\newtheorem{thm}{Theorem}[section]
\newtheorem{cor}[thm]{Corollary}
\newtheorem{lem}[thm]{Lemma}
\newtheorem{prop}[thm]{Proposition}
\theoremstyle{definition}
\newtheorem{defn}[thm]{Definition}
\theoremstyle{remark}
\newtheorem{rem}[thm]{Remark}
\newtheorem*{ex}{Example}
\numberwithin{equation}{section}
\newenvironment{enui}{
\begin{enumerate}

}{\end{enumerate}}
\newenvironment{enuit}{
\begin{enumerate}

}{\end{enumerate}}
\newcommand{\beql}[1]{\begin{equation}\label{#1}}
\newcommand{\eeq}{\end{equation}}
\newcommand{\benui}{\begin{enui}}
\newcommand{\eenui}{\end{enui}}
\newcommand{\blem}{\begin{lem}}
\newcommand{\blemnl}[2]{\blem[#1]\label{#2}}
\newcommand{\elem}{\end{lem}}
\newcommand{\rcor}[1]{Corollary~\ref{#1}}
\newcommand{\coref}[1]{\rcor{#1}}
\newcommand{\rthm}[1]{Theorem~\ref{#1}}
\newcommand{\thref}[1]{\rthm{#1}}
\newcommand{\rmref}[1]{Remark~\ref{#1}}
\newcommand{\lmref}[1]{Lemma~\ref{#1}}
\newcommand{\rsec}[1]{Section~\ref{#1}}
\newcommand{\csec}[1]{Section~#1}
\newcommand{\clem}[1]{Lemma~#1}
\newcommand{\cthm}[1]{Theorem~#1}
\newcommand{\ccor}[1]{Corollary~#1}
\newcommand{\cdef}[1]{Definition~#1}
\newcommand{\crem}[1]{Remark~#1}
\newcommand{\tie}{i.\,e.}
\newcommand{\tresp}{resp.}
\newcommand{\norm}[1]{\lVert #1\rVert}
\newcommand{\defeq}{:=}
\newcommand{\Mat}[1]{\begin{pmatrix}#1\end{pmatrix}}
\newcommand{\ip}[2]{\langle #1,#2\rangle}
\newcommand{\il}[1]{\item\label{#1}}
\DeclareMathOperator{\Rstr}{Rstr}
\newcommand{\rstr}[2]{\Rstr_{#2}#1}
\newcommand{\set}[1]{\{#1\}}
\newcommand{\tcL}{{\widetilde{\mathfrak{L}}}}
\newcommand{\abs}[1]{\lvert #1\rvert}
\newcommand{\zitaa}[2]{\cite[#2]{#1}}
\newcommand{\ko}[1]{\overline{#1}}
\DeclareMathOperator*{\prodr}{\overset{\curvearrowright}{\prod}}
\newcommand{\N}{{\mathbb{N}}}
\newcommand{\Z}{\ensuremath{\mathbb Z}}
\newcommand{\cB}{{\mathcal{B}}}
\newcommand{\cC}{{\mathfrak{C}}}
\newcommand{\cS}{{\mathcal{S}}}
\newcommand{\cL}{{\mathfrak{L}}}
\newcommand{\fL}{{\mathfrak{L}}}
\newcommand{\cD}{{\mathfrak{D}}}
\newcommand{\cQ}{{\mathcal{Q}}}
\newcommand{\cR}{{\mathcal{R}}}
\newcommand{\cN}{{\mathfrak{N}}}
\newcommand{\fM}{{\mathfrak{M}}}
\newcommand{\cF}{{\mathfrak{F}}}
\newcommand{\cM}{{\mathfrak{M}}}
\newcommand{\cU}{{\mathcal{U}}}
\newcommand{\cH}{{\mathfrak{H}}}
\newcommand{\gH}{{\mathfrak{H}}}
\newcommand{\cG}{{\mathfrak{G}}}
\newcommand{\cK}{{\mathfrak{K}}}
\newcommand{\cE}{{\mathfrak{E}}}
\newcommand{\cP}{{\mathcal{P}}}
\newcommand{\cA}{{\mathcal{A}}}
\newcommand{\C}{\mathbb{C}}
\newcommand{\D}{\mathbb{D}}
\newcommand{\T}{\mathbb{T}}
\newcommand{\aagf}{\begin{eqnarray}}
\newcommand{\zzgf}{\end{eqnarray}}
\newcommand{\aai}{\begin{itemize}}
\newcommand{\zzi}{\end{itemize}}
\newcommand{\matr}[4]{\Mat{
#1 & #2 \\
#3 & #4}}
\newcommand{\cl}[2]{\Mat{#1 \\ #2}}
\newcommand{\dcol}[3]{\Mat{
            #1 \\
            #2  \\
            #3}}
\newcommand{\bcase}[4]{\begin{cases}
#1,&\text{ if }#2\\
#3,&\text{ if }#4
\end{cases}}
\newcommand{\te}{\theta}
\newcommand{\la}{\lambda}
\newcommand{\La}{\Lambda}
\newcommand{\Ga}{\Gamma}
\newcommand{\ga}{\gamma}
\newcommand{\dl}{\delta}
\newcommand{\ome}{\omega}
\newcommand{\al}{\alpha}
\newcommand{\be}{\beta}
\newcommand{\si}{\sigma}
\newcommand{\Dl}{\Delta}
\newcommand{\Om}{\Omega}
\newcommand{\wt}[1]{\widetilde{#1}}
\newcommand{\wh}[1]{\widehat{#1}}
\DeclareMathOperator{\rank}{rank}
\newcommand{\nnu}{\nonumber}
\newcommand{\ol}[1]{\overline{#1}}
\DeclareMathOperator{\Lin}{Lin}
\DeclareMathOperator{\col}{col}
\newcommand{\gB}{{\mathfrak B}}
\newcommand{\gF}{{\mathfrak F}}
\newcommand{\gG}{{\mathfrak G}}
\newcommand{\gL}{{\mathfrak L}}
\newcommand{\gM}{{\mathfrak M}}
\newcommand{\R}{{\mathbb R}}
\newcommand{\dEins}{{\mathbf{1}}}
\newcommand{\dT}{{\mathbb T}}
\newcommand{\dC}{{\mathbb C}}
\newcommand{\Pol}{{\mathcal Pol}}
\DeclareMathOperator{\sgn}{sgn}
\DeclareMathOperator{\Ran}{Ran}
\newcommand{\ul}{\underline}
\newcommand{\eklam}[1]{\left[#1\right]}
\newcommand{\gklam}[1]{\left\{#1\right\}}
\newcommand{\fref}[1]{\eqref{#1}}
\newcommand{\FKlam}[5]{ \rklam{ \fEl{#1}{#2} }_{#3 = #4}^{#5}}
\newcommand{\fKlam}[4]{ \FKlam{#1}{#2}{#2}{#3}{#4}}
\newcommand{\intervalO}[2]{ \rklam{ #1,#2 } }
\newcommand{\orth}[1]{ {#1}^{\bot} }
\newcommand{\inorm}[2]{ {\left\lVert  #1  \right\rVert}_{#2} }
\newcommand{\rklamPaar}[2]{ \rklam{ #1,#2 } }
\newcommand{\Inv}[1]{ { #1 }^{-1} }
\newcommand{\fEl}[2]{#1_{#2}}
\newcommand{\mkEl}[3]{#1_{ #2,#3 }}
\newcommand{\rklam}[1]{\left(#1\right)}
\newcommand{\rklamFunk}[2]{#1 \rklam{#2}}
\newcommand{\drklam}[1]{\left( #1 \right)}
\newcommand{\iEl}[2]{ #1^{\drklam{#2}} }
\newcommand{\qKomma}{\	,}
\newcommand{\Adj}[1]{#1^\ast}
\newcommand{\qPunkt}{\	\ldotp}
\begin{document}

%
%
%
%
%
%
%
%
%

\title[Characteristic Function, Schur Parameters and Pseudocontinuation]{Characteristic Function, Schur Parameters and Pseudocontinuation of Schur
functions}


\author[Dubovoy]{Vladimir K. Dubovoy}
\address{Department of Mathematics and Computer Science\\
    Karazin National University\\
    Kharkov\\
    Ukraine\\
    and\\
    Max Planck Institute for Human Cognitive and Brain Sciences\\
    Stephanstrasse~1A\\
    04103~Leipzig\\
    Germany}
\email{dubovoy.v.k@gmail.com}

\author[Fritzsche]{Bernd Fritzsche}
\address{Universit\"at Leipzig\\
    Fakult\"at f\"ur Mathematik und Informatik\\
    PF~10~09~20\\
    D-04009~Leipzig\\
    Germany}
\email{fritzsche@math.uni-leipzig.de}

\author[Kirstein]{Bernd Kirstein}
\address{Universit\"at Leipzig\\
    Fakult\"at f\"ur Mathematik und Informatik\\
    PF~10~09~20\\
    D-04009~Leipzig\\
    Germany}
\email{kirstein@math.uni-leipzig.de}

\author[M\"adler]{Conrad M\"adler}
\address{Universit\"at Leipzig\\
    Zentrum f\"ur Lehrer:innenbildung und Schulforschung\\
    Prager Stra\ss{}e 38--40\\
    D-04317~Leipzig\\
    Germany}
\email{maedler@math.uni-leipzig.de} 

\author[M\"uller]{Karsten M\"uller}
\address{Max Planck Institute for Human Cognitive and Brain Sciences\\
    Stephanstrasse~1A\\
    04103~Leipzig\\
    Germany}
\email{karstenm@cbs.mpg.de}

\subjclass{Primary 30J10; Secondary 47A48, 47A45, 30E05, 47A57}

\keywords{Schur function, Schur parameters, characteristic operator function, pseudocontinuability of Schur functions, contraction, unitary colligation, open system}

\date{January 1, 2004}
\dedicatory{Dedicated to Damir Zjamovich Arov}

\begin{abstract}
In \cite{D06} there is an approach to the investigation of the pseudo-continuability of Schur functions in terms of Schur parameters.
    In particular, there was obtained a criterion for the pseudocontinuability of Schur functions and the Schur parameters of rational Schur functions were described.
    This approach is based on the description in terms of the Schur parameters of the relative position of the largest shift and the largest coshift in a completely nonunitary contraction.
    It should be mentioned that these results received a further development in \cite{BDFK,DFK,MR4049686,MR2931932,MR2805420}.

    This paper is aimed to give a survey about essential results on this direction.
    The main object in the approach is based on considering a Schur function as characteristic function of a contraction (see \rsec{Sec1.2}).
    This enables us outgoing from Schur parameters to construct a model of the corresponding contraction (see \rsec{S2}).
    In this model, the relative position of the largest shift and the largest coshift in a completely nonunitary contraction is described in \rsec{sec3-20221123} and then, based on this model, to find characteristics which are responsible for the pseudocontinuability of Schur functions (see Sections~\ref{sec4-20221123} and~\ref{sec5}).
    The further parts of this paper (see Sections~\ref{sec6}--\ref{sec8}) admit applications of the above results to the study of properties of Schur functions and questions related with them.
\end{abstract}

\maketitle
\tableofcontents

\section{Introduction}

\subsection{Pseudocontinuability of Schur functions}

Let $\D:=\{\zeta\in\C:|\zeta|<1\}$ be the open unit disk of the complex plane $\C$. The symbol $\cS$ denotes the set of all Schur functions in $\D$, i.e. the set of all functions $\theta:\D\rightarrow\C$ which are holomorphic in $\D$ and satisfy the condition $|\theta(\zeta)|\leq 1$ for all $\zeta\in\D$. The symbol $\cR\cS$ (resp. $J$) stands for the subset of $\cS$ which consists of all rational (resp. inner) functions belonging to $\cS$. Thus, the intersection $\cR\cS\cap J$ consists of all finite Blaschke products.

Let $f$ be a function which is meromorphic in $\D$ and which has nontangential
boundary limit values (Lebesgue) a.e. on $\T:=\{\zeta\in\C:|\zeta|=1\}$.
Denote $\D_e:=\{\zeta:|\zeta|>1\}$ the exterior of the unit disk including
the point infinity. The function $f$ is said to admit \emph{ a pseudocontinuation
of bounded type into} $\D_e$ if there exist functions $\alpha,\beta\not\equiv 0$
which are bounded and holomorphic in $\D_e$ such that the boundary values of
$f$ and $ \hat{f}:=\frac{\alpha}{\beta}$ coincide a.e. on $\T$. From the Theorem of
Luzin--Privalov (see, e.g., \cite{K}) it follows that there is at most one pseudocontinuation.

The study of the phenomenon of pseudocontinuability is important in many
questions of analysis. We draw our attention to two of them. For more detailed
information we refer the reader to \cite{MR0270196}, \cite{RS},
\cite{A2}, \cite{CR}, \cite{N}.

In the Hardy space $H^2(\D)$ we consider the unilateral shift $U^\times$ which is
generated by multiplication with the independent variable $\zeta\in\D$, i.e.
$$(U^\times f)(\zeta)=\zeta f(\zeta),\ f\in H^2(\D).$$
The operator which is adjoint to $U$ is given by
$$(Vf)(\zeta)=\frac{f(\zeta)-f(0)}{\zeta},\ f\in H^2(\D).$$
If we represent a function $f\in H^2(\D)$ as Taylor series via
$$f(\zeta)=a_0+a_1\zeta+a_2\zeta^2+\dotsb+a_n\zeta^n+\dotsb,\ \zeta\in\D,$$
and identify $f$ with the sequence $(a_k)_{k=0}^\infty\in l^2$
then the actions of the operators $U^\times$ and $V$ (by preserving the notations) are
given by
$$U^\times:(a_0,a_1,a_2,a_3,\dotsc)\mapsto(0,a_0,a_1,a_2,\dotsc)$$
and
$$W:(a_0,a_1,a_2,a_3,\dotsc)\mapsto(a_1,a_2,a_3,a_4,\dotsc).$$
In view of a theorem due to Beurling (see, e.g., \cite{K}) the invariant subspaces
of the shift $U$ in $H^2(\D)$ are described by inner functions whereas a function
$f\in H^2(\D)$ is cyclic for $U^\times$ if and only if $f$ is outer. In this connection we note
that in view of a theorem due to Douglas, Shapiro and Shields \cite{MR0270196} a function
$f\in H^2(\D)$ is not cyclic for the backward shift $V$ if and only if it admits
a pseudocontinuation of bounded type in $\D_e$.

Following D.Z. Arov \cite{A3} we denote by $\cS\Pi$ the subset of all functions
belonging to $\cS$ which admit a pseudocontinuation of bounded type in $\D_e$.
We note that the set $J$ of all inner functions in $\D$ is a subset of $\cS\Pi$.
Indeed, if $\te\in J$, then the function $\hat{\te}(\zeta)=\ol{\te^{-1}(\frac{1}{\ol\zeta})},\ \zeta\in\D_e $
is the pseudocontinuation of $\te$.

It is known (see Adamjan/Arov\cite{A-A}, Arov\cite{A3}) that each function of the Schur class $\cS$
is realized as scattering suboperator (Heisenberg scattering function) of a
corresponding unitary coupling, i.e., as characteristic function of unitary colligation.
D.Z. Arov indicated the important role of the
class $\cS\Pi$ in the theory of scattering with loss (see Arov\cite{A2}, \cite{A3}, \cite{A4}).
In this connection the following result is essential for our subsequent considerations.

\begin{thm}[Arov\cite{A2}, De~Wilde\cite{DW}, Douglas/Helton\cite{DH}]\label{thm4.1}
A function $\te$ belongs to the class $\cS\Pi$ if and only if there exists a
$2\times 2$ inner \textnormal{(}in $\D$\textnormal{)} matrix function $\Om$ which satisfies
\aagf\Om(\zeta)=\matr{\chi(\zeta)}{\phi(\zeta)}{\psi(\zeta)}{\te(\zeta)},\ \zeta\in\D.\label{4.1}\zzgf
\end{thm}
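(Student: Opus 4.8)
The plan is to prove both implications by realizing $\theta$ as the characteristic function of a unitary colligation and analyzing when the associated scattering system carries a loss.

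For the ``only if'' direction, suppose $\theta\in\cS\Pi$, so $\theta$ has a pseudocontinuation $\hat\theta=\alpha/\beta$ of bounded type into $\D_e$. The key is to recognize that $\theta\in\cS$ admits, by the Adamjan--Arov/Arov realization theory cited above, a representation as the scattering suboperator of a unitary coupling; equivalently, there is a completely nonunitary contraction $T$ on a Hilbert space $\cH$ with defect spaces of dimension one whose characteristic function coincides with $\theta$. The pseudocontinuability of $\theta$ then forces the unitary dilation (equivalently the minimal unitary coupling) to have a nontrivial absolutely continuous part with extra structure: concretely, one shows that the ``missing'' inner functions $\chi,\phi,\psi$ can be read off from the Sz.-Nagy--Foia\c{s} functional model, using the fact that $\theta\in\cS\Pi$ is equivalent to the scattering function admitting a lossless embedding. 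One builds $\Omega$ column by column: the second column $(\phi,\theta)^{\mathrm{T}}$ must be inner as a $\C^2$-valued function, which pins down $\phi$ (up to an inner scalar factor) via $\abs{\phi}^2+\abs{\theta}^2=1$ a.e.\ on $\T$ together with an outer/inner factorization; the pseudocontinuation of $\theta$ is exactly what guarantees $\phi$ so obtained is holomorphic of bounded type and that a matching first row can be chosen to make $\Omega$ square-inner.

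For the ``if'' direction, suppose such a $2\times 2$ inner $\Omega$ exists with $\theta$ in the lower-right corner. Since $\Omega$ is inner in $\D$, it is unitary-valued a.e.\ on $\T$, hence $\Omega(\zeta)^{-1}=\Omega(\zeta)^*$ a.e.\ on $\T$. The matrix $\Omega^{-1}$, defined on $\D_e$ by $\Omega^{-1}(\zeta)\defeq[\Omega(1/\ol\zeta)]^{-*}$ (the standard reflection), is then holomorphic and of bounded type in $\D_e$, and its boundary values agree a.e.\ on $\T$ with $\Omega(\zeta)^*$. Reading off the appropriate entry of $\Omega^*$ and comparing with the corresponding entry of $\Omega^{-1}$ expressed via Cramer's rule in terms of $\det\Omega$ and the cofactors, one exhibits $\theta$ (the $(2,2)$-entry of $\Omega$, hence essentially the $(1,1)$-cofactor of $\Omega^*$ over $\det\Omega^*$) as a ratio $\alpha/\beta$ with $\alpha,\beta$ bounded holomorphic in $\D_e$ and $\beta\not\equiv0$ (indeed $\beta$ is built from $\det\Omega$, which is a nonzero scalar inner function). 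This $\alpha/\beta$ has the same boundary values as $\theta$ a.e.\ on $\T$, so it is the desired pseudocontinuation of bounded type, i.e.\ $\theta\in\cS\Pi$.

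The main obstacle is the ``only if'' direction: producing the full inner matrix $\Omega$, not just the scalar relation $\abs{\phi}^2+\abs\theta^2=1$ on $\T$. The delicate point is that an arbitrary measurable completion of $(\phi,\theta)^{\mathrm{T}}$ to a unitary-valued matrix on $\T$ need not extend holomorphically into $\D$; one must use the pseudocontinuability of $\theta$ to control the \emph{inner--outer factorization} of the column $(\phi,\theta)^{\mathrm{T}}$ and of $\det\Omega$, ensuring that the constructed $\Omega$ is of bounded type in $\D_e$ as well, so that the reflection argument closes up. Equivalently, this is where one invokes the Douglas--Shapiro--Shields characterization (noncyclicity of $\theta$ for the backward shift) to guarantee that the model space admits the required lossless unitary extension; translating that functional-analytic fact into the explicit matrix completion \eqref{4.1} is the heart of the argument.
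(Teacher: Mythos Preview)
The paper does not prove Theorem~\ref{thm4.1}; it is stated as a known result with attribution to Arov~\cite{A2}, De~Wilde~\cite{DW}, and Douglas/Helton~\cite{DH}, and is used as a black box thereafter (e.g., in the proof of Theorem~\ref{thm4.2}). There is therefore no ``paper's own proof'' to compare against.

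As for your sketch: the ``if'' direction is essentially correct and can be made fully rigorous with little effort. From $\Omega^*\Omega=I$ a.e.\ on $\T$ and Cramer's rule one gets $\ko\theta=\chi/\det\Omega$ a.e.\ on $\T$, i.e.\ $\theta(t)=\ko{\chi(t)}\cdot\det\Omega(t)$ (using $\abs{\det\Omega}=1$ on $\T$). Since $\chi\in H^\infty(\D)$, the function $\zeta\mapsto\ko{\chi(1/\ko\zeta)}$ is bounded holomorphic in $\D_e$, and since $\det\Omega$ is a nonzero scalar inner function, its pseudocontinuation is $1/\ko{\det\Omega(1/\ko\zeta)}$, a ratio of bounded holomorphic functions in $\D_e$. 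Their product gives the required $\hat\theta=\alpha/\beta$.

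Your ``only if'' direction, however, is a plan rather than a proof, and the plan underestimates the difficulty. Knowing $\abs{\phi}^2=1-\abs{\theta}^2$ a.e.\ on $\T$ determines an outer $\phi$ only when $\ln(1-\abs{\theta}^2)\in L^1(\T)$; this is true for $\theta\in\cS\Pi\setminus J$ (cf.\ the paper's Theorem~\ref{thm4.2}) but must be argued, and the inner case $\theta\in J$ has to be handled separately. More seriously, having an inner column $(\phi,\theta)^{\mathrm T}$ does not by itself yield a square inner completion; the existence of such a completion is \emph{equivalent} to $\theta\in\cS\Pi$, so one cannot simply ``read off'' $\chi,\psi$ from the functional model without invoking the Douglas--Shapiro--Shields/Arov machinery at exactly the point you flag as ``the heart of the argument.'' In the cited papers this step is carried out either via Darlington synthesis (Arov, De~Wilde) or via a direct operator-theoretic argument on model spaces (Douglas--Helton); your outline names the right ingredients but does not execute this step.
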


\begin{defn}\label{de4.2}
Let
$$w_a(\zeta):=\bcase{\frac{|a|}{a}\frac{a-\zeta}{1-\ol a\zeta}}{a\in\D\setminus\{0\},}{\zeta}{a=0}$$
denote the elementary Blaschke factor associated with $a$. By an
elementary $2\times 2$--Blaschke--Potapov factor we mean a
$2\times 2$--inner (in $\D$) matrix function of the form \aagf
b(\zeta):=I_2+(w_a(\zeta)-1)P\label{4.2A}\zzgf where $w_a(\zeta)$
is an elementary Blaschke factor whereas $P$ is an orthoprojection
in $\C^2$ of rank one, i.e., $P^2=P,\ P^*=P$ and $\rank P=1$. A
$2\times 2$--matrix function $B(\zeta)$ which is inner in $\D$ is
called a finite Blaschke--Potapov product if $B$ admits a
representation of the form \aagf
B(\zeta)=ub_1(\zeta)b_2(\zeta)\cdot\ldots\cdot
b_n(\zeta)\label{4.2B}\zzgf where $u$ is a constant unitary matrix
and $(b_k(\zeta))_{k=1}^n$ is a sequence of elementary $2\times
2$--Blaschke--Potapov factors.
\end{defn}

It follows easily from a result due to D.Z. Arov \cite{A2} that a
function $\te\in\cS$ is rational if and only if there exists a
finite Blaschke--Potapov product $\Om(\zeta)$ of the form
\eqref{4.1}. Thus, a function $\te\in\cS$ is rational if and only
if it can be represented as a block of a finite product of
elementary $2\times 2$--Blaschke--Potapov factors.

The above formulated criteria establish interesting connections between the pseudocontinuability of Schur functions and properties of other objects.
There it remains as open question which properties of a Schur function are responsible for its pseudocontinuability.
Against this background we mention that in \cite{D06} there was obtained a criterion for the pseudocontinuability of Schur functions which is formulated in terms of its Schur parameters (see \zitaa{D06}{\csec{5}}).
In particular, the Schur parameters of rational Schur functions were described (see \zitaa{D06}{\cthm{5.9}}).
Note that this approach is based on the description in terms of the Schur parameters of the relative position of the largest shift and the largest coshift in a completely nonunitary contraction (see \zitaa{D06}{\csec{3}}).
It should be mentioned that these results received a further development in \cite{BDFK,DFK,MR4049686,MR2931932,MR2805420}.

This paper is aimed to give a survey about essential results on this direction.
The main object in the approach 
is based on considering a Schur function as characteristic function of a contraction (see \rsec{Sec1.2}).
This enables us outgoing from Schur parameters to construct a model of the corresponding contraction (see \rsec{S2}).
In this model, the relative position of the largest shift and the largest coshift in a completely nonunitary contraction is described in \rsec{sec3-20221123} and then, based on this model, to find characteristics which are responsible for the pseudocontinuability of Schur functions (see Sections~\ref{sec4-20221123} and~\ref{sec5}).
The further parts of this paper (see Sections~\ref{sec6}--\ref{sec8}) admit applications of the above results to the study of properties of Schur functions and questions related with them.

\subsection{Schur function as a characteristic function of a unitary colligation}\label{Sec1.2}
Let \(T\) be a contraction acting in some Hilbert space \(\cH\), \tie{},\  \(\norm{T}\le 1\).
The operators 
\begin{align*}
D_T&\defeq \sqrt{I_\cH-T^*T}&
&\text{and}&
D_{T^*}&\defeq \sqrt{I_\cH-TT^*}
\end{align*}
are called \emph{the defect operators} of \(T\).
The closures of their ranges
\begin{align*}
\cD_T&\defeq \ol{D_T(\cH)}&
&\text{and}&
\cD_{T^*}&\defeq \ol{D_{T^*}(\cH)}
\end{align*}
are called \emph{the defect spaces} of \(T\).
The dimensions of these spaces
\begin{align*}
\dl_T&\defeq \dim{\cD_T}&
&\text{and}&
\dl_{T^*}&\defeq \dim{\cD_{T^*}}
\end{align*}
are called \emph{the defect numbers} of the contraction \(T\).
In this way, the condition \(\dl_T=0\) (\tresp{}\ \(\dl_{T^*}=0\)) characterizes isometric (\tresp{}\ coisometric) operators, whereas the conditions \(\dl_T=\dl_{T^*}=0\) characterize unitary operators.
Recall that an operator is called coisometric if its adjoint is isometric.

Starting from the contraction \(T\), we can always find Hilbert spaces \(\cF\) and \(\cG\) and operators \(F\colon\cF\to\cH\), \(G\colon\cH\to\cG\), and \(S\colon\cF\to\cG\) such that the operator matrix
\beql{1.2}
Y
=\Mat{T&F\\ G&S} \colon\cH\oplus\cF\to\cH\oplus\cG
\eeq
is unitary, \tie{},\ the conditions \(Y^*Y=I_{\cH\oplus\cF}\), \(YY^*=I_{\cH\oplus\cG}\) are satisfied.
Obviously, these equalities can be rewritten in the form
\beql{1.3}
\begin{aligned}
G^*G+T^*T&=I_{\cH},&S^*S+F^*F&=I_{\cF},&G^*S+T^*F&=0\\
TT^*+FF^*&=I_{\cH},&GG^*+SS^*&=I_{\cG},&TG^*+FS^*&=0.
\end{aligned}
\eeq
Note that in the general situation from \eqref{1.3} it follows \(G^*G=D_T^2\), \(FF^*=D_{T^*}^2\).
Hence,
\begin{align}\label{1.4}
\ol{F(\cF)}&=\cD_{T^*},&
\ol{G^*(\cG)}&=\cD_T.
\end{align}

\begin{defn}[{\cite{B}}]\label{de1.1}
The 7-tuple
\beql{E2.5}
\Dl=(\cH, \cF, \cG; T, F, G, S)
\eeq
consisting of three Hilbert spaces \(\cH,\cF,\cG\) and four operators \(T,F,G,S\) where
\begin{align*}
T&\colon\cH\to\cH,&
F&\colon\cF\to\cH,&
G&\colon\cH\to\cG,&
S&\colon\cF\to\cG
\end{align*}
is called a \emph{unitary colligation} (or more short \emph{colligation}) if the operator matrix \(Y\) given via \eqref{1.2} is unitary. 
\end{defn}

The operator \(T\) is called \emph{the fundamental operator} of the colligation \(\Dl\).
Clearly, the fundamental operator of a colligation is a contraction.
The operation of representing a contraction \(T\) as fundamental operator of a unitary colligation is called \emph{embedding} \(T\) in a colligation.
This embedding permits to use the spectral theory of unitary operators for the study of contractions (see \cite{SN})

The spaces
\begin{align}\label{eq: 1.7}
\cH_{\cF}&\defeq \bigvee_{n=0}^{\infty}T^nF(\cF),&
\cH_{\cG}&\defeq \bigvee_{n=0}^{\infty}T^{*n}G^*(\cG)
\end{align}
and their orthogonal complements \(\cH_{\cF}^\bot\defeq \cH\ominus\cH_{\cF}\), \(\cH_{\cG}^\bot\defeq \cH\ominus\cH_{\cG}\) play an important role in the theory of colligations. 
Therefore, we will use the decompositions
\begin{align}\label{1.7}
\cH&=\cH_{\cF}\oplus\cH_{\cF}^\bot
\intertext{and}
\label{1.201}
\cH&=\cH_{\cG}^\bot\oplus\cH_{\cG}.
\end{align}
in what follows.
From \eqref{1.4} it follows that the spaces \(\cH_\cF\) and \(\cH_\cG\) can also be defined on an alternate way, namely
\begin{align}\label{1.8}
\cH_{\cF}&\defeq \bigvee_{n=0}^{\infty}T^n\cD_{T^*},&
\cH_{\cG}&\defeq \bigvee_{n=0}^{\infty}T^{*n}\cD_T.
\end{align}
Consequently, the spaces \(\cH_\cF\) and \(\cH_\cG\) do not depend on the concrete way of embedding \(T\) in a colligation.
Note that \(\cH_\cF\) is invariant with respect to \(T\) whereas \(\cH_\cG\) is invariant with respect to \(T^*\).
This means that \(\cH_\cF^\bot\) and \(\cH_\cG^\bot\) are invariant with respect to \(T^*\) and \(T\), respectively.
Passing on to the kernels of the adjoint operators in equalities \eqref{1.8}, we obtain
\begin{align}\label{1.9}
\cH_\cF^\bot&=\bigcap_{n=0}^\infty\ker(D_{T^*}T^{*n}),&
\cH_\cG^\bot&=\bigcap_{n=0}^\infty\ker(D_TT^n).
\end{align}

\begin{thm}
The equalities
\[
\cH_\cF^\bot
=\{h\in\cH : \norm{T^{*n}h}=\norm{h}, \ n=1,2,3,\dotsc \}
\]
and
\[
\cH_\cG^\bot
=\{h\in\cH : \norm{T^nh}=\norm{h}, \ n=1,2,3,\dotsc \}
\]
hold true.
\end{thm}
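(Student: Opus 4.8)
The plan is to prove both equalities by the same argument applied to $T^*$ and $T$ respectively, so it suffices to establish the first one, namely $\cH_\cF^\bot = \{h\in\cH : \norm{T^{*n}h}=\norm{h}, \ n=1,2,3,\dotsc\}$. The starting point is the characterization \eqref{1.9}, which says $\cH_\cF^\bot = \bigcap_{n=0}^\infty\ker(D_{T^*}T^{*n})$. So the entire task reduces to showing, for a fixed $h\in\cH$, that $D_{T^*}T^{*n}h = 0$ for all $n\ge 0$ if and only if $\norm{T^{*n}h} = \norm{h}$ for all $n\ge 1$.

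First I would record the elementary identity $\norm{D_{T^*}g}^2 = \ip{(I_\cH - TT^*)g}{g} = \norm{g}^2 - \norm{T^*g}^2$, valid for every $g\in\cH$, which follows directly from $D_{T^*}^2 = I_\cH - TT^*$ and self-adjointness of $D_{T^*}^2$. Applying this with $g = T^{*n}h$ gives $\norm{D_{T^*}T^{*n}h}^2 = \norm{T^{*n}h}^2 - \norm{T^{*(n+1)}h}^2$. Hence $D_{T^*}T^{*n}h = 0$ is equivalent to $\norm{T^{*n}h} = \norm{T^{*(n+1)}h}$, i.e.\ the sequence $(\norm{T^{*n}h})_{n=0}^\infty$ is stationary at step $n$.

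From here the equivalence is immediate. If $h\in\cH_\cF^\bot$, then $D_{T^*}T^{*n}h = 0$ for every $n\ge 0$, so $\norm{T^{*n}h} = \norm{T^{*(n+1)}h}$ for every $n\ge 0$; chaining these equalities from $n=0$ upward yields $\norm{T^{*n}h} = \norm{h}$ for all $n\ge 1$. Conversely, if $\norm{T^{*n}h} = \norm{h}$ for all $n\ge 1$, then in particular $\norm{T^{*n}h} = \norm{T^{*(n+1)}h}$ for all $n\ge 0$ (both sides equal $\norm{h}$, and for $n=0$ the right side is $\norm{T^*h} = \norm{h}$), so $D_{T^*}T^{*n}h = 0$ for all $n\ge 0$, i.e.\ $h\in\bigcap_{n=0}^\infty\ker(D_{T^*}T^{*n}) = \cH_\cF^\bot$. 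The second equality follows by replacing $T$ with $T^*$ throughout and using the companion formula $\cH_\cG^\bot = \bigcap_{n=0}^\infty\ker(D_TT^n)$ from \eqref{1.9}.

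There is no real obstacle here; the only point requiring a little care is bookkeeping with the index shift — making sure the "stationary from step $n$ on" condition for all $n\ge 0$ is correctly matched with "$\norm{T^{*n}h}=\norm{h}$ for all $n\ge 1$" rather than $n\ge 0$ (the case $n=0$ is vacuous since $\norm{T^{*0}h}=\norm{h}$ trivially). The substantive input, the description \eqref{1.9} of $\cH_\cF^\bot$ and $\cH_\cG^\bot$ as intersections of kernels, has already been derived in the excerpt by passing to adjoints in \eqref{1.8}, so the theorem is essentially a reformulation of that fact via the defect-operator norm identity.
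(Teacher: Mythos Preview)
Your proof is correct and follows essentially the same route as the paper: both arguments rest on \eqref{1.9} together with the defect-operator identity $\norm{T^{*n}h}^2 - \norm{T^{*(n+1)}h}^2 = \norm{D_{T^*}T^{*n}h}^2$ (resp.\ its $T$-version), and then chain these telescoping equalities. The only cosmetic difference is that the paper writes the computation for $T$ first and then says the other case is analogous, while you do it for $T^*$.
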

\begin{proof}
For \(n=1,2,3,\dotsc\), clearly

\[
\norm{T^nh}^2
=\ip{T^{*n}T^nh}{h}
=\ip{T^{*n-1}T^{n-1}h}{h}-\ip{T^{*n-1}D_T^2T^{n-1}h}{h}.
\]
Now the first assertion follows from \eqref{1.9} and the equality
\begin{align*}
\norm{T^{n-1}h}^2-\norm{T^nh}^2
&=\norm{D_TT^{n-1}h}^2,&n&=1,2,3,\dotsc.
\end{align*}
Analogously, the second assertion can be proved.
\end{proof}

\begin{cor}\label{cor1.1} 
The space $\cH_\cF^\bot\;(\text{resp.\ }\cH_\cG^\bot)$ is characterized by the following properties:
\benui
\il{cor1.1.a} \(\cH_\cF^\bot\;(\text{resp.\ }\cH_\cG^\bot)\) is invariant with respect to \(T^*\) $(\text{resp.\ }T)$.
\il{cor1.1.b} $ \rstr{T^*}{\cH_\cF^\bot} \ (\tresp{}\ \rstr{T}{\cH_\cG^\bot})$
is an isometric operator.
\il{cor1.1.c} \(\cH_\cF^\bot\;(\text{resp.\ }\cH_\cG^\bot)\) is the largest subspace of \(\cH\) having the properties \eqref{cor1.1.a}, \eqref{cor1.1.b}.
\eenui
\end{cor}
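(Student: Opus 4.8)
The plan is to read off all three properties from the preceding theorem, together with the invariance already recorded just after \eqref{1.8}. I treat the case of $\cH_\cF^\bot$; the statement for $\cH_\cG^\bot$ follows by the symmetry $T \leftrightarrow T^*$, $\cF \leftrightarrow \cG$.

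Property \eqref{cor1.1.a} is nothing new: immediately after \eqref{1.8} it was observed that $\cH_\cF^\bot$ is invariant with respect to $T^*$, so I would simply invoke that. For \eqref{cor1.1.b}, fix $h\in\cH_\cF^\bot$. By the preceding theorem, $\norm{T^{*n}h}=\norm{h}$ for all $n=1,2,3,\dotsc$; specializing to $n=1$ gives $\norm{T^*h}=\norm{h}$. Since $h$ was an arbitrary element of $\cH_\cF^\bot$, this says precisely that $\rstr{T^*}{\cH_\cF^\bot}$ is isometric.

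For the maximality property \eqref{cor1.1.c}, let $\cL$ be any subspace of $\cH$ enjoying the analogues of \eqref{cor1.1.a} and \eqref{cor1.1.b}, i.e.\ $T^*\cL\subseteq\cL$ and $\rstr{T^*}{\cL}$ isometric. I claim $\cL\subseteq\cH_\cF^\bot$. Indeed, fix $h\in\cL$; by invariance $T^{*n}h\in\cL$ for every $n\ge0$, and applying the isometry of $\rstr{T^*}{\cL}$ successively along this orbit yields $\norm{T^{*n}h}=\norm{T^{*(n-1)}h}=\dotsb=\norm{h}$ for all $n=1,2,3,\dotsc$. By the preceding theorem this means $h\in\cH_\cF^\bot$, so $\cL\subseteq\cH_\cF^\bot$, which is \eqref{cor1.1.c}. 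The assertions for $\cH_\cG^\bot$ are proved analogously, replacing $T^*$ by $T$ and using the second characterization in the preceding theorem.

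There is no real obstacle here; the only point that needs care is the induction in \eqref{cor1.1.c}: the isometry hypothesis is assumed only on $\cL$, so one must first use invariance to ensure the whole forward orbit $(T^{*n}h)_{n\ge0}$ stays inside $\cL$ before iterating the norm-preservation step. Everything else is bookkeeping built directly on the preceding theorem.
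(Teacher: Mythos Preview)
Your proof is correct and is exactly the argument the paper has in mind: the corollary is stated without proof, as an immediate consequence of the preceding theorem together with the invariance noted after \eqref{1.8}, and you have simply spelled out those routine deductions.
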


From the foregoing consideration we immediately obtain the following result.

\begin{thm}\label{th1.2}
The equality
\[
\cH_\cF^\bot\cap\cH_\cG^\bot
=\{h\in\cH : \norm{T^{*n}h}=\norm{h}=\norm{T^nh}, \ n=1,2,3,\dotsc \}
\]
holds true.
\end{thm}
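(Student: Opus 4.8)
The plan is to obtain \thref{th1.2} as a direct intersection of the two sets already characterized in the preceding theorem, so that essentially no new work is needed beyond observing compatibility of the two descriptions. First I would recall from the theorem just proved that
\[
\cH_\cF^\bot=\set{h\in\cH : \norm{T^{*n}h}=\norm{h}, \ n=1,2,3,\dotsc}
\quad\text{and}\quad
\cH_\cG^\bot=\set{h\in\cH : \norm{T^nh}=\norm{h}, \ n=1,2,3,\dotsc},
\]
and then simply intersect: \(h\in\cH_\cF^\bot\cap\cH_\cG^\bot\) if and only if \(h\) lies in both sets, which by definition means \(\norm{T^{*n}h}=\norm{h}\) for all \(n\ge1\) \emph{and} \(\norm{T^nh}=\norm{h}\) for all \(n\ge1\); combining these two families of equalities into the single condition \(\norm{T^{*n}h}=\norm{h}=\norm{T^nh}\) for \(n=1,2,3,\dotsc\) gives exactly the right-hand side of the asserted equality.

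Since the excerpt itself says ``From the foregoing consideration we immediately obtain the following result,'' I would keep the proof at this level: state that the claim follows by taking the intersection of the two characterizations in the previous theorem. If one wanted a touch more detail, one could note the two elementary inclusions separately — the left-to-right inclusion is immediate because membership in an intersection implies membership in each factor, and the right-to-left inclusion holds because a vector satisfying \(\norm{T^{*n}h}=\norm{h}=\norm{T^nh}\) for every \(n\ge1\) satisfies in particular each of the two defining conditions, hence lies in \(\cH_\cF^\bot\) and in \(\cH_\cG^\bot\) and therefore in their intersection.

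There is really no main obstacle here: the content of \thref{th1.2} is entirely carried by the preceding theorem, and the only thing to check is the trivial set-theoretic fact that intersecting two sets described by conditions \(A\) and \(B\) yields the set described by the conjunction \(A\wedge B\). The one point worth making explicit, if desired, is that the two conditions are stated over the same index range \(n=1,2,3,\dotsc\), so no reconciliation of indices is required; the conjunction is literally the concatenation of the two norm-preservation requirements.

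\begin{proof}
This is an immediate consequence of the two equalities established in the preceding theorem. Indeed, by those equalities, a vector \(h\in\cH\) belongs to \(\cH_\cF^\bot\) precisely when \(\norm{T^{*n}h}=\norm{h}\) for all \(n=1,2,3,\dotsc\), and it belongs to \(\cH_\cG^\bot\) precisely when \(\norm{T^nh}=\norm{h}\) for all \(n=1,2,3,\dotsc\). Hence \(h\in\cH_\cF^\bot\cap\cH_\cG^\bot\) if and only if both families of equalities hold, which is exactly the condition \(\norm{T^{*n}h}=\norm{h}=\norm{T^nh}\) for \(n=1,2,3,\dotsc\).
\end{proof}
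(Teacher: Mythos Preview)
Your proposal is correct and matches the paper's treatment exactly: the paper gives no separate proof of \thref{th1.2}, simply stating that it follows immediately from the foregoing considerations, which is precisely the intersection argument you wrote out.
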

\begin{cor}\label{cor1.2}
The subspace \(\cH_\cF^\bot\cap\cH_\cG^\bot\) is the largest amongst all subspaces \(\cH'\) of \(\cH\) having the following properties:
\begin{enuit}
\il{cor1.2.a} \(\cH'\) reduces \(T\).
\il{cor1.2.b} $ \rstr{T}{\cH'} $ is a unitary operator.
\end{enuit}
\end{cor}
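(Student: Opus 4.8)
Set $\cH'\defeq\cH_\cF^\bot\cap\cH_\cG^\bot$. The plan is to derive both assertions from \coref{cor1.1} and \thref{th1.2}: first show that $\cH'$ itself has properties \eqref{cor1.2.a} and \eqref{cor1.2.b}, then show that every subspace with these properties is contained in $\cH'$.

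For the first part I would fix $h\in\cH'$ and use the description of $\cH'$ furnished by \thref{th1.2}, so that $\norm{T^nh}=\norm h=\norm{T^{*n}h}$ for all $n$. In particular $\norm{Th}=\norm h$, and since $\norm{Th}^2=\ip{T^*Th}{h}=\norm h^2-\norm{D_Th}^2$ (using \eqref{1.3}), this forces $D_Th=0$ and hence $T^*Th=h$; symmetrically $D_{T^*}h=0$ and $TT^*h=h$. Now $Th\in\cH'$: by the criterion of \thref{th1.2} again, $\norm{T^n(Th)}=\norm{T^{n+1}h}=\norm h=\norm{Th}$ for $n\ge1$, while $T^*(Th)=T^*Th=h$ gives $\norm{T^{*n}(Th)}=\norm{T^{*(n-1)}h}=\norm h=\norm{Th}$ for $n\ge1$. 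Interchanging the roles of $T$ and $T^*$ shows $T^*h\in\cH'$ as well, so $\cH'$ is invariant under both $T$ and $T^*$, \tie{}, it reduces $T$; and the identities $T^*Th=h$, $TT^*h=h$, valid for every $h\in\cH'$, say precisely that $\rstr{T}{\cH'}$ is unitary.

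For maximality I would take an arbitrary subspace $\cH''\subseteq\cH$ satisfying \eqref{cor1.2.a} and \eqref{cor1.2.b}. Since $\cH''$ reduces $T$, it is invariant under $T$ and under $T^*$, and $\rstr{T^*}{\cH''}$ is the inverse of the unitary operator $\rstr{T}{\cH''}$, so both restrictions are isometric. Applying the maximality statement \eqref{cor1.1.c} of \coref{cor1.1} to the $T^*$-invariant subspace $\cH''$ yields $\cH''\subseteq\cH_\cF^\bot$, and applying it to the $T$-invariant subspace $\cH''$ yields $\cH''\subseteq\cH_\cG^\bot$; hence $\cH''\subseteq\cH_\cF^\bot\cap\cH_\cG^\bot=\cH'$.

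The only step that needs a genuine argument is showing that $\cH'$ is invariant under \emph{both} $T$ and $T^*$ — in contrast to $\cH_\cF^\bot$ and $\cH_\cG^\bot$, which are each invariant under only one of the two — and this is exactly where the pointwise identities $T^*Th=h$ and $TT^*h=h$ on $\cH'$ are used; the remainder is norm bookkeeping or a direct appeal to \coref{cor1.1}, so I do not anticipate any real obstacle.
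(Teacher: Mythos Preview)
Your proof is correct and follows exactly the natural route implied by the paper's setup: the paper states \coref{cor1.2} as an immediate consequence of \thref{th1.2} and \coref{cor1.1} without giving any separate argument, and your write-up is precisely the straightforward verification one expects. The only quibble is the reference to \eqref{1.3} for $\norm{Th}^2=\norm{h}^2-\norm{D_Th}^2$; that identity comes directly from the definition $D_T^2=I_\cH-T^*T$, not from the colligation conditions, but the mathematics is of course unaffected.
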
 
A contraction \(T\) on \(\cH\) is called \emph{completely nonunitary} if there is no nontrivial reducing subspace \(\cL\) of \(\cH\) for which the operator $ \rstr{T}{\cL}$ is unitary.
Consequently, a contraction is completely nonunitary if and only if \(\cH_\cF^\bot\cap\cH_\cG^\bot=\set{0}\).
The colligation \(\Dl\) given in \eqref{E2.5} is called \emph{simple} if \(\cH=\cH_\cF\vee\cH_\cG\) 
and \emph{abundant} otherwise.
Hence, the colligation \(\Dl\) is simple if and only if its fundamental operator \(T\) is a completely nonunitary contraction.

Taking into account the Wold decomposition for isometric operators  (see, e.g., \cite[Ch.~I]{SN}), from \rcor{cor1.1} we infer the following result: 

\begin{thm}\label{th1.3}
Let \(T \in [\cH]\) be a completely nonunitary contraction. 
Then the subspace \(\cH_\cF^\bot\;(\text{resp.\ }\cH_\cG^\bot)\) is characterized by the following properties:
\benui
\il{th1.3.a} The subspace \(\cH_\cF^\bot\;(\text{resp.\ }\cH_\cG^\bot)\) is invariant with respect to \(T^*\) $(\text{resp.\ }T)$.
\il{th1.3.b} The operator $ \rstr{T^*}{\cH_\cF^\bot}\;(\text{resp.\ }\rstr{T}{\cH_\cG^\bot})$   is a unilateral shift.
\il{th1.3.c} \(\cH_\cF^\bot\;(\text{resp.\ }\cH_\cG^\bot)\) is the largest subspace of \(\cH\) having the properties \eqref{th1.3.a}, \eqref{th1.3.b}.
\eenui
\end{thm}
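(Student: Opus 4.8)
The plan is to combine \rcor{cor1.1} with the Wold decomposition for isometries. Set $W\defeq\rstr{T^*}{\cH_\cF^\bot}$. By \rcor{cor1.1} the subspace $\cH_\cF^\bot$ is $T^*$-invariant and $W$ is an isometry, so the Wold decomposition supplies an orthogonal splitting $\cH_\cF^\bot=\cH_s\oplus\cH_u$ into $W$-reducing subspaces such that $\rstr{W}{\cH_s}$ is a unilateral shift and $\rstr{W}{\cH_u}$ is unitary, with $\cH_u=\bigcap_{n=0}^\infty W^n(\cH_\cF^\bot)=\bigcap_{n=0}^\infty T^{*n}(\cH_\cF^\bot)$. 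Properties (a) and (b) will follow immediately once we show $\cH_u=\set{0}$, and this is exactly where complete nonunitarity of $T$ has to enter.

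To that end I would first record two facts. Since $\rstr{W}{\cH_u}$ is unitary and agrees with $T^*$ on $\cH_u$, we have $T^*(\cH_u)=\cH_u$ and $\rstr{T^*}{\cH_u}$ is an isometric bijection of $\cH_u$; in particular $\cH_u$ is $T^*$-invariant. On the other hand, putting $n=0$ in \eqref{1.9} gives $\cH_\cF^\bot\subseteq\ker D_{T^*}$, hence $TT^*h=h$ for every $h\in\cH_\cF^\bot$, and a fortiori for every $h\in\cH_u$. Combining these: given $g\in\cH_u$, choose $g'\in\cH_u$ with $T^*g'=g$; then $Tg=TT^*g'=g'\in\cH_u$ and $\norm{Tg}=\norm{g'}=\norm{T^*g'}=\norm{g}$. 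Thus $\cH_u$ is $T$-invariant and $\rstr{T}{\cH_u}$ is isometric; it is also surjective, since $T(T^*h)=h$ for every $h\in\cH_u$. Hence $\cH_u$ reduces $T$ and $\rstr{T}{\cH_u}$ is unitary, so $\cH_u=\set{0}$ because $T$ is completely nonunitary.

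With $\cH_u=\set{0}$ the operator $W=\rstr{T^*}{\cH_\cF^\bot}$ is a unilateral shift, which together with \rcor{cor1.1}\eqref{cor1.1.a} yields (a) and (b). For the maximality (c), let $\cH'\subseteq\cH$ satisfy (a) and (b); since a unilateral shift is isometric, $\cH'$ satisfies properties \eqref{cor1.1.a} and \eqref{cor1.1.b} of \rcor{cor1.1}, whence $\cH'\subseteq\cH_\cF^\bot$ by \rcor{cor1.1}\eqref{cor1.1.c}; and $\cH_\cF^\bot$ itself has (a), (b) by the preceding step. Finally, the assertions about $\cH_\cG^\bot$ are obtained by applying the result already proved to the completely nonunitary contraction $T^*$ in place of $T$, upon observing that for $T^*$ the subspace playing the role of ``$\cH_\cF^\bot$'' is exactly $\cH_\cG^\bot$ (by \eqref{1.8}) and the corresponding shift is $\rstr{T}{\cH_\cG^\bot}$.

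I expect the one genuinely nontrivial step to be the passage from the $T^*$-theoretic Wold picture on $\cH_\cF^\bot$ back to a reducing unitary part of $T$ on all of $\cH$: one needs both the defect identity $\cH_\cF^\bot\subseteq\ker D_{T^*}$ coming from \eqref{1.9} and the surjectivity of $\rstr{T^*}{\cH_u}$ to see that $\cH_u$ is $T$-invariant with $\rstr{T}{\cH_u}$ unitary. Everything else is bookkeeping around \rcor{cor1.1} and the Wold decomposition.
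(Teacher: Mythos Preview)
Your proof is correct and follows precisely the route the paper indicates: the paper merely states that the theorem is inferred ``from \rcor{cor1.1}'' by ``taking into account the Wold decomposition for isometric operators,'' without writing out any details. Your argument is a faithful expansion of that sketch, supplying in particular the one step the paper leaves implicit, namely that the unitary summand $\cH_u$ in the Wold decomposition of $\rstr{T^*}{\cH_\cF^\bot}$ is a reducing subspace for $T$ on which $T$ acts unitarily, hence vanishes by complete nonunitarity.
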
 

We say that a unilateral shift \(V\colon\cL\to\cL\) \emph{is contained} in the contraction \(T\) if \(\cL\) is a subspace of \(\cH\) which is invariant with respect to \(T\) and \(\rstr{T}{\cL}=V\) is satisfied.

\begin{defn}\label{D1.9-1207}
Let \(T \in [\cH]\) be a completely nonunitary contraction. 
Then the shift \(V_T\defeq\ \rstr{T}{\cH_\cG^\bot}\)   is called \emph{the largest shift} contained in \(T\).
\end{defn}

By a \emph{coshift} we mean an operator the adjoint of which is a unilateral shift.
We say that a coshift \(\wt V\colon\tcL\to\wt \cL\) \emph{is contained} in \(T\) if the unilateral shift \(\wt V^*\) is contained in \(T^*\).
Then from \rthm{th1.3} it follows that the operator $ V_{T^*}= \rstr{T^*}{\cH_\cF^\bot}  $ is the largest shift contained in \(T^*\).
If \(\cH_\cG^\bot=\set{0}\) (\tresp{}\ \(\cH_\cF^\bot=\set{0}\)) we will say that the shift \(V_T\) (\tresp{}\ \(V_{T^*}\)) has multiplicity zero.

\begin{defn}
Let \(T \in [\cH]\) be a completely nonunitary contraction. 
Then the coshift \(\wt V_T\defeq (V_{T^*})^*\) is called \emph{the largest coshift} contained in \(T\).
\end{defn}

Let $T$ be a completely nonunitary contraction in $\mathfrak H$. From the above considerations it
follows that decomposition \eqref{1.7} corresponds to the block representation
\begin{align}\label{1.10}
T = \begin{pmatrix}{T_{\mathfrak F}}&*\\0&{\Tilde{V}_T}\end{pmatrix}.
\end{align}

Analogously, decomposition  \eqref{1.201} corresponds to the block representation
\begin{align}\label{1.11}
T=\begin{pmatrix}{V_T}&*\\0&{T_{\mathfrak G}}.\end{pmatrix}
\end{align}

The properties of a completely nonunitary contraction $T$ are determined in many aspects by the mutual position of the subspaces $\mathfrak H_{\mathfrak{F}}^\perp$ and $\mathfrak H_\mathfrak{G}^\perp$. According to this we note that in \cite[Subsection 3.2]{D06} (see also Sections~\ref{sec3-20221123} and~\ref{sec4-20221123} of this paper) a description of the mutual position of these subspaces was given.

\begin{thm}[{\cite[Theorem~1.9]{D06}}]\label{th1.4}
Let \(T \in [\cH]\) be a completely nonunitary contraction. 
Then the multiplicities of the largest shifts \(V_T\) and \(V_{T^*}\) are not greater than \(\dl_{T^*}\) and \(\dl_T\),  respectively.
\end{thm}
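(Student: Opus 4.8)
The plan is to prove the statement for $V_{T^*}$, i.e.\ that its multiplicity (the dimension of $\cH_\cF^\bot$, or rather the multiplicity of the shift $\rstr{T^*}{\cH_\cF^\bot}$) is at most $\dl_T$; the claim for $V_T$ follows by applying this to $T^*$ in place of $T$. By \rthm{th1.3}, $\rstr{T^*}{\cH_\cF^\bot}$ is a unilateral shift, so its multiplicity equals $\dim(\cH_\cF^\bot\ominus T^*\cH_\cF^\bot)$, the dimension of its wandering subspace. **First I would** identify this wandering subspace concretely. Since $\cH_\cF^\bot$ is $T^*$-invariant and $\rstr{T^*}{\cH_\cF^\bot}$ is an isometry there (\rcor{cor1.1}\eqref{cor1.1.b}), the wandering subspace is $\cH_\cF^\bot\cap\ker T$: indeed $h\in\cH_\cF^\bot$ is orthogonal to $T^*\cH_\cF^\bot$ iff $\ip{h}{T^*g}=\ip{Th}{g}=0$ for all $g\in\cH_\cF^\bot$, and since $h\in\cH_\cF^\bot$ is mapped by $T^*$ isometrically we will want to conclude $Th=0$; one checks $Th\in\cH_\cF^\bot$ (because $\cH_\cF^\bot$ reduces nothing but is $T^*$-invariant — here one uses $T\cH_\cF\subseteq\cH_\cF$, hence for $h\perp\cH_\cF$ and $k\in\cH_\cF$, $\ip{Th}{k}=\ip{h}{T^*k}$, and $T^*k$ need not lie in $\cH_\cF$, so instead use that $T^*$ restricted to $\cH_\cF^\bot$ is isometric to get $\norm{h}^2=\norm{T^*h}^2$ and $\norm{h}^2-\norm{Th}^2=\norm{D_Th}^2$; orthogonality to $T^*\cH_\cF^\bot$ forces $h\perp T^*(T h)$'s component, giving $\norm{Th}=0$). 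So the multiplicity equals $\dim(\cH_\cF^\bot\cap\ker T)$.

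**Next** I would bound $\dim(\cH_\cF^\bot\cap\ker T)$ by $\dl_T=\dim\cD_T$. The key observation is that $\ker T\subseteq\cD_T$: if $Th=0$ then $D_T^2h=(I-T^*T)h=h$, so $h=D_T^2h\in\Ran D_T\subseteq\cD_T$, in fact $D_Th=h$ so $\ker T$ sits inside $\cD_T$ as the eigenspace of $D_T$ for eigenvalue $1$. Therefore $\cH_\cF^\bot\cap\ker T\subseteq\ker T\subseteq\cD_T$, whence $\dim(\cH_\cF^\bot\cap\ker T)\le\dim\cD_T=\dl_T$, which is exactly the desired bound for the multiplicity of $V_{T^*}$. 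Replacing $T$ by $T^*$ (noting $\cH_\cG^\bot$ for $T$ equals $\cH_\cF^\bot$ for $T^*$, and $\cD_{T^*}$ plays the role of $\cD_T$) gives that the multiplicity of $V_T$ is at most $\dl_{T^*}$.

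**The main obstacle** I anticipate is the first step: correctly identifying the wandering subspace of the shift $\rstr{T^*}{\cH_\cF^\bot}$ as $\cH_\cF^\bot\cap\ker T$ and justifying it cleanly, since it mixes the action of $T$ and $T^*$ on a subspace that is only $T^*$-invariant (not $T$-invariant). The cleanest route is probably: for $h\in\cH_\cF^\bot$ we have from the proof of the theorem preceding \rcor{cor1.1} that $\norm{T^{*n}h}=\norm{h}$ for all $n$; the wandering subspace $\cH_\cF^\bot\ominus T^*\cH_\cF^\bot$ consists of those $h\in\cH_\cF^\bot$ with $h\perp T^*\cH_\cF^\bot$. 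Since $P_{\cH_\cF^\bot}T^*$ is the (isometric) generating shift and $T^*$ already maps $\cH_\cF^\bot$ into itself, $h\perp T^*\cH_\cF^\bot$ means $\ip{Th}{g}=0$ for all $g\in\cH_\cF^\bot$, i.e.\ $Th\in\cH_\cF$. But using $TT^*+FF^*=I_\cH$ from \eqref{1.3} and $F^*\rstr{}{\cH_\cF^\bot}=0$ (which holds since $\cH_\cF^\bot=\bigcap_n\ker(D_{T^*}T^{*n})\subseteq\ker D_{T^*}=\ker F^*$ by \eqref{1.4} and \eqref{1.9}), we get $TT^*h=h$ for $h\in\cH_\cF^\bot$; combined with $Th\in\cH_\cF$ and $h\in\cH_\cF^\bot$ one derives $Th=0$. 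Conversely $\ker T\cap\cH_\cF^\bot$ is clearly wandering. Once this identification is secure, the dimension count via $\ker T\subseteq\cD_T$ is immediate, so the bulk of the write-up is the bookkeeping in this first step.
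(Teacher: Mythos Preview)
Your approach has the right endpoint --- bounding the wandering subspace $\tilde{\cL}_0=\cH_\cF^\bot\ominus T^*\cH_\cF^\bot$ via $\cD_T$ --- but the key intermediate claim is false. You assert that $\tilde{\cL}_0=\cH_\cF^\bot\cap\ker T$, i.e.\ that $Th=0$ whenever $h$ is wandering. What is actually true is only that $Th\in\cH_\cF$ for $h\in\tilde{\cL}_0$ (equivalently $\tilde{\cL}_0=\ker\bigl(P_{\cH_\cF^\bot}T\rstr{}{\cH_\cF^\bot}\bigr)$), and in general $Th\neq 0$. The model in \rthm{th2.1} already shows this: there $\tilde{\cL}_0=\C\psi_1$, and $T\psi_1$ is the first column of $\wt R$, namely $\col(-\ol\ga_0\Pi_1,-\ol\ga_1\Pi_2,\dotsc)\in\cH_\cF$, which is nonzero as soon as some $\ga_j\neq 0$. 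Your justification ``$TT^*h=h$ combined with $Th\in\cH_\cF$ and $h\in\cH_\cF^\bot$ gives $Th=0$'' does not go through without an extra hypothesis.

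The repair is short: instead of $\tilde{\cL}_0\subseteq\ker T$, prove that $D_T\rstr{}{\tilde{\cL}_0}$ is injective (this immediately yields $\dim\tilde{\cL}_0\le\dl_T$). Suppose $h\in\tilde{\cL}_0$ with $D_Th=0$. Then $T^*Th=h$, so $\|Th\|=\|h\|$, and $v:=Th\in\cH_\cF$. Now $T^*v=h\in\cH_\cF^\bot$, and since $T^*$ is isometric on $\cH_\cF^\bot$ we get $\|T^{*n}v\|=\|T^*v\|=\|h\|=\|v\|$ for every $n\ge 1$; hence by the characterization of $\cH_\cF^\bot$ in \rthm{th1.2} (or rather the theorem preceding \rcor{cor1.1}) we have $v\in\cH_\cF^\bot$. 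Together with $v\in\cH_\cF$ this forces $v=0$, hence $\|h\|=\|Th\|=0$. This is precisely the content behind \rcor{cor1.9} (with $G$ in place of $D_T$, via $G^*G=D_T^2$), and it is the route taken in \cite{D06}.
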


\begin{cor}[{\cite[Corollary~1.10]{D06}}]\label{cor1.9}
Let \(\Dl\) be a simple unitary colligation of type \eqref{E2.5}.
Denote \(\cL_0\) and \(\tcL_0\) the generating wandering subspaces for the largest shifts \(V_T\) and \(V_{T^*}\),  respectively.
Then \(\ol{P_{\cL_0}F(\cF)}=\cL_0\), \(\ol{P_{\tcL_0}G^*(\cG)}=\tcL_0\), where \(P_{\cL_0}\) and \(P_{\tcL_0}\) are the orthogonal projections from \(\cH\) onto \(\cL_0\) and \(\tcL_0\),  respectively.
\end{cor}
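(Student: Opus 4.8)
The plan is to prove the first identity $\ol{P_{\cL_0}F(\cF)}=\cL_0$; the second will then follow by passing from the colligation $\Dl$ to its adjoint colligation $\Dl^*=(\cH,\cG,\cF;T^*,G^*,F^*,S^*)$, which interchanges the roles of $F$ and $G^*$, of $\cH_\cF$ and $\cH_\cG$, and hence of $V_{T^*}$ and $V_T$ together with their generating wandering subspaces $\tcL_0$ and $\cL_0$. So I would concentrate on showing $\ol{P_{\cL_0}F(\cF)}=\cL_0$, where $\cL_0=\cH_\cG^\bot\ominus T\cH_\cG^\bot$ is the wandering subspace for the largest shift $V_T=\rstr{T}{\cH_\cG^\bot}$ guaranteed by \rthm{th1.3}.

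First I would record the two inclusions. The inclusion $\ol{P_{\cL_0}F(\cF)}\subseteq\cL_0$ is automatic since $P_{\cL_0}$ projects into $\cL_0$ and $\cL_0$ is closed. For the reverse inclusion it suffices, since $\cL_0$ is closed, to show that the orthogonal complement in $\cL_0$ of $P_{\cL_0}F(\cF)$ is trivial: suppose $g\in\cL_0$ with $\ip{g}{P_{\cL_0}F\varphi}=0$ for all $\varphi\in\cF$. Because $g\in\cL_0\subseteq\cH_\cG^\bot$ we have $P_{\cL_0}g=g$, so $\ip{g}{F\varphi}=\ip{P_{\cL_0}g}{F\varphi}=\ip{g}{P_{\cL_0}F\varphi}=0$, i.e.\ $g\perp F(\cF)$. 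By \eqref{1.4} this means $g\perp\cD_{T^*}$, equivalently $D_{T^*}g=0$, i.e.\ $\norm{Tg}=\norm{g}$.

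The heart of the argument is to upgrade "$\norm{Tg}=\norm{g}$ for this single vector $g\in\cL_0$" to "$\norm{T^ng}=\norm{g}$ for all $n$", which by \rthm{th1.2} (together with $g\in\cH_\cG^\bot$ already) forces $g\in\cH_\cF^\bot\cap\cH_\cG^\bot=\set{0}$ by complete nonunitarity, finishing the proof. The key observation is the wandering structure: $\cH_\cG^\bot=\bigoplus_{k\ge0}T^k\cL_0$, so $T^ng$ for $g\in\cL_0$ lives in $T^n\cL_0$ and, crucially, $T$ acts isometrically on all of $\cH_\cG^\bot$ (it is the shift $V_T$ there). Hence for $g\in\cL_0$ one has $\norm{T^ng}=\norm{g}$ for \emph{every} $n\ge1$ automatically, simply because $V_T$ is isometric — wait, this already gives $g\in\cH_\cG^\bot$ with all $\norm{T^ng}=\norm{g}$. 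So in fact the condition $D_{T^*}g=0$ is the one that matters on the other side: I must instead show $\norm{T^{*n}g}=\norm{g}$ for all $n$, i.e.\ $g\in\cH_\cF^\bot$. From $D_{T^*}g=0$ we get $TT^*g=g-D_{T^*}^2g$... rather, $\norm{T^*g}$: use $g=Tg'$? Not available. The clean route: $D_{T^*}g=0$ gives $\ip{T^{*n}g}{T^{*n}g}$; I would show inductively that $D_{T^*}T^{*n}g=0$ for all $n$, using that $g\in\cH_\cG^\bot$ so $T^{*}$ maps $\cH_\cG^\bot$... no, $\cH_\cG^\bot$ is $T$-invariant, not $T^*$-invariant.

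The genuinely delicate point, and the one I expect to be the main obstacle, is precisely reconciling these two one-sided pieces of information: $g\in\cL_0\subseteq\cH_\cG^\bot$ (so $g$ is "shift-like" under $T$) together with $D_{T^*}g=0$ (so $g$ is also "co-isometric-like" under $T$). I would exploit the colligation relations \eqref{1.3}: from $TG^*+FS^*=0$ and $TT^*+FF^*=I_\cH$, applied to $g$, combined with $F^*g=0$ (which is what $g\perp F(\cF)$ means). Then $TT^*g=g$, so $T^*g\in\cD_T$-type analysis applies: $\norm{T^*g}^2=\ip{TT^*g}{g}=\norm{g}^2$, giving $\norm{T^*g}=\norm{g}$; and one checks $T^*g\in\cH_\cG^\bot$ is false in general, so instead iterate: show $F^*T^{*n}g=0$ for all $n$ by induction, using $F^*T^* = F^*T^*$ and the relation $G^*S + T^*F = 0$ hence $F^*T = -S^*G$, together with $g\in\cH_\cG^\bot\Rightarrow G$ (note $\ol{G^*(\cG)}=\cD_T$, and $g\in\cH_\cG^\bot\Rightarrow Gg$ need not vanish, but $g\in\cH_\cG^\bot=\bigcap_n\ker(D_TT^n)$ gives $D_Tg=0$, i.e.\ $G g$... actually $G^*G=D_T^2$ so $D_Tg=0\Rightarrow Gg=0$). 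With both $Gg=0$ and $F^*g=0$ in hand, the relations \eqref{1.3} propagate cleanly: from $F^*g=0$, $Gg=0$ one shows $T^*g$ again satisfies $GT^*g=0$ (using $TG^*+FS^*=0$ adjointed: $GT^*+SF^*=0$, so $GT^*g=-SF^*g=0$) and $F^*T^*g = ?$ — here use $F^*T^*=(TF)^*$ and $S^*S+F^*F=I_\cF$ plus $G^*S+T^*F=0$ to derive $F^*T^* = -S^*G^*$... this gives $F^*T^*g=-S^*G^*$, not obviously zero. I would close this loop by the norm bookkeeping instead: $\norm{T^*g}^2 = \norm{g}^2 - \norm{Gg}^2 = \norm{g}^2$ and $GT^*g=0$ as shown, so $T^*g$ is again a unit-preserving vector with $G(T^*g)=0$; iterating, $\norm{T^{*n}g}=\norm{g}$ and $GT^{*n}g=0$ for all $n$, whence $g\in\cH_\cF^\bot$ by \eqref{1.9}. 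Combined with $g\in\cH_\cG^\bot$, \rthm{th1.2} and complete nonunitarity give $g=0$, completing the proof.
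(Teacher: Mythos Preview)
Your overall strategy is the natural one: take $g\in\cL_0$ orthogonal to $P_{\cL_0}F(\cF)$, deduce $F^*g=0$ and (from $g\in\cH_\cG^\bot$) $Gg=0$, and then try to show $g\in\cH_\cF^\bot$ so that simplicity forces $g=0$. However, the inductive step at the end does not close. You write $\norm{T^*g}^2=\norm{g}^2-\norm{Gg}^2$, but the correct identity is $\norm{T^*g}^2=\norm{g}^2-\norm{F^*g}^2$ (since $I-TT^*=FF^*$); and the relation $GT^*=-SF^*$ shows that both $\norm{T^{*}h}=\norm{h}$ and $GT^*h=0$ require $F^*h=0$, not $Gh=0$. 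So your induction hypothesis ``$G(T^{*n}g)=0$'' is the wrong quantity to track, and even if you tracked $F^*(T^{*n}g)=0$ instead, there is no colligation identity for $F^*T^*$ that would propagate it. The iteration simply cannot be closed using only \eqref{1.3}; you have discarded the key geometric information that $g$ lies in the \emph{wandering} subspace $\cL_0$.

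Here is how to finish once you have $F^*g=0$ and $g\in\cL_0$. Set $h:=T^*g$. From $F^*g=0$ and $TT^*+FF^*=I$ you get $Th=g$ and $\norm{h}=\norm{g}$, so $\norm{Th}=\norm{h}$. Moreover, for every $k\in\cH_\cG^\bot$ one has $\ip{h}{k}=\ip{g}{Tk}=0$ because $g\in\cL_0=\cH_\cG^\bot\ominus T\cH_\cG^\bot$; hence $h\perp\cH_\cG^\bot$. Now let $\cM:=\cH_\cG^\bot\oplus\C h$. This subspace is $T$-invariant (since $Th=g\in\cH_\cG^\bot$), and $T$ is isometric on it: on $\cH_\cG^\bot$ because $V_T$ is a shift, on $h$ since $\norm{Th}=\norm{h}$, and the cross terms satisfy $\ip{Th}{Tk}=\ip{g}{Tk}=0=\ip{h}{k}$ for $k\in\cH_\cG^\bot$. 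By the maximality in \rcor{cor1.1}\eqref{cor1.1.c}, $\cM\subseteq\cH_\cG^\bot$, hence $h\in\cH_\cG^\bot$; combined with $h\perp\cH_\cG^\bot$ this gives $h=0$, and then $g=Th=0$. The second identity then follows, as you said, by passing to the adjoint colligation. (The paper itself does not give a proof here; it cites \cite[Corollary~1.10]{D06}, where the result is deduced from the proof of \rthm{th1.4}, which proceeds along essentially these lines.)
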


\begin{rem}\label{rm1.10}
In \cite[part III]{D82} it was shown that the multiplicity of the shift $V_T$ 
coincides
with $\dl_{T^*}$ if and only if the multiplicity of the shift $V_{T^*}$ 
coincides
with $\dl_T$. Moreover, all remaining cases connected with the inequalities
$$0\le\dim\cL_0<\dl_{T^*},\ 0\le\dim\wt{\cL}_0<\dl_T$$ are possible.
\end{rem}

\begin{defn}\label{de1.10}
Let $\Delta$ be the unitary colligation of the form
\eqref{E2.5}. The operator function
\begin{equation}\label{Nr.1.13}
\theta_{\Dl} (\zeta) := S + \zeta G (I_\gH - \zeta T)^{-1} F,\quad
\zeta\in\mathbb D,
\end{equation}
is called the characteristic operator function $($c.o.f.$)$ of the
colligation $\Delta$.
\end{defn}

By $\cS[\cF,\cG]$ it will be denoted the set of
operator-valued functions $\theta$ which are defined and
holomorphic on $\mathbb D$ and the values of which are contractive
operators from $[\cF,\cG]$ (the set of operator-valued Schur
functions).   

The next statements are very important (see, e.g., Brodskii
\cite{B}):

\begin{thm}\label{th1.12}
$($a$)$ The c.o.f. $\theta_\Delta$ of the unitary colligation
$\Delta$ belongs to the class $\cS [\cF,\cG]$.

$($b$)$
Let $\theta\in\cS[\cF,\cG]$. %
Then there exists a simple unitary colligation $\Delta$ of the form
\eqref{E2.5} such that $\theta$ is the c.o.f. of
$\Delta$, i.e., $\theta=\theta_\Delta$.
\end{thm}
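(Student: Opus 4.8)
The plan is to prove part~(a) by a direct computation showing that $I_\cG - \theta_\Delta(\zeta)^*\theta_\Delta(\zeta)$ is the value of a function with nonnegative real part (equivalently, to exhibit a factorization of the defect), and to prove part~(b) by the standard colligation-reconstruction procedure: starting from $\theta$ one writes down its Taylor coefficients, builds the de~Branges--Rovnyak-type model space or, more elementarily, constructs $\cH$, $T$, $F$, $G$, $S$ directly from the power series expansion and then passes to the simple part. I will treat the two parts in turn.

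For part~(a), I would use the unitarity relations~\eqref{1.3}. Fix $\zeta,\eta\in\D$ and set $\Delta(\zeta)\defeq(I_\cH-\zeta T)^{-1}$, which is bounded since $\norm{T}\le1$. A short manipulation gives
\[
I_\cG-\theta_\Delta(\eta)^*\theta_\Delta(\zeta)
=(1-\ko\eta\zeta)F^*\Delta(\eta)^*\Delta(\zeta)F,
\]
by substituting the defining formula~\eqref{Nr.1.13}, expanding the product, and repeatedly using $S^*S+F^*F=I_\cF$, $GG^*+SS^*=I_\cG$, $G^*S+T^*F=0$, $TT^*+FF^*=I_\cH$ together with the resolvent identity $\Delta(\zeta)-\Delta(\eta)=(\zeta-\eta)\Delta(\eta)T\Delta(\zeta)$. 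Taking $\eta=\zeta$ yields $I_\cG-\theta_\Delta(\zeta)^*\theta_\Delta(\zeta)=(1-\abs{\zeta}^2)F^*\Delta(\zeta)^*\Delta(\zeta)F\ge0$, so $\norm{\theta_\Delta(\zeta)}\le1$; holomorphy on $\D$ is clear from the Neumann series for $\Delta(\zeta)$. Hence $\theta_\Delta\in\cS[\cF,\cG]$. The bookkeeping in the expansion is the only place where care is needed, but it is routine once the six relations in~\eqref{1.3} are kept at hand.

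For part~(b), given $\theta\in\cS[\cF,\cG]$ I would build a colligation by the coisometric/isometric model and then simplify. One clean route: put $\cH\defeq\bigoplus_{n=0}^\infty\cG$ (or use the de~Branges--Rovnyak space $\cH(\theta)$), let $T$ be essentially the backward shift compressed appropriately, and define $F,G,S$ from the coefficients of $\theta(\zeta)=S+\sum_{n\ge1}\zeta^n G T^{n-1}F$; one checks that the block operator $Y$ in~\eqref{1.2} is an isometry, then dilates (or reflects the construction on the $\cF$-side) to make it unitary, obtaining a colligation $\Delta'$ with $\theta_{\Delta'}=\theta$. Finally, restrict to the subspace $\cH_\cF\vee\cH_\cG$ defined in~\eqref{eq: 1.7}: since $\cH_\cF^\bot\cap\cH_\cG^\bot$ corresponds to the unitary part of the fundamental operator and this part is invisible in~\eqref{Nr.1.13} (because $G$ annihilates $\cH_\cF^\bot$ and $F$ maps into $\cH_\cF$, so $GT^nF$ is unchanged under the restriction), the compressed colligation $\Delta$ is simple and still satisfies $\theta_\Delta=\theta$.

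The main obstacle is part~(b): producing a colligation whose $Y$ is genuinely \emph{unitary} (not merely isometric or coisometric) and simultaneously matching $\theta$. The asymmetry between the $\cF$- and $\cG$-sides means the naive one-sided shift model gives only an isometric $Y$; repairing this requires either a second (co)isometric model glued to the first, or invoking the Sz.-Nagy--Foias functional model of the contraction with characteristic function $\theta$, after which passing to the simple part via~\eqref{1.8} and \rcor{cor1.2} is straightforward. I would cite \cite{B} (and \cite{SN}) for the model construction and give the verification that the characteristic function is unchanged under passing to $\cH_\cF\vee\cH_\cG$, since that is the one step genuinely specific to the present setup.
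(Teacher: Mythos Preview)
The paper does not give its own proof of this theorem: it is stated as a known result with the attribution ``(see, e.g., Brodskii~\cite{B})'' and no argument is supplied. So there is nothing to compare your proposal against on the paper's side; the authors treat both parts as standard facts from the theory of unitary colligations and characteristic functions.

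Your sketch is the standard route and is essentially correct. One small slip in part~(a): since $\theta_\Delta(\zeta)\colon\cF\to\cG$, the defect $I-\theta_\Delta(\eta)^*\theta_\Delta(\zeta)$ lives on $\cF$, so the identity should read $I_\cF-\theta_\Delta(\eta)^*\theta_\Delta(\zeta)=(1-\ko\eta\zeta)\,F^*\Delta(\eta)^*\Delta(\zeta)F$, not $I_\cG$. For part~(b) your diagnosis of the main obstacle is accurate: a one-sided shift model yields only an isometric (or coisometric) $Y$, and one must either couple two such models or invoke the Sz.-Nagy--Foias functional model to obtain a genuinely unitary $Y$; after that, passing to the simple part via $\cH_\cF\vee\cH_\cG$ is unproblematic for the reason you state. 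Citing \cite{B} and \cite{SN} for the construction is exactly what the paper itself does.
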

\begin{defn}\label{de1.13}
Let $\Delta_k = (\cH_k, \cF, \cG; T_k, F_k, G_k, S_k), \; k=1,2,$ be
unitary colligations. The colligations $\Delta_1$ and $\Delta_2$ are
called unitarily equivalent if $S_1 =S_2$ and there exists a unitary
operator $Z : \cH_1\to\cH_2$ which satisfies $ZT_1 = T_2Z,\;
ZF_1=F_2,\, G_2 Z = G_1$.
\end{defn}

It can be easily seen that the c.o.f. of unitarily equivalent
colligations coincide. In this connection it turns out to be
important that the converse statement is also true (see, e.g.,
Brodskii \cite{B}).

\begin{thm}\label{th1.14}
If the c.o.f. of two simple colligations coincide, then the
colligations are unitarily equivalent.
\end{thm}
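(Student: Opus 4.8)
The plan is to recover the intertwining unitary directly from the Taylor coefficients of the common characteristic function. Put $\theta\defeq\theta_{\Delta_1}=\theta_{\Delta_2}$ and expand, using \eqref{Nr.1.13},
\[
\theta(\zeta)=S_k+\sum_{n=0}^\infty\zeta^{n+1}G_kT_k^nF_k,\qquad\zeta\in\D,\quad k=1,2.
\]
Comparing coefficients gives $S_1=S_2$ and $G_1T_1^nF_1=G_2T_2^nF_2$ for all $n\ge0$; write $c_n\defeq G_1T_1^nF_1$. Since $\Delta_k$ is simple, the linear manifold
\[
\mathcal{D}_k\defeq\Lin\set{T_k^mF_kf,\ T_k^{*n}G_k^*g:m,n\ge0,\ f\in\cF,\ g\in\cG}
\]
is dense in $\cH_k$ by \eqref{eq: 1.7}. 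I would define $Z$ first on $\mathcal{D}_1$ by $T_1^mF_1f\mapsto T_2^mF_2f$ and $T_1^{*n}G_1^*g\mapsto T_2^{*n}G_2^*g$ (extended linearly), then show it is a well-defined isometry and extend it to $\cH_1$ by continuity.

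\textbf{The technical heart} is the claim that, for a simple colligation, every inner product of two elements of the corresponding manifold is already determined by $S$ and the moments $(c_n)_{n\ge0}$. I would verify this by distinguishing three types of products. The mixed products are immediate: $\ip{T^mFf}{T^{*n}G^*g}=\ip{GT^{m+n}Ff}{g}=\ip{c_{m+n}f}{g}$. For the ``$\cF$--$\cF$'' products, assuming $m\ge n$, I would use the telescoping identity $T^{*n}T^n=I_\cH-\sum_{j=0}^{n-1}T^{*j}G^*GT^j$ (iterating $T^*T=I_\cH-G^*G$ from \eqref{1.3}) to rewrite
\[
\ip{T^mFf}{T^nFf'}=\ip{F^*T^{m-n}Ff}{f'}-\sum_{j=0}^{n-1}\ip{c_{j+m-n}f}{c_jf'},
\]
and then reduce $F^*T^pF$ via $F^*F=I_\cF-S^*S$ and, for $p\ge1$, $F^*T^pF=-S^*GT^{p-1}F=-S^*c_{p-1}$ (using $T^*F=-G^*S$ from \eqref{1.3}). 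The ``$\cG$--$\cG$'' products follow by applying the same reasoning to the adjoint colligation $(\cH,\cG,\cF;T^*,G^*,F^*,S^*)$, whose moments are $c_n^*$. Granting this, isometry of $Z$ on $\mathcal{D}_1$ follows; in particular $Z$ is well defined, because if a formal combination of generators represents $0$ in $\cH_1$, the isometry identity forces its image to have norm $0$ in $\cH_2$. Thus $Z$ extends to an isometry $\cH_1\to\cH_2$, and since its range contains the dense set $\mathcal{D}_2$ and is closed, $Z$ is unitary.

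It then remains to check the relations of Definition~\ref{de1.13}. The equality $S_1=S_2$ was already noted. Applying $Z$ to $F_1f=T_1^0F_1f$ gives $ZF_1=F_2$, and applying $Z$ to $G_1^*g=T_1^{*0}G_1^*g$ gives $ZG_1^*=G_2^*$, whence $G_2Z=G_1$ since $Z$ is unitary. For $ZT_1=T_2Z$ it suffices to check equality on the generators of $\mathcal{D}_1$: on $T_1^mF_1f$ this is the definition of $Z$, while on $T_1^{*n}G_1^*g$ one rewrites $T_1T_1^{*n}G_1^*g$ so that it again lies in $\mathcal{D}_1$, using $T_1G_1^*=-F_1S^*$ (from \eqref{1.3}) when $n=0$, and $T_1T_1^*=I_{\cH_1}-F_1F_1^*$ together with $F_1^*T_1^{*(n-1)}G_1^*=c_{n-1}^*$ when $n\ge1$; the $Z$-image then matches the expression obtained by the identical manipulation inside $\cH_2$. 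Continuity propagates $ZT_1=T_2Z$ to all of $\cH_1$, and $\Delta_1$, $\Delta_2$ are unitarily equivalent. \textbf{The main obstacle} is organizing the inner-product lemma cleanly — in particular handling the ``$\cG$--$\cG$'' block through the adjoint colligation and keeping straight that well-definedness of $Z$ is a consequence of, not a prerequisite for, the isometry computation.
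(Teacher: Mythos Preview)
The paper does not give its own proof of this theorem; it is stated as a quotation from Brodskii~\cite{B} (see the sentence preceding the theorem). So there is no in-paper argument to compare against.

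Your proof is correct and is essentially the standard argument one finds in the literature: read off $S_1=S_2$ and the moments $c_n=G_kT_k^nF_k$ from the Taylor coefficients of the common c.o.f., define $Z$ on the span of the vectors $T_k^mF_kf$ and $T_k^{*n}G_k^*g$ by sending generators to generators, and use the colligation identities \eqref{1.3} to show that all Gram data of these generators are determined by $S$ and $(c_n)_{n\ge0}$. Your reductions are right: the mixed products collapse to $c_{m+n}$; the $\cF$--$\cF$ products are handled via the telescoping $T^{*n}T^n=I-\sum_{j=0}^{n-1}T^{*j}G^*GT^j$ together with $F^*T^pF=-S^*c_{p-1}$ for $p\ge1$ and $F^*F=I_\cF-S^*S$; the $\cG$--$\cG$ products follow by duality through the adjoint colligation. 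The intertwining check on the $T^{*n}G^*g$ generators using $TG^*=-FS^*$ and $TT^*=I-FF^*$ is also correctly carried out. The one logical subtlety you already flag --- that well-definedness of $Z$ is a \emph{consequence} of the Gram identity rather than an assumption --- is handled properly. Nothing to fix.
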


Theorem \ref{th1.12} and Theorem \ref{th1.14} tell us now that,
outgoing from the c.o.f., a simple unitary colligation can be
recovered up to unitary equivalence. Thus, the fundamental operator
$T$ of this colligation is also determined up to unitary
equivalence. For this reason the c.o.f. of a simple unitary
colligation will be also called the c.o.f. of its corresponding
fundamental operator $T$.

\subsection{Naimark dilations}

In this subsection we follow {\cite[Subsection 1.3]{D06}}. Let us consider interrelations between unitary colligations and Naimark
dilations of Borel measures on the unit circle $\T:=\{t\in\C:|t|=1\}$.

Let $\cE$ be a separable complex Hilbert space and denote $[\cE]$ the set of
bounded linear operators in $\cE$. We consider a $[\cE]-$valued Borel 
measure $\mu$ on $\T$. More precisely, $\mu$ is defined on the $\si-$algebra
$\cB(\T)$ of Borelian subsets of $\T$ and has the following properties:
\aai\item[(a)] For $\Dl\in\cB(\T)\ ,\ \mu(\Dl)\in[\cE]$.
\item[(b)] $\mu(\emptyset)=0$.
\item[(c)] For $\Dl\in\cB(\T)\ ,\ \mu(\Dl)\ge0$.
\item[(d)] $\mu$ is $\si-$additive with respect to the strong operator 
convergence.\zzi
The set of all $[\cE]-$valued Borel measures on $\T$ is denoted by  
$\cM(\T,\cE)$.

\begin{defn}
Let $\mu\in\cM(\T,\cE)$ and assume $\mu(\T)=I$. Denote $(s_n)_{n \in \Z}$
the sequence of Fourier coefficients of $\mu\ ,$ i.e.
\aagf s_n:=\int\limits_\T t^{-n}\mu(dt)\ ,\ n\in\Z\ .\label{1.19}\zzgf
By a Naimark dilation of the measure $\mu$ we mean an ordered triple
$(\cK,\cU,\tau)$
having the following properties.
\aai\item[1.] $\cK$ is a separable Hilbert space over $\C$.
\item[2.] $\cU$ is a unitary operator in $\cK$.
\item[3.] $\tau$ is an isometric operator from $\cE$ into $\cK\ ,$ the 
so-called embedding
operator, i.e. $\tau:\cE\to\cK$ and $\tau^*\tau=I_\cE$.
\item[4.] For $n\in\Z$
\aagf s_n=\tau^*{\cU}^n\tau\ .\label{1.20}\zzgf
\zzi
\end{defn}

A Naimark dilation is called \textit{minimal} if
\aagf \cK=\bigvee\limits_{n\in\Z}{\cU}^n\tau(\cE)\ .\label{1.21}\zzgf

\begin{defn}
Two Naimark dilations $(\cK_j,{\cU}_j,\tau_j)\ ,j=1,2\ ,$ of a measure
$\mu\in\cM(\T,\cE)$ are called unitarily equivalent if there exists a unitary
operator $Z:\cK_1\to\cK_2$ which satisfies
\aagf {\cU}_2Z=Z{\cU}_1\ ,\ Z\tau_1=\tau_2\ .\label{1.22}\zzgf
\end{defn}

Analogously with Theorem \ref{th1.14} one can prove

\begin{lem}
Any two minimal Naimark dilations $(\cK_j,{\cU}_j,\tau_j)\ ,j=1,2\ ,$
of a measure $\mu\in\cM(\T,\cE)$ are unitarily equivalent 
(i.e. a minimal Naimark dilation is essentially unique).
\end{lem}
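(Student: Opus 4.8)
The plan is to imitate the proof of \rthm{th1.14}: I would build the intertwining unitary $Z$ first on the natural dense linear manifold spanned by the orbit of $\tau(\cE)$ under the powers of $\cU$, exploiting that all inner products on this manifold are governed by the common Fourier coefficients $(s_n)_{n\in\Z}$ of $\mu$, and then extend $Z$ by continuity.

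Concretely, given two minimal Naimark dilations $(\cK_j,{\cU}_j,\tau_j)$, $j=1,2$, I would set $\cD_j\defeq\Lin\set{{\cU}_j^n\tau_j e : n\in\Z,\ e\in\cE}$; by the minimality condition \eqref{1.21}, each $\cD_j$ is dense in $\cK_j$. The computational heart of the argument is the identity
\[
\Bigl\langle\sum_{k}{\cU}_j^{n_k}\tau_j e_k,\sum_{l}{\cU}_j^{m_l}\tau_j f_l\Bigr\rangle
=\sum_{k,l}\ip{\tau_j^*{\cU}_j^{n_k-m_l}\tau_j e_k}{f_l}
=\sum_{k,l}\ip{s_{n_k-m_l}e_k}{f_l},
\]
valid for arbitrary finite sums, which follows from the unitarity of ${\cU}_j$ together with \eqref{1.20}. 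Since the right-hand side is independent of $j$, the prescription $Z_0\colon\sum_k{\cU}_1^{n_k}\tau_1 e_k\mapsto\sum_k{\cU}_2^{n_k}\tau_2 e_k$ defines a linear map $\cD_1\to\cD_2$ that is simultaneously well defined and isometric: applying the identity to the difference of two representations of a single vector shows that its image is independent of the chosen representation, and the same identity yields $\norm{Z_0 h}=\norm{h}$ for $h\in\cD_1$.

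From here the argument is routine. The isometry $Z_0$ extends by continuity to an isometry $Z\colon\cK_1\to\cK_2$; its range is closed and contains the dense set $\cD_2$, hence $Z$ is surjective, i.e.\ unitary. The relations $Z\tau_1=\tau_2$ and $Z{\cU}_1={\cU}_2 Z$ are then read off directly from the action of $Z_0$ on generators---taking $n=0$ for the first, and shifting the exponent by one for the second---and propagated to all of $\cK_1$ by linearity and continuity. This produces the unitary equivalence \eqref{1.22}, so a minimal Naimark dilation is unique up to unitary equivalence.

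The step I would flag as the main (though modest) obstacle is the well-definedness of $Z_0$: one must be sure that two different finite linear combinations representing the same vector of $\cD_1$ have the same image in $\cD_2$. This is precisely where the hypothesis that both triples are Naimark dilations of the \emph{same} measure enters, through the moment identity above; minimality, by contrast, is used only to guarantee that $\cD_1$ and $\cD_2$ are dense, so that $Z_0$ extends to an honestly unitary operator rather than a mere isometric embedding.
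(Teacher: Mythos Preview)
Your argument is correct and is exactly the standard proof the paper has in mind: the paper does not actually spell out a proof of this lemma but merely remarks that it is proved ``analogously with Theorem~\ref{th1.14}'', and your construction of $Z$ on the dense orbit span via the common moments $s_n$ is precisely that analogy carried out in full.
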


According to the construction of a Naimark dilation of the measure $\mu$ we
consider two functions. The first of them has the form
\aagf\Phi(\zeta)=\int\limits_{\T}\frac{t+\zeta}{t-\zeta}\mu(dt)\ ,\ 
\zeta\in\D\ 
.\label{1.23}\zzgf
Obviously, $\Phi(\zeta)$ is holomorphic in $\D$. Moreover,
$$\Re[\Phi(\zeta)]=\frac{1}{2}[\Phi(\zeta)+\Phi^*(\zeta)]\ge 0\ ,\ \zeta\in\D\ 
,$$
and, as it follows from \eqref{1.19}, $\Phi(\zeta)$ has the Taylor series
representation
\aagf\Phi(\zeta)=I+2s_1\zeta+2s_2\zeta^2+\dots\ ,\zeta\in\D\ .\label{1.24}\zzgf
Thus, $\Phi(\zeta)$ belongs to the Caratheodory class $\cC[\cE]$ of all
$[\cE]-$valued functions which are holomorphic in $\D$ and have nonnegative
real part in $\D$.

The second of these functions $\te(\zeta)$ is related to $\Phi(\zeta)$ via the
Cayley transform:
\aagf\zeta\te(\zeta)=(\Phi(\zeta)-I)(\Phi(\zeta)+I)^{-1}\ .\label{1.25}\zzgf
From the properties of $\te$ and the wellknown lemma of H.A. Schwarz it follows
that $\te(\zeta)\in\cS[\cE]$.
where $\cS[\cE]:=\cS[\cE,\cE]$.
The functions $\Phi(\zeta)$ and $\te(\zeta)$ are called the functions of 
classes
$\cC[\cE]$ and $\cS[\cE],$ respectively, which are \textit{associated} with 
the
measure $\mu$.

If
\aagf\te(\zeta)=c_0+c_1\zeta+c_2\zeta^2+\dots\label{1.26}\zzgf
then from \eqref{1.24} and \eqref{1.25} we obtain
$$(c_0+c_1\zeta+c_2\zeta^2+\dots)(I+s_1\zeta+s_2\zeta^2+\dots)=s_1+s_2\zeta+s_3\zeta^2+\dots\ 
.$$
Thus
\aagf s_1=c_0\ ,\ 
s_n=c_0s_{n-1}+c_1s_{n-2}+\dots+c_{n-2}s_1+c_{n-1}\ ,\ n\in\N\setminus\{1\}\ 
.\label{1.27}\zzgf

\begin{thm}\label{th1.8}
Let $\mu\in\cM(\T,\cE)$ and denote $\te(\zeta)$ the function from the class
$\cS[\cE]$ associated with $\mu$. 
\aai\item[(a)] Let $\Dl=(\cH,\cE,\cE;T,F,G,S)$ be a 
simple unitary colligation  which satisfies $\te_\Dl(\zeta)=\te(\zeta)$. 
Then the triple
\aagf (\cK,\cU,\tau)\label{1.28}\zzgf
where $\cK=\cH\oplus\cE\ ,\ \cU =\matr T F G S :\cH\oplus\cE\to\cH\oplus\cE$
and $\tau$ is the operator of embedding $\cE$ into $\cH\oplus\cE\ ,$ i.e. for
each $e\in\cE$
$$\tau e=(0,e)\in\cH\oplus\cE$$
is a minimal Naimark dilation of the measure $\mu$.
\item[(b)] Let $(\cK,\cU,\tau)$ be a minimal Naimark dilation of the measure 
$\mu$ and
$\tau(\cE)=\wt\cE$. Let $\cH=\cK\ominus\wt\cE$ and suppose that according to
the decomposition $\cK=\cH\oplus\wt\cE$ the unitary operator $W$ has the 
matrix 
representation $$\cU=\matr T F G S:\cK\oplus\wt\cE\to\cK\oplus\wt\cE\ .$$
Then the tuple
$(\cH,\cE,\cE;T,F\tau,\tau^*G,\tau^*S\tau)$
is a minimal unitary colligation which satisfies $\te_\Dl(\zeta)=\te(\zeta)\ .$
\zzi
\end{thm}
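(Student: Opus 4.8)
The plan is to reduce both directions to one generating-function identity that links the characteristic function \eqref{Nr.1.13}, the compressions $\tau^*\cU^n\tau$ of the powers of the unitary operator, and the Fourier coefficients \eqref{1.19} of $\mu$. First I would record that, since $\mu\ge0$ and $\mu(\T)=I$, we have $s_0=I$ and $s_{-n}=s_n^*$ for every $n\in\N$, and that solving the Cayley transform \eqref{1.25} for $\Phi$ — legitimate because $\norm{\zeta\te(\zeta)}<1$ on $\D$ — gives $\Phi(\zeta)=(I-\zeta\te(\zeta))^{-1}(I+\zeta\te(\zeta))$, hence $\tfrac12(\Phi(\zeta)+I)=(I-\zeta\te(\zeta))^{-1}$; comparing with \eqref{1.24} then yields the \emph{measure-side identity}
\[
(I-\zeta\te(\zeta))^{-1}=\sum_{n=0}^{\infty}s_n\zeta^n,\qquad\zeta\in\D .
\]
On the colligation side, for any unitary colligation $\Dl=(\cH,\cE,\cE;T,F,G,S)$ with unitary operator $\cU=\matr{T}{F}{G}{S}$ and embedding $\tau e\defeq(0,e)$, the operator $\tau^*\cU^n\tau$ is exactly the $(2,2)$-block of $\cU^n$ relative to $\cK=\cH\oplus\cE$; summing the Neumann series for $|\zeta|<1$ and applying the Schur-complement formula to the $(2,2)$-block of $(I-\zeta\cU)^{-1}$ gives the \emph{colligation-side identity}
\[
\sum_{n=0}^{\infty}\zeta^n\,\tau^*\cU^n\tau=\bigl(I-\zeta S-\zeta^2 G(I-\zeta T)^{-1}F\bigr)^{-1}=(I-\zeta\te_\Dl(\zeta))^{-1},\qquad\zeta\in\D ,
\]
the last equality being the definition \eqref{Nr.1.13}.

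For part~(a): the triple $(\cK,\cU,\tau)$ with $\cK=\cH\oplus\cE$ clearly satisfies requirements 1--3 of a Naimark dilation, since $\cK$ is separable, $\cU$ is unitary (because $\Dl$ is a colligation), and $\tau^*\tau=I_\cE$. Requirement \eqref{1.20} follows by combining the two identities with the hypothesis $\te_\Dl=\te$ and comparing Taylor coefficients, which gives $\tau^*\cU^n\tau=s_n$ for $n\ge0$; the case $n<0$ is then obtained by passing to adjoints and using $s_{-n}=s_n^*$. For minimality \eqref{1.21} I would take the orthogonal complement $\cL$ of $\bigvee_{n\in\Z}\cU^n\tau(\cE)$ in $\cK$: it reduces $\cU$ and is orthogonal to $\tau(\cE)=\cE$, so $\cL\subseteq\cH$, $\cL$ reduces $T$, and $Gm=0$, $F^*m=0$ for every $m\in\cL$; since $G^*G=D_T^2$ and $\ol{F(\cE)}=\cD_{T^*}$ (see \eqref{1.3}--\eqref{1.4}), this places $\cL$ inside $\cD_T^\perp\cap\cD_{T^*}^\perp$, whence $\rstr{T}{\cL}$ is unitary. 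Complete nonunitarity of $T$ — equivalently, simplicity of $\Dl$, see \rcor{cor1.2} — then forces $\cL=\set0$, which is \eqref{1.21}.

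For part~(b): $\tau$ is a unitary operator from $\cE$ onto $\wt\cE\defeq\tau(\cE)$, so conjugating the unitary $\cU$, written in block form relative to $\cK=\cH\oplus\wt\cE$, by $I_\cH\oplus\tau\colon\cH\oplus\cE\to\cH\oplus\wt\cE$ shows that the operator matrix of $\Dl'\defeq(\cH,\cE,\cE;T,F\tau,\tau^*G,\tau^*S\tau)$ is unitary, so $\Dl'$ is a unitary colligation. Its characteristic function is $\te_{\Dl'}(\zeta)=\tau^*\bigl(S+\zeta G(I-\zeta T)^{-1}F\bigr)\tau$, and applying the colligation-side identity to the auxiliary colligation $(\cH,\wt\cE,\wt\cE;T,F,G,S)$ and transporting by the unitary $\tau$ yields $\sum_{n\ge0}\zeta^n\tau^*\cU^n\tau=(I-\zeta\te_{\Dl'}(\zeta))^{-1}$; combining this with the Naimark property $\tau^*\cU^n\tau=s_n$ and the measure-side identity gives $(I-\zeta\te_{\Dl'}(\zeta))^{-1}=(I-\zeta\te(\zeta))^{-1}$, hence $\te_{\Dl'}=\te$. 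Finally, to establish minimality (i.e.\ $\cH_\cF\vee\cH_\cG=\cH$) I would take $m\in\cH$ orthogonal to $\cH_\cF=\bigvee_{n\ge0}T^nF\wt\cE$ and to $\cH_\cG=\bigvee_{n\ge0}T^{*n}G^*\wt\cE$; a short induction on the block structure of $\cU$ shows that the $\cH$-component of $\cU^n\tau(\cE)$ lies in $\cH_\cF$ for $n\ge1$ and in $\cH_\cG$ for $n\le-1$, while $m\perp\wt\cE=\cU^0\tau(\cE)$, so $m$ is orthogonal to $\bigvee_{n\in\Z}\cU^n\tau(\cE)$, which by minimality of the dilation equals $\cK$; hence $m=0$.

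The main obstacle is bookkeeping rather than a conceptual difficulty. One has to carry out the block (Schur-complement) inversion carefully enough to identify the $(2,2)$-block of $(I-\zeta\cU)^{-1}$ with $(I-\zeta\te_\Dl(\zeta))^{-1}$, which is the precise point where the analytic definition \eqref{Nr.1.13} of the characteristic function is matched against the operator-matrix structure of the colligation; and one has to translate minimality of the Naimark dilation into simplicity of the colligation and back, which rests on the defect-space relations \eqref{1.3}--\eqref{1.4} and on \rcor{cor1.2}. Everything else reduces to the Cayley transform \eqref{1.25} and the elementary positivity properties of $\mu$.
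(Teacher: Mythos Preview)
Your proof is correct and takes a genuinely different route from the paper's own argument. The paper proceeds by a direct induction on $n$: it writes $\cU^n=\matr{*}{x_n}{*}{y_n}$, posits the explicit formulas $x_n=\sum_{k=0}^{n-1}T^{n-1-k}Fs_k$ and $y_n=s_n$, and verifies the inductive step using the recursion \eqref{1.27} that links the Fourier coefficients $s_n$ of $\mu$ to the Taylor coefficients $c_n=GT^{n-1}F$ of $\te_\Dl$; minimality is then read off from the exact identity $\bigvee_{n=0}^m\cU^n\cE=\bigl(\bigvee_{n=0}^{m-1}T^nF(\cE)\bigr)\oplus\cE$. Your approach replaces this blockwise induction by a single generating-function computation --- the Schur-complement evaluation of the $(2,2)$-block of $(I-\zeta\cU)^{-1}$ --- matched against the Cayley-transform identity $(I-\zeta\te(\zeta))^{-1}=\sum_{n\ge0}s_n\zeta^n$, and handles minimality via the reducing-subspace characterisation of complete nonunitarity (\rcor{cor1.2}). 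What you gain is conceptual economy: no explicit bookkeeping of the off-diagonal blocks $x_n$, and no appeal to the combinatorial recursion \eqref{1.27}. What the paper's argument buys is the explicit formula $x_n=\sum_{k=0}^{n-1}T^{n-1-k}Fs_k$, which makes the identity $\bigvee_{n=0}^m\cU^n\cE=\cH_\cF^{(m-1)}\oplus\cE$ immediate and later feeds into the explicit model constructions in \rsec{S2}; your minimality arguments establish the same inclusions, but less constructively.
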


\begin{rem}
The spaces $\cF$ and $\cG$ in the unitary colligation \eqref{E2.5} are different,
whereas they are identified in the consideration of Naimark dilation. For this
reason one has to distinguish between the unitary operator $U$ from \eqref{1.2}
and the unitary operator $\cU$ from \eqref{1.28}. The first of them acts between
different spaces, whereas the second one acts in the space $\cK\ .$
\end{rem}

\begin{proof}
Suppose that $\te$ has the Taylor series representation \eqref{1.26}. In view
of 
$$\te(\zeta)=S+\zeta G(I-\zeta 
T)^{-1}F=S+\sum\limits_{n=1}^\infty\zeta^nGT^{n-1}F\ ,\ \zeta\in\D\ ,$$
we obtain
\aagf c_0=S\ ,\ c_n=GT^{n-1}F\ ,n\in\N\ .\label{1.29}\zzgf

Observe that concerning the proof of the identities \eqref{1.20} it is enough
to prove them for $n\in\N$. For this, it is sufficient to prove that 
\aagf {\cU}^n=\matr * {x_n} * {y_n} \ ,\ n\in\N\label{1.30}\zzgf
where 
\aagf x_n=T^{n-1}Fs_0+T^{n-2}Fs_1+\dots+TFs_{n-2}+Fs_{n-1}\label{1.31}\zzgf
and
\aagf y_n=s_n\ .\label{1.32}\zzgf
Indeed, if the identity \eqref{1.30} is verified, for $n\in\N$ we get
$$\tau^*{\cU}^n\tau=(0,I)\matr * {x_n} * {y_n} \cl 0 I =y_n=s_n\ .$$
From \eqref{1.27} and \eqref{1.29} we infer that \eqref{1.30} is satisfied for
$n=1$. Applying the method of mathematical induction we assume that 
\eqref{1.30}
is satisfied for $n$. Then
\aagf\matr * {x_{n+1}} * {y_{n+1}} &=&{\cU}^{n+1}=\cU{\cU}^n\nnu\\
&=&\matr T F G S \cdot\matr * {x_n} * {y_n}
=\matr * {Tx_n+Fy_n} * {Gx_n+Sy_n}\ .\nnu\zzgf
This yields, together with \eqref{1.27}, \eqref{1.29}, \eqref{1.31} and 
\eqref{1.32}
the validity of \eqref{1.30} for $n+1\ ,$ too. Hence the triple \eqref{1.28}
is a Naimark dilation of the measure $\mu$. According to the proof of 
minimality we note that from \eqref{1.30} it follows 
$$\bigvee\limits_{n=0}^m 
{\cU}^n\cE=(\bigvee\limits_{n=0}^{m-1}T^nF(\cE))\oplus\cE\ ,\ m\in\N\ .$$
Thus,
$\bigvee\limits_{n=0}^\infty {\cU}^n\cE=(\bigvee\limits_{n=0}^\infty
T^nF(\cE))\oplus\cE\ .$
Analogously, we get
$$\bigvee\limits_{n=-\infty}^0
{\cU}^n\cE=(\bigvee\limits_{n=0}^\infty T^{*n}G^*(\cE))\oplus\cE\ .$$
Now the minimality condition \eqref{1.21} follows from the simplicity of the
colligation $\Dl$.

The assertion of (b) follows from the fact that the above considerations
can be done in the reverse way.
\end{proof}

\begin{rem}\label{rem1.22}
Thus, the model of unitary colligations is also a model for the Naimark
dilation
of Borel measures on the unit circle.
\end{rem}

Obviously, $\cS = \cS[\C]$.  In what follows, we consider only
this case.

\subsection{Schur algorithm, Schur parameters}

Let $\te\in\cS$. Following I. Schur \cite{Schur} we set
$\te_0:=\te$
and $\ga_0:=\te_0(0)$. Obviously, $|\ga_0|\le 1$. If $|\ga_0|<1\ ,$ we
consider the function
$$\te_1(\zeta):=\frac 1 \zeta
\frac{\te_0(\zeta)-\ga_0}{1-\ol\ga_0\te_0(\zeta)},
\qquad\zeta\in\D.
$$
In view of the Lemma of H.A. Schwarz $\te_1\in\cS$. As above we set
$\ga_1:=\te_1(0)$ and if $|\ga_1|<1$ we consider the function
$\te_2(\zeta):=\frac 1 \zeta
\frac{\te_1(\zeta)-\ga_1}{1-\ol\ga_1\te_1(\zeta)}$, $\zeta\in\D$.
Further, we continue this procedure inductively. Namely, if in the $j-$th step
a function $\te_j$ occurs for which $|\ga_j|<1$ where $\ga_j:=\te_j(0)$
we set
$$\te_{j+1}(\zeta):=\frac 1 \zeta 
\frac{\te_j(\zeta)-\ga_j}{1-\ol\ga_j\te_j(\zeta)},
\qquad\zeta\in\D,
$$
and continue this procedure.

Then two cases are possible:
\aai\item[(1)] The procedure can be carried out without end, i.e. $|\ga_j|<1\ 
, 
j=0,1,2,\dots$
\item[(2)] There exists an $n\in\{0,1,2,\dots\}$ such that $|\ga_n|=1$ and, if 
$n>0\ 
,$
then $|\ga_j|<1\ \forall j\in\{0,\dots,n-1\}\ .$\zzi
Thus, a sequence $(\ga_j)_{j=0}^\ome$ is associated with each function 
$\te\in\cS\ .$
Hereby we have $\ome=\infty$ in the first case and $\ome=n$ in the second. From
I. Schur's paper \cite{Schur} it is known that the second case appears if and only
if $\te$ is a finite Blaschke product of degree $n\ .$

\begin{defn}
The sequence $(\ga_j)_{j=0}^\ome$ obtained by the above procedure is called the
sequence of Schur parameters associated with the function $\te\ .$
\end{defn}

The following two properties established by I. Schur in \cite{Schur} determine the
particular role which the Schur parameters play in the study of functions of
class $\cS\ .$
\aai\item[(a)] There is a one-to-one correspondence between the set of 
functions 
$\te\in\cS$
and the set of corresponding sequences $(\ga_j)_{j=0}^\ome\ .$
\item[(b)] For each sequence $(\ga_j)_{j=0}^\ome$ which satisfies 
$$\left\{\begin{array}{lll}|\ga_j|<1\ ,\ j\in\{0,1,2,\dots\}\ , {\mbox\ \ if\ 
\ } \ome=\infty\\
|\ga_j|<1\ ,\ j\in\{0,\dots,\ome-1\}\ ,\ |\ga_\ome|=1\ ,{\mbox\ \ if\ \ } 0<
\ome<\infty\\
|\ga_0|=1\ ,{\mbox\ \ if\ \ } \ome=0\end{array}\right.$$
there exists a function $\te\in\cS$ such that the sequence 
$(\ga_j)_{j=0}^{\ome}$
is the Schur parameter sequence of $\te\ .$\zzi

Thus, the Schur parameters are independent parameters which determine the 
functions of class $\cS\ .$

\section{Construction of a model of a unitary colligation via the Schur parameters of its characteristic function in the scalar case}\label{S2}

In this Section, we follow \cite[Section{2}]{D06}.

\subsection{General form of the model}\label{sec2.1}

Let $\te\in\cS$. Assume that 
\aagf\Dl=(\cH,\C,\C;T,F,G,S)\label{2.1}\zzgf
is a simple unitary colligation satisfying $\te=\te_\Dl\ .$
In the considered case we have $\cF=\cG=\C\ .$
Note that in \eqref{2.1} the complex plane $\C$ is considered as the one-dimensional
complex Hilbert space with the usual inner product
\[ \bigl(z,w\bigr)_{\C} = z^{\ast}w, \quad z,w\in\C.   \] 

We take $1$ as basis vector of the one-dimensional vector space $\C\ .$
Set
\aagf \phi_1':=F(1)\ ,\ \wt\phi_1':=G^*(1)\ .\label{2.2}\zzgf
Then
$\cH_\cF=\bigvee\limits_{n=0}^\infty T^n\phi_1'
\ ,\ \cH_\cG=\bigvee\limits_{n=0}^\infty T^{*n}\wt\phi_1'\ .$
If $(f_\al)_{\al\in{\mathcal{A}}}$ is some family of vectors from 
${\mathcal{H}}$ the symbol
$\bigvee\limits_{\al\in{\mathcal{A}}}f_\al$ denotes the smallest (closed) 
subspace of
${\mathcal{H}}$ which contains all vectors of this family. The 
orthogonalization of the 
sequence $(T^n\phi_1')_{n=0}^\infty$ occupies an important place in the 
construction of the model.
First we assume that $(T^n\phi_1')_{n=0}^\infty$ is a sequence of linearly 
independent vectors. Then the Gram-Schmidt orthogonalization procedure
uniquely determines an orthonormal basis $(\phi_k)_{k=1}^\infty$ of the 
subspace $\cH_\cF$ such that, for $n\in\N$ the conditions
\aagf\bigvee\limits_{k=1}^n\phi_k=\bigvee\limits_{k=0}^{n-1}T^k\phi_1'\ ,
\ \ (T^{n-1}\phi_1',\phi_n)>0\label{2.3}\zzgf
are satisfied. Observe that for $n\in\{2,3,\dots\}$ the second condition is
equivalent to
$$(T\phi_{n-1},\phi_n)>0\ .$$
It is well known that the sequence $(\phi_k)_{k=1}^\infty$ is constructed in
the following way. We set
\aagf\phi_k':=T^{k-1}\phi_1'\ ,\ k\in\{1,2,3,\dots\}\label{2.4}\zzgf
and define inductively the sequence of vectors $(\wh\phi_k)_{k=1}^\infty$ via 
\aagf\wh\phi_1:=\phi_1'\ ,\ 
\wh\phi_k:=\phi_k'-\sum\limits_{s=1}^{k-1}\la_{ks}\phi_s'\ ,\ 
k\in\{2,3,\dots\}\label{2.5}\zzgf
where the coefficients $\la_{ks}$ are determined by the conditions
$\wh\phi_k\perp\phi_j'\ ,\ j\in\{1,\dots,k-1\}$. This means that the sequence
$(\la_{ks})_{s=1}^{k-1}$ yields a solution of the system of linear equations
$$\sum\limits_{s=1}^{k-1}\la_{ks}(\phi_s',\phi_j')=(\phi_k',\phi_j')\ ,\ 
j\in\{1,\dots,k-1\}\ .$$
Now we set
$\phi_k:=\frac 1 {\|\wh\phi_k\|}\wh\phi_k\ ,\ k\in\{1,2,\dots\}\ .$
Thus,
$$\phi_1=\frac 1 {\|\phi_1'\|}\phi_1'\ ,
\phi_k=\frac 1 {\|\wh\phi_k\|}T^{k-1}\phi_1'+u_{k-1}\ ,\ k\in\{2,3,\dots\}$$
where $u_k\in\bigvee\limits_{j=1}^k\phi_j$. These relations are equivalent
to
$$T^{k-1}\phi_1'=\|\wh\phi_k\|(\phi_k-u_{k-1})\ ,u_0=0\ ,\ k\in\{1,2,\dots\}\ 
.$$
From these identities we obtain for $k\in\N$
\aagf T\phi_k&=&\frac 1 {\|\wh\phi_k\|}T^k\phi_1'+Tu_{k-1}
=\frac 1 {\|\wh\phi_k\|}\|\wh\phi_{k+1}\|(\phi_{k+1}-u_k)+Tu_{k-1}\nnu\\
&=&\frac {\|\wh\phi_{k+1}\|} {\|\wh\phi_k\|}\phi_{k+1}+v_k\nnu\zzgf
where $v_k\in\bigvee\limits_{j=1}^k\phi_j\ .$
Thus,
\aagf\phi_1'=\|\phi_1'\|\phi_1\label{2.6}\zzgf
and $T\phi_k=\sum\limits_{j=1}^{k+1}t_{jk}\phi_j$ where
\aagf t_{k+1,k}=\frac {\|\wh\phi_{k+1}\|}{\|\wh\phi_k\|}\ ,\ 
k\in\{1,2,\dots\}\ .\label{2.7}\zzgf

Denote by $T_\cF$ the restriction of $T$ onto the invariant subspace $\cH_\cF$. 
From the above consideration it follows that the matrix of the operator 
$T_\cF$ 
with respect to the basis $(\phi_k)_{k=1}^\infty$ of $\cH_\cF$ has the shape
\aagf\left(\begin{array}{cccccc}
t_{11} & t_{12} & \hdots & t_{1n} & \hdots\\
t_{21} & t_{22} & \hdots & t_{2n} & \hdots\\
0 & t_{32} & \hdots & t_{3n} & \hdots\\
\vdots & \vdots & \ddots & \vdots & \\
0 & 0 & \hdots & t_{n+1,n} & \hdots\\
\vdots & \vdots &  & \vdots & \ddots
\end{array}\right)\ .\label{2.8}\zzgf

We assume that $\cH_\cF^\perp=\cH\ominus\cH_\cF\neq\{0\}$. Remember that the
largest shift $V_{T^*}=\Rstr_{\cH_\cF^\perp}T^*$ acts in $\cH_\cF^\perp$. 
Denote $\wt\cL_0$ the generating wandering subspace for the shift $V_{T^*}\ .$
Then 
\aagf\cH_\cF^\perp=\bigoplus\limits_{n=0}^\infty T^{*n}\wt\cL_0\label{2.9}\zzgf
where in view of \thref{th1.4} we have $\dim\wt\cL_0=1$. In view of
\coref{cor1.9} there exists a unique unit vector 
$\psi_1\in\wt\cL_0$
such that
\aagf (\wt\phi_1',\psi_1)>0\label{2.10}\zzgf
where $\wt\phi_1'$ is defined in \eqref{2.2}. In view of \eqref{2.9} and
\eqref{2.10}
the sequence $(\psi_k)_{k\in\N}$ where
$\psi_k:=T^{*k-1}\psi_1\ ,\ k\in\{1,2,\dots\}$
is the unique orthonormal basis in $\cH_\cF^\perp$ satisfying the conditions
\aagf(\wt\phi_1',\psi_1)>0\ ,\ \psi_{k+1}=T^*\psi_k\ ,\ k\in\{1,2,\dots\}\
.\label{2.11}\zzgf

\begin{defn}\label{D2.1-20230123}
The constructed orthonormal basis
\aagf \phi_1,\phi_2,\dots;\psi_1,\psi_2,\dots\label{2.12}\zzgf
of $\cH$ which satisfies the conditions \eqref{2.3} and \eqref{2.11} is called
\emph{canonical}.
\end{defn}

From the form of the construction it is clear that the canonical basis is
uniquely defined by the conditions \eqref{2.3} and \eqref{2.11}. This allows us
to identify in the following considerations operators and their matrix
representations with respect to this basis. We note that we suppose in the 
sequel that the vectors of the canonical basis are ordered as in \eqref{2.12}.
From the above considerations it follows that the matrix of the operator $T$ 
with respect to the canonical basis of $\cH$ has the block shape
\aagf T=\matr {T_\cF} {\wt R} 0 {\wt V_T} \ ,\ \wt V_T=(V_{T^*})^*\ 
,\label{2.13}\zzgf
where the matrix of $T_\cF$ is given by \eqref{2.8},
\aagf\wt R=
\left(\begin{array}{ccc}
r_1' & 0 & \hdots \\
r_2' & 0 & \hdots \\
\vdots & \vdots &  \\
r_n' & 0 & \hdots \\
\vdots & \vdots &  
\end{array}\right)\ ,\ \wt V_T=
\left(\begin{array}{cccc}
0 & 1 & &  \\
& 0 & 1 & \\
&   & 0 & \ddots\\
&   &   &  \ddots
\end{array}\right)\ .\label{2.14}\zzgf
Hereby missing matrix elements are assumed to be zero. Because of \eqref{2.2}
and \eqref{2.6} the matrix of the operator $F$ with respect to the canonical
basis has the shape
\aagf F=\col(\|\phi_1'\|,0,0,\dots;0,0,0,\dots)\ .\label{2.15}\zzgf
For the remaining elements of the unitary colligation $\Dl$ we obtain the 
following matrix representations
\aagf G=(g_1,g_2,g_3,\dots;g_\infty,0,0,\dots)\ ,\ S=\te(0)=\ga_0\ 
,\label{2.16}\zzgf
where in accordance with the above notations we get
\aagf g_k&=&G\phi_k=(G\phi_k,1)=(\phi_k,G^*(1))=(\phi_k,\wt\phi_1')\ ,\ 
k\in\{1,2,\dots\}\nnu\\
g_\infty&=&G\psi_1=(G\psi_1,1)=(\psi_1,G^*(1))=(\psi_1,\wt\phi_1')\ .\nnu\zzgf
The remaining entries in formula \eqref{2.16} are zero since from the 
colligation condition
$TG^*+FS^*=0$ we have for $k\in\{2,3,\dots\}$
\[\begin{split}
    G\psi_k
    &=(G\psi_k,1)=(T^*\psi_{k-1},G^*(1))
    =(\psi_{k-1},TG^*(1))
    =-(\psi_{k-1},FS^*(1))\\
    &=-\ga_0(\psi_{k-1},F(1))
    =-\ga_0\|\phi_1'\|(\psi_{k-1},\phi_1)
    =0\ .
\end{split}\]

From the colligation condition $F^*F+S^*S=I$ we get $\|F\|^2=1-|\ga_0|^2\ .$
Therefore, from \eqref{2.15} we infer
\aagf\|\phi_1'\|=\|F\|=\sqrt{1-|\ga_0|^2}\ .\label{2.17}\zzgf

\begin{lem}[{\cite[Lemma~2.4]{D06}}]\label{lm2.4}
For $k\in\{1,2,\dots\}$ the identities
$\|\wh\phi_k\|=\prod\limits_{j=0}^{k-1}\sqrt{1-|\ga_j|^2}$ hold true.
\end{lem}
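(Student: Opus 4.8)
The plan is to compute the norm $\norm{\wh\phi_k}$ recursively and identify the recursion with the product $\prod_{j=0}^{k-1}\sqrt{1-\abs{\ga_j}^2}$. The key observation is that $\wh\phi_k$ is, by construction, the component of $\phi_k' = T^{k-1}\phi_1'$ orthogonal to $\bigvee_{s=1}^{k-1}\phi_s' = \bigvee_{s=1}^{k-1}\phi_s$, so $\norm{\wh\phi_k}$ is the distance from $T^{k-1}\phi_1'$ to that subspace; equivalently, writing $\phi_k' = \wh\phi_k + (\text{projection})$ and using the orthonormal basis $(\phi_s)_{s=1}^{k}$, one has $T^{k-1}\phi_1' = \norm{\wh\phi_k}\phi_k + (\text{terms in }\phi_1,\dots,\phi_{k-1})$, which is exactly the relation already displayed before \eqref{2.6} in the excerpt. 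Thus $t_{k+1,k} = \norm{\wh\phi_{k+1}}/\norm{\wh\phi_k}$ from \eqref{2.7}, and the base case is $\norm{\wh\phi_1} = \norm{\phi_1'} = \sqrt{1-\abs{\ga_0}^2}$ by \eqref{2.17}. So it suffices to show
\[
t_{k+1,k} = \sqrt{1-\abs{\ga_k}^2}, \qquad k\in\{1,2,\dots\},
\]
and the lemma follows immediately by telescoping the ratios $\norm{\wh\phi_{k+1}}/\norm{\wh\phi_k}$.

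To get the subdiagonal entries $t_{k+1,k}$ in terms of the Schur parameters, I would exploit the recursion \eqref{1.27} relating the Taylor coefficients $c_n$ of $\te$ to the $s_n$, together with the colligation formulas $c_0 = S = \ga_0$ and $c_n = GT^{n-1}F$ from \eqref{1.29}, and the matrix shapes \eqref{2.8}, \eqref{2.15}, \eqref{2.16}. Since $F(1) = \norm{\phi_1'}\phi_1$ and $G\phi_k = g_k$, the coefficient $c_n = GT^{n-1}F(1) = \norm{\phi_1'}\, G(T^{n-1}\phi_1)$ can be expanded using the band structure of $T_\cF$: the vector $T^{n-1}\phi_1$ has a $\phi_n$-component equal to $\prod_{k=1}^{n-1}t_{k+1,k}$ (with lower-index components as well), and applying $G$ picks up $g_n$ on that component. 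This expresses $c_n$ as $g_n\prod_{k=1}^{n-1}t_{k+1,k}\cdot\norm{\phi_1'}$ plus a correction involving $g_1,\dots,g_{n-1}$ and the upper entries $t_{ij}$. Comparing with the Schur recursion \eqref{1.27} — or, more cleanly, comparing with the known relation between Schur parameters and the coefficients of the Schur algorithm — should pin down $t_{k+1,k}$.

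Actually a cleaner route, which I would prefer, is to avoid the Taylor-coefficient bookkeeping and instead use the defect-operator identity directly. Since $\cH_\cF$ is $T$-invariant with $T_\cF$ acting on it, and $\phi_1' = F(1)$ with $\norm{\phi_1'}^2 = 1 - \abs{\ga_0}^2 = \norm{D_{T^*}}^2$ restricted appropriately, one computes $\norm{T\phi_k}^2$ via $\norm{T\phi_k}^2 = \norm{\phi_k}^2 - \norm{D_T\phi_k}^2 = 1 - \norm{D_T\phi_k}^2$. On the other hand $T\phi_k = \sum_{j=1}^{k+1}t_{jk}\phi_j$ gives $\norm{T\phi_k}^2 = \sum_{j=1}^{k+1}\abs{t_{jk}}^2$, and since $T\phi_{k-1}$ lies in $\bigvee_{j=1}^{k}\phi_j$ one can extract $t_{k+1,k}$ inductively once $D_T\phi_k$ is understood. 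The relation $G^*G = D_T^2$ from \eqref{1.3} then ties $\norm{D_T\phi_k}^2 = \norm{G\phi_k}^2 = \abs{g_k}^2$, and the colligation relations for $g_k$ together with the normalization $\sum_k\abs{g_k}^2 + \abs{g_\infty}^2 = \norm{G}^2 = \dl_T\text{-type bound}$ should force the telescoping product.

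The main obstacle I anticipate is the combinatorial identification of the Schur-parameter recursion with the entries of the band matrix: one must carefully track how the Gram–Schmidt coefficients $\la_{ks}$ and the upper-triangular $t_{ij}$ interact so that, after using the colligation conditions \eqref{1.3} (especially $G^*S + T^*F = 0$ and $G^*G + T^*T = I$), all the off-diagonal contributions collapse and only the product $\prod_{j=0}^{k-1}\sqrt{1-\abs{\ga_j}^2}$ survives. I expect the cleanest presentation to proceed by strong induction on $k$: assuming $\norm{\wh\phi_j} = \prod_{i=0}^{j-1}\sqrt{1-\abs{\ga_i}^2}$ for all $j \le k$, one applies the Schur step (the function $\te_1$ is the c.o.f. of the compression of $T$ to $\cH\ominus\bigvee\{\phi_1,\psi_n\}$, or some such subcolligation) to reduce the computation of $\norm{\wh\phi_{k+1}}/\norm{\wh\phi_k}$ to the base case for the shifted colligation, yielding the factor $\sqrt{1-\abs{\ga_k}^2}$ directly. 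Verifying that this compression is again a simple unitary colligation with c.o.f. $\te_1$ — i.e., that the model is compatible with the Schur algorithm — is the real content, and I would isolate it as the key step.
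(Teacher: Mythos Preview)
The paper does not include its own proof of this lemma; it is stated with a citation to \cite[Lemma~2.4]{D06} and no argument is given here. What the surrounding text does make explicit, however, is the \emph{logical order}: Corollary~2.5 (namely $t_{k+1,k}=\sqrt{1-|\ga_k|^2}$) is deduced \emph{from} Lemma~2.4 via \eqref{2.7}, not the other way around. Your plan inverts this: you propose to establish $t_{k+1,k}=\sqrt{1-|\ga_k|^2}$ first and then telescope through \eqref{2.7}. That is a legitimate alternative route in principle, but you must then supply an argument for the subdiagonal entry that is genuinely independent of Lemma~2.4---and none of your three sketches does so.

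Concretely: your defect-operator approach only yields $\sum_{j=1}^{k+1}|t_{jk}|^2 = 1 - |g_k|^2$, which constrains the whole column, not the single entry $t_{k+1,k}$; you would still need the remaining $t_{jk}$ and $g_k$ in terms of Schur parameters, i.e.\ essentially Lemma~2.3, which in the paper is also imported from \cite{D06} and sits at the same depth as Lemma~2.4. Your Taylor-coefficient approach faces the same issue (you need the upper entries $t_{ij}$ to unwind the correction terms). Your third approach---show that compressing the colligation to $\cH\ominus\C\phi_1$ produces a simple unitary colligation with c.o.f.\ $\te_1$, so that the Schur step shifts the parameter sequence and the base case \eqref{2.17} applied at each layer gives the factor $\sqrt{1-|\ga_k|^2}$---is the right idea and matches the ``layered'' structure the paper highlights in its Comments (paragraph~E). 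But you correctly flag that verifying this compression is the real content, and you have not carried it out: one must check that the compressed $7$-tuple is again unitary and simple, identify its new $F$- and $G$-vectors, and confirm that the Gram--Schmidt system for the compressed model is $(\phi_2,\phi_3,\dotsc)$ up to a scalar. Until that step is written down, the argument is a plan rather than a proof.
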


From \eqref{2.7} and \lmref{lm2.4} we get the following result.

\begin{cor}\label{cor2.5}
The identities \aagf t_{k+1,k}=\sqrt{1-|\ga_k|^2}\ ,\
k\in\{1,2,\dots\}\label{2.25}\zzgf
hold true.
\end{cor}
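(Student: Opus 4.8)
The plan is to read off the assertion as a one-line telescoping computation from the two ingredients already prepared, namely the recursion \eqref{2.7} for the subdiagonal entries of the matrix \eqref{2.8} and the product formula of \lmref{lm2.4}. First I would recall that \eqref{2.7} expresses, for every $k\in\{1,2,\dots\}$, the entry $t_{k+1,k}$ as the quotient $\|\wh\phi_{k+1}\|/\|\wh\phi_k\|$ of consecutive norms in the Gram--Schmidt construction. Then I would invoke \lmref{lm2.4}, which is available for both indices $k$ and $k+1$ (both being $\ge1$ whenever $k\ge1$), to substitute $\|\wh\phi_k\|=\prod_{j=0}^{k-1}\sqrt{1-\abs{\ga_j}^2}$ and $\|\wh\phi_{k+1}\|=\prod_{j=0}^{k}\sqrt{1-\abs{\ga_j}^2}$.

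Since in the situation of the canonical model the Schur parameters $\ga_0,\dots,\ga_{k-1}$ that occur all satisfy $\abs{\ga_j}<1$, the denominator $\|\wh\phi_k\|$ is nonzero, so the quotient is well defined, and the common factors cancel: $t_{k+1,k}=\bigl(\prod_{j=0}^{k}\sqrt{1-\abs{\ga_j}^2}\bigr)\big/\bigl(\prod_{j=0}^{k-1}\sqrt{1-\abs{\ga_j}^2}\bigr)=\sqrt{1-\abs{\ga_k}^2}$, which is precisely \eqref{2.25}.

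I do not expect any genuine obstacle here: the entire substantive work has been done in establishing \lmref{lm2.4} (via the Schur-algorithm identity $\|\wh\phi_k\|=\prod_{j=0}^{k-1}\sqrt{1-\abs{\ga_j}^2}$) and in deriving the recursion \eqref{2.7} from the Gram--Schmidt relations; the corollary merely combines them. The only point warranting a moment's care is the index bookkeeping, i.e.\ checking that \lmref{lm2.4} is applied with the correct shift of index so that the two products differ exactly by the single factor $\sqrt{1-\abs{\ga_k}^2}$.
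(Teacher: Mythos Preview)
Your proposal is correct and matches the paper's own derivation exactly: the paper simply states that the corollary follows from \eqref{2.7} and \lmref{lm2.4}, which is precisely the telescoping quotient argument you describe. There is nothing to add.
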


\begin{cor}\label{cor2.6}
The sequence $(T^n\phi_1')_{n=0}^\infty$ consists of linearly independent
vectors
if and only if $|\ga_k|<1\ ,\ k\in\{0,1,\dots\}\ .$
\end{cor}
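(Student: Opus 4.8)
The plan is to connect the linear independence of the sequence $(T^n\phi_1')_{n=0}^\infty$ to the vanishing of the norms $\|\wh\phi_k\|$ computed in \lmref{lm2.4}. Recall that the Gram-Schmidt vectors $\wh\phi_k$ defined in \eqref{2.5} satisfy the property that $\wh\phi_k=0$ precisely when $\phi_k'=T^{k-1}\phi_1'$ lies in the span of $\phi_1',\dots,\phi_{k-1}'$, i.e.\ when $T^{k-1}\phi_1'\in\bigvee_{j=0}^{k-2}T^j\phi_1'$. Since $\cH_\cF=\bigvee_{n=0}^\infty T^n\phi_1'$ and $\cH_\cF$ is $T$-invariant, once $T^{k-1}\phi_1'$ falls into the span of the earlier iterates, so do all subsequent iterates; hence the sequence $(T^n\phi_1')_{n=0}^\infty$ is linearly independent if and only if $\wh\phi_k\neq0$ for every $k\in\{1,2,\dots\}$, equivalently $\|\wh\phi_k\|>0$ for all $k$.

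\textbf{Next step.} By \lmref{lm2.4} we have $\|\wh\phi_k\|=\prod_{j=0}^{k-1}\sqrt{1-|\ga_j|^2}$. This product is strictly positive if and only if each factor $\sqrt{1-|\ga_j|^2}$ is strictly positive, i.e.\ $|\ga_j|<1$ for all $j\in\{0,\dots,k-1\}$. Letting $k$ range over all of $\N$, the condition $\|\wh\phi_k\|>0$ for every $k$ is therefore equivalent to $|\ga_k|<1$ for all $k\in\{0,1,2,\dots\}$. Combining this with the previous paragraph yields the claimed equivalence.

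\textbf{The main obstacle} is really just making the first reduction rigorous, i.e.\ justifying carefully that $\wh\phi_k=0$ forces $\wh\phi_m=0$ for all $m>k$ (so that linear independence of the whole sequence is detected by nonvanishing of \emph{every} $\wh\phi_k$ rather than merely of finitely many). This is immediate from $T$-invariance of $\cH_\cF=\bigvee_{n\ge0}T^n\phi_1'$: if $T^{k-1}\phi_1'=\sum_{s=1}^{k-1}\la_{ks}T^{s-1}\phi_1'$, then applying $T$ repeatedly expresses every $T^{m}\phi_1'$ with $m\ge k-1$ as a linear combination of $\phi_1',\dots,\phi_{k-1}'$, so no new directions appear and the Gram-Schmidt vectors vanish from index $k$ onward. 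Note that this also implicitly uses \lmref{lm2.4}, whose statement presupposes the construction of the $\wh\phi_k$; for indices $k$ beyond the first vanishing, $\wh\phi_k$ is still defined by \eqref{2.5} (the coefficients $\la_{ks}$ solve the corresponding normal equations, which remain consistent), and the formula in \lmref{lm2.4} continues to hold with the convention that once some $|\ga_j|=1$ the product is $0$. With this understood, the proof is a one-line combination of the two observations above.
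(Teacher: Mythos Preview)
Your proof is correct and follows exactly the same route as the paper: the paper's argument is the single sentence ``The proof follows from \lmref{lm2.4} and the observation that the sequence $(T^n\phi_1')_{n=0}^\infty$ consists of linearly independent elements if and only if $\|\wh\phi_k\|>0$, $k\in\{1,2,\dots\}$.'' Your additional paragraph on why vanishing of one $\wh\phi_k$ propagates to all later indices is a reasonable elaboration of what the paper leaves implicit, but the core idea is identical.
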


The proof follows from \lmref{lm2.4} and the observation that the sequence
$(T^n\phi_1')_{n=0}^\infty$ consists of linearly independent elements if and
only if $\|\wh\phi_k\|>0\ , \ k\in\{1,2,\dots\}\ .$

Let us set
\beql{E2.19}
D_{\gamma_k}\defeq\sqrt{1-\abs{\gamma_k}^2},
\qquad k=0,1,2,\dotsc
\eeq

\blemnl{\zitaa{D06}{\csec{2}}}{L2.3}
The following equations
\begin{align}
t_{kk}&=-\ko{\gamma_{k-1}}\gamma_k,&k&=1,2,\dotsc,\label{E2.20}\\
t_{nk}&=-\ko{\gamma_{n-1}}\gamma_k\prod_{j=n}^{k-1}D_{\gamma_j},&k&\geq n+1,\label{E2.21}\\
g_k&=\gamma_k\prod_{j=0}^{k-1}D_{\gamma_j},&k&=1,2,\dotsc\label{E2.22}
\end{align} 
hold.
\elem

\blemnl{\zitaa{D06}{Corollary~2.10}}{L2.4}
The vector system \eqref{2.4} is not closed in $\cH$ if and only if the product
\aagf\prod\limits_{j=0}^\infty(1-|\ga_j|^2)\label{2.53}\zzgf
converges. If this condition is satisfied then
\aagf g_\infty=\prod\limits_{j=0}^\infty\sqrt{1-|\ga_j|^2}\ ,\label{2.54}\zzgf
\aagf r_k'=-\ga_{k-1}\prod\limits_{j=k}^\infty\sqrt{1-|\ga_k|^2}\ \ k&=1,2,\dotsc,  .\label{2.254}\zzgf
\elem

The nonclosedness of the vector system \eqref{2.4} in $\cH$ means that 
\[
\cH_\cF^\perp=\cH\ominus\cH_\cF\neq\{0\}.
\]
In view of \thref{th1.3} this implies that $T^*$ contains a nonzero largest 
shift $V_{T^*}$. Because of $\dl_T=1\ ,$ from \thref{th1.4} we obtain that 
the multiplicity of
$V_{T^*}$ is equal to $1\ ,$ too. In view of \rmref{rm1.10} this
is equivalent to the fact that $T$ contains a nonzero largest shift $V_T$ of
multiplicity $1$. Thus, we have obtained the following result.

\blemnl{\zitaa{D06}{\clem{2.11}}}{L2.7}
Let $\te\in\cS$ and let $\Dl$ be a simple unitary colligation of type
\eqref{2.1} which satisfies $\te_\Dl=\te$.
Then the contraction $T$ (resp. $T^*\ )$ contains a nonzero largest shift if 
and
only if the infinite product \eqref{2.53} converges. If this condition is 
satisfied the multiplicities of the largest shifts $V_T$ and $V_{T^*}$ are 
both equal to $1\ .$
\elem

\begin{rem}\label{re2.8}
It is known (see e.g. Bertin et al. \cite[Ch.3]{Ber}) that 
\aagf\prod\limits_{j=0}^\infty(1-|\ga_j|^2)=\exp\{\frac 1 
{2\pi}\int\limits_{-\pi}^\pi\ln(1-|\te(e^{i\alpha})|^2) d\alpha\}\ ,\nnu\zzgf
where $\te(e^{i\alpha})$ denotes the nontangential boundary values of 
$\te$
which exist and are finite almost everywhere in view of a theorem due to Fatou.
Hence, the convergence of the product \eqref{2.54} means that
$\ln(1-|\te(e^{i\alpha})|^2)\in L^1[-\pi,\pi]\ .$
\end{rem}

\begin{defn}\label{de3.5}
Denote by $\Ga$ the set of all sequences $\ga=(\ga_j)_{j=0}^\ome$
which occur as Schur parameters of Schur functions. Furthermore,
denote $\Ga l_2$ the subset of all sequences belonging to $\Ga$
for which the product \eqref{2.53} converges. Thus,
$$\Ga l_2:=\{\ga=(\ga_j)_{j=0}^\infty:\ga_j\in\C,|\ga_j|<1,j\in\{0,1,2,\dotsc\}{\mbox\ \ and\ \ }\sum\limits_{j=0}^\infty|\ga_j|^2<\infty\}.$$
\end{defn}

The following statement shows the principal difference between the
properties of Schur parameters of inner functions and the
properties of Schur parameters of pseudocontinuable Schur
functions which are not inner.

\begin{thm}[\cite{D98}]\label{thm4.2}
Let $\te\in\cS\Pi$ and denote $(\ga_j)_{j=0}^\ome$ the sequence of
Schur parameters of $\te$. If $\te$ is not inner then
$\ome=\infty$ and the product \eqref{2.53} converges. If $\te$ is
inner then the product \eqref{2.53} diverges.
\end{thm}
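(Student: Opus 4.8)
The plan is to translate the statement, via \rmref{re2.8}, into an integrability question for the boundary function $\ln(1-\abs{\te}^2)$ on $\T$, and then to settle that question from the boundary behaviour of $\te$. If $\ome<\infty$, the Schur parameter sequence terminates with $\abs{\ga_\ome}=1$, so the product $\prod_{j=0}^\ome(1-\abs{\ga_j}^2)$ contains the zero factor $1-\abs{\ga_\ome}^2$ and has value $0$; this is the regime of finite Blaschke products, which are inner, so the second assertion holds trivially there and the first is vacuous. Hence I may assume $\ome=\infty$, where \rmref{re2.8} gives
\[
\prod_{j=0}^\infty(1-\abs{\ga_j}^2)=\exp\gklam{\frac{1}{2\pi}\int_{-\pi}^\pi\ln\bigl(1-\abs{\te(e^{i\alpha})}^2\bigr)\,d\alpha},
\]
so the infinite product converges to a positive number precisely when $\ln(1-\abs{\te}^2)\in L^1[-\pi,\pi]$, and it diverges (has value $0$) otherwise. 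If $\te$ is inner (and $\ome=\infty$), then $\abs{\te(e^{i\alpha})}=1$ for almost every $\alpha$, hence $\ln(1-\abs{\te(e^{i\alpha})}^2)=-\infty$ a.e., the integral equals $-\infty$, and the product diverges; this proves the second assertion.

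Now assume $\te\in\cS\Pi$ is not inner. Then $\te$ is not a finite Blaschke product, so the Schur algorithm does not terminate and $\ome=\infty$. Since $\te\in\cS\Pi$, \thref{thm4.1} provides a $2\times2$ matrix function $\Om=\matr{\chi}{\phi}{\psi}{\te}$ that is inner in $\D$; in particular each entry is holomorphic on $\D$ with modulus at most $1$ there, and $\Om(e^{i\alpha})$ is unitary for a.e.\ $\alpha$, so comparing the Euclidean norm of its second row with $1$ yields
\[
1-\abs{\te(e^{i\alpha})}^2=\abs{\psi(e^{i\alpha})}^2\qquad\text{for a.e. }\alpha.
\]
Here $\psi\not\equiv0$, since $\psi\equiv0$ would force $\abs{\te}=1$ a.e.\ on $\T$, contradicting that $\te$ is not inner.

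It then remains to invoke the classical logarithmic integrability of bounded holomorphic functions: a bounded holomorphic $\psi\not\equiv0$ on $\D$ satisfies $\ln\abs{\psi}\in L^1[-\pi,\pi]$. (Writing $\psi(\zeta)=\zeta^m\psi_0(\zeta)$ with $\psi_0$ bounded holomorphic and $\psi_0(0)\neq0$, the sub-mean-value property of $\ln\abs{\psi_0}$ together with Fatou's lemma gives $-\infty<\ln\abs{\psi_0(0)}\le\frac{1}{2\pi}\int_{-\pi}^\pi\ln\abs{\psi(e^{i\alpha})}\,d\alpha$, while $\ln\abs{\psi}\le0$ a.e.\ on $\T$ since $\abs{\psi}\le1$ there.) Consequently $\ln(1-\abs{\te(e^{i\alpha})}^2)=2\ln\abs{\psi(e^{i\alpha})}$ lies in $L^1[-\pi,\pi]$, and by the identity from \rmref{re2.8} the product $\prod_{j=0}^\infty(1-\abs{\ga_j}^2)$ converges, proving the first assertion. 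I anticipate no serious obstacle: the heart of the matter is extracting from the pseudocontinuation a bounded holomorphic $\psi$ with $\abs{\te}^2+\abs{\psi}^2=1$ on $\T$, which is exactly the row-orthonormality of the inner matrix function $\Om$ in \thref{thm4.1}, after which the $H^\infty$ log-integrability theorem closes the gap; the only slightly delicate points are the bookkeeping among the regimes $\ome<\infty$, $\ome=\infty$ with $\te$ inner, and $\ome=\infty$ with $\te$ not inner, and pinning down what ``the product diverges'' means in each.
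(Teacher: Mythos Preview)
Your proof is correct and follows essentially the same route as the paper's: both use \thref{thm4.1} to produce a bounded holomorphic entry of the inner matrix $\Om$ whose squared boundary modulus equals $1-\abs{\te}^2$, then invoke log-integrability together with \rmref{re2.8}. The only cosmetic difference is that the paper uses the $(1,2)$ entry $\phi$ (column orthonormality of $\Om$) while you use the $(2,1)$ entry $\psi$ (row orthonormality); your version is also more explicit about the case $\ome<\infty$ and spells out the $H^\infty$ log-integrability argument that the paper simply cites.
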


\begin{proof}
If $\te\in\cS\Pi\setminus J$ then the function $\phi$ in the
representation \eqref{4.1} does not identically vanish. Hence,
$\ln(1-|\te(e^{i\al})|^2)=2\ln|\phi(e^{i\al})|\in L^1[-\pi,\pi]$
and in view of \rmref{re2.8} the product \eqref{2.53} converges. If
$\te\in J$ then from \rmref{re2.8} we infer that the product
\eqref{2.53} diverges.
\end{proof}

\begin{cor}\label{cor4.3}
Let $\te\in\cS\Pi\setminus J$. Then the sequence of Schur
parameters of $\te$ belongs to $\Ga l_2$.
\end{cor}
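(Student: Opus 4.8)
The plan is to deduce the statement directly from \thref{thm4.2} together with \lmref{L2.7} (or equivalently \lmref{L2.4}) and \cdef{de3.5}. Let $\te\in\cS\Pi\setminus J$ and denote by $(\ga_j)_{j=0}^\ome$ its Schur parameter sequence. First I would invoke \thref{thm4.2}: since $\te$ is not inner, it gives simultaneously that $\ome=\infty$ and that the infinite product $\prod_{j=0}^\infty(1-\abs{\ga_j}^2)$ converges. The condition $\ome=\infty$ means precisely that $\abs{\ga_j}<1$ for every $j\in\{0,1,2,\dotsc\}$, which is the first requirement in the definition of $\Ga l_2$.

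Next I would translate the convergence of the product into the square-summability condition. A convergent infinite product $\prod_{j=0}^\infty(1-\abs{\ga_j}^2)$ with factors in $(0,1]$ is, by the standard criterion, equivalent to the convergence of the series $\sum_{j=0}^\infty\abs{\ga_j}^2$ (using $-\ln(1-x)\sim x$ as $x\to 0^+$, so that $\sum_{j=0}^\infty\abs{\ga_j}^2<\infty$ iff $\sum_{j=0}^\infty\bigl(-\ln(1-\abs{\ga_j}^2)\bigr)<\infty$ iff the product converges to a nonzero limit). Hence $\sum_{j=0}^\infty\abs{\ga_j}^2<\infty$, which is exactly the second requirement in \cdef{de3.5}. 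Combining the two, $\ga=(\ga_j)_{j=0}^\infty\in\Ga l_2$, which is the assertion.

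There is essentially no obstacle here: the corollary is a direct repackaging of \thref{thm4.2} into the language of \cdef{de3.5}. The only point needing a word of care is the elementary equivalence between convergence of the product $\prod(1-\abs{\ga_j}^2)$ and summability of $\sum\abs{\ga_j}^2$; this is routine real analysis and is already implicitly used in the construction of $\Ga l_2$ in \cdef{de3.5}. Accordingly, the proof can be stated in one or two sentences, citing \thref{thm4.2} for both $\ome=\infty$ and the convergence of \eqref{2.53}, and then identifying the result with membership in $\Ga l_2$.

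\begin{proof}
By \thref{thm4.2}, since $\te\in\cS\Pi$ is not inner, we have $\ome=\infty$ and the product \eqref{2.53} converges. The first fact means $\abs{\ga_j}<1$ for all $j\in\{0,1,2,\dotsc\}$. The convergence of $\prod_{j=0}^\infty(1-\abs{\ga_j}^2)$ to a positive limit is equivalent to $\sum_{j=0}^\infty\abs{\ga_j}^2<\infty$. Hence $(\ga_j)_{j=0}^\infty\in\Ga l_2$ by \cdef{de3.5}.
\end{proof}
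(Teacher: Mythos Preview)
Your proof is correct and matches the paper's approach: the corollary is stated without proof in the paper, as it is an immediate consequence of \thref{thm4.2} and \cdef{de3.5}. The only minor redundancy is your explicit conversion between convergence of the product \eqref{2.53} and square-summability of $(\ga_j)$; in the paper this equivalence is already built into \cdef{de3.5} (the definition first says $\Ga l_2$ consists of those $\ga\in\Ga$ for which the product \eqref{2.53} converges, and then restates this via $\sum_{j=0}^\infty\abs{\ga_j}^2<\infty$), so you could cite the definition directly without the intermediate real-analysis argument.
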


\subsection{Description of the model of a unitary colligation if
$\prod\limits_{j=0}^\infty(1-|\ga_j|^2)$ converges}

In this case we have in particular $|\ga_k|<1$ for all $k\in\{0,1,2,\dots\}\
.$
Therefore in view of \coref{cor2.6} the sequence \eqref{2.4}
does not contain linearly dependent elements whereas \lmref{L2.4}
tells us that this vector system is not closed in $\cH$. This means that
the canonical basis in $\cH$ has the shape \eqref{2.12}. The operators
$T,F,G$ and $S$ have with respect to this basis the matrix representations
\eqref{2.13}-\eqref{2.16}. Thus, from \eqref{2.17}- \eqref{E2.22} and 
\eqref{2.54}- \eqref{2.254} we obtain

\begin{thm}[\zitaa{D06}{Theorem~2.13}]\label{th2.1}
Let $\te\in\cS$ and let $\Dl$ be a simple unitary colligation of type \eqref{2.1} which satisfies $\te_\Dl=\te$. Assume that for the Schur parameter sequence of the function $\te$ the product \eqref{2.53} converges.
Then the canonical basis of the space $\cH$ has the shape \eqref{2.12}.
The operators $T, F, G$ and $S$ have with respect to this basis the following matrix representations:
\aagf T=\matr {T_\cF} {\wt R} 0 {\wt V_T}\ ,\label{2.62}\zzgf
where the operators in \eqref{2.62} are given by
\aagf T_\cF=\left(\begin{array}{ccccc}
-\ol\ga_0\ga_1	& -\ol\ga_0D_{\ga_1}\ga_2 	& \hdots 	& 
-\ol\ga_0\prod\limits_{j=1}^{n-1}D_{\ga_j}\ga_n & \hdots\\
D_{\ga_1} 	& -\ol\ga_1\ga_2  		& \hdots 	& 
-\ol\ga_1\prod\limits_{j=2}^{n-1}D_{\ga_j}\ga_n & \hdots\\
0 		& D_{\ga_2} 			& \hdots 	& -\ol\ga_2\prod\limits_{j=3}^{n-1}D_{\ga_j}\ga_n 
& \hdots\\
\vdots 		& \vdots 			& 		& \vdots 					  & 	 \\
0 		& 0 				& \hdots 	& -\ol\ga_{n-1}\ga_n 				  & \hdots\\
0 		& 0 				& \hdots 	& D_{\ga_n} 					  & \hdots\\
\vdots 		& \vdots 			&  		& \vdots 					  & 
\end{array}\right)\ ,\label{2.63}\zzgf
\aagf \wt R=\left(\begin{array}{cccc}
-\ol\ga_0\prod\limits_{j=1}^\infty D_{\ga_j} & 0 & 0 & \hdots\\
-\ol\ga_1\prod\limits_{j=2}^\infty D_{\ga_j} & 0 & 0 & \hdots\\
\vdots & \vdots & \vdots & \\
-\ol\ga_n\prod\limits_{j=n+1}^\infty D_{\ga_j} & 0 & 0 & \hdots\\
\vdots & \vdots & \vdots & 
\end{array}\right)\ ,
\ \ \wt V_T=\left(\begin{array}{cccc}
0 & 1 & 0 & \hdots\\
0 & 0 & 1 & \hdots\\
0 & 0 & 0 & \hdots\\
\vdots & \vdots & \vdots & 
\end{array}\right)\ ,\nnu\zzgf
\aagf F=\col(D_{\ga_0},0,0,\dots;0,0,0,\dots)\ ,\label{2.64}\zzgf
\aagf G=(\ga_1D_{\ga_0},\ga_2\prod\limits_{j=0}^1 D_{\ga_j},\dots, 
\ga_n\prod\limits_{j=0}^{n-1} D_{\ga_j},\dots; 
\prod\limits_{j=0}^\infty D_{\ga_j}, 0, 0, \dots)\label{2.65}\zzgf
and \aagf S=\ga_0\ .\label{2.66}\zzgf
\end{thm}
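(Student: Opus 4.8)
The plan is to recognize that \thref{th2.1} only assembles facts already established in \rsec{sec2.1}, once the standing hypothesis is used to pin down the structure of the canonical basis. First I would observe that convergence of the infinite product \eqref{2.53} presupposes $\ome=\infty$, hence $|\ga_k|<1$ for every $k\in\{0,1,2,\dotsc\}$; by \coref{cor2.6} this makes the sequence $(T^n\phi_1')_{n=0}^\infty$ from \eqref{2.4} linearly independent, so the Gram--Schmidt procedure of \rsec{sec2.1} runs without interruption and produces the orthonormal system $(\phi_k)_{k=1}^\infty$ of $\cH_\cF$ satisfying \eqref{2.3}. At the same time, \lmref{L2.4} guarantees under this same hypothesis that this vector system is not closed, i.e.\ $\cH_\cF^\perp\neq\{0\}$; combined with \thref{th1.3}, \thref{th1.4} and \rmref{rm1.10} (as recorded in \lmref{L2.7}), this yields that $\cH_\cF^\perp$ carries the largest shift $V_{T^*}$ of multiplicity one, that the vectors $\psi_k=T^{*k-1}\psi_1$ of \eqref{2.11} form an orthonormal basis of $\cH_\cF^\perp$, and hence that the canonical basis of $\cH$ has exactly the shape \eqref{2.12}.

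With the canonical basis fixed, $T$, $F$, $G$ and $S$ have the block matrix representations \eqref{2.13}--\eqref{2.16}, and it remains only to insert the closed-form values of the entries. For $T_\cF$: the diagonal and sub-diagonal entries are $t_{kk}=-\ko{\ga_{k-1}}\ga_k$ by \eqref{E2.20} and $t_{k+1,k}=D_{\ga_k}$ by \coref{cor2.5}, while the entries above the diagonal are $t_{nk}=-\ko{\ga_{n-1}}\ga_k\prod_{j=n}^{k-1}D_{\ga_j}$ by \eqref{E2.21}, which is precisely \eqref{2.63}. The first column of $\wt R$ is $(r_k')_{k\ge1}$ with $r_k'=-\ga_{k-1}\prod_{j=k}^\infty D_{\ga_j}$ by \eqref{2.254}, the other columns vanishing, and $\wt V_T$ is the unilateral shift of \eqref{2.14}. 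Formula \eqref{2.15} together with $\|\phi_1'\|=\sqrt{1-|\ga_0|^2}=D_{\ga_0}$ from \eqref{2.17} gives $F$ as in \eqref{2.64}. For $G$: the entries $g_k=\ga_k\prod_{j=0}^{k-1}D_{\ga_j}$ come from \eqref{E2.22}, the entry $g_\infty=\prod_{j=0}^\infty D_{\ga_j}$ from \eqref{2.54}, and the remaining entries vanish by the computation following \eqref{2.16}; this is \eqref{2.65}. Finally $S=\te(0)=\ga_0$ by \eqref{2.16}, giving \eqref{2.66}.

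Because every ingredient is already in place, there is no genuine obstacle; the one point requiring care is index bookkeeping: matching the product ranges $\prod_{j=n}^{k-1}$, $\prod_{j=k}^\infty$ and $\prod_{j=0}^{k-1}$ appearing in \eqref{2.63}, in $\wt R$ and in \eqref{2.65} against the normalizations $\|\wh\phi_k\|=\prod_{j=0}^{k-1}D_{\ga_j}$ from \lmref{lm2.4}, and checking that the standing hypothesis simultaneously forces linear independence (so the $\phi$-block is genuinely infinite) and non-closedness (so the $\psi$-block is present and of multiplicity one)---these two structural facts are exactly what makes \eqref{2.12} the correct basis shape in the present case.
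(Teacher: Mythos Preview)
Your proposal is correct and follows exactly the same route as the paper: the paragraph preceding \thref{th2.1} is in fact the paper's entire argument, deducing the basis shape \eqref{2.12} from \coref{cor2.6} and \lmref{L2.4} and then reading off the matrix entries from \eqref{2.17}--\eqref{E2.22} and \eqref{2.54}--\eqref{2.254}. You have simply spelled out the bookkeeping in slightly more detail than the paper does.
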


We consider the model space
\begin{equation}
    \wt\cH
    =l_2\oplus l_2
    =\left\{[(x_k)_{k=1}^\infty,(y_k)_{k=1}^\infty]\colon x_k,y_k\in\C;\;\sum_{k=1}^\infty
    |x_k|^2<\infty,\,\sum_{k=1}^\infty |y_k|^2<\infty\right\}\label{2.67}
\end{equation}
For $h_j=[(x_{jk})_{k=1}^\infty,(y_{jk})_{k=1}^\infty]\in\wt\cH\ ,j=1,2$ we
define
$$h_1+h_2:=[(x_{1k}+x_{2k})_{k=1}^\infty,(y_{1k}+y_{2k})_{k=1}^\infty]\ ,$$
$$\la h_1:=[(\la x_{1k})_{k=1}^\infty,(\la y_{1k})_{k=1}^\infty]\ ,\la\in\C\
,$$
$$(h_1,h_2):=\sum\limits_{k=1}^\infty x_{1k}\ol 
x_{2k}+\sum\limits_{k=1}^\infty y_{1k}\ol y_{2k}\ .$$
Equipped with these operations $\wt\cH$ becomes a Hilbert space. By the 
\emph{canonical} basis in $\wt\cH$ we mean the orthonormal basis 
\aagf e_1,e_2,\dots,e_n,\dots;e_1',e_2',\dots,e_n',\dots\label{2.68}\zzgf
where for $j\in\{1,2,\dots\}$
$$e_j=[(\dl_{jk})_{k=1}^\infty,(\dl_{0k})_{k=1}^\infty]\ ,$$
$$e_j'=[(\dl_{0k})_{k=1}^\infty,(\dl_{jk})_{k=1}^\infty]$$
and, as usual, for $j,k\in\{0,1,2,\dots\}$
$$\dl_{jk}=\left\{\begin{array}{ll}
1, & k=j\\
0, & k\neq j\ .
\end{array}\right.$$
We suppose that the elements of the canonical basis are ordered as in 
\eqref{2.68}.

\begin{cor}[\zitaa{D06}{Corollary~2.14} (Description of the model)]\label{cor2.14}
Let $\te\in\cS$ and let $\Dl$ be a simple unitary colligation of type
\eqref{2.1} which satisfies $\te_\Dl=\te$. Assume that the 
product \eqref{2.53} formed from the Schur parameter sequence of the function
$\te$ converges. Let us consider the model space \eqref{2.67} and let
$\wt T$ be the operator in $\wt\cH$ which has the matrix representation 
\eqref{2.62}
with respect to the canonical basis \eqref{2.68}. Moreover, let
$$\wt F:\C\to\wt\cH\ , \wt G:\wt\cH\to\C$$
be those operators which have the matrix representations \eqref{2.64} and 
\eqref{2.65}
with respect to the canonical basis in $\wt\cH\ ,$ respectively. Furthermore
let $\wt S:=\ga_0$. Then the tuple  $\wt\Dl=(\wt\cH,\wt\cF,\wt\cG;\wt T,\wt 
F,\wt G,\wt S)$
where $\wt\cF=\wt\cG=\C\ ,$ is a simple unitary colligation which is unitarily 
equivalent to $\Dl$ and thus $\te_{\wt\Dl}=\te\ .$
\end{cor}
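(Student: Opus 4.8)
The plan is to verify directly that $\wt\Dl$ satisfies the two things we need: that it is a \emph{simple unitary colligation}, and that it is \emph{unitarily equivalent} to $\Dl$; the identity $\te_{\wt\Dl}=\te$ is then automatic from the remark following \cdef{de1.13} (unitarily equivalent colligations have equal c.o.f.) together with $\te_\Dl=\te$. The first point, that $\wt\Dl$ is a unitary colligation, could in principle be checked by plugging the matrices \eqref{2.62}--\eqref{2.66} into the six identities \eqref{1.3}; but it is cleaner to avoid that computation entirely and instead obtain unitarity as a by-product of the equivalence, since $\Dl$ is already known to be a unitary colligation.

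So the core of the argument is to build the unitary operator $Z\colon\cH\to\wt\cH$ required by \cdef{de1.13}. Here one uses the canonical basis \eqref{2.12} of $\cH$ furnished by \rsec{sec2.1}: under the running hypothesis that \eqref{2.53} converges we have in particular $\abs{\ga_k}<1$ for all $k$, so by \coref{cor2.6} the system \eqref{2.4} is linearly independent and the Gram--Schmidt procedure produces $(\phi_k)_{k=1}^\infty$, while \lmref{L2.4} guarantees $\cH_\cF^\bot\neq\set{0}$ so that the vectors $(\psi_k)_{k=1}^\infty$ of \eqref{2.11} are present and, by \thref{th1.4}, exhaust $\cH_\cF^\bot$. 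Thus \eqref{2.12} really is an orthonormal basis of $\cH$, indexed in the same pattern as the canonical basis \eqref{2.68} of $\wt\cH$. Define $Z$ to be the unique bounded linear operator sending $\phi_k\mapsto e_k$ and $\psi_k\mapsto e_k'$ for every $k$; being a bijection between orthonormal bases, $Z$ is unitary.

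It remains to check the three intertwining relations $Z T_1=T_2 Z$, $ZF_1=F_2$, $G_2Z=G_1$ (with $S_1=S_2=\ga_0$ trivially), where $(T_1,F_1,G_1,S_1)=(T,F,G,S)$ and $(T_2,F_2,G_2,S_2)=(\wt T,\wt F,\wt G,\wt S)$. This is where \thref{th2.1} does the work: by construction the matrix of $T$ with respect to \eqref{2.12} is exactly the block matrix \eqref{2.62}--\eqref{2.63}, and the matrix of $\wt T$ with respect to \eqref{2.68} is defined to be the very same array; hence $\langle T\phi,\phi'\rangle=\langle \wt T Z\phi,Z\phi'\rangle$ on basis vectors, which is $ZT=\wt T Z$. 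The same bookkeeping against \eqref{2.64} and \eqref{2.65} gives $ZF=\wt F$ and $\wt G Z=G$. So $Z$ realizes the unitary equivalence of \cdef{de1.13}; consequently $\wt T$ is a contraction and the operator matrix $\wt Y$ built from $(\wt T,\wt F,\wt G,\wt S)$ is unitarily equivalent (via $\operatorname{diag}(Z,I_\C)$ on the left in the appropriate sense) to the unitary $Y$ of $\Dl$, so $\wt\Dl$ is a unitary colligation. Finally, simplicity: $\Dl$ is simple by assumption, i.e.\ $\cH=\cH_\cF\vee\cH_\cG$, and $Z$ being unitary maps this relation onto the corresponding one for $\wt\Dl$; alternatively, since $\wt T$ is unitarily equivalent to the completely nonunitary contraction $T$, it is itself completely nonunitary, which by the discussion after \coref{cor1.2} is equivalent to simplicity of $\wt\Dl$.

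The only genuine point requiring care—the ``main obstacle,'' such as it is—is making sure the index sets line up: one must note that the canonical basis \eqref{2.12} of $\cH$ and the canonical basis \eqref{2.68} of $\wt\cH$ are both \emph{doubly} indexed families $(\phi_k)_k;(\psi_k)_k$ and $(e_k)_k;(e_k')_k$ ordered as prescribed, so that $Z$ respects the $\cH_\cF$-part and the $\cH_\cF^\bot$-part separately; this is exactly the ordering convention fixed in \cdef{D2.1-20230123} and after \eqref{2.68}, and it is what makes the block structure of \eqref{2.62} transfer verbatim. Everything else is a transcription of the matrix entries already computed in \coref{cor2.5}, \lmref{L2.3} and \lmref{L2.4}.
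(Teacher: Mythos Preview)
Your proof is correct and follows the same route as the paper: define the unitary $Z\colon\cH\to\wt\cH$ by $Z\phi_k=e_k$, $Z\psi_k=e_k'$, then read off the intertwining relations $ZT=\wt TZ$, $ZF=\wt F$, $\wt GZ=G$ from the fact that \rthm{th2.1} computes the matrices of $T,F,G,S$ with respect to \eqref{2.12} to be exactly those defining $\wt T,\wt F,\wt G,\wt S$ with respect to \eqref{2.68}. The paper's argument is a two-line version of yours; your added remarks on why the canonical basis exists and why unitarity and simplicity transfer under $Z$ are correct elaborations.
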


\begin{proof}
From \thref{th2.1} it is obvious that the unitary operator $Z:\cH\to\wt\cH$
which maps the canonical basis \eqref{2.12} of $\cH$ to the canonical basis
\eqref{2.68} of $\wt\cH$ via
$$Z\phi_k=e_k\ ,Z\psi_k=\wt e_k\ ,k=1,2\ ,$$
satisfies the conditions
\aagf ZT=\wt TZ\ , ZF=\wt F\ , \wt GZ=G\ .\label{2.69}\zzgf
Thus, the tuple $\wt\Dl$ is a simple unitary colligation which is unitarily
equivalent to $\Dl\ .$
\end{proof}

\subsection{Description of the model of a unitary colligation in the case
of divergence of the series $\sum\limits_{j=0}^\infty |\ga_j|^2$}

In the case considered now the sequence of Schur parameters does not terminate.
Thus, $|\ga_j|<1$ for all $j\in\{0,1,2,\dots\}$. From \coref{cor2.6} 
we obtain that in this case the sequence \eqref{2.4} does not
contain linearly dependent vectors. On the other hand, the infinite product
\eqref{2.53} diverges in this case. Thus, in view of \lmref{L2.4}
we have $\cH_\cF=\cH$. This means that in this case the canonical basis of
the space $\cH$ consists of the sequence
\aagf\phi_1,\phi_1,\dots,\phi_n,\dots\ .\label{2.70}\zzgf
Hence, in the case considered now we have $T=T_\cF$. So we obtain the 
following statement.

\begin{thm}[\zitaa{D06}{\cthm{2.15}}]\label{th2.2}
Let $\te\in\cS$ and let $\Dl$ be a simple unitary colligation of type
\eqref{2.1} which satisfies $\te_\Dl=\te$. Assume that the
Schur parameter sequence $(\ga_j)_{j=0}^\infty$ of the function $\te$ 
satisfies
\aagf\sum\limits_{j=0}^\infty |\ga_j|^2=+\infty\ .\label{2.71}\zzgf
Then the canonical basis of $\cH$ has the shape \eqref{2.70}. The operator $T$ 
has
the matrix representation \eqref{2.63} with respect to this basis, whereas the 
matrix representation of the operators $F, G$ and $S$ with respect to this
basis are given by
\aagf F=\col(D_{\ga_0},0,0,\dots)\ ,\label{2.72}\zzgf
\aagf G=(\ga_1D_{\ga_0},\ga_2\prod\limits_{j=0}^1 
D_{\ga_j},\dots,\ga_n\prod\limits_{j=0}^{n-1} D_{\ga_j},\dots)\label{2.73}\zzgf
and $S=\ga_0\ ,$ respectively.
\end{thm}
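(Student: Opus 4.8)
The plan is to specialize the general model constructed in Subsection~\ref{sec2.1} to the case at hand, where the Schur sequence is infinite and $\sum_{j=0}^\infty|\ga_j|^2=+\infty$. First I would invoke \coref{cor2.6}: since divergence of $\sum|\ga_j|^2$ forces $|\ga_k|<1$ for every $k$, the vector system $(T^n\phi_1')_{n=0}^\infty$ consists of linearly independent vectors, so the Gram--Schmidt construction of Subsection~\ref{sec2.1} applies verbatim and produces the orthonormal system $(\phi_k)_{k=1}^\infty$ of $\cH_\cF$ satisfying \eqref{2.3}. Next I would apply \lmref{L2.4} in the contrapositive: divergence of the product \eqref{2.53} means precisely that the vector system \eqref{2.4} \emph{is} closed in $\cH$, i.e.\ $\cH_\cF=\cH$ and hence $\cH_\cF^\perp=\set{0}$. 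Therefore the canonical basis of $\cH$ reduces to the single sequence \eqref{2.70}, there is no $\psi$-part, and correspondingly $T=T_\cF$, so the block form \eqref{2.13} collapses to its upper-left corner.

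The remaining work is bookkeeping: read off the matrix entries from the formulas already established. The matrix of $T=T_\cF$ with respect to $(\phi_k)_{k=1}^\infty$ is the upper Hessenberg matrix \eqref{2.8}, whose entries are given explicitly by \eqref{E2.20}, \eqref{E2.21} of \lmref{L2.3} together with the subdiagonal formula \eqref{2.25} of \coref{cor2.5}; assembling these gives exactly the displayed matrix \eqref{2.63}. For $F$, combine \eqref{2.15} with \eqref{2.17} to get the single nonzero entry $\|\phi_1'\|=D_{\ga_0}$, which (dropping the now-absent second block) is \eqref{2.72}. For $G$, use \eqref{2.16} and the entry formula \eqref{E2.22}, noting that the component $g_\infty=(\psi_1,\wt\phi_1')$ simply does not occur because $\cH_\cF^\perp=\set{0}$; this yields \eqref{2.73}. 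Finally $S=\te(0)=\ga_0$ is immediate from \eqref{2.16}.

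I do not expect a genuine obstacle here: the theorem is essentially an assembly statement, and every ingredient—linear independence, closedness, and the closed-form matrix entries—is supplied by the lemmas and corollaries in the excerpt. The one point that deserves an explicit sentence rather than a silent appeal is the equivalence ``$\sum|\ga_j|^2=+\infty \iff \prod(1-|\ga_j|^2)$ diverges,'' which is what lets me pass from the hypothesis \eqref{2.71} to the applicability of \lmref{L2.4}; this is the standard fact that for $a_j\in[0,1)$ the product $\prod(1-a_j)$ converges iff $\sum a_j<\infty$ (and note that \eqref{2.71} already presupposes $|\ga_j|<1$ for all $j$, since a terminating sequence would have a finite sum). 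Once that is noted, the proof is the short chain: linear independence (\coref{cor2.6}) $\Rightarrow$ Gram--Schmidt model applies; closedness (\lmref{L2.4}) $\Rightarrow$ $\cH=\cH_\cF$ and basis \eqref{2.70}; then substitute \eqref{2.17}, \eqref{E2.20}--\eqref{E2.22}, \eqref{2.25} into \eqref{2.13}--\eqref{2.16}.
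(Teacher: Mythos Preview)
Your proposal is correct and follows essentially the same route as the paper: invoke \coref{cor2.6} for linear independence, use the divergence of the product \eqref{2.53} together with \lmref{L2.4} to conclude $\cH_\cF=\cH$, deduce that the canonical basis reduces to \eqref{2.70} and $T=T_\cF$, and then read off the matrix entries from \eqref{2.17}, \eqref{2.25}, \eqref{E2.20}--\eqref{E2.22}. Your explicit remark on the equivalence between divergence of $\sum|\ga_j|^2$ and divergence of $\prod(1-|\ga_j|^2)$ is a useful clarification that the paper leaves implicit.
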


In the case considered now as model space $\wt\cH$ we choose the space $l_2$
equipped with the above defined operations. By the canonical basis in $\wt\cH$
we mean the orthonormal basis
$e_j=(\dl_{jk})_{j,k=1}^\infty\ ,j\in\{1,2,3,\dots\}\ .$
Hereby, the elements of this basis are supposed to be naturally ordered via
\aagf e_1,e_2,\dots,e_n,\dots\ .\label{2.75}\zzgf

\begin{cor}[\zitaa{D06}{\ccor{2.16}} Description of the model]
Let $\te\in\cS$ and let $\Dl$ be a simple unitary colligation of type
\eqref{2.1} which satisfies $\te_\Dl=\te$. Assume that the
Schur parameter sequence $(\ga_j)_{j=0}^\infty$ of the function
$\te$ satisfies the divergence condition \eqref{2.71}.
Let us consider the model space $\wt\cH=l_2$ and let $\wt T$
be the operator in $\wt\cH$ which has the matrix representation \eqref{2.63}
with respect to the canonical basis \eqref{2.75}. Moreover, let
$\wt F:\C\to\wt\cH\ , \wt G:\wt\cH\to\C$
be those operators which have the matrix representations \eqref{2.72} and 
\eqref{2.73}
with respect to the canonical basis in $\wt\cH\ ,$ respectively. Furthermore
let $\wt S:=\ga_0$. Then the tuple  $\wt\Dl=(\wt\cH,\wt\cF,\wt\cG;\wt T,\wt 
F,\wt G,\wt S)$
where $\wt\cF=\wt\cG=\C\ ,$ is a simple unitary colligation which is unitarily 
equivalent to $\Dl$ and thus $\te_{\wt\Dl}=\te\ .$
\end{cor}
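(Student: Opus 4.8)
The plan is to follow the proof of \coref{cor2.14} almost verbatim, the only difference being that under the divergence hypothesis \eqref{2.71} the canonical basis of $\cH$ has the shorter shape \eqref{2.70} rather than \eqref{2.12}. First I would invoke \thref{th2.2}: under \eqref{2.71} one has $\cH=\cH_\cF$, the canonical basis of $\cH$ is the sequence $(\phi_k)_{k=1}^\infty$ ordered as in \eqref{2.70}, and with respect to it the operators $T,F,G,S$ of $\Dl$ are represented by the matrix \eqref{2.63}, the columns \eqref{2.72} and \eqref{2.73}, and the scalar $\ga_0$.

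Next I would introduce the map $Z\colon\cH\to\wt\cH$ that sends the canonical basis $(\phi_k)_{k=1}^\infty$ of $\cH$ onto the canonical basis $(e_k)_{k=1}^\infty$ of the model space $\wt\cH=l_2$ listed in \eqref{2.75}, that is, $Z\phi_k=e_k$ for $k\in\{1,2,3,\dotsc\}$. Since both systems are orthonormal bases, $Z$ extends to a unitary operator from $\cH$ onto $\wt\cH$. By construction $\wt T$, $\wt F$, $\wt G$ have, with respect to $(e_k)_{k=1}^\infty$, precisely the matrix representations \eqref{2.63}, \eqref{2.72}, \eqref{2.73} that $T$, $F$, $G$ carry with respect to $(\phi_k)_{k=1}^\infty$, and $\wt S=\ga_0=S$; hence the intertwining relations
\[
ZT=\wt TZ, \qquad ZF=\wt F, \qquad \wt GZ=G, \qquad \wt S=S
\]
hold. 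Consequently the operator matrix $\wt Y=\matr{\wt T}{\wt F}{\wt G}{\wt S}$ equals $(Z\oplus I_\C)\,Y\,(Z\oplus I_\C)^{-1}$, where $Y$ is the unitary operator from \eqref{1.2}; being unitarily equivalent to the unitary $Y$, it is itself unitary, so $\wt\Dl$ is a unitary colligation, and by Definition~\ref{de1.13} it is unitarily equivalent to $\Dl$.

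Finally I would check that $\wt\Dl$ is simple. From the intertwining relations one gets $\wt T^n\wt F=ZT^nF$ and $\wt T^{*n}\wt G^*=ZT^{*n}G^*$ for all $n\geq 0$, so the spaces $\wt\cH_{\wt\cF}=\bigvee_{n=0}^\infty\wt T^n\wt F(\wt\cF)$ and $\wt\cH_{\wt\cG}=\bigvee_{n=0}^\infty\wt T^{*n}\wt G^*(\wt\cG)$ satisfy $\wt\cH_{\wt\cF}=Z\cH_\cF$ and $\wt\cH_{\wt\cG}=Z\cH_\cG$. Since $\cH=\cH_\cF$ already holds in the present case, this gives $\wt\cH_{\wt\cF}=Z\cH=\wt\cH$, so $\wt\Dl$ is simple (alternatively, simplicity of $\Dl$ gives $\cH=\cH_\cF\vee\cH_\cG$, hence $\wt\cH=Z\cH=\wt\cH_{\wt\cF}\vee\wt\cH_{\wt\cG}$). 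The equality $\te_{\wt\Dl}=\te_\Dl=\te$ then follows from the observation, recorded after \thref{th1.14}, that the characteristic operator functions of unitarily equivalent colligations coincide. The only step carrying any content is the verification of the intertwining relations, but this is immediate once \thref{th2.2} is in hand, so I do not anticipate a genuine obstacle — exactly as in the convergent case treated in \coref{cor2.14}.
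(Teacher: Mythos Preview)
Your proposal is correct and follows precisely the paper's own argument: define the unitary $Z$ sending the canonical basis \eqref{2.70} of $\cH$ to the canonical basis \eqref{2.75} of $\wt\cH$ and observe that it satisfies the intertwining relations \eqref{2.69}. The paper's proof consists of that single sentence, so your additional verifications of simplicity and of $\te_{\wt\Dl}=\te$ are faithful expansions of what the paper leaves implicit.
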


\begin{proof}
It suffices to mention that the unitary operator $Z:\cH\to\wt\cH$
which maps the canonical basis \eqref{2.70} of $\cH$ to the canonical basis
\eqref{2.75} of $\wt\cH$ satisfies the conditions \eqref{2.69}.
\end{proof}

\subsection{Description of the model in the case
that the function $\te$ is a finite Blaschke product}

Now we consider the case that the product \eqref{2.53} diverges whereas the 
series \eqref{2.71} converges. Obviously, this can only occur, if there exists
a number $n$ such that
$$|\ga_k|<1\ ,k=0,1,\dots,n-1\ ; |\ga_n|=1\ .$$
As already mentioned this means that the function $\te$ is a finite
Blaschke product of degree $n$. From \eqref{2.25} it follows that in this
case
$$t_{k+1,k}>0\ ,k=1,2,\dots,n-1; t_{n+1,n}=0\ .$$
From \eqref{2.7} we see then that this is equivalent to the fact that the
vectors $(\phi_k')_{k=1}^n$ are linearly dependent whereas the vector 
$\phi_{n+1}'$ 
is a linear combination of them. This means, in the case considered now we
have 
$$\cH=\cH_\cF=\bigvee\limits_{k=1}^\infty\phi_k'=\bigvee\limits_{k=1}^n\phi_k'\ 
.$$
Hence, $\dim\cH=n$ and the canonical basis in $\cH$ has the form
\aagf\phi_1,\phi_2,\dots,\phi_n\ .\label{2.76}\zzgf
As above we obtain the following result.

\begin{thm}[\zitaa{D06}{\cthm{2.17}}]\label{th2.3}
Let $\te$ be a finite Blaschke product of degree $n$ and let $\Dl$ be
a simple unitary colligation of type \eqref{2.1} which satisfies 
$\te_\Dl=\te\ .$
Then the canonical basis of the space $\cH$ has the form \eqref{2.76}. The
operators $T,F,G$ and $S$ have the following matrix representations with
respect to this basis:
\aagf T=\left(\begin{array}{cccc}
-\ol\ga_0\ga_1  & -\ol\ga_0D_{\ga_1}\ga_2 & \hdots & 
-\ol\ga_0\prod\limits_{j=1}^{n-1}D_{\ga_j}\ga_n \\
D_{\ga_1} & -\ol\ga_1\ga_2  & \hdots & 
-\ol\ga_1\prod\limits_{j=2}^{n-1}D_{\ga_j}\ga_n \\
0 & D_{\ga_2} & \hdots & -\ol\ga_2\prod\limits_{j=3}^{n-1}D_{\ga_j}\ga_n \\
\vdots & \vdots & & \vdots \\
0 & 0 & \hdots & -\ol\ga_{n-1}\ga_n  
\end{array}\right)\ ,\label{2.77}\zzgf
\aagf F=\col(D_{\ga_0},0,\dots,0)\ ,\label{2.78}\zzgf
\aagf G=(\ga_1D_{\ga_0},\ga_2\prod\limits_{j=0}^1 D_{\ga_j},\dots, 
\ga_n\prod\limits_{j=0}^{n-1} D_{\ga_j})\label{2.79}\zzgf
\aagf S=\ga_0\ .\label{2.80}\zzgf
\end{thm}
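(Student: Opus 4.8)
The plan is to specialize the general construction of Sections~\ref{sec2.1} to the case where $\te$ is a finite Blaschke product of degree $n$, so the relevant Schur parameters satisfy $|\ga_k|<1$ for $k\in\{0,\dotsc,n-1\}$ and $|\ga_n|=1$. First I would record the structural consequences already established in the excerpt: by \coref{cor2.5} we have $t_{k+1,k}=D_{\ga_k}$ for all admissible $k$, so $t_{k+1,k}>0$ for $k\in\{1,\dotsc,n-1\}$ while $t_{n+1,n}=D_{\ga_n}=0$. As noted in the paragraph preceding the statement, \eqref{2.7} then forces $\|\wh\phi_{n+1}\|=0$, i.e.\ $\phi_{n+1}'=T^n\phi_1'$ lies in $\bigvee_{k=1}^n\phi_k'$, while $(\phi_k')_{k=1}^n$ are linearly independent (since $\|\wh\phi_k\|=\prod_{j=0}^{k-1}D_{\ga_j}>0$ for $k\le n$ by \lmref{lm2.4}). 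Hence $\cH=\cH_\cF=\bigvee_{k=1}^n\phi_k'$, so $\dim\cH=n$ and the canonical basis is exactly \eqref{2.76}; moreover $T=T_\cF$ and $\cH_\cF^\perp=\set{0}$, so there is no shift part $\wt V_T$ and no $\wt R$ block.

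Next I would read off the matrix entries. The matrix of $T=T_\cF$ in the basis $(\phi_k)_{k=1}^n$ is the finite $n\times n$ upper-Hessenberg matrix $(t_{jk})_{j,k=1}^n$, and \lmref{L2.3} already supplies closed formulas: $t_{kk}=-\ko{\ga_{k-1}}\ga_k$ and, for $k\ge j+1$, $t_{jk}=-\ko{\ga_{j-1}}\ga_k\prod_{i=j}^{k-1}D_{\ga_i}$, with the subdiagonal entries $t_{k+1,k}=D_{\ga_k}$ and all entries below the subdiagonal zero. Writing this out for $j,k\in\{1,\dotsc,n\}$ gives precisely \eqref{2.77}. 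For $F$: by \eqref{2.15} and \eqref{2.17}, $F(1)=\phi_1'=\|\phi_1'\|\phi_1=D_{\ga_0}\phi_1$, so $F=\col(D_{\ga_0},0,\dotsc,0)$, which is \eqref{2.78}. For $G$: by \eqref{2.16} and \eqref{E2.22} of \lmref{L2.3}, $g_k=G\phi_k=\ga_k\prod_{j=0}^{k-1}D_{\ga_j}$ for $k=1,\dotsc,n$; since $\cH$ is $n$-dimensional there are no further components and no $g_\infty$ term, giving \eqref{2.79}. Finally $S=\te_\Dl(0)=\ga_0$ by \eqref{2.16}, which is \eqref{2.80}.

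The one point requiring a little care — and the closest thing to an obstacle — is justifying that everything from Section~\ref{sec2.1} carries over verbatim despite the sequence of Schur parameters being finite. The construction there was phrased assuming $\ome=\infty$, but it only ever uses the parameters $\ga_0,\dotsc,\ga_{n}$: the Gram--Schmidt orthogonalization of $(T^{k-1}\phi_1')_{k=1}^{n+1}$ terminates exactly because $\wh\phi_{n+1}=0$, and \lmref{lm2.4}, \coref{cor2.5} and \lmref{L2.3} were all proved by finite induction whose steps are valid as long as the $D_{\ga_j}$ involved are positive, i.e.\ for indices $\le n-1$. So I would simply remark that the arguments of \lmref{lm2.4}, \coref{cor2.5} and \lmref{L2.3} apply unchanged to $k\in\{1,\dotsc,n\}$, and that the identification of $\cH$ with $\bigvee_{k=1}^n\phi_k'$ (established above) makes $T,F,G,S$ genuinely finite matrices rather than truncations. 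Once this is in place, the matrix representations \eqref{2.77}--\eqref{2.80} follow by collecting the formulas, exactly as in the proofs of \thref{th2.1} and \thref{th2.2}.
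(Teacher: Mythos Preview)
Your proposal is correct and follows exactly the paper's approach: the paper's entire proof is the phrase ``As above we obtain the following result,'' together with the structural discussion in the paragraph preceding the theorem, and you have faithfully expanded that into explicit references to \lmref{lm2.4}, \coref{cor2.5}, \lmref{L2.3}, \eqref{2.15}--\eqref{2.17}, and \eqref{E2.22}. Your observation that the arguments of Section~\ref{sec2.1} only use $\ga_0,\dotsc,\ga_n$ and hence carry over verbatim is a useful clarification that the paper leaves implicit.
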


In the case considered now we choose the $n-$dimensional Hilbert space $\C^n$
as model space $\wt\cH$. By the canonical basis in $\wt\cH$ we mean the 
orthonormal basis
\aagf  e_j=(\dl_{jk})_{j,k=1}^n\ ,j\in\{1,\dots,n\}\ .\nnu\zzgf
Hereby we assume that the elements of this basis are naturally ordered via
\aagf e_1,e_2,\dots,e_n\ .\label{2.81}\zzgf
As above the following result can be verified:

\begin{cor}[\zitaa{D06}{\ccor{2.18}} Description of the model]
Let $\te$ be a finite Blaschke product of degree $n$ 
and let $\Dl$ be a simple unitary colligation of type
\eqref{2.1} which satisfies $\te_\Dl=\te$. 
Let $(\ga_j)_{j=0}^n$ be the 
Schur parameter sequence of the function
$\te$. 
Let us consider the model space $\wt\cH=\C^n$ and let $\wt T$
be the operator in $\wt\cH$ which has the matrix representation \eqref{2.77}
with respect to the canonical basis \eqref{2.81}. Moreover, let
$$\wt F:\C\to\wt\cH\ , \wt G:\wt\cH\to\C$$
be those operators which have the matrix representations \eqref{2.78} and 
\eqref{2.79}
with respect to the canonical basis in $\wt\cH\ ,$ respectively. 
Furthermore let $\wt S:=\ga_0$. 
Then the tuple  $\wt\Dl=(\wt\cH,\wt\cF,\wt\cG;\wt T,\wt F,\wt G,\wt S)$
where $\wt\cF=\wt\cG=\C\ ,$ is a simple unitary colligation which is unitarily 
equivalent to $\Dl$ and thus $\te_{\wt\Dl}=\te\ .$
\end{cor}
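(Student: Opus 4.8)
The plan is to proceed exactly as in the proof of \coref{cor2.14}, transporting the intrinsic model of \thref{th2.3} into coordinates by a single unitary map. By \thref{th2.3}, since $\te$ is a finite Blaschke product of degree $n$, the canonical basis of $\cH$ consists of precisely the $n$ vectors $\phi_1,\phi_2,\dotsc,\phi_n$ in \eqref{2.76}; in particular $\dim\cH=n$, and the operators $T,F,G,S$ have, with respect to this basis, exactly the matrix representations \eqref{2.77}, \eqref{2.78}, \eqref{2.79}, \eqref{2.80}. I would then define $Z\colon\cH\to\wt\cH=\C^n$ to be the linear map determined by $Z\phi_k=e_k$ for $k\in\{1,\dotsc,n\}$, where $e_1,\dotsc,e_n$ is the canonical basis \eqref{2.81} of $\C^n$. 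As $Z$ carries one orthonormal basis onto another, it is unitary.

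Next I would verify the intertwining relations $ZT=\wt TZ$, $ZF=\wt F$, $\wt GZ=G$, together with $\wt S=\ga_0=S$. These are immediate from the construction: the operators $\wt T,\wt F,\wt G$ are \emph{defined} to be those whose matrices with respect to $e_1,\dotsc,e_n$ are \eqref{2.77}, \eqref{2.78}, \eqref{2.79}, and by \thref{th2.3} these very matrices represent $T,F,G$ with respect to $\phi_1,\dotsc,\phi_n$. Hence, by Definition~\ref{de1.13}, the colligations $\Dl$ and $\wt\Dl=(\wt\cH,\wt\cF,\wt\cG;\wt T,\wt F,\wt G,\wt S)$ with $\wt\cF=\wt\cG=\C$ are unitarily equivalent. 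Consequently $\wt Y=\matr{\wt T}{\wt F}{\wt G}{\wt S}$, being unitarily equivalent to the unitary operator $Y$ of $\Dl$, is unitary, so $\wt\Dl$ is indeed a unitary colligation; moreover $Z$ maps $\cH_\cF$ onto $\cH_{\wt\cF}$ and $\cH_\cG$ onto $\cH_{\wt\cG}$, so $\wt\cH=Z(\cH_\cF\vee\cH_\cG)=\cH_{\wt\cF}\vee\cH_{\wt\cG}$ and $\wt\Dl$ is simple. Finally, since the c.o.f. of unitarily equivalent colligations coincide (the remark preceding \thref{th1.14}), $\te_{\wt\Dl}=\te_\Dl=\te$.

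There is essentially no obstacle here: all the substance is already contained in \thref{th2.3}, and the corollary is merely its coordinate reformulation. The only point deserving a word of care is that the canonical basis of $\cH$ is finite of length exactly $n$; this rests on the fact, recalled before \thref{th2.3} via \eqref{2.25} and \eqref{2.7}, that for a degree-$n$ finite Blaschke product one has $t_{n+1,n}=0$, so that the vector system \eqref{2.4} spans an $n$-dimensional space. This is precisely what guarantees that $Z\colon\cH\to\C^n$ is a well-defined unitary map, after which the verification above is routine.
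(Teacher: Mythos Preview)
Your proposal is correct and follows exactly the approach the paper intends: the paper states this corollary with the remark ``As above the following result can be verified,'' referring to the proof of \coref{cor2.14}, which is precisely the argument you give---define the unitary $Z$ carrying the canonical basis $\phi_1,\dotsc,\phi_n$ of $\cH$ onto the standard basis $e_1,\dotsc,e_n$ of $\C^n$, read off the intertwining relations from \thref{th2.3}, and conclude unitary equivalence. Your added remarks on simplicity and the invariance of the c.o.f.\ are appropriate elaborations of what the paper leaves implicit.
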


\subsection{Comments}
\paragraph*{A}
Let $\mu$ be a scalar, normalized Borel measure on $\T $. Let
us define the usual Hilbert space of square integrable complex
valued functions on $\T $ with respect to $\mu $ by
$$L^{2}(\mu)=L^{2}(\mu,\T)=\{f: f \ is\ \mu -measurable\ and
\int\limits_{\T}|f(t)|^2 \mu(dt)<\infty\}.
$$On the space  $L^{2}(\mu)$ we consider the unitary operator $U^\times$ which
is generated by multiplication by $\ol t$ where $t \in\T $ \ is
the independent variable :
$$(U^\times f)(t)=\ol tf(t),\ f\in L^{2}(\mu) .$$

Let $\tau$ be the embedding operator of $\C$ into $L^{2}(\mu)$,
i.e., $\tau:\C\to L^{2}(\mu)$ and for each $c\in\C$ the value
$\tau c$ is the constant function with the value $c $. It is
obvious that the triple $(L^{2}(\mu),U^\times,\tau)$ is the
minimal Naimark dilation of the measure $\mu .$

Consider the subspace $\cH_\mu:=L^{2}(\mu)\ominus\tau(\C) .$
According to the decomposition $L^{2}(\mu)=\cH_\mu\oplus\tau(\C)$
the operator $U^\times$ is given by the block matrix
$$U^\times=\matr {T^\times}{F^\times}{G^\times}{S^\times} .$$ Then
from \thref{th1.8}, statement (b), it follows that the set
$$\Dl_\mu:=(\cH_\mu, \C, \C; T^\times, F^\times\tau, \tau^{*} G^\times, \tau^{*} S^\times\tau)$$
is a simple unitary colligation.Moreover, the characteristic
function $\te_{\Dl_\mu}(\zeta)$ is associated with the measure
$\mu $. Thus, if the function $\Phi(\zeta)$ has the form
\eqref{1.23} then from \eqref{1.25} it follows that
$\zeta\te_{\Dl_\mu}(\zeta)=(\Phi(\zeta)-I)(\Phi(\zeta)+I)^{-1}\ .$

It  is important that the canonical basis \eqref{2.12} for the
colligation $\Dl_\mu$ \ is \ generated by the system of orthogonal
polynomials in  $L^{2}(\mu)$. Hence (see \thref{th1.8} and
\rmref{rem1.22}), \thref{th2.1}, \thref{th2.2} and \thref{th2.3}
give the matrix representation of the operator $U^\times$ in this
basis. The first appearance (1944) of this matrix is in Geronimus
\cite{G1}. Ya.L.Geronimus considered the case when the sequence of
the orthogonal polynomials is basis in $L^{2}(\mu),$ i.e., when
the series $\sum\limits_{j=0}^\infty |\ga_j|^2$ diverges (see
\thref{th2.2}). W.B.Gragg \cite{GR} in 1982 rediscovered this
matrix representation and used it for calculations. A.V.Teplyaev
\cite{TEP} (1991) seems to be first to use it for spectral
purposes. What concerns the role of this matrix representation in
theory of orthogonal polynomials on the unit circle we refer the
reader to B.Simon \cite[Chapter 4]{S}.

\paragraph*{B}
The full matrix representation (see \thref{th2.1}) appeared in
Constantinescu \cite{C} in 1984 (see also Bakonyi/Constantinescu
\cite{BC}). He finds it as the Naimark dilation. Let us establish
some connections with results in \cite{C}. We note that from
\rmref{rem1.22} it follows that the above constructed models of
unitary colligations are also models of Naimark dilations of
corresponding Borel measures on $\T$. Under this aspect we
consider in more detail the model described in \coref{cor2.14}. In
this case the model space $\cK$ for the Naimark dilation
\eqref{1.28} has the form \aagf\cK=\wt\cH\oplus\C=(l_2\oplus
l_2)\oplus\C\ .\label{2.82}\zzgf Moreover, $ \cU=\matr{\wt T}{\wt
F}{\wt G}{\wt S}:(l_2\oplus l_2)\oplus\C\to(l_2\oplus
l_2)\oplus\C$. In accordance to \eqref{2.82}, the vectors
$k\in\cK$ have the form \aagf
k=(x_1,x_2,\dots,x_n,\dots;y_1,y_2,\dots,y_n,\dots;c)\
,\label{2.83}\zzgf where $(x_k)_{k\in\N}\in l_2,(y_k)_{k\in\N}\in
l_2\ ,c\in\C$. The operator $\tau$ embeds $\C$ into $\cK$ in the
following way: $\tau c=(0,0,\dots,0,\dots;0,0,\dots,0,\dots;c)\
,c\in\C$. We change the order in the considered orthonormal base
of $\cK$ in such way that the vector $k$ which has the form
\eqref{2.83} is given in the following way \aagf
k=(\dots,y_n,\dots,y_2,y_1,c,x_1,x_2,\dots,x_n,\dots)\ ,\nnu\zzgf
In this case, it is convenient to set $ c=x_0\ ,y_k=x_{-k}\
,k\in\{1,2,\dots\}\ ,$ i.e., \aagf
k=(\dots,x_{-n},\dots,x_{-2},x_{-1},\mbox{\fbox{\rule[-0.1cm]{0.cm}{0.3cm}\,{$x_0$}\,}},x_1,x_2,\dots,x_n,\dots)\label{2.84}\zzgf
where we have drawn a square around the central entry with index
$0$. Now $$\tau
x_0=(\dots,0,\dots,0,\mbox{\fbox{\rule[-0.1cm]{0.cm}{0.3cm}\,{$x_0$}\,}},0,\dots,0,\dots)\
.$$ We associate with the representation \eqref{2.84} the following
orthogonal decomposition \aagf\cK=l_2^-\oplus
l_2^+\label{2.85}\zzgf where $l_2^-=\{k\in\cK:x_k=0\ ,k\ge 0\}
{\mbox\ \ and\ \ }l_2^+=\{k\in\cK:x_k=0\ ,k< 0\}$. From the form
of the operators $\wt T,\wt F,\wt G$ and $\wt S$ it follows that
the operator $\cU$ has the following matrix representation with
respect to the new basis and with respect to the orthogonal
decomposition \eqref{2.85}:
$\matr{\cU_{11}}{\cU_{12}}{\cU_{21}}{\cU_{22}}\ ,$ where
$\cU_{12}=0\ ,$ \aagf \cU_{11}=\left(\begin{array}{cccc}
\ddots &   &   &  \\
\ddots & 0 &  & \\
& 1 & 0 & \\
&  & 1 & 0\\
\end{array}\right)\ ,\ \ \cU_{21}=\left(\begin{array}{cccc}
\dots & 0 & 0 & \prod\limits_{j=0}^\infty D_{\ga_j}\\
\dots & 0 & 0 & -\ol\ga_1\prod\limits_{j=1}^\infty D_{\ga_j}\\
& \vdots & \vdots & \vdots \\
\dots & 0 & 0 & -\ol\ga_n\prod\limits_{j=n+1}^\infty D_{\ga_j}
\end{array}\right)\nnu\zzgf
and \aagf \cU_{22}=\left(\begin{array}{ccccc} \ga_0  &
D_{\ga_0}\ga_1 & \hdots & \prod\limits_{j=0}^{n-1}D_{\ga_j}\ga_n &\hdots\\
D_{\ga_0} & -\ol\ga_0\ga_1  & \hdots &
-\ol\ga_0\prod\limits_{j=1}^{n-1}D_{\ga_j}\ga_n & \hdots\\
0 & D_{\ga_1} & \hdots & -\ol\ga_1\prod\limits_{j=2}^{n-1}D_{\ga_j}\ga_n &
\dots\\
\vdots & \vdots & & \vdots & \\
0 & 0 & \hdots & -\ol\ga_{n-1}\ga_n & \hdots \\
0 & 0 & \hdots & D_{\ga_n} & \hdots \\
\vdots & \vdots &  & \vdots &
\end{array}\right)\ .\label{2.86}\zzgf

In this form but using different methods a Naimark dilation is
constructed in Constantinescu \cite{C} and Bakonyi/Constantinescu
\cite[Chapter 2]{BC}.

\paragraph*{C}
We consider a simple unitary colligation $\Dl$ of the form
\eqref{2.1}. If we choose in $\cH$ the canonical basis in
accordance with \eqref{2.15} we obtain for the contraction $(S,G)$
the matrix representation $ (S,G)=(S
,g_1,g_2,\dots,g_\infty,0,0,\dots)$. The above results show that
if we parametrize the contractive block row
$(S,g_1,g_2,\dots,g_n,\dots)$ by the method proposed in
Constantinescu \cite[Chapter 1]{C} we will obtain all blocks
described in \thref{th2.1} with exception of the coshift $\wt V_T$.

\paragraph*{D}
Because of $\cU_{12}=0$ the operator $\cU_{22}$ is an isometry
acting in $l_2^+$. We mention that the representation of an
isometry in the form \eqref{2.86} plays an important role in
Foias/Frazho \cite[Chapter 13]{FF} in connection with the
construction of Schur representations for the commutant lifting
theorem.

\paragraph*{E}
If $|\ga_k|<1\ ,k\in\{0,1,2,\dots\}$ and the product \eqref{2.53}
diverges then $\cK=l_2^+\ ,$ i.e. $\cU=\cU_{22}$. In this case the
layered form of the model is particularly clear. For example, if we
pass in the Schur algorithm from the Schur function
$\te_0(\zeta)=\te(\zeta)$ to the function
$\te_1(\zeta)=\frac{\te_0(\zeta)-\ga_0}{\zeta(1-\ol\ga_0\te_0(\zeta))}$
the Schur parameter sequence changes from $(\ga_k)_{k=0}^\infty$ to
$(\ga_k)_{k=1}^\infty$. This is expressed in the model
representation \eqref{2.86} in the following way. One has to cancel
the first column and the first row. After that one has to divide the
second row by $-\ol\ga_0$. This "layered form" finds its
expression in the following multiplicative representation of
$\cU_{22}$ which can be immediately checked (see also Foias/Frazho
\cite{FF}, Constantinescu \cite{C}): $\cU_{22}=V_0VV_2\dots
V_n\dots =s-\lim\limits_{n\to\infty}V_0V\dots V_n$ where
$V_0=R_{\ga_0}\oplus 1\oplus 1\oplus\dots ,$ $V=1\oplus
R_{\ga_1}\oplus 1\oplus\dots ,$ $V_2=1\oplus 1\oplus R_{\ga_2}\oplus
\dots$ and $R_{\ga_j}$ is the elementary rotation matrix associated
with $\ga_j\ ,$ i.e.,
$R_{\ga_j}=\matr{\ga_j}{D_{\ga_j}}{D_{\ga_j}}{-\ol\ga_j}\
,j\in\{0,1,2,\dots\}\ .$

\section{A model representation of the largest shift $V_T$ contained in a
    contraction $T$}\label{sec3-20221123}

In this section we follow {\cite[Section 3]{D06}}. 

\subsection{The conjugate canonical basis}\label{sec3.1}

Let $\te(\zeta)\in\cS$. Assume that \aagf\Dl=(\cH, \cF, \cG; T, F,
G, S)\label{3.1}\zzgf is a simple unitary colligation satisfying
$\te(\zeta)=\te_\Dl(\zeta)$. As above we consider the case
$\cF=\cG=\C$. Moreover, we choose the complex number $1$ as basis
vector of the onedimensional complex vector space $\C$. We assume
that the sequence $\ga=(\ga_j)_{j=0}^\ome$ of Schur parameters of
the function $\te(\zeta)$ is infinite (i.e., $\ome=\infty$) and
that the infinite product \eqref{2.53}
converges. In this case, as it follows from \thref{th2.1}, the
canonical basis of the space $\cH$ has the form (2.12). Hereby,
the matrix representation of the operators of the colligation
$\Dl$ with respect to this basis are given by formulas
(2.62)--(2.65).

We consider the function $\wt\te(\zeta)$ which is associate to \
$\te(\zeta)$,\ i.e., \ $\wt\te(\zeta)=\ol{\te(\ol\zeta)},\
\zeta\in\D$. Clearly, that $\wt\te(\zeta)\in\cS$ and
\aagf\wt\Dl:=(\cH, \cG, \cF; T^*, G^*, F^*, S^*)\label{3.3}\zzgf
is a simple unitary colligation satisfying
$\wt\te(\zeta)=\te_{\wt\Dl}(\zeta)$ (see Brodskii\cite{B}). The
unitary colligation \eqref{3.3} is called adjoint to the
colligation \eqref{3.1}. Hence, the function $\wt\te(\zeta)$ is the
c.o.f. of the contraction $T^*$. It can be easily seen that the
Schur parameter sequence $(\wt\ga_j)_{j=0}^\infty$ of the function
$\wt\te(\zeta)$ is given by $\wt\ga_j=\ol\ga_j,
j\in\{0,1,2,\dotsc\}$ and, consequently, the product (2.53)
converges for $(\wt\ga_j)_{j=0}^\infty$ , too. This means that the
canonical basis of the space $\cH$ which is constructed for the
colligation $\wt\Dl$ will also consist of two sequences of vectors
\aagf \wt\phi_1,\wt\phi_2,\dots;\wt\psi_1,\wt\psi_2,\dots\
.\label{E3.3-20221201}\zzgf From the considerations in Section 2.2 it
follows that this basis can be uniquely characterized by the
following conditions:

(1) The sequence $(\wt\phi_k)_{k=1}^\infty$ arises in the result
of the Gram-Schmidt orthogonalization procedure of the sequence
$(T^{*k-1}G^*(1))_{k=1}^\infty$ taking into account the
normalization conditions $(T^{*k-1}G^*(1),\wt\phi_k)>0,\
k\in\{1,2,3,\dotsc\}.$

(2) The vector $\wt\psi_1$ is that basis vector of the
one-dimensional generating wandering subspace of the maximal
unilateral shift $V_T$ acting in $\cH_\cG^\perp=\cH\ominus\cH_\cG$
which satisfies the inequality $(\phi_1,\wt\psi_1)>0$ and,
moreover, \aagf \wt\psi_{k+1}=T\wt\psi_k,\
k\in\{1,2,3,\dotsc\}.\label{3.7}\zzgf

\begin{defn}\label{de3.1}
The canonical basis \eqref{E3.3-20221201} which is constructed for the
adjoint colligation \eqref{3.3} is called conjugated to the
canonical basis (2.12) constructed for the colligation \eqref{3.1}.
\end{defn}

\begin{rem}\label{rm3.2}
In view of $\te(\zeta)=(\wt{\wt\te}(\zeta))$ and
$\Dl=(\wt{\wt\Dl})$ the canonical basis (2.12) is conjugated to
the canonical basis \eqref{E3.3-20221201}.
\end{rem}

Our approach is based on the study of interrelations between the
canonical basis (2.12) and the basis \eqref{E3.3-20221201} which is
conjugated to it. For this reason, we introduce the unitary
operator $\cU(\ga):\cH\to\cH$ which maps the first basis onto the
second one: \aagf \cU(\ga)\phi_k=\wt\phi_k,\ \
\cU(\ga)\psi_k=\wt\psi_k,\  \ k\in\{1,2,3,\dotsc\}
.\label{3.8}\zzgf The orthonormal systems $(\phi_k)_{k=1}^\infty$
and $(\psi_k)_{k=1}^\infty$ are bases of the subspaces $\cH_\cF$
and $\cH_\cF^\perp$, respectively, whereas the orthonormal systems
$(\wt\phi_k)_{k=1}^\infty$ and $(\wt\psi_k)_{k=1}^\infty$ are
bases of the subspaces $\cH_\cG$ and $\cH_\cG^\perp$,
respectively. Therefore, the operator $U(\ga)$ transfers the
decomposition $\cH=\cH_\cF\oplus\cH_\cF^\perp$ into the
decomposition $\cH=\cH_\cG\oplus\cH_\cG^\perp$ taking into account
the structures of the canonical bases. Consequently, the knowledge
of the operator $\cU(\ga)$ enables us to describe the position of
each of the subspaces $\cH_\cG$ and $\cH_\cG^\perp$ in relation to
$\cH_\cF$ and $\cH_\cF^\perp$. We emphasize that many properties
of the function $\te(\zeta)$ and the corresponding contraction $T$
depend on the mutual position of these subspaces.

In view of $\wt\ga_j=\ol\ga_j, j\in\{0,1,2,\dotsc\}$, the
replacement of the canonical basis (2.12) by its conjugated basis
\eqref{E3.3-20221201} requires that in corresponding matrix representations
we have to replace $\ga_j$ by $\ol\ga_j$. In particular, the
following result holds:

\begin{thm}\label{thm3.3}
The matrix representation of the operator $T^*$ with respect to
the canonical basis \eqref{E3.3-20221201} is obtained from the matrix
representation of the operator $T$ with respect to the canonical
basis (2.12) by replacing $\ga_j$ by $\ol\ga_j,\
j\in\{0,1,2,\dotsc\}$.
\end{thm}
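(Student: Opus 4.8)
The plan is to deduce Theorem~\ref{thm3.3} directly from Theorem~\ref{th2.1} by observing that the adjoint colligation $\wt\Dl=(\cH,\cG,\cF;T^\ast,G^\ast,F^\ast,S^\ast)$ is itself a simple unitary colligation of the scalar type \eqref{2.1}, whose fundamental operator is $T^\ast$ and whose characteristic function is $\wt\te(\zeta)=\ol{\te(\ol\zeta)}$. Its Schur parameter sequence is $(\wt\ga_j)_{j=0}^\infty$ with $\wt\ga_j=\ol\ga_j$, so the convergence hypothesis for the product \eqref{2.53} is inherited. First I would record that the canonical basis associated with $\wt\Dl$ is exactly the conjugate canonical basis \eqref{E3.3-20221201}: this is essentially the definition \ref{de3.1}, but I would check that the two normalization conditions in \eqref{2.3} and \eqref{2.11}, when written out for $\wt\Dl$, coincide with conditions (1) and (2) listed before Definition~\ref{de3.1}. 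Concretely, for $\wt\Dl$ the role of $\phi_1'=F(1)$ is played by $G^\ast(1)$ and the role of $\wt\phi_1'=G^\ast(1)$ is played by $F(1)=\phi_1'$, so the Gram--Schmidt data and the positivity/shift normalizations match verbatim.

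Once that identification is in place, the theorem is immediate from the uniqueness built into Theorem~\ref{th2.1}: applying that theorem to the colligation $\wt\Dl$ in place of $\Dl$, the matrix of its fundamental operator $T^\ast$ with respect to its canonical basis \eqref{E3.3-20221201} is given by formulas \eqref{2.62}--\eqref{2.66} with each occurrence of $\ga_j$ replaced by $\wt\ga_j=\ol\ga_j$ (note that $D_{\wt\ga_j}=\sqrt{1-|\ol\ga_j|^2}=D_{\ga_j}$ is unchanged, which is why only the $\ga_j$'s, not the defect factors, get conjugated). Comparing this with the matrix of $T$ from \eqref{2.62}--\eqref{2.66}, the two are obtained from one another precisely by the substitution $\ga_j\mapsto\ol\ga_j$, which is the assertion.

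I would present this in two short moves: (i) verify that ``canonical basis of $\wt\Dl$'' $=$ ``conjugate canonical basis'', citing Definition~\ref{de3.1} and Remark~\ref{rm3.2}; (ii) invoke Theorem~\ref{th2.1} for $\wt\Dl$ and read off the entries. No genuine computation is needed beyond noting $D_{\ol\ga_j}=D_{\ga_j}$ and that $\ol{\ol\ga_j}=\ga_j$ makes the correspondence symmetric (consistent with Remark~\ref{rm3.2}).

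The only real subtlety — and the step I would be most careful about — is the bookkeeping in move (i): one must make sure that in passing to $\wt\Dl$ the spaces $\cH_\cF$ and $\cH_\cF^\perp$ get swapped with $\cH_\cG$ and $\cH_\cG^\perp$ in the right way, so that the ``$\phi$-part'' of the canonical basis of $\wt\Dl$ lands in $\cH_\cG=\bigvee_n T^{\ast n}G^\ast(\C)$ and its ``$\psi$-part'' lands in $\cH_\cG^\perp$, exactly matching conditions (1) and (2) before Definition~\ref{de3.1}. This uses \eqref{1.8} and the fact, noted after \eqref{1.8}, that $\cH_\cF$ and $\cH_\cG$ do not depend on the embedding, together with Theorem~\ref{th1.4} (which guarantees the generating wandering subspace for $V_T$ is one-dimensional, so that $\wt\psi_1$ is uniquely pinned down by the sign normalization $(\phi_1,\wt\psi_1)>0$). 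Once the swap is checked, everything else is a direct appeal to already-proved statements.
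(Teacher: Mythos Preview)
Your proposal is correct and takes essentially the same approach as the paper: the paper does not give a formal proof of Theorem~\ref{thm3.3} at all, but simply records in the sentence immediately preceding it that since $\wt\ga_j=\ol\ga_j$, passing from the canonical basis to the conjugate canonical basis amounts to replacing $\ga_j$ by $\ol\ga_j$ in all matrix representations. Your write-up is just a careful unpacking of that one-line remark via Theorem~\ref{th2.1} applied to $\wt\Dl$.
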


\subsection{A model representation of the maximal unilateral shift $V_T$ contained in a contraction $T$}\label{sec3.2}

Let $\te(\zeta)\in\cS$ and assume that $\Dl$ is a simple unitary
colligation of the form \eqref{3.1} which satisfies
$\te(\zeta)=\te_\Dl(\zeta)$. We assume that the sequence of Schur
parameters of the function $\te(\zeta)$ is infinite and that the
infinite product \eqref{2.53} converges. Then it follows from \lmref{L2.7}
that in this and only this case the contraction $T$ (resp.
$T^*$) contains a nontrivial largest shift $V_T$ (resp.
$V_{T^*}$). Hereby, the multiplicity of the shift $V_T$ (resp.
$V_{T^*}$) equals $1$. The shift $V_{T^*}$ in the model
representation associated with the canonical basis (2.12) is
immediately determined by the sequence of basis vectors
$(\psi_k)_{k=1}^\infty$ since $\psi_1$ is a basis vector of the
one-dimensional generating wandering subspace of $V_{T^*}$ and
$\psi_k=V_{T^*}^{k-1}\psi_1,\ k\in\{2,3,4,\dotsc\}$. Analogously
(see property (2) of the conjugate canonical basis \eqref{E3.3-20221201}) the
sequence $(\wt\psi_k)_{k=1}^\infty$ of the basis \eqref{E3.3-20221201}
determines the largest shift $V_T$. Thus, representing the vectors
$(\wt\psi_k)_{k=1}^\infty$ in terms of the vectors of the basis
(2.12) we obtain a model representation of the largest shift $V_T$
with the aid of the canonical basis (2.12). The main goal of this
paragraph is the detailed description of this model. In the
following we use the same symbol for an operator and its matrix
with respect to the canonical basis (2.12).

The unitary operator \eqref{3.8} has the matrix representation
\aagf
\cU(\ga)=\matr{\cR(\ga)}{\cL(\ga)}{\cP(\ga)}{\cQ(\ga)}\label{3.9}\zzgf
where $\cR, \cP, \cL$ and $\cQ$ are the matrices of the operators
$$P_{\cH_\cF}\Rstr_{\cH_\cF}\cU:\cH_\cF\to\cH_\cF\ ,
\ P_{\cH_\cF^\perp}\Rstr_{\cH_\cF}\cU:\cH_\cF\to\cH_\cF^\perp,$$
$$P_{\cH_\cF}\Rstr_{\cH_\cF^\perp}\cU:\cH_\cF^\perp\to\cH_\cF {\mbox\ \  and\ \ }
P_{\cH_\cF^\perp}\Rstr_{\cH_\cF^\perp}\cU:\cH_\cF^\perp\to\cH_\cF^\perp,$$
respectively. Hereby, if $\cK$ is a closed subspace of $\cH$, the
operator $P_\cK$ denotes the orthoprojection from $\cH$ onto
$\cK$.

From \eqref{3.8} we see that the columns of the matrix \aagf
\cl{\cL(\ga)}{\cQ(\ga)}\label{3.10}\zzgf provide the coefficients
in the representation of the vectors $(\wt\psi_k)_{k=1}^\infty$
with respect to the canonical basis (2.12). Thus, the model
description of the shift $V_T$ leads to the determination of the
matrix \eqref{3.10}. We note that the matrix \eqref{3.10} shows how
the subspace $\cH_\cG^\perp$ is located relatively to the
subspaces $\cH_\cF$ and $\cH_\cF^\perp$.

\begin{thm}[\zitaa{D06}{\cthm{3.4}}]\label{thm3.4}
The identities
\aagf(\wt\psi_1,\phi_1)=\prod\limits_{j=1}^\infty(1-|\ga_j|^2)^{\frac{1}{2}}\label{3.11}\zzgf
and \aagf(\wt\psi_j,\phi_1)=0,\
j\in\{2,3,\dotsc\}\label{3.12}\zzgf hold true.
\end{thm}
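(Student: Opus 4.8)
The plan is to compute the inner products $(\wt\psi_j,\phi_1)$ by exploiting two facts: first, that $\phi_1 = D_{\ga_0}^{-1}\phi_1' = D_{\ga_0}^{-1}F(1)$ by \eqref{2.6} and \eqref{2.17}, so that $\phi_1$ is (up to the scalar $D_{\ga_0}$) the generating vector of $\cH_\cF$; and second, that $\wt\psi_1,\wt\psi_2,\dots$ is the orthonormal basis of $\cH_\cG^\perp$ obtained by iterating $T$ on the wandering vector $\wt\psi_1$, i.e.\ $\wt\psi_{k} = T^{k-1}\wt\psi_1$ by \eqref{3.7}. The orthogonality relations \eqref{3.12} should come almost for free: since $\cH_\cF$ is $T$-invariant and $\phi_1\in\cH_\cF$, while $\wt\psi_j = V_T^{\,j-1}\wt\psi_1$ with $V_T = \rstr{T}{\cH_\cG^\perp}$ a unilateral shift whose generating wandering subspace is spanned by $\wt\psi_1$, one computes $(\wt\psi_j,\phi_1) = (T^{j-1}\wt\psi_1,\phi_1)$. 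The idea is to move powers of $T$ across: use $\phi_1\perp\cH_\cG^\perp$ would be too strong (it is false), so instead I would pass to $T^*$ and use that $\wt\psi_1$ is orthogonal to $T^{*m}\cH_\cF$-type spaces, or more directly use property~(2) of Definition/Remark around \eqref{3.7}: $\wt\psi_1$ lies in the wandering subspace, hence $\wt\psi_1\perp T\cH_\cG^\perp = \bigvee_{k\ge2}\wt\psi_k$, and one leverages the block form \eqref{2.13}.

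Concretely, for \eqref{3.12} the cleanest route is: $\wt\psi_j\in\cH_\cG^\perp$, and $\cH_\cG^\perp$ is $T$-invariant with $T$ acting on it as the shift $V_T$; meanwhile $\phi_1'=F(1)\in\cD_{T^*}$ by \eqref{1.4}. Now $\cH_\cG = \bigvee_{n\ge0}T^{*n}\cD_T$, so $\cH_\cG^\perp = \bigcap_{n\ge0}\ker(D_T T^n)$ by \eqref{1.9}. I would instead use the characterization of $\wt\psi_1$ as the generator: the vectors $\wt\psi_2,\wt\psi_3,\dots$ span $V_T(\cH_\cG^\perp) = T(\cH_\cG^\perp)$, which is a subspace of $\cH_\cG^\perp$ of codimension one. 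The key observation is that $\phi_1 = D_{\ga_0}^{-1}F(1)$ and that the colligation relation $TG^*+FS^* = 0$ from \eqref{1.3}, combined with $G^*(1) = \wt\phi_1'$, gives control of how $F(1)$ pairs with $T^*$-orbits. For $j\ge2$, write $(\wt\psi_j,\phi_1) = (T\wt\psi_{j-1},\phi_1) = (\wt\psi_{j-1},T^*\phi_1)$ and expand $T^*\phi_1$ in the canonical basis using \thref{thm3.3} (the matrix of $T^*$ in basis \eqref{E3.3-20221201} is the $\ga_j\mapsto\ol\ga_j$ transform of \eqref{2.62}); the point is that $T^*\phi_1 \in \cH_\cG$ (since $\cH_\cG$ is $T^*$-invariant — wait, $\cH_\cF$ is $T$-invariant, so $\cH_\cF^\perp$ is $T^*$-invariant; here I need $\phi_1\in\cH_\cF$, and $\cH_\cF$ is $T$-invariant not $T^*$-invariant). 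So the honest computation: $T^*\phi_1$ has a component along $\psi_1$ in general. I would therefore instead verify \eqref{3.12} by showing $(\wt\psi_j,\phi_1) = (\wt\psi_1, V_{T^*}^{\,?}\dots)$ reduces to an inner product of a wandering vector of $V_T$ against something in $T(\cH_\cG^\perp)^\perp$ — the serious bookkeeping step.

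For the value \eqref{3.11}: having reduced to $j=1$, I would compute $(\wt\psi_1,\phi_1)$ using the normalization $(\phi_1,\wt\psi_1)>0$ from property~(2) and the fact that $\wt\psi_1$ is the unit wandering vector of $V_T$ in $\cH_\cG^\perp = \cH\ominus\cH_\cG$. Decompose $\phi_1 = P_{\cH_\cG}\phi_1 + P_{\cH_\cG^\perp}\phi_1$; then $(\wt\psi_1,\phi_1) = (\wt\psi_1, P_{\cH_\cG^\perp}\phi_1)$. Now $P_{\cH_\cG^\perp}\phi_1$ lies in $\cH_\cG^\perp$, and $\wt\psi_1$ generates it as a shift, so $(\wt\psi_1,P_{\cH_\cG^\perp}\phi_1)$ picks out the zeroth Fourier coefficient of $P_{\cH_\cG^\perp}\phi_1$ relative to the shift basis $(\wt\psi_k)$; by the wandering-subspace structure this equals $\|P_{\wt\cL_0}\phi_1\|$ up to a unimodular factor fixed to be positive by the normalization, where $\wt\cL_0 = \C\wt\psi_1$. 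By \coref{cor1.9} applied to the colligation (with roles of $F,G$ swapped, i.e.\ for $\wt\Dl$), $\ol{P_{\wt\cL_0}F(\cF)} = \wt\cL_0$ — but I actually want the norm. I expect the quantitative heart to be the identity $\|P_{\cH_\cG^\perp}\phi_1\| = \prod_{j\ge1}D_{\ga_j}$, which I would obtain either by a direct telescoping using the matrix entries $g_k$ from \eqref{E2.22} and $g_\infty$ from \eqref{2.54} — note $\|\phi_1\|^2 = 1$ and $\phi_1 = D_{\ga_0}^{-1}F(1)$, while the projection onto $\cH_\cG$ involves the vector $\wt\phi_1' = G^*(1)$ whose expansion coefficients are exactly the $g_k$ — or, more slickly, by observing that $1 - \|P_{\cH_\cG}\phi_1\|^2 = \|P_{\cH_\cG^\perp}\phi_1\|^2$ and computing $\|P_{\cH_\cG}\phi_1\|^2$ via a Gram-matrix/orthogonal-polynomial argument. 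The main obstacle will be this last norm computation: bookkeeping the projection of the generating vector $\phi_1$ of $\cH_\cF$ onto $\cH_\cG^\perp$ in terms of the infinite product $\prod_{j\ge1}(1-|\ga_j|^2)$, for which the cleanest tool is probably \coref{cor1.9} together with the explicit formulas \eqref{E2.22}, \eqref{2.54}, \eqref{2.254} for $g_k$, $g_\infty$, $r_k'$ from \lmref{L2.4}, feeding a telescoping sum $\sum_{k\ge1}|g_k|^2 + g_\infty^2$ against the known total $\|G^*(1)\|^2 = D_{\ga_0}^2$.
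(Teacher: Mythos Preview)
Your plan has the right skeleton but misses both key shortcuts, and for \eqref{3.12} this leaves a genuine gap.

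For \eqref{3.12} you correctly write $(\wt\psi_j,\phi_1)=(\wt\psi_{j-1},T^*\phi_1)$ and then worry about a possible $\psi_1$-component of $T^*\phi_1$. But there is none: the colligation identity you need from \eqref{1.3} is $G^*S+T^*F=0$ (not $TG^*+FS^*=0$), which gives
\[
T^*\phi_1 = D_{\ga_0}^{-1}T^*F(1) = -D_{\ga_0}^{-1}G^*S(1) = -D_{\ga_0}^{-1}\ga_0\,G^*(1) = -\ol\ga_0\,\wt\phi_1 \in \cH_\cG.
\]
Since $\wt\psi_{j-1}\in\cH_\cG^\perp$, the inner product vanishes immediately. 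Your alternative routes (expanding $T^*\phi_1$ via \thref{thm3.3}, or invoking wandering-subspace orthogonality) are either circuitous or incomplete as stated; the ``serious bookkeeping'' you anticipate dissolves once you use the correct colligation relation.

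For \eqref{3.11} your projection-and-telescoping plan could be made to work, but it is roundabout. The paper's argument is two lines: from \eqref{2.65} one reads off $(\psi_1,G^*(1))=g_\infty=\prod_{j\ge0}D_{\ga_j}$, whence $(\psi_1,\wt\phi_1)=D_{\ga_0}^{-1}g_\infty=\prod_{j\ge1}D_{\ga_j}$. Then the conjugation symmetry of \rsec{sec3.1} (passing from basis \eqref{2.12} to basis \eqref{E3.3-20221201} amounts to $\ga_j\mapsto\ol\ga_j$) converts $(\psi_1,\wt\phi_1)$ into $(\wt\psi_1,\phi_1)$; since the product $\prod_{j\ge1}D_{\ga_j}$ depends only on $|\ga_j|$, it is invariant under this substitution. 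Your telescoping identity $\sum_{k\ge1}|g_k|^2+g_\infty^2=D_{\ga_0}^2$ only re-derives $g_\infty$; to finish you would still need the conjugation symmetry (or equivalently to redo the whole $g_\infty$ computation for the adjoint colligation $\wt\Dl$), so you may as well use it from the start.
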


\begin{proof}
In view of $\ \wt\phi_1=\frac{1}{\sqrt{1-|\ga_0|^2}}G^*(1)$ from
the matrix representation \eqref{2.65} of the operator $G$ it follows
\aagf(\psi_1,\wt\phi_1)=\frac{1}{\sqrt{1-|\ga_0|^2}}(\psi_1,G^*(1))=
\prod\limits_{j=1}^\infty(1-|\ga_j|^2)^{\frac{1}{2}}.\label{3.13}\zzgf
Since changing from $\ (\psi_1,\wt\phi_1)$ to $\
(\wt\psi_1,\phi_1)$ is realized by replacing $\ga_j$ by
$\wt\ga_j=\ol\ga_j,$ $j\in\{0,1,2,\dotsc,\}$, formula \eqref{3.11}
follows from \eqref{3.13}.

For $j\in\{2,3,\dotsc,\}$ we obtain
$$(\wt\psi_j,\phi_1)=(T\wt\psi_{j-1},\frac{1}{\sqrt{1-|\ga_0|^2}}F(1))
=\frac{1}{\sqrt{1-|\ga_0|^2}}(\wt\psi_{j-1},T^*F(1)).$$ From the
colligation condition (1.3) we infer
$$T^*F(1)=-G^*S(1)=-\ga_0G^*(1)=-\ga_0\sqrt{1-|\ga_0|^2}\ \wt\phi_1.$$
Thus, $(\wt\psi_j,\phi_1)=-\ol\ga_0(\wt\psi_{j-1},\wt\phi_1)=0.$
\end{proof}

We define the coshift $W$ via %
\aagf(\ga_0,\ga_1,\ga_2,\dotsc)\mapsto(\ga_1,\ga_2,\ga_3,\dotsc).\ \label{3.14}\zzgf

In the sequel, the system of functions $(L_n(\ga))_{n=0}^\infty$,
which was introduced for $\ga\in\Ga l_2$ in \cite{D98} will play an
important role. For $\ga\in\Ga l_2$ we set
\begin{flalign}
    &L_0(\ga):=1,\ \  L_n(\ga)=L_n(\ga_0,\ga_1,\ga_2,\dotsc):=\label{3.15}\\
    &\sum_{r=1}^n(-1)^r\!\!\!\!\!\!\!\!\!\!\sum_{s_1+\dotsb+s_r=n}\sum_{j_1=n-s_1}^\infty\sum_{j_2=j_1-s_2}^\infty    \!\!\!\dotso\!\!\!\!\!\sum_{j_r=j_{r-1}-s_r}^\infty\!\!\!\!\!\ga_{j_1}\ol\ga_{j_1+s_1}\ga_{j_2}\ol\ga_{j_2+s_2}\dotsm\ga_{j_r}\ol\ga_{j_r+s_r}.\nnu
\end{flalign}
Here the summation runs over all ordered $r$--tuples
$(s_1,\dotsc,s_r)$ of positive integers which satisfy
$s_1+s_2+\dotsb+s_r=n$.
For example,
\[
    L_1(\ga)
    =-\sum_{j=0}^\infty\ga_j\ol\ga_{j+1}, \ \
    L_2(\ga)
    =-\sum_{j=0}^\infty\ga_j\ol\ga_{j+2}    +\sum_{j_1=1}^\infty\sum_{j_2=j_1-1}^\infty    \ga_{j_1}\ol\ga_{j_1+1}\ga_{j_2}\ol\ga_{j_2+1}.
\]
In
view of $\ga\in\Ga l_2$ the series in \eqref{3.15} converges
absolutely.

\begin{thm}[\zitaa{D06}{\cthm{3.6}}]\label{thm3.5}
(Model representation of the largest shift $V_T$
with respect to the canonical basis (2.12)) Let $\te(\zeta)$ be a
function of class $\cS$ the Schur parameter sequence
$\ga=(\ga_j)_{j=0}^\infty$ of which belongs to $\Ga l_2$. Further,
let $\Dl$ be a simple unitary colligation of the form \eqref{3.1}
which satisfies $\te(\zeta)=\te_\Dl(\zeta)$. Then the vectors
$(\wt\psi_j)_{j=1}^\infty$ of the conjugate canonical basis
\eqref{E3.3-20221201} admit the following representations in terms of the
vectors of the canonical basis (2.12):
\aagf\wt\psi_j=\sum\limits_{k=j}^\infty\Pi_k L_{k-j}(W^j\ga)\phi_k
+\sum\limits_{k=1}^\infty Q(W^{k+j-1}\ga)\psi_k\label{3.16}\zzgf
where \aagf\Pi_k=\prod\limits_{j=k}^\infty\sqrt{1-|\ga_j|^2},\
k\in\{0,1,2,\dotsc\} \label{3.17}\zzgf and \aagf
Q(\ga)=-\sum\limits_{j=0}^\infty\ga_j L_{j}(\ga).
\label{3.18}\zzgf Hereby, the sequence $(L_n(\ga))_{n=0}^\infty$
is defined by \eqref{3.15} whereas the coshift $W$ is given via
\eqref{3.14}. The series in \eqref{3.18} converges absolutely.
\end{thm}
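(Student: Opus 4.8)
The plan is to reduce \eqref{3.16} to identifying the single vector $\wt\psi_1$ together with a stability property of its right-hand side under $T$. By \lmref{L2.7} and \thref{th1.4}, the hypothesis $\ga\in\Ga l_2$ forces $T$ to contain a nonzero largest shift $V_T$ of multiplicity $1$ acting on $\cH_\cG^\perp$; since $\cH_\cG^\perp$ is $T$-invariant (\thref{th1.3}), we have $\wt\psi_j=T^{j-1}\wt\psi_1$ for every $j$, and the one-dimensional generating wandering subspace $\wt\cL_0$ of $V_T$ is spanned by $\wt\psi_1$, normalized through $(\phi_1,\wt\psi_1)>0$ (by the defining property of the conjugate canonical basis in \rsec{sec3.1} together with \coref{cor1.9}). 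Hence it suffices to establish two things: (i) applying $T$, in the block form \eqref{2.62}, to the right-hand side of \eqref{3.16} at index $j$ reproduces its right-hand side at index $j+1$; and (ii) the right-hand side of \eqref{3.16} at $j=1$ equals $\wt\psi_1$. Then \eqref{3.16} follows for all $j$ by induction (base case (ii), inductive step (i), using $T\wt\psi_j=\wt\psi_{j+1}$).

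For (i) I would compute directly with the matrices of \thref{th2.1}. Because $\wt V_T$ is the backward shift on $(\psi_k)_k$, the $\psi$-part of the $j$-th expression is carried onto the $\psi$-part of the $(j+1)$-st (the index $k+j-1$ becoming $k+j$); the block $\wt R$ contributes, through $\psi_1$ only, the term $Q(W^j\ga)\,\wt R\psi_1=Q(W^j\ga)\sum_n r_n'\phi_n$ with $r_n'=-\ol{\ga_{n-1}}\Pi_n$, which must combine with $T_\cF$ applied to $\sum_{k\ge j}\Pi_k L_{k-j}(W^j\ga)\phi_k$. Using the identities $\Pi_k t_{nk}=-\ol{\ga_{n-1}}\ga_k\Pi_n$ (for $k\ge n$) and $\Pi_{k-1}=D_{\ga_{k-1}}\Pi_k$, which come from \thref{th2.1} and \eqref{3.17}, the coefficient of $\phi_m$ in $T(\mathrm{RHS}_j)$ works out, for $m\le j$, to a multiple of $Q(W^j\ga)+\sum_{k\ge j}\ga_k L_{k-j}(W^j\ga)$, which vanishes by the very definition \eqref{3.18} of $Q$; and, for $m\ge j+1$, to $\Pi_m\bigl[(1-\abs{\ga_{m-1}}^2)L_{m-1-j}(W^j\ga)+\ol{\ga_{m-1}}\sum_{k=j}^{m-1}\ga_k L_{k-j}(W^j\ga)\bigr]$, which has to equal $\Pi_m L_{m-1-j}(W^{j+1}\ga)$. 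With $\be\defeq W^j\ga=(\ga_j,\ga_{j+1},\dotsc)$ and $n\defeq m-1-j$, this is exactly the shift recursion $L_n(W\be)=L_n(\be)+\ol{\be_n}\sum_{l=0}^{n-1}\be_l L_l(\be)$ for the system \eqref{3.15} introduced in \cite{D98}.

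For (ii), let $X$ be the right-hand side of \eqref{3.16} at $j=1$. From \eqref{1.9} together with $G^*G=D_T^2$ (so $\ker G=\ker D_T$) one gets $\cH_\cG^\perp=\bigcap_{n\ge0}\ker(GT^n)$, and by (i) the vector $T^nX$ is the right-hand side of \eqref{3.16} at $j=n+1$. Reading off $G$ from \thref{th2.1} and using $g_k\Pi_k=\ga_k\Pi_0$ and $g_\infty=\Pi_0$, one finds
\[
GT^nX=\Pi_0\Bigl(Q(W^{n+1}\ga)+\sum_{k\ge n+1}\ga_k L_{k-n-1}(W^{n+1}\ga)\Bigr)=0
\]
again by \eqref{3.18}, so $X\in\cH_\cG^\perp$. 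Since $L_0\equiv1$ we have $(\phi_1,X)=\Pi_1>0$, which equals $(\phi_1,\wt\psi_1)$ by \thref{thm3.4}, whereas $(\phi_1,\wt\psi_j)=\ol{(\wt\psi_j,\phi_1)}=0$ for $j\ge2$ by the same theorem. Writing $X=c\,\wt\psi_1+Y$ along the decomposition $\cH_\cG^\perp=\wt\cL_0\oplus T\cH_\cG^\perp$ with $Y\in\bigvee_{j\ge2}\wt\psi_j$, comparison of $\phi_1$-components gives $\Pi_1=c\,\Pi_1$, hence $c=1$ and $\norm{X}^2=1+\norm{Y}^2$; it then remains only to verify the norm identity $\sum_{k\ge1}\Pi_k^2\abs{L_{k-1}(W\ga)}^2+\sum_{k\ge1}\abs{Q(W^k\ga)}^2=1$, which forces $Y=0$ and $X=\wt\psi_1$.

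I expect the genuine difficulty to lie not in the operator theory—the reduction to $\wt\psi_1$, the block computation, and the one-line verification $GT^nX=0$ through \eqref{3.18} are routine given Sections~\ref{Sec1.2} and~\ref{S2}—but in the two algebraic facts about the multi-sum \eqref{3.15}: the shift recursion for $L_n$ and the norm identity above, for which the functions $L_n$ were designed in \cite{D98}. Throughout, the rearrangements of series and the termwise application of $T$—in particular to the square-summable column $\wt R\psi_1=\sum_n r_n'\phi_n$—are justified by absolute convergence, which for \eqref{3.18} is already recorded and, for the remaining series, is part of the assertion that $X$ is a well-defined element of $\cH$.
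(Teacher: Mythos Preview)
The paper itself does not include a proof of this theorem; it merely cites \zitaa{D06}{\cthm{3.6}}. So there is no line-by-line comparison to make. What can be said is that in the logical order of the present survey (and of \cite{D06}, judging by the numbering), both of the algebraic facts you isolate as the ``genuine difficulty'' are \emph{derived from} \rthm{thm3.5}, not used to prove it: the shift recursion is exactly \eqref{3.45}, stated here as \zitaa{D06}{\ccor{3.9}}, and your norm identity is the case $k=0$ of \coref{cor3.7}, whose proof in the paper reads ``substitute \eqref{3.16} into $(\wt\psi_1,\wt\psi_{k+1})=\delta_{k0}$''. Your route therefore runs in the opposite direction to the paper's development: you verify the candidate \eqref{3.16} against the defining properties of $\wt\psi_j$, reducing everything to those two lemmas, whereas the original argument presumably constructs \eqref{3.16} and then reads off \eqref{3.45} and \coref{cor3.7} as corollaries.

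Your operator-theoretic reduction is correct and carefully done---the block computation in step~(i) is right (the cancellation at $m\le j$ and the emergence of the recursion at $m\ge j+1$ both check), and step~(ii) correctly lands $X$ in $\cH_\cG^\perp$ and pins $c=1$ via \rthm{thm3.4}. The only substantive gap is the one you flag yourself: you must be sure that \eqref{3.45} and the norm identity admit proofs directly from the combinatorial definition \eqref{3.15}, independent of \eqref{3.16}, or your argument is circular relative to this paper. If you want to sidestep the norm identity entirely, note that once (i) and $X\in\cH_\cG^\perp$ are in hand you have $\|X\|=\|V_T^{j-1}X\|=\|\mathrm{RHS}_j\|$ for every $j$; an asymptotic argument showing $\|\mathrm{RHS}_j\|\to1$ as $j\to\infty$ (using $\Pi_k\to1$, $L_n(W^j\ga)\to\delta_{n0}$, $Q(W^m\ga)\to0$) would then give $\|X\|=1$ without appealing to \coref{cor3.7}.
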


\begin{cor}[\zitaa{D06}{\ccor{3.7}}]\label{cor3.6}
The matrices $\cL(\ga)$ and $\cQ(\ga)$ introduced via \eqref{3.9}
can be expressed as \aagf \cL(\ga)=\left(\begin{array}{cccc}
\Pi_1 & 0 & 0 & \hdots \\
\Pi_2 L_1(W\ga) & \Pi_2 & 0 & \hdots\\
\Pi_3 L_2(W\ga) & \Pi_3 L_1(W^2\ga) & \Pi_3 & \hdots\\
\vdots & \vdots & \vdots & \ddots \\
\Pi_n L_{n-1}(W\ga) & \Pi_n L_{n-2}(W^2\ga) & \Pi_n L_{n-3}(W^3\ga) & \hdots\\
\vdots & \vdots &  \vdots &
\end{array}\right)\label{3.42}\zzgf
and \aagf\cQ(\ga)=\left(\begin{array}{cccc}
Q(W\ga) & Q(W^2\ga) & Q(W^3\ga) & \hdots\\
Q(W^2\ga) & Q(W^3\ga) & Q(W^4\ga) & \hdots\\
Q(W^3\ga) & Q(W^4\ga) & Q(W^5\ga) & \hdots\\
\vdots & \vdots & \vdots &  \\
\end{array}\right)\label{3.43}\zzgf
where $(L_n(\ga))_{n=0}^\infty$, $(\Pi_n)_{n=0}^\infty$ and $Q(\ga)$ are defined via formulas \eqref{3.15}, \eqref{3.17} and
\eqref{3.18}, respectively, whereas $W$ is the coshift introduced in \eqref{3.14}.
\end{cor}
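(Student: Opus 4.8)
The plan is to read off the matrix entries of $\cL(\ga)$ and $\cQ(\ga)$ directly from the expansion formula \eqref{3.16} of Theorem~\ref{thm3.5}. Recall that, by the definition \eqref{3.9} of $\cU(\ga)$ together with \eqref{3.8}, the $j$-th columns of the block matrices $\cL(\ga)$ and $\cQ(\ga)$ are exactly the coordinate vectors of $\wt\psi_j$ relative to the $\phi$-part and the $\psi$-part of the canonical basis (2.12), respectively. So the whole corollary is a matter of transcribing \eqref{3.16} into matrix form and checking that the indices line up.

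First I would fix $j\in\{1,2,3,\dotsc\}$ and look at the $\phi$-component of \eqref{3.16}, namely $\sum_{k\ge j}\Pi_k L_{k-j}(W^j\ga)\phi_k$. This says that the $(k,j)$ entry of $\cL(\ga)$ is $\Pi_k L_{k-j}(W^j\ga)$ for $k\ge j$ and $0$ for $k<j$; in particular the diagonal entry is $\Pi_j L_0(W^j\ga)=\Pi_j$ since $L_0\equiv1$ by \eqref{3.15}. Writing this out for $j=1,2,3,\dotsc$ and using $L_0(W^j\ga)=1$ produces precisely the lower-triangular array \eqref{3.42}: column $1$ is $(\Pi_1,\Pi_2 L_1(W\ga),\Pi_3 L_2(W\ga),\dotsc)^\top$, column $2$ is $(0,\Pi_2,\Pi_3 L_1(W^2\ga),\dotsc)^\top$, and so on, with the general $(n,j)$ entry $\Pi_n L_{n-j}(W^j\ga)$.

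Next I would treat the $\psi$-component of \eqref{3.16}, namely $\sum_{k\ge1}Q(W^{k+j-1}\ga)\psi_k$. This identifies the $(k,j)$ entry of $\cQ(\ga)$ as $Q(W^{k+j-1}\ga)$, which depends only on the sum $k+j$; hence $\cQ(\ga)$ is a Hankel matrix. Specializing: the $(1,1)$ entry is $Q(W\ga)$, the $(1,2)$ and $(2,1)$ entries are $Q(W^2\ga)$, and in general the anti-diagonal $k+j=m+1$ carries the constant value $Q(W^m\ga)$. This is exactly \eqref{3.43}. Finally, the absolute convergence of the series defining $L_n$ and $Q$, and hence the fact that these matrices have well-defined entries, was already established in Theorem~\ref{thm3.5}, so nothing further is needed there.

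There is essentially no obstacle here: the corollary is a bookkeeping restatement of Theorem~\ref{thm3.5}, and the only point requiring a moment's care is matching the running index $k$ in \eqref{3.16} with the row index of the matrices (and recalling $L_0\equiv1$ so that the diagonal of $\cL(\ga)$ really is $\Pi_j$, not $\Pi_j L_0$ left unsimplified). If one wanted, one could additionally remark that the unitarity of $\cU(\ga)$ forces the columns of $\bigl(\begin{smallmatrix}\cL(\ga)\\\cQ(\ga)\end{smallmatrix}\bigr)$ to be orthonormal, which serves as a consistency check on \eqref{3.42}–\eqref{3.43}, but this is not required for the proof.
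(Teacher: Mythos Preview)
Your proposal is correct and takes essentially the same approach as the paper: the paper's proof is the single sentence ``The representation formulae \eqref{3.42} and \eqref{3.43} follow from \eqref{3.16},'' and your argument is simply a careful unpacking of that sentence, matching the column index $j$ with the vector $\wt\psi_j$ and reading off the $\phi$- and $\psi$-coefficients from \eqref{3.16}.
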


\begin{proof}
The representation formulae \eqref{3.42} and \eqref{3.43} follow from \eqref{3.16}.
\end{proof}

\begin{cor}[\cite{D98}] \label{cor3.7} Each sequence $(\ga_j)_{j=0}^\infty\in\Ga l_2$
satisfies the following orthogonality relations:
\begin{multline*}
    \sum_{n=0}^\infty\Pi_{n+k}^2L_{n+k}(\ga)\ol{L_n(W^k\ga)}\\
    +\sum_{n=0}^\infty Q(W^n\ga)\ol{Q(W^{n+k}\ga)}
    =\bcase{1}{k=0,}{0}{k\in\{1,2,3,\dotsc\}.}
\end{multline*}
\end{cor}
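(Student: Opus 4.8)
The plan is to derive the orthogonality relations in \coref{cor3.7} directly from the fact that the operator $\cU(\ga)$ of \eqref{3.8} is unitary, exactly as the matrices $\cL(\ga)$ and $\cQ(\ga)$ were identified in \coref{cor3.6}. Since $\cU(\ga)$ maps the orthonormal canonical basis (2.12) onto the orthonormal conjugate basis \eqref{E3.3-20221201}, the column vectors of $\cU(\ga)$ form an orthonormal system in $\cH$; in particular the columns of the block $\cl{\cL(\ga)}{\cQ(\ga)}$ from \eqref{3.10}, which represent the vectors $(\wt\psi_k)_{k=1}^\infty$ in the basis (2.12), are orthonormal. First I would write out $\ip{\wt\psi_{j}}{\wt\psi_{j+k}}=\dl_{0k}$ using the explicit expansion \eqref{3.16} from \thref{thm3.5}. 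Taking $j=1$ (the general $j$ gives nothing new after a shift of $\ga$ by the coshift $W$), the first sum contributes $\sum_{m}\Pi_{m+k}L_{m+k}(W\ga)\ol{\Pi_{m}L_{m}(W^{k+1}\ga)}$ from the $\phi$-components and the second sum contributes $\sum_{n}Q(W^{n}\ga)\ol{Q(W^{n+k}\ga)}$ from the $\psi$-components, where the cross terms vanish because the $\phi_k$ and $\psi_k$ are mutually orthogonal. After replacing $W\ga$ by $\ga$ (equivalently, relabelling the Schur parameter sequence) and reindexing $m\mapsto n$, one obtains precisely the stated identity with $\Pi_{n+k}^2L_{n+k}(\ga)\ol{L_n(W^k\ga)}$ in the first sum.

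The only subtlety is bookkeeping with the product constants: in \eqref{3.16} the $\phi_k$-coefficient of $\wt\psi_j$ is $\Pi_k L_{k-j}(W^j\ga)$, so in the inner product $\ip{\wt\psi_1}{\wt\psi_{1+k}}$ the $\phi_m$-term pairs $\Pi_m L_{m-1}(W\ga)$ with $\ol{\Pi_m L_{m-1-k}(W^{1+k}\ga)}$, giving $\Pi_m^2 L_{m-1}(W\ga)\ol{L_{m-1-k}(W^{1+k}\ga)}$ summed over $m\ge 1+k$. Setting $n:=m-1-k$ and writing $\ga$ in place of $W\ga$ turns this into $\sum_{n\ge 0}\Pi_{n+k}^2 L_{n+k}(\ga)\ol{L_n(W^k\ga)}$, matching the claim. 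I would also note that absolute convergence of all the series involved is guaranteed: the factors $\Pi_k\le 1$ are bounded, the series $\sum|L_n(\ga)|^2$ and $\sum|Q(W^n\ga)|^2$ are finite because they are (pieces of) the squared norms of the unit vectors $\wt\psi_j$, and the absolute convergence of the defining series for $L_n(\ga)$ and $Q(\ga)$ was already recorded after \eqref{3.15} and in \thref{thm3.5}; so the rearrangements and reindexings are legitimate.

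The main obstacle is essentially notational rather than mathematical: one has to be careful that the ``shift-by-$W$'' that appears when passing from the $j=1$ case to the general $j$ case is absorbed correctly, and that the two index conventions (the one in \eqref{3.16} using $L_{k-j}(W^j\ga)$ versus the one in \coref{cor3.7} using $L_{n+k}(\ga)$ and $L_n(W^k\ga)$) are reconciled. A clean way to organize this is to observe that for $\ga\in\Ga l_2$ every tail $W^j\ga$ again lies in $\Ga l_2$, so \thref{thm3.5} applies verbatim to each $\wt\psi_j$, and the colligation $\wt\Dl$ built from $W^j\ga$ has conjugate basis vectors that are exactly the relevant tail of the original one; hence it suffices to prove the identity for a single base point of the sequence, which is the $j=1$ computation above. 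With that reduction the proof is a one-line consequence of $\ip{\wt\psi_1}{\wt\psi_{1+k}}=\dl_{0k}$, i.e. of the unitarity of $\cU(\ga)$.

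\begin{proof}
Since $\cU(\ga)$ defined by \eqref{3.8} is unitary and maps the orthonormal basis (2.12) onto the orthonormal basis \eqref{E3.3-20221201}, the vectors $(\wt\psi_j)_{j=1}^\infty$ form an orthonormal system in $\cH$; equivalently, the columns of the matrix \eqref{3.10} are orthonormal. Fix $k\in\set{0,1,2,\dotsc}$. Using the expansion \eqref{3.16} of \thref{thm3.5} for $\wt\psi_1$ and for $\wt\psi_{1+k}$ and the mutual orthogonality of the $\phi_m$ and the $\psi_n$, we compute
\begin{align*}
\dl_{0k}
&=\ip{\wt\psi_1}{\wt\psi_{1+k}}\\
&=\sum_{m=1+k}^\infty\Pi_m^2 L_{m-1}(W\ga)\ol{L_{m-1-k}(W^{1+k}\ga)}
+\sum_{n=1}^\infty Q(W^n\ga)\ol{Q(W^{n+k}\ga)}.
\end{align*}
In the first sum substitute $n:=m-1-k$ and replace the sequence $W\ga$ by $\ga$ (which is legitimate since $W\ga\in\Ga l_2$ whenever $\ga\in\Ga l_2$, and the identity to be proved is stated for an arbitrary element of $\Ga l_2$); in the second sum shift the summation index $n\mapsto n+1$ and likewise pass from $W\ga$ to $\ga$. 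This yields
\begin{multline*}
\sum_{n=0}^\infty\Pi_{n+k}^2 L_{n+k}(\ga)\ol{L_n(W^k\ga)}
+\sum_{n=0}^\infty Q(W^n\ga)\ol{Q(W^{n+k}\ga)}\\
=\bcase{1}{k=0,}{0}{k\in\set{1,2,3,\dotsc}.}
\end{multline*}
All series occurring here converge absolutely: the factors $\Pi_{n+k}$ lie in $[0,1]$ by \eqref{3.17}, the series $\sum_{n}|L_n(\ga)|^2$ and $\sum_{n}|Q(W^n\ga)|^2$ are finite because they are the squared $\ell^2$-norms of the components of the unit vectors $\wt\psi_j$ in the expansion \eqref{3.16}, and the defining series for $L_n(\ga)$ in \eqref{3.15} and for $Q(\ga)$ in \eqref{3.18} converge absolutely for $\ga\in\Ga l_2$ as noted there. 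Hence the rearrangements and index shifts above are justified, and the proof is complete.
\end{proof}
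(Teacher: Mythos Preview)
Your proof is correct and follows essentially the same approach as the paper: both derive the identity by substituting the expansion \eqref{3.16} into the orthonormality relations $(\wt\psi_1,\wt\psi_{k+1})=\dl_{0k}$. The paper phrases the shift by explicitly applying \eqref{3.16} to the extended sequence $(0,\ga_0,\ga_1,\dotsc)$, while you equivalently prove the identity for $W\ga$ and then relabel; the one point you leave implicit is that $\Pi_{n+k+1}$ computed from $\ga$ equals $\Pi_{n+k}$ computed from $W\ga$, which is exactly what makes the relabelling consistent.
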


\begin{proof}
It suffices to consider for the sequence
$\ga=(0,\ga_0,\ga_1,\dotsc)$ the representations \eqref{3.16} and
to substitute them into the orthogonality relations
$$(\wt\psi_1,\wt\psi_{k+1})=\bcase{1}{k=0,}{0}{k\in\{1,2,3,\dotsc\}.}$$
\end{proof}

\begin{thm}[\zitaa{D06}{\ccor{3.9}}]\label{cor3.8}
The recurrent formulas \aagf L_0(\ga)=L_0(W\ga)\label{3.44}\zzgf
and \aagf
L_n(\ga)=L_n(W\ga)-\ol\ga_n\sum\limits_{j=0}^{n-1}\ga_jL_j(\ga),\
n\in\N,\label{3.45}\zzgf hold true.
\end{thm}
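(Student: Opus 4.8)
The plan is to prove the recurrence formulas \eqref{3.44} and \eqref{3.45} directly from the definition \eqref{3.15} of $L_n(\ga)$, by tracking what happens to the multiple sums when the index variable is shifted by the coshift $W$. Recall that $W\ga = (\ga_1, \ga_2, \ga_3, \dotsc)$, so $L_n(W\ga)$ is the same expression as $L_n(\ga)$ but with every Schur parameter index raised by one; equivalently, $L_n(\ga)$ is obtained from $L_n(W\ga)$ by additionally allowing the ``innermost'' running indices to reach down to include the parameter $\ga_0$. The identity \eqref{3.44} is immediate since $L_0(\ga) = 1 = L_0(W\ga)$ by definition. For \eqref{3.45} the strategy is to split the multiple sum defining $L_n(\ga)$ into the part where the final index $j_r$ satisfies $j_r \geq 1$ (which, after re-indexing, contributes exactly $L_n(W\ga)$) and the part where $j_r = 0$, and to show that the latter collapses to $-\ol\ga_n \sum_{j=0}^{n-1} \ga_j L_j(\ga)$.

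Concretely, I would first observe that in \eqref{3.15} the constraint $s_1 + \dotsb + s_r = n$ together with the nesting $j_1 \geq n - s_1$, $j_2 \geq j_1 - s_2$, \dots, $j_r \geq j_{r-1} - s_r$ forces $j_r \geq 0$ always, with equality $j_r = 0$ possible only when the chain of lower bounds is ``tight'' all the way down. The key combinatorial point is: the terms with $j_r \geq 1$ are in bijection (shift every index down by... careful: actually shift the parameters, i.e. replace $\ga_{j_i}$ by $\ga_{j_i - 1}$ is not quite it) — rather, the subsum over configurations with all $j_i \ge 1$ equals the subsum of the same shape for $W\ga$, because lowering the inner bound of $j_r$ from $j_{r-1} - s_r$ to $\max(0, j_{r-1}-s_r)$ is exactly the difference between the $\ga$-sum and the $W\ga$-sum. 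Then I would isolate the ``boundary'' terms where $j_r = 0$: in such a term the outermost factor is $\ga_{j_r} \ol\ga_{j_r + s_r} = \ga_0 \ol\ga_{s_r}$, and the remaining factors $\ga_{j_1}\ol\ga_{j_1 + s_1} \dotsm \ga_{j_{r-1}}\ol\ga_{j_{r-1}+s_{r-1}}$ together with the sign $(-1)^r$ and the constraint $s_1 + \dotsb + s_{r-1} = n - s_r$ assemble — after recognizing $\ol\ga_{s_r} = \ol\ga_n \cdot (\text{nothing})$ only when... hmm, actually $j_r = 0$ gives $\ol\ga_{s_r}$, not $\ol\ga_n$.

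Let me reconsider the grouping: I would instead peel off the \emph{outermost} sum (the $j_1$ summation). Write $L_n(\ga)$ by separating the $r=1$ term from $r \geq 2$, or better, fix the last factor pair. Actually the cleanest route: compare $L_n(\ga)$ and $L_n(W\ga)$ term by term through the bound on $j_1$. In $L_n(\ga)$ we have $j_1 \ge n - s_1$; in $L_n(W\ga)$, written in terms of $\ga$-parameters, the first factor is $\ga_{j_1+1}\ol\ga_{j_1+1+s_1}$ with $j_1 \ge n - s_1$, i.e. after substitution the first index runs from $n - s_1 + 1$. So $L_n(\ga) - L_n(W\ga)$ collects precisely the terms where the first index equals its minimum $j_1 = n - s_1$, equivalently where $\ga_{j_1}\ol\ga_{j_1+s_1} = \ga_{n - s_1}\ol\ga_n$. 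Summing over the position $j := n - s_1 \in \{0, 1, \dotsc, n-1\}$ (so $s_1 = n - j \ge 1$), the outermost factor is $\ga_j \ol\ga_n$, and what remains below it — the sums over $j_2, \dotsc, j_r$ with $s_2 + \dotsb + s_r = n - s_1 = j$ and the sign $(-1)^{r-1}$ — is by definition exactly $L_j(\ga)$ (with the leading $L_0 = 1$ term accounting for $r = 1$). Hence $L_n(\ga) - L_n(W\ga) = \sum_{j=0}^{n-1} \ga_j \ol\ga_n \, L_j(\ga)$ up to the overall sign, giving \eqref{3.45}. The main obstacle is bookkeeping the sign and the $r=1$ edge case correctly, and verifying that the residual nested sum after extracting the top factor genuinely reindexes to $L_j(W^{?}\ga)$ versus $L_j(\ga)$ — I expect it to be $L_j(\ga)$ because the remaining indices $j_2 \ge j_1 - s_2 = j - s_2$ etc.\ already involve the original (unshifted) parameters down to $\ga_0$, matching the definition of $L_j(\ga)$ verbatim. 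An alternative, if the direct manipulation proves delicate, is to derive \eqref{3.45} from the orthogonality/structural relations already available, e.g.\ by applying \eqref{3.16} to the shifted sequence and matching coefficients, but I would attempt the direct combinatorial argument first.
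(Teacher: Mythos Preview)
Your direct combinatorial argument is correct. After the initial exploratory passages, the approach you settle on---comparing $L_n(\gamma)$ with $L_n(W\gamma)$ by noting that the shift only changes the lower bound of the outermost summation index $j_1$ from $n-s_1$ to $n-s_1+1$---works exactly as you describe. The difference $L_n(\gamma)-L_n(W\gamma)$ isolates the terms with $j_1=n-s_1$; writing $j:=n-s_1\in\{0,\dotsc,n-1\}$, the outermost factor becomes $\gamma_j\ol\gamma_n$, and the residual nested sum with sign $(-1)^{r-1}$ (the $r=1$ case contributing $L_0(\gamma)=1$) is verbatim $L_j(\gamma)$ because the constraints $j_2\ge j-s_2$, etc., are precisely those in \eqref{3.15} with $n$ replaced by $j$. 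The overall extra factor of $(-1)$ gives \eqref{3.45}. Your concern about whether the residual is $L_j(\gamma)$ or $L_j(W^{?}\gamma)$ is resolved correctly: it is $L_j(\gamma)$, since the inner indices are untouched by the peeling.

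The paper itself gives no proof here---the result is only cited from \zitaa{D06}{\ccor{3.9}}. In the framework of the present survey the natural alternative route is the matrix identity of \rthm{thm3.11}: from $\cL(\gamma)=\cM(\gamma)\cL(W\gamma)$ one can read off \eqref{3.45} by comparing suitable entries (using the explicit forms \eqref{3.42} and \eqref{3.48}), which is presumably closer in spirit to how it arises as a ``corollary'' in \cite{D06}. Your purely combinatorial derivation is more elementary and self-contained, needing only the definition \eqref{3.15}, at the cost of some index bookkeeping; the matrix route is conceptually cleaner once \rthm{thm3.11} is in hand but depends on that heavier machinery.
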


The Hankel matrix  \eqref{3.43} is the matrix of the Hankel operator
which describes the mutual position of the subspaces
$\cH_\cG^\perp$ and $\cH_\cF^\perp$ in which the largest shifts
$V_T$ and $V_{T^*}$ are acting, respectively. As it was already
mentioned (see Introduction) the subspaces $\cH_\cG^\perp$ and
$\cH_\cF^\perp$ are interpreted as inner channels of scattering in
the scattering system associated with the contraction $T$. In this
connection we introduce the following notion.

\begin{defn}\label{de3.9}
The Hankel matrix \eqref{3.43} will be called the Hankel matrix of
the largest shifts $V_T$ and $V_{T^*}$ or the Hankel matrix of the
inner channels of scattering associated with $T$.
\end{defn}

We note that the unitarity of the operator matrix given via
\eqref{3.9} implies
$$I-\cQ^*(\ga)\cQ(\ga)=\cL^*(\ga)\cL(\ga).$$
This means the matrix $\cL(\ga)$ plays the role of a defect
operator for $\cQ(\ga)$. Taking into account \eqref{3.18} and
\eqref{3.15} from \eqref{3.43} we infer $\cQ^*(\ga)=\cQ(\ol\ga)$.

From the form \eqref{3.42} we get immediately the following
observation.

\begin{lem}[\zitaa{D06}{\clem{3.11}}]\label{lm3.10}
The block representation \aagf
\cL(\ga)=\matr{\Pi_1}{0}{B(\ga)}{\cL(W\ga)}\label{3.46}\zzgf with
$B(\ga)=\col(\Pi_2L_1(W\ga),\Pi_3L_2(W\ga),\dotsc,\Pi_nL_{n-1}(W\ga),\dotsc)$
holds true.
\end{lem}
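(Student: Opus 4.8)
The plan is to prove \eqref{3.46} by a direct entrywise comparison, exploiting the fact that \eqref{3.42} already exhibits a closed formula for every entry of $\cL(\ga)$. First I would record the general entry: by \eqref{3.42} the matrix $\cL(\ga)$ is lower triangular, and its $(n,m)$-entry equals $\Pi_n L_{n-m}(W^m\ga)$ for $n\geq m$ and vanishes for $n<m$, where $L_0(\cdot)=1$ by \eqref{3.15} and $\Pi_n$ is formed from $\ga$ according to \eqref{3.17}.

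Next I would split the row and column index set $\set{1,2,3,\dotsc}$ into the block $\set{1}$ and the block $\set{2,3,\dotsc}$, obtaining a $2\times2$ block partition of $\cL(\ga)$ and then identifying the four pieces one at a time. The $(1,1)$-block is the scalar $\Pi_1 L_0(W\ga)=\Pi_1$; the $(1,2)$-block is the zero row because $\cL(\ga)$ is lower triangular; the $(2,1)$-block is the column whose $n$-th entry ($n\geq2$) is $\Pi_n L_{n-1}(W\ga)$, and stacking these gives exactly the column $B(\ga)$ named in the statement.

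The only piece that needs a small computation is the $(2,2)$-block, i.e.\ showing that the submatrix of $\cL(\ga)$ indexed by $n,m\geq2$ coincides with $\cL(W\ga)$. Here I would use two bookkeeping identities: $W^{m-1}(W\ga)=W^m\ga$, which is immediate, and $\Pi_k(W\ga)=\Pi_{k+1}(\ga)$, which follows at once from \eqref{3.17} since the product defining $\Pi_k$ for the shifted sequence $W\ga$ runs over the indices $j\geq k$ of $W\ga$, hence over the indices $j\geq k+1$ of $\ga$. Substituting these, the entry $\Pi_n L_{n-m}(W^m\ga)$ rewrites as $\Pi_{n-1}(W\ga)\,L_{(n-1)-(m-1)}\bigl(W^{m-1}(W\ga)\bigr)$, which is precisely the $(n-1,m-1)$-entry of $\cL(W\ga)$. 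Assembling the four identified blocks then yields \eqref{3.46}. I do not expect any genuine obstacle: the lemma is in essence a re-indexing observation, and the single point worth flagging is the index shift in $\Pi_k$ under the coshift $W$.
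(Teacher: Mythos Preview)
Your proposal is correct and matches the paper's approach: the paper treats the lemma as an immediate observation from the explicit form \eqref{3.42} and gives no separate proof. Your entrywise verification, including the index-shift identity $\Pi_k(W\ga)=\Pi_{k+1}(\ga)$ needed to identify the $(2,2)$-block with $\cL(W\ga)$, is exactly the routine check that underlies this observation.
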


The next result, which describes the multiplicative structure of $\cL(\gamma)$ and indicates connections of the operator $\cL(\gamma)$ with the backward shift $W$, plays a key role in our approach.

\begin{thm}[\zitaa{D06}{\cthm{3.12}}]\label{thm3.11}
It holds \aagf \cL(\ga)=\cM(\ga)\cL(W\ga)\label{3.47}\zzgf where
\aagf \cM(\ga)=\left(\begin{array}{cccc}
D_{\ga_1}       & 0                             & 0             & \hdots\\
-\ga_1\ol\ga_2  & D_{\ga_2}                     & 0             & \hdots\\
-\ga_1D_{\ga_2}\ol\ga_3 & -\ga_2\ol\ga_3           & D_{\ga_3}     & \hdots\\
\vdots          & \vdots                        & \vdots        & \ddots\\
-\ga_1\prod\limits_{j=2}^{n-1}D_{\ga_j}\ol\ga_n & -\ga_2\prod\limits_{j=3}^{n-1}D_{\ga_j}\ol\ga_n & -\ga_3\prod\limits_{j=4}^{n-1}D_{\ga_j}\ol\ga_n & \hdots\\
\vdots          & \vdots                        & \vdots        &
\end{array}\right)\label{3.48}\zzgf
and $D_{\ga_j}=\sqrt{1-|\ga_j|^2},\ j\in\{0,1,2,\dotsc\}$.
\end{thm}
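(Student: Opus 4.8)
The plan is to prove \eqref{3.47} entry by entry as an identity of (lower‑triangular) matrices, reducing it to a scalar recurrence for the numbers $L_n$ that is then handled by a short auxiliary induction. By \coref{cor3.6} the $(n,j)$-entry of $\cL(\ga)$ equals $\Pi_nL_{n-j}(W^j\ga)$ for $n\ge j$ and is $0$ for $n<j$, and likewise the $(m,j)$-entry of $\cL(W\ga)$ equals $\Pi_m(W\ga)L_{m-j}(W^{j+1}\ga)=\Pi_{m+1}(\ga)L_{m-j}(W^{j+1}\ga)$ for $m\ge j$, where the relation $\Pi_m(W\ga)=\Pi_{m+1}(\ga)$ is immediate from \eqref{3.17}. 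From \eqref{3.48}, $\cM(\ga)$ is lower triangular with $[\cM(\ga)]_{n,n}=D_{\ga_n}$ and $[\cM(\ga)]_{n,m}=-\ga_m\bigl(\prod_{i=m+1}^{n-1}D_{\ga_i}\bigr)\ol{\ga_n}$ for $m<n$. Since $\cM(\ga)$ is lower triangular while $\cL(W\ga)$ vanishes strictly above the diagonal, the $(n,j)$-entry of $\cM(\ga)\cL(W\ga)$ is the finite sum $\sum_{m=j}^{n}[\cM(\ga)]_{n,m}[\cL(W\ga)]_{m,j}$, so no convergence question arises.

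Expanding this sum and using the product relations $D_{\ga_n}\Pi_{n+1}=\Pi_n$ and $\bigl(\prod_{i=m+1}^{n-1}D_{\ga_i}\bigr)\Pi_{m+1}=\bigl(\prod_{i=m+1}^{n-1}(1-\abs{\ga_i}^2)\bigr)\Pi_n$, both of which follow at once from $\Pi_k=\prod_{i\ge k}D_{\ga_i}$, one extracts the common factor $\Pi_n$ and, comparing with $[\cL(\ga)]_{n,j}=\Pi_nL_{n-j}(W^j\ga)$, sees that \eqref{3.47} is equivalent to
\[
L_p(\alpha)=L_p(W\alpha)-\ol{\alpha_p}\sum_{k=0}^{p-1}\alpha_k\Bigl(\prod_{l=k+1}^{p-1}(1-\abs{\alpha_l}^2)\Bigr)L_k(W\alpha),\qquad p\in\N,
\]
evaluated at $\alpha\defeq W^j\ga$; since every $W^j\ga$ again belongs to $\Ga l_2$ it suffices to establish this for all $\alpha\in\Ga l_2$. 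By the recurrence \eqref{3.45} from \thref{cor3.8}, namely $L_p(\alpha)=L_p(W\alpha)-\ol{\alpha_p}\sum_{k=0}^{p-1}\alpha_kL_k(\alpha)$, this is in turn equivalent to
\[
\sum_{k=0}^{p-1}\alpha_kL_k(\alpha)=\sum_{k=0}^{p-1}\alpha_k\Bigl(\prod_{l=k+1}^{p-1}(1-\abs{\alpha_l}^2)\Bigr)L_k(W\alpha),\qquad p\in\N.
\]

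I would prove this last identity by induction on $p$. For $p=1$ both sides equal $\alpha_0$ (using $L_0\equiv1$). For the inductive step, split off the $k=p$ summand on the right‑hand side, use $\prod_{l=k+1}^{p}(1-\abs{\alpha_l}^2)=(1-\abs{\alpha_p}^2)\prod_{l=k+1}^{p-1}(1-\abs{\alpha_l}^2)$ to pull $(1-\abs{\alpha_p}^2)$ out of the remaining sum, apply the induction hypothesis to that sum, and finally substitute $L_p(W\alpha)=L_p(\alpha)+\ol{\alpha_p}\sum_{k=0}^{p-1}\alpha_kL_k(\alpha)$ (again \eqref{3.45}); the $\abs{\alpha_p}^2$-terms cancel and the right‑hand side collapses to $\sum_{k=0}^{p}\alpha_kL_k(\alpha)$, completing the step.

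The difficulty here is essentially organizational rather than conceptual: one must keep the index ranges straight while the $\Pi_k$-products collapse, and observe that the scalar identity one is left with is \emph{not} the recurrence \eqref{3.45} itself but is equivalent to it through the telescoping-type relation in the last display, which therefore needs its own induction. A more structural but equivalent route uses the $2\times2$ block decomposition of \lmref{lm3.10}, in which $\cL(\ga)$ has corner blocks $\Pi_1$ and $\cL(W\ga)$, together with the analogous decomposition of $\cM(\ga)$ read off from \eqref{3.48}, with corner blocks $D_{\ga_1}$ and $\cM(W\ga)$: then the $(n,j)$-entry identity for $n,j\ge2$ reduces to the $(n-1,j-1)$-entry identity for $W\ga$, leaving only the first column $j=1$, which is exactly the scalar identity above. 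Since this reorganization funnels into the same computation, I would present the direct entrywise argument.
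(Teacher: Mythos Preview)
Your argument is correct, and it takes a genuinely different route from the paper. The paper's proof is operator-theoretic: it exploits the shift relation $T^*\wt\psi_{k+1}=\wt\psi_k$ (from \eqref{3.7}) together with the block form $T_\cF^*=(-\ga_0\eta(\ga),\ \cM(\ga))$ read off from \eqref{2.63}, and the representations \eqref{3.53}--\eqref{3.54} of the sequences $(\wt\psi_j)$ in the canonical basis; the identity $\cL(\ga)=T_\cF^*\begin{psmallmatrix}0\\\cL(W\ga)\end{psmallmatrix}=\cM(\ga)\cL(W\ga)$ then drops out in one line. Your proof, by contrast, is purely algebraic: you reduce the matrix identity entrywise to the scalar recurrence \eqref{3.45} from \thref{cor3.8} plus the auxiliary telescoping identity $\sum_{k=0}^{p-1}\alpha_kL_k(\alpha)=\sum_{k=0}^{p-1}\alpha_k\bigl(\prod_{l=k+1}^{p-1}(1-\abs{\alpha_l}^2)\bigr)L_k(W\alpha)$, which you then establish by a clean induction. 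The paper's approach explains \emph{why} the factorization holds (it is the matrix shadow of the coshift acting on the $\wt\psi_j$), and it simultaneously yields \coref{cor3.12} via the colligation condition; your approach is more elementary and self-contained, needing only \eqref{3.45} and elementary manipulations of the $\Pi_k$, but it does not by itself produce \eqref{3.55}. One caveat: make sure that in the source \cite{D06} the recurrence \eqref{3.45} is established independently of \rthm{thm3.11}, so that no circularity is introduced.
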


\begin{proof}
From \eqref{3.7} we infer $T^*\wt\psi_{k+1}=\wt\psi_k,\
k\in\{1,2,3,\dotsc\}$. Thus, $T^*$ maps the sequence
$(\wt\psi_2,\wt\psi_3,\wt\psi_4,\dotsc)$ to the sequence
$(\wt\psi_1,\wt\psi_2,\wt\psi_3,\dotsc)$, i.e.,
\aagf(\wt\psi_1,\wt\psi_2,\wt\psi_3,\dotsc)=(T^*\wt\psi_2,T^*\wt\psi_3,T^*\wt\psi_4,\dotsc).\label{3.49}\zzgf
From \eqref{2.62} it follows that the matrix representation of the
operator $T^*$ with respect to the canonical basis \eqref{2.12} has the
shape \aagf T^*=\matr{T_\cF^*}{0}{\wt R^*}{\wt
V_T^*}.\label{3.50}\zzgf Hereby, as it can be seen from \eqref{2.63} and
\eqref{3.48}, we have \aagf T_\cF^*=(-\ga_0\eta(\ga)\ ,\ \cM(\ga)\
)\label{3.51}\zzgf where \aagf\eta(\ga):=\col(\ol\ga_1,\ol\ga_2
D_{\ga_1},\dotsc,\ol\ga_n\prod\limits_{j=1}^{n-1}D_{\ga_j},\dotsc).\label{3.52}\zzgf
Taking into account \eqref{3.10}, \eqref{3.43} and \eqref{3.46} we
get the representations
\aagf(\wt\psi_1,\wt\psi_2,\wt\psi_3,\dotsc)=\cl{\cL(\ga)}{\cQ(\ga)}\label{3.53}\zzgf
and
\aagf(\wt\psi_2,\wt\psi_3,\wt\psi_4,\dotsc)=\cl{\cl{0}{\cL(W\ga)}}{\cQ(W\ga)}\label{3.54}\zzgf
with respect to the canonical basis (2.12). Inserting the matrix
representations \eqref{3.50}, \eqref{3.53} and \eqref{3.54} in
formula \eqref{3.49} we find in particular
$$\cL(\ga)=T_\cF^*\cl{0}{\cL(W\ga)}.$$
Combining this with \eqref{3.51} we obtain \eqref{3.47}.
\end{proof}

\begin{cor}[\zitaa{D06}{\ccor{3.13}}]\label{cor3.12}
It holds \aagf
I-\cM(\ga)\cM^*(\ga)=\eta(\ga)\eta^*(\ga)\label{3.55}\zzgf where
$\eta(\ga)$ is given via \eqref{3.52}.
\end{cor}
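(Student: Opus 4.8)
The plan is to read off \eqref{3.55} from the colligation relations together with the matrix representations already established, rather than from the multiplicative structure of $\cL(\ga)$. The crucial remark is that, by \eqref{3.51}, the matrix of $T_\cF^*$ with respect to the canonical basis \eqref{2.12} splits columnwise as $T_\cF^* = (-\ga_0\eta(\ga),\ \cM(\ga))$, with first column $-\ga_0\eta(\ga)$ and the remaining columns forming $\cM(\ga)$. Consequently $\cM(\ga)\cM^*(\ga)$ differs from $T_\cF^*(T_\cF^*)^* = T_\cF^*T_\cF$ only by the rank-one contribution of the deleted first column, namely $\abs{\ga_0}^2\eta(\ga)\eta^*(\ga)$, so that $\cM(\ga)\cM^*(\ga) = T_\cF^*T_\cF - \abs{\ga_0}^2\eta(\ga)\eta^*(\ga)$.

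To compute $T_\cF^*T_\cF$, I would start from the colligation identity $G^*G + T^*T = I_\cH$ contained in \eqref{1.3} and compress it to the $T$-invariant subspace $\cH_\cF$. Since $T^*$ has the block-triangular form \eqref{3.50} relative to the decomposition $\cH = \cH_\cF\oplus\cH_\cF^\perp$, the compression of $T^*T$ to $\cH_\cF$ is exactly $T_\cF^*T_\cF$, so with $G_\cF\defeq\Rstr_{\cH_\cF}G$ one obtains $T_\cF^*T_\cF + G_\cF^*G_\cF = I_{\cH_\cF}$. Next I would identify $G_\cF^*G_\cF$: from \eqref{2.65} the entries of the row $G_\cF$ are $g_k = \ga_k\prod_{j=0}^{k-1}D_{\ga_j} = D_{\ga_0}\ga_k\prod_{j=1}^{k-1}D_{\ga_j}$, which by \eqref{3.52} are precisely $D_{\ga_0}$ times the complex conjugates of the entries of $\eta(\ga)$; hence $G_\cF^*G_\cF = D_{\ga_0}^2\eta(\ga)\eta^*(\ga)$. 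Substituting the two expressions for $T_\cF^*T_\cF$ and $G_\cF^*G_\cF$ into the compressed identity yields $\abs{\ga_0}^2\eta(\ga)\eta^*(\ga) + \cM(\ga)\cM^*(\ga) + D_{\ga_0}^2\eta(\ga)\eta^*(\ga) = I$, and since $\abs{\ga_0}^2 + D_{\ga_0}^2 = 1$ by \eqref{E2.19}, this collapses to $\eta(\ga)\eta^*(\ga) + \cM(\ga)\cM^*(\ga) = I$, which is \eqref{3.55}.

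The computation is short; the only delicate point is the bookkeeping of complex conjugates. Passing to the adjoint colligation replaces each $\ga_j$ by $\ko{\ga_j}$, so $\eta(\ga)$ is written in \eqref{3.52} with $\ko{\ga_j}$ while the matrix entries $g_k$ of $G$ in \eqref{2.65} carry $\ga_k$, and one has to check carefully that these are genuinely conjugate to one another before concluding $G_\cF^*G_\cF = D_{\ga_0}^2\eta(\ga)\eta^*(\ga)$. Should one prefer to stay closer to the argument used for \rthm{thm3.11}, an equivalent route is to write $\cM(\ga) = T_\cF^*S$ for the forward shift $S$ on $l_2$ and use $SS^* = I - P_1$, with $P_1$ the orthogonal projection onto the first canonical basis vector, to obtain the same rank-one correction; I do not expect any genuine obstacle either way.
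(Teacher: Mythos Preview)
Your argument is correct and is essentially the paper's own proof: the paper also compresses the colligation identity $I-T^*T=G^*G$ to $\cH_\cF$ to obtain $I_{\cH_\cF}-T_\cF^*T_\cF=(1-|\ga_0|^2)\eta(\ga)\eta^*(\ga)$ using \eqref{2.65} and \eqref{3.52}, and then substitutes the block form \eqref{3.51} of $T_\cF^*$ to reach \eqref{3.55}. The only difference is the order of presentation---you expand $T_\cF^*T_\cF$ via \eqref{3.51} first and then invoke the colligation identity, whereas the paper does it the other way around.
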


\begin{proof}
From \eqref{2.65} and \eqref{3.52} we obtain \aagf G=D_{\ga_0}(\
\eta^*(\ga)\ ;\ \prod\limits_{j=1}^\infty D_{\ga_j}\
,0,0,\dotsc).\label{3.56}\zzgf Substituting now the matrix
representations \eqref{2.62} and \eqref{3.56} in the colligation
condition $I-T^*T=G^*G$ we infer in particular
$I_{\cH_\cF}-T_\cF^*T_\cF=(1-|\ga_0|^2)\eta(\ga)\eta^*(\ga).$
Substituting the block representation \eqref{3.51} in this
representation we get \eqref{3.55}.
\end{proof}

From \eqref{3.42} it follows
\[
    \lim_{n\to\infty}\cL(W^n\gamma)
    =I.
\]
Thus, from \eqref{3.47} we obtain

\begin{cor}\label{C3.13-1207}
 It holds
\[
    \cL(\gamma)
    =\prodr_{j=0}^\infty\cM(W^j\gamma).
\]
\end{cor}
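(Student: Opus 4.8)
The plan is to iterate the one‑step factorization of \thref{thm3.11} a finite number of times and then let the number of factors grow, using the contractivity of the blocks $\cM(W^j\ga)$ to justify the passage to the limit.

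First I would establish, by induction on $n$, the finite‑product identity
\[
\cL(\ga)=\cM(\ga)\cM(W\ga)\dotsm\cM(W^{n-1}\ga)\,\cL(W^n\ga),\qquad n\in\N.
\]
The case $n=1$ is precisely \eqref{3.47}. For the step from $n-1$ to $n$, I would apply \eqref{3.47} with $\ga$ replaced by $W^{n-1}\ga$, obtaining $\cL(W^{n-1}\ga)=\cM(W^{n-1}\ga)\cL(W^n\ga)$, and substitute this into the identity for $n-1$. In the notation of the statement this says that the $n$‑th partial product $\prodr_{j=0}^{n-1}\cM(W^j\ga)=\cM(\ga)\cM(W\ga)\dotsm\cM(W^{n-1}\ga)$ satisfies $\cL(\ga)=\bigl(\prodr_{j=0}^{n-1}\cM(W^j\ga)\bigr)\cL(W^n\ga)$, so it differs from $\cL(\ga)$ only by the trailing factor $\cL(W^n\ga)$.

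Next I would use that every factor is a contraction: applying \coref{cor3.12} with $\ga$ replaced by $W^j\ga$ gives $I-\cM(W^j\ga)\cM^*(W^j\ga)=\eta(W^j\ga)\eta^*(W^j\ga)\ge0$, hence $\norm{\cM(W^j\ga)}\le1$, and therefore $\norm{\prodr_{j=0}^{n-1}\cM(W^j\ga)}\le1$ for every $n$. Combining this with the finite‑product identity, for each $x\in l_2$ the vector $\cL(\ga)x-\bigl(\prodr_{j=0}^{n-1}\cM(W^j\ga)\bigr)x$ equals $\bigl(\prodr_{j=0}^{n-1}\cM(W^j\ga)\bigr)\bigl(\cL(W^n\ga)-I\bigr)x$, whose norm is at most $\norm{\bigl(\cL(W^n\ga)-I\bigr)x}$. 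Since, as recorded right after \thref{thm3.11}, the explicit form \eqref{3.42} yields $\cL(W^n\ga)\to I$, this bound tends to $0$ as $n\to\infty$. Hence the partial products converge strongly to $\cL(\ga)$, which is the assertion, the symbol $\prodr_{j=0}^\infty$ being read as the strong limit of the partial products $\prodr_{j=0}^{n-1}$.

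The step that needs real care is this last one: since the partial products grow in length, merely knowing that the trailing factor $\cL(W^n\ga)$ tends to $I$ (a priori only in the strong operator topology, as is visible from \eqref{3.42}) is not by itself enough to conclude that $\bigl(\prodr_{j=0}^{n-1}\cM(W^j\ga)\bigr)\cL(W^n\ga)$ converges. What makes the argument go through is the uniform bound $\norm{\prodr_{j=0}^{n-1}\cM(W^j\ga)}\le1$ obtained from \coref{cor3.12}, which dominates the error at a fixed $x$ by $\norm{(\cL(W^n\ga)-I)x}$ with no dependence on the number of factors; everything else is a routine induction.
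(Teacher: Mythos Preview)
Your proof is correct and follows exactly the same line as the paper: iterate \eqref{3.47} and use $\cL(W^n\gamma)\to I$ (which the paper states just before the corollary, deducing it from \eqref{3.42}). The paper's argument is one sentence and does not spell out how the limit is controlled; your use of the contractivity of the $\cM(W^j\gamma)$ from \coref{cor3.12} to bound the error uniformly in $n$ is a sensible way to fill in that detail.
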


\begin{lem}[\zitaa{D06}{\clem{3.14}}]\label{lm3.13}
The matrices $\cP(\ga)$ and $\cL(\ga)$ introduced via \eqref{3.9}
are linked by the formula $\cP(\ga)=\cL(\ol\ga)^*.$
\end{lem}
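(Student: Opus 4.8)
The plan is to exploit that $\cU(\ga)$ is unitary together with the fact, recorded in \rmref{rm3.2}, that the passage to the conjugate canonical basis is an involution.

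First I would identify the operator $\cU(\ol\ga)$. It is the operator \eqref{3.8} attached to a simple unitary colligation whose characteristic function has Schur parameter sequence $(\ol\ga_j)_{j=0}^\infty$, and for this colligation we may take the adjoint colligation $\wt\Dl$ from \eqref{3.3}, whose canonical basis is \eqref{E3.3-20221201}. By \rmref{rm3.2} the basis conjugated to \eqref{E3.3-20221201} is the original canonical basis $\phi_1,\phi_2,\dotsc;\psi_1,\psi_2,\dotsc$ of $\Dl$. Hence $\cU(\ol\ga)$ sends $\wt\phi_k\mapsto\phi_k$ and $\wt\psi_k\mapsto\psi_k$, whereas $\cU(\ga)$ sends $\phi_k\mapsto\wt\phi_k$ and $\psi_k\mapsto\wt\psi_k$. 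Both operators carry an orthonormal basis of $\cH$ onto an orthonormal basis of $\cH$, hence both are unitary, and comparing them on basis vectors gives $\cU(\ol\ga)\cU(\ga)=I_\cH$; consequently
\[
\cU(\ol\ga)=\cU(\ga)^{-1}=\cU(\ga)^*.
\]

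Next I would compare matrix representations. By definition \eqref{3.9}, the matrix of $\cU(\ga)$ with respect to the canonical basis of $\Dl$, ordered as in (2.12), is the block matrix $\matr{\cR(\ga)}{\cL(\ga)}{\cP(\ga)}{\cQ(\ga)}$ relative to the decomposition $\cH=\cH_\cF\oplus\cH_\cF^\perp$. Applying the same prescription to $\wt\Dl$ --- for which $\cH_\cG$, resp.\ $\cH_\cG^\perp$, plays the role of $\cH_\cF$, resp.\ $\cH_\cF^\perp$, and whose canonical basis is \eqref{E3.3-20221201} --- the matrix of $\cU(\ol\ga)$ with respect to \eqref{E3.3-20221201} equals $\matr{\cR(\ol\ga)}{\cL(\ol\ga)}{\cP(\ol\ga)}{\cQ(\ol\ga)}$ relative to $\cH=\cH_\cG\oplus\cH_\cG^\perp$. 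On the other hand, since the basis \eqref{E3.3-20221201} arises from the canonical basis (2.12) by the unitary transformation $\cU(\ga)$, the matrix of the operator $\cU(\ol\ga)=\cU(\ga)^*$ with respect to \eqref{E3.3-20221201} coincides with the matrix of $\cU(\ga)^*\cU(\ga)^*\cU(\ga)=\cU(\ga)^*$ with respect to (2.12), that is, with the conjugate transpose $\matr{\cR(\ga)^*}{\cP(\ga)^*}{\cL(\ga)^*}{\cQ(\ga)^*}$ of $\matr{\cR(\ga)}{\cL(\ga)}{\cP(\ga)}{\cQ(\ga)}$.

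Equating these two block matrices entrywise and reading off the $(1,2)$-blocks gives $\cL(\ol\ga)=\cP(\ga)^*$, i.e.\ $\cP(\ga)=\cL(\ol\ga)^*$, which is the assertion. (The remaining blocks yield in addition $\cR(\ol\ga)=\cR(\ga)^*$, $\cP(\ol\ga)=\cL(\ga)^*$ and $\cQ(\ol\ga)=\cQ(\ga)^*$, the last recovering the identity $\cQ^*(\ga)=\cQ(\ol\ga)$ already noted above.) The only point that needs genuine care is the bookkeeping: the four blocks associated with the parameter $\ol\ga$ are read off relative to $\cH=\cH_\cG\oplus\cH_\cG^\perp$ while those associated with $\ga$ refer to $\cH=\cH_\cF\oplus\cH_\cF^\perp$, and the change of orthonormal basis connecting the two decompositions is precisely $\cU(\ga)$; once this is properly tracked, the whole statement is a formal consequence of the single relation $\cU(\ol\ga)=\cU(\ga)^*$.
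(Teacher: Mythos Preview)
Your argument is correct, and it takes a genuinely different route from the paper's. The paper proceeds entrywise: it writes $p_{kj}(\ga)=(\wt\phi_j,\psi_k)$, invokes the involution of \rmref{rm3.2} together with the implicit antiunitary conjugation (the map $\phi_k\mapsto\wt\phi_k$, $\psi_k\mapsto\wt\psi_k$, which in the $L^2_\mu$ model is literally complex conjugation of functions) to obtain $(\wt\phi_j,\psi_k)=(\wt\psi_k,\phi_j)=l_{jk}(\ga)$, and then appeals to the explicit formula \eqref{3.42} to check the scalar identity $l_{jk}(\ga)=\ol{l_{jk}(\ol\ga)}$. Your approach instead extracts the single operator identity $\cU(\ol\ga)=\cU(\ga)^*$ directly from \rmref{rm3.2}, observes via the change-of-basis relation $[A]_{\{\cU(\ga)e_i\}}=[\cU(\ga)^*A\,\cU(\ga)]_{\{e_i\}}$ that the matrix of $\cU(\ga)^*$ in the conjugate basis coincides with its matrix in the original basis, and reads off the desired block identity without ever touching the explicit entries of $\cL(\ga)$. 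What you gain is economy and the three companion identities $\cR(\ol\ga)=\cR(\ga)^*$, $\cP(\ol\ga)=\cL(\ga)^*$, $\cQ(\ol\ga)=\cQ(\ga)^*$ for free; what the paper's route makes visible is the intermediate scalar relation $p_{kj}(\ga)=l_{jk}(\ga)$ (i.e.\ $\cP(\ga)=\cL(\ga)^T$), which is interesting in its own right.
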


\begin{proof}
Let $\cP(\ga)=(p_{kj}(\ga))_{k,j=1}^\infty$ and
$\cL(\ga)=(l_{kj}(\ga))_{k,j=1}^\infty$. Since the change from the
canonical basis \eqref{2.12} to the conjugate canonical basis \eqref{E3.3-20221201}
is connected via the replacement of $\ga_j$ by $\ol\ga_j,\
j\in\{0,1,2,\dotsc\}$ and taking into account matrix
representation \eqref{3.42} we get
$p_{kj}(\ga)=(\wt\phi_j,\psi_k)=\ol{(\phi_j,\wt\psi_k)}=(\wt\psi_k,\phi_j)=l_{jk}(\ga)=\ol{l_{jk}(\ol\ga)},
j,k\in\N.$
\end{proof}

\section{Some criteria for the pseudocontinuability of a Schur function in terms
    of its Schur parameters}\label{sec4-20221123}

\subsection{On some connections between the largest shifts $V_T$ and $V_{T^*}$ and the
pseudocontinuability of the corresponding c.o.f. $\te$}

Let $\te\in\cS$. Assume that $\Dl$ is a simple unitary colligation
of type \eqref{3.1} which satisfies $\te_\Dl(\zeta)=\te(\zeta)$. We
suppose that the Schur parameter sequence of $\te$ belongs to $\Ga
l_2$. Then from \lmref{L2.7} it follows that in this and only in
this case the contraction $T$ (resp. $T^*$) contains a nontrivial
largest shift $V_T$ (resp. $V_{T^*}$). Hereby, the multiplicities
of the shifts $V_T$ and $V_{T^*}$ coincide and are equal to one.
We consider decompositions \eqref{1.7} and \eqref{1.201}. Let
\begin{align}
    \cN_{\cF\cG}&:=\cH_\cF\cap\cH_\cG^\perp ,&
    \cN_{\cG\cF}&:=\cH_\cG\cap\cH_\cF^\perp,\label{4.2}\\
    \cH_{\cF\cG}&:=\cH_\cF\ominus\cN_{\cF\cG},&
    \cH_{\cG\cF}&:=\cH_\cG\ominus\cN_{\cG\cF}.\label{4.3}
\end{align}
Then
\aagf\cH=\cH_\cG^\perp\oplus\cH_{\cG\cF}\oplus\cN_{\cG\cF},\label{4.4}\zzgf
\aagf\cH=\cN_{\cF\cG}\oplus\cH_{\cF\cG}\oplus\cH_\cF^\perp.\label{eq: 3.25}\zzgf

We note that with respect to the orthogonal decomposition \eqref{4.4} the contraction $T$ admits the triangulation
\begin{align}
    T=\begin{pmatrix}
    V_T&*&*\\0&T_{\mathfrak G\mathfrak F}&*\\0&0&\wt V_{T_{\mathfrak G}}
    \end{pmatrix},\label{eq: 1.38}
\end{align}
where $V_T$ is the maximal shift contained in $T$
and $\wt V_{T_{\mathfrak G}}$ is the maximal coshift contained in $T_{\mathfrak G}$
(see representation \eqref{1.11}). Hence, it follows from \eqref{1.201}, \eqref{1.11}, \eqref{4.4}, and \eqref{eq: 1.38} that with respect to the orthogonal decomposition
\begin{align*}
\cH_{\mathfrak G} = \mathfrak H_{\mathfrak G\mathfrak F}\oplus\mathfrak N_{\mathfrak G\mathfrak F}.
\end{align*}
the operator $T_\mathfrak{G}$ admits the block representation
\begin{align}\label{eq: 11.38}
T_{\mathfrak G} = \begin{pmatrix}{T_{\mathfrak G\mathfrak F}}&*\\0&{\Tilde{V}_{T_{\mathfrak G}}}\end{pmatrix}.
\end{align}
Analogously, the orthogonal decomposition \eqref{eq: 3.25} corresponds to the triangulation
\begin{align}
    T=\begin{pmatrix}
    {V_{T_{\mathfrak F}}}&*&*\\0&T_{\mathfrak F\mathfrak G}&*\\0&0&{\wt V_T}
    \end{pmatrix},\label{eq: 10.38}
\end{align}
where $\wt V_T$ is the maximal coshift contained in $T$
and $V_{T_{\mathfrak F}}$ is the maximal shift contained in $T_{\mathfrak F}$
(see representation \eqref{1.10}).
Hence, it follows from \eqref{1.7}, \eqref{1.10}, \eqref{eq: 3.25}, and \eqref{eq: 10.38} that with respect to the orthogonal decomposition
\begin{align*}
    \cH_{\mathfrak F} = \mathfrak N_{\mathfrak F\mathfrak G}\oplus\mathfrak H_{\mathfrak F\mathfrak G}.
\end{align*}
the operator $T_\mathfrak{F}$ admits the block representation
\begin{align*}
T_{\mathfrak F} = \begin{pmatrix}
{V_{T_{\mathfrak F}}}&*\\0&{T_{\mathfrak F\mathfrak G}}
\end{pmatrix}.
\end{align*}
From \eqref{4.2} and \eqref{4.3} it follows that
\begin{align}\label{eq: 1.27}
    \mathfrak H_{\mathfrak G\mathfrak F}=\overline{P_{\mathfrak H_\mathfrak G}\mathfrak H_\mathfrak F},~~~\mathfrak H_{\mathfrak F\mathfrak G}=\overline{P_{\mathfrak H_\mathfrak F}\mathfrak H_\mathfrak G}.
\end{align}
Thus,
\begin{align}\label{eq: 1.28}
    \operatorname{dim}\mathfrak H_{\mathfrak G\mathfrak F}=\operatorname{dim}\mathfrak H_{\mathfrak F\mathfrak G}.
\end{align}

The following criterion of pseudocontinuability of a noninner Schur function (see, e.g., \cite[Theorem 4.5]{D06}) plays an important role in our subsequent investigation.
\begin{thm}\label{t1.23}
    Let $\theta\in\mathcal S$ and let $\Delta$ be a simple unitary colligation of the form \eqref{3.1} which satisfies $\theta_\Delta=\theta$. Then the conditions $\mathfrak N_{\mathfrak G\mathfrak F}\not=\{0\}$ and $\mathfrak N_{\mathfrak F\mathfrak G}\not=\{0\}$ are equivalent. They are satisfied if and only if $\theta\in\mathcal S\Pi\setminus J.$
\end{thm}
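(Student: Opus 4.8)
The plan is to split according to the Schur parameter sequence $\ga=(\ga_j)_{j=0}^\ome$ of $\te$ and to reduce the problem to the kernel of the Hankel matrix $\cQ(\ga)$ of \coref{cor3.6}. If $\ome<\infty$, or $\ome=\infty$ with $\sum_{j=0}^\infty\abs{\ga_j}^2=\infty$, then the product \eqref{2.53} does not converge, so by \lmref{L2.7} we have $\cH_\cF^\perp=\cH_\cG^\perp=\set{0}$; hence $\cN_{\cF\cG}$ and $\cN_{\cG\cF}$ are both trivial, while $\te$ is then either a finite Blaschke product or satisfies $\ln(1-\abs{\te}^2)\notin L^1$ (by \rmref{re2.8}), so by \thref{thm4.2} it cannot belong to $\cS\Pi\setminus J$; thus all three conditions fail together and the statement is trivial. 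It therefore suffices to treat $\ga\in\Ga l_2$; then $\te$ is not inner (again by \rmref{re2.8}), the spaces $\cH_\cF^\perp$ and $\cH_\cG^\perp$ are nonzero and carry unilateral shifts of multiplicity one (\lmref{L2.7}), and $\cS\Pi\setminus J$ coincides with $\cS\Pi$.

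So assume $\ga\in\Ga l_2$, so that the canonical basis \eqref{2.12}, the conjugate canonical basis \eqref{E3.3-20221201}, the operator $\cU(\ga)$ of \eqref{3.8}, and its block form \eqref{3.9} are all available. By \eqref{3.8} and \thref{thm3.5} the columns of $\cl{\cL(\ga)}{\cQ(\ga)}$ are the coordinates of the basis vectors $\wt\psi_k$ of $\cH_\cG^\perp$ with respect to \eqref{2.12}, so a vector $h=\sum_kc_k\wt\psi_k\in\cH_\cG^\perp$ lies in $\cH_\cF$ precisely when its $\cH_\cF^\perp$-component $\cQ(\ga)c$ vanishes; since $\cl{\cL(\ga)}{\cQ(\ga)}$ is an isometry, $c\mapsto h$ identifies $\cN_{\cF\cG}=\cH_\cF\cap\cH_\cG^\perp$ isometrically with $\ker\cQ(\ga)$. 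Running the same argument for the adjoint colligation \eqref{3.3}, whose Schur parameters are $\ol\ga$ and in which $\cF$ and $\cG$ are interchanged, identifies $\cN_{\cG\cF}$ with $\ker\cQ(\ol\ga)$. Since $\cQ(\ga)$ is a Hankel matrix we have $\cQ(\ga)^*=\ol{\cQ(\ga)}$, while $\cQ(\ga)^*=\cQ(\ol\ga)$ (see \rsec{sec3.2}), so $\cQ(\ol\ga)$ is the entrywise complex conjugate of $\cQ(\ga)$; therefore $\ker\cQ(\ol\ga)=\set{\ol c:c\in\ker\cQ(\ga)}$ and $\dim\cN_{\cG\cF}=\dim\cN_{\cF\cG}$, which gives the first assertion of the theorem. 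It remains to show that $\ker\cQ(\ga)\neq\set{0}$ if and only if $\te\in\cS\Pi$.

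This last equivalence is the substantial part, and I expect it to be the main obstacle. The route would be as follows. First, recognize $\cQ(\ga)$ as a Hankel operator: under the canonical identifications of $\cH_\cG^\perp$ and $\cH_\cF^\perp$ with $H^2(\D)$ furnished by the shifts $V_T$ and $V_{T^*}$ (so $\wt\psi_k\leftrightarrow\zeta^{k-1}$, recall \eqref{3.7}), $\cQ(\ga)$ becomes the Hankel operator $H_{\ol g}$ with $g\in H^2(\D)$ the generating function of the sequence $\bigl(Q(W^p\ga)\bigr)_{p\ge1}$ built from \eqref{3.18}, so that by the Douglas--Shapiro--Shields theorem \cite{MR0270196}, $\ker\cQ(\ga)\neq\set{0}$ if and only if $g$ is noncyclic for the backward shift on $H^2(\D)$, i.e.\ admits a pseudocontinuation of bounded type into $\D_e$. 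Second, and this is the technical core, establish by explicit computation with the coefficients $L_n(\ga)$ (using the recursions \eqref{3.44}--\eqref{3.45}), the formula \eqref{3.18}, and the function-theoretic content of the model of \rsec{S2}, that $g$ admits a bounded-type pseudocontinuation into $\D_e$ precisely when $\te$ does. This is where \thref{thm4.1} enters: the inner $2\times2$ matrix function $\Om$ of the form \eqref{4.1} associated with $\te\in\cS\Pi$ is exactly what produces the pseudocontinuation of $g$ (equivalently, the shift $V_{T_\cF}$ contained in $T_\cF$ that a nonzero element of $\cN_{\cF\cG}$ provides can be completed to such an $\Om$), while conversely a bounded-type pseudocontinuation of $g$ forces the existence of $\Om$, hence $\te\in\cS\Pi$; this step is carried out in \cite{D98} and \zitaa{D06}{\cthm{4.5}}. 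Combining the three stages with the case reduction then proves the theorem.
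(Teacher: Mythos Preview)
The paper itself does not prove this theorem; it cites \zitaa{D06}{\cthm{4.5}} and uses the result as a black box. So there is no ``paper's own proof'' to match against line by line, but we can still ask whether your argument would actually establish the statement.

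Your reduction to $\gamma\in\Gamma l_2$ and your proof of the first equivalence are correct and are exactly in the spirit of the surrounding material: the identification $\cN_{\cF\cG}\cong\ker\cQ(\ga)$ is the counterpart of \lmref{lm4.7} (which gives $\cN_{\cG\cF}=\ker\cQ^*(\ga)$), and the Hankel symmetry together with $\cQ^*(\ga)=\cQ(\ol\ga)$ (recorded in the paper right after Definition~\ref{de3.9}) immediately gives $\dim\cN_{\cF\cG}=\dim\cN_{\cG\cF}$. This part is fine.

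The genuine gap is in the second equivalence. You reduce ``$\ker\cQ(\ga)\neq\{0\}$'' to ``the generating function $g$ of $(Q(W^p\ga))_{p\ge1}$ admits a pseudocontinuation'' via Douglas--Shapiro--Shields; that step is correct. But then you need ``$g$ pseudocontinues $\iff$ $\te$ pseudocontinues'', and for this you cite \zitaa{D06}{\cthm{4.5}} --- which \emph{is} the theorem you are trying to prove. That is circular. Note also that in this survey the implication goes the other way around: \thref{thm5.8} (the statement that $\xi(\ga)$ is noncyclic iff $\te\in\cS\Pi$) is \emph{derived from} \thref{t1.23}, not used to prove it. So a route through the noncyclicity of $g$ cannot avoid an independent argument linking the colligation geometry to the inner $2\times2$ embedding of \thref{thm4.1}.

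The argument that actually proves the result (in \cite{D06}) is operator-theoretic, not function-theoretic in $g$: one shows directly that $\cN_{\cG\cF}\neq\{0\}$ (equivalently, $T_\cG$ contains a nontrivial coshift, see \eqref{eq: 11.38}) is precisely the condition that the simple colligation $\Dl$ embeds into a larger simple unitary colligation whose c.o.f.\ is a $2\times2$ inner matrix with block $\te$; by \thref{thm4.1} (Arov, De~Wilde, Douglas--Helton) this is exactly $\te\in\cS\Pi\setminus J$. Your outline gestures at this (``the shift $V_{T_\cF}$ \dots\ can be completed to such an $\Om$''), but the mechanism you actually rely on --- transferring pseudocontinuability from $\te$ to $g$ --- is not supplied and is not in the cited sources independently of the theorem itself.
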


\begin{cor}\label{c1.24}
If $\theta\in\mathcal S\mathcal P\setminus J$, then $\mathfrak N_{\mathfrak G\mathfrak F}\not=\{0\}$ and $\mathfrak N_{\mathfrak F\mathfrak G}\not=\{0\}$.
\end{cor}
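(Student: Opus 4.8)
The plan is to read off Corollary~\ref{c1.24} as the sufficiency part of the equivalence established in Theorem~\ref{t1.23}. Under the hypothesis, $\theta$ is a pseudocontinuable non-inner Schur function, that is, $\theta\in\mathcal{S}\Pi\setminus J$. First I would invoke Theorem~\ref{th1.12}(b) to fix a simple unitary colligation $\Delta$ of the form~\eqref{3.1} with $\theta_\Delta=\theta$, then form the subspaces $\mathfrak{N}_{\mathfrak{G}\mathfrak{F}}=\mathfrak{H}_{\mathfrak{G}}\cap\mathfrak{H}_{\mathfrak{F}}^\perp$ and $\mathfrak{N}_{\mathfrak{F}\mathfrak{G}}=\mathfrak{H}_{\mathfrak{F}}\cap\mathfrak{H}_{\mathfrak{G}}^\perp$ as in~\eqref{4.2}, and finally apply Theorem~\ref{t1.23}, whose ``if'' direction states precisely that $\theta\in\mathcal{S}\Pi\setminus J$ forces both of these subspaces to be nontrivial.

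A small point worth recording is that the conclusion does not depend on the particular colligation chosen: by the remarks following~\eqref{1.8} the spaces $\mathfrak{H}_{\mathfrak{F}}$ and $\mathfrak{H}_{\mathfrak{G}}$, hence also their orthogonal complements and the intersections $\mathfrak{N}_{\mathfrak{G}\mathfrak{F}}$, $\mathfrak{N}_{\mathfrak{F}\mathfrak{G}}$, are determined by the contraction $T$ alone; moreover, by Theorem~\ref{th1.14} any two simple colligations realizing $\theta$ are unitarily equivalent. Thus ``$\mathfrak{N}\neq\{0\}$'' is an intrinsic property of $\theta$, and the deduction above is insensitive to the choice made in the first step.

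I do not expect a genuine obstacle here; the entire substance is carried by Theorem~\ref{t1.23}. One could alternatively attempt a more hands-on route — from $\theta\in\mathcal{S}\Pi\setminus J$ deduce via Corollary~\ref{cor4.3} that the Schur parameters lie in $\Gamma l_2$, and then via Lemma~\ref{L2.7} that $T$ and $T^*$ each contain a nontrivial largest shift, i.e.\ $\mathfrak{H}_{\mathfrak{G}}^\perp\neq\{0\}$ and $\mathfrak{H}_{\mathfrak{F}}^\perp\neq\{0\}$ — but this only yields nontriviality of the orthogonal complements, not of the intersections $\mathfrak{N}_{\mathfrak{G}\mathfrak{F}}$ and $\mathfrak{N}_{\mathfrak{F}\mathfrak{G}}$; that extra information is exactly what Theorem~\ref{t1.23} supplies. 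Hence the proof reduces to the one-line deduction from that theorem.
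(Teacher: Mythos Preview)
Your proposal is correct and matches the paper's approach: the corollary is stated immediately after Theorem~\ref{t1.23} with no separate proof, so the intended argument is precisely the one-line deduction you give---observe that $\mathcal S\mathcal P\setminus J\subseteq\mathcal S\Pi\setminus J$ (polynomials are rational, hence pseudocontinuable) and apply the ``if'' direction of Theorem~\ref{t1.23}. Your remarks on independence from the choice of colligation and on why the alternative route via Corollary~\ref{cor4.3} and Lemma~\ref{L2.7} falls short are accurate but go beyond what the paper records.
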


\thref{t1.23} will be complemented by the following result
(see Arov\cite{A4}) which is obtained in \cite[Theorem 4.6]{D06} in another way.

\begin{thm}[\cite{A4}]\label{thm4.5}
Let $\te$ be a function of class $\cS$ such that its Schur
parameter sequence $(\ga_j)_{j=0}^\infty$ belongs to $\Ga l_2$.
Assume that $\Dl$ is a simple unitary colligation of the form
\eqref{3.1} which satisfies $\te_\Dl(\zeta)=\te(\zeta)$. Then $\te$
is a rational function if and only if $\dim\cH_{\cG\cF}<\infty\ \
({\mbox\ \ resp.\ \ }\dim\cH_{\cF\cG}<\infty)$. If
$\dim\cH_{\cG\cF}<\infty$ then $\dim\cH_{\cG\cF}$ (resp.
$\dim\cH_{\cF\cG}$) is the smallest number of elementary $2\times
2$--Blaschke--Potapov factors in a finite Blaschke--Potapov
product of the form \eqref{4.2B} with block $\te$.
\end{thm}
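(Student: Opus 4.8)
Throughout put $n:=\dim\cH_{\cG\cF}$; by \eqref{eq: 1.28} we also have $n=\dim\cH_{\cF\cG}$, so the ``resp.'' parts are automatic and it suffices to argue with $\cH_{\cG\cF}$. The plan is to show that $n$ is the McMillan degree of a minimal $2\times2$ inner matrix function $\Om$ of the form \eqref{4.1}, and then to feed this into two facts already at our disposal: $\te\in\cS$ is rational if and only if some such $\Om$ is a finite Blaschke--Potapov product (the consequence of Arov's result \cite{A2} recorded after Definition~\ref{de4.2}); and a $2\times2$ inner function is a finite Blaschke--Potapov product \eqref{4.2B} with $k$ elementary factors if and only if it is the characteristic function of a completely nonunitary contraction on a $k$-dimensional space, $k$ being its McMillan degree (the Sz.-Nagy--Foias structure theory \cite{SN}, together with the fact that each elementary factor \eqref{4.2A} has McMillan degree $1$ and that the degree is additive over minimal products). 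A preliminary reduction: since $\ga\in\Ga l_2$ the Schur parameter sequence is infinite, hence $\te\notin J$, and by \rthm{t1.23} either $\te\in\cS\Pi\setminus J$ (with $\cN_{\cG\cF},\cN_{\cF\cG}\neq\{0\}$) or $\te\notin\cS\Pi$ (with $\cN_{\cG\cF}=\cN_{\cF\cG}=\{0\}$). In the second case $\cH_{\cG\cF}=\cH_\cG$, which carries the infinite orthonormal conjugate canonical system $(\wt\phi_k)_{k\ge1}$, so $n=\infty$, while $\te\notin\cS\Pi$ makes $\te$ nonrational; thus both assertions hold trivially there, and the content lies in the case $\te\in\cS\Pi\setminus J$, assumed from now on.

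The heart of the argument is to identify $T_{\cG\cF}$ with a model contraction of a minimal inner dilation of $\te$. Using the triangulation \eqref{eq: 1.38} of $T$ along $\cH=\cH_\cG^\perp\oplus\cH_{\cG\cF}\oplus\cN_{\cG\cF}$, the maximal shift $V_T$ and the maximal coshift $\wt V_{T_\cG}$ are split off and $T_{\cG\cF}$ is the genuinely two-sided remainder; dually one may use \eqref{eq: 10.38} with $T_{\cF\cG}$ on $\cH_{\cF\cG}$. I would show that $T_{\cG\cF}$ is completely nonunitary with \emph{inner} characteristic function, and that this characteristic function is (after the standard translation between colligation c.o.f.'s and Sz.-Nagy--Foias characteristic functions, and up to unitary equivalence) a $2\times2$ inner $\Om$ of the form \eqref{4.1} of the least possible state dimension among all admissible $\Om$. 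Concretely this amounts to embedding the colligation $\Dl$ for $\te$ into the conservative scattering system whose transfer function is a chosen inner $\Om$ from \eqref{4.1}, observing that the two inner channels of that system carry precisely the largest shift $V_T$ and the largest coshift $\wt V_T$, and that what is left as internal state space is exactly $\cH_{\cG\cF}\cong\cH_{\cF\cG}$; minimality of the state dimension then corresponds, via Theorems~\ref{th1.12} and~\ref{th1.14}, to minimality of the number of elementary factors in \eqref{4.2B}. Granting this identification, $n<\infty$ if and only if the minimal inner dilation $\Om$ has finite-dimensional state space, if and only if $\Om$ is rational, if and only if (by Arov's result) $\te$ is rational; and then $n$ equals the McMillan degree of the minimal $\Om$, i.e.\ the least number of elementary $2\times2$ Blaschke--Potapov factors in a representation \eqref{4.2B} of an $\Om$ of the form \eqref{4.1}.

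The main obstacle is exactly this identification: proving that $T_{\cG\cF}$ (resp.\ $T_{\cF\cG}$) is unitarily equivalent to the model contraction of a \emph{minimal} $2\times2$ inner dilation $\Om$ of $\te$, and that passing to any other admissible $\Om$ can only enlarge the state space. The explicit apparatus of Section~\ref{sec3-20221123} is what makes this tractable: by \coref{cor3.6} the position of $\cH_\cG^\perp$ relative to $\cH_\cF$ and $\cH_\cF^\perp$ is encoded by $\cL(\ga)$ and the Hankel matrix $\cQ(\ga)$, which are explicit functions of $\ga$ and satisfy the unitarity relation $I-\cQ^*(\ga)\cQ(\ga)=\cL^*(\ga)\cL(\ga)$ noted after Definition~\ref{de3.9} together with $\cP(\ga)=\cL(\ol\ga)^*$ (\lmref{lm3.13}); from the blocks of the unitary $\cU(\ga)$ one expresses $n=\dim\overline{P_{\cH_\cF}\cH_\cG}$, and a Kronecker-type argument for Hankel operators equates finiteness of $n$ with rationality of $\te$, $n$ then being the McMillan degree. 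The remaining verifications --- that the characteristic function of $T_{\cG\cF}$ is genuinely inner (here $\ga\in\Ga l_2$ and $\te\in\cS\Pi\setminus J$ enter decisively) and that the two shift channels detach cleanly from the triangulations \eqref{eq: 1.38}, \eqref{eq: 10.38} --- are routine in the model of Section~\ref{sec3-20221123} but still need to be carried out.
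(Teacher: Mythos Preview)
The paper does not give a proof of this theorem. It is stated as a result of Arov \cite{A4}, with the remark preceding it that an alternative proof appears in \cite[Theorem~4.6]{D06}; no argument is reproduced here. So there is no ``paper's own proof'' to compare against, and your proposal should be read as an attempt to reconstruct a proof from the surrounding material.

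Your outline is in the right spirit and tracks Arov's approach: the triangulation \eqref{eq: 1.38} peels off the largest shift and the largest coshift, and the remaining block $T_{\cG\cF}$ is the fundamental operator of a simple unitary colligation whose characteristic function is a $2\times2$ inner matrix $\Om$ of the shape \eqref{4.1}; minimality of that colligation then pins down $\dim\cH_{\cG\cF}$ as the smallest state dimension, hence as the smallest number of elementary Blaschke--Potapov factors. The preliminary reduction to $\te\in\cS\Pi\setminus J$ via \rthm{t1.23} is correct.

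That said, what you have written is a plan rather than a proof. The step you yourself flag as ``the main obstacle'' --- showing that $T_{\cG\cF}$ is completely nonunitary with defect indices $(2,2)$, that its c.o.f.\ is inner, that it coincides (up to the usual equivalence) with an $\Om$ of the form \eqref{4.1}, and that \emph{every} admissible $\Om$ arises from a colligation containing a copy of $T_{\cG\cF}$ (hence has state dimension $\ge\dim\cH_{\cG\cF}$) --- is the entire content of the theorem, and you have not carried it out. The appeal to ``a Kronecker-type argument for Hankel operators'' and to the blocks of $\cU(\ga)$ is suggestive but does not substitute for the actual colligation construction; in particular, you would need to exhibit the two extra channel operators that turn $(\cH_{\cG\cF},T_{\cG\cF})$ into a simple unitary colligation with $\cF=\cG=\C^2$ and verify that its c.o.f.\ has $\te$ in the $(2,2)$ corner. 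Until that identification is written down, the argument is incomplete.
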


\begin{lem}\label{lm4.7}
It holds \aagf\cN_{\cG\cF}=\ker \cQ^*(\ga)\label{4.21}\zzgf where
$\cQ(\ga)$ is that Hankel operator in $\cH_\cF^\perp$ the matrix
representation of which with respect to the basis
$(\psi_j)_{j=1}^\infty$ has the form \eqref{3.43}.
\end{lem}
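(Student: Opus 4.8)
The plan is to read off the claim from the block structure of the unitary operator $\cU(\ga)$ introduced in \eqref{3.8}. First I would recall that, by \rcor{cor3.6}, the Hankel operator $\cQ(\ga)$ occurring in the statement is precisely the block $P_{\cH_\cF^\perp}\Rstr_{\cH_\cF^\perp}\cU(\ga)$ of $\cU(\ga)$ relative to the orthogonal decomposition $\cH=\cH_\cF\oplus\cH_\cF^\perp$ from \eqref{1.7} (see \eqref{3.9}). Since $\cU(\ga)$ sends the orthonormal basis $(\phi_k)_{k=1}^\infty$ of $\cH_\cF$ onto the orthonormal basis $(\wt\phi_k)_{k=1}^\infty$ of $\cH_\cG$ and the orthonormal basis $(\psi_k)_{k=1}^\infty$ of $\cH_\cF^\perp$ onto the orthonormal basis $(\wt\psi_k)_{k=1}^\infty$ of $\cH_\cG^\perp$, it maps $\cH_\cF$ unitarily onto $\cH_\cG$ and $\cH_\cF^\perp$ unitarily onto $\cH_\cG^\perp$; hence its inverse $\cU(\ga)^*$ maps $\cH_\cG$ onto $\cH_\cF$ and $\cH_\cG^\perp$ onto $\cH_\cF^\perp$, and, the compression of an adjoint being the adjoint of the compression,
\[
\cQ^*(\ga)=P_{\cH_\cF^\perp}\Rstr_{\cH_\cF^\perp}\cU(\ga)^*.
\]

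The key step is then a short membership argument. I would take an arbitrary $h\in\cH_\cF^\perp$ and use that $\cU(\ga)^*$ carries $\cH_\cG$ onto $\cH_\cF$: thus $h\in\cH_\cG$ is equivalent to $\cU(\ga)^*h\in\cH_\cF$, i.e.\ to $P_{\cH_\cF^\perp}\cU(\ga)^*h=0$; and since $h\in\cH_\cF^\perp$, this quantity equals $P_{\cH_\cF^\perp}\Rstr_{\cH_\cF^\perp}\cU(\ga)^*h=\cQ^*(\ga)h$. So for $h\in\cH_\cF^\perp$ one has $h\in\cH_\cG\iff h\in\ker\cQ^*(\ga)$. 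Because $\cQ^*(\ga)$ acts in $\cH_\cF^\perp$, its kernel is automatically contained in $\cH_\cF^\perp$, whence $\ker\cQ^*(\ga)=\cH_\cF^\perp\cap\cH_\cG$, which by \eqref{4.2} is $\cN_{\cG\cF}$; this is \eqref{4.21}.

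A fully equivalent but more hands-on alternative would avoid invoking $\cU(\ga)$ abstractly and instead expand a general $h=\sum_{k=1}^\infty c_k\psi_k\in\cH_\cF^\perp$ and compute $(h,\wt\psi_j)$ directly from the representation \eqref{3.16} of the vectors $\wt\psi_j$ spanning $\cH_\cG^\perp$: the $\phi$-components of $\wt\psi_j$ are orthogonal to $h$ and drop out, leaving $(h,\wt\psi_j)=\sum_{k=1}^\infty c_k\ol{Q(W^{k+j-1}\ga)}=(\cQ^*(\ga)h)_j$ in view of the Hankel form \eqref{3.43}; hence $h\perp\cH_\cG^\perp$, i.e.\ $h\in\cH_\cG$, holds if and only if $\cQ^*(\ga)h=0$, and one concludes as before. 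This version makes transparent that it is the Hankel structure of $\cQ(\ga)$ that enters.

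I do not anticipate a genuine obstacle. The only points that need care are the identification of the $\cQ(\ga)$ in the statement with the abstract $(2,2)$-block of $\cU(\ga)$ (supplied by \rcor{cor3.6}), the elementary fact that compressing the adjoint of a bounded operator to a subspace gives the adjoint of its compression, and keeping straight the directions in which $\cU(\ga)$ and $\cU(\ga)^*$ move the pairs $\cH_\cF,\cH_\cG$ and $\cH_\cF^\perp,\cH_\cG^\perp$.
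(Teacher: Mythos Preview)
Your proposal is correct. Your ``hands-on alternative'' is exactly the paper's argument: the paper simply observes that $h\in\cN_{\cG\cF}$ means $h\in\cH_\cF^\perp$ and $h\perp\cH_\cG^\perp$, and since the $\wt\psi_j$ from \eqref{3.16} form an orthonormal basis of $\cH_\cG^\perp$, the latter orthogonality condition for $h=\sum_k c_k\psi_k$ reads $\cQ^*(\ga)h=0$.

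Your first route --- via the block structure of the unitary $\cU(\ga)$ and the observation that $\cU(\ga)^*$ carries $\cH_\cG$ onto $\cH_\cF$ --- is a legitimate, slightly more abstract variant; it packages the same information (that the $(\wt\psi_j)$-coefficients of $h\in\cH_\cF^\perp$ are the components of $\cQ^*(\ga)h$) into the single identity $\cQ^*(\ga)=P_{\cH_\cF^\perp}\Rstr_{\cH_\cF^\perp}\cU(\ga)^*$, which is fine once one has \eqref{3.9}. The paper's version is shorter because it skips the abstract identification and goes straight to the basis computation.
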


\begin{proof}
From \eqref{4.2} it follows that $h\in\cN_{\cG\cF}$ if and only if
$h\in\cH_\cF^\perp$ and $h\perp\cH_\cG^\perp$. Combining this with
the fact that the vectors \eqref{3.16} form an orthonormal basis in
$\cH_\cG^\perp$ we obtain \eqref{4.21}.
\end{proof}

\subsection{Construction of a countable closed vector system in $\cH_{\cG\cF}$ and
investigation of the properties of the sequence
$(\si_n)_{n=1}^\infty$ of Gram determinants of this system}

Let $\te(\zeta)\in\cS$ and assume that $\Dl$ is a simple unitary
colligation of the form  \eqref{3.1} which satisfies
$\te_\Dl(\zeta)=\te(\zeta)$. As in the preceding chapter it is
assumed that the Schur parameter sequence $(\ga_j)_{j=0}^\infty$
of $\te(\zeta)$ belongs to $\Ga l_2$.

\begin{thm}[\zitaa{D06}{\cthm{5.1}}]\label{thm5.1}
The linear span of vectors \aagf
h_n:=\phi_n-\Pi_n\sum\limits_{j=1}^n
\ol{L_{n-j}(W^j\ga)}\wt\psi_j,\ n\in\N\label{5.1}\zzgf is dense in
$\cH_{\cG\cF}$. Here $(\phi_k)_{k=1}^\infty$ and
$(\wt\psi_k)_{k=1}^\infty$ denote the orthonormal systems taken
from the canonical basis \eqref{2.12} and the conjugate canonical basis
\eqref{E3.3-20221201}, respectively, whereas $W, (L_k(\ga))_{k=1}^\infty$ and
$(\Pi_k)_{k=1}^\infty$ are given via \eqref{3.14}, \eqref{3.15} and
\eqref{3.17}.
\end{thm}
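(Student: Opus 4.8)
The plan is to recognise the vectors $h_n$ as the orthogonal projections $P_{\cH_\cG}\phi_n$ of the canonical basis vectors onto $\cH_\cG$; once this is done, the assertion is essentially a reformulation of \eqref{eq: 1.27}.

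First I would use that, by property~(2) of the conjugate canonical basis \eqref{E3.3-20221201}, the vector $\wt\psi_1$ spans the one-dimensional generating wandering subspace of the unilateral shift $V_T=\Rstr_{\cH_\cG^\perp}T$ (its multiplicity being one, see \lmref{L2.7}) and $\wt\psi_{k+1}=T\wt\psi_k$; hence $(\wt\psi_j)_{j=1}^\infty$ is an orthonormal basis of $\cH_\cG^\perp$. Therefore $P_{\cH_\cG^\perp}\phi_n=\sum_{j\ge1}c_{n,j}\wt\psi_j$, where the coefficient $c_{n,j}$ is the complex conjugate of the $\phi_n$--component of $\wt\psi_j$ in the expansion \eqref{3.16} of \thref{thm3.5}: since $\Pi_n$ is a nonnegative real number, that component equals $\Pi_nL_{n-j}(W^j\ga)$ for $j\le n$ and vanishes for $j>n$, so $c_{n,j}=\Pi_n\ol{L_{n-j}(W^j\ga)}$ for $1\le j\le n$ and $c_{n,j}=0$ for $j>n$. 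Comparing with \eqref{5.1} gives $h_n=\phi_n-P_{\cH_\cG^\perp}\phi_n=P_{\cH_\cG}\phi_n$ for every $n\in\N$.

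It then remains to pass from this identity to the density statement. Since $\phi_n\in\cH_\cF$ and, by \eqref{4.2}, $\cN_{\cG\cF}=\cH_\cG\cap\cH_\cF^\perp\subseteq\cH_\cF^\perp$, the vector $\phi_n$ is orthogonal to $\cN_{\cG\cF}$; in view of the orthogonal decomposition $\cH_\cG=\cH_{\cG\cF}\oplus\cN_{\cG\cF}$ from \eqref{4.3} this shows $h_n=P_{\cH_\cG}\phi_n=P_{\cH_{\cG\cF}}\phi_n\in\cH_{\cG\cF}$. Finally, since $(\phi_n)_{n=1}^\infty$ is an orthonormal basis of $\cH_\cF$ and $P_{\cH_\cG}$ is bounded, the closed linear span of $\{h_n:n\in\N\}=\{P_{\cH_\cG}\phi_n:n\in\N\}$ coincides with $\ol{P_{\cH_\cG}\cH_\cF}$, which equals $\cH_{\cG\cF}$ by \eqref{eq: 1.27}; this is the asserted density. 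The only genuinely substantive step is the identification $h_n=P_{\cH_\cG}\phi_n$, which rests entirely on the explicit model formula \eqref{3.16}; everything after it is a routine projection argument, and the hypothesis $\ga\in\Ga l_2$ enters only through \thref{thm3.5} (guaranteeing absolute convergence of the series in \eqref{3.15}, \eqref{3.16} and \eqref{3.18}).
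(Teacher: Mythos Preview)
Your proof is correct and follows essentially the same approach as the paper's: both identify $h_n=P_{\cH_\cG}\phi_n$ by reading off the Fourier coefficients $(\phi_n,\wt\psi_j)$ from the model representation \eqref{3.16}, and then invoke \eqref{eq: 1.27} to obtain density in $\cH_{\cG\cF}$. Your additional check that $h_n\in\cH_{\cG\cF}$ via orthogonality to $\cN_{\cG\cF}$ is correct but not strictly needed, since \eqref{eq: 1.27} already gives $\ol{P_{\cH_\cG}\cH_\cF}=\cH_{\cG\cF}\subseteq\cH_\cG$.
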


\begin{proof}
Since $(\phi_k)_{k=1}^\infty$ is an orthonormal basis in $\cH_\cF$, from the first equality in \eqref{eq: 1.27} it follows that the vectors $h_n=P_{\cH_\cG}\phi_n,\ n\in\N$, form a closed system in
$\cH_{\cG\cF}$. Since $(\wt\psi_k)_{k=1}^\infty$ is an orthonormal
basis in $\cH_\cG^\perp$, the identities
$h_n=P_{\cH_\cG}\phi_n=\phi_n-\sum\limits_{j=1}^\infty(\phi_n,\wt\psi_j)\wt\psi_j,\
n\in\N,$ hold true. It remains to note that from the
decompositions \eqref{3.16} we obtain
$$(\phi_n,\wt\psi_j)=\bcase{\Pi_n\ol{L_{n-j}(W^j\ga)}}{j\le
n,}{0}{j>n.}$$
\end{proof}

\begin{cor}[\zitaa{D06}{\ccor{5.2}}]\label{cor5.2}
It holds
\begin{equation}
    \begin{pmatrix}
        (h_1,h_1) & (h_2,h_1) & \hdots & (h_n,h_1) \\
        (h_1,h_2) & (h_2,h_2) & \hdots & (h_n,h_2)\\
        \vdots & \vdots &  & \vdots \\
        (h_1,h_n) & (h_2,h_n) & \hdots & (h_n,h_n)\\
    \end{pmatrix}
    =I-\cL_n(\ga)\cL_n^*(\ga),\ n\in\N\label{5.2}
\end{equation}
where
\aagf
\!\!\!\!\!\! \cL_n(\ga)=\left(\begin{array}{ccccc}
\Pi_1 & 0 & 0 & \hdots & 0\\
\Pi_2 L_1(W\ga) & \Pi_2 & 0 & \hdots & 0\\
\Pi_3 L_2(W\ga) & \Pi_3 L_1(W^2\ga) & \Pi_3 & \hdots & 0\\
\vdots & \vdots & \vdots & & \vdots \\
\Pi_n L_{n-1}(W\ga) & \Pi_n L_{n-2}(W^2\ga) & \Pi_n L_{n-3}(W^3\ga) & \hdots & \Pi_n\\
\end{array}\right)\label{5.3}\zzgf
is the $n$--th order principal submatrix of the matrix $\cL(\ga)$
given in \eqref{3.42}.
\end{cor}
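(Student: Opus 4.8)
The plan is to compute the Gram matrix entrywise, using the explicit shape \eqref{5.1} of the vectors $h_n$ together with the expansions \eqref{3.16} of the conjugate canonical basis. Recall from the proof of \thref{thm5.1} that $h_n=P_{\cH_\cG}\phi_n$ and that, since $(\wt\psi_j)_{j=1}^\infty$ is an orthonormal basis of $\cH_\cG^\perp$,
\[
h_n=\phi_n-\sum_{k\ge1}(\phi_n,\wt\psi_k)\wt\psi_k,\qquad (\phi_n,\wt\psi_k)=\bcase{\Pi_n\ol{L_{n-k}(W^k\ga)}}{k\le n,}{0}{k>n,}
\]
the second identity being precisely the one extracted there from \eqref{3.16} (here $L_0=1$). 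Note also that each $\Pi_k$ is a nonnegative real number, so $\ol{\Pi_k}=\Pi_k$.

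First I would compute $(h_i,h_j)$ for $i,j\in\{1,\dotsc,n\}$. Since $P_{\cH_\cG}$ is an orthogonal projection, $(h_i,h_j)=(P_{\cH_\cG}\phi_i,P_{\cH_\cG}\phi_j)=(P_{\cH_\cG}\phi_i,\phi_j)$, and inserting $P_{\cH_\cG}\phi_i=\phi_i-\sum_{k\ge1}(\phi_i,\wt\psi_k)\wt\psi_k$ together with $(\phi_i,\phi_j)=\dl_{ij}$ and $(\wt\psi_k,\phi_j)=\ol{(\phi_j,\wt\psi_k)}$ yields
\[
(h_i,h_j)=\dl_{ij}-\sum_{k\ge1}(\phi_i,\wt\psi_k)\ol{(\phi_j,\wt\psi_k)}=\dl_{ij}-\Pi_i\Pi_j\sum_{k=1}^{\min\{i,j\}}\ol{L_{i-k}(W^k\ga)}\,L_{j-k}(W^k\ga),
\]
the sum terminating at $\min\{i,j\}$ because $(\phi_i,\wt\psi_k)=0$ for $k>i$.

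It then remains to recognize the last double sum as a matrix entry of $\cL_n(\ga)\cL_n^*(\ga)$. By \eqref{5.3} the $(k,\ell)$ entry of $\cL_n(\ga)$ equals $\Pi_k L_{k-\ell}(W^\ell\ga)$, the entry being $0$ whenever $\ell>k$; hence the $(i,j)$ entry of $\cL_n(\ga)\cL_n^*(\ga)$ is $\Pi_i\Pi_j\sum_{k=1}^{\min\{i,j\}}L_{i-k}(W^k\ga)\,\ol{L_{j-k}(W^k\ga)}$, which is the complex conjugate of the sum appearing in the formula for $(h_i,h_j)$. Since $\cL_n(\ga)\cL_n^*(\ga)$ is Hermitian, that conjugate equals its $(j,i)$ entry, so $(h_i,h_j)=\dl_{ij}-[\cL_n(\ga)\cL_n^*(\ga)]_{ji}$; because the Gram matrix in \eqref{5.2} is written with $(h_j,h_i)$ in position $(i,j)$, this transpose is absorbed and one arrives at $I-\cL_n(\ga)\cL_n^*(\ga)$, which is \eqref{5.2}. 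No real obstacle is expected here; the only point demanding care is the bookkeeping of the two conjugations/transpositions — the one from Hermitian symmetry of $\cL_n(\ga)\cL_n^*(\ga)$ and the one built into the layout of \eqref{5.2} — which, as noted, cancel.
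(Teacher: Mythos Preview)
Your proof is correct and is precisely the computation the paper has in mind when it says the identities \eqref{5.2} are an immediate consequence of \eqref{5.1}; you have simply written out in full the entrywise verification that the one-line proof leaves implicit. The care you take with the two conjugations/transpositions is warranted and handled correctly.
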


\begin{proof}
The identities \eqref{5.2} are an immediate consequence of
\eqref{5.1}.
\end{proof}

In the sequel, the matrices \aagf
\cA_n(\ga):=I_n-\cL_n(\ga)\cL_n^*(\ga),\ n\in\N \label{5.4}\zzgf
and their determinants \aagf \si_n(\ga):=\bcase{1}{n=0,}{\det
\cA_n(\ga)}{n\in\N}\label{5.5}\zzgf will play an important role.
They have a lot of remarkable properties. In order to prove these
properties we need the following result which follows from
\thref{thm3.11} and \coref{cor3.12}.

\begin{lem}[\zitaa{D06}{\clem{5.3}}]\label{lm5.3}
It holds \aagf \cL_n(\ga)=\cM_n(\ga)\cL_n(W\ga)\label{5.6}\zzgf
where $\cL_n(\ga)$ is given via \eqref{5.3} whereas
\begin{equation}
    \cM_n(\ga)
    =
    \begin{psmallmatrix}
        D_{\ga_1}       & 0                             & 0             & \hdots & 0\\
        -\ga_1\ol\ga_2  & D_{\ga_2}                     & 0             & \hdots & 0\\
        -\ga_1D_{\ga_2}\ol\ga_3 & -\ga_2\ol\ga_3           & D_{\ga_3}     & \hdots & 0\\
        \vdots          & \vdots                        & \vdots        & \ddots & \vdots\\
        -\ga_1\prod_{j=2}^{n-1}D_{\ga_j}\ol\ga_n & -\ga_2\prod_{j=3}^{n-1}D_{\ga_j}\ol\ga_n & -\ga_3\prod_{j=4}^{n-1}D_{\ga_j}\ol\ga_n & \hdots & D_{\ga_n}
    \end{psmallmatrix}\label{5.7}
\end{equation}
is the $n$--th order principal submatrix of the matrix $\cM(\ga)$
given in \eqref{3.48}. Hereby, \aagf
I_n-\cM_n(\ga)\cM_n^*(\ga)=\eta_n(\ga)\eta_n^*(\ga),\
n\in\N,\label{5.8}\zzgf where \aagf
\eta_n(\ga)=\col(\ol\ga_1,\ol\ga_2D_{\ga_1},\dotsc,\ol\ga_n\prod\limits_{j=1}^{n-1}D_{\ga_j}).\label{5.9}\zzgf
\end{lem}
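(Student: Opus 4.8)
The plan is to deduce the lemma from \thref{thm3.11} and \coref{cor3.12} by a truncation argument, exploiting the fact that the infinite matrices $\cM(\ga)$ and $\cL(\ga)$ occurring there are lower triangular.

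First I would record the elementary observation that if $A$ is a (formally) lower triangular infinite matrix and $B$ an infinite matrix for which the product $AB$ is well defined, then for every $n\in\N$ the $n$-th order principal submatrix of $AB$ equals the product of the $n$-th order principal submatrices of $A$ and $B$: for indices $i,j\le n$ one has $(AB)_{ij}=\sum_k A_{ik}B_{kj}$, and since $A_{ik}=0$ for $k>i$ the summation index runs in fact only over $k\le i\le n$. I would apply this with $A=\cM(\ga)$ and $B=\cL(W\ga)$; both are lower triangular by \fref{3.48} and \fref{3.42}, and $\cL_n(\ga)$ from \fref{5.3} (resp.\ $\cL_n(W\ga)$, and $\cM_n(\ga)$ from \fref{5.7}) is precisely the $n$-th order principal submatrix of $\cL(\ga)$ (resp.\ of $\cL(W\ga)$, $\cM(\ga)$). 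Hence the identity \fref{3.47} of \thref{thm3.11} passes to $n$-th order principal submatrices and gives \fref{5.6}.

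For the second assertion I would argue analogously starting from \fref{3.55}. The $n$-th order principal submatrix of the left-hand side $I-\cM(\ga)\cM^*(\ga)$ is $I_n$ minus the $n$-th order principal submatrix of $\cM(\ga)\cM^*(\ga)$; since $\cM(\ga)$ is lower triangular, for $i,j\le n$ we have $(\cM(\ga)\cM^*(\ga))_{ij}=\sum_k\cM(\ga)_{ik}\ko{\cM(\ga)_{jk}}$ with $k\le\min(i,j)\le n$, so this submatrix equals $\cM_n(\ga)\cM_n^*(\ga)$. On the right-hand side $\eta(\ga)\eta^*(\ga)$ is the outer product of the column $\eta(\ga)$ with itself, whose $n$-th order principal submatrix is $\eta_n(\ga)\eta_n^*(\ga)$ with $\eta_n(\ga)$ the truncation \fref{5.9} of \fref{3.52}. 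Combining these yields \fref{5.8}.

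The argument is essentially bookkeeping; the only point requiring attention is the lower triangular structure of $\cM(\ga)$ and $\cL(\ga)$, which is exactly what makes passing to principal submatrices compatible with matrix multiplication — a step that would fail for general infinite matrices. Beyond that there is no genuine obstacle; one should merely keep in mind that, because $\ga\in\Ga l_2$, the infinite products $\Pi_k$ and the series defining $L_k(\ga)$ and $\eta(\ga)$ converge absolutely, so all of the above infinite-matrix manipulations are legitimate.
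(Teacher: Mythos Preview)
Your proposal is correct and matches the paper's approach: the paper simply states that \lmref{lm5.3} ``follows from \thref{thm3.11} and \coref{cor3.12}'' without further detail, and your truncation argument via the lower-triangular structure of $\cM(\ga)$ and $\cL(\ga)$ is precisely the way to make that passage rigorous.
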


\begin{cor}[\zitaa{D06}{\ccor{5.4}}]\label{cor5.4}
The multiplicative decompositions
$$\cL_n(\ga)=\cM_n(\ga)\cdot \cM_n(W\ga)\cdot \cM_n(W^2\ga)\cdot\dotso,\ n\in\N$$
hold true.
\end{cor}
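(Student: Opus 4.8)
The plan is to iterate the one--step factorization \eqref{5.6} from \lmref{lm5.3}. Replacing $\ga$ successively by $W\ga$, $W^2\ga$, \dots\ in \eqref{5.6} and substituting, one obtains for every $m\in\N$ the identity
\[
\cL_n(\ga)=\cM_n(\ga)\cM_n(W\ga)\dotsm\cM_n(W^m\ga)\,\cL_n(W^{m+1}\ga),
\]
so that the partial products $P_m:=\cM_n(\ga)\cM_n(W\ga)\dotsm\cM_n(W^m\ga)$ satisfy $P_m=\cL_n(\ga)\,[\cL_n(W^{m+1}\ga)]^{-1}$; here $\cL_n(W^{m+1}\ga)$ is invertible because, by \eqref{5.3}, it is lower triangular with strictly positive diagonal entries $\Pi_1(W^{m+1}\ga),\dotsc,\Pi_n(W^{m+1}\ga)$ (each $\abs{\ga_j}<1$ and the product in \eqref{2.53} converges since $\ga\in\Ga l_2$). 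It then suffices to show $\cL_n(W^{m}\ga)\to I_n$ as $m\to\infty$, for then $[\cL_n(W^{m+1}\ga)]^{-1}\to I_n$ and hence $P_m\to\cL_n(\ga)$, which is exactly the asserted multiplicative decomposition.

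To verify $\cL_n(W^{m}\ga)\to I_n$, I would inspect the entries of $\cL_n(W^m\ga)$ via \eqref{5.3}: its diagonal entries are $\Pi_k(W^m\ga)=\prod_{j=m+k}^\infty\sqrt{1-\abs{\ga_j}^2}$, and its strictly--lower entries have the form $\Pi_k(W^m\ga)\,L_{k-j}(W^{m+j}\ga)$ with $k-j\geq1$. Since $\ga\in\Ga l_2$, the product $\prod_{j=0}^\infty(1-\abs{\ga_j}^2)$ has a positive limit, so its tails tend to $1$, giving $\Pi_k(W^m\ga)\to1$ for each fixed $k$. Moreover, when $L_p$ with $p\geq1$ from \eqref{3.15} is evaluated at a shift $W^\ell\ga$, every index of $\ga$ occurring in the absolutely convergent defining series is $\geq\ell$; hence $\abs{L_p(W^\ell\ga)}$ is controlled by the tail $\sum_{j\geq\ell}\abs{\ga_j}^2$ (for $L_1$ directly by Cauchy--Schwarz, for $p\geq2$ by the same bound applied to each factor pair), which tends to $0$. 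Thus every entry of $\cL_n(W^m\ga)$ converges to the corresponding entry of $I_n$.

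The only genuinely non--routine point is this convergence $\cL_n(W^m\ga)\to I_n$; the remainder is bookkeeping of the iteration. I note that this is precisely the finite--dimensional analogue of the limit $\lim_{n\to\infty}\cL(W^n\ga)=I$ invoked just before \coref{C3.13-1207}, and that the present corollary is the truncated form of the identity $\cL(\ga)=\prodr_{j=0}^\infty\cM(W^j\ga)$ of \coref{C3.13-1207}; the argument above parallels the proof of that corollary.
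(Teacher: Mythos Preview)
Your proof is correct and follows essentially the same approach as the paper: iterate \eqref{5.6} and use $\cL_n(W^m\ga)\to I_n$ as $m\to\infty$. The paper simply asserts this limit as evident from the form \eqref{5.3}, whereas you spell out the entrywise argument; your extra step of passing through $P_m=\cL_n(\ga)[\cL_n(W^{m+1}\ga)]^{-1}$ is a harmless reformulation of the same convergence.
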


\begin{proof}
From the form \eqref{5.3} of the matrices $\cL_n(\ga)$ it can be seen that\\
$\lim\limits_{m\to\infty}\cL_n(W^m\ga)=I_n$ for all $n\in\N$. Now
using \eqref{5.6} we obtain the assertion.
\end{proof}

\begin{thm}[\cite{D3},\zitaa{D06}{\cthm{5.5}}]\label{thm5.5}
Let $\te(\zeta)$ be a function from $\cS$ the sequence
$(\ga_j)_{j=0}^\infty$ of Schur parameters of which belongs to
$\Ga l_2$. Assume that $\Dl$ is a simple unitary colligation of
the form \eqref{3.1} which satisfies $\te_\Dl(\zeta)=\te(\zeta)$.
Then the matrices $\cA_n(\ga)$ (see \eqref{5.4}) and their
determinants $(\si_n(\ga))_{n=1}^\infty$ have the following
properties: \aai\item[(1)] For $n\in\N$, it hold
$0\le\si_n(\ga)<1$ and $\si_n(\ga)\ge\si_{n+1}(\ga)$. Moreover,
$\lim\limits_{n\to\infty}\si_n(\ga)=0$. \item[(2)] If there exists
some $n_0\in\{0,1,2,\dotsc\}$ which satisfies $\si_{n_0}(\ga)>0$
and $\si_{n_0+1}(\ga)=0$, then $\rank \cA_n(\ga)=n_0$ for $n\ge
n_0$ holds true. Hereby, $n_0=\dim\cH_{\cG\cF}(=\dim\cH_{\cF\cG})$
where $\cH_{\cG\cF}$ and $\cH_{\cF\cG}$ are given via \eqref{4.3}.
Conversely, if $\dim\cH_{\cG\cF}(=\dim\cH_{\cF\cG})$ is a finite
number $n_0$ then $\si_{n_0}(\ga)>0$ and $\si_{n_0+1}(\ga)=0$.
\item[(3)] It holds \aagf
\cA_n(\ga)=\eta_n(\ga)\eta_n^*(\ga)+\cM_n(\ga)\cA_n(W\ga)\cM_n^*(\ga),\
n\in\N\label{5.10}\zzgf where $\cM_n(\ga)$ and $\eta_n(\ga)$ are
defined via \eqref{5.7} and \eqref{5.9}, respectively. \item[(4)]
Let $(\la_{n,j}(\ga))_{j=1}^n$ denote the increasingly ordered
sequence of eigenvalues of the matrix $\cA_n(\ga), n\in\N,$ where
each eigenvalue is counted with its multiplicity, then
\aagf 0&\le&\la_{n,1}(W\ga)\le\la_{n,1}(\ga)\le\la_{n,2}(W\ga)\le\la_{n,2}(\ga)\le\dotsb\nnu\\
&\le&\la_{n,n}(W\ga)\le\la_{n,n}(\ga)<1.\label{5.11}\zzgf Thus,
the eigenvalues of the matrices $\cA_n(\ga)$ and $\cA_n(W\ga)$
interlace. \item[(5)] For $n\in\N$, it holds
\aagf\Ga(h_1,h_2,\dotsc,h_n,G^*(1))=\si_n(W\ga)\prod\limits_{j=0}^n(1-|\ga_j|^2)\label{5.12}\zzgf
where $\Ga(h_1\!,\!h_2\!,\!\ldots\!,\!h_n,\!G^*(1))$ is the Gram
determinant of the vectors $(h_k)_{k=1}^n$ given by \eqref{5.1} and
the vector $G^*(1)$ defined by \eqref{2.2}. Hereby, the rank of the Gram
matrix of the vectors $(h_k)_{k=1}^n$ and $G^*(1)$ is equal to
$\rank\!\cA_n\!(W\ga)+1$. \item[(6)] If $\si_n(\ga)>0$ for every
$n\in\N$ then the sequence
$$\biggl(\prod\limits_{j=0}^n(1-|\ga_j|^2)\frac{\si_n(W\ga)}{\si_n(\ga)}\biggr)_{n=1}^\infty$$
monotonically decreases. Moreover, \aagf\lim\limits_{n\to\infty}
\frac{\si_n(W\ga)}{\si_n(\ga)}=\frac{1}{\Pi_0^2}\|P_{\cN_{\cG\cF}}G^*(1)\|^2\label{5.13}\zzgf
where $\Pi_0$ and $\cN_{\cG\cF}$ are defined via formulas
\eqref{3.17} and \eqref{4.2}, respectively. \item[(7)] Assume that
$\si_n(\ga)>0$ for every $n\in\N$. Then $\si_n(W^m\ga)>0$ for
every $m, n\in\N$. Moreover, if the limit \eqref{5.13} is positive,
then
\aagf\lim\limits_{n\to\infty}\frac{\si_n(W^{m+1}\ga)}{\si_n(W^m\ga)}>0\label{5.14}\zzgf
for every $m\in\N$. \zzi\end{thm}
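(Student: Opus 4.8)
The plan is to reduce everything to two facts already established: by \rcor{cor5.2} the matrix $\cA_n(\ga)$ is the Gram matrix of the vectors $h_1,\dotsc,h_n$ from \eqref{5.1}, which by \thref{thm5.1} (and its proof) equal $h_k=P_{\cH_\cG}\phi_k$ and span a dense subspace of $\cH_{\cG\cF}$; and by \lmref{lm5.3}, $\cL_n(\ga)=\cM_n(\ga)\cL_n(W\ga)$ with $I_n-\cM_n(\ga)\cM_n^*(\ga)=\eta_n(\ga)\eta_n^*(\ga)$. Property~(3) then comes for free: combining \eqref{5.4} and \eqref{5.6} gives $\cA_n(\ga)=I_n-\cM_n(\ga)(I_n-\cA_n(W\ga))\cM_n^*(\ga)$, which by \eqref{5.8} equals $\eta_n(\ga)\eta_n^*(\ga)+\cM_n(\ga)\cA_n(W\ga)\cM_n^*(\ga)$, i.e.\ \eqref{5.10}.

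For property~(1): being a Gram matrix, $\cA_n(\ga)\ge 0$, so $\si_n(\ga)\ge 0$, and $\si_1(\ga)=1-\Pi_1^2<1$ since the convergent product \eqref{3.17} has all factors in $(0,1]$ and none equal to $0$, hence $\Pi_1>0$. Monotonicity is the elementary Gram identity $\si_{n+1}(\ga)=\si_n(\ga)\operatorname{dist}(h_{n+1},\Lin\set{h_1,\dotsc,h_n})^2$ together with $\operatorname{dist}(h_{n+1},\Lin\set{h_1,\dotsc,h_n})\le\norm{h_{n+1}}=\norm{P_{\cH_\cG}\phi_{n+1}}\le 1$; in particular $\si_n(\ga)<1$ for all $n$. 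Finally, the expansions \eqref{3.16} show $(\phi_n,\wt\psi_n)=\Pi_n\ol{L_0(W^n\ga)}=\Pi_n$ because $L_0\equiv 1$, so $\norm{h_n}^2=1-\Pi_n^2\sum_{j=1}^n\abs{L_{n-j}(W^j\ga)}^2\le 1-\Pi_n^2$; since $\ga\in\Ga l_2$ the tails $\Pi_n=\prod_{j\ge n}\sqrt{1-\abs{\ga_j}^2}$ tend to $1$, hence $\norm{h_n}\to 0$, and therefore $\si_n(\ga)=\prod_{k=1}^n\operatorname{dist}(h_k,\Lin\set{h_1,\dotsc,h_{k-1}})^2\to 0$. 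Property~(4) I would read off from~(3): $\cA_n(\ga)$ arises from $\cA_n(W\ga)$ first by the congruence $A\mapsto\cM_n(\ga)A\cM_n^*(\ga)$ --- which, as $\cM_n(\ga)$ has one singular value $<1$ and the remaining ones equal to $1$ by \eqref{5.8}, shifts the eigenvalues in an interlacing way (Cauchy) --- and then by adding the positive rank-one matrix $\eta_n(\ga)\eta_n^*(\ga)$ (Weyl). Composing these two interlacing chains yields \eqref{5.11}; pinning down the exact alternation is the only place here that needs a bit of index bookkeeping.

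Properties~(2), (5), (6) I would treat in one pass. For~(2): $\rank\cA_n(\ga)=\dim\Lin\set{h_1,\dotsc,h_n}$, and since $\set{h_n}$ is dense in $\cH_{\cG\cF}$ while the recursion \eqref{5.6} (equivalently \rcor{cor5.4}) forces the ranks to stabilize, one gets $\rank\cA_n(\ga)=n_0$ for $n\ge n_0$ precisely when $\dim\cH_{\cG\cF}=n_0<\infty$; together with $\si_n(\ga)$ nonincreasing this is the claimed dichotomy $\si_{n_0}(\ga)>0=\si_{n_0+1}(\ga)$, and $n_0=\dim\cH_{\cG\cF}=\dim\cH_{\cF\cG}$ by \eqref{4.3} and density. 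For~(5): by \eqref{2.65} and \eqref{2.54}, $G^*(1)=D_{\ga_0}\wt\phi_1\in\cH_\cG$ and its component in $\cH_\cF^\perp$ is $\Pi_0\psi_1$; expanding the bordered Gram determinant $\Ga(h_1,\dotsc,h_n,G^*(1))$ and simplifying with the recursion for $L_n$ produces $\si_n(W\ga)\prod_{j=0}^n(1-\abs{\ga_j}^2)$, while $\cN_{\cG\cF}\perp\cH_\cF$ gives $P_{\cN_{\cG\cF}}G^*(1)=\Pi_0 P_{\cN_{\cG\cF}}\psi_1$, from which the rank assertion follows. For~(6): by \eqref{5.12}, $\prod_{j=0}^n(1-\abs{\ga_j}^2)\,\si_n(W\ga)/\si_n(\ga)=\Ga(h_1,\dotsc,h_n,G^*(1))/\si_n(\ga)=\operatorname{dist}(G^*(1),\Lin\set{h_1,\dotsc,h_n})^2$, which decreases in $n$ to $\operatorname{dist}(G^*(1),\cH_{\cG\cF})^2$; as $G^*(1)\in\cH_\cG$ this equals $\norm{P_{\cN_{\cG\cF}}G^*(1)}^2$ by \eqref{4.4}, and dividing by $\prod_{j=0}^n(1-\abs{\ga_j}^2)\to\Pi_0^2$ gives \eqref{5.13}.

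Property~(7) carries the main work. Its first half follows from \thref{thm4.5} applied to the function with Schur parameter sequence $W^m\ga$ (again in $\Ga l_2$): that function is rational iff its space $\cH_{\cG\cF}$ is finite dimensional, which by~(2) is iff some $\si_n(W^m\ga)=0$; since a Schur step maps rational functions to rational functions and irrational ones to irrational ones, $\si_n(\ga)>0$ for all $n$ forces $\si_n(W^m\ga)>0$ for all $n,m$. For the second half, \eqref{5.13} applied to $W^m\ga$ identifies $\lim_n\si_n(W^{m+1}\ga)/\si_n(W^m\ga)$ with $\norm{P_{\cN_{\cG\cF}^{(m)}}\psi_1^{(m)}}^2$, where the superscript $(m)$ refers to the model of the function with parameters $W^m\ga$ and $\psi_1^{(m)}$ is the generating vector of the largest shift $V_{T^{(m)*}}$. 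Identifying $\cH_\cF^{(m)\perp}$, the unilateral shift $\rstr{T^{(m)*}}{\cH_\cF^{(m)\perp}}$ (\thref{th1.3}), and the vector $\psi_1^{(m)}$ with $H^2$, multiplication by the variable, and the constant $1$, the $T^{(m)*}$-invariant subspace $\cN_{\cG\cF}^{(m)}$ becomes $\vartheta_m H^2$ for an inner $\vartheta_m$, so this limit equals $\abs{\vartheta_m(0)}^2$. Thus the hypothesis that \eqref{5.13} is positive says $\vartheta_0(0)\ne 0$, and one must propagate this to $\vartheta_m(0)\ne 0$ for all $m$. The crucial --- and hardest --- point is a divisibility nesting of the inner functions $\vartheta_m$ along the Schur algorithm; once this is available, non-vanishing at $0$ passes from one step to the next and \eqref{5.14} follows. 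Establishing that nesting is exactly where the finer model-theoretic and pseudocontinuation machinery of \cite{D06} must be used.
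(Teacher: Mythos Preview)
Your treatment of (1), (3) and (6) is essentially the paper's argument, and your alternative for the first half of (7) via \thref{thm4.5} and rationality--preservation under the Schur step is a valid, arguably cleaner substitute for the paper's direct block--decomposition route through \eqref{5.23}. Parts (2) and (5) are correct in spirit but too compressed: for (2), ``the recursion forces the ranks to stabilize'' hides a nontrivial step (the paper uses both block partitions \eqref{5.15} and \eqref{5.23}, together with the interlacing \eqref{5.11}, to pin down $\dim\ker\cA_{n_0+m}(\ga)$ inductively); for (5) you skip the actual determinant reduction, which in the paper is a concrete column operation after inserting \eqref{5.10}.

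The genuine gap is the second half of (7). You correctly translate the question into whether $\vartheta_m(0)\ne 0$ propagates along the Schur algorithm, but then defer the ``divisibility nesting'' to unspecified pseudocontinuation machinery. The paper's argument is far more elementary than you suggest: by \lmref{lm4.7} one has $\cN_{\cG\cF}=\ker\cQ^*(\ga)$, and by \eqref{3.43} the matrix $\cQ(\ga)$ is Hankel. Passing to the $W\ga$--model replaces $\cQ(\ga)$ by the matrix obtained by deleting its first row (equivalently column), so $\ker\cQ^*(\ga)\subseteq\ker\cQ_1^*(\ga)$ under the obvious coordinate identification. Hence if a generating vector $\tau\in\cN_{\cG\cF}$ has $\be_1=(\tau,\psi_1)\ne 0$, the \emph{same} coordinate sequence gives an element of $\cN_{\cG\cF,1}$ with first coordinate still $\be_1\ne 0$, i.e.\ $\psi_{1,1}$ is not orthogonal to $\cN_{\cG\cF,1}$; induction on $m$ finishes. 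In your inner--function language this Hankel inclusion \emph{is} the divisibility $\vartheta_1\mid\vartheta_0$, from which $\vartheta_0(0)\ne 0\Rightarrow\vartheta_1(0)\ne 0$ is immediate --- so the ``finer machinery'' you invoke reduces to \lmref{lm4.7} plus the Hankel shape of \eqref{3.43}.
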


\begin{proof} (1) Since by \coref{cor5.2} $\cA_n(\ga)$ is a Gram matrix then $\si_n(\ga)\ge0,\ n\in\N$.
On the other side, in view of $\ga\in\Ga l_2$ the matrix
$\cL_n(\ga)$ is invertible. Thus, from \eqref{5.4} we infer
$\si_n(\ga)<1,\ n\in\N$.
From \eqref{5.3} we get the block partition
\begin{equation}\label{E4.22-1111}
    \cL_{n+1}(\ga)
    =\matr{\cL_n(\ga)}{0_{n\times 1}}{b_{n}^*(\ga)}{\Pi_{n+1}}
\end{equation}
where \aagf
b_n(\ga)=\Pi_{n+1}\col(\ol{L_n(W\ga)},\ol{L_{n-1}(W^2\ga)},\dotsc,\ol{L_1(W^n\ga)}).\label{5.15A}\zzgf
From this it follows that
\aagf
\cA_{n+1}(\ga)=\matr{\cA_n(\ga)}{-\cL_n(\ga)b_n(\ga)}{-b_n^*(\ga)\cL_n^*(\ga)}{1-\Pi_{n+1}^2-b_n^*(\ga)b_n(\ga)}.\label{5.15}\zzgf
From \eqref{5.15} we find
\aagf\cA_{n+1}(\ga)=F_{n,1}(\ga)\matr{\cA_n(\ga)}{0}{0}{\cA_n^{[c]}(\ga)}F_{n,1}^*(\ga)\label{5.17}\zzgf
where $F_{n,1}(\ga)=\matr{I_n}{0}{X_{n,1}(\ga)}{1}\ ,\
X_{n,1}(\ga)=-b_n^*(\ga)\cL_n^*(\ga)\cA_n^{-1}(\ga)$ ,  \ and
\aagf\cA_n^{[c]}(\ga)=1-\Pi_{n+1}^2-b_n^*(\ga)b_n(\ga)-b_n^*(\ga)\cL_n(\ga)\cA_n^+(\ga)\cL_n(\ga)b_n(\ga)\label{5.17A}\zzgf
is the Schur complement of the matrix $\cA_n(\ga)$ in the matrix
$\cA_{n+1}(\ga)$. The symbol $\cA_n^+(\ga)$ stands for the
Moore--Penrose inverse of the matrix $\cA_n(\ga)$ (see, e.g.,
\cite[part 1.1]{DFK92}). Thus, \aagf
\si_{n+1}(\ga)=\si_n(\ga)\cA_n^{[c]}(\ga).\label{5.16}\zzgf In
view of $\cA_{n+1}(\ga)\ge 0$ we have $\cA_n^{[c]}(\ga)\ge0$.
Taking into account $\Pi_n>0$ and
$\lim\limits_{n\to\infty}\Pi_n=1$ from this and \eqref{5.17A} we
obtain $0\le \cA_n^{[c]}(\ga)<1$ and
$\lim\limits_{n\to\infty}\cA_n^{[c]}(\ga)=0$. Now (1) follows from
\eqref{5.16}.
\\ (3)
Using \eqref{5.6} we get \aagf
\cA_n(\ga)&=&I_n-\cL_n(\ga)\cL_n^*(\ga)
=I_n-\cM_n(\ga)\cL_n(W\ga)\cL_n^*(W\ga)\cM_n^*(\ga)\nnu\\
&=&I_n-\cM_n(\ga)\cM_n^*(\ga)+\cM_n(\ga)\cA_n(W\ga)\cM_n^*(\ga).\nnu\zzgf
Combining this with \eqref{5.8} we obtain \eqref{5.10}.
\\ (2) 
Assume that $n_0\in\{0,1,2,3,\dotsc\}$ satisfies
$\si_{n_0}(\ga)>0$ and $\si_{n_0+1}(\ga)=0$. If $n_0=0$ then using
$\si_1(\ga)=1-\prod\limits_{j=1}^\infty(1-|\ga_j|^2)$, we infer
$\ga_j=0, j\in\N$. Thus, \eqref{5.3} implies $\cL_n(\ga)=I_n,
n\in\N$ and $\cA_n(\ga)=0, n\in\N$. Consequently, from \eqref{5.2}
it follows that $\dim\cH_{\cG\cF}=0$. In view of \eqref{eq: 1.28} this
means $\dim\cH_{\cF\cG}=0$.

Let $n_0\in\N$. From \eqref{5.3} we get the block partition
\aagf\cL_{n+1}(\ga)=\matr{\Pi_1}{0}{B_{n+1}(\ga)}{\cL_n(W\ga)}\label{5.21}\zzgf
where \aagf
B_{n+1}(\ga)=\col(\Pi_2L_1(W\ga),\Pi_3L_2(W\ga),\dotsc,\Pi_{n+1}L_n(W\ga)).\label{5.22}\zzgf
From this, we obtain the block representation \aagf
\cA_{n+1}(\ga)=\matr{1-\Pi_1^2}{-\Pi_1B_{n+1}^*(\ga)}{-\Pi_1B_{n+1}(\ga)}{\cA_n(W\ga)-B_{n+1}(\ga)B_{n+1}^*(\ga)}.\label{5.23}\zzgf
We consider this block representation for $n=n_0+1$. Since $\det
\cA_{n_0+1}(\ga)=0$, in view of \eqref{5.11}, we have $\det
\cA_{n_0+1}(W\ga)=0$. Assume that $x\in\C^{n_0+1}, x\ne0$ and
$x\in\ker \cA_{n_0+1}(W\ga)$. Then from \eqref{5.23} we see that
the vector $\wt x:=\cl{0}{x}$ belongs to $\ker \cA_{n_0+2}(\ga)$.

Now we consider the block representation \eqref{5.15} for
$n=n_0+1$. Let $y\in\C^{n_0+1}, y\ne0$ and $y\in\ker
\cA_{n_0+1}(\ga)$. Then \eqref{5.15} implies that the vector $\wt
y:=\cl{y}{0}$ belongs to $\ker \cA_{n_0+2}(\ga)$. If the vectors
$\wt x$ and $\wt y$ are collinear then from their construction we
get that $\ker \cA_{n_0+2}(\ga)$ contains the vector
$w=\dcol{0}{z}{0}$ where $z\in\C^{n_0}$ and $z\ne0$. Then the
representation \eqref{5.15} for $n=n_0+1$ implies that
$\cl{0}{z}\in\ker \cA_{n_0+1}(\ga)$. Now using representation
\eqref{5.23} for $n=n_0+1$ we obtain $z\in\ker \cA_{n_0}(\ga)$.
However, $\si_{n_0}(\ga)>0$ and consequently $\ker
\cA_{n_0}(\ga)=\{0\}$. From this contradiction we infer that the
vectors $\wt x$ and $\wt y$ are not collinear. This means $\dim\
\ker \cA_{n_0+2}(\ga)\ge2$. Thus $\rank \cA_{n_0+2}(\ga)\le n_0$.
On the other side, using \eqref{5.15} we obtain $\ \rank
\cA_{n_0+2}(\ga)\ge\rank \cA_{n_0}(\ga)=n_0$. Hence, $\rank
\cA_{n_0+2}(\ga)=n_0$.

Applying the method of mathematical induction to the matrices
$\cA_{n_0+m}(\ga)$ by analogous considerations we get $\rank
\cA_{n_0+m}(\ga)=n_0$ for $m\in\N$. Now using \eqref{5.2},
\eqref{eq: 1.28} and the fact that $(h_n)_{n=1}^\infty$ is a system of
vectors which is total in $\cH_{\cG\cF}$ we find
$\dim\cH_{\cF\cG}=\dim\cH_{\cG\cF}=n_0$. The converse statement
follows immediately from \eqref{5.2} and the above considerations.
\\ (5) Because of $G^*(1)\in\cH_\cG$ and $\wt\psi_j\in\cH_\cG^\perp, j\in\N,$
from (2.65) and \eqref{5.1} we get
$$(G^*(1),h_k)=(G^*(1),\phi_k)=\ol\ga_k\prod\limits_{j=0}^{k-1}D_{\ga_j},\ k\in\N.$$
Combining this with \eqref{5.9} it follows that
\aagf &&\col((G^*(1),h_1),(G^*(1),h_2),\dotsc,(G^*(1),h_n))\nnu\\
&=&D_{\ga_0}\col(\ol\ga_1,\ol\ga_2D_{\ga_1},\dotsc,\ol\ga_n\prod\limits_{j=1}^{n-1}D_{\ga_j})=D_{\ga_0}\eta_n(\ga).\nnu\zzgf
Thus, taking into account $(G^*(1),G^*(1))=1-|\ga_0|^2$ and using
\eqref{5.2} and \eqref{5.10} we get \aagf
&&\Ga(h_1,h_2,\dotsc,h_n,G^*(1))= \left|\begin{array}{cc}
\cA_n(\ga) & D_{\ga_0}\eta_n(\ga) \\
D_{\ga_0}\eta_n^*(\ga) & 1-|\ga_0|^2
\end{array}\right|\nnu\\
&=&(1-|\ga_0|^2) \left|\begin{array}{cc}
\eta_n(\ga)\eta_n^*(\ga)+\cM_n(\ga)\cA_n(W\ga)\cM_n^*(\ga) & \eta_n(\ga) \\
\eta_n^*(\ga) & 1
\end{array}\right|.\nnu\zzgf
Subtracting now the $(n+1)$--th column multiplied by $\ga_1$ from
the first column and, moreover for $k\in\{2,\dotsc,n\}$,
subtracting the $(n+1)$--th column multiplied by
$\ga_k\prod\limits_{j=1}^{k-1}D_{\ga_j}$ from the $k$--th column,
we obtain \aagf \Ga(h_1,h_2,\dotsc,h_n,G^*(1)) &=&(1-|\ga_0|^2)
\left|\begin{array}{cc}
\cM_n(\ga)\cA_n(W\ga)\cM_n^*(\ga) & \eta_n(\ga) \\
0 & 1
\end{array}\right|\nnu\\
&=&(1-|\ga_0|^2)\si_n(W\ga)|\det \cM_n(\ga)|^2.\label{5.24}\zzgf
From \eqref{5.7} we see $\det
\cM_n(\ga)=\prod\limits_{j=1}^nD_{\ga_j}$. Thus, \eqref{5.12}
follows from \eqref{5.24}. From the concrete form of the matrix
$$\matr{\cM_n(\ga)\cA_n(W\ga)\cM_n^*(\ga)}{\eta_n(\ga)}{0}{1}$$
it is clear that the rank of the Gram matrix of the vectors
$(h_k)_{k=1}^n$ and $G^*(1)$ is equal to $\rank \cA_n(W\ga)+1$.
\\ (6) From \eqref{5.12} we get
\aagf\prod\limits_{j=1}^n(1-|\ga_j|^2)\frac{\si_n(W\ga)}{\si_n(\ga)}
=\frac{\Ga(h_1,h_2,\dotsc,h_n,G^*(1))}{\Ga(h_1,h_2,\dotsc,h_n)},\
n\in\N.\label{5.25}\zzgf Denote by $P_n$ the orthoprojection from
$\cH$ onto $\cH_\cG\ominus\Lin\{h_1,h_2,\dotsc,h_n\},n\in\N$.
Because of $G^*(1)\in\cH_\cG$ using well known properties of Gram
determinants (see, e.g., Akhiezer/Glasman \cite[Chapter I]{MR1255973}) we
see
\aagf\frac{\Ga(h_1,h_2,\dotsc,h_n,G^*(1))}{\Ga(h_1,h_2,\dotsc,h_n)}=\|P_nG^*(1)\|^2.\label{5.26}\zzgf
This implies that the sequence on the left hand side of formula
\eqref{5.25} is monoto\-nically decreasing. Since the sequence
$(h_n)_{n=1}^\infty$ is total in $\cH_{\cG\cF}$ the decomposition
\eqref{4.4} shows that $P_{\cN_{\cG\cF}}$ is the strong limit of
the sequence $(P_n)_{n\in\N}$. Therefore, \eqref{5.13} follows from
\eqref{5.25} and \eqref{5.26}.
\\ (7) Assume that $\si_n(\ga)>0$ for every $n\in\N$. Then the block representation
\eqref{5.23} shows that $\si_n(W\ga)>0$ for every $n\in\N$. From
this by induction we get $\si_n(W^m\ga)>0$ for all $n,m\in\N$.
Assume now that the limit \eqref{5.13} is positive. This means that
$\cN_{\cG\cF}\ne\{0\}$ and $P_{\cN_{\cG\cF}}G^*(1)\ne0$ are
satisfied. As already mentioned, the operator
$\Rstr_{\cN_{\cG\cF}}T^*$ is the maximal unilateral shift
contained in $T_\cG^*$. Denote by $\tau, \|\tau\|=1,$ a basis
vector of the generating wandering subspace of this shift. Then
the sequence $(T^{*(n-1)}\tau)_{n\in\N}$ is an orthonormal basis
in $\cN_{\cG\cF}$. Since $\cN_{\cG\cF}\subseteq\cH_\cF^\perp$ (see
\eqref{4.2}) and since the part $(\psi_k)_{k=1}^\infty$ of the
canonical basis (2.12) is an orthonormal basis in $\cH_\cF^\perp$,
we obtain the representation
\aagf\tau=\be_1\psi_1+\be_2\psi_2+\dotsb+\be_n\psi_n+\dotsb\label{5.27}\zzgf
where $\be_j=(\tau,\phi_j), j\in\N$. Because of
$T^*\psi_j=\psi_{j+1}, j\in\{1,2,\dotsc\}$, we get \aagf
T^{*k}\tau=\be_1\psi_{k+1}+\be_2\psi_{k+2}+\dotsb+\be_n\psi_{k+n}+\dotsb,\
k\in\N.\label{5.28}\zzgf From (2.68) we see
\aagf(G^*(1),\psi_1)=\prod\limits_{j=0}^\infty D_{\ga_j}\ ,\
(G^*(1),\psi_k)=0\ ,\ k\in\{2,3,\dotsc\}.\label{5.29}\zzgf
Combining \eqref{5.28} and \eqref{5.29} it follows that
$(G^*(1),T^{*k}\tau)=0,\ k\in\N$. Thus,
$$P_{\cN_{\cG\cF}}G^*(1)=\sum\limits_{k=0}^\infty(G^*(1),T^{*k}\tau)T^{*k}\tau
=(G^*(1),\tau)\tau=\ol\be_1\prod\limits_{j=0}^\infty
D_{\ga_j}\tau.$$ This means \aagf \|P_{\cN_{\cG\cF}}G^*(1)\|=
|\be_1|\prod\limits_{j=0}^\infty D_{\ga_j}.\label{5.30}\zzgf Thus,
the condition $\|P_{\cN_{\cG\cF}}G^*(1)\|\ne0$ is equivalent to
$(\tau,\psi_1)\ne0$. This is equivalent to the fact that $\psi_1$
is not orthogonal to $\cN_{\cG\cF}$. Now we pass to the model
based on the sequence $W\ga=(\ga_1,\ga_2,\ga_3,\dotsc)$. We will
denote the corresponding objects associated with this model by a
lower index $1$. For example, $G_1, \cN_{\cG\cF,1}, \psi_{j1}$
etc. The identity \eqref{4.21} takes now the form $\
\cN_{\cG\cF,1}=\ker \cQ_1^*(\ga)$ where the matrix of the
operator $\cQ_1(\ga)$ is obtained by deleting the first row (or
first column) in the matrix of $\cQ(\ga)$. Therefore, if the
vector $\tau$ with coordinate sequence $(\be_j)_{j=1}^\infty$ (see
\eqref{5.27}) belongs to $\cN_{\cG\cF}$ then in view of \eqref{4.21}
it belongs to $\ker \cQ^*(\ga)$. Thus, in view of the Hankel
structure of $\cQ^*(\ga)$ it follows that the vector with these
coordinates also belongs to $\ker \cQ_1^*(\ga)$. Hence, in view of
\eqref{4.21} this vector belongs to $\cN_{\cG\cF,1}$. Thus, the
condition $\be_1\ne0$ implies that $\psi_{1,1}$ is not orthogonal
to $\cN_{\cG\cF;1}$. This is equivalent to $\|P_{\cN_{\cG\cF,1}}
G_1^*(1)\|\ne0$. Hence, if the limit \eqref{5.13} is positive then the
limit \eqref{5.14} is positive for $m=1$. The case
$m\in\{2,3,4,\dotsc\}$ is handled by induction.
\\ (4) See \cite[Theorem 5.5]{D06}.
\end{proof}

\begin{rem}\label{r1.28}
From Theorems \ref{thm5.1} and \ref{thm5.5} it follows that $\operatorname{dim}\cH_{\cG\cF}=m$ if and only if the first $m$ vectors of the sequence $(h_k)_{k=1}^\infty$ are linear independent (which means they form a basis of the space $\cH_{\cG\cF}$), whereas all vectors of the remaining sequence $(h_k)_{k=m+1}^\infty$ are linear combinations of the sequence $(h_k)_{k=1}^m$.
\end{rem}

Using considerations as in the proof of statement (7) of the
preceding Theorem and taking into account the "'layered"'
structure of the model (see \thref{th2.1} and \coref{cor3.6}), we
obtain the following result.

\begin{cor}[\zitaa{D06}{\ccor{5.6}}]\label{cor5.6}
Suppose that the assumptions of \thref{thm5.5} are fulfilled.
Moreover, assume that $\si_n(\ga)>0$ for all $n\in\N$. Suppose
that there exists an index $m\in\{0,1,2,\dotsc\}$ for which
\eqref{5.14} is satisfied and denote by $m_0(\ga)$ the smallest
index with this property. Then for $m\ge m_0(\ga)$ the limit
\eqref{5.14} is positive. The number $m_0(\ga)$ is characterized by
the following condition. If $\tau$ is a normalized basis vector of
the generating wandering subspace of $V_{T_\cG^*}$ then
\aagf\tau=\be_{m_0(\ga)+1}\psi_{m_0(\ga)+1}+\be_{m_0(\ga)+2}\psi_{m_0(\ga)+2}+\dotsb{\mbox\
\ and\ \ }\be_{m_0(\ga)+1}\ne0.\label{5.31}\zzgf Hereby, the
relations \aagf
m_0(W\ga)=\bcase{m_0(\ga)}{m_0(\ga)=0,}{m_0(\ga)-1}{m_0(\ga)\ge
1}\label{5.32}\zzgf hold true.
\end{cor}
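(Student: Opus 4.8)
The plan is to push the statement through the model built on the shifted Schur parameter sequence $W^m\ga$ and to understand how the maximal shift $V_{T_\cG^*}$ inside $T_\cG^*$ is repositioned under one step $\ga\mapsto W\ga$ of the Schur algorithm. Write $\tau=\tau(\ga)=\sum_{j=1}^\infty\be_j(\ga)\psi_j$ for a unit vector spanning the one-dimensional generating wandering subspace of $\Rstr_{\cN_{\cG\cF}}T^*=V_{T_\cG^*}$, as in the proof of statement~(7) of \thref{thm5.5}, and likewise $\tau(W^m\ga)=\sum_j\be_j(W^m\ga)\psi_{j,m}$ for the colligation attached to $W^m\ga$. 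The first move is to combine statements~(6) and~(7) of \thref{thm5.5}: applied to the $W^m\ga$-model (whose analogues of $\Pi_0$ and of $(G^*(1),\psi_1)$ both equal $\Pi_m=\prod_{j=m}^\infty D_{\ga_j}>0$), the computation $P_{\cN_{\cG\cF}}G^*(1)=\ol{\be_1(\ga)}\,\Pi_0\,\tau$ from the proof of~(7) together with \eqref{5.13} yields the clean identity
\[
\lim_{n\to\infty}\frac{\si_n(W^{m+1}\ga)}{\si_n(W^m\ga)}=\bigl|\be_1(W^m\ga)\bigr|^2,\qquad m\in\{0,1,2,\dotsc\}.
\]
Hence \eqref{5.14} holds for the index $m$ if and only if $\be_1(W^m\ga)\neq0$, and $m_0(\ga)$ is the smallest $m$ with this property; in particular the standing hypothesis that some index satisfies \eqref{5.14} forces $\cN_{\cG\cF}\neq\{0\}$, so $\tau$ really exists.

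Next I would determine how the coordinate vectors of the $\tau(W^m\ga)$ are related. By \lmref{lm4.7}, $\cN_{\cG\cF,m}=\ker\cQ^*(W^m\ga)$, and the Hankel form \eqref{3.43} of $\cQ$ shows that $\cQ^*(W^{m+1}\ga)$ arises from $\cQ^*(W^m\ga)$ by deleting the first row (equivalently the first column). Identifying $\cN_{\cG\cF,m}$ with the subspace $K_m\subseteq\ell^2$ of its coordinate vectors relative to $(\psi_{j,m})_{j\geq1}$ and letting $S$ denote the unilateral shift on $\ell^2$, a short manipulation of the Hankel relations gives $SK_{m+1}\subseteq K_m\subseteq K_{m+1}$ and, moreover, $K_m\subseteq\{v:v_1=0\}$ if and only if $K_m=SK_{m+1}$. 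Since $\Rstr_{\cN_{\cG\cF,m}}T_m^*$ is a sub-shift of the multiplicity-one shift $V_{T_m^*}$, hence of multiplicity one (being nonzero), $K_m=\bigoplus_{k\geq0}S^kE_m$ with $E_m=K_m\ominus SK_m$ one-dimensional and spanned by the coordinate vector of $\tau(W^m\ga)$; therefore the first coordinates of the vectors in $K_m$ are exactly the scalar multiples of $\be_1(W^m\ga)$, so $K_m\subseteq\{v:v_1=0\}$ precisely when $\be_1(W^m\ga)=0$. As $SK_{m+1}$ has codimension one in $K_{m+1}$, the intermediate subspace $K_m$ must equal either $SK_{m+1}$ or $K_{m+1}$; combining this with the two facts above produces the dichotomy: if $\be_1(W^m\ga)\neq0$ then $K_m=K_{m+1}$, so $E_m=E_{m+1}$ and $\be_j(W^{m+1}\ga)=\be_j(W^m\ga)$ for all $j$ (up to one common unimodular factor); if $\be_1(W^m\ga)=0$ then $K_m=SK_{m+1}$, so $E_m=SE_{m+1}$ and $\be_j(W^{m+1}\ga)=\be_{j+1}(W^m\ga)$ for all $j$. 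The same inclusions show that $K_m\neq\{0\}$ for one $m$ forces it for every $m$, which justifies the existence of each $\tau(W^m\ga)$ under our hypothesis.

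Iterating this dichotomy from $m=0$ finishes the proof. Let $i\in\N$ be the position of the first nonvanishing coordinate of $\tau$, that is $\be_1(\ga)=\dotsb=\be_{i-1}(\ga)=0$ and $\be_i(\ga)\neq0$. While the leading coordinate vanishes the second alternative applies, so $\be_j(W^m\ga)=\be_{m+j}(\ga)$ for $m=0,1,\dotsc,i-1$; hence $\be_1(W^m\ga)=\be_{m+1}(\ga)=0$ for $m<i-1$ while $\be_1(W^{i-1}\ga)=\be_i(\ga)\neq0$, whence by the first paragraph $m_0(\ga)=i-1$. This is exactly the asserted characterization $\tau=\be_{m_0(\ga)+1}\psi_{m_0(\ga)+1}+\be_{m_0(\ga)+2}\psi_{m_0(\ga)+2}+\dotsb$ with $\be_{m_0(\ga)+1}\neq0$; and for $m\geq m_0(\ga)$ the value $\be_1(W^{i-1}\ga)\neq0$ is propagated by the first alternative, so $\be_1(W^m\ga)\neq0$ and the limit \eqref{5.14} is positive for every such $m$. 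Finally \eqref{5.32} follows by applying the characterization to $W\ga$: if $m_0(\ga)\geq1$ then $\be_1(\ga)=0$, so $\tau(W\ga)$ has coordinate vector $(\be_2(\ga),\be_3(\ga),\dotsc)$, whose first nonvanishing entry is in position $i-1$, giving $m_0(W\ga)=i-2=m_0(\ga)-1$; if $m_0(\ga)=0$ then $\tau(W\ga)=\tau$ and $m_0(W\ga)=0$.

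The step I expect to be the real obstacle is the second one: pinning down exactly how $\cN_{\cG\cF}$ and its one-dimensional generating wandering subspace move under the Schur step $\ga\mapsto W\ga$, and in particular verifying that the alternative ``$K_m=K_{m+1}$ versus $K_m=SK_{m+1}$'' is governed precisely by whether $\psi_{1,m}\perp\cN_{\cG\cF,m}$. This is where the ``layered'' structure of the model (\thref{th2.1}, \coref{cor3.6}) and the Hankel form \eqref{3.43} of $\cQ$ do the work, in the spirit of the argument in the proof of statement~(7) of \thref{thm5.5}.
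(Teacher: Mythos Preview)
Your proof is correct and follows precisely the approach the paper indicates: it is a careful fleshing out of the sketch ``Using considerations as in the proof of statement~(7) of the preceding Theorem and taking into account the `layered' structure of the model (see \thref{th2.1} and \coref{cor3.6})''. The identity $\lim_{n\to\infty}\si_n(W^{m+1}\ga)/\si_n(W^m\ga)=|\be_1(W^m\ga)|^2$ via \eqref{5.13} and \eqref{5.30}, together with your dichotomy $K_m=K_{m+1}$ versus $K_m=SK_{m+1}$ extracted from the Hankel structure of $\cQ(\ga)$ (exactly the mechanism used in the proof of statement~(7)), is the intended argument, and your treatment of \eqref{5.31} and \eqref{5.32} makes explicit what the paper leaves implicit.
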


\begin{defn}[\zitaa{D06}{\cdef{5.7}}]\label{def5.7}
Assume that $\ga\in\Ga l_2$. Let $\te(\zeta)$ be the Schur
function associated with $\ga$ and let $\Dl$ be a simple unitary
colligation of the form \eqref{3.1} which satisfies
$\te(\zeta)=\te_\Dl(\zeta)$. If $\cN_{\cG\cF}\ne\{0\}$ then the
number $m_0(\ga)$ characterized by condition \eqref{5.31} is called
the level of the subspace $\cN_{\cG\cF}$ or also the level of the
sequence $\ga$. If $\cN_{\cG\cF}=\{0\}$ we set $m_0(\ga):=\infty$
\end{defn}

Thus, it is convenient to consider the vectors
$\psi_1,\psi_2,\psi_3,\dotsc$ as levels of the subspace
$\cH_\cF^\perp$. Hereby, we will say that the vector $\psi_k,\
k\in\{1,2,3,\dotsc\}$ determines the $k$--th level. Then the
number $m_0(\ga)$ expresses the number of levels which have to be
overcome in order to "reach" the subspace $\cN_{\cG\cF}$.

\thref{t1.23} implies that a function $\te(\zeta)$ belongs to
$\cS\Pi\setminus J$ if and only if $m_0(\ga)<\infty$. Hereby, as
\eqref{5.30} shows, the verification of the statement
$\cN_{\cG\cF}\ne\{0\}$ with the aid of the vector $G^*(1)$ is only
possible in the case $m_0(\ga)=0$, this means that $\cN_{\cG\cF}$
"begins" at the first level. Therefore, if $\cN_{\cG\cF}\ne\{0\}$
but $P_{\cN_{\cG\cF}}G^*(1)=0$ then it is necessary to pass from
the sequence $\ga$ to the sequence $W\ga$. Then from \eqref{5.32}
it follows that the subspace $\cN_{\cG\cF}$ will be "found" after
a finite number of such steps.

\subsection{Some criteria of pseudocontinuability of Schur functions}

\begin{thm}[\cite{D98},\zitaa{D06}{\cthm{5.8}}]\label{thm5.8} 
Let $\te(\zeta)$ be a function from $\cS$ the sequence $(\ga_j)_{j=0}^\infty$ of Schur parameters of
which belongs to $\Ga l_2$. Then the vector
$$\xi(\ga):=(Q(W\ga),Q(W^2\ga),\dotsc,Q(W^n\ga),\dotsc)$$ where
$Q(\ga)$ is given in \eqref{3.18}, belongs to $l_2$. The function
$\te(\zeta)$ admits a pseudocontinuation into $\D_e$ if and only
if $\xi(\ga)$ is not cyclic for the coshift $W$ (see \eqref{3.14})
in $l_2$.
\end{thm}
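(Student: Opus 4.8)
The plan is to recognize $\xi(\ga)$ as the first column of the Hankel operator $\cQ(\ga)$ from \eqref{3.43}, to translate cyclicity of $\xi(\ga)$ into a statement about $\ol{\Ran\cQ(\ga)}$, and then to feed this into the description $\cN_{\cG\cF}=\ker\cQ^*(\ga)$ of \lmref{lm4.7} together with the pseudocontinuability criterion \thref{t1.23}.

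First I would settle the preliminaries. Since $\ga\in\Ga l_2$, the infinite product \eqref{2.53} converges to a positive number, so by \lmref{L2.7} the contraction $T$ contains a largest shift of multiplicity one, the model of \thref{th2.1} is available, and the conjugate canonical basis vectors $\wt\psi_j$ admit the expansion \eqref{3.16}. Furthermore, an inner function would make \eqref{2.53} diverge (its boundary modulus equals $1$ a.e.\ on $\T$; see \rmref{re2.8}, and recall $J\subseteq\cS\Pi$ together with \thref{thm4.2}), so under our hypothesis $\te\notin J$; hence $\te\in\cS\Pi$ if and only if $\te\in\cS\Pi\setminus J$. Finally, $\cQ(\ga)$ in \eqref{3.43} is the matrix, with respect to the orthonormal system $(\psi_k)_{k=1}^\infty$, of $P_{\cH_\cF^\perp}\Rstr_{\cH_\cF^\perp}\cU(\ga)$, which is a contraction because $\cU(\ga)$ is unitary; its first column is $\xi(\ga)$, whence $\xi(\ga)\in l_2$ with $\norm{\xi(\ga)}\le1$. (Equivalently, $\xi(\ga)$ is the $(\psi_k)$-part of the unit vector $\wt\psi_1$ in the orthonormal expansion \eqref{3.16}.)

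The core step is to link $\ol{\Ran\cQ(\ga)}$ with cyclicity. By the Hankel structure in \coref{cor3.6}, the $j$-th column of $\cQ(\ga)$ is $(Q(W^j\ga),Q(W^{j+1}\ga),\dotsc)=W^{j-1}\xi(\ga)$, where $W$ is the coshift \eqref{3.14} acting on $l_2$; equivalently $\cQ(\ga)e_j=W^{j-1}\xi(\ga)$ for the standard basis $(e_j)_{j\in\N}$ of $l_2$. Consequently $\ol{\Ran\cQ(\ga)}=\bigvee_{n=0}^\infty W^n\xi(\ga)$, the cyclic subspace of $W$ generated by $\xi(\ga)$, so $\ol{\Ran\cQ(\ga)}=l_2$ precisely when $\xi(\ga)$ is cyclic for $W$. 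Since $\ol{\Ran\cQ(\ga)}=(\ker\cQ^*(\ga))^\perp$, we obtain: $\xi(\ga)$ is cyclic for $W$ if and only if $\ker\cQ^*(\ga)=\{0\}$.

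It remains to chain the equivalences. By \lmref{lm4.7} (identifying $\cH_\cF^\perp$ with $l_2$ via $(\psi_k)\leftrightarrow(e_k)$) we have $\cN_{\cG\cF}=\ker\cQ^*(\ga)$, so $\xi(\ga)$ is cyclic for $W$ if and only if $\cN_{\cG\cF}=\{0\}$; by \thref{t1.23}, $\cN_{\cG\cF}\ne\{0\}$ if and only if $\te\in\cS\Pi\setminus J$; and by the first step this holds if and only if $\te\in\cS\Pi$, i.e.\ if and only if $\te$ admits a pseudocontinuation into $\D_e$. Reading the chain in reverse gives exactly the claimed equivalence. The only step that is not pure bookkeeping is the core step: one has to see that the columns of the Hankel matrix $\cQ(\ga)$ are precisely the iterates $\xi(\ga),W\xi(\ga),W^2\xi(\ga),\dotsc$ of its first column — so that ``$\cQ(\ga)$ has dense range'' is literally ``$\xi(\ga)$ is a cyclic vector for $W$'' — and to keep the index shift of $W^n$ on $l_2$ consistent with the coordinate shift of the Schur parameters inside $Q(W^n\ga)$.
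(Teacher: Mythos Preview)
Your proof is correct and follows essentially the same approach as the paper: identify $\xi(\ga)$ with the first column of the Hankel operator $\cQ(\ga)$, translate cyclicity into a kernel condition, and invoke \lmref{lm4.7} together with \thref{t1.23}. The only minor difference is that you go directly from $\ol{\Ran\cQ(\ga)}=\bigvee_n W^n\xi(\ga)$ to $\ker\cQ^*(\ga)$, whereas the paper first phrases non-cyclicity as $\ker\cQ(\ga)\ne\{0\}$ and then uses the Hankel symmetry $\cQ(\ga)^T=\cQ(\ga)$ to pass to $\ker\cQ^*(\ga)\ne\{0\}$; your route is slightly more direct but the content is the same.
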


\begin{proof}
The vector $\xi(\ga)$ is not cyclic for $W$ in $l_2$ if and only
if $\ker \cQ(\ga)\ne\{0\}$, where $\cQ(\ga)$ is defined via
\eqref{3.43}. The Hankel structure of $\cQ(\ga)$ implies that $\ker
\cQ(\ga)\ne\{0\}$ if and only if $\ker \cQ^*(\ga)\ne\{0\}$. Now
the assertion of the Theorem follows from \lmref{lm4.7} and
\thref{t1.23}.
\end{proof}

The following series of quantitative criteria starts with a
criterion which characterizes the Schur parameters of a rational
function of the Schur class $\cS$.

\begin{thm} [\cite{D3},\zitaa{D06}{\cthm{5.9}}] \label{thm5.9}
Let $\te(\zeta)\in\cS$ and denote $\ga=(\ga_j)_{j=0}^\ome$ the
sequence of its Schur parameters. Then the function $\te(\zeta)$
is rational if and only if one of the following two conditions is
satisfied: \aai\item[(1)] $\ome<\infty,$ i.e., $|\ga_\ome|=1$.
\item[(2)] $\ga\in\Ga l_2$ and there exists an index $n\in\N$ such
that $\si_n(\ga)=0$, where $\si_n(\ga)$ is defined via \eqref{5.5}.
\zzi Hereby: \aai\item[(1a)] $\ome=0$ if and only if
$\te(\zeta)\equiv\ga_0,\ |\ga_0|=1$. \item[(1b)] $\ome\in\N$ if
and only if $\te(\zeta)$ is a finite Blaschke product of degree
$\ome$. \zzi Let $\ga\in\Ga l_2$. If $n_0\in\{0,1,2,\dotsc\}$
satisfies $\si_{n_0}(\ga)>0$ and $\si_{n_0+1}(\ga)=0$ then:
\aai\item[(2a)] $n_0=0$ if and only if $\te(\zeta)\equiv\ga_0,\
|\ga_0|<1$, i.e., if and only if $\te(\zeta)$ is not a constant
function with unitary value but a block of a constant unitary
$2\times 2$ matrix. \item[(2b)] $n_0\in\N$ if and only if
$\te(\zeta)$ is not a finite Blaschke product, but a block of a
finite $2\times 2$--matrix--valued Blaschke--Potapov product of
the form \eqref{4.2B} where $n_0$ is the smallest number of
elementary Blaschke--Potapov factors forming such a $2\times
2$--Blaschke--Potapov product. \zzi
\end{thm}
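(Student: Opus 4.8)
The plan is to reduce the whole statement to the geometric criterion of \thref{thm4.5} (rationality $\Leftrightarrow$ $\dim\cH_{\cG\cF}<\infty$, with the minimal number of elementary Blaschke--Potapov factors equal to that dimension), to its algebraic translation in \thref{thm5.5}(2) ($\dim\cH_{\cG\cF}=n_0$ $\Leftrightarrow$ $\si_{n_0}(\ga)>0$ and $\si_{n_0+1}(\ga)=0$), and, for the case of a terminating Schur parameter sequence, to I.~Schur's classical dichotomy recalled in Section~1.5.

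First I would dispose of the alternative $\ome<\infty$. By definition this means $|\ga_\ome|=1$, and by Schur's theorem this happens exactly when $\te$ is a finite Blaschke product of degree $\ome$; in particular $\te$ is then rational. If $\ome=0$, the condition $|\ga_0|=1$ reads $|\te(0)|=1$, so the maximum modulus principle forces $\te\equiv\ga_0$, while conversely a unimodular constant has $\ome=0$; this is~(1a). For $\ome\in\N$ the same theorem of Schur gives~(1b). So from now on I may assume $\ome=\infty$, i.e.\ $|\ga_j|<1$ for every $j$.

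Next I would prove the main equivalence in this case. For ``$\te$ rational $\Rightarrow$ (2)'': a rational Schur function is of bounded type, hence lies in $\cS\Pi$; since $\ome=\infty$ it is not a finite Blaschke product, and a rational inner function is a finite Blaschke product, so $\te$ is not inner, i.e.\ $\te\in\cS\Pi\setminus J$. By \thref{thm4.2} its Schur parameter sequence lies in $\Ga l_2$, and \thref{thm4.5} then gives $n_0:=\dim\cH_{\cG\cF}<\infty$; \thref{thm5.5}(2) yields $\si_{n_0+1}(\ga)=0$, which is~(2) with $n=n_0+1$. Conversely, if $\ga\in\Ga l_2$ (so automatically $\ome=\infty$) and $\si_n(\ga)=0$ for some $n$, then, $\si_0(\ga)=1>0$ and $(\si_k(\ga))_{k\ge0}$ being nonincreasing by \thref{thm5.5}(1), there is a first index $n_0+1$ with $\si_{n_0+1}(\ga)=0$, whence $\si_{n_0}(\ga)>0$; now \thref{thm5.5}(2) gives $\dim\cH_{\cG\cF}=n_0<\infty$ and \thref{thm4.5} makes $\te$ rational.

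Finally I would derive the refined clauses for $\ga\in\Ga l_2$ with $\si_{n_0}(\ga)>0$, $\si_{n_0+1}(\ga)=0$, using $\dim\cH_{\cG\cF}=n_0$ from \thref{thm5.5}(2). If $n_0=0$, then $\si_1(\ga)=0$; since $\si_1(\ga)=1-\Pi_1^2=1-\prod_{j=1}^\infty(1-|\ga_j|^2)$, this forces $\ga_j=0$ for all $j\ge1$, and, the Schur function with parameter sequence $(0,0,\dotsc)$ being $\te_1\equiv0$, inverting the recursion gives $\te\equiv\ga_0$ with $|\ga_0|<1$; conversely such a constant has Schur parameters $(\ga_0,0,0,\dotsc)$, hence $\Pi_1=1$ and $n_0=0$. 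By \thref{thm4.5} a zero-dimensional $\cH_{\cG\cF}$ means $\te$ is a block of a product of zero elementary Blaschke--Potapov factors, i.e.\ a block of a constant unitary $2\times2$ matrix, and $|\ga_0|<1$ says it is not a unimodular constant; this is~(2a). If $n_0\in\N$, then \thref{thm4.5} says $\te$ is a block of a finite $2\times2$--Blaschke--Potapov product of the form \eqref{4.2B} in which $n_0$ is the minimal number of elementary factors, while $\te$ is not a finite Blaschke product (that would force $\ome<\infty$); conversely, if $\te$ is such a block but not a finite Blaschke product, then $\te$ is rational and not inner, so by \thref{thm4.2} $\ga\in\Ga l_2$ and $\ome=\infty$, and \thref{thm4.5} together with \thref{thm5.5}(2) return $\dim\cH_{\cG\cF}=n_0$ and $\si_{n_0}(\ga)>0=\si_{n_0+1}(\ga)$. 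This is~(2b). There is no real analytic obstacle here: the substance is already contained in \thref{thm4.5} and \thref{thm5.5}, and the only care needed is the bookkeeping at the degenerate boundary $n_0=0$ (equivalently $\ome=0$) and the observation that a rational Schur function with $\ome=\infty$ is automatically noninner, so that \thref{thm4.2} may be invoked.
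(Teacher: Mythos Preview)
Your proof is correct and follows essentially the same route as the paper: condition~(1) is Schur's classical characterization of finite Blaschke products, and condition~(2) together with (2a)--(2b) is obtained by combining \thref{thm4.5} (rationality $\Leftrightarrow \dim\cH_{\cG\cF}<\infty$, with $n_0$ the minimal number of Blaschke--Potapov factors) with \thref{thm5.5}(2) (the determinantal translation $\dim\cH_{\cG\cF}=n_0 \Leftrightarrow \si_{n_0}(\ga)>0=\si_{n_0+1}(\ga)$). You are in fact a bit more careful than the paper's terse proof in that you explicitly invoke \thref{thm4.2} to secure $\ga\in\Ga l_2$ before applying \thref{thm4.5}, and you spell out the $n_0=0$ computation $\si_1(\ga)=1-\Pi_1^2$; both are welcome clarifications.
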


\begin{proof}
All what concerns condition (1) is the well known criterion of
Schur \cite[Part I]{Schur} who described the Schur parameters of
finite Blaschke products. Condition (2) follows from the
corresponding assertions (2) of Theorems \ref{thm5.5} and
\ref{thm4.5}.
\end{proof}

\begin{thm} [\cite{D3},\zitaa{D06}{\cthm{5.10}}]\label{thm5.10}
Let $\te(\zeta)\in\cS$ and denote by $\ga=(\ga_j)_{j=0}^\infty$
the sequence of its Schur parameters. Let $\si_n(\ga),\
n\in\{0,1,2,\dotsc\}, $ be the determinants defined via
\eqref{5.5}.Then $\te(\zeta)\in\cS\Pi\setminus J$ if and only if \
$\ga\in\Ga l_2$ and one of the following conditions is satisfied:
\aai\item[(a)] There exists an index $n\in\N$ such that
$\si_n(\ga)=0$. \item[(b)] If $\si_n(\ga)>0$ for all $n\in\N$ then
there exists a number $m\in\{0,1,2,\dotsc\}$ such that
\aagf\lim\limits_{n\to\infty}\frac{\si_n(W^{m+1}\ga)}{\si_n(W^m\ga)}>0.\label{5.33}\zzgf
\zzi  Suppose that there exists an index $m$ for which \eqref{5.33}
is satisfied and denote $m_0(\ga)$ the smallest index with this
property. Then \eqref{5.33} is satisfied for all $m\ge m_0(\ga)$.
The number $m_0(\ga)$ is characterized by condition \eqref{5.31},
i.e., $m_0(\ga)$ is the level of the sequence $\ga$.
\end{thm}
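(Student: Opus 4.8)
Theorem~\ref{thm5.10} is essentially a synthesis of results already established in the preceding subsections, so the plan is to assemble them rather than to develop new machinery. The skeleton of the argument is as follows.

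\textbf{Step 1: Reduce to the $\Ga l_2$ case.} First I would invoke \thref{thm4.2} (and \coref{cor4.3}): if $\te\in\cS\Pi\setminus J$, then $\ome=\infty$ and the infinite product \eqref{2.53} converges, so $\ga\in\Ga l_2$. Conversely, if $\ga$ terminates then $\te$ is a finite Blaschke product, hence inner, hence not in $\cS\Pi\setminus J$. This shows that the condition $\ga\in\Ga l_2$ is necessary, and it is the standing hypothesis for everything that follows; for the rest of the proof I may assume $\ga=(\ga_j)_{j=0}^\infty\in\Ga l_2$.

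\textbf{Step 2: Characterize $\cS\Pi\setminus J$ via $\cN_{\cG\cF}$.} By \thref{t1.23}, under the standing hypothesis $\te\in\cS\Pi\setminus J$ holds if and only if $\cN_{\cG\cF}\ne\{0\}$ (equivalently $\cN_{\cF\cG}\ne\{0\}$). So the task becomes: translate $\cN_{\cG\cF}\ne\{0\}$ into the stated conditions (a), (b) on the determinants $\si_n(\ga)$.

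\textbf{Step 3: Dichotomy according to whether some $\si_n(\ga)$ vanishes.} If there is an index $n\in\N$ with $\si_n(\ga)=0$, pick the minimal such $n$; then by part~(1) of \thref{thm5.5} there is $n_0\in\{0,1,\dotsc\}$ with $\si_{n_0}(\ga)>0$, $\si_{n_0+1}(\ga)=0$, and by part~(2) of the same theorem $\dim\cH_{\cG\cF}=n_0<\infty$. By \thref{thm4.5} a finite value of $\dim\cH_{\cG\cF}$ forces $\te$ to be rational; combined with $\ga\in\Ga l_2$ (so $\te$ is not a finite Blaschke product) this yields $\te\in\cR\cS\setminus J\subseteq\cS\Pi\setminus J$. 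Here $n_0\ge 1$ corresponds to the nonconstant case and $n_0=0$ to $\te\equiv\ga_0$ with $|\ga_0|<1$; in the latter case $\cN_{\cG\cF}$ is still nonzero since the model has a free shift, consistent with \thref{thm5.5}(2). This handles condition~(a).

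\textbf{Step 4: The case $\si_n(\ga)>0$ for all $n$.} Now assume $\si_n(\ga)>0$ for every $n\in\N$, so $\dim\cH_{\cG\cF}=\infty$ and $\te$ is not rational; we must show $\te\in\cS\Pi\setminus J$ is equivalent to \eqref{5.33} for some $m$. By \thref{t1.23} this means $\cN_{\cG\cF}\ne\{0\}$. Apply parts~(6) and~(7) of \thref{thm5.5}. Part~(6) gives $\lim_{n\to\infty}\si_n(W\ga)/\si_n(\ga)=\Pi_0^{-2}\norm{P_{\cN_{\cG\cF}}G^*(1)}^2$; more generally, replacing $\ga$ by $W^m\ga$ and using that the model for $W^m\ga$ is the "shifted" model (Comment~E, \thref{th2.1}), the limit in \eqref{5.33} equals $\Pi_0(W^m\ga)^{-2}\norm{P_{\cN_{\cG\cF,m}}G_m^*(1)}^2$ where the subscript $m$ denotes objects for the sequence $W^m\ga$. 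The computation \eqref{5.30} in the proof of \thref{thm5.5}(7) shows this is positive exactly when $\psi_{1,m}$ (the first basis vector of $\cH_{\cF}^\perp$ in the $m$-shifted model) is not orthogonal to $\cN_{\cG\cF,m}$. Now: if $\cN_{\cG\cF}=\{0\}$, then by the Hankel/shift argument in the proof of part~(7) $\cN_{\cG\cF,m}=\{0\}$ for all $m$, so no $m$ works. If $\cN_{\cG\cF}\ne\{0\}$, let $\tau$ be a normalized generating vector of the wandering subspace of $V_{T_\cG^*}$; writing $\tau=\sum_{j\ge1}\be_j\psi_j$, some coordinate $\be_{m_0+1}\ne0$ is the first nonzero one, and then in the $m_0$-shifted model $\psi_{1,m_0}$ is not orthogonal to $\cN_{\cG\cF,m_0}$, so \eqref{5.33} holds for $m=m_0$. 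This establishes the equivalence with condition~(b).

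\textbf{Step 5: Identify $m_0(\ga)$ and monotonicity.} The fact that \eqref{5.33} holds for all $m\ge m_0(\ga)$ once it holds for $m_0(\ga)$ follows from \coref{cor5.6}, as does the characterization of $m_0(\ga)$ by \eqref{5.31} and hence its identification with the level of $\ga$ in the sense of \cdef{def5.7}. This is essentially a matter of quoting \coref{cor5.6} and observing that the "level" terminology was set up precisely to name $m_0(\ga)$.

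\textbf{Main obstacle.} The delicate point is Step~4: relating the ratio $\si_n(W^{m+1}\ga)/\si_n(W^m\ga)$ to the geometry of $\cN_{\cG\cF}$ in the shifted models, and in particular making precise that "$\cN_{\cG\cF}\ne\{0\}$ but hidden below level $m$" propagates correctly under the passage $\ga\mapsto W\ga$. This rests entirely on the Hankel structure of $\cQ(\ga)$ (via \lmref{lm4.7}, $\cN_{\cG\cF}=\ker\cQ^*(\ga)$) together with the recursion \eqref{5.32} for $m_0$; the argument is already carried out inside the proof of \thref{thm5.5}(7) and \coref{cor5.6}, so here I would simply cite those and emphasize how the pieces fit. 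Everything else is bookkeeping of the already-proven facts \thref{thm4.2}, \thref{t1.23}, \thref{thm4.5}, and the individual parts of \thref{thm5.5}.
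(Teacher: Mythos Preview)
Your proposal is correct and follows essentially the same route as the paper's proof: the paper handles condition~(a) by invoking \thref{thm5.9} (which is itself just \thref{thm5.5}(2) combined with \thref{thm4.5}, exactly what you unwind in Step~3), and for condition~(b) it cites parts~(6) and~(7) of \thref{thm5.5}, \coref{cor5.6}, and \thref{t1.23}, which is precisely your Steps~4--5. The only cosmetic difference is that you spell out the reduction to $\ga\in\Ga l_2$ via \thref{thm4.2}/\coref{cor4.3} explicitly, whereas the paper takes this as understood from the statement.
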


\begin{proof}
\thref{thm5.9} implies that condition (a) is satisfied if and only
if the function $\te(\zeta)$ is rational and therefore belongs to
$\cS\Pi\setminus J$. In the case that $\te(\zeta)$ is not rational
the assertions of the Theorem follow from the assertions (6) and
(7) of \thref{thm5.5}, \coref{cor5.6} and \thref{t1.23}.
\end{proof}

For the proof of the next criterion we need additional facts about
the matrices $\cA_n(\ga),\ n\in\N,$ and their determinants.

\begin{lem} [\zitaa{D06}{\clem{5.11}}]\label{lm5.11}
Assume that $\ga\in\Ga l_2$ and $\si_{n+1}(\ga)>0$ for some
$n\in\N$. Then
\aagf\prod\limits_{j=0}^n(1-|\ga_j|^2)\frac{\si_n(W\ga)}{\si_n(\ga)}
=(1+\frac{1}{1-|\ga_1|^2}\La_n^*(\ga)\cA_n^{-1}(W\ga)\La_n(\ga))^{-1}\label{5.34}\zzgf
where \aagf\!\!\La_n(\ga)=\col(\ol\ga_1\ ,\ \ol\ga_2D_{\ga_2}^{-1}
\ ,\ \ol\ga_3D_{\ga_2}^{-1}D_{\ga_3}^{-1}\ ,\dotsc, \
\ol\ga_n\prod\limits_{j=2}^nD_{\ga_j}^{-1}).\label{5.35}\zzgf
\end{lem}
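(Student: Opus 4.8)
The plan is to regard the additive decomposition \eqref{5.10} of $\cA_n(\ga)$ as a rank-one perturbation of $\cM_n(\ga)\cA_n(W\ga)\cM_n^*(\ga)$ and to apply the matrix determinant lemma (the identity $\det(A+uv^*)=\det A\cdot(1+v^*A^{-1}u)$ for invertible $A$). First I would record the invertibility that makes the right-hand side of \eqref{5.34} meaningful. Since $\ga\in\Ga l_2$ we have $\abs{\ga_1}<1$, hence $D_{\ga_1}>0$. By \coref{cor5.2} the matrix $\cA_{n+1}(\ga)$ is a Gram matrix, so $\si_{n+1}(\ga)>0$ forces it to be positive definite; then its principal submatrix $\cA_n(\ga)$ is positive definite, and from the block representation \eqref{5.23} the Schur complement $\cA_n(W\ga)-B_{n+1}(\ga)B_{n+1}^*(\ga)$ is positive definite, whence so is $\cA_n(W\ga)$. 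In particular $\si_n(\ga)>0$ and $\si_n(W\ga)>0$, and $\cM_n(\ga)$ is invertible, being lower triangular with diagonal $D_{\ga_1},\dotsc,D_{\ga_n}$ (see \eqref{5.7}), with $\det\cM_n(\ga)=\prod_{j=1}^n D_{\ga_j}$.

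Next I would apply the matrix determinant lemma to $\cA_n(\ga)=\cB_n+\eta_n(\ga)\eta_n^*(\ga)$, where $\cB_n\defeq\cM_n(\ga)\cA_n(W\ga)\cM_n^*(\ga)$ is invertible with $\det\cB_n=\bigl(\prod_{j=1}^n(1-\abs{\ga_j}^2)\bigr)\si_n(W\ga)$. This gives $\si_n(\ga)=\det\cB_n\cdot\bigl(1+\eta_n^*(\ga)\cB_n^{-1}\eta_n(\ga)\bigr)$, that is,
\[
\Bigl(\prod_{j=1}^n(1-\abs{\ga_j}^2)\Bigr)\frac{\si_n(W\ga)}{\si_n(\ga)}
=\Bigl(1+\eta_n^*(\ga)(\cM_n^*(\ga))^{-1}\cA_n^{-1}(W\ga)\cM_n^{-1}(\ga)\eta_n(\ga)\Bigr)^{-1}.
\]
It then remains to identify the quadratic form in the denominator with $D_{\ga_1}^{-2}\La_n^*(\ga)\cA_n^{-1}(W\ga)\La_n(\ga)$, i.e.\ to prove $\cM_n^{-1}(\ga)\eta_n(\ga)=D_{\ga_1}^{-1}\La_n(\ga)$, equivalently $\cM_n(\ga)\La_n(\ga)=D_{\ga_1}\eta_n(\ga)$, with $\eta_n(\ga)$ and $\La_n(\ga)$ as in \eqref{5.9} and \eqref{5.35}.

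This last identity is the only genuinely computational point, and it is elementary: evaluating the $k$-th entry of $\cM_n(\ga)\La_n(\ga)$ from \eqref{5.7} and \eqref{5.35} and pulling out the common factor $\ol\ga_k$, the claim reduces to the scalar identity $\prod_{j=2}^{k-1}(1-\abs{\ga_j}^2)^{-1}=(1-\abs{\ga_1}^2)+\sum_{i=1}^{k-1}\abs{\ga_i}^2\prod_{j=2}^i(1-\abs{\ga_j}^2)^{-1}$, which telescopes upon writing $\abs{\ga_j}^2=1-D_{\ga_j}^2$; alternatively one checks $\cM_n(\ga)\La_n(\ga)=D_{\ga_1}\eta_n(\ga)$ by induction on $n$ via the one-row-and-column bordering of $\cM_n$ and $\La_n$. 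Substituting $\cM_n^{-1}(\ga)\eta_n(\ga)=D_{\ga_1}^{-1}\La_n(\ga)$ into the displayed formula yields \eqref{5.34}. I do not expect a real obstacle here: the only thing needing care is the a priori invertibility of $\cA_n(W\ga)$ required by the matrix determinant lemma, which is exactly what the hypothesis $\si_{n+1}(\ga)>0$ delivers through \eqref{5.23}. As a sanity check, the Sherman--Morrison identity $1-\eta_n^*(\ga)\cA_n^{-1}(\ga)\eta_n(\ga)=\bigl(1+\eta_n^*(\ga)\cB_n^{-1}\eta_n(\ga)\bigr)^{-1}$ shows that this is consistent with the Gram-determinant computation underlying \eqref{5.12} and \eqref{5.25}.
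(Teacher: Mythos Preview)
Your argument is correct and follows essentially the same route as the paper's: both establish invertibility of $\cA_n(W\ga)$ from \eqref{5.23}, apply the rank-one determinant identity to the decomposition \eqref{5.10} (you phrase it as the matrix determinant lemma, the paper as $\det(I_n+X_nX_n^*)=1+X_n^*X_n$ after pulling out the factor $\cM_n(\ga)\cA_n^{1/2}(W\ga)$), and both hinge on the identity $\cM_n(\ga)\La_n(\ga)=D_{\ga_1}\eta_n(\ga)$, which the paper records as \eqref{5.37} with the remark ``by direct computation.'' Your displayed formula carries the product $\prod_{j=1}^n(1-|\ga_j|^2)$, which is what the computation actually yields (nothing in $\cA_n(\ga)$, $\cA_n(W\ga)$, or $\La_n(\ga)$ depends on $\ga_0$); the index $j=0$ in the paper's \eqref{5.34} and in its proof appears to be a typo.
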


\begin{proof}
From formula \eqref{5.23} it follows that in the considered case
the matrix $\cA_n(W\ga)$ is invertible. Therefore, taking into
account the invertibility of $\cM_n(\ga)$ and using \eqref{5.10} we
get \aagf
\cA_n(\ga)=\cM_n(\ga)\cA_n^{\frac{1}{2}}(W\ga)[X_n(\ga)X_n^*(\ga)+I_n]\cA_n^{\frac{1}{2}}(W\ga)\cM_n^*(\ga)\label{5.36}\zzgf
where
$X_n(\ga)=\cA_n^{-\frac{1}{2}}(W\ga)\cM_n^{-1}(\ga)\eta_n(\ga)$.
By direct computation it is checked that \aagf
\cM_n(\ga)\La_n(\ga)=D_{\ga_1}\eta_n(\ga).\label{5.37}\zzgf Thus,
$X_n(\ga)=D_{\ga_1}^{-1}\cA_n^{-\frac{1}{2}}(W\ga)\La_n(\ga).$
Taking the determinant in \eqref{5.36} and using the form
\eqref{5.7} of the matrix $\cM_n(\ga)$ we obtain
$$\si_n(\ga)=\prod\limits_{j=0}^n(1-|\ga_j|^2)\si_n(W\ga)\det(I_n+X_n(\ga)X_n^*(\ga)).$$
From this and the identity
$\det(I_n+X_n(\ga)X_n^*(\ga))=1+X_n^*(\ga)X_n(\ga)$ (see, e.g.,
\cite[Lemma 1.1.8]{DFK92}) we obtain \eqref{5.34}.
\end{proof}

\begin{lem} [\zitaa{D06}{\clem{5.12}}]\label{lm5.12}
Assume that $\ga\in\Ga l_2$ and that $\si_{n}(\ga)=0$ for some
$n\in\N$. Denote by $m_0(\ga)$ the level of the sequence $\ga$,
i.e., $m_0(\ga)$ is characterized by condition \eqref{5.31}. Then:
\aai\item[(a)] For $m\ge m_0(\ga)$ it holds $\rank
\cA_n(W^m\ga)=\rank \cA_n(W^{m+1}\ga),\ n\in\N$. \item[(b)] If $\
m_0(\ga)\ge 1\ ,\ m\in\{0,1,\dotsc,m_0(\ga)-1\}$ and $n_0(m)$ is
such that $\si_{n_0(m)}(W^m\ga)>0$ and $\si_{n_0(m)+1}(W^m\ga)=0$
then
\aagf&&\rank \cA_n(W^m\ga)\ge\rank \cA_n(W^{m+1}\ga),\ n\in\{1,2,\dotsc,n_0(m)-1\},\nnu\\
&&\rank \cA_n(W^m\ga)=\rank \cA_n(W^{m+1}\ga)+1,\ n\ge
n_0(m).\nnu\zzgf \zzi
\end{lem}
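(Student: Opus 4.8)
The plan is to reduce both assertions to one rank-comparison dichotomy, derived from assertion~(5) of \thref{thm5.5}, together with the description of the level $m_0$ from \cdef{\ref{def5.7}} and the recursion \eqref{5.32}. First, since $\ga\in\Ga l_2$ and $\si_n(\ga)=0$ for some $n$, the associated function $\te$ is rational but not a finite Blaschke product, hence $\te\in\cS\Pi\setminus J$; thus $\cN_{\cG\cF}\ne\{0\}$ and $m_0(\ga)<\infty$, and the same holds for every shifted sequence $W^m\ga$ (whose Schur function is again rational, noninner, with parameter sequence in $\Ga l_2$). As in the proof of assertion~(7) of \thref{thm5.5}, I would write $h_{k,m}$, $G_m^*(1)$, $\cH_{\cG,m}$, $\cH_{\cG\cF,m}$, $\cN_{\cG\cF,m}$ for the objects attached to the model of $W^m\ga$, and record, by iterating \eqref{5.32}, that $m_0(W^m\ga)=\max\{m_0(\ga)-m,0\}$.

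The central step is: for every $\al\in\Ga l_2$, with $h_k^{\al}$ and $G_\al^*(1)$ the corresponding model vectors,
\[
\rank\cA_n(\al)=\rank\cA_n(W\al)+\begin{cases}1,&\text{if }G_\al^*(1)\in\Lin\{h_1^{\al},\dotsc,h_n^{\al}\},\\0,&\text{otherwise.}\end{cases}
\]
Indeed, by \coref{cor5.2} the matrix $\cA_n(\al)$ is the Gram matrix of $h_1^{\al},\dotsc,h_n^{\al}$, hence $\rank\cA_n(\al)=\dim\Lin\{h_1^{\al},\dotsc,h_n^{\al}\}$, whereas assertion~(5) of \thref{thm5.5} states that the Gram matrix of $h_1^{\al},\dotsc,h_n^{\al},G_\al^*(1)$ has rank $\rank\cA_n(W\al)+1$; since adjoining one vector raises a Gram rank by $0$ or $1$, the displayed alternative follows. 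I would also record the level description: by \eqref{5.30} and \cdef{\ref{def5.7}}, $m_0(W^m\ga)\ge1$ holds iff $P_{\cN_{\cG\cF,m}}G_m^*(1)=0$, and since $G_m^*(1)\in\cH_{\cG,m}=\cH_{\cG\cF,m}\oplus\cN_{\cG\cF,m}$ this is equivalent to $G_m^*(1)\in\cH_{\cG\cF,m}$, whereas $m_0(W^m\ga)=0$ iff $G_m^*(1)\notin\cH_{\cG\cF,m}$.

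Assertion~(a): if $m\ge m_0(\ga)$ then $m_0(W^m\ga)=0$, so $G_m^*(1)\notin\cH_{\cG\cF,m}$. As each $h_{k,m}$ lies in $\cH_{\cG\cF,m}$, the span $\Lin\{h_{1,m},\dotsc,h_{n,m}\}$ is contained in $\cH_{\cG\cF,m}$ and hence cannot contain $G_m^*(1)$, for any $n$; the central step then gives $\rank\cA_n(W^m\ga)=\rank\cA_n(W^{m+1}\ga)$ for all $n\in\N$. Assertion~(b): if $m<m_0(\ga)$ then $m_0(W^m\ga)\ge1$, so $G_m^*(1)\in\cH_{\cG\cF,m}$; since $\|G_m^*(1)\|^2=1-|\ga_m|^2>0$, this forces $\cH_{\cG\cF,m}\ne\{0\}$, and by assertion~(2) of \thref{thm5.5} the index $n_0(m)$ (for which $\si_{n_0(m)}(W^m\ga)>0$ and $\si_{n_0(m)+1}(W^m\ga)=0$) equals $\dim\cH_{\cG\cF,m}\ge1$. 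For $n\ge n_0(m)$, \rmref{r1.28} gives $\Lin\{h_{1,m},\dotsc,h_{n,m}\}=\cH_{\cG\cF,m}\ni G_m^*(1)$, so the central step yields $\rank\cA_n(W^m\ga)=\rank\cA_n(W^{m+1}\ga)+1$. For $1\le n\le n_0(m)-1$, the interlacing inequalities \eqref{5.11} of assertion~(4) of \thref{thm5.5}, applied with $W^m\ga$ in place of $\ga$, give $\la_{n,j}(W^{m+1}\ga)\le\la_{n,j}(W^m\ga)$ for all $j$, so the number of positive eigenvalues does not decrease, giving $\rank\cA_n(W^m\ga)\ge\rank\cA_n(W^{m+1}\ga)$.

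The only point requiring care is that assertion~(5) of \thref{thm5.5} and \eqref{5.30} are used here without the auxiliary hypothesis ``$\si_n(\ga)>0$ for all $n$'' under which \eqref{5.30} was recorded; one checks that their proofs use only $\cN_{\cG\cF}\ne\{0\}$, which is guaranteed here by $\te\in\cS\Pi\setminus J$. Everything else is bookkeeping with the chain of inclusions $\Lin\{h_{1,m},\dotsc,h_{n,m}\}\subseteq\cH_{\cG\cF,m}\subseteq\cH_{\cG,m}\ni G_m^*(1)$.
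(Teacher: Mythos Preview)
Your proof is correct and takes essentially the same approach as the paper---both hinge on assertion~(5) of \thref{thm5.5} together with the level criterion via $P_{\cN_{\cG\cF}}G^*(1)$ (through \eqref{5.30}), and your ``central step'' simply makes explicit the rank dichotomy the paper uses implicitly. One minor redundancy: your central step already yields $\rank\cA_n(W^m\ga)\ge\rank\cA_n(W^{m+1}\ga)$ for every $n$ (the correction term is $0$ or $1$), so the separate appeal to the interlacing inequalities for $n<n_0(m)$ is not needed.
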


\begin{proof} (a)
It suffices to consider the case $m_0(\ga)=0$. In the opposite
case it is necessary to change from $\ga$ to $W^{m_0(\ga)}\ga$.
Thus, assume that $m_0(\ga)=0$. Because of $\be_1\ne0$ from
\eqref{5.30} we get $P_{\cN_{\cG\cF}}G^*(1)\ne0$. This means, for
arbitrary $n\in\N$ the rank of the Gram matrices of the vectors
$(h_j)_{j=1}^n$ is one smaller than the rank of the Gram matrix of
the vectors $(h_j)_{j=1}^n$ and $G^*(1)$. Now for the case $m=0$
the assertion
follows from statement (5) of \thref{thm5.5}. The case $m>0$ can be treated analogously. \\
(b) It suffices to consider the case $m=0$. The other cases can be
considered analogously. Thus, let $m=0$. In the considered case we
proceed as above and take into account that now $\be_1=0$. Hence,
$P_{\cN_{\cG\cF}}G^*(1)=0$ and $G^*(1)\in\cH_{\cG\cF}$.
\end{proof}

\begin{thm} [\cite{D3},\zitaa{D06}{\cthm{5.13}}]\label{thm5.13}
Let $\te(\zeta)\in\cS$ and denote by $\ga$ the sequence of its
Schur parameters. Then $\te(\zeta)\in\cS\Pi\setminus J$ if and
only if $\ga\in\Ga l_2$ and there exist numbers
$m\in\{0,1,2,\dotsc\}$ and $c>0$, which depend on $m$, such that
\aagf\matr{\cA(W^{m+1}\ga)}{\La(W^m\ga)}{\La^*(W^m\ga)}{c}\ge
0\label{5.38}\zzgf where $$\cA(\ga)=I-\cL(\ga)\cL^*(\ga),$$
\aagf\La(\ga)=\col(\ol\ga_1\ ,\ \ol\ga_2D_{\ga_2}^{-1} \ ,\
\ol\ga_3D_{\ga_2}^{-1}D_{\ga_3}^{-1}\ ,\dotsc, \
\ol\ga_n\prod\limits_{j=2}^nD_{\ga_j}^{-1}\
,\dotsc)\label{5.39}\zzgf and $\cL(\ga)$ is given via \eqref{3.42}.
Suppose that there exists an index $m$ for which \eqref{5.38} is
satisfied and denote by $m_0(\ga)$ the smallest index with this
property. Then \eqref{5.38} is satisfied for all $m\ge m_0(\ga)$.
The number $m_0(\ga)$ is characterized by condition \eqref{5.31},
i.e., $m_0(\ga)$ is the level of the sequence $\ga$.
\end{thm}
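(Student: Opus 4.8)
The statement to prove is Theorem~\ref{thm5.13}, a reformulation of the pseudocontinuability criterion of Theorem~\ref{thm5.10} via the infinite positive-semidefinite block matrix \eqref{5.38}. The strategy is to reduce \eqref{5.38} to the limit condition \eqref{5.33} in Theorem~\ref{thm5.10} by means of the finite-dimensional identity \eqref{5.34} of Lemma~\ref{lm5.11}, passing carefully to the limit $n\to\infty$ and using the monotonicity properties from Theorem~\ref{thm5.5}(6). I would also keep Theorem~\ref{thm5.9} at hand to dispatch separately the rational case, in which $\si_n(\ga)=0$ for some $n$; there condition (a) of Theorem~\ref{thm5.10} is what applies, and the block matrix \eqref{5.38} must be interpreted with the truncated matrices, exactly as Lemma~\ref{lm5.12} prepares.

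Let me sketch the main argument assuming first that $\si_n(\ga)>0$ for all $n\in\N$, so that by Theorem~\ref{thm5.5}(7) $\si_n(W^m\ga)>0$ for all $m,n$ and all the matrices $\cA_n(W^m\ga)$ are invertible. By Lemma~\ref{lm5.11} applied to $W^m\ga$ in place of $\ga$, for each $n$ we have
\[
\prod_{j=0}^{n}\bigl(1-\abs{\ga_{m+j}}^2\bigr)\,\frac{\si_n(W^{m+1}\ga)}{\si_n(W^m\ga)}
=\Bigl(1+\tfrac{1}{1-\abs{\ga_{m+1}}^2}\La_n^*(W^m\ga)\cA_n^{-1}(W^{m+1}\ga)\La_n(W^m\ga)\Bigr)^{-1}.
\]
By Theorem~\ref{thm5.5}(6) the left-hand side is monotonically decreasing in $n$ and converges to $\tfrac{1}{\Pi_0^2}\norm{P_{\cN_{\cG\cF}}G^*(1)}^2$ for the shifted sequence; the point is that the limit on the left is $>0$ precisely when \eqref{5.33} holds (after absorbing the convergent product $\prod_{j}\bigl(1-\abs{\ga_{m+j}}^2\bigr)$, which is positive because $\ga\in\Ga l_2$). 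On the right-hand side, a positive limit is equivalent to the boundedness of the quadratic forms $\La_n^*(W^m\ga)\cA_n^{-1}(W^{m+1}\ga)\La_n(W^m\ga)$, i.e.\ to the existence of a finite constant $c'$ with $\La_n^*(W^m\ga)\cA_n^{-1}(W^{m+1}\ga)\La_n(W^m\ga)\le c'$ for all $n$. By the standard Schur-complement characterization of positivity of a bordered matrix (see, e.g., the reference \cite{DFK92} already cited for Lemma~\ref{lm5.11}), this last bound is equivalent to
\[
\matr{\cA_n(W^{m+1}\ga)}{\La_n(W^m\ga)}{\La_n^*(W^m\ga)}{c}\ge 0
\]
for some $c>0$ and all $n$, which upon letting $n\to\infty$ is exactly the infinite inequality \eqref{5.38}. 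Combining with Theorem~\ref{thm5.10}, $\te\in\cS\Pi\setminus J$ iff such $m$ and $c$ exist. The claims about $m_0(\ga)$ being the smallest such $m$, its stability for all larger $m$, and its characterization by \eqref{5.31}, then transfer verbatim from the corresponding statements in Theorem~\ref{thm5.10} and Corollary~\ref{cor5.6}, since the two conditions \eqref{5.33} and \eqref{5.38} have just been shown to be equivalent level by level (use the relation \eqref{5.32} for the induction on $m$).

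It remains to treat the case where $\si_n(\ga)=0$ for some $n$. By Theorem~\ref{thm5.9} this happens exactly when $\te$ is rational, hence $\te\in\cS\Pi\setminus J$ once $\ga\in\Ga l_2$; so the ``only if'' direction is automatic and one only needs to exhibit suitable $m,c$. Here I would invoke Lemma~\ref{lm5.12}: taking $m=m_0(\ga)$, part~(a) gives $\rank\cA_n(W^m\ga)=\rank\cA_n(W^{m+1}\ga)$ for all $n$, and the bordered Schur-complement criterion still applies with Moore--Penrose inverses in place of ordinary inverses, yielding a uniform bound and hence \eqref{5.38} for $m=m_0(\ga)$; part~(b) shows that for $m<m_0(\ga)$ the rank jumps, so no finite $c$ works, pinning $m_0(\ga)$ as the smallest admissible index. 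The main obstacle I anticipate is the limit interchange: one must show that the \emph{uniform-in-$n$} bound on the finite quadratic forms is genuinely equivalent to the infinite operator inequality \eqref{5.38}, which requires knowing that $\La(W^m\ga)\in l_2$ (true since $\ga\in\Ga l_2$, because the factors $\prod_{j=2}^n D_{\ga_j}^{-1}$ stay bounded — the series $\sum\abs{\ga_j}^2<\infty$ forces $\prod D_{\ga_j}$ to converge to a positive limit) and that $\cA(W^{m+1}\ga)=I-\cL\cL^*$ is a bounded nonnegative operator with the $n$-truncations converging strongly; this is where a little care with the Schur complements and the Moore--Penrose inverses in the non-invertible (rational) case is needed, but all the ingredients are already assembled in Lemmas~\ref{lm5.11} and~\ref{lm5.12} and Theorem~\ref{thm5.5}.
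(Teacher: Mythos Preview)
The survey paper does not include a proof of Theorem~\ref{thm5.13}; it is merely cited from \cite{D3} and \cite[Theorem~5.13]{D06}. So there is no ``paper's own proof'' to compare against. That said, your strategy is exactly the natural one and is the approach taken in \cite{D06}: the surrounding Lemmas~\ref{lm5.11} and~\ref{lm5.12} are prepared precisely for this purpose, and your reduction via Lemma~\ref{lm5.11} to the equivalence between the positivity of the limit \eqref{5.33} and the uniform boundedness of the quadratic forms $\La_n^*\cA_n^{-1}\La_n$ (hence the Schur-complement block inequality, hence~\eqref{5.38}) is correct in the case $\si_n(\ga)>0$ for all $n$. The passage from the uniform finite inequalities to the operator inequality~\eqref{5.38} is also sound, since $\cL(\ga)$ is lower triangular and therefore the $n\times n$ principal submatrix of $\cA(\ga)=I-\cL(\ga)\cL^*(\ga)$ is exactly $\cA_n(\ga)$.

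The rational case, however, remains a genuine loose end in your sketch. The rank equality in Lemma~\ref{lm5.12}(a) does yield the range condition $\La_n(W^m\ga)\in\Ran\cA_n(W^{m+1}\ga)$ for $m\ge m_0(\ga)$ --- this follows from the identity $\cM_n^{-1}(\ga)\cA_n(\ga)(\cM_n^*)^{-1}(\ga)=\cA_n(W\ga)+D_{\ga_1}^{-2}\La_n(\ga)\La_n^*(\ga)$, obtained by combining \eqref{5.10} and \eqref{5.37}, since a rank-one perturbation preserves rank precisely when the added vector lies in the range --- but that alone does not give a uniform bound on $\La_n^*\cA_n^+\La_n$. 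One clean way to finish: for $m\ge m_0(\ga)$ and $n_0$ the rank of $W^m\ga$, the kernels $\ker\cA_{n}(W^{m+1}\ga)$ for $n\ge n_0$ are generated from $\ker\cA_{n_0+1}(W^{m+1}\ga)$ by the explicit construction in Lemma~\ref{lm5.18} (or Proposition~\ref{prop26}), so the Moore--Penrose inverses act nontrivially only on a fixed $n_0$-dimensional piece and the quadratic forms stabilize; equivalently, $\cA(W^{m+1}\ga)$ has finite rank $n_0$, whence~\eqref{5.38} reduces to a finite-dimensional inequality. For $m<m_0(\ga)$, Lemma~\ref{lm5.12}(b) gives a rank jump, so $\La_n(W^m\ga)\notin\Ran\cA_n(W^{m+1}\ga)$ for large $n$ and no finite $c$ works, pinning down $m_0(\ga)$ as claimed.
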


\begin{cor} [\zitaa{D06}{\ccor{5.14}}]\label{cor5.14}
Let $\te(\zeta)\in\cS$ and denote by $\ga$ the sequence of its
Schur parameters. Then $\te(\zeta)\in\cS\Pi\setminus J$ if and
only if $\ga\in\Ga l_2$ and there exists an index
$m\in\{0,1,2,\dotsc\}$ for which the vector $\La(W^m\ga)$ belongs
to the range of the operator $\cA^{\frac{1}{2}}(W^{m+1}\ga)$.
Suppose that there exists such an index $m$ and denote by
$m_0(\ga)$ the smallest one. Then for all $m\ge m_0(\ga)$ the
vector $\La(W^m\ga)$ belongs to the range of the operator
$\cA^{\frac{1}{2}}(W^{m+1}\ga)$. The number $m_0(\ga)$ is
characterized by condition \eqref{5.31}. This means that $m_0(\ga)$
is the level of the sequence $\ga$.
\end{cor}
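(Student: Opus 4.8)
The plan is to obtain \coref{cor5.14} from \thref{thm5.13} by translating the positive semidefiniteness condition \eqref{5.38} into an operator-range inclusion. The required tool is the classical criterion for nonnegativity of $2\times2$ block operator matrices: if $A$ is a bounded nonnegative operator on a Hilbert space and $b$ is a vector in that space, then there exists a real number $c>0$ such that $\matr{A}{b}{b^*}{c}\ge0$ if and only if $b$ lies in the range $\Ran(A^{1/2})$; more precisely, $\matr{A}{b}{b^*}{c}\ge0$ holds exactly when $b=A^{1/2}v$ for some $v$ with $\norm{v}^2\le c$ (see, e.g., \cite{DFK92}).

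First I would check that this criterion applies with $A=\cA(W^{m+1}\ga)$ and $b=\La(W^m\ga)$ for every $\ga\in\Ga l_2$ and every $m\in\{0,1,2,\dotsc\}$. On the one hand, $\cA(W^{m+1}\ga)=I-\cL(W^{m+1}\ga)\cL^*(W^{m+1}\ga)$ is a bounded nonnegative self-adjoint operator on $l_2$, since the block $\cL(W^{m+1}\ga)$ of the unitary operator $\cU(W^{m+1}\ga)$ from \eqref{3.9} is a contraction, whence $\cL(W^{m+1}\ga)\cL^*(W^{m+1}\ga)\le I$. On the other hand, $\La(W^m\ga)\in l_2$, because the condition $\ga\in\Ga l_2$ makes the products of the factors $D_{\ga_j}^{-1}$ occurring in \eqref{5.39} converge, so the entries of $\La(W^m\ga)$ differ from those of $W^m\ga$ only by bounded factors. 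Applying the criterion then yields, for each fixed $m$, that there is a $c>0$ making \eqref{5.38} valid if and only if $\La(W^m\ga)\in\Ran(\cA^{1/2}(W^{m+1}\ga))$.

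Combining this equivalence with \thref{thm5.13} immediately gives the first assertion of the corollary. Moreover, by the equivalence just established the set of indices $m$ for which \eqref{5.38} holds for some $c>0$ coincides with the set of indices $m$ for which $\La(W^m\ga)$ lies in $\Ran(\cA^{1/2}(W^{m+1}\ga))$; hence the smallest such index is the same number $m_0(\ga)$ as in \thref{thm5.13}, and consequently both the statement that the range condition then holds for all $m\ge m_0(\ga)$ and the characterization of $m_0(\ga)$ by \eqref{5.31} (namely, that $m_0(\ga)$ is the level of $\ga$) carry over verbatim. The one point demanding a modicum of care is the verification of the hypotheses of the range criterion --- nonnegativity and boundedness of $\cA(W^{m+1}\ga)$ and membership of $\La(W^m\ga)$ in $l_2$ --- the rest being a direct reformulation; no genuine obstacle arises beyond invoking the correct form of the block-matrix criterion.
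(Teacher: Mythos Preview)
Your proposal is correct and follows essentially the same approach as the paper's own proof, which simply invokes \thref{thm5.13} together with the well-known block-matrix criterion for nonnegative Hermitian block matrices (citing \cite[Lemma~2.1]{BDK}) after noting that $\cA(W^m\ga)\ge0$. Your write-up is more detailed in that it explicitly verifies the hypotheses of the block-matrix criterion (boundedness and nonnegativity of $\cA(W^{m+1}\ga)$, and $\La(W^m\ga)\in l_2$), but the logical skeleton is identical.
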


\begin{proof}
Because of $\cA(W^m\ga)\ge 0\ ,\ m\in\{0,1,2,\dotsc\}, $ the
assertion follows from \thref{thm5.13} and the well known
criterion for nonnegative Hermitian block matrices (see, e.g.,
\cite[Lemma 2.1]{BDK}).
\end{proof}

\begin{rem}[\zitaa{D06}{\crem{5.15}}]\label{re5.15}
The matrix representation \eqref{3.9} implies
\[
    \cA(\ga)
    =I-\cL(\ga)\cL^*(\ga)
    =\cR(\ga)\cR^*(\ga).
\]
Therefore, \coref{cor5.14} remains true if the range of the operator \(\cA^\frac{1}{2}(W^{m+1}\ga)\) is replaced by the range of the operator \(\cR(W^{m+1}\ga)\).
\end{rem}

\section{General properties of the Schur parameter sequences of pseudocontinuable
    Schur functions}\label{sec5}

\subsection{On some properties of the Schur parameter sequences of pseudocontinuable
Schur functions}

In the term $\La_n^*(\ga)\cA_n^{-1}(W\ga)\La_n(\ga)$ (see \lmref{lm5.11}) the parameter $\ga_1$ is only contained in $\La_n(\ga)$.
This enables us to give a more concrete description of the dependence of this expression on $\ga_1$.
For this we consider the representation \eqref{5.23}.
We assume that for $n\in\N$ the matrix $\cA_{n+1}(\ga)$ is invertible and introduce the notations
\begin{equation}
H_n(\ga)
:=\cA_n(W\ga)-B_{n+1}(\ga)B_{n+1}^*(\ga)\label{5.45}
\end{equation}
and
\begin{equation}
H_n^{[c]}(\ga)
:=1-\Pi_1^2-\Pi_1^2B_{n+1}^*(\ga)H_n^{-1}(\ga)B_{n+1}(\ga).\label{5.46}
\end{equation}
Then from \eqref{5.23} it follows
\begin{multline*}
    \cA_{n+1}(\ga)
    =\matr{1}{-\Pi_1B_{n+1}^*(\ga)H_n^{-1}(\ga)}{0}{I_n}\matr{H_n^{[c]}(\ga)}{0}{0}{H_n(\ga)}\\
    \times\matr{1}{0}{-\Pi_1H_n^{-1}(\ga)B_{n+1}(\ga)}{I_n}.
\end{multline*}
Thus,
\[\begin{split}
    &\cA_{n+1}^{-1}(\ga)\\
    &=\matr{1}{0}{\Pi_1H_n^{-1}(\ga)B_{n+1}(\ga)}{I_n}\matr{\frac{1}{H_n^{[c]}(\ga)}}{0}{0}{H_n^{-1}(\ga)}\matr{1}{\Pi_1B_{n+1}^*(\ga)H_n^{-1}(\ga)}{0}{I_n}\\
    &=\frac{1}{H_n^{[c]}(\ga)}\cl{1}{\Pi_1H_n^{-1}(\ga)B_{n+1}(\ga)}\left(1,\Pi_1B_{n+1}^*(\ga)H_n^{-1}(\ga)\right)+\matr{0}{0}{0}{H_n^{-1}(\ga)}.
\end{split}\]
Using this product representation and the equality
\(\La_{n+1}(\ga)=\begin{psmallmatrix}\ol\ga_1\\D_{\ga_2}^{-1}\La_n(W\ga)\end{psmallmatrix}\) we find
\aagf
    &&\La_{n+1}^*(\ga)\cA_{n+1}^{-1}(W\ga)\La_{n+1}(\ga)\label{5.47}\\
    &=&\frac{1}{H_n^{[c]}(W\ga)}|\ga_1+
    \Pi_3\La_n^*(W\ga)H_n^{-1}(W\ga)B_{n+1}(W\ga)|^2+\nnu\\
    &&+\frac{1}{1-|\ga_2|^2}\La_n^*(W\ga)H_n^{-1}(W\ga)\La_n(W\ga).\nnu
\zzgf
Hereby, $\ga_1$ occurs only in the expression in the modules.

\begin{defn}\label{de5.16}
Denote $\Pi\Ga$ (resp. $\Pi\Ga l_2$) the set of all $\ga\in\Ga$
for which the associated Schur function belongs to $\cS\Pi$ (resp.
$\cS\Pi\setminus J$).
\end{defn}

\begin{lem}[\zitaa{D06}{\clem{5.17}}]\label{lm5.17}
Let $\ga\in\Pi\Ga l_2$. Assume that $\si_n(\ga)>0$ for all
$n\in\N$ and $m_0(\ga)=0$. Then
\aagf\lim\limits_{n\to\infty}H_n^{[c]}(\ga)=0\label{5.48}\zzgf
where $H_n^{[c]}(\ga)$ is given via \eqref{5.46}.
\end{lem}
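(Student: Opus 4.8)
The plan is to realize $H_n^{[c]}(\ga)$, for each $n\in\N$, as a product of three scalar sequences, one tending to $0$ and the other two bounded. The starting point is that, in view of \eqref{5.46}, \eqref{5.45} and the block partition \eqref{5.23}, the number $H_n^{[c]}(\ga)$ is nothing but the Schur complement of the lower right block $H_n(\ga)$ of $\cA_{n+1}(\ga)$. Since $\ga\in\Ga l_2$ and $\si_n(\ga)>0$ for all $n$, the Gram matrix $\cA_{n+1}(\ga)$ (see \coref{cor5.2}) is positive definite, hence so is its principal submatrix $H_n(\ga)$, and therefore
\[
    \si_{n+1}(\ga)=H_n^{[c]}(\ga)\det H_n(\ga).
\]
On the other hand \eqref{5.16} gives $\si_{n+1}(\ga)=\si_n(\ga)\cA_n^{[c]}(\ga)$, while the rank one perturbation structure $H_n(\ga)=\cA_n(W\ga)-B_{n+1}(\ga)B_{n+1}^*(\ga)$ from \eqref{5.45} and the matrix determinant lemma yield $\det H_n(\ga)=\si_n(W\ga)\bigl(1-B_{n+1}^*(\ga)\cA_n^{-1}(W\ga)B_{n+1}(\ga)\bigr)$, where $\cA_n(W\ga)$ is invertible because $\si_n(W\ga)=\det\cA_n(W\ga)>0$ by \thref{thm5.5}\,(7). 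Combining these three facts,
\[
    H_n^{[c]}(\ga)=\frac{\si_n(\ga)}{\si_n(W\ga)}\cdot\frac{1}{1-B_{n+1}^*(\ga)\cA_n^{-1}(W\ga)B_{n+1}(\ga)}\cdot\cA_n^{[c]}(\ga).
\]
Since $\lim_{n\to\infty}\cA_n^{[c]}(\ga)=0$ (this is established inside the proof of \thref{thm5.5}\,(1)), the claim reduces to showing that the first two factors stay bounded.

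For the first factor I would invoke \thref{thm5.5}\,(6), which asserts $\si_n(W\ga)/\si_n(\ga)\to\frac{1}{\Pi_0^2}\norm{P_{\cN_{\cG\cF}}G^*(1)}^2$. This is where the hypotheses are used decisively. Because $\ga\in\Pi\Ga l_2$, the associated Schur function belongs to $\cS\Pi\setminus J$, so $\cN_{\cG\cF}\ne\set0$ by \thref{t1.23}, and hence a normalized basis vector $\tau$ of the generating wandering subspace of $\rstr{T^*}{\cN_{\cG\cF}}$ exists; moreover $m_0(\ga)=0$ means precisely that in the expansion $\tau=\be_1\psi_1+\be_2\psi_2+\dots$ one has $\be_1\ne0$. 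Then \eqref{5.30} gives $\norm{P_{\cN_{\cG\cF}}G^*(1)}=\abs{\be_1}\prod_{j=0}^\infty D_{\ga_j}=\abs{\be_1}\Pi_0>0$, so the above limit is a finite positive number; consequently $\si_n(\ga)/\si_n(W\ga)$ converges to $\Pi_0^2/\norm{P_{\cN_{\cG\cF}}G^*(1)}^2<\infty$ and is, in particular, bounded.

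For the second factor I would show that $1-B_{n+1}^*(\ga)\cA_n^{-1}(W\ga)B_{n+1}(\ga)$ is not only positive (this follows from $\det H_n(\ga)>0$ and $\si_n(W\ga)>0$) but bounded away from $0$. Applying the Sherman--Morrison formula to $H_n^{-1}(\ga)=\bigl(\cA_n(W\ga)-B_{n+1}(\ga)B_{n+1}^*(\ga)\bigr)^{-1}$ turns \eqref{5.46} into
\[
    H_n^{[c]}(\ga)=(1-\Pi_1^2)-\Pi_1^2\cdot\frac{B_{n+1}^*(\ga)\cA_n^{-1}(W\ga)B_{n+1}(\ga)}{1-B_{n+1}^*(\ga)\cA_n^{-1}(W\ga)B_{n+1}(\ga)}.
\]
As $H_n(\ga)$ is a positive definite block of the positive definite matrix $\cA_{n+1}(\ga)$, its Schur complement $H_n^{[c]}(\ga)$ is strictly positive, so the displayed identity forces $B_{n+1}^*(\ga)\cA_n^{-1}(W\ga)B_{n+1}(\ga)<1-\Pi_1^2$, i.e.\ $1-B_{n+1}^*(\ga)\cA_n^{-1}(W\ga)B_{n+1}(\ga)>\Pi_1^2>0$ (note $\Pi_1\ge\Pi_0>0$ since $\ga\in\Ga l_2$). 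Hence the middle factor lies in $(0,\Pi_1^{-2})$ for all $n$, so $H_n^{[c]}(\ga)$ is the product of a convergent sequence, a bounded sequence and a null sequence, and therefore $\lim_{n\to\infty}H_n^{[c]}(\ga)=0$, which is \eqref{5.48}.

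The main obstacle is the bookkeeping in the first step, namely recognizing that $\si_{n+1}(\ga)$ admits the two factorizations $H_n^{[c]}(\ga)\det H_n(\ga)$ and $\si_n(\ga)\cA_n^{[c]}(\ga)$, which together isolate the null factor $\cA_n^{[c]}(\ga)$, and the uniform lower bound $\inf_n\bigl(1-B_{n+1}^*(\ga)\cA_n^{-1}(W\ga)B_{n+1}(\ga)\bigr)\ge\Pi_1^2>0$, which is morally the assertion that $h_1$ stays a definite distance from $\bigvee_{k\ge2}h_k$ but which here falls out cheaply from the positivity of the Schur complement $H_n^{[c]}(\ga)$ rather than requiring a genuine geometric estimate. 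Everything else is a direct application of the relevant parts of \thref{thm5.5} together with \thref{t1.23}.
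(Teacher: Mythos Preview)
Your proof is correct and follows essentially the same route as the paper. Both arguments hinge on the two block decompositions \eqref{5.15} and \eqref{5.23} of $\cA_{n+1}(\ga)$, the resulting identity $H_n^{[c]}(\ga)(1-q_n(\ga))=1-\Pi_1^2-q_n(\ga)$ (which you obtain via the Schur complement and the matrix determinant lemma, while the paper computes $\det(I-\cL_{n+1}^*\cL_{n+1})$ directly), and the crucial use of $m_0(\ga)=0$ through \thref{thm5.5}\,(6) to ensure that $\si_n(W\ga)/\si_n(\ga)$ has a positive limit. The only organizational difference is that the paper first shows $q_n(\ga)\to1-\Pi_1^2$ and then substitutes into \eqref{5.50}, whereas you factor $H_n^{[c]}(\ga)$ as a product and bound the non-null factors separately; your uniform bound $1-q_n(\ga)>\Pi_1^2$ from the positivity of $H_n^{[c]}(\ga)$ is a pleasant shortcut that the paper does not isolate explicitly.
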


\begin{proof}
In view of \eqref{5.45} for $n\in\N$ we get
\aagf &&B_{n+1}^*(\ga)H_n^{-1}(\ga)B_{n+1}(\ga)\nnu\\
&=&B_{n+1}^*(\ga)\cA_n^{-\frac{1}{2}}(W\ga)
(I_n-\cA_n^{-\frac{1}{2}}(W\ga)B_{n+1}(\ga)B_{n+1}^*(\ga)\cA_n^{-\frac{1}{2}}(W\ga))^{-1}\cdot\nnu\\
&&\cdot\cA_n^{-\frac{1}{2}}(W\ga)B_{n+1}(\ga)=\sum\limits_{k=1}^\infty
q_n^k(\ga)=\frac{q_n(\ga)}{1-q_n(\ga)}\nnu\zzgf where
$q_n(\ga)=B_{n+1}^*(\ga)\cA_n^{-1}(W\ga)B_{n+1}(\ga)\ ,\ n\in\N.$
Thus, \aagf
H_n^{[c]}(\ga)=1-\Pi_1^2-\Pi_1^2\frac{q_n(\ga)}{1-q_n(\ga)}\ ,\
n\in\N.\label{5.50}\zzgf

Using the block partition \eqref{5.21} of the matrix
$\cL_{n+1}(\ga)$ we obtain for $n\in\N$ by analogy with the
derivation of the formulas \eqref{5.16} (see, e.g., \cite[Lemma
1.1.7]{DFK92}) \aagf &&\!\!\!\!\!\!\!\!\!\!\!\si_{n+1}(\ga)
=\det(I_{n+1}-\cL_{n+1}^*(\ga)\cL_{n+1}(\ga))\nnu\\
&&\!\!\!\!\!\!\!\!\!\!\!=\det\matr{1-\Pi_1^2-B_{n+1}^*(\ga)B_{n+1}(\ga)}{-B_{n+1}^*(\ga)\cL_n(W\ga)}{-\cL_n^*(W\ga)B_{n+1}(\ga)}{I_n-\cL_n^*(W\ga)\cL_n(W\ga)}\nnu\\
&&\!\!\!\!\!\!\!\!\!\!\!=\si_n(W\ga) \{ \ 1\!-\!\Pi_1^2\!-
B_{n+1}^*(\ga)( \ I_n+\cL_n(W\ga)\cA_n^{-1}(W\ga)\cL_n^*(W\ga) \ )B_{n+1}(\ga) \}\nnu\\
&&\!\!\!\!\!\!\!\!\!\!\!=\si_n(W\ga)(1-\Pi_1^2-B_{n+1}^*(\ga)\cA_n^{-1}(W\ga)B_{n+1}(\ga)).\nnu\zzgf
This means $\si_{n+1}(\ga)=\si_n(W\ga)(1-\Pi_1^2-q_n(\ga))\ ,\
n\in\N$. Comparing this expression with \eqref{5.16} we obtain
$$1=\frac{\si_n(W\ga)}{\si_n(\ga)}\cdot\frac{1-\Pi_1^2-q_n(\ga)}{\cA_n^{[c]}(\ga)}.$$
It holds $\lim\limits_{n\to\infty}\cA_n^{[c]}(\ga)=0$. Hereby, in
view of $m_0(\ga)=0, $ the limit \eqref{5.13} is positive. Thus,
$\lim\limits_{n\to\infty}q_n(\ga)=1-\Pi_1^2$. Now \eqref{5.50}
implies \eqref{5.48}.
\end{proof}

\begin{lem}[\zitaa{D06}{\clem{5.18}}]\label{lm5.18}
Let $\ga\in\Pi\Ga l_2$. Assume that $m_0(\ga)=0$ and that there
exists an index $n_0\in\N$ such that $\si_{n_0}(\ga)>0$ and
$\si_{n_0+1}(\ga)=0$ are satisfied. Then there exists a unique
constant vector $a=\col(a_1,\dotsc,a_{n_0})$ such that $a_1\neq0$
and for $j\in\{0,1,2,\dotsc\}$ the relations \aagf
(I_{n_0}-\cL_{n_0}^*(W^j\ga)\cL_{n_0}(W^j\ga))a=\frac{1}{\Pi_{n_0+j+1}}b_{n_0}(W^j\ga),\label{5.51}\zzgf
\aagf \cl{\Pi_{n_0+j+1}\cL_{n_0}(W^j\ga)a}{1}\in\ker
\cA_{n_0+1}(W^j\ga)\label{5.52}\zzgf and \aagf
\cM_{n_0+1}^*(W^j\ga)\cl{\!\!\Pi_{n_0+j+1}\cL_{n_0}(W^j\ga)a\!\!}{1}
=D_{\ga_{n_0+j+1}}\cl{\!\!\Pi_{n_0+j+2}\cL_{n_0}(W^{j+1}\ga)a\!\!}{1}\label{5.53}\zzgf
are fulfilled where $\Pi_n, b_n(\ga)$ and $\cM_n(\ga)$ are defined
via \eqref{3.17}, \eqref{5.15A} and \eqref{5.7}, respectively.
\end{lem}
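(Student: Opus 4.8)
The plan is to reduce the statement to the one‑dimensionality of the kernels of the matrices $\cA_{n_0+1}(W^j\ga)$, $j\in\{0,1,2,\dotsc\}$, to read off the vector $a$ from the kernel at $j=0$, and to transport the corresponding kernel vectors through all shifted sequences by means of the factorization $\cL_n(\ga)=\cM_n(\ga)\cL_n(W\ga)$ of \eqref{5.6}. First I would check that the hypotheses are stable under the backward shift: since $m_0(\ga)=0$, \eqref{5.32} gives $m_0(W^j\ga)=0$ for all $j$, and since $\ga\in\Ga l_2$ (recall $\Pi\Ga l_2\subseteq\Ga l_2$ by \coref{cor4.3}) with $\si_{n_0}(\ga)>0$, $\si_{n_0+1}(\ga)=0$, \lmref{lm5.12}(a) yields $\rank\cA_n(W^j\ga)=\rank\cA_n(\ga)$ for all $n,j$, whence $\si_{n_0}(W^j\ga)>0$ and $\si_{n_0+1}(W^j\ga)=0$ for every $j$. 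Consequently each $\cA_{n_0+1}(W^j\ga)=I_{n_0+1}-\cL_{n_0+1}(W^j\ga)\cL_{n_0+1}^*(W^j\ga)$ (see \eqref{5.4}) is a nonnegative matrix whose leading $n_0\times n_0$ principal submatrix $\cA_{n_0}(W^j\ga)$ is positive definite and whose determinant vanishes, so $\ker\cA_{n_0+1}(W^j\ga)$ is one‑dimensional; moreover, since by \coref{cor5.2} $\cA_{n_0+1}(W^j\ga)$ is the Gram matrix of the vectors $h_1,\dotsc,h_{n_0+1}$ of \eqref{5.1} built from $W^j\ga$ and $h_1,\dotsc,h_{n_0}$ are linearly independent, the (essentially unique) kernel vector has a nonzero last coordinate; normalizing it to $1$, write $c^{(j)}=\col(x^{(j)},1)$, so that $c^{(j)}$ encodes the dependence $\sum_{k=1}^{n_0+1}c^{(j)}_kh_k=0$ in $\cH_{\cG\cF}$.

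Since $\cL_{n_0}(\ga)$ is invertible ($\ga\in\Ga l_2$), put $a:=\Pi_{n_0+1}^{-1}\cL_{n_0}^{-1}(\ga)x^{(0)}$, so that \eqref{5.52} holds at $j=0$ by construction. The key step is the identity $\cM_{n_0+1}^*(W^j\ga)c^{(j)}=D_{\ga_{n_0+j+1}}c^{(j+1)}$, which becomes exactly \eqref{5.53} once we know that $c^{(j)}$ has the form claimed in \eqref{5.52}. To prove it, apply \eqref{5.10} (written for $W^j\ga$ and $n=n_0+1$) to $c^{(j)}$; the rank‑one summand $\eta_{n_0+1}(W^j\ga)\bigl(\eta_{n_0+1}^*(W^j\ga)c^{(j)}\bigr)$ vanishes, because $\eta_{n_0+1}^*(W^j\ga)c^{(j)}=D_{\ga_j}^{-1}\bigl(\sum_k c^{(j)}_kh_k,G^*(1)\bigr)=0$ (all quantities formed from $W^j\ga$), using the identity $\col((G^*(1),h_k))_{k=1}^{n_0+1}=D_{\ga_j}\eta_{n_0+1}(W^j\ga)$ noted in the proof of \thref{thm5.5}(5). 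Cancelling the invertible factor $\cM_{n_0+1}(W^j\ga)$ then shows $\cM_{n_0+1}^*(W^j\ga)c^{(j)}\in\ker\cA_{n_0+1}(W^{j+1}\ga)$, and since $\cM_{n_0+1}(W^j\ga)$ is lower triangular with last column $\col(0,\dotsc,0,D_{\ga_{n_0+j+1}})$, this vector has last coordinate $D_{\ga_{n_0+j+1}}$; by one‑dimensionality of the kernel it equals $D_{\ga_{n_0+j+1}}c^{(j+1)}$.

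To obtain \eqref{5.52} for all $j$, introduce $u^{(j)}:=\cL_{n_0+1}^*(W^j\ga)c^{(j)}$; the relation $\cA_{n_0+1}(W^j\ga)c^{(j)}=0$ gives $\cL_{n_0+1}(W^j\ga)u^{(j)}=c^{(j)}$, while \eqref{5.6} together with the identity just established gives $u^{(j)}=D_{\ga_{n_0+j+1}}u^{(j+1)}$, hence $u^{(j)}=\tfrac{\Pi_{n_0+j+1}}{\Pi_{n_0+1}}u^{(0)}$. Splitting $u^{(0)}=\col(u',u'')$ with $u'\in\C^{n_0}$ and using the block shape \eqref{E4.22-1111} of $\cL_{n_0+1}$, the first $n_0$ entries of $c^{(j)}=\cL_{n_0+1}(W^j\ga)u^{(j)}$ come out to be $\tfrac{\Pi_{n_0+j+1}}{\Pi_{n_0+1}}\cL_{n_0}(W^j\ga)u'=\Pi_{n_0+j+1}\cL_{n_0}(W^j\ga)a$, while the last entry is $1$: this is \eqref{5.52} with the same $a$ for every $j$, and \eqref{5.53} then follows from the previous paragraph. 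Reading off the first block of $\cA_{n_0+1}(W^j\ga)c^{(j)}=0$ from \eqref{5.15}, together with $\cA_{n_0}=I-\cL_{n_0}\cL_{n_0}^*$ and invertibility of $\cL_{n_0}(W^j\ga)$, yields \eqref{5.51}. Uniqueness is immediate: any $a$ satisfying \eqref{5.52} at $j=0$ makes $\col(\Pi_{n_0+1}\cL_{n_0}(\ga)a,1)$ a last‑coordinate‑one vector of the one‑dimensional space $\ker\cA_{n_0+1}(\ga)$, hence equal to $c^{(0)}$, so $a$ is forced by invertibility of $\cL_{n_0}(\ga)$.

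The step I expect to be the main obstacle is the inequality $a_1\neq0$. From the lower‑triangular shape of $\cL_{n_0}$, whose first row is $(\Pi_1,0,\dotsc,0)$, one gets $c^{(j)}_1=\Pi_1\Pi_{n_0+j+1}a_1$, so $a_1\neq0$ is equivalent to the genuine participation of $h_1$ in the dependence $\sum_k c^{(j)}_kh_k=0$. Establishing this requires translating the hypothesis $m_0(\ga)=0$ — which, by Definition~\ref{def5.7} and the proof of \thref{thm5.5}(7), means that the generator $\tau$ of the wandering subspace of $\Rstr_{\cN_{\cG\cF}}T^*$ satisfies $(\tau,\psi_1)\neq0$ — into nonvanishing of the first component of these finite‑dimensional kernel vectors, an argument in the spirit of the passages from $\cN_{\cG\cF}$‑nonvanishing to the determinantal criteria carried out in the proofs of \lmref{lm5.12} and \coref{cor5.6}.
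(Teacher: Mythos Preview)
Your overall architecture is correct and matches the paper's proof closely: one-dimensionality of $\ker\cA_{n_0+1}(W^j\ga)$ via \lmref{lm5.12}(a), transport of kernel vectors by $\cM_{n_0+1}^*$, and independence of $a$ from $j$. Your device of tracking $u^{(j)}:=\cL_{n_0+1}^*(W^j\ga)c^{(j)}$ is a pleasant variant; the paper instead observes that $\eta_{n_0+1}(W^j\ga)\perp\ker\cA_{n_0+1}(W^j\ga)$ forces $\cM_{n_0+1}^*$ and $\cM_{n_0+1}^{-1}$ to agree on the kernel, and then reads off $a(W^{j+1}\ga)=D_{\ga_{n_0+j+1}}^{-1}a(W^j\ga)$ directly from the block shape of $\cM_{n_0+1}^{-1}$ together with \eqref{5.6}. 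Both routes land at the same normalization $a:=\Pi_{n_0+1}^{-1}a(\ga)$. Incidentally, for the orthogonality $\eta_{n_0+1}^*(W^j\ga)c^{(j)}=0$ you do not need the Gram interpretation: it drops out of \eqref{5.10} applied to $c^{(j)}$, since both summands on the right are nonnegative and their sum vanishes on $c^{(j)}$.

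Where you go astray is in assessing $a_1\neq0$ as ``the main obstacle'' requiring a translation of $m_0(\ga)=0$ into statements about wandering subspaces. You have already extracted from $m_0(\ga)=0$ everything that is needed, namely $\si_{n_0}(W\ga)>0$. The paper's argument is then a two-line block computation: if $a_1=0$, the triangular shape of $\cL_{n_0}(\ga)$ (first row $(\Pi_1,0,\dotsc,0)$) makes the first coordinate of $c^{(0)}$ vanish, so $c^{(0)}=\col(0,z)$ with $z\in\C^{n_0}$; feeding this into the block decomposition \eqref{5.23} gives $B_{n_0+1}^*(\ga)z=0$ from the first row and then $\cA_{n_0}(W\ga)z=0$ from the last block, whence $z=0$ since $\cA_{n_0}(W\ga)$ is positive definite --- contradicting $z_{n_0}=1$. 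No further appeal to \eqref{5.31}, \coref{cor5.6}, or the geometry of $\cN_{\cG\cF}$ is required at this point.
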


\begin{proof}
From the assumptions of the lemma we obtain analogously to
\eqref{5.17} \aagf
\cA_{n_0+1}(\ga)=\matr{I_{n_0}}{0}{X_{n_0,1}(\ga)}{1}\matr{\cA_{n_0}(\ga)}{0}{0}{0}\matr{I_{n_0}}{X_{n_0,1}^*(\ga)}{0}{1},\label{5.54}\zzgf
where \aagf
X_{n_0,1}^*(\ga)=-\cA_{n_0}^{-1}(\ga)\cL_{n_0}(\ga)b_{n_0}(\ga).\label{5.55}\zzgf
Let
$a(\ga):=(I_{n_0}-\cL_{n_0}^*(\ga)\cL_{n_0}(\ga))^{-1}b_{n_0}(\ga).$
Thus, \aagf
(I_{n_0}-\cL_{n_0}^*(\ga)\cL_{n_0}(\ga))a(\ga)=b_{n_0}(\ga).\label{5.57}\zzgf
From \eqref{5.54} and \eqref{5.55} we see that the vector \aagf
\cl{\cL_{n_0}(\ga)a(\ga)}{1}\label{5.56}\zzgf belongs to $\ker
\cA_{n_0+1}(\ga)$.
Because of $m_0(\ga)=0$ then \lmref{lm5.12} implies
that for arbitrary $j\in\N$ the relations
\aagf\si_{n_0}(W^j\ga)>0\ ,\
\si_{n_0+1}(W^{j+1}\ga)=0\label{5.58}\zzgf hold true. This means
$\dim\ker \cA_{n_0+1}(W^j\ga)=1\ ,\ j\in\{0,1,2,\dotsc\}$.
For this reason, all computations can be done in the same way if we
replace $\ga$ by $W^j\ga\ ,\ j\in\N$. Thus, for
$j\in\{0,1,2,\dotsc\}$ we have \aagf
\cl{\cL_{n_0}(W^j\ga)a(W^j\ga)}{1}\in\ker
\cA_{n_0+1}(W^j\ga).\label{5.59}\zzgf

Using \eqref{5.10} and \eqref{5.58} we infer
$$\cM_{n_0+1}^*(W^j\ga)(\ker \cA_{n_0+1}(W^j\ga))=\ker \cA_{n_0+1}(W^{j+1}\ga)\ ,\ j\in\{0,1,2,\dotsc\}.$$
This means for $j\in\{0,1,2,\dotsc\}$ \aagf
\cM_{n_0+1}^*(W^j\ga)\cl{\cL_{n_0}(W^j\ga)a(W^j\ga)}{1}=k_j\cl{\cL_{n_0}(W^{j+1}\ga)a(W^{j+1}\ga)}{1}\
.\label{5.60}\zzgf Hereby, from \eqref{5.7} we get \aagf
k_j=D_{\ga_{n_0+j+1}}\ ,\ j\in\{0,1,2,\dotsc\}.\label{5.61}\zzgf

Using \eqref{5.10} it follows $\eta_{n_0+1}(W^j\ga)\perp\ker
\cA_{n_0+1}(W^j\ga)$, $j\in\{0,1,2,\dotsc\}$. Therefore, in view
of \eqref{5.8}, the operators $\cM_{n_0+1}^*(W^j\ga)$ and
$\cM_{n_0+1}^{-1}(W^j\ga)$ coincide on the subspace $\ker
\cA_{n_0+1}(W^j\ga)$. Combining this with \eqref{5.6} and
\eqref{5.7} we find for $j\in\{0,1,2,\dotsc\}$ the equations \aagf
&&\kern-2ex\cM_{n_0+1}^*(W^j\ga)\cl{\cL_0(W^j\ga)a(W^j\ga)}{1}
=\cM_{n_0+1}^{-1}(W^j\ga)\cl{\cL_0(W^j\ga)a(W^j\ga)}{1}\nnu\\
&=&\matr{\cM_{n_0}^{-1}(W^j\ga)}{0}{*}{*}\cl{\cL_0(W^j\ga)a(W^j\ga)}{1}
=\cl{\cL_0(W^{j+1}\ga)a(W^j\ga)}{*}.\nnu\zzgf Taking into account
\eqref{5.60} and \eqref{5.61} from this we get
$$a(W^{j+1}\ga)=D_{\ga_{n_0+j+1}}^{-1}a(W^j\ga)\ ,\ j\in\{0,1,2,\dotsc\}.$$
This means \aagf
a(W^j\ga)=\prod\limits_{k=1}^jD_{\ga_{n_0+k}}^{-1}a(\ga)\ ,\
j\in\{0,1,2,\dotsc\}.\label{5.62}\zzgf If we set
$a:=\Pi_{n_0+1}^{-1}a(\ga)$ then \eqref{5.62} implies
$a(W^j\ga)=\Pi_{n_0+j+1}a\ ,\ j\in\{0,1,2,\dotsc\}$. Substituting
this expression into formulas \eqref{5.57} for $W^j\ga$ instead of
$\ga$, \eqref{5.59} and \eqref{5.60} we obtain \eqref{5.51},
\eqref{5.52} and \eqref{5.53}, respectively.

If we assume that $a_1=0$ then representation \eqref{5.3} shows
that the first component of the vector \eqref{5.56}  is $0$ . Then
representation \eqref{5.23} implies that $\ker
\cA_{n_0}\!(W\ga)\ne\!\!0$. This contradiction shows that $a_1\neq
0$. Finally, the uniqueness of the vector $a$ follows from
\eqref{5.57}.
\end{proof}

Before formulating the next result we note that all functions $\La_n(\ga)$, $H_n(\ga)$ and $B_n(\ga)$ only depend on $(\ga_1,\ga_2,\dotsc)$.
This means that the functions $\La_n(W^m\ga)$, $H_n(W^m\ga)$ and $B_n(W^m\ga)$ only depend on $(\ga_{m+1},\ga_{m+2},\dotsc)$.

\begin{thm}[\zitaa{D06}{\cthm{5.19}}]\label{thm5.19}
Assume $\ga\in\Pi\Ga l_2$. Denote by $m_0(\ga)$ the level of the
sequence $\ga$. Then for every $m\ge m_0(\ga)+1$ the element
$\ga_m$ is uniquely determined by the subsequent elements
$\ga_{m+1},\ga_{m+2},\dotsc$. Moreover, the following statements
hold true: \aai\item[(1)] Assume that $\si_n(\ga)>0$ for all
$n\in\N$. Then \aagf&&\ga_m=-\Pi_{m+2}\cdot
\lim\limits_{n\to\infty}\La_n^*(W^m\ga)H_n^{-1}(W^m\ga)B_{n+1}(W^m\ga)\ ,\nnu\\
&&\ \ \ \ \ \ \ \ \ \ \ \ \ \ \ \ \ \ \ \ \ \ \ \ \ \ \ \ \ \ \ \
\ \ \ \ \ \ \ \ \ \ \ \ \ \ \ \ \ \ \ \ m\ge
m_0(\ga)+1\label{5.63}\zzgf where $\Pi_n, \La_n(\ga), H_n(\ga)$
and $B_{n+1}(\ga)$ are defined via \eqref{3.17}, \eqref{5.35},
\eqref{5.45} and \eqref{5.22}, respectively. \item[(2)] Assume that
there exists an $n\in\N$ such that $\si_n(\ga)=0$ is satisfied.
Let $n_0\in\{0,1,2,\dotsc\}$ be chosen such that
$\si_{n_0}(W^{m_0(\ga)}\ga)>0$ and
$\si_{n_0+1}(W^{m_0(\ga)}\ga)=0$. Then there exists a function
$w(\ga)=w(\ga_1,\ga_2,\dotsc)$ such that the identities
\aagf\ga_m=w(W^{m}\ga)\ ,\ m\ge m_0(\ga)+1\label{5.64}\zzgf are
fulfilled. Hereby, we have the following cases: \aai\item[(2a)] If
$n_0=0$ then $w(\ga_1,\ga_2,\dotsc)\equiv0,$ i.e., $\ga_m=0$ for
$m\ge m_0(\ga)+1$. \item[(2b)] If $n_0\in\N$ then \aagf
w(\ga)=-\frac{1}{w_1(\ga)}\sum\limits_{k=1}^{n_0}\ga_{k}w_{k+1}(\ga)\prod\limits_{j=1}^{k}D_{\ga_j}^{-1},\label{5.65}\zzgf
where for $k\in\{1,2,\dotsc,n_0\}$ \aagf \ \ \ \ \ \
w_k(\ga)=\Pi_{n_0+1}\Pi_{k}\sum\limits_{j=1}^k a_jL_{k-j}(W^{j}
\ga){\mbox\ \ and\ \ }w_{n_0+1}(\ga)\equiv 1.\label{5.66}\zzgf
Hereby, the constant vector $a=\col(a_1,a_2,\dotsc,a_{n_0})$
satisfies \eqref{5.51} for $j\ge m_0(\ga)$. \zzi \zzi\end{thm}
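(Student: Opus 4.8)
The plan is to split the argument according to the two mutually exclusive alternatives of \thref{thm5.10}: either $\si_n(\ga)>0$ for all $n\in\N$ (statement~(1)), or $\si_n(\ga)=0$ for some $n\in\N$, in which case $\te$ is rational by \thref{thm5.9} (statement~(2)). In both cases one first reduces to the situation $m_0(\ga)=0$. This is legitimate because $\Pi\Ga l_2$ is invariant under $W$ and $m_0(W\ga)=\max\{m_0(\ga)-1,0\}$ (both from \eqref{5.32} and \thref{thm5.10}), and because, as observed before the statement, the quantities $\La_n$, $H_n$, $B_n$, $w$ and $\Pi_k$ depend only on $(\ga_1,\ga_2,\dotsc)$. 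Hence it suffices to show that for a sequence with $m_0(\ga)=0$ the entry $\ga_1$ is determined by $(\ga_2,\ga_3,\dotsc)$ via the claimed formula, and then to apply this conclusion to $W^{m-1}\ga$ for each $m\ge m_0(\ga)+1$. In case~(1) one uses in addition that $\si_n(W^k\ga)>0$ for all $k,n$ by part~(7) of \thref{thm5.5}.

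For statement~(1), fix $m\ge m_0(\ga)+1$ and apply \eqref{5.47} with $W^{m-1}\ga$ in place of $\ga$; this writes the scalar $\La_{n+1}^*(W^{m-1}\ga)\cA_{n+1}^{-1}(W^m\ga)\La_{n+1}(W^{m-1}\ga)$ as $\frac{1}{H_n^{[c]}(W^m\ga)}\abs{\ga_m+\Pi_{m+2}\La_n^*(W^m\ga)H_n^{-1}(W^m\ga)B_{n+1}(W^m\ga)}^2$ plus the term $\frac{1}{1-\abs{\ga_{m+1}}^2}\La_n^*(W^m\ga)H_n^{-1}(W^m\ga)\La_n(W^m\ga)$. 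Since $\si_{n+1}(W^m\ga)>0$, the matrix $\cA_{n+1}(W^m\ga)$ is positive definite, hence so are its principal submatrix $H_n(W^m\ga)$ (see \eqref{5.23}, \eqref{5.45}) and the scalar $H_n^{[c]}(W^m\ga)$ (a factor in $\si_{n+1}(W^m\ga)=H_n^{[c]}(W^m\ga)\det H_n(W^m\ga)$), so both summands on the right are nonnegative. The left-hand side converges as $n\to\infty$: by \lmref{lm5.11} (with $n+1$ and $W^{m-1}\ga$) it is an affine function of $\bigl(\prod_j(1-\abs{(W^{m-1}\ga)_j}^2)\,\si_{n+1}(W^m\ga)/\si_{n+1}(W^{m-1}\ga)\bigr)^{-1}$, and by part~(6) of \thref{thm5.5} that sequence converges monotonically to a strictly positive limit (positivity of the limit \eqref{5.13} is equivalent to $m_0(W^{m-1}\ga)=0$, which holds since $m-1\ge m_0(\ga)$). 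By \lmref{lm5.17} applied to $W^m\ga$ we have $H_n^{[c]}(W^m\ga)\to0$, so $1/H_n^{[c]}(W^m\ga)\to+\infty$; since a sum of two nonnegative sequences cannot stay bounded if one of them is an unbounded coefficient times a factor bounded away from zero, the distinguished factor $\abs{\ga_m+\Pi_{m+2}\La_n^*(W^m\ga)H_n^{-1}(W^m\ga)B_{n+1}(W^m\ga)}^2$ must tend to $0$. This is precisely \eqref{5.63}, and uniqueness is automatic since its right-hand side involves only $(\ga_{m+1},\ga_{m+2},\dotsc)$.

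For statement~(2), assume $m_0(\ga)=0$ and let $n_0\in\{0,1,2,\dotsc\}$ be the index with $\si_{n_0}(\ga)>0$, $\si_{n_0+1}(\ga)=0$ (it exists by part~(2) of \thref{thm5.5}, and equals $\dim\cH_{\cG\cF}$). If $n_0=0$, then $\si_1(\ga)=1-\prod_{j\ge1}(1-\abs{\ga_j}^2)=0$ forces $\ga_j=0$ for all $j\ge1$, which is case~(2a). If $n_0\in\N$, apply \lmref{lm5.18} to obtain the unique vector $a=\col(a_1,\dotsc,a_{n_0})$, $a_1\ne0$, satisfying \eqref{5.51}--\eqref{5.53} for all $j\ge0$. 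By \eqref{5.3} and \eqref{5.66} one has $[\cL_{n_0}(W^j\ga)a]_k=w_k(W^j\ga)/\Pi_{n_0+j+1}$, so the kernel vector $\col(\Pi_{n_0+j+1}\cL_{n_0}(W^j\ga)a,1)$ of $\cA_{n_0+1}(W^j\ga)$ appearing in \eqref{5.52} equals $\col(w_1(W^j\ga),\dotsc,w_{n_0+1}(W^j\ga))$ (recall $w_{n_0+1}\equiv1$). Substituting this into \eqref{5.53}, using the explicit first column of $\cM_{n_0+1}$ from \eqref{5.7} (in which $\ga_{j+1}$ enters only linearly), the recursion \eqref{3.45} for the $L_n$, and the identity $\Pi_k=D_{\ga_k}\Pi_{k+1}$ from \eqref{3.17}, one obtains a scalar relation in which $\ga_{j+1}$ occurs as a factor $\ol\ga_{j+1}$ multiplying an expression in $(\ga_{j+2},\ga_{j+3},\dotsc)$ and $\ga_{j+1}$ itself; solving this relation produces \eqref{5.65}, and $j=m-1$ then gives \eqref{5.64} (the degenerate subcase $\ga_{j+1}=0$ being treated separately).

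The step I expect to be the main obstacle is this last manipulation in case~(2b): matching the abstract vector $a$ of \lmref{lm5.18} with the closed form \eqref{5.65}--\eqref{5.66} --- namely, identifying the kernel vectors of $\cA_{n_0+1}(W^j\ga)$ with $\col(w_1(W^j\ga),\dotsc)$ and then carrying out the substitutions (via \eqref{3.45} and $\Pi_k=D_{\ga_k}\Pi_{k+1}$) that reduce \eqref{5.53} to the stated recursion for $\ga_{j+1}$. In case~(1) the only delicate point is the convergence of the left-hand side of \eqref{5.47}; once \lmref{lm5.11}, part~(6) of \thref{thm5.5}, and the characterization of $m_0=0$ via \eqref{5.13} are available, the remainder is a soft nonnegativity-and-boundedness argument.
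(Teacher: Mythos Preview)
Your treatment of case~(1) coincides with the paper's: the same combination of \eqref{5.47}, \lmref{lm5.11}, part~(6) of \thref{thm5.5}, and \lmref{lm5.17} forces the modulus term to vanish in the limit.

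For case~(2b) you take a genuinely different route. After identifying the kernel vectors of $\cA_{n_0+1}(W^j\ga)$ with $\col(w_1(W^j\ga),\dotsc,w_{n_0+1}(W^j\ga))$ --- which the paper also does --- you propose to substitute into the recursion \eqref{5.53} and unwind via \eqref{3.45} and the triangular structure of $\cM_{n_0+1}$. The paper instead invokes the block-matrix inequality \eqref{5.38} (arising in the proof of \thref{thm5.13}): since $Y(W\ga)\in\ker\cA_{n_0+1}(W\ga)$, nonnegativity of
\[
\matr{\cA_{n_0+1}(W\ga)}{\La_{n_0+1}(\ga)}{\La_{n_0+1}^*(\ga)}{c}
\]
forces the orthogonality $\La_{n_0+1}^*(\ga)\,Y(W\ga)=0$. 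Because $\ga_1$ appears only in the first entry of $\La_{n_0+1}(\ga)$ (see \eqref{5.35}), this single scalar equation solves \emph{immediately} to \eqref{5.65} without any recursion or use of \eqref{3.45}. Your path through \eqref{5.53} may well be workable --- the first row of $\cM_{n_0+1}^*(W^j\ga)$ carries a factor $\ol\ga_{j+1}$ in every off-diagonal entry, and $\cL_{n_0}(W^j\ga)$ depends on $\ga_{j+1}$ only through its first column --- but it leaves you with a relation that is not linear in $\ga_{j+1}$ (there is a $D_{\ga_{j+1}}$ in the diagonal term), and reducing that to the clean form \eqref{5.65} is exactly the ``main obstacle'' you flagged. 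The paper's orthogonality argument sidesteps this computation entirely.
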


\begin{proof}
Without loss of generality we assume that $m_0(\ga)=0$. If
$\si_n(\ga)>0$ for all $n\in\N$ then \lmref{lm5.11} implies that
the expression \eqref{5.47} has to be bounded if $n\to\infty$.
Thus, in the case $m=1$ formula \eqref{5.63} follows from the
boundedness of the expressions \eqref{5.47} and \eqref{5.48}. For
arbitrary $m\ge 2$ formula \eqref{5.63} is verified analogously by
passing from the sequence $\ga$ to the sequence $W^{m-1}\ga$.

Assume now that there exists an $n\in\N$ such that $\si_n(\ga)=0$
is satisfied. Without loss of generality, as above, we assume that
$m_0(\ga)=0$. If $n_0=0$ then $\si_1(\ga)=0$, i.e.,
$1-\prod\limits_{j=1}^\infty(1-|\ga_j|^2)=0$. This implies (2a).

Suppose now that $n_0\in\N$.
Then (see the proof of \thref{thm5.13}) there exists $m\ge0$ and a constant $c>0$ such that the inequality
\aagf
    \matr{\cA_{n_0+1}(W^{m+1}\ga)}{\La_{n_0+1}(W^m\ga)}{\La_{n_0+1}^*(W^m\ga)}{c}
    \ge0\label{5.68}
\zzgf
holds true.
Let
\begin{equation}
    Y(\ga)
    =\cl{\Pi_{n_0+1}\cL_{n_0}(\ga)a}{1},\label{5.69}
\end{equation}
where the vector $a$ satisfies \eqref{5.51}.
Then \eqref{5.52} implies \(Y(W\ga)\in\ker \cA_{n_0+1}(W\ga)\).
From this and \eqref{5.68} for $m=0$ we infer
\aagf
    \La_{n_0+1}^*(\ga)Y(W\ga)
    =0.\label{5.70}
\zzgf
Using \eqref{5.69} and \eqref{5.3} we see that $Y(\ga)$ has the
form
\[
    Y(\ga)
    =\col(w_1(\ga),w_2(\ga),\dotsc,w_{n_0+1}(\ga))
\]
where the sequence $(w_j(\ga))_{j=1}^{n_0+1}$ is defined via
\eqref{5.66}.
Taking into account \eqref{5.35} and substituting the coordinates of $Y(\ga)$ in \eqref{5.70} we obtain the identity \eqref{5.64} for $m=1$.
Hereby, $w(\ga)$ has the form \eqref{5.65}.
Passing now from $\ga$ to $W^{m-1}\ga$ and repeating the above considerations we obtain from \lmref{lm5.18} the formulas \eqref{5.64} for $m\in\{2,3,4,\dotsc\}$.
\end{proof}

The theorems proved above motivate the introduction of the
following notations (see \zitaa{D06}{\cdef{5.20}}):

\begin{defn}\label{def5.20}
The elements $\ga$ of the set $\Pi\Ga l_2$ are called
$\Pi$--sequences. A $\Pi$--sequence $\ga$ is called pure if
$m_0(\ga)=0$. If $\ga,\ga'\in\Ga l_2$ then $\ga'$ is called a
extension of $\ga$ if there exists an $n\in\N$ such that
$W^n\ga'=\ga$ is satisfied. If $\ga$ is a pure $\Pi$--sequence and
$\ga'$ is an extension of $\ga$ then $\ga'$ is called a regular
extension of $\ga$ if $\ga'$ is also a pure $\Pi$--sequence.
\end{defn}

\begin{defn}\label{D5.6-1111}
Let $\gamma=(\gamma_j)_{j=0}^\infty\in\Gamma l_2$.
\begin{itemize}
    \item[(a)] Suppose that there exists some positive integer $n$ such that $\sigma_n(\gamma)=0$. Then the nonnegative integer $n_0$ satisfying $\sigma_{n_0}(\gamma)>0$ and $\sigma_{n_0+1}(\gamma)=0$ is called the \emph{rank of the sequence $\gamma$}. In this case we will write $\rank\gamma=n_0$ to indicate that $\gamma$ has the finite rank $n_0$.
    \item[(b)] If $\sigma_n(\gamma)>0$ for all $n\in\N_0$, then $\gamma$ is called \emph{a sequence of infinite rank}.
\end{itemize}
In the cases (a) and (b), we write $\rank\gamma=n_0$ and $\rank\gamma=\infty$, respectively.
\end{defn}

\begin{rem}\label{R5.7-1111}
Let $\te(\zeta)$ be the Schur function associated with $\ga$ and let $\Dl$ be a simple unitary colligation of type \eqref{3.1} which satisfies $\te(\zeta)=\te_\Dl(\zeta)$.
Then the equations
\[
\rank\ga
=\dim\cH_{\cG\cF}
(=\dim\cH_{\cF\cG})
\]
hold.
\end{rem}

\thref{thm5.19} shows that in the case of a pure $\Pi$--sequence
$\ga=(\ga_j)_{j=0}^\infty$ every element $\ga_n\ ,\ n\in\N, $ is
uniquely determined by the sequence $\ga=(\ga_j)_{j=n+1}^\infty$.
Therefore, every $\Pi$--sequence $\ga$ is a extension of a pure
$\Pi$--sequence $W^{m_0(\ga)}\ga$.

Let us consider an arbitrary $\Pi$--sequence
$\ga=(\ga_j)_{j=0}^\infty$. Then obviously the sequences
$(\ga_j)_{j=1}^\infty$ and $\ga=(\ga_j)_{j=-1}^\infty$ where
$|\ga_{-1}|<1$ are $\Pi$--sequences. This means that as well
deleting an arbitrary finite number of first elements of a
$\Pi$--sequence as finite extension of a $\Pi$--sequence gives us
again a $\Pi$--sequence. However, if $\ga=(\ga_j)_{j=0}^\infty$ is
a $\Pi$--sequence then the freedom of choice is restricted only to
the first $m_0(\ga)+1$ elements $(\ga_j)_{j=0}^{m_0(\ga)}$.
Beginning with the element $\ga_{m_0(\ga)+1}$ all the following
elements of the sequence $\ga$ are uniquely determined by the
corresponding subsequent ones. Namely, the existence of a
determinate chain $(\ga_j)_{m_0(\ga)+1}^\infty$ ensures the
pseudocontinuability of the corresponding function
$\te(\zeta)\in\cS$. Therefore, in order to understand the
phenomenon of pseudocontinuability it will be necessary to study
the structure of pure $\Pi$--sequences. \thref{thm5.19} shows that
a regular extension of a pure $\Pi$--sequence is always unique and
preserves this structure.

Let $\ga$ be a pure $\Pi$--sequence and $\ga'$ one of its
nonregular one-step extensions. Then, as it follows from
\thref{t1.23}, \lmref{lm4.7} and the structure of the kernel of a
Hankel matrix, an arbitrary extension of $\ga'$ can never be a
pure $\Pi$--sequence.

The combination of statement (2) of \thref{thm5.5} and
\thref{thm5.9} shows that a $\Pi$--sequence $\ga$ has finite rank
if and only if its associated function $\te(\zeta)$ is rational.
Hereby, this rank coincides with the smallest number of elementary
$2\times 2$--Blaschke--Potapov factors of type \eqref{4.2A}
occuring in a finite Blaschke--Potapov product which has the block
$\te$.

\lmref{lm5.12} shows that a regular extension of a pure
$\Pi$--sequence of finite rank has the same rank. On the other
hand, the rank of every nonregular one-step extension of a pure
$\Pi$--sequence is one larger. Since every $\Pi$--sequence $\ga$
is a nonregular $m_0(\ga)$-steps extension of a pure
$\Pi$--sequence $V^{m_0(\ga)}\ga$ we have
\aagf\rank\ga=m_0(\ga)+\rank W^{m_0(\ga)}\ga.\label{5.70A}\zzgf
Hereby, $\rank W^{m_0(\ga)}\ga=\rank W^{m_0(\ga)+n}\ga \ , \
n\in\{1,2,3,\dotsc\}.$
\subsection{The structure of pure $\Pi$--sequences of rank $0$ or $1$}

\begin{lem}[\zitaa{D06}{\clem{5.21}}]\label{lm5.21}
Every $\Pi$--sequence $\ga$ of rank $0$ is pure and has the form
\aagf\ga=(\ga_0,0,0,0,\dotsc)\ ,\ |\ga_0|<1.\label{5.71}\zzgf
Conversely, every sequence of type \eqref{5.71} is a pure
$\Pi$--sequence of rank $0$.
\end{lem}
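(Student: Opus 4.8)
The plan is to reduce both assertions to the single determinant $\si_1(\ga)$ and then to read off purity from the rank--level identity \eqref{5.70A}. The key observation is purely computational: by \eqref{5.5} one has $\si_0(\ga)=1$, so for a sequence $\ga\in\Ga l_2$ the condition $\rank\ga=0$ (see Definition~\ref{D5.6-1111}) is equivalent to $\si_1(\ga)=0$; and, by \eqref{5.4}, the form \eqref{5.3} of $\cL_1(\ga)=(\Pi_1)$ and \eqref{3.17}, $\si_1(\ga)=\det\cA_1(\ga)=1-\Pi_1^2=1-\prod_{j=1}^\infty(1-\abs{\ga_j}^2)$. Since $\ga\in\Ga l_2$ forces $\abs{\ga_j}<1$, hence each factor $1-\abs{\ga_j}^2$ into $(0,1]$, the vanishing $\si_1(\ga)=0$ is equivalent to $\prod_{j=1}^\infty(1-\abs{\ga_j}^2)=1$, and this in turn to $\ga_j=0$ for all $j\ge1$, \tie{} (recalling $\abs{\ga_0}<1$) to $\ga$ having the form \eqref{5.71}.

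Granting this equivalence, both implications are short. If $\ga$ is a $\Pi$--sequence with $\rank\ga=0$, then $\ga$ has the form \eqref{5.71} by the above. Conversely, if $\ga$ has the form \eqref{5.71}, then $\ga\in\Ga l_2$ and $\rank\ga=0$, and $\ga$ is a $\Pi$--sequence: since $\si_1(\ga)=0$, condition (a) of \thref{thm5.10} holds, whence the associated Schur function lies in $\cS\Pi\setminus J$, \tie{} $\ga\in\Pi\Ga l_2$. In either case purity is immediate: a $\Pi$--sequence has $\cN_{\cG\cF}\ne\set{0}$ by \thref{t1.23}, so $m_0(\ga)<\infty$, and substituting $\rank\ga=0$ into \eqref{5.70A}, namely $\rank\ga=m_0(\ga)+\rank W^{m_0(\ga)}\ga$ with both summands nonnegative, forces $m_0(\ga)=0$.

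I do not expect a genuine obstacle: the computation of $\si_1(\ga)$ is a one-line specialization of the definitions, and \eqref{5.70A} carries the purity assertion. The only point deserving a moment's care is the verification, in the converse direction, that \eqref{5.71} yields a bona fide $\Pi$--sequence; for this it is cleanest to quote the determinant criterion \thref{thm5.10} rather than to argue directly from the definition of pseudocontinuability, since the degenerate subcase $\ga_0=0$ (where the associated function is $\te\equiv0$) is awkward to treat by hand. Alternatively one may observe that \eqref{5.71} is the Schur parameter sequence of the constant function $\te\equiv\ga_0$ with $\abs{\ga_0}<1$, which is rational and not inner and therefore belongs to $\cS\Pi\setminus J$ by the Blaschke--Potapov description recalled after Definition~\ref{de4.2}.
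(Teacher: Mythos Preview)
Your proof is correct and follows the same core computation as the paper: the paper's proof is the single line ``$\rank\ga=0$ means $\si_1(\ga)=0$, i.e.\ $1-\prod_{j=1}^\infty(1-|\ga_j|^2)=0$, hence $\ga_j=0$ for $j\ge1$; the converse is obvious.'' You supply more detail than the paper, in particular an explicit justification of purity via \eqref{5.70A} and of the $\Pi$--sequence property via \thref{thm5.10}, but the approach is the same.
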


\begin{proof}
Indeed, if $\rank\ga=0$ then $\si_1(\ga)=0,$ i.e.,
$1-\prod\limits_{j=1}^\infty(1-|\ga_j|^2)=0$. This implies
$\ga_j=0\ ,j\in\{1,2,3,\dotsc\}$. The converse statement is
obvious.
\end{proof}

Thus, $\Pi$--sequences of type
$(\ga_0,\ga_1,\dotsc,\ga_n,0,0,\dotsc)\ ,\ |\ga_n|>0\ ,\ n\in\N$
are never pure. They are $n$-step extensions of a pure
$\Pi$--sequence of type \eqref{5.71} where $|\ga_0|>0$. Obviously,
every such sequence has rank $n$.

\begin{thm}[\cite{D3},\zitaa{D06}{\cthm{5.22}}]\label{thm5.22}
A sequence $\ga=(\ga_j)_{j=0}^\infty\in\Ga$ is a pure
$\Pi$--sequence of first rank if and only if $\ga_1\neq 0$ and
there exists a complex number $\la$ such that the conditions \aagf
0<|\la|\le 1-|\ga_1|\label{5.72}\zzgf and
\aagf\ga_{m+1}=\la\frac{\ga_m}{\prod\limits_{j=1}^m(1-|\ga_j|^2)}\
,\ m\in\N\label{5.73}\zzgf are satisfied.
\end{thm}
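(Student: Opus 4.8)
The plan is to obtain both implications by specializing the model‑theoretic results of \rsec{sec5} to rank $n_0=1$. For the necessity part, suppose $\ga$ is a pure $\Pi$--sequence of rank $1$; then $\ga\in\Pi\Ga l_2$, $m_0(\ga)=0$, $\si_1(\ga)>0$ and $\si_2(\ga)=0$ (Definitions~\ref{def5.20} and~\ref{D5.6-1111}), so the distinguished index in \rthm{thm5.19}(2) is $n_0=1$ and case~(2b) applies. Substituting $n_0=1$ into \eqref{5.65}, \eqref{5.66} and using $L_0\equiv1$ gives $w_2(\ga)\equiv1$, $w_1(\ga)=\Pi_1\Pi_2a_1$, hence $w(\ga)=-\ga_1D_{\ga_1}^{-1}/(\Pi_1\Pi_2a_1)$, where $a_1\neq0$ is the constant from \lmref{lm5.18}. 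Evaluating the identity $\ga_m=w(W^m\ga)$ of \rthm{thm5.19} and collapsing the tail products by means of $\Pi_{m+1}^2=\Pi_1^2\prod_{j=1}^m(1-|\ga_j|^2)^{-1}$, one finds exactly the recurrence \eqref{5.73} with $\la:=-a_1\Pi_1^2$. Since $a_1\neq0$ and $\Pi_1>0$ (because $\ga\in\Ga l_2$), $\la\neq0$; and if $\ga_1$ vanished, \eqref{5.73} would force $\ga_j=0$ for all $j\geq1$, hence $\Pi_1=1$ and $\si_1(\ga)=1-\Pi_1^2=0$, contradicting $\rank\ga=1$. Thus $\ga_1\neq0$.

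It remains, for necessity, to prove the bound $|\la|\leq1-|\ga_1|$. Feeding \eqref{5.73} into $L_1(W\ga)=-\sum_{i\geq1}\ga_i\ol{\ga_{i+1}}$ gives $\ga_i\ol{\ga_{i+1}}=\ol\la\,|\ga_i|^2\prod_{j=1}^i(1-|\ga_j|^2)^{-1}$, and the elementary telescoping $|\ga_i|^2\prod_{j=1}^i(1-|\ga_j|^2)^{-1}=\prod_{j=1}^i(1-|\ga_j|^2)^{-1}-\prod_{j=1}^{i-1}(1-|\ga_j|^2)^{-1}$ sums to $\Pi_1^{-2}-1$, so $L_1(W\ga)=-\ol\la(1-\Pi_1^2)/\Pi_1^2$. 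On the other hand, forming $\cA_2(\ga)=I_2-\cL_2(\ga)\cL_2^*(\ga)$ from \eqref{5.3}--\eqref{5.4} and imposing $\si_2(\ga)=\det\cA_2(\ga)=0$ yields $|L_1(W\ga)|^2=(1-\Pi_1^2)(1-\Pi_2^2)/\Pi_2^2$. Equating the two expressions, cancelling the positive factor $1-\Pi_1^2$ and using $\Pi_1^2=(1-|\ga_1|^2)\Pi_2^2$ gives $|\la|^2=(1-|\ga_1|^2)^2\Pi_2^2(1-\Pi_2^2)/(1-\Pi_1^2)$. Abbreviating $g=|\ga_1|^2$, $x=\Pi_2^2$, this reads $|\la|^2=(1-g)^2x(1-x)/(1-(1-g)x)$, and a short computation gives the factorization $(1-\sqrt g)^2-|\la|^2=(1-\sqrt g)^2\,[1-(1+\sqrt g)x]^2/(1-(1-g)x)\geq0$, i.e.\ $|\la|\leq1-|\ga_1|$; moreover $|\la|>0$ because purity excludes $\Pi_2=1$ (if all $\ga_j=0$ for $j\geq2$ then $\ga=(\ga_0,\ga_1,0,0,\dotsc)$ has level $m_0(\ga)=1$).

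For sufficiency, assume $\ga_1\neq0$ and that \eqref{5.73} holds for some $\la$ with $0<|\la|\leq1-|\ga_1|$; all $\ga_m$ are then nonzero. Writing $\pi_m:=\prod_{j=1}^m(1-|\ga_j|^2)$ and $\pi_0:=1$, the same telescoping turns \eqref{5.73} into the Möbius recurrence $\pi_{m+1}=c-|\la|^2/\pi_m$ with $c:=1-|\ga_1|^2+|\la|^2$. Its fixed--point equation $t^2-ct+|\la|^2=0$ has discriminant $c^2-4|\la|^2=((1+|\ga_1|)^2-|\la|^2)((1-|\ga_1|)^2-|\la|^2)\geq0$ exactly because $|\la|\leq1-|\ga_1|$, hence two positive roots $\mu^-\leq\mu^+$; since $\pi_1=1-|\ga_1|^2>\mu^+$ and the increasing map $t\mapsto c-|\la|^2/t$ satisfies $\mu^+<c-|\la|^2/t<t$ for $t>\mu^+$, the sequence $(\pi_m)$ decreases to $\mu^+>0$. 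Therefore $\ga\in\Ga l_2$, $\Pi_1^2=\mu^+\in(0,1)$, $\si_1(\ga)=1-\Pi_1^2>0$; running the computation of the preceding paragraph in reverse (now $L_1(W\ga)=-\ol\la(1-\mu^+)/\mu^+$) and using that $\mu^+$ solves $t^2-ct+|\la|^2=0$ gives $\si_2(\ga)=0$, so $\rank\ga=1$. By \rthm{thm5.9}(2b), $\te$ is a block of a finite $2\times2$ Blaschke--Potapov product, hence lies in $\cS\Pi$ by \rthm{thm4.1}, and it is not inner since $\ga\in\Ga l_2$ (\rmref{re2.8}); thus $\ga\in\Pi\Ga l_2$. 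Finally, by the rank formula \eqref{5.70A}, $m_0(\ga)+\rank W^{m_0(\ga)}\ga=1$; the alternative $m_0(\ga)=1$ would force $\rank W\ga=0$, whence $W\ga=(\ga_1,0,0,\dotsc)$ by \lmref{lm5.21}, contradicting $\ga_2=\la\ga_1/(1-|\ga_1|^2)\neq0$. Hence $m_0(\ga)=0$ and $\ga$ is a pure $\Pi$--sequence of rank~$1$.

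The main obstacle is not conceptual but organizational: turning the abstract expressions of \rthm{thm5.19} and \lmref{lm5.18} into the single clean recurrence \eqref{5.73}, and then recognizing the two exact identities that make everything fit — the telescoping sum computing $L_1(W\ga)$ and the factorization $(1-\sqrt g)^2-|\la|^2=(1-\sqrt g)^2[1-(1+\sqrt g)x]^2/(1-(1-g)x)$ — which simultaneously force the admissible range $0<|\la|\leq1-|\ga_1|$, guarantee $\si_2(\ga)=0$, and make the Möbius iteration for $(\pi_m)$ converge to a strictly positive limit.
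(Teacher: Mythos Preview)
Your proof is correct and follows the same overall architecture as the paper: necessity via \rthm{thm5.19} (case (2b), $n_0=1$) to extract the recurrence \eqref{5.73} with $\lambda=-a_1\Pi_1^2$, then a bound on $|\lambda|$; sufficiency by showing $\gamma\in\Gamma l_2$, computing $\sigma_2(\gamma)=0$, and arguing purity via \eqref{5.70A}.

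The technical execution differs in two places worth recording. For the bound $|\lambda|\le 1-|\gamma_1|$, the paper first uses the ratio $|\gamma_{m+1}|/|\gamma_m|\to|\lambda|/\Pi_1^2$ to get $|\lambda|\le\Pi_1^2$, then sums the identities $\Pi_1 D_{\gamma_1}\cdots D_{\gamma_m}\gamma_{m+1}=\lambda\gamma_m\Pi_{m+1}$ to obtain $\Pi_1^2(1-|\gamma_1|^2-\Pi_1^2)=|\lambda|^2(1-\Pi_1^2)$ directly, and reads off the bound from the discriminant of the quadratic \eqref{5.77}. Your route via $L_1(W\gamma)$ and $\sigma_2(\gamma)=0$ reaches the same quadratic and the same discriminant condition; the factorization you write down is just that discriminant. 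For the sufficiency step $\gamma\in\Gamma l_2$, the paper propagates the hypotheses to $W\gamma$ (with $\lambda_1=\lambda/(1-|\gamma_1|^2)$, checking $|\lambda_1|\le 1-|\gamma_2|$) and iterates to the explicit bound $|\gamma_{m+1}|\le|\gamma_1|/(1+m|\gamma_1|)$. Your M\"obius recursion $\pi_{m+1}=c-|\lambda|^2/\pi_m$ (which does hold for all $m\ge 0$ with $\pi_0=1$, as one checks by induction using $|\gamma_m|^2=1-\pi_m/\pi_{m-1}$) is a cleaner way to see both that $\pi_m\searrow\mu^+>0$ and that $\Pi_1^2=\mu^+$ solves the very quadratic whose discriminant encodes the admissible range of $|\lambda|$; it then makes the verification $\sigma_2(\gamma)=0$ a one-line consequence of the fixed-point equation, whereas the paper computes $L_1(W\gamma)$ twice (once from the forward and once from the backward form of \eqref{5.73}) to reach the same conclusion. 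One cosmetic point: your separate argument that $|\lambda|>0$ via ``purity excludes $\Pi_2=1$'' is redundant, since you already have $\lambda=-a_1\Pi_1^2\neq 0$ from \lmref{lm5.18}.
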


\begin{proof}
Assume that $\ga$ is a pure $\Pi$--sequence of first rank. Using
\thref{thm5.19} we see that in the case (2b) for $n_0=1$ the
function $w(\ga)$ has the form
$$w(\ga)=-\frac{1}{w_1(\ga)}\ga_1D_{\ga_1}^{-1}w_2(\ga).$$
Hereby, we have $w_1(\ga)=\Pi_2\Pi_1a_1$ and
$w_2(\ga)=w_{n_0+1}(\ga)=1$. Thus,
$w(\ga)=-\frac{\ga_1}{a_1\Pi_1^2}$. From this and \eqref{5.64} we
see that the elements of the sequence $\ga$ are related by the
identities $\ga_m=-\frac{\ga_{m+1}}{a_1\Pi_{m+1}^2}\ ,\ m\ge 1.$
This means
$$\ga_{m+1}=-a_1\Pi_{m+1}^2\ga_m=-a_1\Pi_1^2\frac{\ga_m}{\prod\limits_{j=1}^m(1-|\ga_j|^2)}.$$
Setting $\la:=-a_1\Pi_1^2$ this gives us \eqref{5.73}. Hereby,
because of $a_1\neq 0$ we have $\la\neq 0$. From \eqref{5.73} it
follows $\ga_1\neq 0$ since otherwise we would have that $\ga$ has
rank $0$. Thus, $|\ga_j|>0$ for $j\in\N$.

From \eqref{5.73} we get
$$\frac{|\ga_{m+1}|}{|\ga_m|}=\frac{|\la|}{\prod\limits_{j=1}^m(1-|\ga_j|^2)}\ ,\ m\in\N.$$
Thus,
$\lim\limits_{n\to\infty}\frac{|\ga_{m+1}|}{|\ga_m|}=\frac{|\la|}{\Pi_1^2}$.
In view of $\ga\in\Ga l_2$, this implies
\aagf|\la|\le\Pi_1^2<1.\label{5.75}\zzgf The identities
\eqref{5.73} can be rewritten in the form
\aagf\Pi_1D_{\ga_1}D_{\ga_2}\cdot\ldots\cdot
D_{\ga_m}\ga_{m+1}=\la\ga_m\Pi_{m+1}\ ,\ m\in\N.\label{5.76}\zzgf
Taking into account the equations
$$\sum\limits_{m=1}^\infty D_{\ga_1}^2D_{\ga_2}^2\cdot\ldots\cdot D_{\ga_m}^2|\ga_{m+1}|^2=1-|\ga_1|^2-\Pi_1^2$$
and $\sum\limits_{m=1}^\infty |\ga_m|^2\Pi_{m+1}^2=1-\Pi_1^2, $
from \eqref{5.76} we get
\beql{E5.31-1114}
    \Pi_1^2(1-|\ga_1|^2-\Pi_1^2)
    =|\la|^2(1-\Pi_1^2).
\eeq
Thus, $\Pi_1^2$
is a root of the equation \aagf
x^2-x(1-|\ga_1|^2+|\la|^2)+|\la|^2=0.\label{5.77}\zzgf Hence, this
equation has a root in the interval $(0,1)$. Consequently, taking
into account \eqref{5.75} we obtain \eqref{5.72}.

Conversely, assume that $0<|\ga_1|<1$ and that the conditions
\eqref{5.72} and \eqref{5.73} are satisfied. Then
\aagf|\ga_2|=\frac{|\la|}{1-|\ga_1|}\frac{|\ga_1|}{1+|\ga_1|}\le\frac{|\ga_1|}{1+|\ga_1|}.\label{5.78}\zzgf
The identities \eqref{5.73} can be rewritten for
$m\in\{2,3,4,\dotsc\}$ in the form
\aagf\ga_{m+1}=\la_1\frac{\ga_m}{\prod\limits_{j=2}^m(1-|\ga_j|^2)}\label{5.79}\zzgf
where $\la_1=\frac{\la}{1-|\ga_1|^2}$. From \eqref{5.78} we see
$0<|\ga_2|<1$. Hereby, it can be immediately checked that \aagf
0<|\la_1|\le1-|\ga_2|.\label{5.80}\zzgf Thus, after replacing
$\la$ by $\la_1$ and $\ga_j$ by $\ga_{j+1}, j\in\{1,2,3,\dotsc\}$
the conditions \eqref{5.72} and \eqref{5.73} are still in force and
go over in the conditions \eqref{5.80} and \eqref{5.79}. In
particular, this implies
$$|\ga_3|=\frac{|\la_1|}{1-|\ga_2|}\frac{|\ga_2|}{1+|\ga_2|}\le\frac{|\ga_2|}{1+|\ga_2|}.$$
Applying now the principle of mathematical induction we obtain
$$|\ga_{m+1}|\le\frac{|\ga_m|}{1+|\ga_m|}\ ,\ m\in\N.$$
This implies that the inequalities
\[\begin{split}
    |\ga_{m+1}|
    \le\frac{\frac{|\ga_{m-1}|}{1+|\ga_{m-1}|}}{1+\frac{|\ga_{m-1}|}{1+|\ga_{m-1}|}}
    &=\frac{|\ga_{m-1}|}{1+2|\ga_{m-1}|}\\
    &\le\frac{|\ga_{m-2}|}{1+3|\ga_{m-2}|}
    \le\dotsb
    \le\frac{|\ga_1|}{1+m|\ga_1|} ,\qquad m\in\N,
\end{split}\]
hold true.
Hence, $\ga\in\Ga l_2$.
Hereby, we have $\si_1(\ga)>0$.

Using \eqref{5.73} we find \aagf L_1(\ga_1,\ga_2,\dotsc)
&=&-\sum\limits_{m=1}^\infty\ga_m\ol\ga_{m+1} %
=-\ol\la\sum\limits_{m=1}^\infty\ga_m\frac{\ol\ga_m}{\prod\limits_{j=1}^m(1-|\ga_j|^2)}\nnu\\
&=&-\frac{\ol\la}{\Pi_1^2}\sum\limits_{m=1}^\infty|\ga_m|^2\Pi_{m+1}^2 %
=-\frac{\ol\la}{\Pi_1^2}(1-\Pi_1^2).\label{5.81}\zzgf On the other
hand, rewriting \eqref{5.73} in the form
$$\ga_m=\frac{\Pi_1^2}{\la}\frac{\ga_{m+1}}{\prod\limits_{j=m+1}^\infty(1-|\ga_j|^2)}\ ,\ m\in\N \  $$
we obtain \aagf &&\!\!\!\!L_1(\ga_1,\ga_2,\dotsc)
=-\sum\limits_{m=1}^\infty\ga_m\ol\ga_{m+1} %
=-\frac{\Pi_1^2}{\la}\sum\limits_{m=1}^\infty\frac{\ga_{m+1}}{\prod\limits_{j=m+1}^\infty(1-|\ga_j|^2)}\ol\ga_{m+1}\nnu\\
&=&\!\!\!\!-\frac{\Pi_1^2}{\la\Pi_2^2}(|\ga_2|^2+|\ga_3|^2(1-|\ga_2|^2)
+\dotsb+|\ga_m|^2\prod\limits_{j=1}^{m-1}(1-|\ga_j|^2)+\dotsb)\nnu\\
&=&-\frac{\Pi_1^2}{\la\Pi_2^2}(1-\Pi_2^2).\nnu\zzgf Combining this
with \eqref{5.81} we get
$|L_1(W\ga)|^2=\frac{(1-\Pi_1^2)(1-\Pi_2^2)}{\Pi_2^2}$. Thus,
\aagf\si_2(\ga)&=& \left|\begin{array}{cc}
1-\Pi_1^2 & -\Pi_1\Pi_2\ol{L_1(W\ga)} \\
-\Pi_1\Pi_2L_1(W\ga) & 1-\Pi_2^2(1+|L_1(W\ga)|^2)
\end{array}\right|\nnu\\
&=&(1-\Pi_1^2)(1-\Pi_2^2)-\Pi_2^2|L_1(W\ga)|^2=0.\nnu\zzgf Hence,
the sequence $\ga$ has rank $1$. Since the sequence $\ga$ is not
an extension of a sequence of rank $0$, in view of \eqref{5.70A},
it is pure.
\end{proof}

\begin{cor}[\zitaa{D06}{\ccor{5.23}}]\label{cor5.23}
Let $\ga=(\ga_j)_{j=0}^\infty\in\Ga l_2$. Then it is
\aagf|\sum\limits_{j=1}^\infty\ga_j\ol\ga_{j+1}|\le\frac{(1-\prod\limits_{j=1}^{\infty}(1-|\ga_j|^2))(1-\prod\limits_{j=2}^{\infty}(1-|\ga_j|^2))}{\prod\limits_{j=2}^{\infty}(1-|\ga_j|^2)}.\label{5.81A}\zzgf
Equality \ holds true if and only if there exists a complex number
\  $\la$ such that $0\le|\la|\le 1-|\ga_1|$ and the conditions
\eqref{5.73} are satisfied. In this case we have:
\aai
    \item[\textnormal{(1)}] If $\ga_1=0$ then the sequence $\ga$ is a pure $\Pi$--sequence of
    rank $0$.
    \item[\textnormal{(2)}] If $\ga_1\ne0$ and $\la=0$ then the sequence $\ga$ is a nonregular one-step extension of a pure $\Pi$--sequence of rank $0$.
    \item[\textnormal{(3)}] If $\ga_1\ne0$ and $\la\ne0$ then the sequence $\ga$ is a pure $\Pi$--sequence of rank $1$.
\zzi
\end{cor}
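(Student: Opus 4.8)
The plan is to reduce the whole statement to the properties of the Gram determinant $\si_2(\ga)=\det\cA_2(\ga)$ collected in \thref{thm5.5}, together with the structural descriptions of pure $\Pi$--sequences of rank $0$ and $1$ provided by \lmref{lm5.21} and \thref{thm5.22}. First I would record that, by \eqref{3.14} and \eqref{3.15}, the left-hand side of \eqref{5.81A} equals $\abs{L_1(W\ga)}$. Since by \coref{cor5.2} the matrix $\cA_2(\ga)=I_2-\cL_2(\ga)\cL_2^*(\ga)$ is a Gram matrix, we have $\si_2(\ga)=\det\cA_2(\ga)\ge0$, and evaluating this $2\times2$ determinant exactly as in the proof of \thref{thm5.22} gives
\[
\si_2(\ga)=\bigl(1-\Pi_1^2\bigr)\bigl(1-\Pi_2^2\bigr)-\Pi_2^2\,\abs{L_1(W\ga)}^2 ,
\]
where $\Pi_1^2=\prod_{j=1}^\infty(1-\abs{\ga_j}^2)$ and $\Pi_2^2=\prod_{j=2}^\infty(1-\abs{\ga_j}^2)$ are positive because $\ga\in\Ga l_2$. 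Rearranging $\si_2(\ga)\ge0$ yields \eqref{5.81A}, and equality in \eqref{5.81A} amounts to $\si_2(\ga)=0$; since $\si_0(\ga)=1$ and $(\si_n(\ga))_{n\ge0}$ is nonincreasing by \thref{thm5.5}(1), this is in turn equivalent to $\rank\ga\in\{0,1\}$ (see \thref{thm5.5}(2) and Definition~\ref{D5.6-1111}).

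For the ``if'' direction I would suppose that such a $\la$ exists and distinguish three cases. If $\ga_1=0$, then \eqref{5.73} propagates inductively to $\ga_j=0$ for all $j\ge2$, so $\ga=(\ga_0,0,0,\dotsc)$; by \lmref{lm5.21} this is a pure $\Pi$--sequence of rank $0$, establishing (1), and both sides of \eqref{5.81A} vanish since then $\Pi_1^2=\Pi_2^2=1$ and $L_1(W\ga)=0$. If $\ga_1\ne0$ and $\la=0$, then \eqref{5.73} again forces $\ga_j=0$ for $j\ge2$, so $\ga=(\ga_0,\ga_1,0,0,\dotsc)$ with $\ga_1\ne0$, which by the discussion following \lmref{lm5.21} is a nonregular one-step extension of the pure $\Pi$--sequence $(\ga_1,0,0,\dotsc)$ of rank $0$, establishing (2); again $L_1(W\ga)=0$ and $\Pi_2^2=1$, so equality holds. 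If $\ga_1\ne0$ and $\la\ne0$, then $0<\abs{\la}\le1-\abs{\ga_1}$ together with \eqref{5.73} is exactly the hypothesis of \thref{thm5.22}, so $\ga$ is a pure $\Pi$--sequence of rank $1$, establishing (3); then $\si_2(\ga)=0$ and equality holds.

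For the ``only if'' direction I would assume equality, hence $\si_2(\ga)=0$ and $\rank\ga\in\{0,1\}$. If $\rank\ga=0$, then $\ga=(\ga_0,0,0,\dotsc)$ by \lmref{lm5.21}, so $\ga_1=0$ and \eqref{5.73} holds with $\la:=0$ (which satisfies $0\le\abs{\la}\le1-\abs{\ga_1}=1$); this is case (1). If $\rank\ga=1$, then by \thref{thm5.5}(2) and \thref{thm5.9} the Schur function associated with $\ga$ is rational and, its parameter sequence being infinite, it is not a finite Blaschke product, so $\ga\in\Pi\Ga l_2$; when $\ga$ is moreover pure, \thref{thm5.22} supplies $\ga_1\ne0$ and a $\la$ with $0<\abs{\la}\le1-\abs{\ga_1}$ and \eqref{5.73} (case (3)); and when $\ga$ is not pure, $m_0(\ga)\ge1$ and \eqref{5.70A} gives $1=\rank\ga=m_0(\ga)+\rank W^{m_0(\ga)}\ga$, forcing $m_0(\ga)=1$ and $\rank W\ga=0$, hence $W\ga=(\ga_1,0,0,\dotsc)$ by \lmref{lm5.21}, so $\ga=(\ga_0,\ga_1,0,0,\dotsc)$ with $\ga_1\ne0$ (otherwise $\rank\ga=0$) and \eqref{5.73} holds with $\la:=0$ (case (2)).

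The step I anticipate to be the main obstacle is the ``only if'' treatment of a rank-$1$ sequence: one must first argue that it is genuinely a $\Pi$--sequence so that the rank-additivity relation \eqref{5.70A} applies, and then combine \eqref{5.70A} with \lmref{lm5.21} to recognize that a non-pure rank-$1$ sequence has the explicit shape $(\ga_0,\ga_1,0,0,\dotsc)$. Everything else is a direct application of the determinant identity above, \lmref{lm5.21}, and \thref{thm5.22}.
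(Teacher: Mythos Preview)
Your proposal is correct and follows essentially the same approach as the paper: the paper's proof is a two-sentence sketch stating that \eqref{5.81A} is equivalent to $\si_2(\ga)\ge0$, that equality holds if and only if $\si_2(\ga)=0$, and that this occurs precisely in the three listed cases. You have correctly unpacked what the paper leaves implicit, namely the explicit determinant computation for $\si_2(\ga)$ and the case analysis via \lmref{lm5.21}, \thref{thm5.22}, and \eqref{5.70A}; the step you flagged as the main obstacle (showing a non-pure rank-$1$ sequence must have the shape $(\ga_0,\ga_1,0,0,\dotsc)$ with $\ga_1\ne0$) is handled correctly.
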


\begin{proof}
The inequality \eqref{5.81A} is equivalent to the condition
$\si_2(\ga)\ge 0$. For this reason equality holds if and only if
$\si_2(\ga)=0$. However, this occurs if and only if we have one of
the three cases mentioned in \coref{cor5.23}.
\end{proof}

As examples we consider the functions $\frac{1+\zeta}{2}$ and
$\frac{1}{2-\zeta}$ which belong to $\cS\Pi\setminus J$. As it was
shown by I. Schur \cite[part II]{Schur}, their Schur parameter
sequences are
$(\frac{1}{2},\frac{2}{3},\frac{2}{5},\frac{2}{7},\dotsc)$ and
$(\frac{1}{2},\frac{1}{3},\frac{1}{4},\frac{1}{5},\dotsc)$,
respectively. We note that both sequences fulfill the conditions
of \thref{thm5.22} with values $\la=\frac{1}{3}$ and
$\frac{2}{3}$, respectively. Thus, both sequences are pure
$\Pi$--sequences of rank $1$. Furthermore, it can be easily
checked that the functions
$$\te(\zeta)=e^{i\si}\frac{w(1+\al)+e^{i\beta}\zeta(1-\al
w)}{(1+\al)-e^{i\beta}\zeta(\al-\ol w)}\ ,\
\si,\beta\in\mathbb{R}\ ,\ w\in\D\ ,\ \al>0 \  $$ belong to
$\cS\Pi\setminus J$ and that their Schur parameter sequence
$(\ga_k)_{k=0}^\infty$ is given by \aagf\ga_0=e^{i\si}w\ ,\
\ga_n=\frac{e^{i(\si+n\beta)}}{\al+n}\ ,\ n\in\N.\label{5.83}\zzgf
Using the identity
$1-|\ga_n|^2=\frac{(\al+n-1)(\al+n+1)}{(\al+n)^2}$ \ it can be
checked by straightforward computations that the sequence
\eqref{5.83} also satisfies the conditions of \thref{thm5.22} with
$\la=e^{i\beta}\frac{\al}{\al+1}$. Hence, the sequence \eqref{5.83}
is a pure $\Pi$--sequence of rank $1$, too.

\section{The $\cS$--recurrence property of the Schur parameter sequences associated with non--inner rational Schur functions}\label{sec6}
In this section we follow \cite[\csec{2}]{DFK}.

Let $\theta\in\cS$ and let
\begin{equation}\label{thetaTaylor}
\theta(\zeta)=\sum_{j=0}^\infty c_j\zeta^j,\ \ \zeta\in\D,
\end{equation}
be the Taylor series representation of $\theta$. Moreover, let $(\gamma_j)_{j=0}^w$ be the Schur parameter sequence associated with $\theta$.
As it was shown by I. Schur  \cite[part I]{Schur}, for each integer $n$ satisfying $0\leq n<w$, the identities
\begin{equation}\label{gammaN}
\gamma_n=\Phi_n(c_0,c_1,\dotsc,c_n)
\end{equation}
and
\begin{equation}\label{cN}
c_n=\Psi_n(\gamma_0,\gamma_1,\dotsc,\gamma_n)
\end{equation}
hold true.
Here, I. Schur presented an explicit description of the function $\Phi_n$.
For the function $\Psi_n$, he obtained the formula
\begin{equation}\label{cN+1}
    \Psi_n(\gamma_0,\gamma_1,\dotsc,\gamma_n)
    =\gamma_n\cdot\prod_{j=0}^{n-1}\left(1-|\gamma_j|^2\right)+\widetilde{\Psi}_{n-1}(\gamma_0,\dotsc,\gamma_{n-1})
\end{equation}
where $\widetilde{\Psi}_{n-1}$ is a polynomial of the variables $\gamma_0,\overline{\gamma_0},\dotsc,\gamma_{n-1},\overline{\gamma_{n-1}}$.

It should be mentioned that the explicit form of the functions $\widetilde{\Psi}_{n-1}$ was described in \cite{D98}.
Thus, for every integer $n$ satisfying $0\leq n<w$, the sequences $(c_k)_{k=0}^n$ and $(\gamma_k)_{k=0}^n$ can each be expressed in terms of the other.

It is known (see, e.g., Proposition 1.1 in \cite{Ber}) that the power series
\begin{equation}\label{sumcj}
\sum_{j=0}^\infty c_jz^j
\end{equation}
can be written as a quotient $\frac{P}{Q}$ of two polynomials $P$ and $Q$ where $Q(z)=1-q_1z-\dotsb -q_rz^r$ if and only if there exists some $m\in\N_0$ such that for each integer $n$ with $n\geq m$ the relation
\begin{equation}\label{cn+1}
c_{n+1}=q_1c_n+q_2c_{n-1}+\dotsb+q_rc_{n-r+1}
\end{equation}
holds true. In this case the sequence $c=(c_j)_{j=0}^\infty$ is said to be a recurrent sequence of $r$--th order and formula (\ref{cn+1}) is called a recurrence formula of order $r$.\\ 
From (\ref{cn+1}), (\ref{cN}) and (\ref{cN+1}) it follows that the rationality of a function $\theta\in\cS$ can be characterized by relations of the form
\begin{equation}\label{gammaN+1}
\gamma_{n+1}=g_n(\gamma_0,\gamma_1,\dotsc,\gamma_n),\ \ n\geq n_0,
\end{equation}
where $(g_n)_{n\geq n_0}$ is some sequence of given functions. It should be mentioned that the functions $(g_n)_{n\geq n_0}$ obtained in this way do not have such an explicit structure which enables us to perform a detailed analysis of the Schur parameter sequences of functions belonging to the class $\cR\cS\backslash J$.\\
The main goal of this subsection is to present a direct derivation of the relations (\ref{gammaN+1}) and, in so doing, characterize the Schur parameter sequences associated with functions from $\cR\cS\backslash J$.\\
We rewrite equation (\ref{cn+1}) in a different way. Here we consider the vectors
\begin{equation}\label{q}
q:=(-q_r,-q_{r-1},\dotsc,-q_1,1)^T
\end{equation}
and
\begin{equation}\label{mu}
\mu_{r+1}(c):=(\overline{ c_0},\overline{ c_1},\dotsc,\overline{ c_{r-1}},\overline{ c_r})^T.
\end{equation}
For each $n\in\{m,m+1,\dotsc\}$ we have then
$$\mu_{r+1}(W^{n-r+1}c)=(\overline{ c_{n-r+1}},\overline{ c_{n-r+2}},\dotsc,\overline{ c_{n}},\overline{ c_{n+1}})^T,$$
where $W$ is the coshift given by \eqref{3.14}. Thus, for each integer $n$ with $n\geq m$, the recursion formula (\ref{cn+1}) can be rewritten as an orthogonality condition in the form
\begin{equation}\label{recform}
\left(q,\mu_{r+1}(W^{n-r+1}c)\right)_{\C^{r+1}}=0,
\end{equation}
where $(\cdot,\cdot)_{\C^{r+1}}$ denotes the usual Euclidean inner product in the space $\C^{r+1}$ (i.e., $(x,y)_{\C^{r+1}}=y^*x$ for all $x,y\in\C^{r+1}$).\\
Let the series (\ref{sumcj}) be the Taylor series of a function $\theta\in\cR\cS\backslash J$ and let $\gamma=(\gamma_j)_{j=0}^\infty$ be the sequence of Schur parameters associated with $\theta$. Then it will turn out that the recurrence property of the Taylor coefficient sequence $(c_j)_{j=0}^\infty$ implies some type of recurrence relations for the sequence $\gamma=(\gamma_j)_{j=0}^\infty$. With this in mind we introduce the following notion.

\begin{defn}[\zitaa{DFK}{Definition~2.1}]\label{defn21}
Let $\gamma=(\gamma_j)_{j=0}^\infty\in\Gamma$. Then the sequence $\gamma$ is called \emph{$\cS$-recurrent} if there exist $r\in\N$
and some vector $p=(p_r,p_{r-1},\dotsc,p_0)^T\in\C^{r+1}$ with $p_0\neq 0$ such that for all integers $n$ with $n\geq r$ the relations
\begin{equation}\label{Srecurrent}
\left(p,\left[\stackrel{\longrightarrow}{\prod_{k=0}^{n-r-1}}\fM_{r+1}(W^k\gamma)\right]\eta_{r+1}(W^{n-r}\gamma)\right)_{\C^{r+1}}=0
\end{equation}
are satisfied, where the matrix $\fM_{r+1}(\gamma)$ and the vector $\eta_{r+1}(\gamma)$ are defined via \eqref{5.7} and \eqref{5.9}, respectively. In this case the vector $p$ is called an \emph{$r$--th order $S$--recurrence vector associated with $\gamma$}.
\end{defn}

\begin{rem}[\zitaa{DFK}{\crem{2.2}}]\label{rem33}
If we compare the vectors $\mu_{r+1}(c)$ and $\eta_{r+1}(\gamma)$ introduced in \eqref{mu} and \eqref{5.9}, respectively, then we see that the numbers $\overline{ \gamma_k}$ in the vector $\eta_{r+1}(\gamma)$ are multiplied with the factor $\prod_{j=1}^{k-1}D_{\gamma_j}$ which can be thought of as a weight factor. Moreover, contrary to \eqref{recform}, the vector $\eta_{r+1}(W^{n-r}\gamma)$ is paired in \eqref{Srecurrent} with the matrix product
$$\stackrel{\longrightarrow}{\prod_{k=0}^{n-r-1}}\fM_{r+1}(W^k\gamma).$$
In the case $n=r$ the latter product has to be interpreted as the unit matrix $I_{r+1}$.
\end{rem}

The following result plays an important role in our subsequent considerations.
\begin{lem}[\zitaa{DFK}{\clem{2.3}}]\label{lem23}
Let $\gamma=(\gamma_j)_{j=0}^\infty\in\Gamma l_2$ and let $n\in\N$. Then $\cA_n(\gamma)$ defined via \eqref{5.4} can be represented via
\begin{equation}\label{Anxi}
\cA_n(\gamma)=\sum_{j=0}^\infty \xi_{n,j}(\gamma)\xi_{n,j}^*(\gamma),
\end{equation}
where
\begin{equation}\label{xi}
\xi_{n,j}(\gamma):=\left[\stackrel{\longrightarrow}{\prod_{k=0}^{j-1}}\fM_n(W^k\gamma)\right]\eta_n(W^j\gamma),\ \ j\in\N_0.
\end{equation}
\end{lem}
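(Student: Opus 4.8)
The plan is to iterate the factorization of $\cL_n$ from \coref{cor5.4} and to read off the telescoping structure it produces. I would set $P_0:=I_n$ and, for $j\in\N$,
\[
P_j:=\prodr_{k=0}^{j-1}\cM_n(W^k\gamma)=\cM_n(\gamma)\cM_n(W\gamma)\dotsm\cM_n(W^{j-1}\gamma),
\]
so that $\xi_{n,j}(\gamma)=P_j\,\eta_n(W^j\gamma)$ for every $j\in\N_0$ by \eqref{xi}. First I would record, by iterating \eqref{5.6}, the factorization $\cL_n(\gamma)=P_N\,\cL_n(W^N\gamma)$, valid for each $N\in\N_0$. Since $\gamma\in\Gamma l_2$ forces $W^N\gamma\in\Gamma l_2$, the matrix $\cL_n(W^N\gamma)$ is invertible (cf.\ the argument in the proof of \thref{thm5.5}\,(1)); moreover $\cL_n(W^N\gamma)\to I_n$ as $N\to\infty$, which is precisely the observation underlying \coref{cor5.4} and is visible from the explicit shape \eqref{5.3}. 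Hence $P_N=\cL_n(\gamma)\,\cL_n(W^N\gamma)^{-1}\to\cL_n(\gamma)$, and therefore $P_NP_N^*\to\cL_n(\gamma)\cL_n^*(\gamma)$ in norm.

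The second step is purely algebraic. Using $P_{j+1}=P_j\,\cM_n(W^j\gamma)$ together with the defect identity $I_n-\cM_n(\delta)\cM_n^*(\delta)=\eta_n(\delta)\eta_n^*(\delta)$ from \lmref{lm5.3} (applied with $\delta=W^j\gamma$ in place of $\gamma$), I would compute, for every $j\in\N_0$,
\begin{align*}
P_jP_j^*-P_{j+1}P_{j+1}^*
&=P_j\bigl(I_n-\cM_n(W^j\gamma)\cM_n^*(W^j\gamma)\bigr)P_j^*\\
&=P_j\,\eta_n(W^j\gamma)\eta_n^*(W^j\gamma)\,P_j^*
=\xi_{n,j}(\gamma)\xi_{n,j}^*(\gamma).
\end{align*}
Summing this over $j=0,1,\dotsc,N-1$ telescopes to $I_n-P_NP_N^*=\sum_{j=0}^{N-1}\xi_{n,j}(\gamma)\xi_{n,j}^*(\gamma)$ for every $N\in\N$.

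Finally, letting $N\to\infty$ and invoking $P_NP_N^*\to\cL_n(\gamma)\cL_n^*(\gamma)$ from the first step, the left-hand side converges to $I_n-\cL_n(\gamma)\cL_n^*(\gamma)=\cA_n(\gamma)$ by \eqref{5.4}. In particular, the series of positive semidefinite matrices $\xi_{n,j}(\gamma)\xi_{n,j}^*(\gamma)$ converges in norm with sum $\cA_n(\gamma)$, which is the asserted representation \eqref{Anxi}. The main obstacle I anticipate is the convergence claim $P_N\to\cL_n(\gamma)$ (equivalently $P_NP_N^*\to\cL_n(\gamma)\cL_n^*(\gamma)$): this is exactly the point where the hypothesis $\gamma\in\Gamma l_2$ is used in an essential way — both to ensure invertibility of $\cL_n(W^N\gamma)$ and to guarantee, via \eqref{5.3}, that $\cL_n(W^N\gamma)\to I_n$ — after which the rest is routine bookkeeping with the ordered products $P_j$.
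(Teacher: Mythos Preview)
Your proof is correct and essentially the same as the paper's. The paper iterates the recursion \eqref{5.10}, obtaining after $N$ steps $\cA_n(\gamma)=\sum_{j=0}^{N-1}\xi_{n,j}(\gamma)\xi_{n,j}^*(\gamma)+P_N\cA_n(W^N\gamma)P_N^*$ and then lets $N\to\infty$ using contractivity of the $\cM_n(W^j\gamma)$ together with $\cA_n(W^N\gamma)\to 0$; your telescoping identity $I_n-P_NP_N^*=\sum_{j=0}^{N-1}\xi_{n,j}\xi_{n,j}^*$ is the same computation rewritten (indeed $P_N\cA_n(W^N\gamma)P_N^*=P_NP_N^*-\cL_n(\gamma)\cL_n^*(\gamma)$ via $P_N\cL_n(W^N\gamma)=\cL_n(\gamma)$), and both limit arguments rest on $\cL_n(W^N\gamma)\to I_n$.
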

\begin{proof}
Applying \eqref{5.10} to $W\gamma$ instead of $\gamma$ we obtain
$$\cA_n(W\gamma)=\eta_n(W\gamma)\eta_n^*(W\gamma)+\fM_n(W\gamma)\cA_n(W^2\gamma)\fM_n^*(W\gamma).$$
Inserting this expression into \eqref{5.10} we get
\begin{eqnarray*}
\cA_n(\gamma)&=&\eta_n(\gamma)\eta_n^*(\gamma)+\fM_n(\gamma)\eta_n(W\gamma)\eta_n^*(W\gamma)\fM_n^*(\gamma)\\
&&+\fM_n(\gamma)\fM_n(W\gamma)\cA_n(W^2\gamma)\fM_n^*(W\gamma)\fM_n^*(\gamma).
\end{eqnarray*}
This procedure will be continued now.
Taking into account the contractivity of the matrices $\fM_n(W^j\gamma)$, $j\in\N_0$, and the limit relation $\lim_{m\rightarrow\infty}\cA_n(W^m\gamma)=0_{(n+1)\times(n+1)}$, which follows from \eqref{5.4}, we obtain (\ref{Anxi}).
\end{proof}
Let $r\in\N$. Using (\ref{xi}) one can see that condition \eqref{Srecurrent}, which expresses $\cS$--recurrence of $r$--th order, can be rewritten in the form
\begin{equation}\label{Srecurrent2}
\left(p,\xi_{r+1,j}(\gamma)\right)_{\C^{r+1}}=0,\ \ j\in\N_0.
\end{equation}
Thus the application of Lemma \ref{lem23} leads us immediately to the following result.
\begin{prop}[\zitaa{DFK}{Proposition~2.4}]\label{prop24}
Let $\gamma=(\gamma_j)_{j=0}^\infty\in\Gamma l_2$. Further, let $r\in\N$ and let $p=(p_r,\dotsc,p_0)^T$ $\in\C^{r+1}$. Then $p$ is an $r$--th order $\cS$--recurrence vector associated with $\gamma$ if and only if $p_0\neq 0$ and
\begin{equation}\label{pKern}
p\in\ker\cA_{r+1}(\gamma).
\end{equation}
\end{prop}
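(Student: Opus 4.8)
The plan is to obtain Proposition~\ref{prop24} as an essentially immediate consequence of the reformulation \eqref{Srecurrent2} of $\cS$--recurrence together with \lmref{lem23}. First I would recall that, by Definition~\ref{defn21} and the passage to \eqref{Srecurrent2} (the substitution $j:=n-r$ turns the matrix product $\stackrel{\longrightarrow}{\prod_{k=0}^{n-r-1}}\fM_{r+1}(W^k\gamma)$ occurring in \eqref{Srecurrent} into $\stackrel{\longrightarrow}{\prod_{k=0}^{j-1}}\fM_{r+1}(W^k\gamma)$, so that by \eqref{xi} the left--hand side of \eqref{Srecurrent} becomes exactly $(p,\xi_{r+1,j}(\gamma))_{\C^{r+1}}$), the vector $p$ is an $r$--th order $\cS$--recurrence vector associated with $\gamma$ if and only if $p_0\neq0$ and
\[
(p,\xi_{r+1,j}(\gamma))_{\C^{r+1}}=0,\qquad j\in\N_0.
\]

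Next I would invoke \lmref{lem23} for $n=r+1$, which gives the convergent representation $\cA_{r+1}(\gamma)=\sum_{j=0}^\infty\xi_{r+1,j}(\gamma)\xi_{r+1,j}^*(\gamma)$. Pairing it with $p$ from both sides and using the convention $(x,y)_{\C^{r+1}}=y^*x$, one obtains
\[
p^*\cA_{r+1}(\gamma)p=\sum_{j=0}^\infty p^*\xi_{r+1,j}(\gamma)\xi_{r+1,j}^*(\gamma)p=\sum_{j=0}^\infty\abs{(p,\xi_{r+1,j}(\gamma))_{\C^{r+1}}}^2,
\]
the series converging because its left--hand side is finite (here $\gamma\in\Gamma l_2$ enters). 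Since $\cA_{r+1}(\gamma)$ is positive semidefinite --- it is a sum of rank--one positive semidefinite matrices by \lmref{lem23}, and also a Gram matrix by \coref{cor5.2} --- we have $p\in\ker\cA_{r+1}(\gamma)$ if and only if $p^*\cA_{r+1}(\gamma)p=0$, using the identities $p^*\cA_{r+1}(\gamma)p=\norm{[\cA_{r+1}(\gamma)]^{1/2}p}^2$ and $\ker\cA_{r+1}(\gamma)=\ker[\cA_{r+1}(\gamma)]^{1/2}$. By the last display, $p^*\cA_{r+1}(\gamma)p=0$ is in turn equivalent to the vanishing of every term, i.e.\ to $(p,\xi_{r+1,j}(\gamma))_{\C^{r+1}}=0$ for all $j\in\N_0$.

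Chaining the two equivalences yields precisely the claim: $p$ is an $r$--th order $\cS$--recurrence vector associated with $\gamma$ if and only if $p_0\neq0$ and $p\in\ker\cA_{r+1}(\gamma)$. The whole argument is routine bookkeeping; the only point requiring a little care is tracking the conjugation and ordering conventions in the Euclidean inner product, so that $p^*\xi\xi^*p=\abs{(p,\xi)_{\C^{r+1}}}^2$ comes out with the correct complex conjugate. I do not anticipate any genuine obstacle here, since the one substantive analytic ingredient --- the absolutely convergent series representation of $\cA_{r+1}(\gamma)$ --- is already supplied by \lmref{lem23}.
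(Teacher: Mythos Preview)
Your proof is correct and is exactly the argument the paper has in mind: the paper itself simply says that Proposition~\ref{prop24} follows immediately from the reformulation \eqref{Srecurrent2} together with \lmref{lem23}, and you have spelled out precisely those details via the identity $p^*\cA_{r+1}(\gamma)p=\sum_{j\ge0}\abs{(p,\xi_{r+1,j}(\gamma))_{\C^{r+1}}}^2$ and the positive semidefiniteness of $\cA_{r+1}(\gamma)$.
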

Now we are able to prove one of the main results of this section.
It establishes an important connection between the $\cS$--recurrence property of a sequence $\gamma\in\Gamma l_2$ and the rationality of the Schur function $\theta$, the Schur parameter sequence of which is $\gamma$.
\begin{thm}[\zitaa{DFK}{\cthm{2.5}}]\label{thm25}
Let $\gamma=(\gamma_j)_{j=0}^\infty\in\Gamma$ and let $\theta$ be the Schur function with Schur parameter sequence $\gamma$. Then $\theta\in\cR\cS\backslash J$ if and only if $\gamma$ is an $\cS$--recurrent sequence belonging to $\Gamma l_2$.
\end{thm}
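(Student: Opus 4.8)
The plan is to deduce the theorem from two results already established: Proposition~\ref{prop24}, which for $\ga\in\Gamma l_2$ converts the $\cS$--recurrence of $\ga$ into the existence of some $r\in\N$ and a vector $p=(p_r,\dotsc,p_0)^T$ with $p_0\neq0$ lying in $\ker\cA_{r+1}(\ga)$, and the rationality criterion \thref{thm5.9}, which characterizes rational $\te\in\cS$ by $\ga\in\Gamma l_2$ together with the vanishing of some $\si_n(\ga)$. Non--innerness will be free: a sequence in $\Gamma l_2$ makes $\prod_{j=0}^\infty(1-|\ga_j|^2)$ converge, and by \thref{thm4.2} this excludes inner functions. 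As a preliminary observation, if the Schur parameter sequence of $\te$ has a unimodular entry (equivalently, if it is finite), then $\te$ is a finite Blaschke product, hence inner, so $\te\notin\cR\cS\setminus J$, while also $\ga\notin\Gamma l_2$; both sides of the equivalence are then false, so throughout I would assume $\ga=(\ga_j)_{j=0}^\infty$ with $|\ga_j|<1$ for all $j$.

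For the implication ``$\Leftarrow$'', I would take $\ga\in\Gamma l_2$ to be $\cS$--recurrent and pick $r\in\N$ together with an associated $r$--th order $\cS$--recurrence vector $p$. By Proposition~\ref{prop24}, $p_0\neq0$ and $p\in\ker\cA_{r+1}(\ga)$; in particular $p\neq0$, so $\si_{r+1}(\ga)=\det\cA_{r+1}(\ga)=0$ by \eqref{5.5}. Condition~(2) of \thref{thm5.9} then yields that $\te$ is rational, hence $\te\in\cS\Pi$; since $\ga\in\Gamma l_2$, the product $\prod_{j=0}^\infty(1-|\ga_j|^2)$ converges, and \thref{thm4.2} shows $\te$ is not inner. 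Thus $\te\in\cR\cS\setminus J$.

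For the converse, I would start from $\te\in\cR\cS\setminus J$. Since $\te$ is rational, \thref{thm5.9} applies, and since $\te\notin J$, condition~(1) there cannot hold (its subcases would force $\te$ to be inner), so condition~(2) holds: $\ga\in\Gamma l_2$ and $\si_n(\ga)=0$ for some $n\in\N$. By \thref{thm5.5}(1) the sequence $(\si_n(\ga))_{n\geq0}$ is non--increasing with $\si_0(\ga)=1$, so there is a unique $n_0\in\{0,1,2,\dotsc\}$ with $\si_{n_0}(\ga)>0$ and $\si_{n_0+1}(\ga)=0$, namely $n_0=\rank\ga$ (Definition~\ref{D5.6-1111}). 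If $n_0\geq1$, then $\cA_{n_0}(\ga)$ is invertible, and the block partition \eqref{5.15} of $\cA_{n_0+1}(\ga)$ shows that every $p=\cl{p'}{p_0}\in\ker\cA_{n_0+1}(\ga)$ with $p_0=0$ satisfies $p'\in\ker\cA_{n_0}(\ga)=\{0\}$, hence $p=0$; so every nonzero vector in the (nontrivial, because $\si_{n_0+1}(\ga)=0$) kernel $\ker\cA_{n_0+1}(\ga)$ has $p_0\neq0$, and Proposition~\ref{prop24} with $r=n_0$ identifies it as an $\cS$--recurrence vector. If $n_0=0$, then $\si_1(\ga)=1-\prod_{j=1}^\infty(1-|\ga_j|^2)=0$ forces $\ga_j=0$ for all $j\geq1$, whence, by the definitions \eqref{3.15}, \eqref{3.17} and the formula \eqref{5.3}, $\cL_n(\ga)=I_n$ and $\cA_n(\ga)=0$ for every $n$; here I would take $r=1$ and any $p\in\C^2$ with $p_0\neq0$, which trivially lies in $\ker\cA_2(\ga)=\C^2$, and Proposition~\ref{prop24} again produces an $\cS$--recurrence vector. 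In both cases $\ga$ is $\cS$--recurrent and belongs to $\Gamma l_2$, finishing the argument.

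The genuine content is already carried by Proposition~\ref{prop24} (which rests on \lmref{lem23} and the multiplicative structure of $\cL(\ga)$) and by \thref{thm5.9}, so I do not expect a deep new obstacle. The one point that needs care is guaranteeing that $\ker\cA_{r+1}(\ga)$ contains a vector whose last coordinate $p_0$ is nonzero: this is precisely what the block--triangular shape \eqref{5.15} secures when $\rank\ga\geq1$, and it is immediate in the degenerate case $\rank\ga=0$.
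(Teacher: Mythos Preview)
Your proof is correct and follows essentially the same approach as the paper's: both rely on Proposition~\ref{prop24} to translate $\cS$--recurrence into the existence of a kernel vector with nonzero last coordinate, and both invoke the rationality criterion of \thref{thm5.9} (combined with \thref{thm4.2}) to link rationality of $\theta$ with the vanishing of some $\si_{r+1}(\ga)$. Your handling of the degenerate case $n_0=0$ is actually more explicit than the paper's, which merely notes that $\theta$ is constant without spelling out an $\cS$--recurrence vector.
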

\begin{proof}
    From Theorems~\ref{thm4.2} and~\ref{thm5.9} it follows that $\theta\in\cR\cS\backslash J$ if and only if $\gamma$ belongs to $\Gamma l_2$ and there exists some $r\in\N$ such that $\sigma_{r+1}(\gamma)=0$. In this case, we infer from part~(2) of Theorem~\ref{thm5.5} that there exists an $n_0\in\N_0$ such that $\sigma_{n_0}(\gamma)>0$ and $\sigma_{n_0+1}(\gamma)=0$. If $n_0=0$, then
$$0=\sigma_1(\gamma)=1-\prod_{j=1}^\infty(1-|\gamma_j|^2).$$
Thus, $\gamma_j=0$ for all $j\in\N$. This implies that $\theta$ is the constant function in $\D$ with value $\gamma_0\in\D$. If $n_0\in\N$, then we have $\det\cA_{n_0}(\gamma)>0$ and $\det\cA_{n_0+1}(\gamma)=0$. The condition $\det\cA_{n_0+1}(\gamma)=0$ is equivalent to the existence of a nontrivial vector $p=(p_{n_0},\dotsc,p_0)^T\in\C^{n_0+1}$ which satisfies
\begin{equation}\label{pKernN0}
p\in\ker\cA_{n_0+1}(\gamma).
\end{equation}
Equation \eqref{5.15} for $n=n_0$ has the form
\begin{equation}\label{BlockAn1}
\cA_{n_0+1}(\gamma)=\begin{pmatrix}
                \cA_{n_0}(\gamma) & -\fL_{n_0}(\gamma)b_{n_0}(\gamma)\\
                - b_{n_0}^*(\gamma)\fL_{n_0}^*(\gamma) & 1-\Pi_{n_0+1}^2-b_{n_0}^*(\gamma)b_{n_0}(\gamma)
                \end{pmatrix}.
\end{equation}
From (\ref{BlockAn1}) it follows that $p_0\neq 0$. Indeed, if we would have $p_0=0$, then from (\ref{BlockAn1}) we could infer $(p_{n_0},\dotsc,p_1)^T\in\ker\cA_{n_0}(\gamma)$. In view of $\det\cA_{n_0}(\gamma)\neq 0$, this implies $(p_{n_0},\dotsc,p_1)^T=0_{n_0\times 1}$ which is a contradiction to the choice of $p$. Now the asserted equivalence follows immediately from Proposition \ref{prop24}.
\end{proof}

\begin{prop}[\zitaa{DFK}{Proposition~2.6}]\label{prop26}
Let $n_0\in\N$, and let $\gamma=(\gamma_j)_{j=0}^\infty$ be a sequence which belongs to $\Gamma l_2$ and satisfies $\rank\gamma=n_0$.
Then:
\begin{itemize}
    \item[\textnormal{(a)}] The sequence $\gamma$ is $\cS$--recurrent and $n_0$ is the minimal order of $\cS$--recurrence vector associated with $\gamma$. There is a unique $n_0$--th order $\cS$--recurrence vector $p=(p_{n_0},\dotsc,p_0)^T$ of $\gamma$ which satisfies $p_0=1$.
    \item[\textnormal{(b)}] Let $r$ be an integer with $r\geq n_0$ and let $p$ be an $n_0$--th order $\cS$--recurrence vector associated with $\gamma$.
    \begin{itemize}
        \item[\textnormal{(b1)}] Let the sequence $(\widetilde g_j)_{j=1}^{r-n_0+1}$ of vectors from $\C^{n_0+1}$ be defined by
        \begin{multline}\label{wtgj}
        \widetilde g_1:=p,\;\widetilde g_2:=\fM_{n_0+1}^*(\gamma)p ,\dotsc\\
        \dotsc,\widetilde g_{r-n_0+1}:=\left[\stackrel{\longleftarrow}{\prod_{k=0}^{r-n_0+1}}\fM_{n_0+1}^*(W^k\gamma)\right]p.
        \end{multline}
        Then the $\C^{r+1}$--vectors
        \begin{equation}\label{gj}
        g_1:=\left(\begin{matrix}\widetilde g_1\\0\\\vdots\\0\end{matrix}\right),\ \ g_2:=\left(\begin{matrix}0\\\widetilde g_2\\0\\\vdots\\0\end{matrix}\right),\ \ \dotsc, \ \ g_{r-n_0+1}:=\left(\begin{matrix}0\\\vdots\\0\\\widetilde g_{r-n_0+1}\end{matrix}\right)
        \end{equation}
        form a basis of $\ker\cA_{r+1}(\gamma)$.
        \item[\textnormal{(b2)}] The sequence $\gamma$ has $\cS$--recurrence vector of $r$--th order and every such vector $\widehat p$ has the shape
        \begin{equation}\label{alphag}
        \widehat p=\alpha_1 g_1+\alpha_2 g_2+\dotsb+\alpha_{r-n_0+1}g_{r-n_0+1},
        \end{equation}
        where $(\alpha_j)_{j=1}^{r-n_0+1}$ is a sequence of complex numbers satisfying $\alpha_{r-n_0+1}\neq 0$.
    \end{itemize}
\end{itemize}
\end{prop}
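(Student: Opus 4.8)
The plan is to reduce the proposition to linear algebra for the positive semidefinite matrices $\cA_n(\ga)$ of \eqref{5.4}. I use throughout that, by \coref{cor5.2}, $\cA_n(\ga)=I_n-\cL_n(\ga)\cL_n^{*}(\ga)$ is a Gram matrix, so $\|\cL_n^{*}(\ga)v\|\le\|v\|$ for every $v$ with equality exactly when $v\in\ker\cA_n(\ga)$; and that $\rank\ga=n_0$ means $\si_{n_0}(\ga)>0$ and $\si_{n_0+1}(\ga)=0$, whence $\cA_{n_0}(\ga)$ is invertible and, by part~(2) of \thref{thm5.5}, $\rank\cA_n(\ga)=n_0$ for all $n\ge n_0$. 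For~(a): $\ker\cA_{n_0+1}(\ga)$ is one-dimensional, and the block decomposition \eqref{5.15} with $n=n_0$ shows that every nonzero $p=(p_{n_0},\dotsc,p_0)^{T}$ in it has $p_0\ne0$ (otherwise its first $n_0$ entries would lie in $\ker\cA_{n_0}(\ga)=\set0$). By Proposition~\ref{prop24} these $p$ are exactly the $n_0$-th order $\cS$-recurrence vectors, which gives $\cS$-recurrence and, after normalizing $p_0=1$, the asserted unique vector. No smaller order is possible: for $r<n_0$ one has $\si_{r+1}(\ga)\ge\si_{n_0}(\ga)>0$ by part~(1) of \thref{thm5.5}, so $\cA_{r+1}(\ga)$ is invertible and, by Proposition~\ref{prop24}, admits no $\cS$-recurrence vector.

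For~(b1) it suffices, since $r-n_0+1=\dim\ker\cA_{r+1}(\ga)$, to show that $g_1,\dotsc,g_{r-n_0+1}$ are linearly independent and lie in $\ker\cA_{r+1}(\ga)$. First, by induction on $j$ I would show that $\widetilde g_j\in\ker\cA_{n_0+1}(W^{j-1}\ga)$ and that the last coordinate of $\widetilde g_j$ is nonzero. For $j=1$ this is~(a) via Proposition~\ref{prop24}. For the step, \eqref{5.10} applied to $W^{j-1}\ga$ writes $\cA_{n_0+1}(W^{j-1}\ga)$ as the sum of the positive semidefinite matrices $\eta_{n_0+1}(W^{j-1}\ga)\eta_{n_0+1}^{*}(W^{j-1}\ga)$ and $\cM_{n_0+1}(W^{j-1}\ga)\cA_{n_0+1}(W^{j}\ga)\cM_{n_0+1}^{*}(W^{j-1}\ga)$; hence $\widetilde g_j\in\ker\cA_{n_0+1}(W^{j-1}\ga)$ forces $\widetilde g_{j+1}=\cM_{n_0+1}^{*}(W^{j-1}\ga)\widetilde g_j\in\ker\cA_{n_0+1}(W^{j}\ga)$, and the last coordinate of $\widetilde g_{j+1}$ is a positive multiple of that of $\widetilde g_j$ since $\cM_{n_0+1}^{*}$ is upper triangular with positive diagonal (see \eqref{5.7}).

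Next, to obtain $g_j\in\ker\cA_{r+1}(\ga)$ I would use two triangular block structures of $\cL(\ga)$. Because $\cL_n(\ga)$ is the $n$-th principal submatrix of $\cL(\ga)$ and $g_j$ is supported in coordinates $j,\dotsc,n_0+j$, the upper triangular $\cL_{r+1}^{*}(\ga)$ sends $g_j$ to a vector supported in coordinates $1,\dotsc,n_0+j$ whose value there is $\cL_{n_0+j}^{*}(\ga)g_j^{[j]}$, where $g_j^{[j]}\in\C^{n_0+j}$ is the restriction of $g_j$ to its first $n_0+j$ coordinates (that is, $j-1$ zeros followed by $\widetilde g_j$); hence $\|\cL_{r+1}^{*}(\ga)g_j\|=\|\cL_{n_0+j}^{*}(\ga)g_j^{[j]}\|$ and $\|g_j\|=\|g_j^{[j]}\|$, so $g_j\in\ker\cA_{r+1}(\ga)$ iff $g_j^{[j]}\in\ker\cA_{n_0+j}(\ga)$. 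Iterating \eqref{5.21} shows the trailing $(n_0+1)\times(n_0+1)$ block of $\cL_{n_0+j}(\ga)$ equals $\cL_{n_0+1}(W^{j-1}\ga)$, so the last $n_0+1$ coordinates of $\cL_{n_0+j}^{*}(\ga)g_j^{[j]}$ are $\cL_{n_0+1}^{*}(W^{j-1}\ga)\widetilde g_j$, of norm $\|\widetilde g_j\|=\|g_j^{[j]}\|$ by the previous step; together with $\|\cL_{n_0+j}^{*}(\ga)g_j^{[j]}\|\le\|g_j^{[j]}\|$ this forces equality, i.e.\ $g_j^{[j]}\in\ker\cA_{n_0+j}(\ga)$. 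Linear independence is then immediate: $g_j$ has a nonzero entry in coordinate $n_0+j$ while $g_i$ vanishes there for $i<j$, so a vanishing linear combination is resolved coordinate by coordinate from $r+1$ downwards. Part~(b2) follows from~(b1) and Proposition~\ref{prop24}: an $r$-th order $\cS$-recurrence vector $\widehat p$ lies in $\ker\cA_{r+1}(\ga)$, so $\widehat p=\sum_j\al_j g_j$; only $g_{r-n_0+1}$ has a nonzero entry in coordinate $r+1$, whence the nonzero last coordinate of $\widehat p$ forces $\al_{r-n_0+1}\ne0$, and $\widehat p:=g_{r-n_0+1}$ witnesses existence.

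The step I expect to be the main obstacle is $g_j\in\ker\cA_{r+1}(\ga)$: a recursion carried out directly in $\C^{r+1}$ does not close because neither $\cL_n$ nor $\cL_n^{*}$ preserves the span of the trailing coordinates. The remedy is to extract the two reductions above from the \emph{principal} submatrices of $\cL(\ga)$ (reducing to $\cA_{n_0+j}(\ga)$) and from its \emph{trailing} submatrices via \eqref{5.21} (reducing to $\cA_{n_0+1}(W^{j-1}\ga)$), and to carry out all kernel membership through the positive-semidefinite identity \eqref{5.10}. This last point is essential because $\rank W^{j}\ga$ may be strictly smaller than $n_0$ when $\ga$ is not pure, so the shifted matrices $\cA_{n_0+1}(W^{j-1}\ga)$ need not have one-dimensional kernels and one cannot argue via uniqueness of kernel directions.
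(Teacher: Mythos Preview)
Your proof is correct and follows essentially the same route as the paper: part~(a) via Proposition~\ref{prop24} and the block form \eqref{5.15}, part~(b1) by propagating kernel membership through $\cM_{n_0+1}^{*}$ using \eqref{5.10} and then exploiting the two block structures \eqref{E4.22-1111} and \eqref{5.21} together with positive semidefiniteness, and part~(b2) as a direct corollary. Your norm/contractivity phrasing (equality in $\|\cL_n^{*}v\|\le\|v\|$ iff $v\in\ker\cA_n$) is exactly equivalent to the paper's direct use of the block decompositions \eqref{5.15} and \eqref{5.23} of the positive semidefinite matrix $\cA_{n+1}(\gamma)$; the paper only spells out the case $r=n_0+1$ and then appeals to induction, whereas you give the general step explicitly.
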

\begin{proof}
(a) From Definition \ref{D5.6-1111} the relation
\begin{equation}\label{n0}
n_0=\min\{r\in\N_0:\ker\cA_{r+1}(\gamma)\neq \{0_{(r+1)\times 1}\}\}
\end{equation}
follows. The block decomposition (\ref{BlockAn1}) shows that
\begin{equation}\label{dimKern}
\dim[\ker\cA_{n_0+1}(\gamma)]=1.
\end{equation}
Let $p=(p_{n_0},\dotsc,p_0)^T\in\ker\cA_{n_0+1}(\gamma)\backslash\{0_{(n_0+1)\times 1}\}$. As in the proof of Theorem \ref{thm25} it can be shown then that $p_0\neq 0$. Thus, Proposition \ref{prop24} yields that $p$ is an $n_0$--th order $\cS$--recurrence vector associated with $\gamma$. Taking into account (\ref{dimKern}) and applying again Proposition \ref{prop24}, we see that there is a unique $n_0$--th order $\cS$--recurrence vector $p=(p_{n_0},\dotsc,p_0)^T$ associated with $\gamma$ which satisfies $p_0=1$. In particular, $\gamma$ is $\cS$--recurrent. In view of (\ref{n0}), applying Proposition \ref{prop24} we see that $n_0$ is the minimal order of $\cS$--recurrence
vector associated with $\gamma$.\\
(b1) In the case $r=n_0$ the assertion is already proved above.
Let $r=n_0+1$.
Using \eqref{5.21} and \eqref{5.4}, we obtain the block decomposition
\begin{equation}\label{BlockAn2}
\cA_{r+1}(\gamma)=\begin{pmatrix}
                1-\Pi_{1}^2 & -\Pi_1B_{r+1}^*(\gamma)\\
                -B_{r+1}^*(\gamma)\Pi_1 & \cA_r(W\gamma)-B_{r+1}(\gamma)B_{r+1}^*(\gamma)
                \end{pmatrix}.
\end{equation}
In view of $p\in\ker\cA_{n_0+1}(\gamma)$ the block decomposition (\ref{BlockAn1}) with $n=n_0+1$ implies that $g_1=\begin{pmatrix}p\\0\end{pmatrix}$ belongs to $\ker\cA_{n_0+2}(\gamma)$. Furthermore, using \eqref{5.10} with $n=n_0$, we see that
$$\fM_{n_0+1}^*(\gamma)p\in\ker\cA_{n_0+1}(W\gamma).$$
Now the block decomposition (\ref{BlockAn2}) implies that the vector $g_2=\begin{pmatrix}0\\\widetilde{g_2}\end{pmatrix}$ also belongs to $\ker\cA_{n_0+2}(\gamma)$. In view of $p_0\neq 0$ and the triangular shape of the matrix $\fM_{n_0+1}^*(\gamma)$ (see \eqref{5.7}), we see that the last component of the vector $g_2$ does not vanish. Thus, the vectors $g_1$ and $g_2$ are linearly independent vectors belonging to $\ker\cA_{n_0+2}(\gamma)$. Since part (b) of Theorem~\ref{thm5.5} implies that $\dim[\ker\cA_{n_0+2}(\gamma)]=2$, we obtain that $g_1$ and $g_2$ form a basis of $\ker\cA_{n_0+2}(\gamma)$. One can prove the assertion by induction for arbitrary $r\in\{n_0,n_0+1,\dotsc\}$.\\
(b2) This follows immediately by combining Proposition \ref{prop24} with part (b1).
\end{proof}
Proposition \ref{prop26} leads us to the following notion.
\begin{defn}[\zitaa{DFK}{Definition~2.7}]\label{defn27}
Let $n_0\in\N$ and let $\gamma=(\gamma_j)_{j=0}^\infty$ be a sequence which belongs to $\Gamma l_2$ and satisfies $\rank\gamma=n_0$. Then the unique $n_0$--th order $\cS$--recurrence vector $p=(p_{n_0},\dotsc,p_0)^T$ satisfying $p_0=1$ is called \emph{the basic $\cS$--recurrence vector associated with $\gamma$}.
\end{defn}
Let $\gamma=(\gamma_j)_{j=0}^\infty\in\Gamma l_2$ be an $\cS$--recurrent sequence and let $p$ be the basic $\cS$--recurrence vector associated with $\gamma$. Then Proposition \ref{prop26} shows that all $\cS$--recurrence vectors associated with $\gamma$ can be obtained from $p$.\\
Our next consideration is aimed at working out the recurrent character of formula \eqref{Srecurrent}. More precisely, we will verify that, for each integer $n$ with $n\geq r$, the element $\gamma_{n+1}$ can be expressed in terms of the preceding members $\gamma_0,\dotsc,\gamma_n$ of the sequence $\gamma$. In view of Proposition \ref{prop26}, this is the content of the following result.
\begin{thm}[\zitaa{DFK}{\cthm{2.8}}]\label{thm28}
Let $\gamma=(\gamma_j)_{j=0}^\infty$ be a $\cS$--recurrent sequence which belongs to $\Gamma l_2$ and
let $p=(p_r,p_{r-1},\dotsc,p_0)^T$ be an $r$--th order $\cS$--recurrence vector associated with $\gamma$. Further, let
\begin{equation}\label{lambda}
\lambda:=-\frac{1}{p_0}\left[\prod_{k=1}^rD_{\gamma_k}\right]\cdot(p_r,p_{r-1},\dotsc,p_1)^T
\end{equation}
Then for every integer $n$ with $n\geq r$ the relations
\begin{multline}\label{DarGammaN+1}
    \gamma_{n+1}
    =\left[\prod_{s=1}^nD_{\gamma_s}^{-1}\right]\\
    \times\left(\left[\stackrel{\longleftarrow}{\prod_{j=0}^{n-r-1}}\fM_r^{-1}(W^j\gamma)\right]\lambda,\left[\prod_{k=n-r+1}^nD_{\gamma_k}^{-1}\right]\eta_r(W^{n-r}\gamma)\right)_{\C^r}
\end{multline}
hold where $D_{\gamma_j}$, $W$, $\fM_r(\gamma)$, and $\eta_r(\gamma)$ are defined via \eqref{E2.19}, \eqref{3.14}, \eqref{5.7}, and \eqref{5.9}, respectively.
\end{thm}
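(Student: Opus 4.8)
The plan is to read the relation off directly from the $\cS$--recurrence hypothesis. By Definition~\ref{defn21}, it says that the orthogonality relation \eqref{Srecurrent} holds for every integer $n\ge r$; by Proposition~\ref{prop24} this is also equivalent to $p_0\neq0$ and $p\in\ker\cA_{r+1}(\gamma)$, and in the notation of \eqref{xi} the relation \eqref{Srecurrent} for a fixed $n$ reads $(p,\xi_{r+1,n-r}(\gamma))_{\C^{r+1}}=0$. Fix $n\ge r$ and put $m:=n-r$. The key point is that $\gamma_{n+1}$ enters this identity through exactly one scalar: the first $r$ coordinates of $\eta_{r+1}(W^m\gamma)$ form the vector $\eta_r(W^m\gamma)$, which involves only $\gamma_{m+1},\dotsc,\gamma_n$, while the last coordinate equals $\overline{\gamma_{n+1}}\prod_{s=m+1}^{n}D_{\gamma_s}$. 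So I would split $\eta_{r+1}(W^m\gamma)$ into these two pieces, substitute into $(p,\xi_{r+1,n-r}(\gamma))_{\C^{r+1}}=0$, and solve for $\gamma_{n+1}$.

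To make the splitting effective, one uses the block lower--triangular structure of the matrices $\fM_{r+1}$: by \eqref{5.7} the leading $r\times r$ principal submatrix of $\fM_{r+1}(\delta)$ is $\fM_r(\delta)$, its bottom right entry is $D_{\delta_{r+1}}$, and its top right block vanishes. Hence the first $r$ coordinates of $\xi_{r+1,m}(\gamma)$ coincide with $\xi_{r,m}(\gamma)=\big[\stackrel{\longrightarrow}{\prod_{k=0}^{m-1}}\fM_r(W^k\gamma)\big]\eta_r(W^m\gamma)$, while the bottom right entry of $\stackrel{\longrightarrow}{\prod_{k=0}^{m-1}}\fM_{r+1}(W^k\gamma)$ is $\prod_{s=r+1}^{n}D_{\gamma_s}$. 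Writing $p=(\widetilde p,p_0)^{T}$ with $\widetilde p=(p_r,\dotsc,p_1)^{T}\in\C^{r}$, pairing with $p$, and using $p_0\neq0$ and $D_{\gamma_s}>0$ (which holds since $\gamma\in\Gamma l_2$), one obtains a closed expression for $\gamma_{n+1}$ in terms of $\widetilde p$, the defect factors, $\eta_r(W^m\gamma)$, the forward product $\stackrel{\longrightarrow}{\prod_{k=0}^{m-1}}\fM_r(W^k\gamma)$, and the subdiagonal blocks accumulated along the $\fM_{r+1}$--product.

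It then remains to recognize this expression as the right--hand side of \eqref{DarGammaN+1}. Here one exploits $p\in\ker\cA_{r+1}(\gamma)$ via the block decomposition \eqref{5.15} of $\cA_{r+1}(\gamma)$ and the factorization $\cA_{r+1}(\gamma)=\sum_{j\ge0}\xi_{r+1,j}(\gamma)\xi_{r+1,j}^{*}(\gamma)$ of \lmref{lem23}, together with the recursion $\cA_n(\gamma)=\eta_n(\gamma)\eta_n^{*}(\gamma)+\fM_n(\gamma)\cA_n(W\gamma)\fM_n^{*}(\gamma)$ from \eqref{5.10}, the relation $\fM_n(\gamma)\fM_n^{*}(\gamma)=I-\eta_n(\gamma)\eta_n^{*}(\gamma)$ from \eqref{5.8}, and the identity $\fM_n(\gamma)\La_n(\gamma)=D_{\gamma_1}\eta_n(\gamma)$ from \eqref{5.37}. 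The last two combine to give $\fM_n^{*}(\gamma)=\fM_n^{-1}(\gamma)-D_{\gamma_1}^{-1}\La_n(\gamma)\eta_n^{*}(\gamma)$, a rank--one correction to $\fM_n^{-1}$; it is precisely this correction, propagated along the product and then annihilated by $p\in\ker\cA_{r+1}(\gamma)$, that turns the $\fM_r^{*}$--products produced in the previous step into the inverse products $\fM_r^{-1}(W^j\gamma)$, while the scaling in \eqref{lambda} packages the surviving contribution of $\widetilde p$ into $\lambda$. I would organize the whole reduction as an induction on $n$: the base case $n=r$ has the empty product, reads $(p,\eta_{r+1}(\gamma))_{\C^{r+1}}=0$, and gives at once $\gamma_{r+1}=-\big(p_0\prod_{s=1}^{r}D_{\gamma_s}\big)^{-1}\eta_r(\gamma)^{*}\widetilde p$, which is \eqref{DarGammaN+1} for $n=r$; the passage from $n$ to $n+1$ uses the $\cS$--recurrence at the higher index together with the factorization $\stackrel{\longrightarrow}{\prod_{k=0}^{m}}\fM_r(W^k\gamma)=\big[\stackrel{\longrightarrow}{\prod_{k=0}^{m-1}}\fM_r(W^k\gamma)\big]\fM_r(W^m\gamma)$.

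The main obstacle is this last bookkeeping step: one must verify that the subdiagonal blocks built up inside $\stackrel{\longrightarrow}{\prod_{k=0}^{m-1}}\fM_{r+1}(W^k\gamma)$, once $p\in\ker\cA_{r+1}(\gamma)$ is invoked, rearrange so that nothing but the backward product $\big[\stackrel{\longleftarrow}{\prod_{j=0}^{m-1}}\fM_r^{-1}(W^j\gamma)\big]\lambda$ survives, and, in parallel, that the many products of defect factors $D_{\gamma_s}$ collapse exactly to the prefactor $\prod_{s=1}^{n}D_{\gamma_s}^{-1}$ and to the weight $\prod_{k=n-r+1}^{n}D_{\gamma_k}^{-1}$ that appear in \eqref{DarGammaN+1}.
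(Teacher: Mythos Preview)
Your starting point and the base case are correct, and you have identified the right ingredients: the block lower--triangular structure of $\fM_{r+1}$, the relation $p\in\ker\cA_{r+1}(\gamma)$, and the rank--one link \eqref{5.8} between $\fM_n^{*}$ and $\fM_n^{-1}$. However, the route you propose---splitting $\xi_{r+1,m}(\gamma)$ on the $\eta$ side, tracking the accumulated lower--left blocks of $\stackrel{\longrightarrow}{\prod}\fM_{r+1}(W^k\gamma)$, and then arguing that the rank--one corrections on $\tilde p=(p_r,\dotsc,p_1)^{T}$ cancel---is considerably harder than necessary, and the cancellation you need is only asserted, not verified. The difficulty is genuine: $\tilde p$ does \emph{not} lie in $\ker\cA_r(\gamma)$ in general, so you cannot simply replace $\fM_r^{*}$ by $\fM_r^{-1}$ on it; the correction terms must be shown to be absorbed by the last--coordinate contributions, and this is the nontrivial bookkeeping you flag but do not carry out.

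The paper avoids this entirely by moving the product to the $p$ side and working at size $r+1$ throughout. From \eqref{5.10} one has $(p,\eta_{r+1}(\gamma))_{\C^{r+1}}=0$ and $\fM_{r+1}^{*}(\gamma)p\in\ker\cA_{r+1}(W\gamma)$; by \eqref{5.8}, on vectors orthogonal to $\eta_{r+1}(\gamma)$ the operator $\fM_{r+1}^{*}(\gamma)$ coincides with $\fM_{r+1}^{-1}(\gamma)$. Iterating, one gets the clean identity
\[
\Bigl[\stackrel{\longleftarrow}{\prod_{k=0}^{n-r-1}}\fM_{r+1}^{*}(W^k\gamma)\Bigr]p
=\Bigl[\stackrel{\longleftarrow}{\prod_{k=0}^{n-r-1}}\fM_{r+1}^{-1}(W^k\gamma)\Bigr]p.
\]
Now read off the two pieces of this common vector from the two sides: the first $r$ coordinates from the right--hand side (since $\fM_{r+1}^{-1}$ is block lower triangular with top--left block $\fM_r^{-1}$, they equal $-p_0\bigl[\prod_{s=1}^{r}D_{\gamma_s}^{-1}\bigr]\bigl[\stackrel{\longleftarrow}{\prod}\fM_r^{-1}(W^k\gamma)\bigr]\lambda$), and the last coordinate from the left--hand side (since $\fM_{r+1}^{*}$ is block upper triangular with bottom--right entry $D_{\gamma_{r+1}}$, it is $p_0\prod_{k=0}^{n-r-1}D_{\gamma_{r+1+k}}$). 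Substituting into $\bigl(\,\cdot\,,\eta_{r+1}(W^{n-r}\gamma)\bigr)_{\C^{r+1}}=0$ with $\eta_{r+1}(W^{n-r}\gamma)=\bigl(\eta_r(W^{n-r}\gamma)^{T},\ \overline{\gamma_{n+1}}\prod_{k=1}^{r}D_{\gamma_{k+n-r}}\bigr)^{T}$ gives \eqref{DarGammaN+1} directly, with no subdiagonal blocks to track and no induction on $n$ needed.
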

\begin{proof}
Since $p$ is an $r$--th order $\cS$--recurrence vector associated with $\gamma$, the relation \eqref{Srecurrent} is satisfied.
From Definition \ref{defn21} it follows that $p_0\neq0$. We rewrite \eqref{Srecurrent} in the form
\begin{equation}\label{Srecurrent3}
\left(\left[\stackrel{\longleftarrow}{\prod_{k=0}^{n-r-1}}\fM_{r+1}^*(W^k\gamma)\right]p\ \ ,\ \ \eta_{r+1}(W^{n-r}\gamma)\right)_{\C^{r+1}}=0.
\end{equation}
In view of Proposition \ref{prop24}, we have $p\in\ker\cA_{r+1}(\gamma)$. Applying \eqref{5.10} for $n=r+1$, we obtain
\begin{equation}\label{thm28_22}
\left(p,\eta_{r+1}(\gamma)\right)_{C^{r+1}}=0
\end{equation}
and
\begin{equation}\label{thm28_23}
\fM_{r+1}^*(\gamma)\cdot p\in\ker\cA_{r+1}(W\gamma).
\end{equation}
Using \eqref{5.8} for $n=r+1$, we see that, for all $x\in\C^{r+1}$ which are orthogonal to $\eta_{r+1}(\gamma)$, the identity $\fM_{r+1}^*(\gamma)x=\fM_{r+1}^{-1}(\gamma)x$ holds true. Thus, from (\ref{thm28_22}) we infer
\begin{equation}\label{thm28_24}
\fM_{r+1}^*(\gamma)p=\fM_{r+1}^{-1}(\gamma)p.
\end{equation}
Bearing (\ref{thm28_23}), (\ref{thm28_24}), and Lemma~\ref{lm5.3} in mind, replacing $p$ in these considerations with $\fM_{r+1}^*(\gamma)p$, we obtain
$$\fM_{r+1}^*(W\gamma)\fM_{r+1}^*(\gamma) p\in\ker\cA_{r+1}(W^2\gamma)$$
and
$$\fM_{r+1}^*(W\gamma)\fM_{r+1}^*(\gamma)p=\fM_{r+1}^{-1}(W\gamma)\fM_{r+1}^{-1}(\gamma)p.$$
Thus, by induction we get
\begin{equation}\label{thm28_25}
\left[\stackrel{\longleftarrow}{\prod_{k=0}^{n-r-1}}\fM_{r+1}^*(W^k\gamma)\right]p=\left[\stackrel{\longleftarrow}{\prod_{k=0}^{n-r-1}}\fM_{r+1}^{-1}(W^k\gamma)\right]p.
\end{equation}
From \eqref{lambda} we see that the vector $p$ can be written in the form
\begin{equation}\label{thm28_26}
p=p_0\begin{pmatrix}-\left[\prod_{s=1}^rD_{\gamma_s}^{-1}\right]\lambda\\ 1\end{pmatrix}.
\end{equation}
From \eqref{5.7} we infer that the matrix $\fM_{r+1}(\gamma)$ has the block decomposition
\begin{equation}\label{thm28_27}
\fM_{r+1}(\gamma)=\begin{pmatrix}\fM_r(\gamma) & 0_{r\times 1}\\ \star&D_{\gamma_{r+1}}\end{pmatrix}.
\end{equation}
Formula (\ref{thm28_27}) implies the block representation
\begin{equation}\label{thm28_28}
\fM_{r+1}(\gamma)=\begin{pmatrix}\fM_r^{-1}(\gamma) & 0_{r\times 1}\\ \star&D_{\gamma_{r+1}}^{-1}\end{pmatrix}.
\end{equation}
Combining (\ref{thm28_28}) and (\ref{thm28_26}), we conclude that the right--hand side of (\ref{thm28_25}) can be rewritten in the form
\begin{equation}\label{thm28_29}
\left[\stackrel{\longleftarrow}{\prod_{k=0}^{n-r-1}}\fM_{r+1}^{-1}(W^k\gamma)\right]p=p_0\cdot\begin{pmatrix}-\left[\prod_{s=1}^rD_{\gamma_s}^{-1}\right]\left[\stackrel{\longleftarrow}{\prod_{k=0}^{n-r-1}}\fM_r^{-1}(W^k\gamma)\right]\lambda\\k_{n-r}\end{pmatrix}
\end{equation}
where $k_{n-r}$ is some complex number. On the other hand, taking into account (\ref{thm28_27}) and (\ref{thm28_26}), we find that the left--hand side of (\ref{thm28_25}) can be expressed by
\begin{equation}\label{thm28_30}
\left[\stackrel{\longleftarrow}{\prod_{k=0}^{n-r-1}}\fM_{r+1}^*(W^k\gamma)\right]p=p_0\cdot\begin{pmatrix}\star\\\prod_{k=0}^{n-r-1}D_{\gamma_{r+1+k}}\end{pmatrix}.
\end{equation}
The combination of (\ref{thm28_25}), (\ref{thm28_29}), and (\ref{thm28_30}) yields
\begin{equation}\label{thm28_31}
k_{n-r}=\prod_{k=0}^{n-r-1}D_{\gamma_{r+1+k}}.
\end{equation}
Combining (\ref{thm28_25}), (\ref{thm28_29}), and (\ref{thm28_31}) yields
\begin{equation}\label{thm28_32}
\left[\stackrel{\longleftarrow}{\prod_{k=0}^{n-r-1}}\fM_{r+1}^*(W^k\gamma)\right]p=p_0\cdot\begin{pmatrix}-\left[\prod_{s=1}^rD_{\gamma_s}^{-1}\right]\left[\stackrel{\longleftarrow}{\prod_{k=0}^{n-r-1}}\fM_r^{-1}(W^k\gamma)\right]\lambda\\\prod_{k=0}^{n-r-1}D_{\gamma_{r+1+k}}\end{pmatrix}.
\end{equation}
From \eqref{5.9} we get
$$\eta_{r+1}(\gamma)=\begin{pmatrix}\eta_r(\gamma)\\\overline{\gamma_{r+1}}\prod_{k=1}^{r}D_{\gamma_k}\end{pmatrix}.$$
Consequently,
\begin{equation}\label{thm28_33}
\eta_{r+1}(W^{n-r}\gamma)=
\begin{pmatrix}\eta_r(W^{n-r}\gamma)\\\overline{\gamma_{n+1}}\prod_{k=1}^{r}D_{\gamma_{k+n-r}}\end{pmatrix}.
\end{equation}
Using (\ref{thm28_32}) and (\ref{thm28_33}), we infer
\begin{eqnarray}\label{thm28_34}
&&\hspace{-1cm}\left(\left[\stackrel{\longleftarrow}{\prod_{k=0}^{n-r-1}}\fM_{r+1}^*(W^k\gamma)\right]p\ \ ,\ \ \eta_{r+1}(W^{n-r}\gamma)\right)_{\C^{r+1}}\nonumber\\
&=&p_0\cdot\left[\left(-\left[\prod_{s=1}^rD_{\gamma_s}^{-1}\right]\cdot\left[\stackrel{\longleftarrow}{\prod_{j=0}^{n-r-1}}\fM_r^{-1}(W^j\gamma)\right]\lambda\ \ ,\ \ \eta_r(W^{n-r}\gamma)\right)_{\C^r}\right.\nonumber\\
&&\hspace{1cm}\left.+\left(\prod_{k=0}^{n-r-1}D_{\gamma_{r+1+k}}\right)\gamma_{n+1}\left(\prod_{k=1}^rD_{\gamma_{k+n-r}}\right)\right].
\end{eqnarray}
Taking into account (\ref{thm28_31}), (\ref{thm28_34}), and $p_0\neq0$, a straightforward computation yields (\ref{DarGammaN+1}). Thus, the proof is complete.
\end{proof}

\begin{rem}[\zitaa{DFK}{\crem{2.9}}]\label{rem29}
Let $\gamma=(\gamma_j)_{j=0}^\infty$ be a sequence which satisfies the assumptions of Theorem \ref{thm28}. Then it is possible to recover the whole sequence $\gamma$ via the formulas \eqref{DarGammaN+1} from the section $(\gamma_j)_{j=0}^r$ and the vector $\lambda=(\lambda_1,\dotsc,\lambda_r)^T$. Indeed, for $n=r$ we have
\[\begin{split}
    \gamma_{r+1}
    &=\left[\prod_{k=1}^rD_{\gamma_k}^{-2}\right]\eta_r^*(\gamma)\lambda\\
    &=\frac{1}{(1-|\gamma_1|^2)\dotsm(1-|\gamma_r|^2)}\cdot\sum_{k=1}^r\lambda_{r-k+1}\gamma_k\left(\prod_{j=1}^kD_{\gamma_j}\right).    
\end{split}\]
In the case $n=r+2$ the vector $\lambda$ has to be replaced by $\fM_r^{-1}(W\gamma)$ and the sequence $(\gamma_j)_{j=0}^r$ has to be replaced by $(\gamma_{j+1})_{j=0}^r$. The matrix $\fM_r(\gamma)$ depends on the section $(\gamma_j)_{j=1}^r$. Thus, the matrix $\fM_r^{-1}(W\gamma)$ depends on the section $(\gamma_{j+1})_{j=1}^r$. Consequently, formula \eqref{DarGammaN+1} yields an expression for $\gamma_{r+2}$ in terms of the sequence $(\gamma_j)_{j=1}^{r+1}$. Continuing this procedure inductively we see that, for all integers $n$ with $n\geq r$, formula \eqref{DarGammaN+1} produces an expression for $\gamma_{n+1}$ which is based on the section $(\gamma_j)_{j=0}^n$. Consequently, the sequence $(\gamma_j)_{j=0}^\infty$ is completely determined by the section $(\gamma_j)_{j=0}^r$ and the vector $\lambda$. It should be mentioned that in the case $n_0=1$, which corresponds to a sequence of rank 1, for each $n\in\N$ formula \eqref{DarGammaN+1} has the form
$$\gamma_{n+1}=\lambda\cdot\frac{\gamma_n}{\prod_{j=1}^n(1-|\gamma_j|^2)}.$$
Observe that, for this particular case $n_0=1$, it was derived in \rthm{thm5.22} (see \eqref{5.73}).
\end{rem}
Our next goal can be described as follows. Let $n_0\in\N$ and let $\gamma=(\gamma_j)_{j=0}^\infty\in\Gamma l_2$ be a sequence which satisfies $\rank\gamma=n_0$. Furthermore, let $r$ be an integer with $r\geq n_0$ and let $p$ be an $r$-th $\cS$--recurrence vector associated with $\gamma$. Then we will show that the identity
$$\prod_{k=1}^r(1-|\gamma_k|^2)-\Pi_1^2=\lambda^*\left( \ (\fL_r^{-1}(\gamma))^*\fL_r^{-1}(\gamma)-I_r\right)\lambda$$
holds, where $\Pi_1$, $\lambda$ and $\fL_r(\gamma)$ are defined via \eqref{3.17}, \eqref{lambda}, and \eqref{5.3}, respectively. To accomplish this we still need to make some preparations. In this way, we will be led to several results that are, by themselves, of interest.\\
Let $n\in\N$. Then the symbol $\left\|. \right\|_{\C^n}$ stands for the Euclidean norm in the space $\C^n$.
\begin{lem}[\zitaa{DFK}{\clem{2.10}}]\label{lem210}
Let $\gamma=(\gamma_j)_{j=0}^\infty$ be a sequence from $\D$. Let $n\in\N$ and let $\eta_n(\gamma)$ be defined via \eqref{5.9}. Then
$$1-\left\|\eta_n(\gamma)\right\|_{\C^n}^2=\prod_{j=1}^nD_{\gamma_j}^2.$$
\end{lem}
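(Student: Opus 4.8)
The plan is to evaluate $\|\eta_n(\ga)\|_{\C^n}^2$ directly from the explicit form \eqref{5.9} and to recognize the resulting sum as telescoping. First I would note that, by \eqref{5.9} and \eqref{E2.19},
\[
    \|\eta_n(\ga)\|_{\C^n}^2
    =\sum_{k=1}^n|\ga_k|^2\prod_{j=1}^{k-1}D_{\ga_j}^2,
\]
where for $k=1$ the product over the empty index set is understood to be $1$.

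The key step is to use the identity $|\ga_k|^2=1-D_{\ga_k}^2$ in order to rewrite each summand as
\[
    |\ga_k|^2\prod_{j=1}^{k-1}D_{\ga_j}^2
    =\prod_{j=1}^{k-1}D_{\ga_j}^2-\prod_{j=1}^{k}D_{\ga_j}^2,
    \qquad k\in\{1,2,\dotsc,n\}.
\]
Summing over $k$, the right--hand side telescopes and yields $\|\eta_n(\ga)\|_{\C^n}^2=1-\prod_{j=1}^nD_{\ga_j}^2$, which is equivalent to the assertion of the lemma.

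An equally short route is induction on $n$. The base case $n=1$ is immediate, since $\eta_1(\ga)=(\ol\ga_1)$ and $1-|\ga_1|^2=D_{\ga_1}^2$. For the inductive step one observes that passing from $n$ to $n+1$ adds to $\|\eta_n(\ga)\|_{\C^n}^2$ only the term $|\ga_{n+1}|^2\prod_{j=1}^nD_{\ga_j}^2$, so that, using the induction hypothesis,
\[
    1-\|\eta_{n+1}(\ga)\|_{\C^{n+1}}^2
    =\prod_{j=1}^nD_{\ga_j}^2-|\ga_{n+1}|^2\prod_{j=1}^nD_{\ga_j}^2
    =\prod_{j=1}^{n+1}D_{\ga_j}^2.
\]
There is no genuine obstacle in this argument; the only point that needs a little care is the bookkeeping of the empty--product convention in the first term of the telescoping sum (equivalently, the base case of the induction).
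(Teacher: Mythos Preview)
Your proposal is correct and essentially coincides with the paper's proof: the paper uses exactly your inductive argument, observing the block decomposition $\eta_n(\gamma)=\begin{pmatrix}\eta_{n-1}(\gamma)\\ \overline{\gamma_n}\prod_{k=1}^{n-1}D_{\gamma_k}\end{pmatrix}$ and concluding by induction. Your telescoping version is just the same induction unrolled, so there is no substantive difference.
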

\begin{proof}
For $n=1$ the asserted equation obviously holds. Now let $n\ge 2$. Then from \eqref{5.9} we see the block decomposition
$$\eta_n(\gamma)=\begin{pmatrix}\eta_{n-1}(\gamma)\\\overline{\gamma_n}\left[\prod_{k=1}^{n-1}D_{\gamma_k}\right]\end{pmatrix}.$$
Thus, taking into account the definition of the Euclidean norm, we get
$$\left\|\eta_n(\gamma)\right\|_{\C^n}^2=\left\|\eta_{n-1}(\gamma)\right\|_{\C^{n-1}}^2+|\gamma_n|^2\left[\prod_{k=1}^{n-1}D_{\gamma_k}^2\right].$$
Now, the assertion follows immediately by induction.
\end{proof}
\begin{lem}[\zitaa{DFK}{\clem{2.11}}]\label{lem211}
Let $n\in\N$. Furthermore, let the nonsingular complex $n\times n$ matrix $\fM$ and the vector $\eta\in\C^n$ be chosen such that
\begin{equation}\label{lem211_36}
I_n-\fM\fM^*=\eta\eta^*
\end{equation}
holds. Then $1-\left\|\eta\right\|_{\C^n}^2>0$ and the vector
\begin{equation}\label{lem211_37}
\widetilde\eta:=\frac{1}{\sqrt{1-\left\|\eta\right\|_{\C^n}^2}}\fM^*\eta
\end{equation}
satisfies
\begin{equation}\label{lem211_38}
I_n-\fM^*\fM=\widetilde\eta\widetilde\eta^*.
\end{equation}
\end{lem}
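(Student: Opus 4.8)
The plan is to base the whole argument on the single observation that $\eta$ is an eigenvector of $\fM\fM^*$, and to exploit the invertibility of $\fM$ to transfer the defect relation \eqref{lem211_36} from $\fM\fM^*$ to $\fM^*\fM$. First I would apply both sides of \eqref{lem211_36} to $\eta$, obtaining
\[
\fM\fM^*\eta=(I_n-\eta\eta^*)\eta=\bigl(1-\left\|\eta\right\|_{\C^n}^2\bigr)\eta .
\]
Pairing this with $\eta$ gives $\left\|\fM^*\eta\right\|_{\C^n}^2=\bigl(1-\left\|\eta\right\|_{\C^n}^2\bigr)\left\|\eta\right\|_{\C^n}^2$. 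If $\eta=0$, then \eqref{lem211_36} forces $\fM\fM^*=I_n$, hence $\fM$ is unitary, $\fM^*\fM=I_n$, and both \eqref{lem211_38} and the positivity claim are trivial. So I may assume $\eta\ne0$; since $\fM^*$ is nonsingular, the left-hand side above is strictly positive, and division by $\left\|\eta\right\|_{\C^n}^2>0$ yields $1-\left\|\eta\right\|_{\C^n}^2>0$. This proves the first assertion and makes \eqref{lem211_37} well defined.

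The key step is then to left-multiply the eigenrelation by $\fM^{-1}$, which is legitimate since $\fM$ is nonsingular, to get
\[
\fM^*\eta=\bigl(1-\left\|\eta\right\|_{\C^n}^2\bigr)\fM^{-1}\eta .
\]
In my view this identity is the real content of the lemma; once it is in hand, everything else is a short manipulation. Conjugating \eqref{lem211_36} by $\fM$ gives $I_n-\fM^*\fM=\fM^{-1}(I_n-\fM\fM^*)\fM=\fM^{-1}\eta\eta^*\fM$, and therefore
\begin{align*}
\widetilde\eta\widetilde\eta^*
&=\frac{1}{1-\left\|\eta\right\|_{\C^n}^2}\,\fM^*\eta\eta^*\fM
=\frac{1}{1-\left\|\eta\right\|_{\C^n}^2}\Bigl(\bigl(1-\left\|\eta\right\|_{\C^n}^2\bigr)\fM^{-1}\eta\Bigr)\eta^*\fM\\
&=\fM^{-1}\eta\eta^*\fM
=I_n-\fM^*\fM ,
\end{align*}
which is exactly \eqref{lem211_38}.

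The main obstacle here is conceptual rather than computational: one has to notice that because $\fM$ is invertible the defect of $\fM^*\fM$ is just the $\fM$-conjugate of the defect of $\fM\fM^*$, and that the rank-one factor $\eta\eta^*$ transforms accordingly via $\fM^*\eta=\bigl(1-\left\|\eta\right\|_{\C^n}^2\bigr)\fM^{-1}\eta$. The only technical care needed is the case split $\eta=0$ versus $\eta\ne0$, which is required solely to guarantee strict positivity of $1-\left\|\eta\right\|_{\C^n}^2$, since for $\eta=0$ the inner-product identity above is vacuous. No spectral theory beyond this single eigenrelation enters the proof.
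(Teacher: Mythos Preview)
Your proof is correct. The opening steps---applying \eqref{lem211_36} to $\eta$ to get $\fM\fM^*\eta=(1-\|\eta\|_{\C^n}^2)\eta$, the case split $\eta=0$ versus $\eta\ne0$, and the positivity argument via nonsingularity of $\fM^*$---coincide with the paper's proof almost verbatim. The difference lies in how \eqref{lem211_38} is finally obtained. The paper observes that $(I_n-\fM^*\fM)\fM^*\eta=\|\eta\|_{\C^n}^2\,\fM^*\eta$, so $\widetilde\eta$ is an eigenvector of $I_n-\fM^*\fM$ with eigenvalue $\|\widetilde\eta\|_{\C^n}^2=\|\eta\|_{\C^n}^2$, and then invokes the rank identity $\rank(I_n-\fM^*\fM)=\rank(I_n-\fM\fM^*)=1$ to conclude $I_n-\fM^*\fM=\widetilde\eta\widetilde\eta^*$. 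Your route instead passes through the conjugation identity $I_n-\fM^*\fM=\fM^{-1}(I_n-\fM\fM^*)\fM=\fM^{-1}\eta\eta^*\fM$ together with $\fM^*\eta=(1-\|\eta\|_{\C^n}^2)\fM^{-1}\eta$, and substitutes directly. Your argument is marginally more self-contained in that it avoids the rank comparison, while the paper's version makes the spectral picture (that $\widetilde\eta$ spans the range of $I_n-\fM^*\fM$) more explicit; both are short and essentially equivalent.
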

\begin{proof}
The case $\eta=0_{n\times 1}$ is trivial. Now suppose that $\eta\in\C^n\backslash\{0_{n\times 1}\}$. From (\ref{lem211_36}) we get
\begin{equation}\label{lem211_40}
(I_n-\fM\fM^*)\eta=\eta\eta^*\eta=\left\|\eta\right\|_{\C^n}^2\cdot\eta
\end{equation}
and consequently
\begin{equation}\label{lem211_41}
\fM\fM^*\eta=(1-\left\|\eta\right\|_{\C^n}^2)\cdot\eta.
\end{equation}
Hence $1-\left\|\eta\right\|_{\C^n}^2$ is an eigenvalue of $\fM\fM^*$ with corresponding eigenvector $\eta$. Since the matrix $\fM\fM^*$ is nonsingular positive Hermitian, we have $1-\left\|\eta\right\|_{\C^n}^2>0$. Using (\ref{lem211_40}) we infer
\begin{equation}\label{lem211_43}
(I_n-\fM^*\fM)\fM^*\eta=\fM^*(I_n-\fM\fM^*)\eta=\left\|\eta\right\|_{\C^n}^2\cdot\fM^*\eta.
\end{equation}
Taking into account (\ref{lem211_41}) we can conclude
\begin{equation}\label{lem211_46}
\left\|\fM^*\eta\right\|_{\C^n}^2=\eta^*\fM\fM^*\eta=\eta^*\left[(1-\left\|\eta\right\|_{\C^n}^2)\cdot\eta\right]=(1-\left\|\eta\right\|_{\C^n}^2)\cdot\left\|\eta\right\|_{\C^n}^2
\end{equation}
and therefore from (\ref{lem211_37}) we have
\begin{equation}\label{lem211_44}
\left\|\widetilde\eta\right\|_{\C^n} = \left\|\eta\right\|_{\C^n} >0.
\end{equation}
Formulas (\ref{lem211_43}), (\ref{lem211_37}) and (\ref{lem211_44}) show that $\left\|\widetilde\eta\right\|_{\C^n}^2$ is an eigenvalue of $I_n-\fM^*\fM$ with corresponding eigenvector $\widetilde\eta$. From (\ref{lem211_36}) and $\eta\neq0_{n\times 1}$ we get $$\rank(I_n-\fM^*\fM)=\rank(I_n-\fM\fM^*)=1.$$
So for each vector $h$ we can conclude
$$(I_n-\fM^*\fM)h=(I_n-\fM^*\fM)(h,\frac{\widetilde\eta \ }{ \ \left\|\widetilde\eta\right\|_{\C^n}})_{_{\C^n}}
\frac{\widetilde\eta \ }{ \ \left\|\widetilde\eta\right\|_{\C^n}}=(h,\widetilde\eta)_{_{\C^n}}\widetilde\eta=\widetilde\eta\widetilde\eta^* \cdot h.$$
\end{proof}
\begin{prop}[\zitaa{DFK}{Proposition~2.12}]\label{prop212}
Let $\gamma=(\gamma_j)_{j=0}^\infty\in\Gamma l_2$ and let $n\in\N$. Then
$$I_n-\fM_n^*(\gamma)\fM_n(\gamma)
=\frac{1}{\prod_{k=1}^n(1-|\gamma_k|^2)}\fM_n^*(\gamma)\eta_n(\gamma)\eta_n^*(\gamma)\fM_n(\gamma)$$
where $\fM_n(\gamma)$ and $\eta_n(\gamma)$ are defined via \eqref{5.7} and \eqref{5.9}, respectively.
\end{prop}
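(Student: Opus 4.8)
The plan is to obtain the asserted identity as an immediate consequence of three results already at our disposal: equation \eqref{5.8} of \lmref{lm5.3}, together with \lmref{lem210} and \lmref{lem211}. The idea is to apply \lmref{lem211} to the matrix $\fM:=\fM_n(\ga)$ and the vector $\eta:=\eta_n(\ga)$, and then to identify the scalar factor appearing in the conclusion of that lemma by means of \lmref{lem210}.

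First I would check that the hypotheses of \lmref{lem211} are met. Since $\ga\in\Ga l_2$ we have $|\ga_k|<1$, hence $D_{\ga_k}>0$, for every $k\in\{1,\dotsc,n\}$; by the lower--triangular shape \eqref{5.7} of $\fM_n(\ga)$, whose diagonal entries are $D_{\ga_1},\dotsc,D_{\ga_n}$, the matrix $\fM_n(\ga)$ is therefore nonsingular, with $\det\fM_n(\ga)=\prod_{k=1}^nD_{\ga_k}$. Moreover, \eqref{5.8} of \lmref{lm5.3} states exactly that $I_n-\fM_n(\ga)\fM_n^*(\ga)=\eta_n(\ga)\eta_n^*(\ga)$, which is precisely the hypothesis \eqref{lem211_36} of \lmref{lem211} with $\fM=\fM_n(\ga)$ and $\eta=\eta_n(\ga)$. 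Applying \lmref{lem211} then yields $1-\norm{\eta_n(\ga)}_{\C^n}^2>0$ and
\[
I_n-\fM_n^*(\ga)\fM_n(\ga)=\wt\eta\wt\eta^*,\qquad\wt\eta=\frac{1}{\sqrt{1-\norm{\eta_n(\ga)}_{\C^n}^2}}\fM_n^*(\ga)\eta_n(\ga),
\]
so that, using $(\fM_n^*(\ga)\eta_n(\ga))^*=\eta_n^*(\ga)\fM_n(\ga)$,
\[
I_n-\fM_n^*(\ga)\fM_n(\ga)=\frac{1}{1-\norm{\eta_n(\ga)}_{\C^n}^2}\fM_n^*(\ga)\eta_n(\ga)\eta_n^*(\ga)\fM_n(\ga).
\]
It remains to evaluate the denominator. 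By \lmref{lem210} we have $1-\norm{\eta_n(\ga)}_{\C^n}^2=\prod_{j=1}^nD_{\ga_j}^2=\prod_{j=1}^n(1-|\ga_j|^2)$; substituting this into the previous display gives exactly the claimed formula.

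No genuine obstacle is anticipated: the argument is a straightforward chaining of the three cited lemmas, and the whole proof reduces to a few lines. The only point requiring a little attention is the verification that $\fM_n(\ga)$ satisfies the nonsingularity hypothesis of \lmref{lem211}, but this is immediate from the triangular form \eqref{5.7} and the assumption $\ga\in\Ga l_2$.
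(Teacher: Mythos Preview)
Your proof is correct and follows exactly the approach indicated in the paper, which simply notes that the combination of \lmref{lm5.3} (specifically \eqref{5.8}), \lmref{lem210}, and \lmref{lem211} yields the assertion. Your write-up spells out this combination in detail, including the verification that $\fM_n(\ga)$ is nonsingular, which is precisely what the terse proof in the paper leaves implicit.
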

\begin{proof}
    The combination of Lemma~\ref{5.3}, Lemma \ref{lem210} and Lemma \ref{lem211} yields the assertion.
\end{proof}
The following result should be compared with Lemma \ref{lem23}. Under the assumptions of Lemma \ref{lem23} we will verify that for each $n\in\N$ the right defect matrix $I_n-\fL_n^*(\gamma)\fL_n(\gamma)$ admits a series representation which is similar to the series representation for the left defect matrix $I_n-\fL_n(\gamma)\fL_n^*(\gamma)$.
\begin{prop}[\zitaa{DFK}{Proposition~2.13}]\label{prop213}
    Let $\gamma=(\gamma_j)_{j=0}^\infty\in\Gamma l_2$, let $n\in\N$ and let $\fL_n(\gamma)$ be defined via \eqref{5.3}.
    Then
\begin{equation}\label{prop213_46}
I_n-\fL_n^*(\gamma)\fL_n(\gamma)=\sum_{j=0}^\infty\tau_{n,j}(\gamma)\tau_{n,j}^*(\gamma)
\end{equation}
where for each $j\in\N_0$ the matrix $\tau_{n,j}(\gamma)$ is defined via
\begin{equation}\label{taunj}
\tau_{n,j}(\gamma):=\left(\prod_{k=j+1}^{j+n}D_{\gamma_k}^{-1}\right)\left[\stackrel{\longleftarrow}{\prod_{k=j}^\infty}\fM_n^*(W^k\gamma)\right]\eta_n(W^j\gamma)
\end{equation}
and where $D_{\gamma_k}$, $W$, $\fM_n(\gamma)$, and $\eta_n(\gamma)$ are given by \eqref{E2.19}, \eqref{3.14}, \eqref{5.7}, and \eqref{5.9}, respectively.
\end{prop}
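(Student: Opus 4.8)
The plan is to imitate the proof of Lemma~\ref{lem23}, but to work with the ``right'' defect matrix $I_n-\fL_n^*(\gamma)\fL_n(\gamma)$ instead of the ``left'' one $\cA_n(\gamma)=I_n-\fL_n(\gamma)\fL_n^*(\gamma)$, and accordingly to replace the identity $I_n-\fM_n(\gamma)\fM_n^*(\gamma)=\eta_n(\gamma)\eta_n^*(\gamma)$ (see \eqref{5.8}) by its counterpart $I_n-\fM_n^*(\gamma)\fM_n(\gamma)=\frac{1}{\prod_{k=1}^n(1-|\gamma_k|^2)}\fM_n^*(\gamma)\eta_n(\gamma)\eta_n^*(\gamma)\fM_n(\gamma)$ supplied by Proposition~\ref{prop212}. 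Fix $n\in\N$. Since $W^j\gamma\in\Gamma l_2$ for every $j\in\N_0$, \coref{cor5.2} shows that $\cA_n(W^j\gamma)=I_n-\fL_n(W^j\gamma)\fL_n^*(W^j\gamma)$ is a Gram matrix; hence $\fL_n(W^j\gamma)$ is contractive and $B_j:=I_n-\fL_n^*(W^j\gamma)\fL_n(W^j\gamma)\geq 0$.

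The key step is to prove the one--step recursion
\[
    B_j=B_{j+1}+\tau_{n,j}(\gamma)\tau_{n,j}^*(\gamma),\qquad j\in\N_0.
\]
To this end, apply \eqref{5.6} to $W^j\gamma$ to obtain $\fL_n(W^j\gamma)=\fM_n(W^j\gamma)\fL_n(W^{j+1}\gamma)$; substituting this into $\fL_n^*(W^j\gamma)\fL_n(W^j\gamma)$ and writing $\fM_n^*\fM_n=I_n-(I_n-\fM_n^*\fM_n)$ gives
\[
    B_j=B_{j+1}+\fL_n^*(W^{j+1}\gamma)\bigl(I_n-\fM_n^*(W^j\gamma)\fM_n(W^j\gamma)\bigr)\fL_n(W^{j+1}\gamma).
\]
Proposition~\ref{prop212}, applied to $W^j\gamma$ (so that $\prod_{k=1}^n(1-|\gamma_{k+j}|^2)=\prod_{k=j+1}^{j+n}D_{\gamma_k}^2$), rewrites the middle factor, and using once more $\fM_n(W^j\gamma)\fL_n(W^{j+1}\gamma)=\fL_n(W^j\gamma)$ (whence $\fL_n^*(W^{j+1}\gamma)\fM_n^*(W^j\gamma)=\fL_n^*(W^j\gamma)$) the last summand collapses to $v_jv_j^*$, where $v_j:=\frac{1}{\prod_{k=j+1}^{j+n}D_{\gamma_k}}\fL_n^*(W^j\gamma)\eta_n(W^j\gamma)$.

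It remains to identify $v_j$ with $\tau_{n,j}(\gamma)$ from \eqref{taunj}. By \coref{cor5.4} applied to $W^j\gamma$ (valid since $\fL_n(W^m\gamma)\to I_n$, which is immediate from the triangular form \eqref{5.3}) one has $\fL_n(W^j\gamma)=\fM_n(W^j\gamma)\fM_n(W^{j+1}\gamma)\cdots$; taking adjoints gives $\fL_n^*(W^j\gamma)=\stackrel{\longleftarrow}{\prod_{k=j}^\infty}\fM_n^*(W^k\gamma)$, which is precisely the operator in \eqref{taunj}, so $v_j=\tau_{n,j}(\gamma)$. Telescoping the recursion then yields $B_0=B_m+\sum_{j=0}^{m-1}\tau_{n,j}(\gamma)\tau_{n,j}^*(\gamma)$ for every $m\in\N$. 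Now let $m\to\infty$: since $\fL_n(W^m\gamma)\to I_n$ --- its diagonal entries $\Pi_1(W^m\gamma),\dots,\Pi_n(W^m\gamma)$ are tails of the convergent product $\prod_{j=0}^\infty(1-|\gamma_j|^2)$ and tend to $1$, while its entries $\Pi_p(W^m\gamma)L_{p-q}(W^{m+q}\gamma)$ below the diagonal ($1\le q<p\le n$) tend to $0$ because $\sum_{j=0}^\infty|\gamma_j|^2<\infty$ --- we get $B_m\to 0$; and since the summands $\tau_{n,j}(\gamma)\tau_{n,j}^*(\gamma)$ are positive semidefinite, the partial sums form a nondecreasing sequence of $n\times n$ matrices dominated by $B_0$, hence convergent. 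Passing to the limit gives \eqref{prop213_46}.

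Everything here is routine once it is set up. The points that need care are the bookkeeping of the ordered matrix products under adjunction --- namely that the forward product $\fL_n(W^j\gamma)=\fM_n(W^j\gamma)\fM_n(W^{j+1}\gamma)\cdots$ of \coref{cor5.4} turns into the backward product $\stackrel{\longleftarrow}{\prod_{k=j}^\infty}\fM_n^*(W^k\gamma)$ of \eqref{taunj}, and that the scalar normalizations $\prod_{k=j+1}^{j+n}D_{\gamma_k}^{-1}$ match --- together with the two limit assertions ($B_m\to 0$ and convergence of the telescoped series), both of which rest on the hypothesis $\gamma\in\Gamma l_2$.
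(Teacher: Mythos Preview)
Your proof is correct and follows essentially the same approach as the paper: both iterate the identity obtained from \eqref{5.6}, namely $I_n-\fL_n^*(W^j\gamma)\fL_n(W^j\gamma)=\fL_n^*(W^{j+1}\gamma)\bigl(I_n-\fM_n^*(W^j\gamma)\fM_n(W^j\gamma)\bigr)\fL_n(W^{j+1}\gamma)+I_n-\fL_n^*(W^{j+1}\gamma)\fL_n(W^{j+1}\gamma)$, invoke Proposition~\ref{prop212} for the middle factor, use \coref{cor5.4} to rewrite $\fL_n^*(W^j\gamma)$ as the backward product in \eqref{taunj}, and pass to the limit via $\fL_n(W^m\gamma)\to I_n$. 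Your organization (naming $B_j$ and collapsing each summand to $\tau_{n,j}\tau_{n,j}^*$ before telescoping) is slightly more compact than the paper's, which first writes the infinite sum with the raw middle factors \eqref{prop213_49} and only then substitutes \eqref{prop213_51} and \eqref{prop213_52}, but the content is identical.
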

\begin{proof}
From \eqref{5.6} we get
\begin{multline*}
    I_n-\fL_n^*(\gamma)\fL_n(\gamma)
    =\fL_n^*(W\gamma)\left[I_n-\fM_n^*(\gamma)\fM_n(\gamma)\right]\fL_n(W\gamma)\\
    +I_n-\fL_n^*(W\gamma)\fL_n(W\gamma).
\end{multline*}
Considering now $\fL_n(W\gamma)$ instead of $\fL_n(\gamma)$ and repeating the above procedure, we obtain, after $m$--steps, the formula
\begin{multline}\label{prop213_48}
    I_n-\fL_n^*(\gamma)\fL_n(\gamma)\\
    =\sum_{j=0}^{m-1}\fL_n^*(W^{j+1}\gamma)\left[I_n-\fM_n^*(W^j\gamma)\fM_n(W^j\gamma)\right]\fL_n(W^{j+1}\gamma)\\
    +I_n-\fL_n^*(W^m\gamma)\fL_n(W^m\gamma).
\end{multline}
From this, taking into account that for $n\in\N$
\[
    \lim_{m\to\infty}\cL_n(W^m\ga)
    =I_q,
\]
we obtain
\begin{equation}\label{prop213_49}
I_n-\fL_n^*(\gamma)\fL_n(\gamma)=\sum_{j=0}^\infty\fL_n^*(W^{j+1}\gamma)\left[I_n-\fM_n^*(W^j\gamma)\fM_n(W^j\gamma)\right]\fL_n(W^{j+1}\gamma).
\end{equation}
For each $j\in\N_0$, from \rcor{cor5.4} %
\begin{equation}\label{prop213_51}
\fL_n(W^j\gamma)=\stackrel{\longrightarrow}{\prod_{k=j}^{\infty}}\fM_n(W^k\gamma)
\end{equation}
follows and Proposition \ref{prop212} implies
\begin{multline}\label{prop213_52}
    I_n-\fM_n^*(W^j\gamma)\fM_n(W^j\gamma)\\
    =\left(\prod_{k=j+1}^{j+n}D_{\gamma_k}^{-2}\right)\fM_n^*(W^j\gamma)\eta_n(W^j\gamma)\eta_n^*(W^j\gamma)\fM_n(W^j\gamma).
\end{multline}
Now the combination of (\ref{prop213_49})--(\ref{prop213_52}) yields (\ref{prop213_46}).
\end{proof}
\begin{lem}[\zitaa{DFK}{\clem{2.14}}]\label{lem214}
Let $\gamma=(\gamma_j)_{j=0}^\infty\in\Gamma l_2$, let $r\in\N$, and let $\Pi_1$ be defined via \eqref{3.17}. Then
$$\sum_{n=r}^\infty|\gamma_{n+1}|^2\left[\prod_{k=1}^n(1-|\gamma_k|^2)\right]=\prod_{k=1}^r(1-|\gamma_k|^2)-\Pi_1^2.$$
\end{lem}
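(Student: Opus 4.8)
The plan is to recognize the series as a telescoping sum. For $n\in\N_0$ set $\pi_n:=\prod_{k=1}^n(1-|\gamma_k|^2)$, with the convention $\pi_0:=1$. First I would note the elementary identity $\pi_n-\pi_{n+1}=\pi_n|\gamma_{n+1}|^2$, which follows at once from $\pi_{n+1}=\pi_n(1-|\gamma_{n+1}|^2)$. Summing this over $n\in\{r,r+1,\dotsc,N\}$ for an arbitrary integer $N\geq r$ yields the telescoping identity
\[
\sum_{n=r}^N|\gamma_{n+1}|^2\left[\prod_{k=1}^n(1-|\gamma_k|^2)\right]
=\pi_r-\pi_{N+1}.
\]

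Next I would let $N\to\infty$. Since $\gamma\in\Gamma l_2$ we have $\sum_{j=0}^\infty|\gamma_j|^2<\infty$, so the infinite product $\prod_{j=1}^\infty(1-|\gamma_j|^2)$ converges; by definition \eqref{3.17} its value equals $\Pi_1^2$. Hence $\pi_{N+1}\to\Pi_1^2$ as $N\to\infty$. Moreover every summand $|\gamma_{n+1}|^2\pi_n$ is nonnegative, so the partial sums on the left-hand side increase and are bounded above by $\pi_r$; therefore the series converges and, passing to the limit in the displayed identity,
\[
\sum_{n=r}^\infty|\gamma_{n+1}|^2\left[\prod_{k=1}^n(1-|\gamma_k|^2)\right]
=\pi_r-\Pi_1^2
=\prod_{k=1}^r(1-|\gamma_k|^2)-\Pi_1^2,
\]
which is the asserted formula.

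There is no real obstacle here; the only point requiring (standard) justification is the convergence of the infinite product $\prod_{j=1}^\infty(1-|\gamma_j|^2)$, which is already invoked repeatedly in the paper (see, e.g., \lmref{L2.4} and \rmref{re2.8}) and is an immediate consequence of $\gamma\in\Gamma l_2$.
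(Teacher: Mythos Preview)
Your proof is correct and is essentially the same as the paper's: both exploit the elementary identity $\prod_{k=1}^n(1-|\gamma_k|^2)-\prod_{k=1}^{n+1}(1-|\gamma_k|^2)=|\gamma_{n+1}|^2\prod_{k=1}^n(1-|\gamma_k|^2)$, telescope, and pass to the limit using $\prod_{k=1}^{N+1}(1-|\gamma_k|^2)\to\Pi_1^2$. Your presentation via the notation $\pi_n$ is slightly more compact, but the argument is identical.
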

\begin{proof}
Taking into account \eqref{3.17} and \eqref{E2.19}, we obtain
\begin{eqnarray*}
&&\hspace{-1cm}\prod_{k=1}^r(1-|\gamma_k|^2)=\left[\prod_{k=1}^r(1-|\gamma_k|^2)\right]\left[|\gamma_{r+1}|^2+(1-|\gamma_{r+1}|^2)\right]\\
&=&|\gamma_{r+1}|^2\left[\prod_{k=1}^r(1-|\gamma_k|^2)\right]+\prod_{k=1}^{r+1}(1-|\gamma_k|^2)\\
&=&|\gamma_{r+1}|^2\left[\prod_{k=1}^r(1-|\gamma_k|^2)\right]+|\gamma_{r+2}|^2\left[\prod_{k=1}^{r+1}(1-|\gamma_k|^2)\right]+\prod_{k=1}^{r+2}(1-|\gamma_k|^2).
\end{eqnarray*}
Iterating this procedure, for each integer $m$ with $m\geq r$, we get
$$\prod_{k=1}^r(1-|\gamma_k|^2)-\Pi_1^2=\sum_{n=r}^m|\gamma_{n+1}|^2\left[\prod_{k=1}^n(1-|\gamma_k|^2)\right]+\prod_{k=1}^{m+1}(1-|\gamma_k|^2)-\Pi_1^2.$$
This yields the assertion after passing to the limit $m\rightarrow\infty$.
\end{proof}
\begin{thm}[\zitaa{DFK}{\cthm{2.15}}]\label{thm215}
Let $n_0\in\N$ and let $\gamma=(\gamma_j)_{j=0}^\infty$ be a sequence which belongs to $\Gamma l_2$ and satisfies $\rank\gamma=n_0$. Further, let $r$ be an integer with $r\geq n_0$, let $p=(p_r,\dotsc,p_0)^T$ be an $r$--th order $\cS$--recurrence vector associated with $\gamma$, and let $\lambda$ be defined via \eqref{lambda}. Then
\begin{equation}\label{thm215_53}
\prod_{k=1}^r(1-|\gamma_k|^2)-\Pi_1^2=\lambda^*\left( \ (\fL_r^{-1}(\gamma))^*\fL_r^{-1}(\gamma)-I_r\right)\lambda,
\end{equation}
where $\Pi_1$ and $\fL_r(\gamma)$ are defined by \eqref{3.17} and \eqref{5.3}, respectively.
\end{thm}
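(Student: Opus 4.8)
The plan is to express the left-hand side of \eqref{thm215_53} as a convergent tail series by means of Lemma~\ref{lem214}, then to evaluate that series using the explicit formula for $\gamma_{n+1}$ from Theorem~\ref{thm28} together with the series representation of the defect matrix $I_r-\fL_r^*(\gamma)\fL_r(\gamma)$ from Proposition~\ref{prop213}. Since $\rank\gamma=n_0$ with $r\ge n_0$, the sequence $\gamma$ is $\cS$--recurrent (Theorem~\ref{thm25}) and, by Proposition~\ref{prop26}, it possesses an $r$--th order $\cS$--recurrence vector $p$, so $\lambda$ is well defined and Theorem~\ref{thm28} is applicable. Also, $\gamma\in\Gamma l_2$ guarantees that each $\fL_n(\gamma)$ is lower triangular with positive diagonal entries $\Pi_1,\dotsc,\Pi_n$ (cf.\ the argument in the proof of Theorem~\ref{thm5.5}, part~(1)), hence invertible.

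First I would fix the abbreviations $M_j:=\stackrel{\longleftarrow}{\prod_{k=0}^{j-1}}\fM_r^{-1}(W^k\gamma)$ (with $M_0:=I_r$) and $d_j:=\prod_{k=j+1}^{j+r}D_{\gamma_k}^{-1}$ for $j\in\N_0$. Iterating the factorization \eqref{5.6}, namely $\fL_r(\gamma)=\fM_r(\gamma)\fL_r(W\gamma)$, gives $\fL_r(\gamma)=\fM_r(\gamma)\fM_r(W\gamma)\dotsm\fM_r(W^{j-1}\gamma)\fL_r(W^j\gamma)$, hence the book-keeping identity $M_j=\fL_r(W^j\gamma)\fL_r^{-1}(\gamma)$. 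Next, for $n\ge r$ I would specialize \eqref{DarGammaN+1} to $j:=n-r$, so that it reads $\gamma_{n+1}=\bigl(\prod_{s=1}^nD_{\gamma_s}^{-1}\bigr)\bigl(M_j\lambda,\,d_j\eta_r(W^j\gamma)\bigr)_{\C^r}$. Using $(x,y)_{\C^r}=y^*x$ and $\prod_{s=1}^nD_{\gamma_s}^{-2}=\prod_{k=1}^n(1-|\gamma_k|^2)^{-1}$ one obtains
\[
    |\gamma_{n+1}|^2\prod_{k=1}^n(1-|\gamma_k|^2)
    =d_j^2\,\lambda^*M_j^*\eta_r(W^j\gamma)\eta_r^*(W^j\gamma)M_j\lambda .
\]
Summing over $n\ge r$, i.e.\ over $j\ge0$, and invoking Lemma~\ref{lem214}, the claim \eqref{thm215_53} reduces to the matrix identity $\sum_{j=0}^\infty d_j^2\,M_j^*\eta_r(W^j\gamma)\eta_r^*(W^j\gamma)M_j=(\fL_r^{-1}(\gamma))^*\fL_r^{-1}(\gamma)-I_r$ (convergence of the series being guaranteed by Lemma~\ref{lem214} and Proposition~\ref{prop213}).

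To prove this matrix identity I would substitute $M_j=\fL_r(W^j\gamma)\fL_r^{-1}(\gamma)$, factoring $(\fL_r^{-1}(\gamma))^*(\cdot)\fL_r^{-1}(\gamma)$ out of the sum and leaving $\sum_{j=0}^\infty d_j^2\,\fL_r^*(W^j\gamma)\eta_r(W^j\gamma)\eta_r^*(W^j\gamma)\fL_r(W^j\gamma)$. By Corollary~\ref{cor5.4} (equivalently \eqref{prop213_51}), $\stackrel{\longleftarrow}{\prod_{k=j}^\infty}\fM_r^*(W^k\gamma)=\bigl(\stackrel{\longrightarrow}{\prod_{k=j}^\infty}\fM_r(W^k\gamma)\bigr)^*=\fL_r^*(W^j\gamma)$, so the matrices $\tau_{r,j}(\gamma)$ of \eqref{taunj} are precisely $\tau_{r,j}(\gamma)=d_j\,\fL_r^*(W^j\gamma)\eta_r(W^j\gamma)$; hence the remaining sum equals $\sum_{j=0}^\infty\tau_{r,j}(\gamma)\tau_{r,j}^*(\gamma)=I_r-\fL_r^*(\gamma)\fL_r(\gamma)$ by Proposition~\ref{prop213}. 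Therefore $\sum_{j=0}^\infty d_j^2M_j^*\eta_r(W^j\gamma)\eta_r^*(W^j\gamma)M_j=(\fL_r^{-1}(\gamma))^*\bigl[I_r-\fL_r^*(\gamma)\fL_r(\gamma)\bigr]\fL_r^{-1}(\gamma)=(\fL_r^{-1}(\gamma))^*\fL_r^{-1}(\gamma)-I_r$, using $(\fL_r^{-1}(\gamma))^*\fL_r^*(\gamma)=I_r$ and $\fL_r(\gamma)\fL_r^{-1}(\gamma)=I_r$. Sandwiching this identity between $\lambda^*(\cdot)\lambda$ and comparing with Lemma~\ref{lem214} yields \eqref{thm215_53}.

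This is essentially a bookkeeping argument rather than a deep one; the main point to get right is the recognition that the weight vectors $\tau_{r,j}(\gamma)$ from Proposition~\ref{prop213} are exactly the building blocks produced when the formula for $\gamma_{n+1}$ in Theorem~\ref{thm28} is squared, together with the two conjugation/inversion identities $M_j=\fL_r(W^j\gamma)\fL_r^{-1}(\gamma)$ and $\stackrel{\longleftarrow}{\prod_{k=j}^\infty}\fM_r^*(W^k\gamma)=\fL_r^*(W^j\gamma)$. The only matter requiring a little care is the convergence of the infinite products and of the matrix series, but this is already contained in Corollary~\ref{cor5.4} and Proposition~\ref{prop213} under the standing assumption $\gamma\in\Gamma l_2$.
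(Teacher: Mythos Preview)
Your proof is correct and follows essentially the same route as the paper's own proof: both square the formula \eqref{DarGammaN+1} from Theorem~\ref{thm28}, sum over $n\ge r$, apply Lemma~\ref{lem214} to the left side, and recognize the resulting matrix sum as $(\fL_r^{-1}(\gamma))^*\bigl[I_r-\fL_r^*(\gamma)\fL_r(\gamma)\bigr]\fL_r^{-1}(\gamma)$ via Proposition~\ref{prop213} and the identification of the $\tau_{r,j}(\gamma)$. Your notational device $M_j=\fL_r(W^j\gamma)\fL_r^{-1}(\gamma)$ is exactly the identity the paper derives inline from Corollary~\ref{cor5.4}, so the two arguments coincide in substance.
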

\begin{proof}
Let $n$ be an integer with $n\geq r$. Then, from Theorem \ref{thm28} by rewriting formula (\ref{DarGammaN+1}) we get the relation
\begin{equation}\label{thm215_55}
\gamma_{n+1}\left(\prod_{k=1}^nD_{\gamma_k}\right)=\left(\prod_{k=n-r+1}^nD_{\gamma_k}^{-1}\right)\eta_r^*(W^{n-r}\gamma)\left(\stackrel{\longleftarrow}{\prod_{k=0}^{n-r-1}}\fM_r^{-1}(W^k\gamma)\right)\lambda.
\end{equation}
From \rcor{cor5.4} it follows that
$$\stackrel{\longleftarrow}{\prod_{k=0}^{n-r-1}}\fM_r^{-1}(W^k\gamma)=\left[\stackrel{\longrightarrow}{\prod_{k=n-r}^\infty}\fM_r(W^k\gamma)\right]\fL_r^{-1}(\gamma).$$
Inserting this into (\ref{thm215_55}), we get
\begin{multline*}
    \gamma_{n+1}\left(\prod_{k=1}^nD_{\gamma_k}\right)\\
    =\left(\prod_{k=n-r+1}^nD_{\gamma_k}^{-1}\right)\eta_r^*(W^{n-r}\gamma)\left[\stackrel{\longrightarrow}{\prod_{k=n-r}^\infty}\fM_r(W^k\gamma)\right]\fL_r^{-1}(\gamma)\lambda.
\end{multline*}
This implies
\begin{eqnarray}\label{thm215_56}
&&\hspace{-1cm}\sum_{n=r}^\infty|\gamma_{n+1}|^2\left(\prod_{k=1}^nD_{\gamma_k}^2\right)\nonumber\\
&=&\sum_{n=r}^\infty\left(\prod_{k=n-r+1}^nD_{\gamma_k}^{-2}\right)\lambda^*(\fL_r^{-1}(\gamma))^* \left[\stackrel{\longleftarrow}{\prod_{k=n-r}^\infty}\fM_r^*(W^k\gamma)\right]\eta_r(W^{n-r}\gamma)\nonumber\\
&&\hspace{2cm}\cdot\eta_r^*(W^{n-r}\gamma)\left[\stackrel{\longrightarrow}{\prod_{k=n-r}^\infty}\fM_r(W^k\gamma)\right]\fL_r^{-1}(\gamma)\lambda\nonumber\\
&=&\lambda^*(\fL_r^{-1}(\gamma))^* \left(\sum_{n=r}^\infty\left(\prod_{k=n-r+1}^nD_{\gamma_k}^{-2}\right)\left[\stackrel{\longleftarrow}{\prod_{k=n-r}^\infty}\fM_r^*(W^k\gamma)\right]\eta_r(W^{n-r}\gamma)\right.\nonumber\\
&&\hspace{2cm}\left.\cdot\eta_r^*(W^{n-r}\gamma)\left[\stackrel{\longrightarrow}{\prod_{k=n-r}^\infty}\fM_r(W^k\gamma)\right]\right)\fL_r^{-1}(\gamma)\lambda
\end{eqnarray}
According to Lemma \ref{lem214} the left--hand side of equation (\ref{thm215_56}) can be rewritten as
\begin{equation}\label{thm215_57}
\sum_{n=r}^\infty|\gamma_{n+1}|^2\left(\prod_{k=1}^nD_{\gamma_k}^2\right)=\prod_{k=1}^r(1-|\gamma_k|^2)-\Pi_1^2.
\end{equation}
Substituting the summation index $j=n-r$ and taking (\ref{taunj}) and (\ref{prop213_46}) into account, we obtain
\begin{eqnarray}\label{thm215_58}
&&\hspace{-1cm}\sum_{n=r}^\infty\left(\prod_{k=n-r+1}^nD_{\gamma_k}^{-2}\right)\left[\stackrel{\longleftarrow}{\prod_{k=n-r}^\infty}\fM_r^*(W^k\gamma)\right]\eta_r(W^{n-r}\gamma)\nonumber\\
&&\hspace{2cm}\cdot\eta_r^*(W^{n-r}\gamma)\left[\stackrel{\longrightarrow}{\prod_{k=n-r}^\infty}\fM_r(W^k\gamma)\right]\nonumber\\
&=&\sum_{j=0}^\infty\left(\prod_{k=j+1}^{j+r}D_{\gamma_k}^{-2}\right)\left[\stackrel{\longleftarrow}{\prod_{k=j}^\infty}\fM_r^*(W^k\gamma)\right]\eta_r(W^j\gamma)\nonumber\\
&&\hspace{2cm}\cdot\eta_r^*(W^j\gamma)\left[\stackrel{\longrightarrow}{\prod_{k=j}^\infty}\fM_r(W^k\gamma)\right]\nonumber\\
&=&\sum_{j=0}^\infty\tau_{n,j}(\gamma)\tau_{n,j}^*(\gamma)=I_r-\fL_r^*(\gamma)\fL_r(\gamma).
\end{eqnarray}
The combination of (\ref{thm215_57}), (\ref{thm215_56}), and (\ref{thm215_58}) yields
\begin{eqnarray*}
\prod_{k=1}^r(1-|\gamma_k|^2)-\Pi_1^2&=&\lambda^*(\fL_r^{-1}(\gamma))^* \left(I_r-\fL_r^*(\gamma)\fL_r(\gamma)\right)\fL_r^{-1}(\gamma)\lambda\\
&=&\lambda^*\left( \ (\fL_r^{-1}(\gamma))^*\fL_r^{-1}(\gamma)-I_r\right)\lambda.
\end{eqnarray*}
Thus, the proof is complete.
\end{proof}

\begin{rem}[\zitaa{DFK}{\crem{2.16}}]\label{rem216}
    We reconsider Theorem \textnormal{\ref{thm215}} in the particular case that $n_0=1$ and $r=1$ holds.
    From \eqref{5.3} we get $\fL_1(\gamma)=\Pi_1$.
    Thus, equation \eqref{thm215_53} has the form $$1-|\gamma_1|^2-\Pi_1^2=|\lambda|^2\left(\frac{1}{\Pi_1^2}-1\right).$$
Hence,
\begin{equation}\label{rem216_59}
    \Pi_1^2\left(1-|\gamma_1|^2-\Pi_1^2\right)=|\lambda|^2\left(1-\Pi_1^2\right).
\end{equation}
Equation \eqref{rem216_59} was obtained in the proof of \rthm{thm5.22} (see \eqref{E5.31-1114}).
The method of proving Theorem~\ref{thm215} is a generalization to the case of a sequence $\gamma=(\gamma_j)_{j=0}^\infty\in\Gamma l_2$ having arbitrary finite rank of the method of proving equation \eqref{rem216_59} in \rthm{thm5.22} which works only for sequences having first rank.
\end{rem}

\section{Characterization of Schur parameter sequences of polynomial Schur functions}\label{sec7-1125}

\subsection{First observations on Schur functions of polynomial type}\label{Sec0.3}

The main aim of this section is the investigation of the Schur parameters of a particular subclass of the class $\mathcal S\Pi.$

A function $\theta\in\mathcal S$ is called \textit{of polynomial type} if there exists a polynomial $P$ with complex coefficients such that $\theta=\operatorname{Rstr.}_\mathbb{D}P$ is satisfied. Clearly, $P$ is then uniquely determined.

We denote by $\mathcal S\mathcal P$ the set of all Schur functions of polynomial type. If $m\in\N_0$, we denote by $\mathcal S\mathcal P_m$ the set of all functions $\theta\in\mathcal S\mathcal P$ of the shape $\theta=\operatorname{Rstr.}_\mathbb{D}P$, where $P$ is a polynomial of exact degree $m$.

Let $m\in\N_0$ and let $\theta\in\mathcal S\mathcal P_m$.
Then we are naturally led to the following two cases.
\begin{enumerate}
    \item $\theta\in\mathcal S\mathcal P_m\cap  J$\\
    Obviously, in this case, there exists a unique $u\in\mathbb T$ such that $\theta(\zeta)=u\cdot \zeta^m$.
    Hence, the Schur parameter sequence of $\theta$ is given by $\gamma_m=u$ and, if $m>0$, additionally, by $\gamma_0=\dotsb=\gamma_{m-1}=0.$
    \item $\theta\in\mathcal S\mathcal P_m\setminus J$\\
    We start with a family of remarkable members of $\mathcal S\mathcal P_m\setminus J$.
    
    \begin{ex}\label{Ex1.18A}
    Let $m\in\N$ and $u\in\mathbb D\setminus\{0\}$.
    Define the function $\theta\colon\mathbb D\to\C$ by $\theta(\zeta):=u\zeta^m$.
    Obviously, then $\theta\in\mathcal S\mathcal P_m\setminus J$ and the Schur parameter sequence $(\ga_j)_{j=0}^\infty$ of $\theta$ is given by
    \[
        \ga_j
        =
        \begin{cases}
            0&\text{if }j\in\mathbb N_0\setminus\{m\}\\
            u&\text{if }j=m
        \end{cases}.
    \]
    \end{ex}
\end{enumerate}
In view of $\mathcal S\mathcal P_m\subseteq\mathcal S\Pi$, the application of Theorem \ref{thm4.2} immediately yields:
\begin{thm}\label{t1.19}
Let $\theta\in\mathcal S\mathcal P_m\setminus J$ with Schur parameters $(\gamma_j)_{j=0}^m$. Then $w=\infty$ and the product
\beql{eq: 1.10}
\prod\limits_{j=0}^\infty(1-|\ga_j|^2)
\eeq
converges.
\end{thm}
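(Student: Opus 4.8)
The plan is to obtain the statement as an immediate consequence of \thref{thm4.2}, the only preparatory step being the inclusion $\mathcal S\mathcal P_m\subseteq\cS\Pi$ already announced before the theorem.

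First I would justify that inclusion. Write $\theta=\rstr{P}{\D}$, where $P(\zeta)=\sum_{j=0}^mc_j\zeta^j$, and on $\D_e$ set $\beta(\zeta):=\zeta^{-m}$ and $\alpha(\zeta):=\zeta^{-m}P(\zeta)=\sum_{j=0}^mc_j\zeta^{j-m}$. Then $\alpha$ and $\beta$ are holomorphic and bounded in $\D_e$ (including at the point $\infty$) and $\beta\not\equiv0$; since $\alpha/\beta=P$ holds identically on $\D_e$, in particular the nontangential boundary values of $\rstr{P}{\D}$ and of $\hat f:=\alpha/\beta$ coincide a.e.\ on $\T$. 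Hence $\theta$ admits a pseudocontinuation of bounded type into $\D_e$, that is, $\theta\in\cS\Pi$. (This is of course merely a special case of the familiar fact $\cR\cS\subseteq\cS\Pi$.)

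Then I would simply apply \thref{thm4.2}: since $\theta\in\mathcal S\mathcal P_m\setminus J\subseteq\cS\Pi\setminus J$, that theorem yields that the Schur algorithm for $\theta$ does not break off, i.e.\ $\ome=\infty$, and that the product \eqref{eq: 1.10} converges, which is precisely the assertion. I would add the remark that, because $\ome=\infty$, the Schur parameter sequence of $\theta$ is in fact $(\ga_j)_{j=0}^\infty$; the integer $m$ in the formulation records the degree of the underlying polynomial, not the length of the parameter sequence.

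There is no genuine obstacle here, and the argument is short. What merits emphasis is the phenomenon the statement isolates: in contrast to a finite Blaschke product of degree $m$, whose Schur parameter sequence terminates at the index $m$ (with $|\ga_m|=1$), a \emph{non-inner} polynomial Schur function of degree $m$ necessarily has an infinite Schur parameter sequence, and by \coref{cor4.3} that sequence even belongs to $\Ga l_2$.
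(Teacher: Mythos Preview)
Your proof is correct and follows exactly the paper's route: the paper simply notes the inclusion $\mathcal S\mathcal P_m\subseteq\cS\Pi$ and then invokes \thref{thm4.2}, which is precisely what you do (with the added service of spelling out the pseudocontinuation explicitly). Your closing remark about the misleading superscript $m$ in the statement is apt and worth keeping.
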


\subsection{On a theorem of R. I. Teodorescu}\label{Sec2.6}
The investigations by R. I. Teodorescu (see \cite{T76}, \cite{T78}) on  contractive operators in Hilbert spaces the c. o. f. of which is of polynomial type played an important role in this section. In particular, the following result influenced our approach.
\begin{thm}[{\cite{T78}}]\label{t1.29}
Let $T$ be a contraction on a Hilbert space such that the characteristic operator function of $T$ is a polynomial of degree $m$. Then $\mathfrak H$ admits a decomposition into three orthogonal subspaces such that the corres-ponding matrix of $T$ has the form
\begin{align*}
    \begin{pmatrix}
    T_1&*&*\\0&T_2&*\\0&0&T_3
    \end{pmatrix},
\end{align*}
where $T_1, ~T_3^*$ are isometries and $T_2^m=0.$
\end{thm}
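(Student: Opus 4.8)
The plan is to recognise the required decomposition as the triangulation \eqref{eq: 1.38} set up in \rsec{sec4-20221123}, and to reduce the nilpotency of the middle block to the elementary fact that a Schur function is a polynomial of degree $m$ precisely when the Taylor coefficients $GT^nF$ of its characteristic function \eqref{Nr.1.13} vanish for all $n\ge m$. First I would reduce to the case that $T\in[\cH]$ is completely nonunitary: the characteristic operator function sees only the completely nonunitary part, and if $T=T_0\oplus U$ with $T_0$ completely nonunitary and $U$ unitary, then a decomposition $\cH_{T_0}=\cL_1\oplus\cL_2\oplus\cL_3$ of the space of $T_0$ gives the decomposition $\cH=\cL_1\oplus\cL_2\oplus(\cL_3\oplus\cH_U)$ for $T$, where the new third block $T_3\oplus U$ has adjoint $T_3^*\oplus U^*$, an isometry since both summands are, while the absence of coupling between $\cH_{T_0}$ and $\cH_U$ keeps the triangular shape. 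So let $T$ be completely nonunitary and embed it into a simple unitary colligation $\Dl=(\cH,\cF,\cG;T,F,G,S)$ with $\te_\Dl=\te$ (the argument below is insensitive to $\dim\cF$ and $\dim\cG$; for the scalar setting of this paper one takes $\cF=\cG=\C$). Then I would work with the orthogonal decomposition \eqref{4.4}
\[
    \cH=\cH_\cG^\perp\oplus\cH_{\cG\cF}\oplus\cN_{\cG\cF},
\]
where $\cN_{\cG\cF}=\cH_\cG\cap\cH_\cF^\perp$ and $\cH_{\cG\cF}=\cH_\cG\ominus\cN_{\cG\cF}$ as in \eqref{4.2} and \eqref{4.3}.

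Next I would check that this decomposition triangulates $T$. The subspace $\cH_\cG^\perp$ is $T$-invariant, while $\cN_{\cG\cF}$, being the intersection of the $T^*$-invariant subspaces $\cH_\cG$ and $\cH_\cF^\perp$, is $T^*$-invariant, so $\cH_\cG^\perp\oplus\cH_{\cG\cF}=\cN_{\cG\cF}^\perp$ is $T$-invariant as well. Hence $T$ has block upper-triangular form with diagonal blocks $T_1=\rstr{T}{\cH_\cG^\perp}$, $T_2=P_{\cH_{\cG\cF}}\rstr{T}{\cH_{\cG\cF}}$ and $T_3=P_{\cN_{\cG\cF}}\rstr{T}{\cN_{\cG\cF}}$. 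By \thref{th1.3} the operator $T_1$ is a unilateral shift (the largest shift $V_T$ of Definition~\ref{D1.9-1207}), in particular isometric; and since $\cN_{\cG\cF}$ is $T^*$-invariant, $T_3^*=\rstr{T^*}{\cN_{\cG\cF}}$ is the restriction of the unilateral shift $\rstr{T^*}{\cH_\cF^\perp}$ (again \thref{th1.3}) to one of its invariant subspaces and hence is itself a completely nonunitary isometry, i.e.\ a unilateral shift. Moreover, the triangular shape yields $T_2^n=P_{\cH_{\cG\cF}}T^n|_{\cH_{\cG\cF}}$ for every $n\in\N$, so the whole matter comes down to proving $T_2^m=0$.

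For the nilpotency, from \eqref{Nr.1.13} the Taylor expansion of $\te$ is $\te(\zeta)=S+\sum_{n=1}^\infty\zeta^nGT^{n-1}F$, so $\te$ being a polynomial of degree $\le m$ forces $GT^nF=0$ for all $n\ge m$. Consequently, for every $j\ge m$, every $k\in\N_0$ and all $x\in\cF$, $y\in\cG$, one has $\ip{T^jFx}{T^{*k}G^*y}=\ip{GT^{j+k}Fx}{y}=0$, i.e.\ $T^jF(\cF)\perp\cH_\cG$ in view of the description $\cH_\cG=\bigvee_{k\ge0}T^{*k}G^*(\cG)$ from \eqref{eq: 1.7}. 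Since $\cH_\cF=\bigvee_{j\ge0}T^jF(\cF)$ and $\cH_{\cG\cF}=\ol{P_{\cH_\cG}\cH_\cF}$ by \eqref{eq: 1.27}, the vectors $v_j:=P_{\cH_\cG}T^jFx$ with $0\le j\le m-1$ and $x\in\cF$ span a dense subspace of $\cH_{\cG\cF}$, the contributions with $j\ge m$ vanishing. Fix such $j$ and $x$ and write $T^jFx=v_j+w_j$ with $w_j\in\cH_\cG^\perp$. As $\cH_\cG^\perp$ is $T$-invariant, $T^mw_j\in\cH_\cG^\perp\subseteq\cH_{\cG\cF}^\perp$, whence
\[
    T_2^mv_j
    =P_{\cH_{\cG\cF}}T^mv_j
    =P_{\cH_{\cG\cF}}T^{m+j}Fx
    =0,
\]
the last equality because $m+j\ge m$ gives $T^{m+j}F(\cF)\perp\cH_\cG\supseteq\cH_{\cG\cF}$. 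By density and continuity $T_2^m=0$, and undoing the reduction to the completely nonunitary case completes the argument.

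I do not expect a serious obstacle: the two decisive facts---the triangulation \eqref{eq: 1.38} and the equivalence ``polynomial of degree $m$'' $\Longleftrightarrow$ ``$GT^nF=0$ for $n\ge m$''---are essentially at hand. The points needing care are the passage to the completely nonunitary part (and the absorption of the unitary summand into a diagonal block without destroying triangularity), the verification that the three-term decomposition is genuinely a triangulation so that $T_2^m$ is the middle diagonal block of $T^m$, and the fact that a unilateral shift restricts to a unilateral shift on every invariant subspace. If one prefers to remain strictly within the framework of \rsec{sec4-20221123}, where \eqref{eq: 1.38} is formulated under the hypothesis $\ga\in\Ga l_2$, one should first treat the inner polynomials $\te(\zeta)=u\zeta^m$, $|u|=1$, separately: by \thref{th2.3} the model space is then $m$-dimensional and $T$ is the nilpotent Jordan cell, so $T^m=0$ and one takes $T_2=T$ with $T_1$ and $T_3$ acting on $\set{0}$; for $\te\in\mathcal S\mathcal P_m\setminus J$ the sequence $\ga$ lies in $\Ga l_2$ by \thref{t1.19}, so that \eqref{eq: 1.38}, \lmref{L2.7}, \thref{t1.23} and \thref{thm4.5} apply as above, and in addition $\dim\cH_{\cG\cF}<\infty$, so that $T_2$ is in fact a finite nilpotent matrix.
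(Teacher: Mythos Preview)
The paper does not give its own proof of \thref{t1.29}; it is stated with a citation to Teodorescu~\cite{T78} and then replaced by the modified version \thref{t1.30}, which the paper does prove in full. Your argument is correct and is essentially the same as the paper's proof of \thref{t1.30}: the three-space decomposition is precisely the triangulation \eqref{eq: 1.38}, the isometric character of $T_1$ and $T_3^*$ comes from \thref{th1.3}, and the nilpotency of the middle block is deduced from the vanishing of the Taylor coefficients $c_{n+1}=GT^nF$ for $n\ge m$. The only cosmetic difference is that the paper argues on the adjoint side (showing $(T^*)^r\cH_\cG\subseteq\cH_\cF^\perp$ for $r\ge m$, hence $(T_\cG^*)^r\cH_\cG\subseteq\cN_{\cG\cF}$, which via \eqref{eq: 11.38} gives $(T_{\cG\cF})^m=0$), whereas you work directly with $T$ via $T^{m+j}F(\cF)\perp\cH_\cG$ and the spanning set $v_j=P_{\cH_\cG}T^jFx$; these are dual forms of the same computation. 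Your extra care with the reduction to the completely nonunitary case and the separate treatment of inner polynomials $\te(\zeta)=u\zeta^m$ are not in the paper (which is restricted to the $\Ga l_2$ setting in \thref{t1.30}) and make your version slightly more self-contained as a proof of the original Teodorescu statement.
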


We formulate a modified version of Theorem \ref{t1.29}, which is connected with the above constructed operator model.

\begin{thm}\label{t1.30}
Let $\theta\in\mathcal S$ with Schur parameter sequence $\gamma=(\gamma_j)_{j=0}^w$. Further, let
\beql{eq: 1.11}
\Delta
\defeq(\cH,\cF,\cG;T,F,G,S)
\eeq
be a simple unitary colligation which satisfies $\theta_\Delta=\theta$.
Then the following statements are equivalent:
\begin{itemize}
    \item[\textnormal{(i)}] $\theta\in\mathcal S\mathcal P\setminus J$.
    \item[\textnormal{(ii)}] $w = \infty$, the sequence $\gamma$ belongs to $\Gamma\ell_2$ and there exists an $n\in\N$ such that the operator $T_{\cG\cF}$ taken from the triangulation \eqref{eq: 1.38} is nilpotent.
\end{itemize}
If one of the equivalent conditions \textnormal{(i)} and \textnormal{(ii)} is satisfied, then
\begin{align}\label{eq: 11.45}
\operatorname{degree}\theta=\min\{n\in\N:~(T_{\cG\cF})^n=0\}.
\end{align}
\end{thm}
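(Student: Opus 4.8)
The plan is to reduce both implications, and the degree formula \eqref{eq: 11.45}, to a single moment identity expressing the Taylor coefficients of $\theta$ through powers of $T_{\cG\cF}$. We may assume throughout that $\gamma\in\Gamma\ell_2$: if \textnormal{(i)} holds this follows from $\mathcal S\mathcal P\setminus J\subseteq\mathcal S\Pi\setminus J$ and \thref{thm4.2}, while if \textnormal{(ii)} holds it is part of the hypothesis. Under this assumption the triangulations \eqref{1.11}, \eqref{eq: 1.38} and \eqref{eq: 11.38} are available, and reading off their block forms yields the bookkeeping we need: $\cH_{\cG\cF}$ is $T_\cG$--invariant with $\rstr{T_\cG}{\cH_{\cG\cF}}=T_{\cG\cF}$; $\cH_\cG$ is $T^*$--invariant with $\rstr{T^*}{\cH_\cG}=T_\cG^*$; since $\cH_\cG^\perp$ is $T$--invariant, $P_{\cH_\cG}T^n=T_\cG^nP_{\cH_\cG}$ on $\cH$; and since $\cN_{\cG\cF}$ is $T_\cG^*$--invariant, $P_{\cH_{\cG\cF}}(T_\cG^*)^n=(T_{\cG\cF}^*)^nP_{\cH_{\cG\cF}}$ on $\cH_\cG$, for every $n\in\N_0$.

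Put $\hat F\defeq P_{\cH_{\cG\cF}}F(1)$ and $\hat G\defeq P_{\cH_{\cG\cF}}G^*(1)$. Since $\cH_{\cG\cF}=\ol{P_{\cH_\cG}\cH_\cF}$ by \eqref{eq: 1.27}, we have $\hat F=P_{\cH_\cG}F(1)\in\cH_{\cG\cF}$, while $G^*(1)-\hat G\in\cN_{\cG\cF}\perp\cH_{\cG\cF}$. Writing $\theta(\zeta)=\sum_{j=0}^\infty c_j\zeta^j$ and combining $c_n=GT^{n-1}F$ from \eqref{1.29} with $P_{\cH_\cG}T^{n-1}F(1)=T_{\cG\cF}^{n-1}\hat F$ one obtains the moment identity
\begin{equation}\label{plan-moment}
    c_n=(T_{\cG\cF}^{n-1}\hat F,\hat G),\qquad n\in\N .
\end{equation}
In addition I would record, with no polynomiality hypothesis, that $\hat F$ is cyclic for $T_{\cG\cF}$ and $\hat G$ is cyclic for $T_{\cG\cF}^*$, i.e.\ the closed linear spans $\bigvee_{n\ge0}T_{\cG\cF}^n\hat F$ and $\bigvee_{n\ge0}(T_{\cG\cF}^*)^n\hat G$ both equal $\cH_{\cG\cF}$. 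The first follows from $\cH_\cF=\bigvee_{n\ge0}T^nF(\cF)$ by applying $P_{\cH_\cG}$ and using $\cH_{\cG\cF}=\ol{P_{\cH_\cG}\cH_\cF}$ together with $P_{\cH_\cG}T^nF(1)=T_{\cG\cF}^n\hat F$; the second follows from $\cH_\cG=\bigvee_{n\ge0}T^{*n}G^*(\cG)$ by applying $P_{\cH_{\cG\cF}}$ (using $\cH_{\cG\cF}\subseteq\cH_\cG$) together with $P_{\cH_{\cG\cF}}T^{*n}G^*(1)=(T_{\cG\cF}^*)^n\hat G$.

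The equivalence and \eqref{eq: 11.45} then follow. Assume \textnormal{(i)} and set $m\defeq\operatorname{degree}\theta$; the case $m=0$ (where $\theta$ is a constant of modulus $<1$, so $\gamma$ has rank $0$ by \lmref{lm5.21} and $\cH_{\cG\cF}=\{0\}$ by \rmref{R5.7-1111}) is trivial, so let $m\ge1$, whence $c_j=0$ for $j>m$ and $c_m\ne0$. By \eqref{plan-moment} then $(T_{\cG\cF}^k\hat F,\hat G)=0$ for all $k\ge m$, so for finite linear combinations $x$ of the vectors $T_{\cG\cF}^j\hat F$ and $y$ of the vectors $(T_{\cG\cF}^*)^k\hat G$ sesquilinearity gives $(T_{\cG\cF}^m x,y)=0$; by the two cyclicity statements and continuity this extends to all $x,y\in\cH_{\cG\cF}$, i.e.\ $T_{\cG\cF}^m=0$, while $T_{\cG\cF}^{m-1}\ne0$ since $c_m\ne0$. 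Thus \textnormal{(ii)} holds and $\min\{n\in\N:(T_{\cG\cF})^n=0\}=m=\operatorname{degree}\theta$. Conversely, assume \textnormal{(ii)} and let $n$ be minimal with $(T_{\cG\cF})^n=0$. By \eqref{plan-moment}, $c_j=0$ for $j>n$, so $\theta$ is the restriction to $\D$ of a polynomial; moreover $\theta\notin J$ because $\gamma\in\Gamma\ell_2$ (\thref{thm4.2}); hence $\theta\in\mathcal S\mathcal P\setminus J$. Finally $c_n\ne0$: otherwise, running the preceding argument with exponent $n-1$ (note $(n-1)+j+k\ge n$ once $j+k\ge1$, while the $j=k=0$ term equals $c_n=0$) would give $T_{\cG\cF}^{n-1}=0$, contradicting the minimality of $n$. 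Therefore $\operatorname{degree}\theta=n=\min\{n\in\N:(T_{\cG\cF})^n=0\}$, which is \eqref{eq: 11.45}.

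The block--matrix bookkeeping and the sesquilinear expansions are routine; the only delicate point is the passage from the vanishing of the scalar moments $(T_{\cG\cF}^k\hat F,\hat G)$ for large $k$ to the vanishing of the operator powers $T_{\cG\cF}^k$, which is exactly where both cyclicity statements are indispensable and where one must avoid tacitly assuming $\cH_{\cG\cF}$ to be finite dimensional (although, once $\theta$ is known to be rational, it is, by \thref{thm4.5}). An alternative route to the nilpotency of $T_{\cG\cF}$ in the implication \textnormal{(i)}$\Rightarrow$\textnormal{(ii)} would be to match the three orthogonal summands of Teodorescu's \thref{t1.29} with the triangulation \eqref{eq: 1.38} by means of the maximality properties in \coref{cor1.1}; but identity \eqref{plan-moment} bypasses this and, in addition, yields the sharp formula \eqref{eq: 11.45} directly.
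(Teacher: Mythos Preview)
Your proof is correct, and for the direction \textnormal{(ii)}$\Rightarrow$\textnormal{(i)} it essentially coincides with the paper's: both derive the moment identity (your \eqref{plan-moment}, the paper's formula $c_n=GP_{\cH_{\cG\cF}}T_{\cG\cF}^{\,n-1}P_{\cH_{\cG\cF}}F$) from the upper--triangular block structure of $T$ relative to the decomposition \eqref{4.4}, and read off that nilpotency of $T_{\cG\cF}$ truncates the Taylor series.

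The route diverges in the direction \textnormal{(i)}$\Rightarrow$\textnormal{(ii)}. The paper does \emph{not} invoke the cyclicity of $\hat F$ and $\hat G$; instead it works at the level of $T^*$ on all of $\cH$. From $c_{r+1}=0$ for $r\ge m$ one has $F^*(T^*)^k(T^*)^r(T^*)^\ell G^*(1)=0$ for all $k,\ell\ge0$; varying $\ell$ and using $\cH_\cG=\bigvee_\ell(T^*)^\ell G^*(\cG)$ gives $F^*(T^*)^k(T^*)^r h=0$ for every $h\in\cH_\cG$; then varying $k$ and using $\cH_\cF^\perp=\bigcap_k\ker\bigl(F^*(T^*)^k\bigr)$ gives $(T^*)^r h\in\cH_\cF^\perp$. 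Combined with $(T^*)^r h\in\cH_\cG$ this yields $(T_\cG^*)^m(\cH_\cG)\subseteq\cN_{\cG\cF}$, and the block form \eqref{eq: 11.38} then forces $(T_{\cG\cF})^m=0$. Your approach trades this range argument for the pair of cyclicity statements plus a sesquilinear extension, which is more symmetric---the same device handles both implications and the degree formula \eqref{eq: 11.45} simultaneously---while the paper's argument is closer in spirit to Teodorescu's original proof and avoids any closure/density step by never compressing to $\cH_{\cG\cF}$ until the very end. Both arguments are short; yours has the advantage of making transparent exactly why the nilpotency index matches the degree.
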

\begin{rem}\label{r10.11}
Note that the analogous assertion is true also for the triangulation \eqref{eq: 10.38}.
\end{rem}
\begin{proof}
Suppose that the Taylor series representation of $\theta$ is given by
\begin{align}
    \theta(\zeta)=\sum_{n=0}^\infty c_n\zeta^n.\label{eq: 1.39}
\end{align}
From the form \eqref{eq: 1.11} of $\Delta$ and Definition 2.13 we obtain
\begin{align}
    \theta_\Delta(\zeta)=S+\sum_{n=1}^\infty \zeta^nGT^{n-1}F.\label{eq: 1.40}
\end{align}
In view of $\theta_\Delta=\theta$ the identity theorem for holomorphic functions gives us
\begin{align}
    c_0=S,~~~~~c_n=GT^{n-1}F,~n\in\N.\label{eq: 1.41}
\end{align}

(i)$\rightarrow$(ii). Theorem \ref{t1.19} implies that the product \eqref{eq: 1.10} converges. Conse-quently, $\gamma\in\Gamma\ell_2$. Let $m$ be the degree of the polynomial $\theta$. From \eqref{eq: 1.41} we get
\begin{align*}
    F^*(T^*)^rG^*=\overline{c_{r+1}}=0,~r\in\{m,m+1,\dots\}.
\end{align*}
From this it follows
\begin{align}
    F^*(T^*)^k(T^*)^r(T^*)^\ell G^*=0\label{eq: 1.44}
\end{align}
for $k,\ell\in\{0,1,2,\dots\}$ and $r\in\{m,m+1,\dots\}$.
If we fix $k$ and $r$ in \eqref{eq: 1.44} and vary $\ell$, then from second equality \eqref{eq: 1.7} we obtain
\begin{align}
    F^*(T^*)^k(T^*)^rh=0\label{eq: 1.45}
\end{align}
for each $h\in\cH_\cG$, $k\in\{0,1,2,\dots\}$ and $r\in\{m,m+1,\dots\}$.
In \eqref{eq: 1.45} we fix now $r$ and vary $k$. Then, from first equality \eqref{1.9}, we obtain
\begin{align}\label{eq: 10.45}
    (T^*)^rh\in\mathfrak H_\cF^\perp,~r\in\{m,m+1,\dots\}.
\end{align}
for each $h\in\cH_\cG$.
The subspace $\cH_\cG$ is invariant with respect to $T^*$ and $T_{\cG}^*=\operatorname{Rstr.}_{\cH_{\cG}}{T^*}.$
Thus, for each $h\in\cH_\cG$ and $r\in\{m,m+1,\dots\}$,
the equality
\begin{align}
    (T_{\cG}^*)^rh\in\mathfrak H_\cG\cap\cH_\cF^\perp=\mathfrak N_{\cG\cF}\label{eq: 1.46}
\end{align}
follows from \eqref{eq: 10.45} and the first equality \eqref{4.2}.

We note that the contraction $T_{\cG}$ admits the triangulation \eqref{eq: 11.38}.
Thus, the condition \eqref{eq: 1.43} is equivalent to
\begin{align*}
    (T_{\cG\cF})^m=0.
\end{align*}
Hence, if for $r\in\N$ it holds $(T_{\cG\cF})^r=0$, then
\begin{align}r\le\operatorname{degree}\theta.\label{eq: 1.47B}\end{align}
Thus, condition (ii) is satisfied.

(ii)$\rightarrow$(i). The operators $V_t$ and $\widetilde V_{T_\mathfrak{G}}$ occuring in the block representation \eqref{eq: 1.38} are a unilateral shift and a unilateral coshift, respectively. Hence,
\begin{align*}
    V_T^* V_T = I_{\mathfrak G^\perp}, \ \ \ \wt V_{T_{\mathfrak G}} \wt V_{T_{\mathfrak G}}^*
    = I_{\mathfrak N_{\mathfrak G\mathfrak F}}.
\end{align*}
This implies that with respect to the orthogonal decomposition \eqref{4.4} we have the block representations
\begin{align*}
    I - T^* T =   \begin{pmatrix}
    0&0&0\\0&*&*\\0&*&*
    \end{pmatrix}, \ \ \  I - TT^* = \begin{pmatrix}
                    *&*&0\\ *&*&0\\0&0&0
                    \end{pmatrix}.
\end{align*}
From this relation and the conditions of unitarity of a colligation (see \eqref{1.3})
\begin{align*}
    T^*T+G^*G=I_{\mathfrak H}, \ \ \ TT^*+FF^*=I_{\mathfrak H}
\end{align*}
we conclude that with respect to the orthogonal decomposition \eqref{4.4} the mappings $G$ and $F$ have the block representations
\begin{align*}
    G = ( 0, *, * ), \ \ \ F =  \begin{pmatrix}
    *\\ *\\0
    \end{pmatrix}.
\end{align*}
From these representations and triangulation \eqref{eq: 1.38} we obtain
\begin{align}
    c_n=GT^{n-1}F=GP_{\cG\cF}T_{\cG\cF}^{n-1}P_{\cG\cF}F,~n\in\N.\label{eq: 1.43}
\end{align}
where $P_{\cG\cF}$ is the orthogonal projection from $\cH$ onto $\cH_{{\mathfrak G\mathfrak F}}$. Because of (ii) there exists an $n\in\N$ such that $T^n_{\mathfrak G\mathfrak{F}}=0$. Then, in view of \eqref{eq: 1.43},
\begin{align*}
    c_l=0,~l>n.
\end{align*}
Thus, $\theta\in\mathcal S\mathcal P$ and
\begin{align}\label{eq: 1.44A}
\operatorname{degree}\theta\le\min\{n\in\N:~(T_{\cG\cF})^n=0\}.
\end{align}
Taking \ into account $\gamma\in\Gamma\ell_2$, we infer from Theorem \ref{thm4.2} that $\theta\not\in\ J$. Hence, (i)  is satisfied.

From \eqref{eq: 1.47B} and \eqref{eq: 1.44A} we infer
\begin{align*}
    \operatorname{degree}\theta=\min\{n\in\N:~(T_{\cG\cF})^n=0\}.
\end{align*}
\end{proof}

\subsection{The matrix of the operator $T_{\cG\cF}$ with respect to the basis $(h_k)_{k=1}^m$ in the case $\operatorname{dim}\cH_{\cG\cF}=m<+\infty$}\label{Sec3.1}
The following assertion plays an important role in our approach.
\begin{thm}\label{t30.1}
    Let $\theta\in\mathcal S$ be such that its Schur parameter sequence $\gamma=(\gamma_j)_{j=0}^\infty$ satisfies $\gamma\in\Gamma\ell_2$. Further, let $\Delta$ be a simple unitary colligation of the form \eqref{eq: 1.11} which satisfieds $\theta_\Delta=\theta$. Consider the othorgonal decomposition \eqref{4.4}) of $\mathfrak H$ and
let $\operatorname{dim}\mathfrak{H}_{\cG\cF}=m<+\infty$. Then the matrix of the operator $T_{\cG\cF}$
(see \eqref{eq: 1.38})  with respect to the basis $(h_k)_{k=1}^m$ (see \eqref{5.1}) is given by
\begin{equation}
    \begin{psmallmatrix}
        -\overline{\gamma}_0\gamma_1	& -\overline{\gamma}_0D_{\gamma_1}\gamma_2 	& \hdots 	 & -\overline{\gamma}_0\prod\limits_{j=1}^{m-2}D_{\gamma_j}\gamma_{m-1}&
        -\overline{\gamma}_0\prod\limits_{j=1}^{m-1}D_{\gamma_j}\gamma_m -D_{\ga_m}\alpha_1\\
        D_{\gamma_1} 	& -\overline{\gamma}_1\gamma_2  		& \hdots 	& -\overline{\gamma}_1\prod\limits_{j=2}^{m-2}D_{\gamma_j}\gamma_{m-1}&
        -\overline{\gamma}_1\prod\limits_{j=2}^{m-1}D_{\gamma_j}\gamma_m -D_{\ga_m}\alpha_2\\
        0 		& D_{\gamma_2} 			& \hdots 	& -\overline{\gamma}_2\prod\limits_{j=3}^{m-2}D_{\gamma_j}\gamma_{m-1} & -\overline{\gamma}_2\prod\limits_{j=3}^{m-1}D_{\gamma_j}\gamma_m
        -D_{\ga_m}\alpha_3\\
        \vdots 		& \vdots 			& 		& \vdots 					  	 &\vdots \\
        0 		& 0 				& \hdots 	&D_{\gamma_{m-1}}& -\overline{\gamma}_{m-1}\gamma_m-D_{\ga_m}\alpha_m
    \end{psmallmatrix},\label{eq: 2.2}
\end{equation}
where the vector
\begin{align}
    a:=\operatorname{col}(\alpha_1,\alpha_2,\dots,\alpha_m,1)\label{eq: 2.3}
\end{align}
satisfies the homogeneous linear system
\begin{align}
    \mathcal A_{m+1}(\ga)\cdot a=0_{(m+1)\times 1}
\end{align}
and $\mathcal A_n(\ga)$ is given by \eqref{5.4}.
\end{thm}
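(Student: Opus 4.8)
The plan is to compute the action of $T$ on the vectors $h_k$, $k\in\{1,\dots,m\}$, and then project back onto $\cH_{\cG\cF}=\operatorname{Lin}\{h_1,\dots,h_m\}$ using the orthogonal projection $P_{\cH_{\cG\cF}}$ along the decomposition \eqref{4.4}. Recall from \thref{thm5.1} that $h_k=P_{\cH_\cG}\phi_k$ and from \rmref{r1.28} that under the hypothesis $\dim\cH_{\cG\cF}=m$ the vectors $h_1,\dots,h_m$ form a basis of $\cH_{\cG\cF}$ while $h_{m+1},h_{m+2},\dotsc$ are linear combinations of them. The starting point is the matrix representation \eqref{2.63} of $T=T_\cF$ on $\cH_\cF$ with respect to the canonical basis $(\phi_k)_{k=1}^\infty$; applying $T$ to $\phi_k$ and then $P_{\cH_\cG}$ gives, for $k\le m-1$,
\[
    T_{\cG\cF}h_k
    =P_{\cH_{\cG\cF}}T\phi_k
    =\sum_{j=1}^{k+1}t_{jk}h_j
    =-\ko{\gamma_0}\cdots\text{(column $k$ of \eqref{2.63})}\cdots,
\]
which reproduces exactly the first $m-1$ columns of \eqref{eq: 2.2}, since these involve only $h_1,\dots,h_m$ and the Schur-parameter entries $t_{jk}$ of \eqref{2.63}. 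So the only real content is the last column, namely the expansion of $T_{\cG\cF}h_m$.

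For the last column I would proceed as follows. From \eqref{2.63}, $T\phi_m=\sum_{j=1}^{m+1}t_{jm}\phi_j$, so $P_{\cH_\cG}T\phi_m=\sum_{j=1}^{m+1}t_{jm}h_j$, where the coefficients $t_{jm}$ for $j\le m$ are the ``$\gamma$-part'' entries appearing in \eqref{eq: 2.2} and $t_{m+1,m}=D_{\gamma_m}$. The point is that $h_{m+1}$ is not a new basis vector: by \rmref{r1.28} it is a linear combination $h_{m+1}=\sum_{j=1}^m\kappa_j h_j$. The coefficients $\kappa_j$ are exactly governed by the kernel of $\cA_{m+1}(\gamma)$. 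Indeed, \coref{cor5.2} says that the Gram matrix of $(h_1,\dots,h_n)$ equals $\cA_n(\gamma)$, and $\det\cA_{m+1}(\gamma)=\sigma_{m+1}(\gamma)=0$ (by statement (2) of \thref{thm5.5}, since $\dim\cH_{\cG\cF}=m$ forces $\sigma_m(\gamma)>0$, $\sigma_{m+1}(\gamma)=0$). The block decomposition \eqref{BlockAn1} of $\cA_{m+1}(\gamma)$ then shows, exactly as in the proof of \thref{thm25}, that any nonzero $a\in\ker\cA_{m+1}(\gamma)$ has nonvanishing last component, hence can be normalized to $a=\operatorname{col}(\alpha_1,\dots,\alpha_m,1)$, and the linear-dependence relation among $h_1,\dots,h_{m+1}$ reads $\sum_{j=1}^m\alpha_j h_j+h_{m+1}=0$, i.e. $h_{m+1}=-\sum_{j=1}^m\alpha_j h_j$. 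Substituting this into $P_{\cH_\cG}T\phi_m=\sum_{j=1}^{m}t_{jm}h_j+D_{\gamma_m}h_{m+1}$ yields the $j$-th entry of the last column as $t_{jm}-D_{\gamma_m}\alpha_j=-\ko{\gamma_{j-1}}\prod_{i=j}^{m-1}D_{\gamma_i}\gamma_m-D_{\gamma_m}\alpha_j$, which is precisely \eqref{eq: 2.2}.

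The main obstacle, and the step that needs the most care, is justifying that $P_{\cH_{\cG\cF}}T\phi_k=P_{\cH_\cG}T\phi_k$, i.e. that applying $T$ to a vector of $\cH_\cG$ and then the orthoprojection onto $\cH_{\cG\cF}$ is the same as first projecting onto $\cH_\cG$ and then onto $\cH_{\cG\cF}$. This is where the triangulation \eqref{eq: 1.38} enters: with respect to \eqref{4.4}, $\cH_\cG^\perp$ is $T$-invariant (it carries the shift $V_T$), $\cH_\cG=\cH_{\cG\cF}\oplus\cN_{\cG\cF}$, and $T_\cG$ has the block form \eqref{eq: 11.38}, so $T_\cG h_k=T_{\cG\cF}h_k\oplus(\ast)$ with the first block in $\cH_{\cG\cF}$; thus $P_{\cH_{\cG\cF}}T h_k=P_{\cH_{\cG\cF}}P_{\cH_\cG}T h_k=T_{\cG\cF}h_k$. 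One must also check that the normalized kernel vector $a$ is unique (which follows from $\dim\ker\cA_{m+1}(\gamma)=1$, again by \eqref{BlockAn1}), so that formula \eqref{eq: 2.2} is unambiguous. Once these points are in place the computation is routine bookkeeping with \eqref{2.63} and \coref{cor2.5}.
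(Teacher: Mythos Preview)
Your proposal is correct and follows essentially the same route as the paper: both compute the action of $T$ on the $h_k$'s using the model representation \eqref{2.63}, project onto $\cH_{\cG\cF}$ via the triangulation \eqref{eq: 1.38}, and handle the last column through the linear-dependence relation $h_{m+1}=-\sum_{j=1}^m\alpha_j h_j$ arising from $a\in\ker\cA_{m+1}(\gamma)$. The only difference is cosmetic: the paper writes $Th_n$ out in full from the explicit formula \eqref{5.1}, carrying along the $\wt\psi_j$-terms (via $T\wt\psi_j=\wt\psi_{j+1}$) and then killing them with $P_{\cG\cF}$, whereas you short-circuit this by observing that $\phi_k-h_k\in\cH_\cG^\perp$ and that $\cH_\cG^\perp$ is $T$-invariant, so $P_{\cH_{\cG\cF}}T\phi_k=P_{\cH_{\cG\cF}}Th_k=T_{\cG\cF}h_k$ directly. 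One small imprecision: in your last paragraph you write $T_\cG h_k=T_{\cG\cF}h_k\oplus(\ast)$, but by the upper-triangular form \eqref{eq: 11.38} the $\cN_{\cG\cF}$-component is actually $0$, not merely ``some $(\ast)$''; this does not affect your argument since $P_{\cH_{\cG\cF}}$ discards it anyway.
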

\begin{proof}
The assumption $\operatorname{dim}\mathfrak H_{\cG\cF}=m<+\infty$ implies (see Remark \ref{r1.28}) that the vectors $(h_k)_{k=1}^m$ form a basis of the space $\mathfrak H_{\cG\cF}$, whereas the vectors $(h_k)_{k=m+1}^\infty$ are linear combinations of them. In the language of determinants $(\sigma_n(\ga))_{n\in\N}$ (see \eqref{5.5}) this means that
\begin{align*}
    \sigma_m(\ga)>0,~~~\sigma_{m+1}(\ga)=0.
\end{align*}
Thus, there exist complex numbers $\alpha_1,\dots,\alpha_n$ such that
\begin{align}
    h_{m+1}=-\alpha_1 h_1-\alpha_2 h_2-\dots-\alpha_m h_m\label{eq: 2.5}.
\end{align}
It can be seen from \eqref{5.2} that the vector
\begin{align*}
    a:=\operatorname{col}(\alpha_1,\alpha_2,\dots,\alpha_m,1)
\end{align*}
satisfies the equation
\begin{align*}
    \mathcal A_{m+1}(\ga)\cdot a=0_{(m+1)\times 1}.
\end{align*}
In order to obtain the shape \eqref{eq: 2.2} of the matrix $T_{\cG\cF}$ we apply the operator $T$ on both sides of \eqref{5.1}. This gives us
\begin{align}
    Th_n=T\phi_n-\Pi_n\sum_{j=1}^n\overline{L_{n-j}(W^j\gamma)}T\wt \psi_j,~~~n\in\N.\label{eq: 2.6}
\end{align}
From the model representation \eqref{2.63} we get
\begin{align*}
    T\phi_1=T_\mathfrak F\phi_1=-\overline{\ga_0}\ga_1\phi_1+D_{\ga_1}\phi_2,
\end{align*}
\begin{align*}
    T\phi_n=T_\mathfrak F\phi_n=-\sum_{k=1}^{n-1}(\overline{\ga_{k-1}}\prod_{j=k}^{n-1}D_{\gamma_j}\ga_n)\phi_k
-\overline{\ga_{n-1}}\ga_n \phi_n + D_{\ga_n}\phi_{n+1},~~~n\geq 2.
\end{align*}
Inserting these expressions into \eqref{eq: 2.6} and taking into account \eqref{3.7}, we obtain
\begin{align}
    Th_1 = -\overline{\ga_0}\ga_1\phi_1+D_{\ga_1}\phi_2 - \Pi_1\wt \psi_2,\label{eq: 20.7}
\end{align}
\begin{align}
Th_n= - \sum_{k=1}^{n-1}(\overline{\ga_{k-1}}\prod_{j=k}^{n-1}D_{\gamma_j}\ga_n)\phi_k
-\overline{\ga_{n-1}}\ga_n \phi_n + D_{\ga_n}\phi_{n+1} - \notag\\
- \Pi_n\sum_{j=1}^n\overline{L_{n-j}(W^j\gamma)}\wt \psi_{j+1},~~~n\geq 2.\label{eq: 2.7}
\end{align}

Taking into account the orthogonality relations $\wt \psi_j\perp\cH_\cG,~j\in\N,$ from \eqref{5.1} we infer
\begin{align*}
    h_n=P_{\cG\cF}h_n=P_{\cG\cF}\phi_n,~~~n\in\N,
\end{align*}
where $P_{\cG\cF}$ is the orthogonal projection from $\cH$ onto $\cH_{{\mathfrak G\mathfrak F}}.$
Further from the triangulation \eqref{eq: 1.38} we get
\begin{align*}
    P_{\cG\cF}Th = T_{\cG\cF}h, \ \ h\in\cH_{\cG\cF}.
\end{align*}
For this reason, applying the orthoprojection $P_{\cG\cF}$ on both sides of the identities \eqref{eq: 20.7}
and \eqref{eq: 2.7} , for $1\le n\le m-1$, we get
\[
    T_{\cG\cF} h_1
    = -\overline{\ga_0}\ga_1 h_1 + D_{\ga_1} h_2
\]
and
\begin{multline*}
    T_{\cG\cF} h_n
    = - \sum_{k=1}^{n-1}\left(\overline{\ga_{k-1}}\prod_{j=k}^{n-1}D_{\gamma_j}\ga_n\right) h _k
    -\overline{\ga_{n-1}}\ga_n h_n + D_{\ga_n} h_{n+1},\\
    2\leq n \leq m-1.
\end{multline*}
Taking into account \eqref{eq: 2.5}, from \eqref{eq: 2.7} it follows, for $n=m$, then
\begin{align*}
    T_{\cG\cF}h_m&=-\sum_{k=1}^{m-1}(\overline{\ga_{k-1}}\prod_{j=k}^{m-1}D_{\gamma_j}\ga_m) h _k
-\overline{\ga_{m-1}}\ga_m h_m + D_{\ga_m} h_{m+1}  \\
    &= - \sum_{k=1}^{m-1}(\overline{\ga_{k-1}}\prod_{j=k}^{m-1}D_{\gamma_j}\ga_m + D_{\ga_m}\alpha_k)) h _k
-(\overline{\ga_{m-1}}\ga_m  + D_{\ga_m}\alpha_m) h_m.
\end{align*}
\end{proof}

The following assertion specifies Theorem \ref{t1.30}.
\begin{thm}\label{t30.27}
Let $\theta\in\mathcal S$ with Schur parameter sequence $\gamma=(\gamma_j)_{j=0}^w$. Let $\Delta$ be a simple unitary colligation of type \eqref{eq: 1.11} which satisfies $\theta_\Delta=\theta$. Further, let $m\in\N.$
Then the following statements are equivalent:
\begin{itemize}
    \item[(i)] $\theta\in\mathcal S\mathcal P_m\setminus J$.
    \item[(ii)] $w = \infty$, the sequence $\gamma$ belongs to $\Gamma\ell_2$, the subspace $\cH_{\cG\cF}$taken from the decomposition \eqref{4.4} satisfies $\operatorname{dim}\mathfrak{H}_{\cG\cF}=m$
and the operator $T_{\cG\cF}$ taken from the triangulation \eqref{eq: 1.38} is nilpotent.
\end{itemize}
If one of the equivalent conditions (i) and (ii) is satisfied, then the nilpotency index
of $T_{\cG\cF}$ equals $m.$
\end{thm}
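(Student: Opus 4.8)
The plan is to derive Theorem~\ref{t30.27} by combining Theorem~\ref{t1.30} with Theorem~\ref{thm5.5}(2) and the formula \eqref{eq: 11.45} for the degree of $\theta$. First I would establish the implication (i)$\Rightarrow$(ii). Assume $\theta\in\mathcal S\mathcal P_m\setminus J$. Since $\mathcal S\mathcal P_m\subseteq\mathcal S\mathcal P\setminus J$, Theorem~\ref{t1.30} applies and gives that $w=\infty$, that $\gamma\in\Gamma\ell_2$, and that $T_{\cG\cF}$ is nilpotent; moreover, \eqref{eq: 11.45} yields $\operatorname{degree}\theta=\min\{n\in\N:(T_{\cG\cF})^n=0\}$, which here equals $m$ because $\theta$ has exact degree $m$. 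It remains to identify $\dim\cH_{\cG\cF}$. By Theorem~\ref{thm5.5}(2) together with Theorem~\ref{thm4.5}, $\theta$ being rational forces $\dim\cH_{\cG\cF}=n_0$, where $n_0$ is the unique index with $\sigma_{n_0}(\gamma)>0$ and $\sigma_{n_0+1}(\gamma)=0$; and $n_0$ is the smallest number of elementary Blaschke--Potapov factors in a $2\times2$ product with block $\theta$. So the real task in this direction is to show $n_0=m$, i.e.\ that the nilpotency index of $T_{\cG\cF}$ (which is $m$) coincides with $\dim\cH_{\cG\cF}$.

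For this identification I would argue directly on the $m\times m$ matrix of $T_{\cG\cF}$ with respect to the basis $(h_k)_{k=1}^m$ given explicitly in Theorem~\ref{t30.1}. That matrix is lower Hessenberg with nonzero subdiagonal entries $D_{\gamma_1},D_{\gamma_2},\dots,D_{\gamma_{m-1}}$ (all strictly positive, since $|\gamma_j|<1$ for $j<w=\infty$). A lower Hessenberg matrix of size $m$ with nonvanishing subdiagonal is nonderogatory, hence its nilpotency index, if it is nilpotent, equals $m$; conversely, no nilpotent operator on an $m$-dimensional space can have nilpotency index exceeding $m$. Thus once we know from Theorem~\ref{t1.30} that $T_{\cG\cF}$ is nilpotent and $\dim\cH_{\cG\cF}=n_0$, the nilpotency index is forced to be exactly $n_0$. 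Combining this with \eqref{eq: 11.45}, which gives nilpotency index $=\operatorname{degree}\theta=m$, we obtain $n_0=m=\dim\cH_{\cG\cF}$, completing (i)$\Rightarrow$(ii) and simultaneously the final assertion about the nilpotency index.

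For the converse (ii)$\Rightarrow$(i), assume $w=\infty$, $\gamma\in\Gamma\ell_2$, $\dim\cH_{\cG\cF}=m$, and $T_{\cG\cF}$ nilpotent. Then Theorem~\ref{t1.30} immediately yields $\theta\in\mathcal S\mathcal P\setminus J$ and $\operatorname{degree}\theta=\min\{n\in\N:(T_{\cG\cF})^n=0\}$. Since $T_{\cG\cF}$ acts on the $m$-dimensional space $\cH_{\cG\cF}$ and is nonderogatory (again by the Hessenberg form in Theorem~\ref{t30.1} with positive subdiagonal), its nilpotency index is exactly $m$, so $\operatorname{degree}\theta=m$ and therefore $\theta\in\mathcal S\mathcal P_m\setminus J$. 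This also re-confirms the statement about the nilpotency index. I would present (i)$\Rightarrow$(ii) and (ii)$\Rightarrow$(i) in this order, and state the nilpotency-index claim as an immediate byproduct extracted from both directions.

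The main obstacle I anticipate is the nonderogatory argument: one must be careful that the relevant matrix is genuinely the matrix in a \emph{basis} of $\cH_{\cG\cF}$ (which requires $\dim\cH_{\cG\cF}=m<\infty$, supplied in both directions, cf.\ Remark~\ref{r1.28}), and that the strict positivity $D_{\gamma_j}>0$ for $1\le j\le m-1$ holds --- this uses $|\gamma_j|<1$, which is guaranteed by $w=\infty$. A minor subtlety is that in Theorem~\ref{t30.1} the last column of the matrix is modified by the vector $a$ solving $\mathcal A_{m+1}(\gamma)a=0$, but this modification does not touch the subdiagonal, so the Hessenberg structure and nonderogatoriness are unaffected. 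Everything else is bookkeeping built on Theorems~\ref{t1.30}, \ref{t30.1}, \ref{thm5.5}, \ref{thm4.5}, and \ref{thm4.2}.
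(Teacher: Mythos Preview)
Your proof is correct and follows essentially the same route as the paper: both directions rest on Theorem~\ref{t1.30} together with the explicit Hessenberg matrix of Theorem~\ref{t30.1}, and the key step is that the strictly positive subdiagonal $D_{\gamma_1},\dots,D_{\gamma_{m-1}}$ forces the nilpotency index to equal $\dim\cH_{\cG\cF}$. The only cosmetic difference is that the paper writes this out concretely as $T_{\cG\cF}^{r-1}h_1=\prod_{k=1}^{r-1}D_{\gamma_k}\,h_r+\sum_{k<r}c_kh_k\neq 0$ rather than invoking the word ``nonderogatory,'' and it does not explicitly cite Theorems~\ref{thm5.5} and~\ref{thm4.5} for the finiteness of $\dim\cH_{\cG\cF}$ (though it uses it implicitly).
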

\begin{proof}
(i)$\rightarrow$(ii). Let $\theta\in\mathcal S\mathcal P_m\setminus J$. Then, from Theorem \ref{t1.30}
we get $w = \infty, \gamma \in\Gamma\ell_2$ and $T_{\cG\cF}$ is nilpotent.
Let $r := \operatorname{dim}\mathfrak{H}_{\cG\cF}$. Then $(T_{\cG\cF})^r = 0.$
Thus, taking into account \eqref{eq: 11.45}, it follows
\begin{align}\label{eq: 21.8}
    r\geq m.
\end{align}
From the shape \eqref{eq: 2.2} of the matrix $T_{\cG\cF}$ with respect to the basis $(h_k)_{k=1}^r$ it follows
\begin{align}\label{eq: 21.9}
    T_{\cG\cF}^{(r-1)} h_1 = \prod_{k=0}^{r-1}D_{\gamma_k} h_r + \sum_{k=1}^{r-1} c_k h_k \neq 0.
\end{align}
Thus, $r\le m$. Combining this with \eqref{eq: 21.8} we get $r=m$.

(ii)$\rightarrow$(i).  In this case, it follows from Theorem \ref{t1.30} that $\theta\in\mathcal S\mathcal P_m\setminus J$. Moreover, for $r=m$ we see from \eqref{eq: 21.9} that in the given case $T_{\cG\cF}^{(m-1)}\not=0$. However, since $T_{\cG\cF}$ is a nilpotent operator and $\operatorname{dim}\mathfrak H_{\cG\cF}=m$, then $T_{\cG\cF}^m=0$. Thus, it follows from \eqref{eq: 11.45} that $\operatorname{degree}\theta=m$.
\end{proof}

\subsection{Schur parameter sequences of polynomial Schur functions. Examples}\label{Sec3.2}
The main aim of this Section~\ref{sec7-1125} is to prove the following result.
\begin{thm}\label{t2.2}
Let $\theta\in\mathcal S$ and let $\Delta$ be a simple unitary colligation of the form \eqref{eq: 1.11} which satisfies $\theta_\Delta=\theta$. Let $\ga=(\ga_j)_{j=0}^w$ be the Schur parameter sequence of $\theta$. Let $m\in\N$. Then $\theta\in\mathcal S\mathcal P_m$ if and only if one of the following two conditions is satisfied:
\begin{itemize}
    \item[(1)] It hold $w=m,~\ga_0=\ga_1=\dotsb=\ga_{m-1}=0,~\ga_m=u\in\mathbb T$. In this case $\theta(\zeta)=u\zeta^m,~\zeta\in\mathbb D$, and $\theta$ is obviously an inner function.
    \item[(2)] It hold $w=\infty$, $\gamma\in\Gamma\ell_2$ and the relation
    \begin{align}
        \sigma_m>0,~~~\sigma_{m+1}=0\label{eq: 2.8}
    \end{align}
    are satisfied, where the determinants $\sigma_k,~k\in\{0,1,2,\dots\}$ are given via \eqref{5.5}, whereas matrix \eqref{eq: 2.2} is nilpotent. In this case $\theta\in\mathcal S\mathcal P_m\setminus J$
and the nilpotency index of matrix \eqref{eq: 2.2} equals $m$.
\end{itemize}
\end{thm}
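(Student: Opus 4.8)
The plan is to reduce \rthm{t2.2} to the already-established \rthm{t30.27} together with \rthm{t1.19} and the characterization of inner polynomial Schur functions. First I would dispose of the inner case. If $\te\in\cS\cP_m\cap J$, then by the discussion in \rsec{Sec0.3} we have $\te(\zeta)=u\zeta^m$ for some $u\in\T$, so its Schur parameter sequence is $\ga_0=\dotsb=\ga_{m-1}=0$, $\ga_m=u$, i.e. $w=m$; this is exactly condition (1). Conversely, if (1) holds, then running the Schur algorithm on $\ga=(0,\dotsc,0,u)$ produces $\te(\zeta)=u\zeta^m$, which lies in $\cS\cP_m\cap J\subseteq\cS\cP_m$. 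Thus (1) is equivalent to $\te\in\cS\cP_m\cap J$, and it remains only to show that, for a noninner $\te$, the condition $\te\in\cS\cP_m\setminus J$ is equivalent to condition (2).

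For the noninner part I would argue as follows. Suppose $\te\in\cS\cP_m\setminus J$. By \rthm{t1.19}, $w=\infty$ and the product \eqref{eq: 1.10} converges, so $\ga\in\Ga\ell_2$. By \rthm{t30.27}, applied to the simple unitary colligation $\Dl$, we get $\dim\cH_{\cG\cF}=m$ and $T_{\cG\cF}$ nilpotent of nilpotency index $m$. Now part (2) of \rthm{thm5.5} gives that $\dim\cH_{\cG\cF}=m$ forces $\si_m(\ga)>0$ and $\si_{m+1}(\ga)=0$, which is precisely \eqref{eq: 2.8}. Moreover, since $\dim\cH_{\cG\cF}=m<\infty$, \rthm{t30.1} tells us that the matrix of $T_{\cG\cF}$ with respect to the basis $(h_k)_{k=1}^m$ is exactly the matrix \eqref{eq: 2.2} (with $a=\col(\al_1,\dotsc,\al_m,1)$ the generator of $\ker\cA_{m+1}(\ga)$); since $T_{\cG\cF}$ is nilpotent, so is \eqref{eq: 2.2}. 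This yields all of condition (2), including that the nilpotency index of \eqref{eq: 2.2} equals $m$.

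Conversely, assume (2): $w=\infty$, $\ga\in\Ga\ell_2$, $\si_m(\ga)>0$, $\si_{m+1}(\ga)=0$, and \eqref{eq: 2.2} nilpotent. By part (2) of \rthm{thm5.5}, the conditions $\si_m(\ga)>0$, $\si_{m+1}(\ga)=0$ give $\dim\cH_{\cG\cF}=m<\infty$, so \rthm{t30.1} again identifies the matrix of $T_{\cG\cF}$ on the basis $(h_k)_{k=1}^m$ with \eqref{eq: 2.2}; hence $T_{\cG\cF}$ is nilpotent. Now \rthm{t30.27}, in the direction (ii)$\Rightarrow$(i), yields $\te\in\cS\cP_m\setminus J$ and that the nilpotency index of $T_{\cG\cF}$ (equivalently of \eqref{eq: 2.2}) equals $m$. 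This closes the equivalence and supplies the final ``nilpotency index $=m$'' assertion.

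The main obstacle, and the only place where something genuinely has to be checked rather than merely quoted, is the bookkeeping that the operator $T_{\cG\cF}$ appearing in the triangulation \eqref{eq: 1.38} (which is what \rthm{t1.30} and \rthm{t30.27} speak about) is the same operator whose matrix is computed in \rthm{t30.1}; one must make sure that the basis $(h_k)_{k=1}^m$ of $\cH_{\cG\cF}$ used in \rthm{t30.1} is compatible with the decomposition \eqref{4.4} underlying \eqref{eq: 1.38}. But \rthm{thm5.1} and \rmref{r1.28} already guarantee that $(h_k)_{k=1}^m$ is a basis of $\cH_{\cG\cF}$ precisely when $\dim\cH_{\cG\cF}=m$, so this matching is automatic, and the proof is essentially an assembly of \rthm{t1.19}, \rthm{thm5.5}\,(2), \rthm{t30.1}, and \rthm{t30.27}, plus the elementary treatment of the inner case.
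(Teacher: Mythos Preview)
Your proof is correct and follows essentially the same route as the paper: the inner case is handled directly, and the noninner case is reduced to \rthm{t30.27} and \rthm{t30.1}, with the equivalence between $\sigma_m(\gamma)>0$, $\sigma_{m+1}(\gamma)=0$ and $\dim\cH_{\cG\cF}=m$ supplied by part~(2) of \rthm{thm5.5} (the paper cites the equivalent \rmref{r1.28} instead). Your write-up is somewhat more explicit about the identification of $T_{\cG\cF}$ with the matrix \eqref{eq: 2.2}, but the logical structure is identical.
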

\begin{proof}

Obviously, condition (1) provides a complete characterization of polynomials, which are inner functions.

Let $\theta\in\mathcal S\mathcal P_m\setminus J$. Then Theorem \ref{t30.27} and Theorem \ref{t30.1}  imply that
$ \operatorname{dim}\mathfrak{H}_{\cG\cF}=m, \ T_{\cG\cF}$ is nilpotent and the nilpotency index of matrix
\eqref{eq: 2.2} equals $m$. Further, in view of Remark \ref{r1.28}, we have
\begin{align*}
    \sigma_m>0,~~~\sigma_{m+1}=0.
\end{align*}
Hence, the necessity of the conditions of the Theorem is proved.

Conversely, the conditions \eqref{eq: 2.8} mean that $\operatorname{dim}\cH_{\cG\cF}=m$. Thus,
we infer from Theorem \ref{t30.27} and Theorem \ref{t30.1} that $\theta\in\mathcal S\mathcal P_m\setminus J$.
\end{proof}
Note that Theorem~\ref{thm28} is important in connection with the consideration of subsequent examples.

\begin{ex}\label{e2.5}
We already noted (see Theorem~\ref{thm5.22}) that I. Schur considered in his fundamental paper \cite[Part II]{Schur} as an example the function
\begin{align}
    \theta(\zeta):=\frac{1+\zeta}{2}, \ \ \zeta\in\mathbb D.\label{eq: 2.12}
\end{align}
Its Schur parameter sequence is given by
\begin{align}
    \ga_0=\frac{1}{2},~\ga_1=\frac{2}{3},~\ga_2=\frac{2}{5},~\ga_3=\frac{2}{7},\dots\label{eq: 2.13}
\end{align}
This example is of interest for us because the function $\theta$ belongs to $\mathcal S\mathcal P_1\setminus J$.
The Schur parameter sequence \eqref{eq: 2.13} has rank one.
Thus, the sequence \eqref{eq: 2.13} satisfies the conditions (see Theorem~\ref{thm5.22})
\begin{align}
    \ga_{n+1}=\lambda\frac{\ga_n}{\prod_{j=1}^n[1-|\ga_j|^2]},~~~n\ge1,\label{eq: 2.14}
\end{align}
where $\lambda=\frac{1}{3}$. Observe that relation \eqref{eq: 2.14} is just the condition \eqref{DarGammaN+1} for $m=1$.

Since the sequence \eqref{eq: 2.13} belongs to $\Gamma\ell_2$ and consists of nonzero elements,
we have in this case
\begin{align*}
    \sigma_1(\ga)=1-\prod_{j=1}^\infty(1-|\ga_j|^2)>0.
\end{align*}
In \cite{D06} or \cite{DFK} it was shown that each sequence for which \eqref{eq: 2.14} is fulfilled, satisfies $\sigma_2(\ga)=0$. Thus, for the sequence \eqref{eq: 2.13} we have $\sigma_1(\ga)>0$ and $\sigma_2(\ga)=0$ what completely agrees with the fact that the function \eqref{eq: 2.12} is a polynomial of first degree.

It remains to prove that in the case $m = 1$ the matrix \eqref{eq: 2.2} formed from the sequence \eqref{eq: 2.13} is nilpotent, which means that the number $-\overline{\ga_0}\ga_1-D_{\ga_1}\alpha_1$ has to be zero. In the given case $\gamma_0=\frac{1}{2}$, $\ga_1=\frac{2}{3}$ and, in view of \eqref{lambda}, we have $-D_{\ga_1}\alpha_1=\lambda $. But we have observed above that $\lambda=\frac{1}{3}$. Consequently,
\begin{align*}
-\overline{\ga_0}\ga_1-D_{\ga_1}\alpha_1=0.
\end{align*}
Hence, the data of the polynomial \eqref{eq: 2.12} are in harmony with Theorem \ref{t2.2}.
\end{ex}

\begin{ex}\label{e2.6}
We consider the function $\theta$ given by
\begin{align}
    \theta(\zeta):=\frac{1+\zeta^2}{2},~~~\zeta\in\mathbb D\label{eq: 2.15}
\end{align}
In this case the function $\theta$ belongs to $\mathcal S\mathcal P_2\setminus J$.
Taking into account Example \ref{e2.5} and the shape of the function \eqref{eq: 2.15}, we obtain
\begin{align}
    \gamma_0=\frac{1}{2},~\ga_1=0,~\ga_2=\frac{2}{3},~\ga_3=0,~\ga_4=\frac{2}{5},~\ga_5=0,~\ga_6=\frac{2}{7},\dots\label{eq: 2.16}
\end{align}
From Theorem \ref{t2.2} and Theorem \ref{thm28} it follows that
the Schur parameter sequence $(\gamma_j)_{j=0}^\infty$ of $\theta$ fulfils the conditions \eqref{DarGammaN+1} for $m=2$. This can be verify directly.

Writing the equations \eqref{DarGammaN+1} for $n=2$ and $n=3$, (i.e., for $\ga_3$ and $\gamma_4$,) and solving the corresponding system of linear equations of $\lambda=(\lambda_1,\lambda_2)^T$, we obtain
\begin{align}
    \lambda_1=\frac{1}{3}, ~\lambda_2=0.\label{eq: 2.17}
\end{align}
Furthermore, we see that the conditions \eqref{DarGammaN+1} are satisfied for $m=2$, $n\geq4$ and
these $\lambda_1, \ \lambda_2$.

In this example as well as in the preceding, we have $\sigma_1(\ga)>0$. If it would be $\sigma_2(\ga)=0$, then as it was shown in \cite{D06}, \cite{DFK} the Schur parameter sequence \eqref{eq: 2.16} would satisfy the condition \eqref{eq: 2.14}. However, it can be easily checked that this is not true. Hence, $\sigma_2(\ga)>0$.
In \cite{DFK} it was shown that if for $m = 2$ the condition \eqref{DarGammaN+1} is fulfilled, then the
corresponding sequence satisfies $\sigma_3(\ga)=0$. This agrees completely with the fact that the function \eqref{eq: 2.15} is a polynomial of second degree.

It remains to prove that in the case $m=2$ the nilpotency index of the matrix \eqref{eq: 2.2}
formed from the sequence \eqref{eq: 2.16} equals 2. This matrix is given by
\begin{align}\begin{pmatrix}
    -\overline{\gamma_0}\gamma_1&-\overline{\gamma_0}D_{\ga_1}\ga_2-D_{\ga_2}\alpha_1\\D_{\ga_1}&-\overline{\ga_1}\ga_2-D_{\ga_2}\alpha_2\end{pmatrix}.\label{eq: 2.18}
\end{align}
Taking into account that the vector $a=(\alpha_1,\alpha_2)^T$ satisfies \eqref{lambda} and the concrete form \eqref{eq: 2.17} of the coordinates of the vector $\lambda=(\lambda_1,\lambda_2)^T$, we obtain
\begin{align*}
    \frac{1}{3}=-D_{\ga_1}D_{\ga_2}\alpha_1,~~\alpha_2=0.
\end{align*}
From this and formulas \eqref{eq: 2.16} for the Schur parameters of the function \eqref{eq: 2.15} we obtain that the matrix \eqref{eq: 2.18} has shape
\begin{align*}
    \begin{pmatrix}0&0\\1&0\end{pmatrix}.
\end{align*}
Thus,
\begin{align*}
    \begin{pmatrix}0&0\\1&0\end{pmatrix}^2=\begin{pmatrix}0&0\\0&0\end{pmatrix}.
\end{align*}
Hence, all conditions of Theorem \ref{t2.2} are satisfied for the polynomial $\theta.$

\end{ex}

    :

    \section{Characterization of Helson-Szeg\H o measures in terms of the Schur parameters of the associated Schur function}\label{sec8}
\subsection{Interrelated quadruples consisting of a probability measure, a normalized Carath\'eodory function, a Schur function and a sequence of contractive complex numbers}
\label{s0}

The central object in this section is the class $\cM_+(\T)$ of all finite nonnegative measures on
the Borelian $\sigma$-algebra $\gB$ on $\T$. A measure $\mu\in\cM_+(\T)$ is called probabiltity measure if $\mu(\T)=1$. 
We denote by $\cM_+^1(\T)$ the subset of all probability measures which belong to $\cM_+(\T)$. 

Now we are going to introduce the subset of Helson-Szeg\H o measures on $\T$. For this reason, we denote by $\Pol$
the set of all trigonometric polynomials, i.e. the set of all functions $f:\T\rightarrow\C$ for which 
there exist a finite subset $I$ of the set $\Z$ of all integers and a sequence $(a_k)_{k\in I}$ from $\C$ such that 
\begin{align}\label{s0-1}
f(t)&=\sum_{k\in I}a_kt^k, \qquad t\in\T.
\end{align}
If $f\in\Pol$ is given via (\ref{s0-1}), then the conjugation $\tilde f$ of $f$ is defined via
\begin{align}\label{s0-2}
\tilde f(t):=-i\sum_{k\in I}(\sgn k)a_kt^k, \qquad t\in\T,
\end{align}
where $\sgn 0:=0$ and where $\sgn k:=\frac k{|k|}$ for each $k\in\Z\setminus\{0\}$.

\begin{defn}\label{s0-d1}
A non-zero measure $\mu$ which belongs to $\cM_+(\T)$ is called a \emph{Helson-Szeg\H o measure} if there exists a positive real constant $C$
such that for all $f\in\Pol$ the inequality
\begin{align}\label{s0-d1-1}
\int_\T|\tilde f(t)|^2\mu(dt)\le C\int_\T|f(t)|^2\mu(dt)
\end{align}
is satisfied.
\end{defn}

If $\mu\in\cM_+(\T)$, then $\mu$ is a Helson-Szeg\H o measure if and only if 
$\alpha\mu$ is a Helson-Szeg\H o measure for each $\alpha\in(0,+\infty)$. Thus, the investigation of 
Helson-Szeg\H o measures can be restricted to the class $\cM_+^1(\T)$.

The main goal of this section is to describe all Helson-Szeg\H o measures $\mu$ belonging to $\cM_+^1(\T)$
in terms of the Schur parameter sequence of some Schur function $\theta$ which will be associated with $\mu$.
For this reason, we will need the properties of Helson-Szeg\H o measures listed below (see Theorem \ref{s0-t1}).
For more information about Helson-Szeg\H o measures, we refer the reader, e.g., to \cite[Chapter~4]{MR0628971}, \cite[Chapter 7]{K}, \cite[Chapter 5]{5}.

Let $\cC$ be the Carath\'eodory class of all functions $\Phi:\D\rightarrow\C$ which are holomorphic in $\D$ 
and which satisfy $\Re\Phi(\zeta)\ge0$ for each $\zeta\in\D$. Furthermore, let 
$$ \cC^0 :=\{\Phi\in\cC:\Phi(0)=1\}.$$

The class $\cC $ is intimately related with the class $\cM_+(\T)$.
According to the Riesz-Herglotz theorem (see, e.g., \cite[Chapter 1]{RR}), 
for each function $\Phi\in\cC $ there exist a unique measure
$\mu\in\cM_+(\T)$ and a unique number $\beta\in\R$ such that
\begin{align}\label{s0-4}
\Phi(\zeta) &= \int_\T\frac{t+\zeta}{t-\zeta}\,\mu(dt)+i\beta,  \qquad \zeta\in\D.
\end{align}
Obviously, $\beta = {\rm Im}\, [\Phi (0)]$. On the other hand, it can be easily checked that, for arbitrary
$\mu\in\cM_+(\T)$ and $\beta \in\R$, the function $\Phi$ which is defined by the
right-hand side of (\ref{s0-4}) belongs to $\cC $. 
If we consider the Riesz-Herglotz representation (\ref{s0-4}) for a function $\Phi\in\cC^0 $,
then $\beta =0$ and $\mu$ belongs to the set $\cM_+^1 (\mathbb T)$.
Actually, in this way we obtain a bijective correspondence between the classes $\cC^0 $ and $\cM_+^1(\T)$.

Let $\te\in\cS$.  Then the function $\Phi:\mathbb D\to\mathbb C$ defined by
\begin{equation}\label{Nr.0.1}
\Phi (\zeta) := \frac{1+\zeta\Theta (\zeta)}{1-\zeta\Theta (\zeta)}
\end{equation}
belongs to the class $\cC^0 $. Note that from \eqref{Nr.0.1} it follows
\begin{equation}\label{Nr.0.1f}
\zeta\Theta (\zeta) = \frac{\Phi (\zeta) - 1}{\Phi (\zeta) + 1}\,, \quad \zeta\in\mathbb D.
\end{equation}
Consequently, it can be easily verified that via \eqref{Nr.0.1} a bijective correspondence between the classes $\cS $
and $\cC^0 $ is established.

In the result of the above considerations we obtain special ordered qudruples $[\mu,\Phi,\Theta,\gamma]$ consisting
of a measure $\mu\in\cM_+^1(\T)$, a function $\Phi\in\cC^0 $, a function $\Theta\in\cS $, 
and Schur parameters $\gamma=(\gamma_j)_{j=0}^\omega\in\Gamma$,
which are interrelated in such way that each of these four objects uniquely
determines the other three ones. For that reason, if one of the four objects is given, we will call the three others
associated with it.

Let $f\in\Pol$ be given by (\ref{s0-1}). Then we consider the Riesz projection $P_+f$ which is defined by
$$ (P_+f)(t):=\sum_{k\in I\cap\N_0}a_kt^k, \qquad t\in\T. $$
Let $\Pol_+:=\Lin\{t^k:k\in\N_0\}$ and $\Pol_-:=\Lin\{t^{-k}:k\in\N\}$ where $\N$ is the set of all positive integers.
Then, clearly, $P_+$ is the projection which projects the linear space $\Pol$ onto $\Pol_+$ parallel to $\Pol_-$.

In view of a result due to Fatou (see, e.g., \cite[Theorem 1.18]{RR}), we will use the following notation:
If $h\in H^2(\D)$, then the symbol $\ul h$ stands for the radial boundary values of $h$, which exist for $m$-a.e. $t\in\T$ where $m$ is the normalized Lebesgue measure on $\T$.
If $z\in\C$, then the symbol $z^*$ stands for the complex conjugate of $z$.

\begin{thm}\label{s0-t1}
Let $\mu\in\cM_+^1(\T)$. Then the following statements are equivalent:
\begin{enumerate}
    \item[\textnormal{(i)}] $\mu$ is a Helson-Szeg\H o measure.
        \item[\textnormal{(ii)}] The Riesz projection $P_+$ is bounded in $L_\mu^2$.
            \item[\textnormal{(iii)}] The sequence $(t^n)_{n\in\Z}$ is a (symmetric or nonsymmetric) basis of $L_\mu^2$.
                \item[\textnormal{(iv)}] $\mu$ is absolutely continuous with respect to $m$ and there is an outer function
$h\in H^2(\D)$ such that $\frac{d\mu}{dm}=|\ul h|^2$ and 
$$ {\rm dist\,}\big(\ul h^*\!/\ul h,H^\infty(\T)\big)<1. $$
\end{enumerate}
\end{thm}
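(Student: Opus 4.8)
The plan is to prove Theorem~\ref{s0-t1} by organizing the four conditions into the cyclic chain (i)$\Rightarrow$(ii)$\Rightarrow$(iii)$\Rightarrow$(iv)$\Rightarrow$(i), exploiting the fact that each link is essentially a classical fact from the theory of $H^p$ spaces, but carefully keeping track of the normalization $\mu\in\cM_+^1(\T)$ so that the statement meshes with the surrounding quadruple $[\mu,\Phi,\Theta,\gamma]$. First I would record the elementary equivalence between (i) and (ii): writing $f\in\Pol$ as $f=(P_+f)+(P_-f)$, the boundedness of $P_+$ on $L^2_\mu$ is the statement that $\|P_+f\|_{L^2_\mu}\le c\|f\|_{L^2_\mu}$ for all $f\in\Pol$, and since $\widetilde f=-i(P_+f)+i(P_-f)+i a_0$ one sees directly that $\|\widetilde f\|_{L^2_\mu}$ is comparable to $\|f-\int f\,d\mu\|_{L^2_\mu}$ modulo constants precisely when $P_+$ is bounded; because $\Pol$ is dense in $L^2_\mu$ this transfers to all of $L^2_\mu$. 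So (i)$\Leftrightarrow$(ii) is a bookkeeping argument on trigonometric polynomials, using $\sgn(-k)=-\sgn k$.

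Next, for (ii)$\Rightarrow$(iii) I would argue that boundedness of $P_+$ means $L^2_\mu$ splits topologically as the closure of $\Pol_+$ plus the closure of $\Pol_-$ with an angle bounded away from zero; combined with the obvious fact that $\bigvee_{n\ge 0}t^n$ and $\bigvee_{n<0}t^n$ together span $L^2_\mu$ (density of $\Pol$), this yields that $(t^n)_{n\in\Z}$ is a basis. For (iii)$\Rightarrow$(iv) I would invoke the standard structure theory: a basis property of $(t^n)_{n\in\Z}$ forces in particular that $(t^n)_{n\ge 0}$ is a basic sequence, which together with the completeness already known implies $\mu$ has no singular part --- otherwise $\overline{\Pol_+}$ and $\overline{\Pol_-}$ would have nontrivial intersection (both would contain the part of $L^2_\mu$ over the singular support) contradicting the basis property --- so $d\mu=wdm$ with $\log w\in L^1$ (again by the basis property, which rules out $\log w\notin L^1$ since otherwise $\overline{\Pol_+}=L^2_\mu$). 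Then one takes $h$ to be the outer function with $|\underline h|^2=w$, and the quantitative content ``${\rm dist}(\underline h^*/\underline h,H^\infty)<1$'' is exactly the Helson--Szeg\H o--type reformulation of the angle being positive, via the classical computation relating the cosine of the angle between $\overline{\Pol_+}$ and $\overline{\Pol_-}$ in $L^2_\mu$ to the distance of the unimodular function $\underline h^*/\underline h$ to $H^\infty$. Finally (iv)$\Rightarrow$(i) is the easy direction of the Helson--Szeg\H o theorem: if $w=e^{u+\widetilde v}$ with $u,v\in L^\infty_{\mathbb R}$ and $\|v\|_\infty<\pi/2$ --- which is the reformulation of the distance condition --- then a direct estimate using the $L^2$-boundedness of the conjugation operator with respect to $dm$ and the boundedness of the weight factors gives \eqref{s0-d1-1}.

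The main obstacle I anticipate is the precise quantitative equivalence inside (iii)$\Leftrightarrow$(iv), namely translating the purely geometric basis/angle statement into the analytic condition ${\rm dist}(\underline h^*/\underline h,H^\infty)<1$. This requires the identity expressing the angle between $\overline{\Pol_+}$ and $\overline{\Pol_-}$ in $L^2_\mu$ in terms of an $H^\infty$-distance of a unimodular symbol; one has to be careful about which exponent of $t$ appears (because of the factor $\zeta$ in \eqref{Nr.0.1}) and about the role of the outer function $h$ normalizing the weight. I would handle this by passing through the unitary map $L^2_\mu\to L^2_m$ given by multiplication by $\underline h$ on the $\Pol_+$ side (and by $\underline h^*$ suitably on the other side), which conjugates the problem to the classical Helson--Szeg\H o setup on $L^2_m$ where the distance formula is available; the normalization $\mu(\T)=1$ only fixes $h(0)$ up to a unimodular constant and does not affect any of the distances. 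All other steps are routine applications of Fatou's theorem, density of $\Pol$, and the $L^2_m$-boundedness of the harmonic conjugation, so the write-up would be short once the angle-to-distance dictionary is stated cleanly.
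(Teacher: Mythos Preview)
The paper does not actually prove Theorem~\ref{s0-t1}; it is stated as a known classical characterization of Helson--Szeg\H{o} measures, with references to Garnett \cite[Chapter~4]{MR0628971}, Koosis \cite[Chapter~7]{K}, and Nikolski \cite[Chapter~5]{5} for the proof. Your sketch is a reasonable outline of the standard argument found in those references, but there is no proof in the paper to compare it against.
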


\begin{cor}\label{s0-c1}
Let $\mu\in\cM_+^1(\T)$ be a Helson-Szeg\H o measure. 
Then the Schur parameter sequence $\gamma=(\gamma_j)_{j=0}^\omega$ associated 
with $\mu$ is infinite, i.e., $\omega=\infty$ holds, and the series $\sum_{j=0}^\infty|\gamma_j|^2$ converges, i.e. 
$\gamma\in l_2$.
\end{cor}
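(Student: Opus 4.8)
The plan is to pass to the Schur function $\Theta\in\cS$ associated with $\mu$ and reduce everything to the Szeg\H o-type identity of \rmref{re2.8}, which states that the product $\prod_{j=0}^\infty(1-|\gamma_j|^2)$ converges if and only if $\ln\bigl(1-|\Theta(e^{i\alpha})|^2\bigr)$ is Lebesgue integrable on $[-\pi,\pi]$; by Definition~\ref{de3.5} the convergence of that product is exactly the assertion $\gamma\in\Gamma l_2$. So it suffices to verify two things: that the Schur algorithm applied to $\Theta$ does not terminate, i.e.\ $\omega=\infty$, and that $\ln(1-|\Theta(e^{i\alpha})|^2)\in L^1$.

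The first step is to produce the standard boundary formula relating $1-|\Theta|^2$ to the weight of $\mu$. From \eqref{Nr.0.1} one gets $1-\zeta\Theta(\zeta)=2/(\Phi(\zeta)+1)$, and then a one-line computation of $2\Re\Phi=\Phi+\overline\Phi$ gives $\Re\Phi(\zeta)=(1-|\zeta\Theta(\zeta)|^2)/|1-\zeta\Theta(\zeta)|^2$ for $\zeta\in\D$. Passing to radial boundary values and using the Riesz-Herglotz representation \eqref{s0-4} together with Fatou's theorem on boundary limits of Poisson integrals (so that $\Re\ul\Phi$ equals the density of the absolutely continuous part of $\mu$ with respect to $m$), we arrive, for $m$-almost every $t\in\T$, at
\[
1-|\ul\Theta(t)|^2=\frac{d\mu}{dm}(t)\cdot\bigl|1-t\,\ul\Theta(t)\bigr|^2 .
\]
Now the Helson-Szeg\H o hypothesis enters through statement~(iv) of \rthm{s0-t1}: $\mu$ is absolutely continuous with $\frac{d\mu}{dm}=|\ul h|^2$ for some outer $h\in H^2(\D)$. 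Since an outer function has nonvanishing radial boundary values a.e., the right-hand side above is positive $m$-a.e., hence $|\ul\Theta|<1$ on a set of positive measure, so $\Theta$ is not inner and, in particular, is not a finite Blaschke product. By Schur's termination criterion (\cite{Schur}) this forces $\omega=\infty$ and $|\gamma_j|<1$ for all $j$, which settles the first point.

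For the second point, take logarithms in the displayed identity,
\[
\ln\bigl(1-|\ul\Theta(t)|^2\bigr)=\ln\frac{d\mu}{dm}(t)+2\ln\bigl|1-t\,\ul\Theta(t)\bigr| ,
\]
and show that each summand lies in $L^1(m)$. The first summand equals $2\ln|\ul h|$, which is integrable because $h\in H^2(\D)$ is outer: $\ln^+|\ul h|\le|\ul h|^2\in L^1$, while outerness gives $\int_\T\ln|\ul h|\,dm=\ln|h(0)|>-\infty$. For the second summand, observe that $1-\zeta\Theta$ belongs to $H^\infty(\D)$, has modulus at most $2$, and equals $1$ at the origin; hence $\ln^+\bigl|1-t\,\ul\Theta(t)\bigr|\le\ln 2\in L^1$, and Jensen's inequality yields $\int_\T\ln\bigl|1-t\,\ul\Theta(t)\bigr|\,dm\ge\ln\bigl|(1-\zeta\Theta)(0)\bigr|=0$, so its negative part is integrable as well. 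Consequently $\ln(1-|\Theta(e^{i\alpha})|^2)\in L^1[-\pi,\pi]$, and \rmref{re2.8} gives the convergence of $\prod_{j=0}^\infty(1-|\gamma_j|^2)$; by Definition~\ref{de3.5} this is precisely $\gamma\in\Gamma l_2$, i.e.\ $\sum_{j=0}^\infty|\gamma_j|^2<\infty$.

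All the individual computations are elementary; the one place that calls for care is excluding the degenerate case in which the Schur algorithm stops --- equivalently, $\Theta$ is inner, equivalently $\mu$ is purely singular --- and this exclusion is exactly what the absolute continuity built into statement~(iv) of \rthm{s0-t1} provides. So the only genuine input beyond the Cayley-type correspondence \eqref{Nr.0.1} and routine bookkeeping is property~(iv) of \rthm{s0-t1}: a Helson-Szeg\H o measure is absolutely continuous with a logarithmically integrable density.
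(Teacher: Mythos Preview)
Your proof is correct and follows essentially the same route as the paper's: invoke condition~(iv) of \rthm{s0-t1}, derive the boundary identity for $1-|\ul\Theta|^2$, take logarithms, verify integrability of each summand, and conclude via \rmref{re2.8}. The only cosmetic difference is that the paper writes the second factor as $|\ul\Phi+1|^{-2}$ (using $1-\zeta\Theta=2/(\Phi+1)$) and argues its log-integrability from the outerness of $\Phi$, whereas you keep it as $|1-t\,\ul\Theta(t)|^2$ and use Jensen's inequality directly; these are equivalent, and your explicit treatment of the termination case $\omega<\infty$ is a welcome clarification that the paper leaves implicit.
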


\begin{proof}
Let $\theta\in\cS $ be the Schur function associated with $\mu$. Then it is known (see Remark~\ref{re2.8}) that
\begin{align}\label{s0-c1-1}
\prod_{j=0}^\omega(1-|\gamma_j|^2)&=\exp\Big\{\int_\T\ln(1-|\ul\theta(t)|^2)m(dt)\Big\}.
\end{align}
We denote by $\Phi$ the function from $\cC^0 $ associated with
$\mu$. Using (\ref{s0-4}), assumption (iv) in Theorem \ref{s0-t1}, and Fatou's theorem we obtain
\begin{align}\label{s0-c1-2}
1-|\ul\theta(t)|^2 = \frac{4\Re\ul\Phi(t)}{|\ul\Phi(t)+1|^2} = \frac{4|\ul h(t)|^2}{|\ul\Phi(t)+1|^2} 
\end{align}
for $m$-a.e. $t\in\T$. Thus,
\begin{align}\label{s0-c1-3}
\ln(1-|\ul\theta(t)|^2) = \ln 4+2\ln|\ul h(t)|-2\ln|\ul\Phi(t)+1|. 
\end{align}
In view of $\Re\Phi(\zeta)>0$ for each $\zeta\in\D$, the function $\Phi$ is outer. Hence, $\ln|\ul\Phi+1|\in L_m^1$ (see, e.g. 
\cite[Theorem 4.29 and Theorem 4.10]{3}). Taking into account condition (iv) in Theorem \ref{s0-t1},
we obtain $h\in H^2(\D)$. Thus,
we infer from (\ref{s0-c1-3}) that $\ln(1-|\ul\theta(t)|^2)\in L_m^1$. Now the assertion follows
from (\ref{s0-c1-1}).
\end{proof}

\begin{rem}\label{s0-r1}
Let $\mu\in\cM_+^1(\T)$, and let the Lebesgue decomposition of $\mu$ with respect to $m$ be given by
\begin{align}\label{s0-r1-1}
\mu(dt)&=v(t)m(dt)+\mu_s(dt),
\end{align}
where $\mu_s$ stands for the singular part of $\mu$ with repect to $m$. Then the relation $\Re\ul\Phi=v$ holds
$m$-a.e. on $\T$. The identity (\ref{s0-c1-3}) has now the form
$$ \ln(1-|\ul\theta(t)|^2) = \ln 4+\ln v(t)-2\ln|\ul\Phi(t)+1| $$
for $m$-a.e. $t\in\T$. From this and (\ref{s0-c1-1}) now it follows a well-known result, namely, that $\ln v\in L_m^1$
(i.e., $\mu$ is a Szeg\H o measure) if and only if $\omega=\infty$ and $\gamma\in l_2$.
In particular, a Helson-Szeg\H o measure is also a Szeg\H o measure.
\end{rem}

We first wish to mention that our interest in describing the class of Helson-Szeg\H o measures in terms of Schur parameters was 
initiated by conversations with L. B. Golinskii and A. Ya. Kheifets
who studied related questions in joint research with F. Peherstorfer and P. M. Yuditskii (see \cite{GKPY}, \cite{KPY}, \cite{PVY}). In Section \ref{s6} we will comment on some results in \cite{GKPY} which are similar to our own.
The above-mentioned problem is of particular interest, even on its own. Solutions to this problem promise
important applications and new results in scattering theory for CMV matrices (see
\cite{GKPY}, \cite{KPY}, \cite{5}) and in nonlinear Fourier analysis (see \cite{14A}).
Our approach to the description of Helson-Szeg\H o measures differs from the one in \cite{GKPY} in that we investigate
this question for CMV matrices in another basis
(see \cite[Definition 2.2., Theorem 2.13]{D06}), 
namely the one for that CMV matrices have the full GGT representation
(see Simon \cite[p. 261--262, Remarks and Historical Notes]{S}).

\subsection{A unitary colligation associated with a Borelian probability measeure on the unit circle}
\label{s2}
Let $\te\in\cS$. Assume that
\begin{equation}\label{Nr.d1.5b}
\Dl := (\gH, \C, \C; T, F, G, S)
\end{equation}
is a simple unitary colligation satisfying $\te=\te_\Dl$.
In the considered case is $\cF=\cG=\C$.

\begin{prop}\label{s2-p1}
Let $\mu\in\cM_+^1(\T)$ be a Szeg\H o measure. Let $\Theta$ be the function belonging to $\cS $ which is associated with $\mu$ and let $\Dl$ be the simple unitary colligation the characteristic operator function of which coincides with $\Theta$.
Then the spaces \textnormal{(}see \eqref{eq: 1.7}--\eqref{1.201}\textnormal{)}
\begin{align}\label{s2-p1-1}
& \gH_\gF^\bot:=\gH\ominus\gH_\gF, \qquad \gH_\gG^\bot:=\gH\ominus\gH_\gG 
\end{align}
are nontrivial.
\end{prop}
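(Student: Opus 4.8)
The statement to prove is that for a Szegő measure $\mu\in\cM_+^1(\T)$, the subspaces $\gH_\gF^\bot$ and $\gH_\gG^\bot$ of the simple unitary colligation $\Dl$ with characteristic function $\Theta$ (the Schur function associated with $\mu$) are both nontrivial. The plan is to reduce this to the already-established machinery on Schur parameters and infinite products. First I would recall from Remark~\ref{s0-r1} that $\mu$ is a Szegő measure if and only if its associated Schur parameter sequence $\ga=(\ga_j)_{j=0}^\omega$ satisfies $\omega=\infty$ and $\ga\in l_2$, i.e.\ $\ga\in\Ga l_2$ (Definition~\ref{de3.5}). In particular the infinite product $\prod_{j=0}^\infty(1-|\ga_j|^2)$ converges, since convergence of $\sum_{j=0}^\infty|\ga_j|^2$ is equivalent to convergence of this product (and here all $|\ga_j|<1$).

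Second, I would invoke Lemma~\ref{L2.7} (the cited \clem{2.11} of \cite{D06}): for a Schur function $\te\in\cS$ and a simple unitary colligation $\Dl$ of the scalar type \eqref{2.1} with $\te_\Dl=\te$, the contraction $T$ (resp.\ $T^*$) contains a nonzero largest shift if and only if the infinite product \eqref{2.53} converges, and in that case both largest shifts have multiplicity $1$. Since we have just checked that the product converges, $T$ contains a nonzero largest shift $V_T$, so $\cH_\cG^\bot\neq\{0\}$ by Definition~\ref{D1.9-1207}, and $T^*$ contains a nonzero largest shift $V_{T^*}=\rstr{T^*}{\cH_\cF^\bot}$ (see Theorem~\ref{th1.3} and the discussion after it), so $\cH_\cF^\bot\neq\{0\}$. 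Translating back to the notation of the proposition ($\gH_\gF^\bot=\gH\ominus\gH_\gF$, $\gH_\gG^\bot=\gH\ominus\gH_\gG$), this is exactly the assertion.

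Alternatively — and this is essentially the same argument routed through the characterization of $\cH_\cF^\bot$ in \eqref{1.9} together with Lemma~\ref{L2.4} — one notes that Lemma~\ref{L2.4} states that the vector system $(T^n\phi_1')_{n=0}^\infty$ fails to be closed in $\cH$ exactly when the product \eqref{2.53} converges, and non-closedness means $\cH_\cF^\bot=\cH\ominus\cH_\cF\neq\{0\}$; then Remark~\ref{rm1.10} (the equivalence of the multiplicity conditions, from \cite[part III]{D82}) gives $\cH_\cG^\bot\neq\{0\}$ as well, since $\dl_T=\dl_{T^*}=1$ in the scalar case. I would present the proof in the first form, as it is the most direct and uses only the explicitly-cited Lemma~\ref{L2.7}.

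There is essentially no obstacle here: the entire content is the chain ``Szegő measure $\Rightarrow$ $\ga\in\Ga l_2$ $\Rightarrow$ product \eqref{2.53} converges $\Rightarrow$ both largest shifts nontrivial.'' The only point requiring a word of care is the bookkeeping between the two notational conventions for the measure-to-colligation correspondence: one must check that the colligation $\Dl$ attached to $\mu$ in Proposition~\ref{s2-p1} (via $\mu\mapsto\Phi\in\cC^0\mapsto\Theta\in\cS$) has the same Schur parameter sequence as the one appearing in Remark~\ref{s0-r1}, which it does because that sequence is defined precisely as the Schur parameters of $\Theta$. Hence the proof is a short citation of Lemma~\ref{L2.7} after recalling Remark~\ref{s0-r1}.
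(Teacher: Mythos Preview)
Your proof is correct and follows essentially the same route as the paper: Szeg\H{o} measure $\Rightarrow$ $\gamma\in\Gamma l_2$ $\Rightarrow$ the product \eqref{2.53} converges $\Rightarrow$ both $\gH_\gF^\bot$ and $\gH_\gG^\bot$ are nontrivial. The only cosmetic differences are in the citations: the paper invokes Corollary~\ref{s0-c1} and Theorem~\ref{th2.1}, whereas you use Remark~\ref{s0-r1} and Lemma~\ref{L2.7}; your choices are in fact slightly more to the point (Remark~\ref{s0-r1} is the exact Szeg\H{o}-measure characterization, and Lemma~\ref{L2.7} states directly that \emph{both} largest shifts are nonzero).
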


\begin{proof}
Let $\gamma=(\gamma_j)_{j=0}^\omega\in\Gamma$ be the Schur parameter sequence of $\Theta$. Then from Corollary \ref{s0-c1}
we infer that $\omega=\infty$ and that $\gamma\in l_2$.
In this case it follows from \rthm{th2.1} that both spaces \eqref{s2-p1-1} are nontrivial.
\end{proof}

Because of \eqref{eq: 1.7} (for $\cF=\cG=\C$) and \eqref{s2-p1-1} it follows that the subspace $\gH_\gG^\perp$ 
$($resp. $\gH_\gF^\perp)$ is invariant with respect to $T$ $($resp. $T^*)$.
It follows from \rthm{th1.3} that
$$  V_T:= {\rm Rstr}._{\gH_\gG^\perp}T \quad\mbox{and}\quad  V_{T^*}:= {\rm Rstr.}_{\gH_\gF^\perp}T^* $$
are 
exactly the largest shifts contained in $T$ and $T^*$, respectively (see Definition~\ref{D1.9-1207}).

Let $\mu\in\cM^1_+(\T)$. Then our subsequent
considerations are concerned with the investigation of the unitary
operator $U_\mu^\times:L_\mu^2\to L_\mu^2$ which is defined for each $f\in L_\mu^2$ by
\begin{align}\label{Nr.2.0}
(U_\mu^\times f) (t) &:= \ko{t} \cdot f(t),
\qquad t\in\mathbb T.
\end{align}
Denote by $\tau$ the embedding operator of $\C$ into $L_\mu^2$, i.e., $\tau :\C\to L_\mu^2$
is such that for each $c\in\C$ the image $\tau (c)$ of $c$ is the constant function on $\T$
with value $c$. Denote by $\mathbb C_\T$ the subspace of $L_\mu^2$ which is generated by
the constant functions and denote by ${\bf 1}$ the constant function on $\T$ with value $1$.
Then obviously $\tau (\C) =\C_\T$ and $\tau (1) = {\bf 1}$.

We consider the subspace
$$ \gH_\mu := L_\mu^2 \ominus \mathbb C_\mathbb T. $$
Denote by
$$ U_\mu^\times = \left(\begin{matrix} T^\times & F^\times\cr G^\times & S^\times  \end{matrix}\right) $$
the block representation of the operator $U_\mu^\times$ with respect to the orthogonal decomposition
$L_\mu^2 = \gH_\mu\oplus \mathbb C_\mathbb T$. 
Then we obtain from \rthm{th1.8} the following result.

\begin{thm}\label{T8.6-20221201} %
Let $\mu\in\cM_+^1 (\mathbb T)$. Define $T_\mu := T^\times$,\; $F_\mu := F^\times \tau$,\;
$G_\mu := \tau^\ast G^\times$,\;and $S_\mu := \tau^\ast S^\times\tau$. Then
\begin{align}\label{Nr.2.2}
& \Dl_\mu := ( \gH_\mu,\mathbb C, \mathbb C;  T_\mu, F_\mu, G_\mu, S_\mu)
\end{align}
is a simple unitary colligation the characteristic function $\Theta_{\Dl_\mu}$ of which
coincides with the Schur function $\Theta$ associated with $\mu$.
\end{thm}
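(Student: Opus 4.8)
The statement to be proved is Theorem~\ref{T8.6-20221201}: that the tuple $\Dl_\mu = (\gH_\mu,\C,\C;T_\mu,F_\mu,G_\mu,S_\mu)$ built from $U_\mu^\times$ is a simple unitary colligation whose c.o.f. coincides with the Schur function $\Theta$ associated with $\mu$. The strategy is to realize $(L_\mu^2,U_\mu^\times,\tau)$ as a \emph{minimal} Naimark dilation of $\mu$ and then invoke part~(b) of \rthm{th1.8}. Nothing here is new; everything is assembled from results already in the excerpt, so the proof is short.

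\medskip

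First I would check that $(L_\mu^2,U_\mu^\times,\tau)$ is a Naimark dilation of $\mu$ in the sense of the definition preceding \eqref{1.21}: $L_\mu^2$ is a separable Hilbert space, $U_\mu^\times$ defined by $(U_\mu^\times f)(t)=\ko t f(t)$ is unitary on $L_\mu^2$ (it is multiplication by the unimodular function $\ko t$, with inverse multiplication by $t$), and $\tau\colon\C\to L_\mu^2$ sending $c$ to the constant function $c$ is isometric because $\mu(\T)=1$, so $\tau^*\tau=I_\C$. The moment condition \eqref{1.20} is immediate: for $n\in\Z$,
\[
\tau^*(U_\mu^\times)^n\tau(1)
=\int_\T \ko t^{\,n}\,\mu(dt)
=s_n,
\]
the $n$-th Fourier coefficient of $\mu$ as in \eqref{1.19} (here $\cE=\C$). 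Next, minimality \eqref{1.21}: the closed linear span of $\{(U_\mu^\times)^n\mathbf 1:n\in\Z\}$ is the closed span of $\{\ko t^{\,n}:n\in\Z\}=\{t^k:k\in\Z\}$, which is dense in $L_\mu^2$ by the Weierstrass approximation theorem for trigonometric polynomials together with density of $C(\T)$ in $L_\mu^2$. Hence the dilation is minimal. (These observations are exactly what is recorded in Comment~A of \rsec{S2}, and I would simply cite that remark.)

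\medskip

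Now I would apply part~(b) of \rthm{th1.8} with $\cE=\C$, $\wt\cE=\tau(\C)=\C_\T$, so that $\gH=\gH_\mu=L_\mu^2\ominus\C_\T$, and with the block decomposition
\[
U_\mu^\times=\Mat{T^\times & F^\times\\ G^\times & S^\times}\colon \gH_\mu\oplus\C_\T\to\gH_\mu\oplus\C_\T.
\]
Theorem~\ref{th1.8}(b) then asserts precisely that $(\gH_\mu,\C,\C;T^\times,F^\times\tau,\tau^*G^\times,\tau^*S^\times\tau)$ is a simple (the excerpt says ``minimal'', but for the scalar colligation minimality is simplicity) unitary colligation whose c.o.f. equals the function $\Theta\in\cS$ associated with $\mu$ via the Cayley transform \eqref{1.25}. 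With the abbreviations $T_\mu:=T^\times$, $F_\mu:=F^\times\tau$, $G_\mu:=\tau^*G^\times$, $S_\mu:=\tau^*S^\times\tau$ this is exactly the assertion of the theorem. One small bookkeeping point I would spell out: the function $\Theta$ appearing in \rthm{th1.8} is the one associated with $\mu$ through $\zeta\Theta(\zeta)=(\Phi(\zeta)-1)(\Phi(\zeta)+1)^{-1}$ with $\Phi(\zeta)=\int_\T\frac{t+\zeta}{t-\zeta}\mu(dt)$ (note $\Phi\in\cC^0$ since $\mu\in\cM_+^1(\T)$), and this coincides with the $\Theta$ of the interrelated quadruple $[\mu,\Phi,\Theta,\gamma]$ defined in \rsec{s0} via \eqref{Nr.0.1}; so ``$\Theta$ associated with $\mu$'' is unambiguous.

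\medskip

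The proof has essentially no hard part; the only thing that requires a line of care is the minimality/simplicity verification, i.e.\ that $\bigvee_{n\in\Z}(U_\mu^\times)^n\mathbf 1=L_\mu^2$, which rests on density of trigonometric polynomials in $L_\mu^2$ for a finite Borel measure $\mu$ on $\T$. Everything else is a direct quotation of \rthm{th1.8}(b) and of Comment~A. I would therefore write the proof as: verify the four Naimark-dilation axioms and minimality for $(L_\mu^2,U_\mu^\times,\tau)$, then invoke Theorem~\ref{th1.8}(b) to conclude.
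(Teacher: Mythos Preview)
Your proposal is correct and follows exactly the approach the paper indicates: the paper states Theorem~\ref{T8.6-20221201} without a separate proof, simply prefacing it with ``Then we obtain from \rthm{th1.8} the following result,'' and Comment~A of \rsec{S2} already records that $(L_\mu^2,U_\mu^\times,\tau)$ is the minimal Naimark dilation of $\mu$. Your write-up supplies the routine verifications (unitarity of $U_\mu^\times$, isometry of $\tau$, the moment identity, and minimality via density of trigonometric polynomials) that the paper leaves implicit, and then invokes \rthm{th1.8}(b) exactly as intended.
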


In view of Theorem~\ref{T8.6-20221201}, the operator $T_\mu$ is a completely nonunitary contraction and if the function $\Phi$ is given by 
(\ref{s0-4}) with $\beta=0$, then from (\ref{Nr.0.1f}) it follows
$$ \zeta\Theta_{\Dl_\mu} (\zeta) = \frac{\Phi (\zeta) -1}{\Phi (\zeta) + 1},
\quad\zeta\in\mathbb D. $$

\begin{defn}
Let $\mu\in\cM_+^1 (\T)$. Then the simple unitary colligation given by \eqref{Nr.2.2} is called 
\emph{the unitary colligation associated with} $\mu$.
\end{defn}

Let $\mu\in\cM_+^1(\T)$ be a Szeg\H o measure and let $\gamma=(\gamma_j)_{j=0}^\omega\in\Gamma$ be the Schur parameter 
sequence associated with $\mu$. Then Remark \ref{s0-r1} shows that $\omega=\infty$ and $\gamma\in l_2$.
Furthermore, we use for all integers $n$ the setting $e_n(t):\mathbb T\to L^2_\mu$ defined by
\begin{align}\label{Nr.EK}
& e_n (t) := t^n .
\end{align}
Thus, we have $e_{-n} = (U_\mu^\times)^n {\bf 1}$,
where $U_\mu^\times$ is the operator defined by (\ref{Nr.2.0}). 
We consider then the system $\{e_0, e_{-1}, e_{-2},\dotsc\}$. 
By the Gram-Schmidt orthogonalization method in the space $L_\mu^2$ we get a unique sequence
$(\varphi_n)_{n=0}^\infty$ of polynomials, where (see Subsection~\ref{sec2.1})
\begin{align}\label{Nr.3.1}
& \varphi_n (t) = \alpha_{n,n} t^{-n} + \alpha_{n,n-1} t^{-(n-1)} + \cdots + \alpha_{n,0},
\; t\in\mathbb T,\quad n\in \N_0,
\end{align}
such that the conditions
\begin{align}\label{Nr.3.2}
&\bigvee_{k=0}^n \varphi_k = \bigvee_{k=0}^n (U_\mu^\times)^k {\bf 1},
\quad \bigl((U_\mu^\times)^n {\bf 1}, \varphi_n\bigr)_{L_\mu^2} > 0, 
\quad n\in \N_0,
\end{align}
are satisfied. 
We note that the second condition in (\ref{Nr.3.2}) is equivalent to \linebreak
$\bigl( {\bf 1}, \varphi_0 \bigr)_{L_\mu^2} > 0$ and 
\begin{align}\label{Nr.3.3}
& \bigl(U_\mu^\times \varphi_{n-1},\varphi_n\bigr)_{L_\mu^2} > 0, 
\quad n\in \N_0.
\end{align}
In particular, since $\mu (\mathbb T)=1$ holds, from the construction of $\varphi_0$ we see that
\begin{align}\label{Nr.4.11}
& \varphi_0 = \boldsymbol{1}.
\end{align}
We consider a simple unitary colligation $\Dl_\mu$ of the type \eqref{Nr.2.2} associated with the measure $\mu$. 
The spaces 
\eqref{eq: 1.7} associated with the unitary colligation $\Dl_\mu$ 
have the forms
\begin{align}\label{s2-17} 
& \gH_{\mu,\gF}=\bigvee_{n=0}^\infty T_\mu^nF_\mu(\C) \quad\mbox{and}\quad
\gH_{\mu,\gG}=\bigvee_{n=0}^\infty(T_\mu^*)^n G_\mu^*(\C) ,
\end{align}
respectively.
Let the sequence of functions $(\varphi'_k)_{k=1}^\infty$ be defined by
\begin{align}\label{Nr.3.9}
& \varphi'_k := T_\mu^{k-1}  F_\mu (1), \quad k\in\N.
\end{align}
In view of the formulas
\begin{align}\label{Nr.3.8}
& \bigvee_{k=0}^n (U_\mu^\times)^k {\bf 1} 
= \left( \bigvee_{k=0}^{n-1} (T_\mu)^k F_\mu (1)\right) \oplus \mathbb C_\mathbb T, 
\quad n\in\N,
\end{align}
it can be seen that the sequence $(\varphi_k)_{k=1}^\infty$ can
be obtained by applying the Gram-Schmidt orthonormalization procedure to 
$(\varphi'_k)_{k=1}^\infty$ with additional consideration of the normalization 
condition (\ref{Nr.3.3}). Thus, we obtain the following result:

\begin{thm}\label{s2-t2}
The system $(\varphi_k)_{k=1}^\infty$ of orthonormal polynomials is a basis in the space 
$\gH_{\mu, \gF}$, and 
\begin{align}\label{s2-t2-1}
& \gH_{\mu,\gF}=\left(\bigvee_{k=0}^\infty \ko{t}^k\right)\ominus\C_\T.
\end{align}
This system can be obtained in the result of the application of the Gram-Schmidt
orthogonalization procedure to the sequence \eqref{Nr.3.9} taking into account the normalization 
condition \eqref{Nr.3.3}.
\end{thm}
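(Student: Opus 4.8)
The plan is to reduce everything to already-established facts about the canonical basis $(\varphi_k')_{k=1}^\infty$ constructed in Subsection~\ref{sec2.1} for an abstract simple unitary colligation, and then to transport that picture into the concrete model $\Dl_\mu$ acting on $\gH_\mu=L_\mu^2\ominus\C_\T$. First I would invoke Remark~\ref{s0-r1} (or Corollary~\ref{s0-c1}) to record that the Szeg\H o hypothesis on $\mu$ forces $\omega=\infty$ and $\gamma\in l_2$; hence the product \eqref{2.53} converges and Theorem~\ref{th2.1} applies to any simple unitary colligation whose characteristic function is $\Theta$. By Theorem~\ref{T8.6-20221201}, $\Dl_\mu$ is such a colligation, so in particular the sequence $(T_\mu^n\varphi_1')_{n=0}^\infty$ consists of linearly independent vectors (Corollary~\ref{cor2.6}), and the Gram--Schmidt procedure of Subsection~\ref{sec2.1} determines a unique orthonormal basis $(\varphi_k)_{k=1}^\infty$ of $\gH_{\mu,\gF}$ satisfying the normalization conditions \eqref{2.3}, equivalently \eqref{Nr.3.3}.

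Next I would establish the key identity \eqref{Nr.3.8}, namely that $\bigvee_{k=0}^n (U_\mu^\times)^k\mathbf 1 = \bigl(\bigvee_{k=0}^{n-1}T_\mu^kF_\mu(1)\bigr)\oplus\C_\T$ for $n\in\N$. This is the concrete counterpart of the computation in the proof of Theorem~\ref{th1.8} showing $\bigvee_{k=0}^m\cU^k\cE = \bigl(\bigvee_{k=0}^{m-1}T^kF(\cE)\bigr)\oplus\cE$; here $\cU$ is $U_\mu^\times$, $\cE$ is $\C_\T$, and the block decomposition $U_\mu^\times=\begin{psmallmatrix}T^\times&F^\times\\ G^\times&S^\times\end{psmallmatrix}$ with respect to $L_\mu^2=\gH_\mu\oplus\C_\T$ gives $F_\mu(1)=F^\times\tau(1)=P_{\gH_\mu}U_\mu^\times\mathbf 1$. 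One shows by induction on $n$ that $(U_\mu^\times)^n\mathbf 1 \in \bigl(\bigvee_{k=0}^{n-1}T_\mu^kF_\mu(1)\bigr)\oplus\C_\T$ and conversely that $T_\mu^{n-1}F_\mu(1)$ lies in the span of $\mathbf 1,\dots,(U_\mu^\times)^n\mathbf 1$, using that $T_\mu$ is the compression of $U_\mu^\times$ to $\gH_\mu$ and $\gH_\mu$ is $U_\mu^\times$-invariant modulo $\C_\T$ in the appropriate one-sided sense. Since $e_{-n}=(U_\mu^\times)^n\mathbf 1$ by \eqref{Nr.EK} and the remark following it, the left-hand side of \eqref{Nr.3.8} equals $\bigvee_{k=0}^n\ko t^k$.

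With \eqref{Nr.3.8} in hand, the theorem follows quickly. Comparing the defining conditions \eqref{Nr.3.2}--\eqref{Nr.3.3} for $(\varphi_k)_{k=0}^\infty$ with the defining conditions \eqref{2.3}/\eqref{Nr.3.3} for the canonical basis attached to $(\varphi_k')_{k=1}^\infty$ in $\gH_{\mu,\gF}$, and using that $\varphi_0=\mathbf 1$ by \eqref{Nr.4.11} spans $\C_\T$, one sees that $(\varphi_k)_{k=1}^\infty$ is exactly the orthonormal system produced by Gram--Schmidt applied to the sequence \eqref{Nr.3.9} under the normalization \eqref{Nr.3.3}; uniqueness of both constructions (stressed in Definition~\ref{D2.1-20230123} and the paragraph after it) forces the identification. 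In particular $(\varphi_k)_{k=1}^\infty$ is an orthonormal basis of $\gH_{\mu,\gF}$. Finally, taking the closed linear span over $n$ in \eqref{Nr.3.8} yields $\bigvee_{k=0}^\infty\ko t^k = \gH_{\mu,\gF}\oplus\C_\T$, which is exactly \eqref{s2-t2-1} after subtracting $\C_\T$; alternatively one reads this off directly from the first formula in \eqref{s2-17} together with \eqref{Nr.3.9} and \eqref{Nr.3.8}.

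The main obstacle I anticipate is the bookkeeping in the proof of \eqref{Nr.3.8}: one must be careful that the compression $T_\mu=P_{\gH_\mu}U_\mu^\times|_{\gH_\mu}$ really reproduces powers correctly on the relevant one-sided cyclic subspace, i.e.\ that no ``leakage'' into $\C_\T$ spoils the induction, and that the strictly-upper-triangular (with respect to the ordering by degree) nature of the action is used in the right direction. This is precisely the point where the structure of $U_\mu^\times$ as multiplication by $\ko t$ — equivalently, the Naimark-dilation picture of Remark~\ref{rem1.22} — is essential, and it is the only place where a genuine (though short) computation is needed; everything else is an appeal to Theorem~\ref{th2.1}, Theorem~\ref{T8.6-20221201}, Corollary~\ref{s0-c1}, and the uniqueness of the Gram--Schmidt/canonical-basis construction.
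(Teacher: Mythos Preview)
Your proposal is correct and follows essentially the same route as the paper: the paper states formula~\eqref{Nr.3.8} (without a detailed proof, noting only that it parallels the computation in the proof of Theorem~\ref{th1.8}) and then deduces the theorem from it together with the uniqueness of the Gram--Schmidt construction, exactly as you do. Your write-up is in fact more detailed than the paper's, which essentially treats the theorem as a direct consequence of~\eqref{Nr.3.8} and the preceding discussion.
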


\begin{rem}\label{s2-r2}
Analogously to \eqref{s2-t2-1} we have the equation
\begin{align}\label{s2-r2-1}
& \gH_{\mu,\gG}=\left(\bigvee_{k=0}^\infty t^k\right)\ominus\C_\T.
\end{align}
\end{rem}

In view of \eqref{s2-p1-1}, let
$$ \gH_{\mu,\gF}^\perp := \gH_\mu\ominus \gH_{\mu,\gF} , \qquad \gH_{\mu,\gG}^\perp := \gH_\mu\ominus \gH_{\mu,\gG}. $$ 
If $\mu$ is a Szeg\H o measure, then we have
\begin{align}\label{s2-15}
L_\mu^2\ominus\bigvee_{k=0}^\infty e_{-k} 
=\big(\gH_\mu\oplus\C_\T\big)\ominus\big(\gH_{\mu,\gF}\oplus\C_\T\big)
= \gH_\mu\ominus \gH_{\mu,\gF} = \gH_{\mu,\gF}^\perp \ne\{0\}.
\end{align}
So from Proposition \ref{s2-p1} we obtain the known result that in this case the system $(\varphi_n)_{n=0}^\infty$ is not complete in the space $L_\mu^2$.

In our case 
$$  V_{T_\mu}:= {\rm Rstr.}_{\gH_{\mu,\gF}^\perp}T_\mu \qquad
\big(\mbox{resp.}\  V_{T_\mu^*}:= {\rm Rstr.}_{\gH_{\mu,\gF}^\perp}T_\mu^* \big) $$
is the maximal unilateral shift contained in $T_\mu$ (resp. $T_\mu^*$) 
(see Theorem~\ref{th1.3}).
In view of $\delta_{T_\mu} = \delta_{T_\mu^*} =1$ 
the multiplicity of the unilateral shift $V_{T_\mu}$ (resp. $V_{T_\mu^*}$) is equal to $1$.

\begin{prop}\label{3.4}
The orthonormal system of the polynomials $(\varphi_k)_{k=0}^\infty$ is noncomplete in $L_\mu^2$
if and only if the contraction $T_\mu$ $($resp. $T_\mu^*)$ contains a maximal unilateral 
shift $V_{T_\mu}$ $($resp. $V_{T_\mu^*})$ of multiplicity $1$.
\end{prop}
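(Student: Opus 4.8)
The plan is to reduce the noncompleteness of the polynomial system $(\varphi_n)_{n=0}^\infty$ to the nontriviality of the subspace $\gH_{\mu,\gF}^\perp$ and then read off the conclusion from the structure theory of completely nonunitary contractions developed in Section~1. First I would record the two facts that make the machinery applicable: by \rthm{T8.6-20221201} the colligation $\Dl_\mu$ in \eqref{Nr.2.2} is simple, hence its fundamental operator $T_\mu$ is a completely nonunitary contraction, and since $\cF=\cG=\C$ its defect numbers satisfy $\delta_{T_\mu}=\delta_{T_\mu^*}=1$.

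Next I would identify the closed linear span of $(\varphi_n)_{n=0}^\infty$ in $L_\mu^2$. From the Gram--Schmidt relations \eqref{Nr.3.2} together with $(U_\mu^\times)^k\boldsymbol{1}=e_{-k}$ one gets $\bigvee_{k=0}^n\varphi_k=\bigvee_{k=0}^n e_{-k}$ for every $n$, and therefore $\bigvee_{n=0}^\infty\varphi_n=\bigvee_{k=0}^\infty e_{-k}$. Passing to the limit $n\to\infty$ in \eqref{Nr.3.8} yields $\bigvee_{k=0}^\infty e_{-k}=\gH_{\mu,\gF}\oplus\C_\T$, so that, in view of $L_\mu^2=\gH_\mu\oplus\C_\T$ and $\gH_{\mu,\gF}\subseteq\gH_\mu$,
\[
L_\mu^2\ominus\bigvee_{n=0}^\infty\varphi_n=\big(\gH_\mu\oplus\C_\T\big)\ominus\big(\gH_{\mu,\gF}\oplus\C_\T\big)=\gH_\mu\ominus\gH_{\mu,\gF}=\gH_{\mu,\gF}^\perp .
\]
Thus $(\varphi_n)_{n=0}^\infty$ is noncomplete in $L_\mu^2$ if and only if $\gH_{\mu,\gF}^\perp\neq\{0\}$. (This is exactly the computation in \eqref{s2-15}, but now carried out without the Szeg\H o hypothesis, which plays no role here.)

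Finally I would translate the condition $\gH_{\mu,\gF}^\perp\neq\{0\}$ into the language of largest shifts. Since $T_\mu$ is completely nonunitary, \rthm{th1.3} applied to $\Dl_\mu$ gives that $V_{T_\mu^*}={\rm Rstr.}_{\gH_{\mu,\gF}^\perp}T_\mu^*$ is the largest shift contained in $T_\mu^*$, and by \rthm{th1.4} its multiplicity does not exceed $\delta_{T_\mu}=1$; hence $\gH_{\mu,\gF}^\perp\neq\{0\}$ precisely when $V_{T_\mu^*}$ has multiplicity one. In the same way, $V_{T_\mu}={\rm Rstr.}_{\gH_{\mu,\gG}^\perp}T_\mu$ is the largest shift contained in $T_\mu$, with multiplicity at most $\delta_{T_\mu^*}=1$, so that $\gH_{\mu,\gG}^\perp\neq\{0\}$ precisely when $V_{T_\mu}$ has multiplicity one. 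To pass between the two I would invoke \rmref{rm1.10}: the multiplicity of $V_{T_\mu}$ equals $\delta_{T_\mu^*}=1$ if and only if the multiplicity of $V_{T_\mu^*}$ equals $\delta_{T_\mu}=1$, and since each of these multiplicities is $0$ or $1$, this means that $V_{T_\mu}$ has multiplicity one exactly when $V_{T_\mu^*}$ does. Chaining these equivalences gives the asserted characterization. I do not expect a serious obstacle; the only points requiring care are the bookkeeping of the orthogonal complements of $\C_\T$ (so that the ambient space $L_\mu^2$ and the subspace $\gH_\mu$ are handled consistently) and the remark that in the present scalar situation the defect numbers equal $1$, which is what forces a nonzero largest shift to have multiplicity exactly one.
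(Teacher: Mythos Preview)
Your proof is correct and follows essentially the same route as the paper. The paper does not give a separate proof of this proposition; it is stated as an immediate consequence of the identification \eqref{s2-15} together with the description of $V_{T_\mu}$ and $V_{T_\mu^*}$ as the largest shifts on $\gH_{\mu,\gG}^\perp$ and $\gH_{\mu,\gF}^\perp$ via \rthm{th1.3} and the observation $\delta_{T_\mu}=\delta_{T_\mu^*}=1$, which is precisely what you spell out. Your use of \rmref{rm1.10} to link the multiplicities of $V_{T_\mu}$ and $V_{T_\mu^*}$ is the natural way to make the ``resp.'' part explicit; in the paper this equivalence is taken for granted (it also follows from \lmref{L2.7}).
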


\subsection{On the connection between the Riesz projection
    $\fEl{P}{+}$ and the projection $\mkEl{ \cP^{\gF} }{\mu}{\gG}$
    which projects $\fEl{\gH}{\mu}$ onto $\mkEl{\gH}{\mu}{\gG}$
    parallel to $\mkEl{\gH}{\mu}{\gF}$}
\label{s3}

Let $\mu \in \rklamFunk{ \fEl{\cM^{1}}{+} }{ \dT }$.
We consider the unitary colligation $\fEl{\Delta}{\mu}$
of type \eqref{Nr.2.2} which is associated with the measure $\mu$.
Then the following statement is true.

\begin{thm}[\zitaa{DFK}{\cthm{3.1}}]\label{3-thm1}
    Let $\mu \in \rklamFunk{ \fEl{\cM^{1}}{+} }{ \dT }$ be
    a Szeg\H{o} measure. Then the Riesz projection $\fEl{P}{+}$
    is bounded in $\fEl{L^2}{\mu}$ if and only if the projection
    $\mkEl{ \cP^{\gF} }{\mu}{\gG}$ which projects
    $\fEl{\gH}{\mu}$ onto $\mkEl{\gH}{\mu}{\gG}$
    parallel to $\mkEl{\gH}{\mu}{\gF}$ is bounded.
\end{thm}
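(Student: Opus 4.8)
The plan is to translate the boundedness of the Riesz projection $P_+$ in $L_\mu^2$ into the boundedness of the skew projection $\cP_{\mu,\cG}^{\cF}$ via the orthogonal decomposition $L_\mu^2 = \C_\T \oplus \gH_\mu$ and the identifications \eqref{s2-t2-1} and \eqref{s2-r2-1}. The key observation is that, for a Szeg\H o measure $\mu$, one has
\[
\Pol_+ = \C_\T \vee \Big(\bigvee_{k=0}^\infty t^k\Big) = \C_\T \oplus \gH_{\mu,\cG}
\]
after closure, and similarly, conjugating, $\Pol_- \oplus \C_\T$ closes up to $\C_\T \oplus \gH_{\mu,\cF}$ (using $\gH_{\mu,\cF} = (\bigvee_{k\ge 0}\ko t^k)\ominus\C_\T$). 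Since $P_+$ is the projection of $\Pol$ onto $\Pol_+$ parallel to $\Pol_-$, and $\C_\T$ is common to both "halves", the essential non-orthogonal content of $P_+$ is exactly the skew projection of $\gH_\mu = \gH_{\mu,\cF}\vee\gH_{\mu,\cG}$ onto $\gH_{\mu,\cG}$ parallel to $\gH_{\mu,\cF}$, which is $\cP_{\mu,\cG}^{\cF}$.

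\textbf{Key steps.} First I would record that, by Proposition~\ref{s2-p1} and its consequences, for a Szeg\H o measure $\mu$ the colligation $\Dl_\mu$ is simple, so $\gH_\mu = \gH_{\mu,\cF}\vee\gH_{\mu,\cG}$; hence the skew projection $\cP_{\mu,\cG}^{\cF}$ (projecting onto $\gH_{\mu,\cG}$ parallel to $\gH_{\mu,\cF}$) is densely defined on $\gH_\mu$, and the question is precisely whether it extends to a bounded operator. Second, I would establish the density statements: $\overline{\Pol_+} = \C_\T \oplus \gH_{\mu,\cG}$ and $\overline{\Pol_- + \C_\T} = \C_\T \oplus \gH_{\mu,\cF}$ in $L_\mu^2$; the first is immediate from $\Pol_+ = \Lin\{t^k : k\ge 0\}$ together with \eqref{s2-r2-1}, and the second from $\Pol_- + \C_\T = \Lin\{\ko t^k : k\ge 0\}$ together with \eqref{s2-t2-1}. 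Third, I would exhibit the precise algebraic relation between $P_+$ and $\cP_{\mu,\cG}^{\cF}$: writing $Q$ for the orthogonal projection of $L_\mu^2$ onto $\gH_\mu$ (i.e. $Qf = f - (f,\boldsymbol 1)_{L_\mu^2}\boldsymbol 1$), one checks on trigonometric polynomials that
\[
P_+ f = (f,\boldsymbol 1)_{L_\mu^2}\,\boldsymbol 1 + \cP_{\mu,\cG}^{\cF}\, Qf ,\qquad f\in\Pol,
\]
because $P_+$ fixes $\C_\T$, annihilates $\Pol_-$ modulo constants in the appropriate sense, and the residual action on $\gH_\mu$ must be the skew projection onto the closure of the "positive" part parallel to the closure of the "negative" part. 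Fourth, from this identity both implications follow at once: if $P_+$ is bounded on $L_\mu^2$, then $\cP_{\mu,\cG}^{\cF} = (P_+ - (\,\cdot\,,\boldsymbol 1)\boldsymbol 1)\restriction_{\gH_\mu}$ is bounded on a dense subset of $\gH_\mu$ and hence bounded; conversely, if $\cP_{\mu,\cG}^{\cF}$ is bounded, the displayed formula and $\|Q\|\le 1$ give a uniform bound for $P_+ f$ in terms of $\|f\|_{L_\mu^2}$ on $\Pol$, which is dense in $L_\mu^2$, so $P_+$ extends boundedly.

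\textbf{Main obstacle.} The routine parts are the density statements and the norm bookkeeping; the step requiring genuine care is verifying the displayed identity $P_+ f = (f,\boldsymbol 1)\boldsymbol 1 + \cP_{\mu,\cG}^{\cF} Qf$ — in particular that the "parallel to" structure of $P_+$ (skew along $\Pol_-$) matches the "parallel to $\gH_{\mu,\cF}$" structure of $\cP_{\mu,\cG}^{\cF}$ once one quotients out the constants. The subtlety is that $\gH_{\mu,\cF}$ is the closure of $\Pol_- + \C_\T$ minus $\C_\T$, not the closure of $\Pol_-$ itself inside $\gH_\mu$; one must confirm that removing the shared one-dimensional space $\C_\T$ from both $\overline{\Pol_+}$ and $\overline{\Pol_-+\C_\T}$ still yields a direct (skew) decomposition $\gH_\mu = \gH_{\mu,\cF}\dotplus\gH_{\mu,\cG}$ with the skew projection being the compression of $P_+$. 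This is where one invokes that $\Dl_\mu$ is simple (so the two subspaces span $\gH_\mu$) together with the fact that $P_+$ is already known to be a well-defined projection on $\Pol$; the compression of a projection to a complemented pair of subspaces is again the associated skew projection, provided the decomposition is direct, and directness on the dense level is exactly the statement $\overline{\Pol_+}\cap\overline{\Pol_-+\C_\T} = \C_\T$, which follows from $(t^n)_{n\in\Z}$ being a (possibly non-orthogonal) complete minimal system — or, more elementarily, from the fact that a nonzero $f$ in both closures would force $\mu$ to have a point mass violating the Szeg\H o condition. With that in hand the equivalence is immediate.
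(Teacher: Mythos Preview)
Your displayed identity $P_+ f = (f,\mathbf{1})_{L^2_\mu}\,\mathbf{1} + \cP^{\gF}_{\mu,\gG}\, Qf$ is false, and this is where the argument breaks. Take $f = \varphi_1 \in \gH_{\mu,\gF} \subset \gH_\mu$. Then $(f,\mathbf{1}) = 0$, $Qf = f$, and since $f \in \gH_{\mu,\gF}$ the skew projection gives $\cP^{\gF}_{\mu,\gG} f = 0$; your formula predicts $P_+ \varphi_1 = 0$. But $\varphi_1(t) = \alpha_{1,1}\ko t + \alpha_{1,0}$, so $P_+ \varphi_1 = \alpha_{1,0}\,\mathbf{1}$, which is nonzero whenever the first moment of $\mu$ is nonzero. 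The root of the error is that $\gH_{\mu,\gF}$ is the \emph{orthogonal} complement of $\C_\T$ inside $\bigvee_{k\ge 0}\ko t^{\,k}$, whereas $\ker P_+$ is the closure of $\Pol_-$; these are two \emph{different} codimension-one subspaces of $\C_\T \oplus \gH_{\mu,\gF}$ (they coincide only for Lebesgue measure). Hence the ``parallel to'' directions of $P_+$ and of $\cP^{\gF}_{\mu,\gG}$ do not match after removing the constants, contrary to what your ``Main obstacle'' paragraph asserts.

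The paper's proof confronts exactly this discrepancy. Working on the finite-dimensional truncations $\gH_\mu^{(n)}$ one obtains, for $h\in\gH_\mu^{(n)}$, the relation $P_+ h = \cP^{\gF}_{\mu,\gG} h + h_{\gF}(0)\,\mathbf{1}$, where $h_{\gF}(0)$ is the constant coefficient of the $\gH_{\mu,\gF}$-component $h_{\gF}$. For the implication ``$P_+$ bounded $\Rightarrow$ $\cP^{\gF}_{\mu,\gG}$ bounded'' the extra term is harmless, since it can be rewritten as $P_{\C_\T}(I-P_+)h$. For the converse, however, one must bound the linear functional $h_{\gF} \mapsto h_{\gF}(0)$ on $\gH_{\mu,\gF}$ independently of $P_+$, and this is precisely where the Szeg\H{o}--Kolmogorov--Krein theorem enters: evaluation at $0$ is bounded on the $L^2_\mu$-closure of the analytic polynomials exactly because $\mu$ is a Szeg\H{o} measure. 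Your proposal neither isolates this term nor invokes that theorem, so the implication ``$\cP^{\gF}_{\mu,\gG}$ bounded $\Rightarrow$ $P_+$ bounded'' has a genuine gap.
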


\begin{proof}
    For each $n \in \N_0$ we consider particular subspaces of the space $\gH_\mu$, namely
    \begin{align}       \label{3-thm1-1}
        \mkEl{ \iEl{\gH}{n} }{\mu}{\gF}
        :=  \bigvee_{k = 0}^{n}
        { 
            \fEl{T^k}{\mu}
            \rklamFunk{ \fEl{F}{\mu} }{\dC} 
        } \qKomma
        \qquad
        \qquad
        \mkEl{ \iEl{\gH}{n} }{\mu}{\gG}
        :=  \bigvee_{k = 0}^{n}
        { 
            \rklam{ \fEl{ \Adj{T} }{\mu} }^k
            \rklamFunk{ \fEl{ \Adj{G} }{\mu} }{\dC} 
        } \qKomma
    \end{align}
    and
    \begin{align}       \label{3-thm1-2}
        \fEl{ \iEl{\gH}{n} }{\mu}
        :=  \mkEl{ \iEl{\gH}{n} }{\mu}{\gF}
        \vee
        \mkEl{ \iEl{\gH}{n} }{\mu}{\gG} \qKomma
        \qquad
        \qquad
        \mkEl{L}{\mu}{n}
        :=  \fEl{ \iEl{\gH}{n} }{\mu}
        \oplus
        \fEl{\dC}{\dT} \qPunkt
    \end{align}
    Then from \eqref{Nr.3.9}, \eqref{s2-t2-1}, and \eqref{s2-r2-1} %
    we obtain the relations
    \begin{align}       \label{3-thm1-3}
        \mkEl{\gH}{\mu}{\gF}
        =  \bigvee_{n = 0}^{\infty}
        { 
            \mkEl{ \iEl{\gH}{n} }{\mu}{\gF}
        } \qKomma
        \qquad
        \qquad
        \mkEl{\gH}{\mu}{\gG}
        =  \bigvee_{n = 0}^{\infty}
        { 
            \mkEl{ \iEl{\gH}{n} }{\mu}{\gG}
        } \qKomma
    \end{align}
    \begin{align}       \label{3-thm1-4}
        \fEl{\gH}{\mu}
        =  \bigvee_{n = 0}^{\infty}
        { 
            \fEl{ \iEl{\gH}{n} }{\mu}
        } \qKomma
        \qquad
        \qquad
        \fEl{L^2}{\mu}
        =  \bigvee_{n = 0}^{\infty}
        { 
            \mkEl{L}{\mu}{n}
        } \qKomma
    \end{align}
    \begin{align}       \label{3-thm1-5}
        \mkEl{ \iEl{\gH}{n} }{\mu}{\gF}
        =  \rklam{
            \bigvee_{k = 0}^{n}
            { 
                \ko{t}^k
            }
        }
        \ominus
        \fEl{\dC}{\dT} \qKomma
        \qquad
        \qquad
        \mkEl{ \iEl{\gH}{n} }{\mu}{\gG}
        =  \rklam{
            \bigvee_{k = 0}^{n}
            { 
                t^k
            }
        }
        \ominus
        \fEl{\dC}{\dT}\,,
    \end{align}
    and
    \begin{align}       \label{3-thm1-6}
        \fEl{ \iEl{\gH}{n} }{\mu}
        =  \rklam{
            \bigvee_{k = -n}^{n}
            { 
                t^{k}
            }
        }
        \ominus
        \fEl{\dC}{\dT} \qKomma
        \qquad
        \qquad
        \mkEl{L}{\mu}{n}
        =   \bigvee_{k = -n}^{n}
        { 
            t^{k}
        } .
    \end{align}
    Since $\mu$ is a Szeg\H{o} measure, for each $n \in \N_0$
    we obtain
    \begin{align}       \label{3-thm1-7}
        \fEl{ \iEl{\gH}{n} }{\mu}
        = \mkEl{ \iEl{\gH}{n} }{\mu}{\gF}
        \quad \dot{+} \quad
        \mkEl{ \iEl{\gH}{n} }{\mu}{\gG}        \qPunkt
    \end{align}
    Suppose now that the Riesz projection $\fEl{P}{+}$ is bounded
    in $\fEl{L^2}{\mu}$. Let $h \in \fEl{ \iEl{\gH}{n} }{\mu}$.
    Then, because of \fref{3-thm1-6}, the function $h$ has the form
    \begin{align}       \label{3-thm1-8}
        \rklamFunk{h}{t}
        = \sum_{k = -n}^{n}{ \fEl{a}{k} t^k }        \,,\qquad t\in\T.
    \end{align}
    From \fref{3-thm1-5} and \fref{3-thm1-7} we obtain
    \begin{align}       \label{3-thm1-9}
        h = \fEl{h}{\gF} + \fEl{h}{\gG}   \qKomma
    \end{align}
    where
    \begin{align*}
        \fEl{h}{\gF}  \in \mkEl{ \iEl{\gH}{n} }{\mu}{\gF} \qKomma
        \qquad
        \qquad
        \fEl{h}{\gG}  \in \mkEl{ \iEl{\gH}{n} }{\mu}{\gG} \qKomma
    \end{align*}
    \begin{align}       \label{3-thm1-10}
        \rklamFunk{ \fEl{h}{\gF} }{t}
        = \mkEl{a}{0}{\gF} 
        + \sum_{k = 1}^{n}
        {
            \fEl{a}{-k} t^{-k}
        }        \qKomma
        \qquad
        \qquad
        \rklamFunk{ \fEl{h}{\gG} }{t}
        = \mkEl{a}{0}{\gG} 
        + \sum_{k = 1}^{n}
        {
            \fEl{a}{k} t^{k}
        }\,,
    \end{align}
    and
    \begin{align*}
        \fEl{a}{0} 
        = \mkEl{a}{0}{\gF} +  \mkEl{a}{0}{\gG}  \qPunkt
    \end{align*}
    Observe that
    \begin{align}       \label{3-thm1-11}
        \mkEl{ \cP^{\gF} }{\mu}{\gG}  h
        = \fEl{h}{\gG}  \qPunkt
    \end{align}
    On the other hand, we have
    \begin{align}       \label{3-thm1-12}
        h = \fEl{h}{+}  +  \fEl{h}{-} \qKomma
    \end{align}
    where, for each $t\in\T$,
    \begin{align}       \label{3-thm1-13}
        \rklamFunk{ \fEl{h}{+} }{t}
        = \rklamFunk{ \rklam{ \fEl{P}{+}(h) } }{t}
        = \sum_{k = 0}^{n}
        {
            \fEl{a}{k} t^{k}
        }        \qKomma
        \qquad
        \qquad
        \rklamFunk{ \fEl{h}{-} }{t}
        = \sum_{k = 1}^{n}
        {
            \fEl{a}{-k} t^{-k}
        } \qPunkt
    \end{align}
    For a polynomial $\fEl{h}{\gF}$ of the type \fref{3-thm1-10}
    we set
    \begin{align}       \label{3-thm1-14}
        \rklamFunk{ \fEl{h}{\gF} }{0}
        := \mkEl{a}{0}{\gF}    \qPunkt
    \end{align}
    Then from \fref{3-thm1-10}--\fref{3-thm1-14} we infer
    \begin{align}       \label{3-thm1-15}
        \fEl{P}{+}  h
        = \fEl{h}{+}
        = \fEl{h}{\gG}
        + \rklamFunk{ \fEl{h}{\gF} }{0} \cdot  \dEins
        = \mkEl{ \cP^{\gF} }{\mu}{\gG} h
        + \rklamFunk{ \fEl{h}{\gF} }{0} \cdot  \dEins  \qPunkt
    \end{align}
    Observe that
    \begin{align}       \label{3-thm1-16}
        \fEl{h}{-}
        = \fEl{h}{\gF}
        - \rklamFunk{ \fEl{h}{\gF} }{0} \cdot  \dEins \qKomma
    \end{align}
    where in view of \fref{3-thm1-5} we see that
    \begin{align*}
        \fEl{h}{\gF}  \perp \dEins \qPunkt
    \end{align*}
    Let $\fEl{P}{ \fEl{\dC}{\dT} }$ be the orthoprojection from
    $\fEl{L^2}{\mu}$ onto $\fEl{\dC}{\dT}$. Then from \fref{3-thm1-16}
    it follows that
    \begin{align*}
        \rklamFunk{ \fEl{h}{\gF} }{0} \cdot \dEins
        = \fEl{P}{ \fEl{\dC}{\dT} }  \fEl{h}{-}
        = \fEl{P}{ \fEl{\dC}{\dT} }
        \rklam{
            I - \fEl{P}{+}
        } h   \qPunkt
    \end{align*}
    Inserting this expression into \fref{3-thm1-15} we get
    \begin{align*}
        \mkEl{ \cP^{\gF} }{\mu}{\gG} h
        = \fEl{P}{+}  h
        - \fEl{P}{ \fEl{\dC}{\dT} }
        \rklam{
            I - \fEl{P}{+}
        } h  \qPunkt
    \end{align*}
    From this and \fref{3-thm1-4} it follows that the boundedness
    of the projection $\fEl{P}{+}$ in $\fEl{L^2}{\mu}$ implies
    the boundedness of the projection $\mkEl{ \cP^{\gF} }{\mu}{\gG}$
    in $\fEl{\gH}{\mu}$. 

    Conversely, suppose that the projection
    $\mkEl{ \cP^{\gF} }{\mu}{\gG}$ is bounded in $\fEl{\gH}{\mu}$.
    Let $f \in \mkEl{L}{\mu}{n}$. We denote by
    $\fEl{P}{ \fEl{ \iEl{\gH}{n} }{\mu} }$ the orthogonal projection
    from $\fEl{L^2}{\mu}$ onto $\fEl{ \iEl{\gH}{n} }{\mu}$. We set
    \begin{align*}
        h := \fEl{P}{ \fEl{ \iEl{\gH}{n} }{\mu} } f
    \end{align*}
    and use for $h$ the notations introduced
    in \fref{3-thm1-8}--\fref{3-thm1-10}. Let
    \begin{align*}
        f = \fEl{f}{+} + \fEl{f}{-}   \qKomma
    \end{align*}
    where $\fEl{f}{+} :=  \fEl{P}{+}  f$. Then
    \begin{align*}
        f =  \fEl{P}{ \fEl{\dC}{\dT} }  f + h
        =  \fEl{P}{ \fEl{\dC}{\dT} }  f
        + \fEl{h}{\gG}  + \fEl{h}{\gF} \qPunkt
    \end{align*}
    This implies
    \begin{align*}
        \fEl{P}{+}  f
        =  \fEl{P}{ \fEl{\dC}{\dT} }  f
        + \fEl{h}{\gG}
        + \rklamFunk{ \fEl{h}{\gF} }{0} \cdot \dEins \qPunkt
    \end{align*}
    This means
    \begin{align}       \label{3-thm1-17}
        \fEl{P}{+}  f
        =  \fEl{P}{ \fEl{\dC}{\dT} }  f
        + \mkEl{ \cP^{\gF} }{\mu}{\gG}
        \fEl{P}{ \fEl{ \iEl{\gH}{n} }{\mu} }  f
        + \rklamFunk{ \fEl{h}{\gF} }{0} \cdot \dEins \qPunkt
    \end{align}
    The mapping
    $\fEl{h}{\gF} \mapsto \rklamFunk{ \fEl{h}{\gF} }{0}$
    is a linear functional on the set 
    $${\mathcal Pol}_{\le0}:=\Lin\{t^{-k}:k\in\N_0\}.$$
    Since $\mu$ is a Szeg\H{o} measure, the Szeg\H{o}-Kolmogorov-Krein
    Theorem (see, e.g. \cite[Theorem 4.31]{3}) implies that this functional
    is bounded in $\fEl{L^2}{\mu}$ on the set ${\mathcal Pol}_{\le0}$. Thus,
    there exists a constant $C \in \intervalO{0}{+\infty}$ such that
    \begin{align}       \label{3-thm1-18}
        \inorm{
            \rklamFunk{ \fEl{h}{\gF} }{0} \cdot \dEins
        }{ \fEl{L^2}{\mu} }
        &=    \abs{ \rklamFunk{ \fEl{h}{\gF} }{0} } \nonumber \\[4pt]
        &\leq C \cdot \norm{ \fEl{h}{\gF} } \nonumber \\[4pt]
        &= C
        \cdot 
        \norm{ 
            \rklam{ I - \mkEl{ \cP^{\gF} }{\mu}{\gG}  } 
            h
        }    \nonumber \\[4pt]
        &\leq C
        \cdot 
        \norm{  I - \mkEl{ \cP^{\gF} }{\mu}{\gG}  }
        \cdot
        \norm{h}    \nonumber \\[4pt]
        &\leq C
        \cdot 
        \norm{  I - \mkEl{ \cP^{\gF} }{\mu}{\gG}  }
        \cdot
        \norm{f}    \qPunkt
    \end{align}
    From \fref{3-thm1-17} and \fref{3-thm1-18} we get
    \begin{align*}
        \norm{ \fEl{P}{+} f }
        &\leq 
        \norm{ \fEl{P}{ \fEl{\dC}{\dT} } f }
        + \norm{
            \mkEl{ \cP^{\gF} }{\mu}{\gG}
            \fEl{P}{ \fEl{ \iEl{\gH}{n} }{\mu} }  f
        }
        + C \cdot \norm{ I - \mkEl{ \cP^{\gF} }{\mu}{\gG} }
        \cdot \norm{f} \nonumber \\[4pt]
        &\leq
        \norm{f} 
        +  \norm{ \mkEl{ \cP^{\gF} }{\mu}{\gG} } \cdot \norm{f} 
        + C \cdot \norm{ I - \mkEl{ \cP^{\gF} }{\mu}{\gG} }
        \cdot \norm{f}  \qPunkt
    \end{align*}
    Now considering the limit as $n \rightarrow \infty$
    and taking into account \fref{3-thm1-4}, we see that the
    boundedness of the projection $\mkEl{ \cP^{\gF} }{\mu}{\gG}$
    implies the boundedness of the Riesz projection $\fEl{P}{+}$
    in $\fEl{L^2}{\mu}$.
\end{proof}
\subsection{On the connection of the Riesz projection
    $\fEl{P}{+}$ with the orthogonal projections from
    $\fEl{\gH}{\mu}$ onto the subspaces
    $\mkEl{ \orth{\gH} }{\mu}{\gF}$
    and $\mkEl{ \orth{\gH} }{\mu}{\gG}$}
\label{s4}

Let $\mu \in \rklamFunk{ \fEl{\cM^{1}}{+} }{ \dT }$ be
a Szeg\H{o} measure. As we did earlier, we consider the simple
unitary colligation $\fEl{\Delta}{\mu}$ of type \fref{Nr.2.2} which is
associated with the measure $\mu$. As was previously mentioned, we
then have
\begin{align*}
    \mkEl{ \orth{\gH} }{\mu}{\gF} \neq  \gklam{0}
    \qquad
    \qquad
    \mbox{and}
    \qquad
    \qquad
    \mkEl{ \orth{\gH} }{\mu}{\gG} \neq  \gklam{0} \qPunkt
\end{align*}
We denote by $\fEl{P}{ \mkEl{ \orth{\gH} }{\mu}{\gF} }$
and $\fEl{P}{ \mkEl{ \orth{\gH} }{\mu}{\gG} }$ the
orthogonal projections from $\fEl{\gH}{\mu}$ onto
$\mkEl{ \orth{\gH} }{\mu}{\gF}$ and $\mkEl{ \orth{\gH} }{\mu}{\gG}$,
respectively.

Let $h \in \fEl{ \iEl{\gH}{n} }{\mu}$. Along with the decomposition
\fref{3-thm1-9} we consider the decomposition
\begin{align}       \label{4-1}
    h = \fEl{ \wt{h} }{\gF} + \fEl{ \orth{ \wt{h} } }{\gF}  \qKomma
\end{align}
where
\begin{align*}
    \wt h_\gF\in\gH_{\mu,\gF}^{(n)} 
    \qquad\qquad\mbox{and}\qquad\qquad
    \fEl{ \orth{ \wt{h} } }{\gF}
    \in \fEl{ \iEl{\gH}{n} }{\mu} 
    \ominus 
    \mkEl{ \iEl{\gH}{n} }{\mu}{\gF} \qPunkt 
\end{align*}
>From the shape \fref{3-thm1-5} of the
subspace $\mkEl{ \iEl{\gH}{n} }{\mu}{\gF}$
and the polynomial structure of the orthonormal basis
$\fKlam{\varphi}{n}{0}{\infty}$ of the subspace
$\mkEl{\gH}{\mu}{\gF}$, it follows that
\begin{align*}
    \fEl{ \orth{ \wt{h} } }{\gF}
    = \fEl{P}{ \mkEl{ \orth{\gH} }{\mu}{\gF} }  h \qPunkt
\end{align*}
Since $\fEl{h}{\gF}$ (see \fref{3-thm1-9})
and $\fEl{ \wt{h} }{\gF}$ belong to $\mkEl{ \iEl{\gH}{n} }{\mu}{\gF}$,
we get
\begin{align}       \label{4-2}
    \fEl{ \orth{ \wt{h} } }{\gF}
    = \fEl{P}{ \mkEl{ \orth{\gH} }{\mu}{\gF} }  h
    = \fEl{P}{ \mkEl{ \orth{\gH} }{\mu}{\gF} }  \mkEl{h}{\gG}
    = \fEl{P}{ \mkEl{ \orth{\gH} }{\mu}{\gF} }
    \mkEl{ \cP^{\gF} }{\mu}{\gG} h
    = \mkEl{B}{\mu}{\gF}
    \mkEl{ \cP^{\gF} }{\mu}{\gG} h  \qKomma
\end{align}
where
\begin{align}       \label{4-3}
    B_{\mu,\gF}:=\Rstr_{\gH_{\mu,\gG}}P_{\gH_{\mu,\gF}^\perp}:\gH_{\mu,\gG}\to\gH_{\mu,\gF}^\perp\,,
\end{align}
i.e., we consider $\mkEl{B}{\mu}{\gF}$ as an operator acting between
the spaces $\mkEl{\gH}{\mu}{\gG}$ and $\mkEl{ \orth{\gH} }{\mu}{\gF}$.

\begin{thm}[\zitaa{DFK}{\cthm{4.1}}]\label{4-thm1}
    Let $\mu \in \rklamFunk{ \fEl{\cM^{1}}{+} }{ \dT }$ be
    a Szeg\H{o} measure. Then the projection $\mkEl{ \cP^{\gF} }{\mu}{\gG}$
    is bounded in $\fEl{\gH}{\mu}$ if and only if the operator
    $\mkEl{B}{\mu}{\gF}$ defined in \fref{4-3} is boundedly invertible.
\end{thm}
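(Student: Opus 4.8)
The plan is to exploit the relationship between the skew projection $\mkEl{ \cP^{\gF} }{\mu}{\gG}$ (which projects $\gH_\mu$ onto $\gH_{\mu,\gG}$ parallel to $\gH_{\mu,\gF}$) and the operator $B_{\mu,\gF} = \Rstr_{\gH_{\mu,\gG}}P_{\gH_{\mu,\gF}^\perp}$. First I would set up notation: write $\gH_\mu = \gH_{\mu,\gF} \oplus \gH_{\mu,\gF}^\perp$ and observe that for $h \in \gH_\mu$ the component $\mkEl{ \cP^{\gF} }{\mu}{\gG} h$ lies in $\gH_{\mu,\gG}$, while the difference $h - \mkEl{ \cP^{\gF} }{\mu}{\gG} h$ lies in $\gH_{\mu,\gF}$. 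Applying $P_{\gH_{\mu,\gF}^\perp}$ to this decomposition kills the $\gH_{\mu,\gF}$-part, so $P_{\gH_{\mu,\gF}^\perp} h = P_{\gH_{\mu,\gF}^\perp} \mkEl{ \cP^{\gF} }{\mu}{\gG} h = B_{\mu,\gF}\mkEl{ \cP^{\gF} }{\mu}{\gG} h$. In particular, restricting to $h \in \gH_{\mu,\gF}^\perp$ we get $h = B_{\mu,\gF}\mkEl{ \cP^{\gF} }{\mu}{\gG} h$, which already shows $B_{\mu,\gF}$ is surjective onto $\gH_{\mu,\gF}^\perp$ whenever $\mkEl{ \cP^{\gF} }{\mu}{\gG}$ is bounded, and that $\mkEl{ \cP^{\gF} }{\mu}{\gG}\Rstr_{\gH_{\mu,\gF}^\perp}$ is a right inverse of $B_{\mu,\gF}$.

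For the forward direction (boundedness of $\mkEl{ \cP^{\gF} }{\mu}{\gG}$ implies bounded invertibility of $B_{\mu,\gF}$): I would argue that $B_{\mu,\gF}$ is automatically injective. Indeed, if $g \in \gH_{\mu,\gG}$ with $B_{\mu,\gF}g = P_{\gH_{\mu,\gF}^\perp}g = 0$, then $g \in \gH_{\mu,\gF}$, so $g \in \gH_{\mu,\gF}\cap\gH_{\mu,\gG} = \gH_{\mu,\gF\gG}$ in the notation of Section~\ref{sec4-20221123}; but under the Szeg\H o hypothesis (so that $\gamma \in \Gamma\ell_2$ by Remark~\ref{s0-r1}) one has the direct-sum decomposition $\gH_{\mu} = \gH_{\mu,\gF} \dot{+} \gH_{\mu,\gG}$ in the topological sense precisely when $\mkEl{ \cP^{\gF} }{\mu}{\gG}$ is bounded, which forces $\gH_{\mu,\gF}\cap\gH_{\mu,\gG} = \{0\}$; hence $g = 0$. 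Combined with the surjectivity and the explicit bounded right inverse $\mkEl{ \cP^{\gF} }{\mu}{\gG}\Rstr_{\gH_{\mu,\gF}^\perp}$ already produced, $B_{\mu,\gF}$ is a bounded bijection between Hilbert spaces, hence boundedly invertible by the open mapping theorem, with $B_{\mu,\gF}^{-1} = \mkEl{ \cP^{\gF} }{\mu}{\gG}\Rstr_{\gH_{\mu,\gF}^\perp}$.

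For the reverse direction (bounded invertibility of $B_{\mu,\gF}$ implies boundedness of $\mkEl{ \cP^{\gF} }{\mu}{\gG}$): starting from $P_{\gH_{\mu,\gF}^\perp} h = B_{\mu,\gF}\mkEl{ \cP^{\gF} }{\mu}{\gG} h$ for all $h \in \gH_\mu$, I would apply $B_{\mu,\gF}^{-1}$ to obtain $\mkEl{ \cP^{\gF} }{\mu}{\gG} h = B_{\mu,\gF}^{-1}P_{\gH_{\mu,\gF}^\perp} h$. Since $B_{\mu,\gF}^{-1}$ is bounded and $P_{\gH_{\mu,\gF}^\perp}$ is an orthogonal projection (norm $\le 1$), this gives $\norm{\mkEl{ \cP^{\gF} }{\mu}{\gG} h} \le \norm{B_{\mu,\gF}^{-1}}\cdot\norm{h}$, so $\mkEl{ \cP^{\gF} }{\mu}{\gG}$ is bounded. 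One must be slightly careful that the identity $P_{\gH_{\mu,\gF}^\perp} h = B_{\mu,\gF}\mkEl{ \cP^{\gF} }{\mu}{\gG} h$ holds a priori on a dense subset (e.g.\ $\bigcup_n \gH_\mu^{(n)}$, where $\mkEl{ \cP^{\gF} }{\mu}{\gG}$ is defined as the algebraic skew projection on $\gH_\mu^{(n)} = \gH_{\mu,\gF}^{(n)} \dot{+} \gH_{\mu,\gG}^{(n)}$), and then the estimate extends $\mkEl{ \cP^{\gF} }{\mu}{\gG}$ to a bounded operator on all of $\gH_\mu$.

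The main obstacle I anticipate is the bookkeeping around domains and the Szeg\H o hypothesis: the skew projection $\mkEl{ \cP^{\gF} }{\mu}{\gG}$ is only a priori a densely-defined (possibly unbounded) operator, so the identity linking it to $B_{\mu,\gF}$ and all the rank/injectivity arguments must be phrased first on the finite-dimensional pieces $\gH_\mu^{(n)}$ and then passed to the limit, using the decomposition relations \fref{3-thm1-3}--\fref{3-thm1-7} and the fact (via Remark~\ref{s0-r1} and Proposition~\ref{s2-p1}) that $\gamma \in \ell_2$ guarantees $\gH_{\mu,\gF}$ and $\gH_{\mu,\gF}^\perp$ are both nontrivial and the direct sum $\gH_{\mu,\gF}^{(n)} \dot{+} \gH_{\mu,\gG}^{(n)}$ is genuine for every $n$. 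The purely Hilbert-space algebra (open mapping theorem, right-inverse identity) is routine; it is the careful handoff between the algebraic and the topological notions of direct sum that needs attention.
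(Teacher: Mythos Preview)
Your proposal is correct and follows essentially the same structure as the paper: both establish the key identity $P_{\gH_{\mu,\gF}^\perp}h = B_{\mu,\gF}\,\cP^{\gF}_{\mu,\gG}h$ on the dense pieces $\gH_\mu^{(n)}$, then use it in both directions exactly as you describe. Your handling of the domain issues via the filtration $\gH_\mu^{(n)}$ and the pass to the limit matches the paper's \eqref{3-thm1-3}--\eqref{3-thm1-7} setup.

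There is one genuine difference worth noting. To obtain $\gH_{\mu,\gF}\cap\gH_{\mu,\gG}=\{0\}$ (needed for injectivity of $B_{\mu,\gF}$), the paper takes a detour: it invokes Theorem~\ref{3-thm1} to conclude that boundedness of $\cP^{\gF}_{\mu,\gG}$ implies boundedness of the Riesz projection $P_+$, and then appeals to condition~(iii) of Theorem~\ref{s0-t1} (the basis property of $(t^n)_{n\in\Z}$) to get the trivial intersection. Your argument is more self-contained: you observe directly that a bounded extension of the densely-defined skew projection is a bounded idempotent with range $\gH_{\mu,\gG}$ and kernel $\gH_{\mu,\gF}$, which immediately forces $\gH_{\mu,\gF}\cap\gH_{\mu,\gG}=\{0\}$. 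This is a legitimate simplification and avoids importing the Helson--Szeg\H{o} machinery into what is purely a Hilbert-space lemma. (One small slip: the space $\gH_{\gF\gG}$ of Section~\ref{sec4-20221123} is \emph{not} the intersection $\gH_\gF\cap\gH_\gG$; it is defined as $\gH_\gF\ominus\gN_{\gF\gG}$. This does not affect your argument, which only needs the intersection to be trivial.)
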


\begin{proof}
    Suppose first that $\mkEl{B}{\mu}{\gF}$ has a bounded inverse
    $\mkEl{ \Inv{B} }{\mu}{\gF}$. Then for $h \in \fEl{ \iEl{\gH}{n} }{\mu}$,
    in view of \fref{4-1} and \fref{4-2}, we have
    \begin{align*}
        \mkEl{ \cP^{\gF} }{\mu}{\gG}  h
        = \mkEl{ \Inv{B} }{\mu}{\gF}
        \fEl{P}{ \mkEl{ \orth{\gH} }{\mu}{\gF} }  h \qPunkt
    \end{align*}
    If $n \rightarrow \infty$, this gives us the boundedness of
    the projection $\mkEl{ \cP^{\gF} }{\mu}{\gG}$ in $\fEl{\gH}{\mu}$.

    Conversely, suppose that the projection $\mkEl{ \cP^{\gF} }{\mu}{\gG}$
    is bounded in $\fEl{\gH}{\mu}$.
    If $h \in \fEl{ \iEl{\gH}{n} }{\mu} \ominus \mkEl{ \iEl{\gH}{n} }{\mu}{\gF}$,
    then the decomposition \fref{4-1} provides us
    \begin{align*}
        h = \fEl{ \wt{h} }{\gF}^\bot
    \end{align*}
    and identity \fref{4-2} yields
    \begin{align}       \label{4-4}
        h = \mkEl{ B }{\mu}{\gF}
        \mkEl{ \cP^{\gF} }{\mu}{\gG}  h \qPunkt
    \end{align}
    Since $\mkEl{ \cP^{\gF} }{\mu}{\gG}$ is bounded in $\fEl{\gH}{\mu}$,
    Theorem \ref{3-thm1} implies that the Riesz projection $\fEl{P}{+}$
    is bounded in $\fEl{L^2}{\mu}$. Then it follows from condition (iii)
    in Theorem \ref{s0-t1} that
    \begin{align*}
        \mkEl{\gH}{\mu}{\gF}  \cap  \mkEl{\gH}{\mu}{\gG}  = \gklam{0} \qPunkt
    \end{align*}
    Thus, from the shape \fref{4-3} of the operator $\mkEl{B}{\mu}{\gF}$,
    we infer that
    \begin{align*}
        \mathrm{ker} \, \mkEl{B}{\mu}{\gF} = \gklam{0}
        \qquad
        \qquad
        \mbox{and}
        \qquad
        \qquad
        \rklamFunk{ \mkEl{ \cP^{\gF} }{\mu}{\gG} }
        { \fEl{ \orth{\gH} }{\gF} }
        = \fEl{ \gH }{\gG}  \qPunkt
    \end{align*}
    Now equation \fref{4-4} can be rewritten in the form
    \begin{align*}
        \mkEl{ \Inv{B} }{\mu}{\gF}  h 
        = \mkEl{ \cP^{\gF} }{\mu}{\gG}  h \qKomma
        \qquad
        \qquad
        \qquad
        \qquad
        h
        \in \fEl{ \iEl{\gH}{n} }{\mu} 
        \ominus 
        \mkEl{ \iEl{\gH}{n} }{\mu}{\gF} \qPunkt 
    \end{align*}
    The limit $n \rightarrow \infty$, \eqref{3-thm1-2} and \eqref{3-thm1-4}  give us the desired result.
\end{proof}

The combination of Theorem \ref{4-thm1} with Theorem \ref{3-thm1}
leads us to the following result.

\begin{thm}[\zitaa{DFK}{\cthm{4.2}}]\label{4-thm2}
    Let $\mu \in \rklamFunk{ \fEl{\cM^{1}}{+} }{ \dT }$ be
    a Szeg\H{o} measure. Then the Riesz projection $\fEl{P}{+}$
    is bounded in $\fEl{L^2}{\mu}$ if and only if the operator
    $\mkEl{B}{\mu}{\gF}$ defined in \fref{4-3} is boundedly invertible.
\end{thm}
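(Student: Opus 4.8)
The plan is to obtain Theorem~\ref{4-thm2} by transitivity, chaining together the two equivalences already established in Theorems~\ref{3-thm1} and~\ref{4-thm1}. First I would record, from Theorem~\ref{3-thm1}, that for a Szeg\H{o} measure $\mu\in\cM_+^1(\T)$ the Riesz projection $\fEl{P}{+}$ is bounded in $\fEl{L^2}{\mu}$ if and only if the (in general oblique) projection $\mkEl{\cP^\gF}{\mu}{\gG}$, which maps $\fEl{\gH}{\mu}$ onto $\mkEl{\gH}{\mu}{\gG}$ parallel to $\mkEl{\gH}{\mu}{\gF}$, is bounded in $\fEl{\gH}{\mu}$. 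Then I would invoke Theorem~\ref{4-thm1}, which under the same Szeg\H{o} hypothesis asserts that $\mkEl{\cP^\gF}{\mu}{\gG}$ is bounded in $\fEl{\gH}{\mu}$ if and only if the operator $\mkEl{B}{\mu}{\gF}=\Rstr_{\gH_{\mu,\gG}}P_{\gH_{\mu,\gF}^\perp}\colon\gH_{\mu,\gG}\to\gH_{\mu,\gF}^\perp$ introduced in \eqref{4-3} is boundedly invertible. Concatenating these two equivalences yields exactly the claimed equivalence between boundedness of $\fEl{P}{+}$ in $\fEl{L^2}{\mu}$ and bounded invertibility of $\mkEl{B}{\mu}{\gF}$, which completes the proof.

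Before writing this down I would check that the hypotheses of the two cited theorems are literally the ones available here. Both are stated for a Szeg\H{o} measure $\mu\in\cM_+^1(\T)$, and Corollary~\ref{s0-c1} together with Remark~\ref{s0-r1} guarantees that in this case the Schur parameter sequence $\gamma=(\gamma_j)_{j=0}^\omega$ associated with $\mu$ satisfies $\omega=\infty$ and $\gamma\in\ell_2$. This is precisely the regime in which Proposition~\ref{s2-p1} produces nontrivial defect subspaces $\gH_{\mu,\gF}^\perp$ and $\gH_{\mu,\gG}^\perp$, so that $\mkEl{B}{\mu}{\gF}$ in \eqref{4-3} is a genuinely nonzero operator and the reasoning inside Theorems~\ref{3-thm1} and~\ref{4-thm1} applies without modification. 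I would also make sure that the domain and codomain conventions used for $\mkEl{B}{\mu}{\gF}$ in the statement of the present theorem agree with those fixed in \eqref{4-3}.

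There is essentially no genuine obstacle in this final step: the substantive analytic work was already carried out in establishing Theorems~\ref{3-thm1} and~\ref{4-thm1}. The first of these reduces a statement about the Riesz projection on the full space $\fEl{L^2}{\mu}$ to a statement about an oblique projection on the codimension-one subspace $\fEl{\gH}{\mu}$, by tracking the finite-dimensional sections $\fEl{\iEl{\gH}{n}}{\mu}$ and using the Szeg\H{o}--Kolmogorov--Krein theorem to control the functional $\fEl{h}{\gF}\mapsto\rklamFunk{\fEl{h}{\gF}}{0}$; the second identifies boundedness of that oblique projection with bounded invertibility of $\mkEl{B}{\mu}{\gF}$ via the decomposition \eqref{4-1}--\eqref{4-2} and condition (iii) of Theorem~\ref{s0-t1}. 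The present theorem is then a formal corollary of the two, and the only care required is the bookkeeping of hypotheses just described. Merging the two proofs into a single chain of estimates is possible but would merely reproduce the content of Theorems~\ref{3-thm1} and~\ref{4-thm1}.
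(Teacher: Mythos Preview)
Your proposal is correct and matches the paper's own approach exactly: the paper simply states that Theorem~\ref{4-thm2} follows by combining Theorem~\ref{4-thm1} with Theorem~\ref{3-thm1}, which is precisely the transitivity argument you describe. Your additional remarks about checking hypotheses and the nontriviality of $\gH_{\mu,\gF}^\perp$ are accurate bookkeeping but go beyond what the paper writes out.
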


Let $f \in {\mathcal Pol}$ be given by \fref{s0-1}.
Along with the Riesz projection $\fEl{P}{+}$, we consider
the projection $\fEl{P}{-}$, which is defined by:
\begin{align*}
    \rklamFunk{ \rklam{ \fEl{P}{-} } }{t}
    :=  \sum_{ -k \in I \cap \N_0 }
    { \fEl{a}{k}  t^k } \qKomma
    \qquad  \qquad  t \in \dT   \qPunkt
\end{align*}
Obviously,
\begin{align*}
    \fEl{P}{-}  = \overline{
        \fEl{P}{+}  \overline{f}
    } \qKomma
    \qquad
    \qquad
    \mbox{and}
    \qquad
    \qquad
    \fEl{P}{+}  = \overline{
        \fEl{P}{-}  \overline{f}
    } \qPunkt
\end{align*}
Thus, the boundedness of one of the projections $\fEl{P}{+}$
and $\fEl{P}{-}$ in $\fEl{L^2}{\mu}$ implies the boundedness
of the other one. It is readily checked that the change from the
projection $\fEl{P}{+}$ to $\fEl{P}{-}$ is connected
with changing the roles of the spaces $\mkEl{\gH}{\mu}{\gG}$
and $\mkEl{\gH}{\mu}{\gF}$. Thus we obtain the following result,
which is dual to Theorem \ref{4-thm2}.

\begin{thm}[\zitaa{DFK}{\cthm{4.3}}]\label{4-thm3}
    Let $\mu \in \rklamFunk{ \fEl{\cM^{1}}{+} }{ \dT }$ be
    a Szeg\H{o} measure. Then the Riesz projection $\fEl{P}{+}$
    is bounded in $\fEl{L^2}{\mu}$ if and only if the operator
    $\mkEl{B}{\mu}{\gG} : \mkEl{\gH}{\mu}{\gF} \rightarrow \mkEl{ \orth{\gH} }{\mu}{\gG}$ defined by
    \begin{align}\label{4-thm3-1}
        \mkEl{B}{\mu}{\gG}h
        := {\fEl{P}{ \mkEl{ \orth{\gH} }{\mu}{\gG} }}h
    \end{align}
    is boundedly invertible. Here the symbol $\fEl{P}{ \mkEl{ \orth{\gH} }{\mu}{\gG} }$
    stand for the orthogonal projection from $\fEl{\gH}{\mu}$ onto
    $\mkEl{ \orth{\gH} }{\mu}{\gG}$.
\end{thm}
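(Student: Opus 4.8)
The plan is to derive Theorem~\ref{4-thm3} as the ``dual'' counterpart of Theorem~\ref{4-thm2}, exploiting the symmetry between the Riesz projections $P_+$ and $P_-$ and the interchange of the roles of the subspaces $\gH_{\mu,\gF}$ and $\gH_{\mu,\gG}$. First I would make precise the observation already recorded in the text, namely that $P_-f = \overline{P_+\overline f}$ and $P_+f = \overline{P_-\overline f}$ for every $f\in{\mathcal Pol}$; since complex conjugation is an isometric (conjugate-linear) bijection of $L^2_\mu$ onto itself which maps ${\mathcal Pol}$ onto ${\mathcal Pol}$, this shows that $P_+$ is bounded in $L^2_\mu$ if and only if $P_-$ is bounded in $L^2_\mu$. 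Consequently, in the statement of Theorem~\ref{4-thm2} one may replace ``$P_+$ is bounded'' by ``$P_-$ is bounded'' without changing its truth value.

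Next I would carry out the analogue of the analysis in Section~\ref{s3} and Section~\ref{s4} with $P_-$ in place of $P_+$. Conjugation interchanges $\Pol_+$ and $\Pol_-$, hence, via \eqref{s2-t2-1} and \eqref{s2-r2-1}, it interchanges $\gH_{\mu,\gF}=\bigl(\bigvee_{k\ge0}\overline t^{\,k}\bigr)\ominus\C_\T$ and $\gH_{\mu,\gG}=\bigl(\bigvee_{k\ge0}t^k\bigr)\ominus\C_\T$, and therefore also interchanges their orthogonal complements $\gH_{\mu,\gF}^\perp$ and $\gH_{\mu,\gG}^\perp$. Running the proof of Theorem~\ref{3-thm1} verbatim but with the pair $(\gH_{\mu,\gF},\gH_{\mu,\gG})$ replaced by $(\gH_{\mu,\gG},\gH_{\mu,\gF})$ and $P_+$ replaced by $P_-$ yields: $P_-$ is bounded in $L^2_\mu$ if and only if the projection $\cP^{\gG}_{\mu,\gF}$ of $\gH_\mu$ onto $\gH_{\mu,\gF}$ parallel to $\gH_{\mu,\gG}$ is bounded. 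Then the argument of Theorem~\ref{4-thm1}, with the same substitution, shows that $\cP^{\gG}_{\mu,\gF}$ is bounded in $\gH_\mu$ if and only if the operator
\[
    B_{\mu,\gG}=\Rstr_{\gH_{\mu,\gF}}P_{\gH_{\mu,\gG}^\perp}\colon\gH_{\mu,\gF}\to\gH_{\mu,\gG}^\perp
\]
of \eqref{4-thm3-1} is boundedly invertible. Chaining the two equivalences and using the first paragraph ($P_-$ bounded $\Leftrightarrow$ $P_+$ bounded) gives exactly the assertion of Theorem~\ref{4-thm3}.

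A cleaner way to organize the write-up, avoiding any genuine repetition of the earlier proofs, is to introduce the conjugation operator $C\colon L^2_\mu\to L^2_\mu$, $Cf:=\overline f$, note that it restricts to a conjugate-linear isometric bijection $\gH_\mu\to\gH_\mu$ fixing $\C_\T$, and verify the intertwining relations $CP_+C=P_-$, $C\gH_{\mu,\gF}=\gH_{\mu,\gG}$, $C\gH_{\mu,\gF}^\perp=\gH_{\mu,\gG}^\perp$, and hence $C\,\bigl(\Rstr_{\gH_{\mu,\gG}}P_{\gH_{\mu,\gF}^\perp}\bigr)\,C=\Rstr_{\gH_{\mu,\gF}}P_{\gH_{\mu,\gG}^\perp}$, i.e. $CB_{\mu,\gF}C=B_{\mu,\gG}$. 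Since $C$ is an isometry, $B_{\mu,\gF}$ is boundedly invertible if and only if $B_{\mu,\gG}$ is. Combining this single algebraic identity with Theorem~\ref{4-thm2} (applied to $\mu$) and with the equivalence ``$P_+$ bounded $\Leftrightarrow$ $P_-$ bounded'' immediately produces Theorem~\ref{4-thm3}.

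The main obstacle—really the only point requiring care—is the bookkeeping needed to confirm that conjugation genuinely effects the claimed interchange of \emph{all} the relevant objects: in particular that $C$ maps $\gH_{\mu,\gF}$ \emph{onto} $\gH_{\mu,\gG}$ (not merely into it), which rests on the explicit descriptions \eqref{s2-t2-1}, \eqref{s2-r2-1} together with the Szeg\H o hypothesis guaranteeing that these spans are proper, and that it correspondingly swaps the orthogonal complements inside $\gH_\mu$. One should also check that $B_{\mu,\gG}$ as defined in \eqref{4-thm3-1} is exactly $\Rstr_{\gH_{\mu,\gF}}P_{\gH_{\mu,\gG}^\perp}$, so that the conjugated operator matches it on the nose. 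Once these identifications are in place, no new analytic input is needed: everything follows from Theorems~\ref{4-thm2}, \ref{3-thm1}, \ref{4-thm1} and the elementary properties of the conjugation isometry.
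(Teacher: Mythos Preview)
Your proposal is correct and follows essentially the same approach as the paper: the paper derives Theorem~\ref{4-thm3} by observing that $P_-f=\overline{P_+\overline f}$ (so $P_+$ is bounded iff $P_-$ is), noting that passing from $P_+$ to $P_-$ amounts to interchanging the roles of $\gH_{\mu,\gF}$ and $\gH_{\mu,\gG}$, and then invoking the dual of Theorem~\ref{4-thm2}. Your explicit use of the conjugation operator $C$ to formalize the intertwining $CB_{\mu,\gF}C=B_{\mu,\gG}$ is a nice way to make precise what the paper leaves as ``it is readily checked.''
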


\subsection{Matrix Representation of the Operator $\mkEl{B}{\mu}{\gG}$
    in Terms of the Schur Parameters Associated With the
    Measure $\mu$}
\label{s5}

Let $\mu \in \rklamFunk{ \fEl{\cM^{1}}{+} }{ \dT }$ be
a Szeg\H{o} measure. We consider the simple unitary
colligation $\fEl{\Delta}{\mu}$ of the type \fref{Nr.2.2} which
is associated with the measure $\mu$. In this case we have
(see Section \ref{s2})
\begin{align*}
    \mkEl{ \orth{\gH} }{\mu}{\gF} \neq  \gklam{0}
    \qquad
    \qquad
    \mbox{and}
    \qquad
    \qquad
    \mkEl{ \orth{\gH} }{\mu}{\gG} \neq  \gklam{0}
\end{align*}
The operator $\mkEl{B}{\mu}{\gG}$ acts between the
subspaces $\mkEl{\gH}{\mu}{\gF}$ and $\mkEl{ \orth{\gH} }{\mu}{\gG}$.
According to the matrix description of the operator
$\mkEl{B}{\mu}{\gG}$ we consider particular orthogonal
bases in these subspaces. In the subspace $\mkEl{ \gH }{\mu}{\gF}$ we have already
considered one such basis, namely the basis consisting
of the trigonometric polynomials $\fKlam{\varphi}{n}{1}{\infty}$
(see Theorem \ref{s2-t2}). Regarding the construction of
an orthonormal basis in $\mkEl{ \orth{\gH} }{\mu}{\gG}$,
we first complete the system $\fKlam{\varphi}{n}{1}{\infty}$
to an orthonormal basis in $\fEl{\gH}{\mu}$. This procedure
is described in more detail in Subsection~\ref{sec2.1}.

We consider the orthogonal decomposition
\begin{align}\label{Nr.67}
    & \gH_\mu = \gH_{\mu,\gF} \oplus \gH_{\mu,\gF}^\perp.
\end{align}
Denote by $\tilde{\gL}_0$ the wandering subspace which generates the subspace associated with the unilateral shift 
$V_{T_\mu^\ast}$. Then (see Proposition \ref{3.4})
$\dim \tilde{\gL}_0 = 1$ and, since $V_{T_\mu^\ast}$ is an isometric operator, we have
\begin{align}\label{Nr.3.10}
    & V_{T_\mu^\ast} = {\rm Rstr.}_{\gH_{\mu,\gF}^\perp} (U_\mu^\times)^\ast.
\end{align}
Consequently,
\begin{align}\label{Nr.3.11}
    & \gH_{\mu,\gF}^\perp 
    = \bigoplus\limits_{n=0}^\infty V_{T_\mu^\ast}^n (\tilde{\gL}_0)
    = \bigvee_{n=0}^\infty (T_\mu^\ast)^n (\tilde{\gL}_0)
    = \bigvee_{n=0}^\infty [ (U_\mu^\times)^\ast]^n (\tilde{\gL}_0).
\end{align}
There exists (see Corollary~\ref{cor1.9}) 
a unique unit function $\psi_1\in\tilde{\gL}_0$ which fulfills
\begin{align}\label{Nr.3.12}
    & \bigl( G_{\mu}^\ast (1), \psi_1 \bigr)_{L_\mu^2} > 0.
\end{align}
Because of (\ref{Nr.3.10}), (\ref{Nr.3.11}), and (\ref{Nr.3.12}) it follows that the
sequence $(\psi_k)_{k=1}^\infty$, where
\begin{align}\label{Nr.3.13}
    & \psi_k := [(U_\mu^\times)^\ast]^{k-1} \psi_1,
    \quad k\in\N,
\end{align}
is the unique orthonormal basis of the space $\gH_{\mu,\gF}^\perp$ which satisfies the conditions
\begin{align}\label{Nr.3.14}
    & \bigl( G_{\mu}^\ast (1), \psi_1 \bigr)_{L_\mu^2} > 0,
    \quad \psi_{k+1} = (U_\mu^\times)^\ast \psi_k,
    \quad k\in \N,
\end{align}
or equivalently
\begin{align}\label{Nr.3.15}
    & \bigl( G_{\mu}^\ast (1), \psi_1 \bigr)_{L_\mu^2} > 0,
    \quad \psi_{k+1} (t) = t^k\cdot \psi_1 (t),
    \; t\in\mathbb T, \quad k\in \N.
\end{align}
According to Definition~\ref{D2.1-20230123} 
we introduce the following notion.

\begin{defn}\label{3.5}
    The constructed orthonormal basis
    \begin{align}\label{Nr.3.16}
        & \varphi_0, \varphi_1, \varphi_2,\dotsc ;\;  \psi_1, \psi_2,\dotsc
    \end{align}
    in the space $L_\mu^2$ which satisfies the conditions \eqref{Nr.3.2} and \eqref{Nr.3.14} is called the
    \emph{cano\-ni\-cal orthonormal basis in} $L_\mu^2$.
\end{defn}

Note that the analytic structure of the system $(\psi_k)_{k=1}^\infty$ is described in the paper \cite{MR2805420}.

Obviously, the canonical orthonormal basis \eqref{Nr.3.16} in $L_\mu^2$ is uniquely determined
by the conditions \eqref{Nr.3.2} and \eqref{Nr.3.14}. 
Here the sequence $(\varphi_k)_{k=0}^\infty$ is an orthonormal system of
polynomials (depending on $\ko{t}$). The orthonormal system $(\psi_k)_{k=1}^\infty$ is
built with the aid of the operator $U_\mu^\times$ from the function
$\psi_1$ $($see \eqref{Nr.3.13}$)$ in a similar way as the system
$(\varphi_k)_{k=0}^\infty$ was built from the function $\varphi_0$
$($see \eqref{Nr.3.1} and \eqref{Nr.3.2}$)$. The only difference is that the system
$\left( [(U_\mu^\times)^\ast]^k \psi_1 \right)_{k=0}^\infty$ is
orthonormal, whereas in the general case the system $\left(
(U_\mu^\times)^k \varphi_0\right)_{k=0}^\infty$ is not
orthonormal. In this respect, the sequence $(\psi_k)_{k=1}^\infty$
can be considered as a natural completion of the system of
orthonormal polynomials $(\varphi_k)_{k=0}^\infty$ to an orthonormal
basis in $L_\mu^2$.

\begin{rem}\label{3.6}
    The orthonormal system
    \begin{align}\label{Nr.3.16b}
        & \varphi_1,\varphi_2,\dotsc ;\;  \psi_1, \psi_2,\dotsc
    \end{align}
    is an orthonormal basis in the space $\gH_\mu$. We will call it
    the \emph{canonical orthonormal basis in} $\gH_\mu$. 
\end{rem}

It is well known (see, e.g., Brodskii \cite{B}) that one can consider simultaneously together with the simple unitary
colligation (\ref{Nr.2.2}) the adjoint unitary colligation
\begin{equation}\label{Nr.3.29}
    \tilde{\bigtriangleup}_\mu := 
    (\gH_\mu, \mathbb C, \mathbb C;T_\mu^\ast, G_\mu^\ast, F_\mu^\ast, S_\mu^\ast)
\end{equation}
which is also simple. Its characteristic function $\Theta_{\tilde{\bigtriangleup}_\mu}$ is for
each $z\in\mathbb D$ given by
\begin{equation*}
    \Theta_{\tilde{\bigtriangleup}_\mu} (z) = \Theta_{\bigtriangleup_\mu}^{\ast} (z^{\ast}).
\end{equation*}
We note that the unitary colligation (\ref{Nr.3.29}) is associated with the operator $(U_\mu^\times)^\ast$.
It can be easily checked that the action of $(U_\mu^\times)^\ast$ is given for each
$ f\in L_\mu^2$ by
\[ [(U_\mu^\times)^\ast f] (t) = t\cdot f(t),\quad t\in\mathbb T . \]
If we replace the operator $U_\mu^\times $ by $(U_\mu^\times)^\ast$ in the preceding considerations,
which have lead to the canonical orthonormal basis (\ref{Nr.3.16}), then we obtain an orthonormal basis
of the space $L_\mu^2$ which consists of two sequences
\begin{equation}
    \label{Nr.ONB}
    (\tilde{\varphi}_j)_{j=0}^\infty 
    \quad\mbox{and}\quad
    (\tilde{\psi}_j)_{j=1}^\infty
\end{equation}
of functions. From our treatments above it follows that the orthonormal basis (\ref{Nr.ONB}) is uniquely
determined by the following conditions:
\begin{enumerate}
    \item[(a)]
    The sequence $(\tilde{\varphi}_j)_{k=0}^\infty$ arises from the result of the Gram-Schmidt orthogonalization
    procedure of the sequence $\left( [(U_\mu^\times)^\ast]^n {\bf 1}\right)_{n=0}^\infty$ and additionally
    taking into account the normalization conditions
    \begin{equation*}
        \bigl( [(U_\mu^\times)^\ast]^n {\bf 1}, \tilde{\varphi}_n \bigr)_{L_\mu^2} > 0,
        \quad n\in \N_0.
    \end{equation*}
    \item[(b)]
    The relations
    \[ \bigl(F_{\mu}(1), \tilde{\psi}_1\bigr)_{L_\mu^2} > 0
    \quad\mbox{and}\quad
    \tilde{\psi}_{k+1} = U_\mu^\times \tilde{\psi}_k,\quad k\in \N, \]
    hold. 
\end{enumerate}
It can be easily checked that
\[ \tilde{\varphi}_{k} = \varphi_k^{\ast}, 
\quad k\in \N_0, 
\vspace{-2mm} \]
and \vspace{-2mm}
\[ \tilde{\psi}_{k} = \psi_k^{\ast},
\quad k\in \N.
\]

According to Definition~\ref{de3.1} 
we introduce the following notion.

\begin{defn}\label{D3.14}
    The orthogonal basis
    \begin{equation}\label{Nr.3.31}
        \varphi_0^{\ast}, \varphi_1^{\ast}, \varphi_2^{\ast},\dotsc ; \psi_1^{\ast}, \psi_2^{\ast},\dotsc
    \end{equation}
    is called the conjugate canonical orthonormal basis with respect to the canonical orthonormal basis \eqref{Nr.3.16}.
\end{defn}

We note that $\varphi_0=\varphi_0^*=1$.
Similarly as (\ref{Nr.3.8}) the identity
\begin{equation}\label{Nr.3.38}
    \bigvee_{k=0}^{n} [(U_\mu^\times)^\ast]^k {\bf 1} 
    = \left( \bigvee_{k=0}^{n-1} (T_\mu^\ast)^k G_\mu^\ast (1) \right) \oplus \mathbb C_\mathbb T
\end{equation}
can be verified. Thus,
\begin{equation}\label{Nr.3.39}
    \gH_{\mu,\gF} = \bigvee_{k=1}^\infty \varphi_k,\quad \gH_{\mu,\gG}
    = \bigvee_{k=1}^\infty \varphi_k^{\ast},
\end{equation}
\begin{equation}\label{Nr.3.40}
    \gH_{\mu,\gF}^\perp = \bigvee_{k=1}^\infty \psi_k,\quad
    \gH_{\mu,\gG}^\perp = \bigvee_{k=1}^\infty \psi_k^{\ast}.
\end{equation}

In 
Subsection~\ref{sec3.1} (see \eqref{3.8}) 
the unitary operator $\cU$ was introduced  which
maps the elements of the canonical basis (\ref{Nr.3.16}) onto the corresponding elements of the
conjugate canonical basis (\ref{Nr.3.31}). More precisely, we consider the operator
\begin{equation}\label{5-16}
    \cU_\mu \varphi_n = \varphi_n^{\ast},\quad n\in \N_0,
    \qquad \mbox{and} \qquad 
    \cU_\mu \psi_n = \psi_n^{\ast},\quad n\in \N.
\end{equation}
The operator $\cU_\mu$ is related to the conjugation operator in $L_\mu^2$. Namely, if
$f\in L_\mu^2$ and if
\begin{equation*}
    f =\sum_{k=0}^\infty \alpha_k \varphi_k + \sum_{k=1}^\infty
    \beta_k\psi_k,
\end{equation*}
then
\begin{equation*}
    f^{\ast} = \sum_{k=0}^\infty \alpha_k^{\ast} \varphi_k^{\ast} + \sum_{k=1}^\infty \beta_k^{\ast} \psi_k^{\ast} 
    = \sum_{k=0}^\infty \alpha_k^{\ast} \cU\varphi_k + \sum_{k=1}^\infty \beta_k^{\ast} \cU\psi_k.
\end{equation*}

From \fref{5-16} it follows that
\begin{align*}
    \fEl{\cU}{\mu}  ~:~ \fEl{\gH}{\mu}  \longrightarrow \fEl{\gH}{\mu}  \;, \qquad \cU_\mu(\bf 1)=\bf 1 \,.
\end{align*}
Let
\begin{align}       \label{5-17}
    \fEl{\cU}{ \fEl{\gH}{\mu} } 
    :=   \Rstr_{ \fEl{\gH}{\mu} }{\fEl{\cU}{\mu}}  \qPunkt
\end{align}
Then, obviously,
\begin{align}       \label{5-18}
    \fEl{\cU}{ \fEl{\gH}{\mu} } \fEl{\varphi}{n}  = \Adj{ \fEl{\varphi}{n} }
    \qquad
    \mbox{and}
    \qquad
    \fEl{\cU}{ \fEl{\gH}{\mu} } \fEl{\psi}{n}  = \Adj{ \fEl{\psi}{n} }
    \qKomma
    \qquad
    n \in \N \qPunkt
\end{align}
Clearly, the system $\fKlam{ \Adj{\psi} }{n}{1}{\infty}$
is an orthonormal basis in the space $\mkEl{ \orth{\gH} }{\mu}{\gG}$.
This system will turn out to be the special orthonormal basis of
the space $\mkEl{ \orth{\gH} }{\mu}{\gG}$ mentioned
at the beginning of this section. Thus, the matrix representation
of the operator
\begin{align*}
    \mkEl{B}{\mu}{\gG}  
    ~:~ \mkEl{\gH}{\mu}{\gF}  \longrightarrow \mkEl{ \orth{\gH} }{\mu}{\gG}
\end{align*}
will be considered with respect to the orthonormal bases
\begin{align}       \label{5-19}
    \fKlam{\varphi}{n}{1}{\infty}
    \qquad
    \qquad
    \mbox{and}
    \qquad
    \qquad
    (\psi_n^*)_{n=1}^\infty
\end{align}
of the spaces $\mkEl{\gH}{\mu}{\gF}$ and $\mkEl{ \orth{\gH} }{\mu}{\gG}$,
respectively.
Let (see \eqref{3.9})
\begin{align}       \label{5-20}
    \begin{pmatrix}
        \cR   & \cL   \\[5pt]
        \cP   & \cQ
    \end{pmatrix}
\end{align}
be the matrix representation of the operator $\fEl{\cU}{ \fEl{\gH}{\mu} }$
with respect to the canonical basis \fref{Nr.3.16b} of the space $\fEl{\gH}{\mu}$.
Then, from \fref{5-18} we infer that the columns
\begin{align*}
    \begin{pmatrix}
        \cR   \\[5pt]
        \cP
    \end{pmatrix}
    \qquad
    \qquad
    \mbox{and}
    \qquad
    \qquad
    \begin{pmatrix}
        \cL   \\[5pt]
        \cQ
    \end{pmatrix}
\end{align*}
of the block-matrix \fref{5-20} are the coefficients in the series
developments of $\Adj{ \fEl{\varphi}{n} }$ and $\Adj{ \fEl{\psi}{n} }$
with respect to the canonical basis \fref{Nr.3.16b}. If $h \in \fEl{\gH}{\mu}$
then clearly
\begin{align}       \label{5-21}
    \fEl{P}{ \mkEl{ \orth{\gH} }{\mu}{\gG} }  h
    = \sum_{k = 1}^{\infty}
    { 
        \rklamPaar{h}{ \Adj{ \fEl{\psi}{k} } } 
        \Adj{ \fEl{\psi}{k} }
    } \qPunkt
\end{align}
Thus, the matrix representation of the operator
$\fEl{P}{ \mkEl{ \orth{\gH} }{\mu}{\gG} }$ considered as
an operator acting between $\fEl{\gH}{\mu}$ and
$\mkEl{ \orth{\gH} }{\mu}{\gG}$ equipped with the
orthonormal bases \fref{Nr.3.16b} and $\fKlam{ \Adj{\psi} }{n}{1}{\infty}$
has the form
\begin{align*}
    \rklamPaar{ \Adj{\cL} }{ \Adj{\cQ} }  \qPunkt
\end{align*}
>From this and the shape \fref{4-thm3-1} of the operator
$\mkEl{B}{\mu}{\gG}$, we obtain the following result.

\begin{thm}[\zitaa{DFK}{\cthm{5.4}}]\label{5-thm4}
    Let $\mu \in \rklamFunk{ \fEl{\cM^{1}}{+} }{ \dT }$ be
    a Szeg\H{o} measure. Then the matrix of the operator
    \begin{align*}
        \mkEl{B}{\mu}{\gG}  
        ~:~ \mkEl{\gH}{\mu}{\gF}  \longrightarrow \mkEl{ \orth{\gH} }{\mu}{\gG}
    \end{align*}
    with respect to the orthonormal bases $\fKlam{\varphi}{k}{1}{\infty}$
    and $\fKlam{ \Adj{\psi} }{n}{1}{\infty}$ of the spaces
    $\mkEl{\gH}{\mu}{\gF}$ and $\mkEl{ \orth{\gH} }{\mu}{\gG}$,
    respectively, is given by $\Adj{\cL}$ where $\cL$ is the block of
    the matrix given in \fref{5-20}. 
\end{thm}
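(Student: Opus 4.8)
The plan is to read the matrix of $\mkEl{B}{\mu}{\gG}$ directly off the block matrix \eqref{5-20} of the operator $\fEl{\cU}{\fEl{\gH}{\mu}}$, using only facts already recorded above. First I would invoke the standing hypothesis: since $\mu$ is a Szeg\H{o} measure, Proposition~\ref{s2-p1} gives $\mkEl{\orth{\gH}}{\mu}{\gF}\neq\{0\}$ and $\mkEl{\orth{\gH}}{\mu}{\gG}\neq\{0\}$, so the systems $(\fEl{\psi}{k})_{k=1}^\infty$ and $(\Adj{\fEl{\psi}{k}})_{k=1}^\infty$ introduced in Definition~\ref{3.5} and \eqref{5-18} really are orthonormal bases of $\mkEl{\orth{\gH}}{\mu}{\gF}$ and $\mkEl{\orth{\gH}}{\mu}{\gG}$. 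In particular $\mkEl{B}{\mu}{\gG}$, being the restriction to $\mkEl{\gH}{\mu}{\gF}$ of the orthogonal projection $\fEl{P}{\mkEl{\orth{\gH}}{\mu}{\gG}}$, is a well-defined bounded operator, and $\fEl{\cU}{\fEl{\gH}{\mu}}$ is unitary, being the restriction to $\fEl{\gH}{\mu}$ of the unitary operator $\fEl{\cU}{\mu}$.

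Next I would carry out the bookkeeping. Write $\cL=(\fEl{\cL}{jn})_{j,n=1}^\infty$ and $\cQ=(\fEl{\cQ}{jn})_{j,n=1}^\infty$ for the blocks appearing in \eqref{5-20}. Since \eqref{5-18} gives $\fEl{\cU}{\fEl{\gH}{\mu}}\fEl{\psi}{n}=\Adj{\fEl{\psi}{n}}$, the $\psi$-indexed columns of \eqref{5-20} yield the expansion
\[
\Adj{\fEl{\psi}{n}}=\sum_{j=1}^\infty\fEl{\cL}{jn}\fEl{\varphi}{j}+\sum_{j=1}^\infty\fEl{\cQ}{jn}\fEl{\psi}{j}
\]
with respect to the canonical basis \eqref{Nr.3.16b} of $\fEl{\gH}{\mu}$. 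Fixing $m\in\N$ and using the orthonormality of $(\fEl{\varphi}{j})_{j=1}^\infty$ together with $\fEl{\varphi}{j}\perp\fEl{\psi}{\ell}$, I obtain $(\fEl{\varphi}{m},\Adj{\fEl{\psi}{k}})=\overline{\fEl{\cL}{mk}}$, hence by \eqref{5-21}
\[
\mkEl{B}{\mu}{\gG}\fEl{\varphi}{m}=\fEl{P}{\mkEl{\orth{\gH}}{\mu}{\gG}}\fEl{\varphi}{m}=\sum_{k=1}^\infty(\fEl{\varphi}{m},\Adj{\fEl{\psi}{k}})\,\Adj{\fEl{\psi}{k}}=\sum_{k=1}^\infty\overline{\fEl{\cL}{mk}}\,\Adj{\fEl{\psi}{k}}.
\]
Thus the $(k,m)$-entry of the matrix of $\mkEl{B}{\mu}{\gG}$ with respect to the bases $(\fEl{\varphi}{k})_{k=1}^\infty$ and $(\Adj{\fEl{\psi}{k}})_{k=1}^\infty$ equals $\overline{\fEl{\cL}{mk}}=(\Adj{\cL})_{km}$; that is, this matrix is $\Adj{\cL}$, as asserted. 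Equivalently, by the discussion preceding the statement the matrix of $\fEl{P}{\mkEl{\orth{\gH}}{\mu}{\gG}}$ regarded as an operator from $\fEl{\gH}{\mu}$ (basis \eqref{Nr.3.16b}) to $\mkEl{\orth{\gH}}{\mu}{\gG}$ (basis $(\Adj{\fEl{\psi}{k}})_{k=1}^\infty$) is the block row $(\Adj{\cL},\Adj{\cQ})$, and restricting the domain to $\mkEl{\gH}{\mu}{\gF}=\bigvee_{k\geq1}\fEl{\varphi}{k}$ keeps only the $\varphi$-indexed columns, leaving the block $\Adj{\cL}$; the shape \eqref{4-thm3-1} of $\mkEl{B}{\mu}{\gG}$ then finishes the proof.

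I expect no real obstacle here: the statement is purely about matrix representations, and once the Szeg\H{o} hypothesis has produced the relevant orthonormal bases there is no convergence or analytic subtlety left. The only point demanding care is keeping the conjugation and transposition conventions consistent, so that the block which appears is exactly $\Adj{\cL}$ rather than, say, $\overline{\cL}$ or a transpose of $\cL$; this rests entirely on the index placement in the one-line identity $(\fEl{\varphi}{m},\Adj{\fEl{\psi}{k}})=\overline{\fEl{\cL}{mk}}$, which I would verify carefully against the definitions \eqref{3.9}, \eqref{5-16}, and \eqref{5-18}.
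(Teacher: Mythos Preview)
Your proof is correct and follows essentially the same approach as the paper: the argument in the paper is precisely the discussion immediately preceding the theorem, which computes the matrix of $P_{\gH_{\mu,\gG}^\perp}$ as the block row $(\cL^*,\cQ^*)$ via \eqref{5-21} and then restricts to $\gH_{\mu,\gF}$ using \eqref{4-thm3-1}. You have simply written this out in greater detail, including the explicit verification of the entry $(\varphi_m,\psi_k^*)=\overline{\cL_{mk}}$.
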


Now Theorem \ref{4-thm3} can be reformulated in the following way.

\begin{cor}[\zitaa{DFK}{\ccor{5.5}}]\label{5-cor5}
    Let $\mu \in \rklamFunk{ \fEl{\cM^{1}}{+} }{ \dT }$ be
    a Szeg\H{o} measure. Then the Riesz projection $\fEl{P}{+}$
    is bounded in $\fEl{L^2}{\mu}$ if and only if 
    $\cL^*$ is boundedly invertible in $l_2$ where $\cL$ is the block of the matrix given in 
    \fref{5-20}.
\end{cor}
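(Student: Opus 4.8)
The plan is to simply combine Theorem \ref{4-thm3} with Theorem \ref{5-thm4}. Recall from Theorem \ref{4-thm3} that, for a Szeg\H{o} measure $\mu \in \rklamFunk{ \fEl{\cM^{1}}{+} }{ \dT }$, the Riesz projection $\fEl{P}{+}$ is bounded in $\fEl{L^2}{\mu}$ if and only if the operator $\mkEl{B}{\mu}{\gG} : \mkEl{\gH}{\mu}{\gF} \rightarrow \mkEl{ \orth{\gH} }{\mu}{\gG}$ defined via \fref{4-thm3-1} is boundedly invertible. On the other hand, Theorem \ref{5-thm4} identifies the matrix of $\mkEl{B}{\mu}{\gG}$ with respect to the orthonormal bases $\fKlam{\varphi}{k}{1}{\infty}$ of $\mkEl{\gH}{\mu}{\gF}$ and $\fKlam{ \Adj{\psi} }{n}{1}{\infty}$ of $\mkEl{ \orth{\gH} }{\mu}{\gG}$ as $\Adj{\cL}$, where $\cL$ is the block appearing in the matrix representation \fref{5-20} of $\fEl{\cU}{ \fEl{\gH}{\mu} }$.

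The remaining step is to transfer the statement ``$\mkEl{B}{\mu}{\gG}$ is boundedly invertible'' into ``$\Adj{\cL}$ is boundedly invertible in $l_2$''. First I would note that, since $\fKlam{\varphi}{k}{1}{\infty}$ and $\fKlam{ \Adj{\psi} }{n}{1}{\infty}$ are orthonormal bases of the Hilbert spaces $\mkEl{\gH}{\mu}{\gF}$ and $\mkEl{ \orth{\gH} }{\mu}{\gG}$, respectively, the coordinate isomorphisms onto $l_2$ are unitary; hence $\mkEl{B}{\mu}{\gG}$ and its matrix $\Adj{\cL}$ are unitarily equivalent as operators between these spaces. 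Bounded invertibility is preserved under unitary equivalence, so $\mkEl{B}{\mu}{\gG}$ is boundedly invertible precisely when $\Adj{\cL}$, viewed as a bounded operator in $l_2$, is boundedly invertible. Chaining the two equivalences yields exactly the assertion of the corollary.

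There is essentially no obstacle here: the corollary is a pure reformulation, and all the substantive work has already been done in Theorems \ref{3-thm1}, \ref{4-thm1}, \ref{4-thm2}, \ref{4-thm3} and \ref{5-thm4}. The only point deserving a word of care is the implicit claim that $\Adj{\cL}$ actually defines a bounded operator in $l_2$; this follows because $\cL$ is a block of the matrix \fref{5-20} of the unitary operator $\fEl{\cU}{ \fEl{\gH}{\mu} }$, so $\cL$ (and hence $\Adj{\cL}$) is automatically a contraction. Thus the one-line proof is to invoke Theorem \ref{4-thm3} together with the matrix identification in Theorem \ref{5-thm4}.
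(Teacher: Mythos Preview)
Your proposal is correct and matches the paper's approach exactly: the paper simply presents this corollary as a reformulation of Theorem~\ref{4-thm3} via the matrix identification in Theorem~\ref{5-thm4}, which is precisely what you do. Your additional remark that the coordinate isomorphisms given by the orthonormal bases are unitary (so bounded invertibility transfers) makes explicit the one tacit step the paper leaves to the reader.
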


In 
Subsection~\ref{sec3.2} the matrix $\cL$ was expressed in terms of the Schur parameters associated with the measure $\mu$.
In order to write down this matrix we introduce the necessary notions and terminology used in 
this subsection. 
The matrix
$\cL$ expressed in terms of the corresponding Schur parameter
sequence will the denoted by $\rklamFunk{\cL}{\gamma}$.

Let us recall the class of sequences given in Definition~\ref{de3.5}:
$$
\Gamma l_2
:=\left\{\gamma=(\gamma_j)_{j=0}^\infty\in l_2:\gamma_j\in\D,j\in\N_0\right\}.
$$
Let us mention the following well--known fact (see, for example,
Remark \ref{s0-r1})

\begin{prop}       \label{5-prop5}
    Let $\mu \in \rklamFunk{ \fEl{\cM^{1}}{+} }{ \dT }$. Then
    $\mu$ is a Szeg\H{o} measure if and only if
    $ \gamma$ belongs to $\Gamma \fEl{l}{2} $.
\end{prop}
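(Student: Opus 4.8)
The plan is to connect the convergence of the Szeg\H{o}-type product $\prod_{j=0}^\omega(1-|\gamma_j|^2)$ with the property that $\mu$ is a Szeg\H{o} measure, i.e.\ that $\ln v\in L_m^1$ where $v=d\mu/dm$. The bridge is exactly the identity recorded in \rmref{s0-r1}, which in turn rests on the formula in \rmref{re2.8}. So the whole statement is essentially a restatement of a fact that has already been established in the excerpt, and the proof amounts to assembling the pieces in the right order.

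First I would recall that by the discussion in Subsection~\ref{s0} the measure $\mu\in\cM_+^1(\T)$, the Carath\'eodory function $\Phi\in\cC^0$, the Schur function $\Theta\in\cS$, and the Schur parameter sequence $\gamma=(\gamma_j)_{j=0}^\omega\in\Gamma$ form an interrelated quadruple, so $\gamma$ is well defined and is the Schur parameter sequence of the associated $\Theta$. Next I would invoke \rmref{re2.8}, which gives
\[
\prod_{j=0}^\omega(1-|\gamma_j|^2)
=\exp\Bigl\{\int_\T\ln(1-|\ul\Theta(t)|^2)\,m(dt)\Bigr\}
\]
(using Fatou's theorem for the existence of the boundary values $\ul\Theta$ a.e.). Then, with the Lebesgue decomposition $\mu(dt)=v(t)m(dt)+\mu_s(dt)$ and the relation $\Re\ul\Phi=v$ a.e.\ on $\T$ together with \eqref{Nr.0.1}, one gets
\[
1-|\ul\Theta(t)|^2=\frac{4\Re\ul\Phi(t)}{|\ul\Phi(t)+1|^2}=\frac{4v(t)}{|\ul\Phi(t)+1|^2},
\]
hence
\[
\ln(1-|\ul\Theta(t)|^2)=\ln 4+\ln v(t)-2\ln|\ul\Phi(t)+1|
\]
for $m$-a.e.\ $t\in\T$. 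Since $\Re\Phi>0$ on $\D$, the function $\Phi$ is outer, so $\ln|\ul\Phi+1|\in L_m^1$ (Smirnov's theorem). Therefore $\ln(1-|\ul\Theta|^2)\in L_m^1$ if and only if $\ln v\in L_m^1$; equivalently, the product $\prod_{j=0}^\omega(1-|\gamma_j|^2)$ is convergent (and nonzero) if and only if $\mu$ is a Szeg\H{o} measure. Finally, recalling (from the second case in Schur's dichotomy described in Subsection~1.4, or directly from \coref{cor2.6}) that the product degenerates to $0$ precisely when either $\omega<\infty$ or $\gamma\notin l_2$, convergence to a positive value is equivalent to $\omega=\infty$ together with $\sum_{j=0}^\infty|\gamma_j|^2<\infty$, i.e.\ $\gamma\in\Gamma l_2$. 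This proves both implications.

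I do not expect a genuine obstacle here: everything needed is already stated in the excerpt, and the only care required is the logical bookkeeping — namely that the finite-$\omega$ case (a finite Blaschke product, which is \emph{not} a Szeg\H{o} measure since then $v\equiv 1$ on a null set / $\mu$ is singular-plus-trivial and $|\ul\Theta|=1$ a.e.) is correctly excluded, and that the ``only if'' direction needs the remark that $\gamma\in\Gamma l_2$ already forces $\omega=\infty$ by definition of $\Gamma l_2$ in \cdef{de3.5}. Thus the cleanest write-up is simply: by \rmref{s0-r1}, $\mu$ is a Szeg\H{o} measure if and only if $\omega=\infty$ and $\gamma\in l_2$; and the latter is by definition the statement $\gamma\in\Gamma l_2$.
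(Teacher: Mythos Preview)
Your proposal is correct and matches the paper's approach exactly: the paper does not give a separate proof of this proposition but merely points to \rmref{s0-r1}, and your argument is precisely a spelled-out version of that remark (together with the computation already carried out in the proof of \coref{s0-c1}). The only cosmetic issue is the somewhat garbled parenthetical about the finite-$\omega$ case; it suffices to note that $\omega<\infty$ forces the product $\prod_{j=0}^\omega(1-|\gamma_j|^2)$ to vanish, so convergence to a positive value is equivalent to $\omega=\infty$ and $\sum_j|\gamma_j|^2<\infty$, i.e.\ $\gamma\in\Gamma l_2$.
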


The following result is important for us (see Corollary~\ref{cor3.6}).

\begin{thm}       \label{5-thm7}
    Let $\mu \in \rklamFunk{ \fEl{\cM^{1}}{+} }{ \dT }$ be
    a Szeg\H{o} measure and let $\gamma \in \Gamma$ be the Schur
    parameter sequence associated with $\mu$. Then
    $ \gamma  \in \Gamma \fEl{l}{2} $
    and the block $\cL$ of the matrix \fref{5-20} has the form
    \begin{align}\label{5-thm7-1}
        &  \rklamFunk{\cL}{\gamma}
        =  \begin{pmatrix}        
            \Pi_1 & 0 & 0 &  \ldots \\
            \Pi_2L_1(W\gamma) & \Pi_2 & 0 & \ldots  \\
            \Pi_3L_2(W\gamma) & \Pi_3L_1(W^2\gamma) & \Pi_3 & \ldots  \\
            \vdots & \vdots & \vdots & \ddots \\
            \Pi_nL_{n-1}(W\gamma) & \Pi_nL_{n-2}(W^2\gamma) & \Pi_nL_{n-3}(W^3\gamma) & \ldots \\
            \vdots & \vdots & \vdots &          
        \end{pmatrix}    \qKomma
    \end{align}
    where products $\fEl{\Pi}{n}$, functions $\rklamFunk{ \fEl{L}{n} }{\gamma}$ and the coshift $W$ are given via the formulas \fref{3.17}, \fref{3.15}, and \fref{3.14}, respectively.
\end{thm}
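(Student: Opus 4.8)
\textbf{Proof plan for Theorem~\ref{5-thm7}.}
The plan is to identify the block $\cL$ of the matrix \fref{5-20} — which, by Theorem~\ref{5-thm4}, governs the operator $\mkEl{B}{\mu}{\gG}$ — with the matrix $\cL(\ga)$ appearing in \coref{cor3.6}, and then simply quote the formula \eqref{3.42} given there. The first observation I would record is that, since $\mu$ is a Szeg\H{o} measure, Proposition~\ref{5-prop5} (equivalently \rmref{s0-r1}) yields that the associated Schur parameter sequence $\gamma=(\gamma_j)_{j=0}^\infty$ is infinite and belongs to $\Ga l_2$. Hence the hypotheses of \thref{thm3.5} and \coref{cor3.6} are met, so the matrices $\Pi_n$, $L_n(\ga)$, $Q(\ga)$, and the coshift $W$ are all well defined and the series in \eqref{3.15} and \eqref{3.18} converge absolutely.

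Next I would match the two colligation frameworks. The unitary colligation $\Dl_\mu$ from \eqref{Nr.2.2} associated with $\mu$ has fundamental operator $T_\mu = T^\times$, which is completely nonunitary with c.o.f.\ the Schur function $\Theta$ associated with $\mu$ (Theorem~\ref{T8.6-20221201}); its Schur parameter sequence is $\gamma \in \Ga l_2$. By the uniqueness up to unitary equivalence of simple colligations with a given c.o.f.\ (\thref{th1.14}), the objects built from $\Dl_\mu$ coincide with the ones built from the abstract colligation $\Dl$ of \rsec{sec3.1}. Concretely: the orthonormal polynomials $(\varphi_k)_{k=1}^\infty$ obtained by Gram--Schmidt orthogonalization of $(\varphi'_k)_{k=1}^\infty = (T_\mu^{k-1}F_\mu(1))_{k=1}^\infty$ (Theorem~\ref{s2-t2}) play the role of the canonical basis vectors $(\phi_k)_{k=1}^\infty$ of \eqref{2.12}; the vectors $(\psi_k)_{k=1}^\infty$ from \eqref{Nr.3.13}--\eqref{Nr.3.14} are the canonical basis vectors of $\gH_{\mu,\gF}^\perp$; and the conjugate basis $(\varphi_k^*)_{k=0}^\infty, (\psi_k^*)_{k=1}^\infty$ from Definition~\ref{D3.14} is exactly the conjugate canonical basis of Definition~\ref{de3.1} (with $\wt\ga_j = \ol\ga_j$). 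Under this identification the operator $\cU_{\gH_\mu}$ of \eqref{5-17}--\eqref{5-18} is precisely the operator $\cU(\ga)$ of \eqref{3.8}, so its matrix \eqref{5-20} with respect to the canonical basis \eqref{Nr.3.16b} coincides, block for block, with the matrix \eqref{3.9}. In particular the block $\cL$ of \eqref{5-20} equals $\cL(\ga)$.

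With this identification in hand, the assertion \eqref{5-thm7-1} is nothing but the explicit form \eqref{3.42} of $\cL(\ga)$ from \coref{cor3.6}, which was derived from the model representation \eqref{3.16} of the shift $V_T$ with respect to the canonical basis. So the body of the proof is: (i) invoke Proposition~\ref{5-prop5} to get $\gamma \in \Ga l_2$; (ii) carefully justify the dictionary $\phi_k \leftrightarrow \varphi_k$, $\psi_k \leftrightarrow \psi_k$, $\wt\phi_k \leftrightarrow \varphi_k^*$, $\wt\psi_k \leftrightarrow \psi_k^*$, $\cU(\ga) \leftrightarrow \cU_{\gH_\mu}$, using \thref{th1.14}, Theorem~\ref{s2-t2}, \rmref{s2-r2}, and the construction of the conjugate basis; (iii) read off $\cL = \cL(\ga)$ and apply \coref{cor3.6}.

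\textbf{The main obstacle} I expect is step (ii): making airtight that the ad hoc bases constructed directly in $L_\mu^2$ via Gram--Schmidt (the $\varphi_k$, $\psi_k$) really are the canonical basis of \cdef{D2.1-20230123} for the colligation $\Dl_\mu$, including the matching of all the positivity/normalization conventions — e.g.\ that the normalization $((U_\mu^\times)^n\mathbf 1,\varphi_n)_{L_\mu^2}>0$ in \eqref{Nr.3.2} corresponds to the condition $(T^{n-1}\phi_1',\phi_n)>0$ in \eqref{2.3}, via the correspondence $\varphi'_k = T_\mu^{k-1}F_\mu(1)$ and \eqref{Nr.3.8}, and that the choice of $\psi_1$ in \eqref{Nr.3.12} matches \eqref{2.10}. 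Once these conventions are seen to align (which is routine but must be stated), the rest is a pure citation. I would also note briefly that the $W$-shifted matrices $\cM(W^j\ga)$ and the identity $\cL(\ga)=\cM(\ga)\cL(W\ga)$ of \thref{thm3.11} need not be re-derived here, since \eqref{3.42} is already in closed form.
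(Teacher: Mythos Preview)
Your proposal is correct and matches the paper's approach: the paper simply introduces Theorem~\ref{5-thm7} with the parenthetical ``(see Corollary~\ref{cor3.6})'' and gives no further argument, so the entire content is exactly the identification you describe --- $\gamma\in\Gamma l_2$ by Proposition~\ref{5-prop5}, the canonical basis \eqref{Nr.3.16b} for $\Dl_\mu$ coincides with the abstract one \eqref{2.12}, hence $\cL=\cL(\gamma)$ and formula~\eqref{3.42} applies. Your step~(ii) is more carefully spelled out than anything the paper writes, but that is precisely the routine verification implicit in the citation.
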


\begin{rem}[\zitaa{DFK}{\crem{5.8}}]\label{5-rem8}
    It follows from Theorems \ref{5-thm4} and \ref{5-thm7}
    that the matrix representation of the operator
    \begin{align*}
        \mkEl{B}{\mu}{\gG}  
        ~:~ \mkEl{\gH}{\mu}{\gF}  \longrightarrow \mkEl{ \orth{\gH} }{\mu}{\gG}
    \end{align*}
    with respect to the orthonormal bases $\fKlam{\varphi}{k}{1}{\infty}$
    and $\fKlam{\psi^*}{n}{1}{\infty}$ of the spaces $\mkEl{\gH}{\mu}{\gF}$
    and $\mkEl{ \orth{\gH} }{\mu}{\gG}$, respectively, is given by the
    matrix $\rklamFunk{ \Adj{\cL} }{\gamma}$, where $\rklamFunk{\cL}{\gamma}$
    has the form \fref{5-thm7-1}.
\end{rem}

\subsection{Characterization of Helson-Szeg\H o measures in terms of the Schur parameters of the associated Schur function}
\label{s6}

The first criterion which characterizes Helson-Szeg\H o measures in the associated Schur parameter sequence was already 
obtained. It follows by combination of Theorem \ref{s0-t1}, Theorem \ref{4-thm3}, Proposition \ref{5-prop5},
Theorem \ref{5-thm7}, and Remark \ref{5-rem8}. This leads us to the following theorem, which is one of the main results of this section.

\begin{thm}[\zitaa{DFK}{\cthm{6.1}}]\label{s6-t1}
    Let $\mu\in\cM_+^1(\T)$ and let $\gamma\in\Gamma$ be the sequence of Schur parameters associated with $\mu$.
    Then $\mu$ is a Helson-Szeg\H o measure if and only if $\gamma\in\Gamma l_2$ and the operator $\cL^*(\gamma)$,
    which is defined in $l_2$ by the matrix \eqref{5-thm7-1}, is boundedly invertible.
\end{thm}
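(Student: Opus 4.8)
The plan is to assemble the criterion from the chain of equivalences already established in this section, so the "proof" is really a careful bookkeeping argument. First I would observe that, by the remark following Definition~\ref{s0-d1}, it suffices to treat the case $\mu\in\cM_+^1(\T)$, and that the Schur parameter sequence $\gamma$ associated with $\mu$ is well defined because of the bijective correspondence between $\cM_+^1(\T)$, $\cC^0$, $\cS$, and $\Gamma$ described in Subsection~\ref{s0}. The strategy then splits into the two implications packaged in the "if and only if".

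For the forward direction, assume $\mu$ is a Helson-Szeg\H o measure. By \coref{s0-c1} (or equivalently Remark~\ref{s0-r1}) $\mu$ is a Szeg\H o measure, hence $\gamma\in\Gamma l_2$ by \rprop{5-prop5}; in particular every object built in Subsections~\ref{s2}--\ref{s5} is available. By the equivalence (i)$\Leftrightarrow$(ii) of \thref{s0-t1}, the Riesz projection $P_+$ is bounded in $L^2_\mu$. Applying \thref{4-thm3} to the Szeg\H o measure $\mu$, the operator $B_{\mu,\gG}\colon\gH_{\mu,\gF}\to\gH_{\mu,\gG}^\perp$ defined in \eqref{4-thm3-1} is boundedly invertible. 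By \thref{5-thm4}, the matrix of $B_{\mu,\gG}$ with respect to the orthonormal bases $(\varphi_k)_{k=1}^\infty$ of $\gH_{\mu,\gF}$ and $(\psi_n^*)_{n=1}^\infty$ of $\gH_{\mu,\gG}^\perp$ equals $\cL^*$, where $\cL$ is the block occurring in \eqref{5-20}; and by \thref{5-thm7} this block, expressed in terms of $\gamma$, has the explicit form \eqref{5-thm7-1}. Since unitary identifications of Hilbert spaces with $l_2$ via orthonormal bases preserve bounded invertibility, it follows that the operator $\cL^*(\gamma)$ acting in $l_2$ through the matrix \eqref{5-thm7-1} is boundedly invertible. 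This proves necessity.

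For the converse, assume $\gamma\in\Gamma l_2$ and that $\cL^*(\gamma)$ is boundedly invertible in $l_2$. Then by \rprop{5-prop5} the measure $\mu$ is a Szeg\H o measure, so the colligation $\Delta_\mu$ of type \eqref{Nr.2.2} and all the attendant decompositions \eqref{Nr.3.39}--\eqref{Nr.3.40} are in force, and $\gH_{\mu,\gF}^\perp\neq\{0\}$, $\gH_{\mu,\gG}^\perp\neq\{0\}$ by Proposition~\ref{s2-p1}. By \thref{5-thm7} the matrix \eqref{5-thm7-1} is exactly the matrix of $\cL$ from \eqref{5-20}, hence $\cL^*$ is, via \thref{5-thm4}, the matrix of $B_{\mu,\gG}$ with respect to the bases $(\varphi_k)_{k=1}^\infty$ and $(\psi_n^*)_{n=1}^\infty$; thus bounded invertibility of $\cL^*(\gamma)$ in $l_2$ is equivalent to bounded invertibility of the operator $B_{\mu,\gG}$. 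Now \thref{4-thm3} gives that $P_+$ is bounded in $L^2_\mu$, and finally the equivalence (i)$\Leftrightarrow$(ii) of \thref{s0-t1} shows that $\mu$ is a Helson-Szeg\H o measure. This completes the argument.

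The only genuinely delicate point — and the one I would state carefully rather than wave through — is the passage "matrix is boundedly invertible $\Leftrightarrow$ operator is boundedly invertible": one must note that the coordinate isomorphisms $\gH_{\mu,\gF}\to l_2$ and $\gH_{\mu,\gG}^\perp\to l_2$ induced by the orthonormal bases $(\varphi_k)_{k=1}^\infty$ and $(\psi_n^*)_{n=1}^\infty$ are unitary, so $\cL^*(\gamma)$ and $B_{\mu,\gG}$ are unitarily equivalent as bounded operators between Hilbert spaces, and unitary equivalence transports both boundedness and boundedness of the inverse. Everything else is a direct concatenation of \thref{s0-t1}, \thref{4-thm3}, \rprop{5-prop5}, \thref{5-thm4}, \thref{5-thm7}, and \rmref{5-rem8}, exactly as indicated in the sentence preceding the statement.
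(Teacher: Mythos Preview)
Your proof is correct and follows exactly the approach the paper indicates: it is the concatenation of Theorem~\ref{s0-t1}, Theorem~\ref{4-thm3}, Proposition~\ref{5-prop5}, Theorem~\ref{5-thm4}/\ref{5-thm7}, and Remark~\ref{5-rem8}, which is precisely what the paper cites in the sentence preceding the statement. You have simply written out the two implications in detail, and your care about the unitary-equivalence step transporting bounded invertibility is a welcome clarification of the only nontrivial passage.
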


\begin{cor}[\zitaa{DFK}{\ccor{6.2}}]\label{s6-c1}
    Let $\mu\in\cM_+^1(\T)$ and let $\gamma\in\Gamma$ be the sequence of Schur parameters associated with $\mu$.
    Then $\mu$ is a Helson-Szeg\H o measure if and only if $\gamma\in\Gamma l_2$ and there exists some positive constant $C$
    such that for each $h\in l_2$ the inequality
    \begin{align}\label{s6-c1-1}
        \|\cL^*(\gamma)h\|\ge C\|h\|
    \end{align}
    is satisfied.
\end{cor}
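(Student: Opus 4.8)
The plan is to derive Corollary~\ref{s6-c1} directly from Theorem~\ref{s6-t1} by unravelling what it means for the bounded linear operator $\cL^\ast(\gamma)$ on $l_2$ to be \emph{boundedly invertible}. Since $\gamma\in\Gamma l_2$ already guarantees (via Theorem~\ref{5-thm7} and the triangular, unipotent-up-to-positive-diagonal shape of $\cL(\gamma)$ in \eqref{5-thm7-1}) that $\cL(\gamma)$, and hence $\cL^\ast(\gamma)$, is a bounded operator on $l_2$ with dense range and trivial kernel, the only issue is the existence of a bounded inverse, equivalently, that $\cL^\ast(\gamma)$ is bounded below.

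First I would record the elementary functional-analytic fact that for a bounded operator $A$ on a Hilbert space the following are equivalent: (i) $A$ is boundedly invertible; (ii) $A$ is injective, has closed range, and has dense range; (iii) there is a constant $C>0$ with $\|Ah\|\ge C\|h\|$ for all $h$, \emph{and} $A$ has dense range. The implication (iii)$\Rightarrow$(i) needs the density of the range: being bounded below only gives injectivity with closed range, i.e.\ a bounded inverse defined on $\overline{\Ran A}$. So the second ingredient I would supply is that $\cL^\ast(\gamma)$ always has dense range in $l_2$ whenever $\gamma\in\Gamma l_2$. This follows from the structure of $\cL(\gamma)$: from \eqref{5-thm7-1} the finite sections $\cL_n(\gamma)$ (cf.\ \eqref{5.3}) are lower triangular with strictly positive diagonal entries $\Pi_1,\dots,\Pi_n$, hence invertible, so $\cL(\gamma)$ has trivial kernel; dually $\Ran\cL^\ast(\gamma)$ is dense iff $\ker\cL(\gamma)=\{0\}$, which holds. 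Alternatively, one may invoke Corollary~\ref{cor3.6} together with the unitarity of the block matrix \eqref{3.9}: the relation $I-\cQ^\ast(\gamma)\cQ(\gamma)=\cL^\ast(\gamma)\cL(\gamma)$ recorded after Definition~\ref{de3.9} shows $\cL(\gamma)$ is injective (as $\cQ(\gamma)$ is a contraction arising from a unitary operator whose isometric part is controlled), so again $\ker\cL(\gamma)=\{0\}$ and $\Ran\cL^\ast(\gamma)$ is dense.

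With these two facts in hand the proof is a two-line matching argument. If $\mu$ is a Helson--Szeg\H o measure, Theorem~\ref{s6-t1} gives $\gamma\in\Gamma l_2$ and $\cL^\ast(\gamma)$ boundedly invertible; then $\|\cL^\ast(\gamma)h\|\ge\|(\cL^\ast(\gamma))^{-1}\|^{-1}\|h\|$ for all $h\in l_2$, so \eqref{s6-c1-1} holds with $C=\|(\cL^\ast(\gamma))^{-1}\|^{-1}$. Conversely, if $\gamma\in\Gamma l_2$ and \eqref{s6-c1-1} holds for some $C>0$, then $\cL^\ast(\gamma)$ is injective with closed range; combined with the density of $\Ran\cL^\ast(\gamma)$ just established, $\cL^\ast(\gamma)$ is surjective, hence boundedly invertible by the open mapping theorem, and Theorem~\ref{s6-t1} yields that $\mu$ is a Helson--Szeg\H o measure.

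I do not expect a genuine obstacle here; the corollary is essentially a restatement of Theorem~\ref{s6-t1}. The only point that requires a sentence of justification, rather than being purely formal, is the density of the range of $\cL^\ast(\gamma)$ (equivalently $\ker\cL(\gamma)=\{0\}$), since without it the estimate \eqref{s6-c1-1} alone would not recover bounded invertibility. That is why I would isolate it as a small preliminary lemma (or cite Corollary~\ref{cor3.6} and the triangular structure \eqref{5-thm7-1}) before closing the argument.
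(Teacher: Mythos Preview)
Your proposal is correct and follows essentially the same approach as the paper's proof: both directions reduce to Theorem~\ref{s6-t1}, with the only nontrivial point being that $\ker\cL(\gamma)=\{0\}$ (equivalently, $\overline{\Ran\cL^\ast(\gamma)}=l_2$), which the paper also obtains from the lower-triangular shape \eqref{5-thm7-1} with positive diagonal entries $\Pi_k$. The paper phrases the closing step slightly differently (closedness of the inverse operator rather than the open mapping theorem), but the content is identical.
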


\begin{proof}
    First suppose that $\gamma\in\Gamma l_2$ and that there exists some positive constant $C$ such that for each $h\in l_2$ the inequality \eqref{s6-c1-1} is satisfied. From the shape \eqref{5-thm7-1} of the operator $\cL(\gamma)$ it follows 
    immediately that 
    $\ker\cL(\gamma)=\{0\}$. Thus, $\overline{\Ran\cL^*(\gamma)}=l_2$. From \eqref{s6-c1-1} it follows that
    the operator $\cL^*(\gamma)$ is invertible and that the corresponding inverse operator $\big(\cL^*(\gamma)\big)^{-1}$ is bounded and satisfies
    $$ \big\|\big(\cL^*(\gamma)\big)^{-1}\big\|\le\frac1C $$
    where $C$ is taken from \eqref{s6-c1-1}. Since $\cL^*(\gamma)$ is a bounded linear operator, the operator $[\cL^*(\gamma)]^{-1}$ is closed. Thus $\Ran\cL^*(\gamma)=l_2$ and, consequently, the operator $\cL^*(\gamma)$ is boundedly invertible. 
    Hence, Theorem \ref{s6-t1} yields that $\mu$ is a Helson-Szeg\H o measure. 
    If $\mu$ is a Helson-Szeg\H o measure, then Theorem \ref{s6-t1} yields that $\cL^*(\gamma)$ is boundedly invertible.
    Hence, condition \eqref{s6-c1-1} is trivially satisfied.
\end{proof}

It should be mentioned that a result similar to Theorem \ref{s6-t1} was
proved earlier using a different method in \cite[Definition 4.6, Proposition 4.7 and Theorem 4.8]{GKPY}. More specifically, it was shown that a measure $\mu$ is a
Helson-Szeg\H o measure if and only if some infinite matrix $\cM$ (which is
defined in \cite[formulas (4.1) and (4.2)]{GKPY}) generates a bounded operator
in $\ell^2$.
It was also shown that the boundedness of $\cM$ is equivalent to the boundedness of another operator matrix $\cL$ defined in formula (6.4) of \cite{GKPY}.

In order to derive criteria in another way we need some statements on the operator $\cL(\gamma)$ defined in Corollary~\ref{cor3.6} (see \eqref{3.42}). 
The following result 
plays an important role in the study of the matrix $\cL(\gamma)$.
Namely, Theorem~\ref{thm3.11} describes the multiplicative structure of $\cL(\gamma)$ and indicates connections to the backward shift:

\textit{
    It holds that
    \begin{equation}       \label{s6-2A-1A}
        \cL(\gamma) = \gM(\gamma) \cdot \cL(W \gamma)
    \end{equation}
    where
    \begin{equation}       \label{s6-2A-1B}
        \gM(\gamma) :=
        \rklam{
            \begin{smallmatrix}
                D_{\gamma_1} & 0 & 0 & \cdots & 0 & \cdots\\
                -\gamma_1\overline\gamma_2 & D_{\gamma_2} & 0 & \cdots & 0 & \cdots \\
                -\gamma_1D_{\gamma_2}\overline\gamma_3 & -\gamma_2\overline\gamma_3 & D_{\gamma_3} & \hdots & 0 & \cdots \\
                \vdots & \vdots & \vdots & & \vdots \\
                -\gamma_1\left( \prod_{j=2}^{n-1}D_{\gamma_j}\right)\overline\gamma_n & -\gamma_2\left( \prod_{j=3}^{n-1}D_{\gamma_j}\right)\overline\gamma_n & -\gamma_3\left( \prod_{j=4}^{n-1}D_{\gamma_j}\right)\overline\gamma_n & \cdots & D_{\gamma_n} & \cdots \\
                \vdots & \vdots & \vdots &  & \vdots & 
            \end{smallmatrix}
        }
    \end{equation}
    and $D_{\gamma_j} := \sqrt{ 1 - \lvert \gamma_j \rvert^2 }, \, j \in \N_0$.
    The matrix $\gM(\gamma)$ satisfies
    \begin{equation}       \label{s6-2A-1C}
        I - \gM(\gamma) \gM^*(\gamma) = \eta(\gamma) \eta^*(\gamma)
    \end{equation}
    where
    \begin{equation}       \label{s6-2A-1D}
        \eta(\gamma)
        := {\rm col} \left(
        \overline{ \gamma_1 } ,  \, 
        \overline{ \gamma_2 } D_{\gamma_1} ,  \, 
        \ldots ,
        \overline{ \gamma_n } \prod_{j = 1}^{n - 1}{ D_{\gamma_j} } ,  \, 
        \ldots
        \right)
    \end{equation}
}

The multiplicative structure \eqref{s6-2A-1A} of $\cL(\gamma)$ gives us some hope that the boundedness of the operator $\cL^*(\gamma)$ can be reduced to a constructive condition on the Schur paramters via convergence of some infinite products (series). This is a promising direction for future work on this problem.

Let $\gamma\in\Gamma l_2$.
For each $n\in\N$ we set (see \eqref{5.3})
\begin{equation}\label{frakLn}
    \fL_n(\gamma):=\begin{pmatrix}
        \Pi_1 & 0 & 0 & \hdots & 0 \\
        \Pi_2L_1(W\gamma) & \Pi_2 & 0 & \hdots & 0 \\
        \Pi_3L_2(W\gamma) & \Pi_3L_1(W^2\gamma) & \Pi_3 & \hdots & 0 \\
        \vdots & \vdots & \vdots & & \vdots \\
        \Pi_nL_{n-1}(W\gamma) & \Pi_nL_{n-2}(W^2\gamma) & \Pi_nL_{n-3}(W^3\gamma) & \hdots & \Pi_n
    \end{pmatrix}.
\end{equation}
The matrices introduced in (\ref{frakLn}) will play an important role in our investigations. Now we turn our attention to some properties of the matrices $\fL_n(\gamma)$, $n\in\N $, which will later be of use.
From Corollary~\ref{cor5.2} 
we get the following result.

\begin{lem}\label{lem11a}
    Let $\gamma=(\gamma_j)_{j=0}^\infty\in\Gamma l_2$ and let $n\in\N$. Then the matrix $\fL_n(\gamma)$ defined by (\ref{frakLn}) is contractive.
\end{lem}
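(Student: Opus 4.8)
The plan is to identify $\fL_n(\gamma)$ as the $n$-th order principal submatrix of the infinite matrix $\cL(\gamma)$ from \eqref{3.42} and to deduce contractivity from the unitarity of the operator $\cU(\gamma)$ in \eqref{3.9}. First I would recall from \coref{cor3.6} (and the definition \eqref{5.3} in \coref{cor5.2}) that $\fL_n(\gamma)=\cL_n(\gamma)$ is exactly the upper-left $n\times n$ block of $\cL(\gamma)$; since $\gamma\in\Gamma l_2$, the matrix $\cL(\gamma)$ is the block $(1,2)$-entry of the matrix representation \eqref{3.9} of the \emph{unitary} operator $\cU(\gamma)\colon\cH\to\cH$ with respect to the canonical basis. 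In particular $\cL(\gamma)$ is a contraction on $l_2$, because it equals $P_{\cH_\cF}\Rstr_{\cH_\cF^\perp}\cU(\gamma)$, a compression of a unitary operator.

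The key step is then the elementary observation that every principal submatrix of a contraction is again a contraction. Concretely, let $P_n\colon l_2\to\C^n$ be the orthogonal projection onto the first $n$ coordinates and let $E_n\colon\C^n\to l_2$ be the canonical embedding, so that $P_nE_n=I_n$ and $\norm{P_n}=\norm{E_n}=1$. The block-triangular structure of $\cL(\gamma)$ displayed in \eqref{3.42}—the entry in row $k$, column $j$ vanishes for $j>k$, and the columns beyond the first $n$ do not contribute to the first $n$ rows of $\cL(\gamma)E_n$—shows that $\fL_n(\gamma)=P_n\cL(\gamma)E_n$. Hence for every $x\in\C^n$ we have $\norm{\fL_n(\gamma)x}_{\C^n}=\norm{P_n\cL(\gamma)E_nx}\le\norm{\cL(\gamma)E_nx}\le\norm{E_nx}=\norm{x}_{\C^n}$, which is precisely the assertion that $\fL_n(\gamma)$ is contractive.

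There is essentially no hard part here; the only point requiring a word of care is the passage from the abstract operator $\cL(\gamma)$ to its matrix, i.e.\ justifying that the displayed matrix \eqref{3.42} indeed represents a bounded operator on $l_2$ and that its truncation is the principal submatrix \eqref{frakLn}. This is already furnished by \coref{cor3.6} together with the unitarity of \eqref{3.9}, so I would simply cite those. Alternatively, one could give a self-contained argument via \coref{cor5.2}: the matrix in \eqref{5.2} equals $I_n-\fL_n(\gamma)\fL_n^*(\gamma)$ and is a Gram matrix of the vectors $(h_k)_{k=1}^n$ from \eqref{5.1}, hence nonnegative, so $\fL_n(\gamma)\fL_n^*(\gamma)\le I_n$ and $\fL_n(\gamma)$ is contractive. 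I would present the proof using this second route, as it is shortest and relies only on the already-stated \coref{cor5.2}.
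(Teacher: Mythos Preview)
Your proposal is correct, and the second route you settle on---citing \coref{cor5.2} to conclude that $I_n-\fL_n(\gamma)\fL_n^*(\gamma)$ is a Gram matrix and hence nonnegative---is exactly what the paper does (it simply prefaces the lemma with ``From \coref{cor5.2} we get the following result'' and gives no further proof). Your first route via the unitarity of $\cU(\gamma)$ in \eqref{3.9} is also valid and slightly more conceptual, but the Gram-matrix argument is the intended one.
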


We continue with some asymptotical considerations.

\begin{lem}\label{lem12neu}
    Let $\gamma=(\gamma_j)_{j=0}^\infty\in\Gamma l_2$. Then:
    \begin{itemize}
        \item[(a)] $\lim_{k\rightarrow\infty}\Pi_k=1$.
        \item[(b)] For each $j\in\N$,\: $\lim_{m\rightarrow\infty}L_j(W^m\gamma)=0$.
        \item[(c)] For each $n\in\N$,\: $\lim_{m\rightarrow\infty}\fL_n(W^m\gamma)=I_n$.
    \end{itemize}
\end{lem}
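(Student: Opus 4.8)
The three assertions (a), (b), (c) are purely asymptotic statements about sequences $\gamma=(\gamma_j)_{j=0}^\infty\in\Gamma l_2$, so the only input needed is $\sum_{j=0}^\infty|\gamma_j|^2<\infty$ together with the explicit formulas \eqref{3.17}, \eqref{3.15} and \eqref{frakLn}. I would prove them in the order (a), then (b), then (c), since (c) is essentially a consequence of (a) and (b) once one inspects the entries of $\fL_n$.

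For (a), recall from \eqref{3.17} that $\Pi_k=\prod_{j=k}^\infty\sqrt{1-|\gamma_j|^2}$. The plan is to take logarithms: $\ln\Pi_k^2=\sum_{j=k}^\infty\ln(1-|\gamma_j|^2)$, and since $\gamma\in\Gamma l_2$ forces $|\gamma_j|\to0$, for $j$ large we have $|\ln(1-|\gamma_j|^2)|\le 2|\gamma_j|^2$, so the tail $\sum_{j=k}^\infty\ln(1-|\gamma_j|^2)$ is absolutely convergent and is the tail of a convergent series, hence tends to $0$ as $k\to\infty$. Therefore $\Pi_k^2\to1$, so $\Pi_k\to1$. (One should note $\Pi_k>0$ throughout, which is immediate since each factor is positive.)

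For (b), fix $j\in\N$ and use the defining double-sum formula \eqref{3.15} for $L_j(\gamma)$. Replacing $\gamma$ by $W^m\gamma=(\gamma_{m},\gamma_{m+1},\dotsc)$ simply shifts all indices up by $m$, so every monomial $\gamma_{j_1}\overline{\gamma_{j_1+s_1}}\dotsm$ appearing in $L_j(W^m\gamma)$ has all its indices $\ge m$ (indeed, the innermost summation index $j_r$ runs from $j_{r-1}-s_r\ge\dots\ge j-s_1-\dots-s_r=0$ in the unshifted sum, hence from $\ge m$ in the shifted sum). Thus $|L_j(W^m\gamma)|$ is bounded above by the same double sum with every $\gamma$ replaced by $|\gamma|$ and all indices $\ge m$; this majorant is the tail (starting at index $m$) of the absolutely convergent series defining $L_j$ applied to $(|\gamma_i|)_{i\ge0}$, so it tends to $0$ as $m\to\infty$. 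Hence $L_j(W^m\gamma)\to0$.

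For (c), fix $n\in\N$ and look at the matrix $\fL_n(W^m\gamma)$ given by \eqref{frakLn}. Its diagonal entries are $\Pi_{1}(W^m\gamma),\dots,\Pi_n(W^m\gamma)$, and $\Pi_k(W^m\gamma)=\prod_{i=m+k}^\infty\sqrt{1-|\gamma_i|^2}=\Pi_{m+k}(\gamma)$, which tends to $1$ as $m\to\infty$ by part (a); its strictly-lower-triangular entries are of the form $\Pi_{p}(W^m\gamma)\,L_{q}(W^{m+r}\gamma)$ for suitable fixed $p,q\ge1$ and $r\ge1$ (reading off \eqref{frakLn}), and since $\Pi_p(W^m\gamma)\le1$ is bounded while $L_q(W^{m+r}\gamma)\to0$ by part (b), each such entry tends to $0$. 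As $\fL_n$ is a fixed finite ($n\times n$) size, entrywise convergence is convergence in operator norm, so $\fL_n(W^m\gamma)\to I_n$.

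The main obstacle — really a routine bookkeeping matter rather than a genuine difficulty — is making the index-shift argument in (b) fully rigorous: one must check that the map $\gamma\mapsto L_j(\gamma)$ depends only on the entries $\gamma_0,\dots,\gamma_j,\gamma_{j+1},\dotsc$ with the smallest occurring index being $0$ (equivalently, $j_r\ge n-s_1-\dots-s_r$ can equal $0$), so that after shifting by $W^m$ the smallest index is exactly $m$ and the bound is genuinely a tail of a convergent series. Everything else is elementary estimation using $|\gamma_j|\to0$ and dominated convergence for series.
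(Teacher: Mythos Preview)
Your proposal is correct and follows essentially the same approach as the paper's own proof, which is extremely terse: the paper simply says that (a) follows from the convergence of $\prod_{k=0}^\infty D_{\gamma_k}$, that (b) is ``an immediate consequence of the definition'' in \eqref{3.14}--\eqref{3.15}, and that (c) follows by inspection of $\fL_n$ from (a) and (b). You have supplied exactly the details that the paper suppresses, including the tail-of-a-convergent-series reasoning for (b) and the entrywise argument for (c); the only cosmetic point is that your phrase ``tail of the absolutely convergent series defining $L_j$'' is a slight abuse (it is not literally a tail), but the bound $|L_j(W^m\gamma)|\le P_j\bigl(\sum_{i\ge m}|\gamma_i|^2\bigr)$ for a polynomial $P_j$ with zero constant term (obtained by dropping the nesting constraints and using Cauchy--Schwarz on each factor) makes your argument fully rigorous.
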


\begin{proof}
    The choice of $\gamma$ implies the convergence of the infinite product $\prod_{k=0}^\infty D_{\gamma_k}$. 
    This yields (a). 
    Assertion (b) is an immediate consequence of the definition of the sequence $(L_j(W^m\gamma))_{m=1}^\infty$ (see (\ref{3.14}) and (\ref{3.15})). By inspection of the sequence\\
    $(\fL_n(W^m\gamma))_{m=1}^\infty$ one can immediately see that the combination of (a) and (b) yields the assertion of (c).
\end{proof}

The following result is similar to the multiplicative representation \eqref{s6-2A-1A} (see Lemma~\ref{lm5.3} and Corollary~\ref{cor5.4}):

\textit{
Let $\gamma=(\gamma_j)_{j=0}^\infty\in\Gamma l_2$ and let $n\in\N$. Then
\begin{equation}\label{LnProd}
    \fL_n(\gamma)=\gM_n(\gamma)\cdot\fL_n(W\gamma),
\end{equation}
so that
\begin{equation}\label{LnDecomp}
            \fL_n(\gamma)=\stackrel{\longrightarrow}{\prod_{k=0}^{\infty}}\gM_n(W^k\gamma),
        \end{equation}
where
\begin{multline}\label{frakMn}
    \gM_n(\gamma)\\
    :=
    \begin{psmallmatrix}
        D_{\gamma_1} & 0 & 0 & \hdots & 0 \\
        -\gamma_1\overline\gamma_2 & D_{\gamma_2} & 0 & \hdots & 0 \\
        -\gamma_1D_{\gamma_2}\overline\gamma_3 & -\gamma_2\overline\gamma_3 & D_{\gamma_3} & \hdots & 0 \\
        \vdots & \vdots & \vdots & & \vdots \\
        -\gamma_1\left(\prod_{j=2}^{n-1}D_{\gamma_j}\right)\overline\gamma_n & -\gamma_2\left(\prod_{j=3}^{n-1}D_{\gamma_j}\right)\overline\gamma_n & -\gamma_3\left(\prod_{j=4}^{n-1}D_{\gamma_j}\right)\overline\gamma_n & \hdots & D_{\gamma_n}
    \end{psmallmatrix}.
\end{multline}
Moreover, $\gM_n(\gamma)$ is a nonsingular matrix which fulfills
\begin{equation}\label{MnForm}
    I_n-\gM_n(\gamma)\gM_n^*(\gamma)=\eta_n(\gamma)\eta_n^*(\gamma),
\end{equation}
where
\begin{equation}\label{eta}
    \eta_n(\gamma)
    :=\left(\overline{\gamma_1},\overline{\gamma_2}D_{\gamma_1},\dotsc,\overline{\gamma_n}
    \Bigg(\prod_{j=1}^{n-1}D_{\gamma_j}\Bigg)\right)^T.
\end{equation}
}

Now we state the next main result of this section.

For $h=(z_j)_{j=1}^\infty\in l_2$ and $n\in\N$ we set 
$$ h_n:=(z_1,\dotsc,z_n)^\top\in\C^n .$$

\begin{thm}[\zitaa{DFK}{\cthm{6.8}}]\label{s6-t2}
    Let $\mu\in\cM_+^1(\T)$ and let $\gamma\in\Gamma$ be the sequence of Schur parameters associated with $\mu$.
    Then $\mu$ is a Helson-Szeg\H o measure if and only if $\gamma\in\Gamma l_2$ and there exists some positive constant $C$
    such that for all $h\in l_2$ the inequality
    \begin{align}\label{s6-t2-2}
        \lim\limits_{n\to\infty}\lim\limits_{m\to\infty}\left\|\Bigg(\stackrel\longleftarrow{\prod_{k=0}^m}\gM_n^*(W^k\gamma)\Bigg)h_n\right\|\ge C\|h\|
    \end{align}
    is satisfied.
\end{thm}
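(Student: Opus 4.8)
The plan is to reduce the claimed inequality \eqref{s6-t2-2} to the bounded invertibility of $\cL^*(\gamma)$ established in \rthm{s6-t1}, using the multiplicative structure of $\cL_n(\gamma)$ supplied just before the statement. First I would recall from \lmref{lem12neu}(c) and the displayed multiplicative decomposition \eqref{LnDecomp} that for each fixed $n\in\N$ one has
\[
    \fL_n(\gamma)
    =\prodr_{k=0}^\infty\gM_n(W^k\gamma)
    =\lim_{m\to\infty}\gM_n(\gamma)\gM_n(W\gamma)\dotsm\gM_n(W^m\gamma),
\]
and correspondingly, taking adjoints,
\[
    \fL_n^*(\gamma)
    =\lim_{m\to\infty}\Bigl(\stackrel{\longleftarrow}{\prod_{k=0}^m}\gM_n^*(W^k\gamma)\Bigr),
\]
the limit being in operator norm on $\C^n$ because the partial products are contractions (\lmref{lem11a}) converging entrywise. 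Hence, for each fixed $n$,
\[
    \lim_{m\to\infty}\Bigl\|\Bigl(\stackrel{\longleftarrow}{\prod_{k=0}^m}\gM_n^*(W^k\gamma)\Bigr)h_n\Bigr\|
    =\|\fL_n^*(\gamma)h_n\|.
\]
So the left-hand side of \eqref{s6-t2-2} equals $\lim_{n\to\infty}\|\fL_n^*(\gamma)h_n\|$.

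Next I would identify $\lim_{n\to\infty}\|\fL_n^*(\gamma)h_n\|$ with $\|\cL^*(\gamma)h\|$. Since $\fL_n(\gamma)$ is the $n$-th principal submatrix of the lower-triangular matrix $\cL(\gamma)$ from \eqref{3.42}, the adjoint $\fL_n^*(\gamma)$ is the compression $P_n\cL^*(\gamma)\rstr{}{\C^n}$ of the upper-triangular operator $\cL^*(\gamma)$, where $P_n$ is the orthogonal projection of $l_2$ onto the first $n$ coordinates. For $h\in l_2$ with $h_n=P_n h$, a standard approximation argument (using that $\cL^*(\gamma)$ is bounded, that $P_n\to I$ strongly, and that $\cL(\gamma)$ is triangular so $\fL_n^*(\gamma)h_n$ is the truncation of $\cL^*(\gamma)h_n$) gives
\[
    \lim_{n\to\infty}\|\fL_n^*(\gamma)h_n\|
    =\|\cL^*(\gamma)h\|.
\]
The triangularity is what makes this clean: the $k$-th component of $\fL_n^*(\gamma)h_n$ coincides with the $k$-th component of $\cL^*(\gamma)h$ for all $k\le n$ once $h$ is itself finitely supported, and the general case follows by density of finitely supported vectors together with uniform boundedness of the compressions. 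Combining the two reductions, the left-hand side of \eqref{s6-t2-2} equals $\|\cL^*(\gamma)h\|$ for every $h\in l_2$, so the asserted inequality \eqref{s6-t2-2} holds for all $h\in l_2$ with some $C>0$ if and only if $\|\cL^*(\gamma)h\|\ge C\|h\|$ for all $h\in l_2$.

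Finally I would close the loop via \coref{s6-c1}: the bound $\|\cL^*(\gamma)h\|\ge C\|h\|$ for all $h\in l_2$ with some positive $C$ is precisely condition \eqref{s6-c1-1}, which by that corollary is equivalent to $\mu$ being a Helson-Szeg\H o measure (together with $\gamma\in\Gamma l_2$, which by \propref{5-prop5} is automatic once $\mu$ is Szeg\H o, and is in any case part of the hypothesis in the direction that matters). This yields the claimed equivalence. I expect the main obstacle to be the careful justification of the interchange of the two limits and of the identification $\lim_{n\to\infty}\|\fL_n^*(\gamma)h_n\| = \|\cL^*(\gamma)h\|$: one must argue that the order $\lim_n\lim_m$ in \eqref{s6-t2-2} is legitimate and produces exactly $\|\cL^*(\gamma)h\|$, which rests on the norm convergence $\gM_n$-products $\to\fL_n^*(\gamma)$ for fixed $n$ (uniform in $h_n$ on the unit ball of $\C^n$) and on the strong convergence of the compressions $P_n\cL^*(\gamma)P_n\to\cL^*(\gamma)$; the triangular shape of $\cL(\gamma)$ is the structural fact that makes both steps go through without further estimates.
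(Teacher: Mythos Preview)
Your proposal is correct and follows essentially the same route as the paper: first use the multiplicative decomposition \eqref{LnDecomp} (together with \lmref{lem12neu}(c)) to collapse the inner limit over $m$ to $\|\fL_n^*(\gamma)h_n\|$, then identify the outer limit with $\|\cL^*(\gamma)h\|$ and invoke \coref{s6-c1}. The paper's own proof is much terser---it simply asserts that \eqref{s6-t2-2} is equivalent to $\lim_{n\to\infty}\|\cL_n^*(\gamma)h_n\|\ge C\|h\|$ and that this in turn is equivalent to \eqref{s6-c1-1}---whereas you supply the justification for the second step via the upper-triangularity of $\cL^*(\gamma)$ (which gives $\fL_n^*(\gamma)h_n=\cL^*(\gamma)P_nh$ exactly) and strong convergence $P_n\to I$.
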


\begin{proof}
    In view of \eqref{LnDecomp} and condition (c) in Lemma \ref{lem12neu} the condition \eqref{s6-t2-2}
    is equivalent to the fact that for all $h\in l_2$ the inequality 
    \begin{align}\label{s6-t2-3}
        \lim\limits_{n\to\infty}\|\cL_n^*(\gamma)h_n\|\ge C\|h\|
    \end{align}
    is satisfied. This inequality is equivalent to the inequality \eqref{s6-c1-1}.
\end{proof}

Theorem \ref{s6-t2} leads to an alternate proof of an interesting sufficient condition for a Szeg\H o measure to be a Helson-Szeg\H o measure
(see Theorem \ref{6-thm9}).
To prove this result we use the next inequalities, which follow from Lemma~\ref{lem211}.

\begin{lem}[\zitaa{DFK}{\ccor{6.10}}]\label{s6-c2}
    Let the assumptions of Lemma \ref{lem211} be satisfied. Then for each $h\in\C^n$ the inequalities 
    \begin{align}\label{s6-c2-1}
        \|\gM h\|\ge(1-\|\eta\|^2)^{\frac12}\,\|h\|
    \end{align}
    and
    \begin{align}\label{s6-c2-2}
        \|\gM^*h\|\ge(1-\|\eta\|^2)^{\frac12}\,\|h\|
    \end{align}
    are satisfied.
\end{lem}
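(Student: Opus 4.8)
The plan is to read both inequalities off the rank--one perturbation identity figuring in the hypotheses, combined with the second conclusion of Lemma~\ref{lem211}. First I would establish \eqref{s6-c2-2}. By assumption \eqref{lem211_36} we have $\gM\gM^\ast = I_n - \eta\eta^\ast$. Since $\eta\eta^\ast$ is $\|\eta\|^2$ times the orthogonal projection onto the line $\C\eta$, it satisfies $0 \le \eta\eta^\ast \le \|\eta\|^2 I_n$, and hence
\[
    \gM\gM^\ast \ge (1 - \|\eta\|^2) I_n ,
\]
where $1 - \|\eta\|^2 > 0$ by Lemma~\ref{lem211}. For arbitrary $h \in \C^n$ this gives $\|\gM^\ast h\|^2 = \langle \gM\gM^\ast h, h\rangle \ge (1-\|\eta\|^2)\|h\|^2$, which is \eqref{s6-c2-2} after taking square roots.

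For \eqref{s6-c2-1} I would invoke Lemma~\ref{lem211}: with $\widetilde\eta$ defined via \eqref{lem211_37}, the dual identity \eqref{lem211_38} reads $I_n - \gM^\ast\gM = \widetilde\eta\widetilde\eta^\ast$, and moreover $\|\widetilde\eta\| = \|\eta\|$ by \eqref{lem211_44}. Repeating the preceding argument with $\gM^\ast\gM$ in place of $\gM\gM^\ast$ and $\widetilde\eta$ in place of $\eta$ yields $\gM^\ast\gM \ge (1 - \|\widetilde\eta\|^2) I_n = (1-\|\eta\|^2) I_n$, hence $\|\gM h\|^2 = \langle \gM^\ast\gM h, h\rangle \ge (1-\|\eta\|^2)\|h\|^2$, which is \eqref{s6-c2-1}.

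The argument is entirely elementary, so I do not expect a genuine obstacle. The only point requiring a little care is the bookkeeping of which defect identity ($\gM\gM^\ast$ versus $\gM^\ast\gM$) is responsible for which of the two inequalities, together with the observation that the norm of the auxiliary vector $\widetilde\eta$ coincides with that of $\eta$; both of these facts are already supplied by Lemma~\ref{lem211}, so nothing further is needed.
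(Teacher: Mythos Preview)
Your proof is correct and essentially identical to the paper's. The paper writes the same Cauchy--Schwarz bound in scalar form, $\|h\|^2-\|\gM h\|^2=\lvert(h,\widetilde\eta)\rvert^2\le\|\widetilde\eta\|^2\|h\|^2=\|\eta\|^2\|h\|^2$, and then says the other inequality is analogous; the only cosmetic difference is that you phrase it as an operator inequality $\gM^\ast\gM\ge(1-\|\eta\|^2)I_n$ and treat \eqref{s6-c2-2} first.
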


\begin{proof}
    Applying \eqref{lem211_38} and \eqref{lem211_44} we get for $h\in\C^n$ the relation
    \begin{align*}
        \|h\|^2-\|\gM h\|^2=\big((I-\gM^*\gM)h,h\big)=\lvert(h,\widetilde\eta)\rvert^2\le\|\widetilde\eta\|^2\,\|h\|^2=\|\eta\|^2\,\|h\|^2.
    \end{align*}
    This implies \eqref{s6-c2-1}. Analogously, \eqref{s6-c2-2} can be verified.
\end{proof}

\begin{cor}[\zitaa{DFK}{\ccor{6.11}}]\label{s6-c3}
    Let $\gamma\in\Gamma l_2$, and let the matrix $\gM_n(\gamma)$ be defined via \eqref{frakMn}. Then for all $h\in\C^n$
    the inequalities
    \begin{align}\label{s6-c3-1}
        \|\gM_n(\gamma)h\|\ge\Bigg(\prod_{j=1}^n D_{\gamma_j}\Bigg)\|h\|
    \end{align}
    and
    \begin{align}\label{s6-c3-2}
        \|\gM_n^*(\gamma)h\|\ge\Bigg(\prod_{j=1}^n D_{\gamma_j}\Bigg)\|h\|
    \end{align}
    are satisfied.
\end{cor}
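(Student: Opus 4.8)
The plan is to derive this corollary directly from the general inequalities in \lmref{s6-c2}, once the matrix $\gM_n(\gamma)$ is recognized as an instance of the matrix $\fM$ appearing in \lmref{lem211}. First I would recall the properties of $\gM_n(\gamma)$ collected in the displayed statement immediately following \lmref{lem12neu} (equivalently, \lmref{lm5.3}): for every $\gamma\in\Gamma l_2$ and every $n\in\N$, the matrix $\gM_n(\gamma)$ defined via \eqref{frakMn} is nonsingular and satisfies
\[
I_n-\gM_n(\gamma)\gM_n^*(\gamma)=\eta_n(\gamma)\eta_n^*(\gamma),
\]
where $\eta_n(\gamma)$ is the vector given in \eqref{eta}. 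These are precisely the hypotheses of \lmref{lem211} (with $\fM:=\gM_n(\gamma)$ and $\eta:=\eta_n(\gamma)$), and hence the hypotheses under which \lmref{s6-c2} is formulated.

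Next I would simply invoke \lmref{s6-c2}: applying \eqref{s6-c2-1} and \eqref{s6-c2-2} to $\gM:=\gM_n(\gamma)$ and $\eta:=\eta_n(\gamma)$ yields, for all $h\in\C^n$,
\[
\|\gM_n(\gamma)h\|\ge\bigl(1-\|\eta_n(\gamma)\|^2\bigr)^{1/2}\|h\|
\qquad\text{and}\qquad
\|\gM_n^*(\gamma)h\|\ge\bigl(1-\|\eta_n(\gamma)\|^2\bigr)^{1/2}\|h\|.
\]
It then remains to identify the constant on the right-hand side. Since $\gamma\in\Gamma l_2$ is in particular a sequence from $\D$, \lmref{lem210} (applied with the vector $\eta_n(\gamma)$ from \eqref{5.9}, which coincides with the vector in \eqref{eta}) gives
\[
1-\|\eta_n(\gamma)\|^2=\prod_{j=1}^n D_{\gamma_j}^2,
\]
so that $\bigl(1-\|\eta_n(\gamma)\|^2\bigr)^{1/2}=\prod_{j=1}^n D_{\gamma_j}$, using that each $D_{\gamma_j}=\sqrt{1-|\gamma_j|^2}$ is nonnegative. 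Substituting this into the two inequalities above produces exactly \eqref{s6-c3-1} and \eqref{s6-c3-2}, completing the proof.

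There is essentially no serious obstacle here; the corollary is a bookkeeping consequence of results already established in the excerpt. The only point that warrants a line of care is to check that the vector $\eta_n(\gamma)$ occurring in the defect identity \eqref{MnForm} is literally the same vector whose Euclidean norm is computed in \lmref{lem210} — this is clear by comparing \eqref{5.9} (equivalently \eqref{eta}) in both places — so that the constant $\prod_{j=1}^nD_{\gamma_j}$ is the correct one and not merely a lower bound.
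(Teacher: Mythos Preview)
Your proof is correct and follows essentially the same approach as the paper: verify that $\gM_n(\gamma)$ and $\eta_n(\gamma)$ satisfy the hypotheses of \lmref{lem211}, apply \lmref{s6-c2}, and then identify $1-\|\eta_n(\gamma)\|^2=\prod_{j=1}^n D_{\gamma_j}^2$. The only cosmetic difference is that the paper writes out this last norm computation explicitly as a telescoping identity, whereas you cite \lmref{lem210}; the content is identical.
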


\begin{proof}
    The matrix $\rklamFunk{ \fEl{\gM}{n} }{\gamma}$
    satisfies the conditions of Lemma \ref{lem211}. Here the
    vector $\eta$ has the form \eqref{eta}. It remains
    only to mention that in this case we have
    \begin{align}\label{s6-c3-3}
        1 - \norm{\eta}^2
        &=	1 - \abs{ \fEl{\gamma}{1} }^2
        - \abs{ \fEl{\gamma}{2} }^2
        \rklam{ 1 - \abs{ \fEl{\gamma}{1} }^2 }
        - \ldots
        - \abs{ \fEl{\gamma}{n} }^2
        \eklam{ 
            \prod_{j = 1}^{n-1}{
                \rklam{1 - \abs{ \fEl{\gamma}{j} }^2}
            } 
        }	\nonumber\\[5pt]
        &=	\prod_{j = 1}^{n}\rklam{
            1 - \abs{ \fEl{\gamma}{j} }^2
        }	\qPunkt
    \end{align}
\end{proof}

The above consideration lead us to an alternate proof for a nice
sufficient criterion for the Helson-Szeg\H{o} property of a measure
$\mu \in \rklamFunk{ \fEl{\cM^1}{+} }{\dT}$
which is expressed in terms of the modules of the associated
Schur parameter sequence.

Regarding the history of Theorem \ref{6-thm9}, it should be mentioned
that, in view of a theorem by B. L. Golinskii and I. A. Ibragimov \cite{5a},
the convergence of the infinite product in \eqref{6-24} is equivalent
to the property that $\mu$ is absolutely continuous with respect to
the Lebesgue measure. The corresponding density is then of the form
$\exp{g}$, where $g$ is a real Besov-class function. A Theorem of V. V. Peller's
\cite{11A} states that every function of this form is a density of a
Helson-Szeg\H{o} measure. This topic was also discussed in detail
in \cite{GKPY}.

\begin{thm}[\zitaa{DFK}{\cthm{6.12}}]\label{6-thm9}
    Let $\mu \in \rklamFunk{ \fEl{\cM^1}{+} }{\dT}$
    and let $\gamma	\in	\Gamma$ be the Schur parameter sequence
    associated with $\mu$. If $\gamma \in \Gamma \fEl{l}{2}$ and
    the infinite product
    \begin{align}		\label{6-24}
        \prod_{k = 1}^{\infty}{ 
            \prod_{j = k}^{\infty}{  
                \rklam{ 1 - \abs{ \fEl{\gamma}{j} }^2 }
            } 
        }
    \end{align}
    converges, then $\mu$ is a Helson-Szeg\H{o} measure.
\end{thm}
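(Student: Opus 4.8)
\textbf{Proof plan for Theorem~\ref{6-thm9}.}
The plan is to use the characterization of the Helson-Szeg\H{o} property given in \coref{s6-c1}: it suffices to produce a positive constant $C$ such that $\norm{\cL^*(\gamma)h}\ge C\norm{h}$ for every $h\in l_2$. By \thref{s6-t2} this is in turn equivalent to exhibiting a positive lower bound for $\lim_{n\to\infty}\lim_{m\to\infty}\norm{(\stackrel{\longleftarrow}{\prod_{k=0}^m}\gM_n^*(W^k\gamma))h_n}$ uniformly in $\norm{h}=1$, using the product decomposition \eqref{LnDecomp} of $\fL_n$ into the factors $\gM_n(W^k\gamma)$. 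So the whole argument reduces to controlling norms of long products of the matrices $\gM_n(W^k\gamma)$ from below.

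The key step is the lower bound for a single factor. By \coref{s6-c3}, for every $h\in\C^n$ one has $\norm{\gM_n^*(W^k\gamma)h}\ge(\prod_{j=1}^n D_{(W^k\gamma)_j})\norm{h}=(\prod_{j=k+1}^{k+n}D_{\gamma_j})\norm{h}$; here I would use the identity \eqref{s6-c3-3} to see that $1-\norm{\eta_n(W^k\gamma)}^2=\prod_{j=k+1}^{k+n}(1-|\gamma_j|^2)$. Iterating this bound over $k=0,1,\dotsc,m$ gives
\[
    \Bigl\|\Bigl(\stackrel{\longleftarrow}{\prod_{k=0}^m}\gM_n^*(W^k\gamma)\Bigr)h_n\Bigr\|
    \ge\Bigl(\prod_{k=0}^m\ \prod_{j=k+1}^{k+n}D_{\gamma_j}\Bigr)\norm{h_n}.
\]
Letting first $m\to\infty$ and then $n\to\infty$, the double product on the right tends to $\prod_{k=0}^\infty\prod_{j=k+1}^\infty(1-|\gamma_j|^2)$, which is exactly (a shifted version of) the product \eqref{6-24}; its convergence to a strictly positive limit $C$ is precisely the hypothesis. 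Since $\norm{h_n}\to\norm{h}$ as $n\to\infty$, inequality \eqref{s6-t2-2} holds with this $C$, and \thref{s6-t2} (or directly \coref{s6-c1} via \thref{s6-t2-3}) yields that $\mu$ is a Helson-Szeg\H{o} measure. I would also note at the start that $\gamma\in\Gamma l_2$ is already part of the hypothesis, so \prref{5-prop5} (or \crem{s0-r1}) guarantees $\mu$ is a Szeg\H{o} measure, which is the standing assumption under which \thref{s6-t2} and the lemmas on $\gM_n$ are stated.

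The main obstacle I anticipate is not the estimate itself — which is a clean iteration of \coref{s6-c3} — but bookkeeping the order of the limits and the index shifts in \eqref{6-24}. One must check that the nested limits $\lim_{n\to\infty}\lim_{m\to\infty}$ in \thref{s6-t2} match the grouping $\prod_{k\ge1}\prod_{j\ge k}$ in \eqref{6-24}: the $m$-limit corresponds to letting the number of $\gM_n^*$-factors grow, producing for fixed $n$ the tail $\prod_{k=0}^\infty\prod_{j=k+1}^{k+n}D_{\gamma_j}^2$, and the subsequent $n$-limit fills in the inner products to full tails $\prod_{j=k+1}^\infty D_{\gamma_j}^2$. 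Care is needed to justify interchanging the $n\to\infty$ limit with the infinite product over $k$; this is legitimate because the partial products are monotone decreasing in $n$ and bounded below by the (convergent, hence positive) full product, so the limit exists and equals the value of \eqref{6-24}, which is positive by assumption. Once this monotonicity/positivity point is made explicit, the rest is routine.
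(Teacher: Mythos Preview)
Your proposal is correct and follows essentially the same approach as the paper: iterate the single-factor lower bound from \coref{s6-c3} over the product $\prod_{k=0}^m\gM_n^*(W^k\gamma)$ and conclude via \thref{s6-t2}. The paper sidesteps the limit-interchange issue you flag by immediately replacing each finite inner product $\prod_{j=k+1}^{k+n}D_{\gamma_j}$ with the smaller infinite tail $\prod_{j=k+1}^\infty D_{\gamma_j}$ (valid since every $D_{\gamma_j}\le1$), so the resulting lower bound $\prod_{k=1}^{m+1}\prod_{j=k}^\infty D_{\gamma_j}\ge\prod_{k=1}^\infty\prod_{j=k}^\infty D_{\gamma_j}$ is already independent of $n$ and only the trivial $m$-limit remains.
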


\begin{proof}
    Applying successively the estimate \eqref{s6-c3-2}, we
    get for all $m, \, n \in \N$ and all vectors $h \in \dC^n$
    the chain of inequalities
    \begin{align}
        &	\hspace{-18pt}
        \norm{  
            \eklam{
                \overleftarrow{
                    \prod_{k = 0}^{m}
                }
                {
                    \rklamFunk{ \fEl{\gM^*}{n} }{ W^k \gamma}
                }
            } h
        }	\nonumber	\\
        &=	\norm{  
            \rklamFunk{ \fEl{\gM^*}{n} }{ W^m \gamma}
            \eklam{
                \overleftarrow{
                    \prod_{k = 0}^{m-1}
                }
                {
                    \rklamFunk{ \fEl{\gM^*}{n} }{ W^k \gamma}
                }}
            h
        }	\nonumber	\displaybreak[0]	\\[5pt]
        &\geq	\prod_{j = m+1}^{m+n}{ \fEl{D}{ \fEl{\gamma}{j} } }
        \norm{  
            \eklam{
                \overleftarrow{\prod_{k = 0}^{m-1}}
                {
                    \rklamFunk{ \fEl{\gM^*}{n} }{ W^k \gamma}
                }
            } h
        }	
        \nonumber	\\[5pt]
        &\geq		\ldots\nonumber	\\[5pt]
        &\geq	\rklam{
            \prod_{j = m+1}^{m+n}{ \fEl{D}{ \fEl{\gamma}{j} } }
        }
        \cdot
        \rklam{
            \prod_{j = m}^{m+n-1}{ \fEl{D}{ \fEl{\gamma}{j} } }
        }
        \cdot
        \ldots
        \cdot
        \rklam{
            \prod_{j = 1}^{n}{ \fEl{D}{ \fEl{\gamma}{j} } }
        }
        \norm{h}	\nonumber	\displaybreak[0]	\\[5pt]
        &\geq	\rklam{
            \prod_{j = m+1}^{\infty}{ \fEl{D}{ \fEl{\gamma}{j} } }
        }
        \cdot
        \rklam{
            \prod_{j = m}^{\infty}{ \fEl{D}{ \fEl{\gamma}{j} } }
        }
        \cdot
        \ldots
        \cdot
        \rklam{
            \prod_{j = 1}^{\infty}{ \fEl{D}{ \fEl{\gamma}{j} } }
        }
        \norm{h}	\nonumber	\displaybreak[0]	\\[5pt]
        &=	\rklam{
            \prod_{k = 1}^{m+1}{
                \prod_{j = k}^{\infty}{ \fEl{D}{ \fEl{\gamma}{j} } }
            }
        }
        \norm{h}	\nonumber	\\[5pt]
        &\geq	\rklam{
            \prod_{k = 1}^{\infty}{
                \prod_{j = k}^{\infty}{ \fEl{D}{ \fEl{\gamma}{j} } }
            }
        }
        \norm{h}
    \end{align}
    >From this inequality it follows \eqref{s6-t2-2} where
    $$ C=\prod_{k=1}^\infty\prod_{j=k}^\infty D_{\gamma_j} \;.$$
    Thus, the proof is complete.
\end{proof}

Taking into account that the convergence of the infinite product \eqref{6-24}
is equivalent to the strong Szeg\H o condition
\begin{equation*}
    \sum_{k = 1}^{\infty}{ k \cdot \abs{ \fEl{\gamma}{k} }^2 } < \infty ,
\end{equation*}
Theorem \ref{6-thm9} is an immediate consequence of \cite[Theorem 5.3]{GKPY}.
The proof of \cite[Theorem 5.3]{GKPY} is completely different from the above
proof of Theorem \ref{6-thm9}. It is based on a scattering formalism using
CMV matrices (For a comprehensive exposition on CMV matrices, we refer the
reader to Chapter 4 in the monograph Simon \cite{S}.)

The aim of our next considerations is to characterize the Helson-Szeg\H o
property of a measure $\mu \in \rklamFunk{ \cM^{1}_{+} }{\dT}$ in terms
of some infinite series formed from its Schur parameter sequence. The
following result follows from Lemma~\ref{lem23}.

\begin{thm}[\zitaa{DFK}{\cthm{6.13}}]\label{6-thm13}
    Let $\gamma = \fKlam{\gamma}{j}{0}{\infty} \in \Gamma \fEl{\ell}{2}$ and let
    \begin{align}		\label{6-100}
        \rklamFunk{ \cA }{\gamma}
        :=  I - \rklamFunk{ \cL }{\gamma} \rklamFunk{ \Adj{\cL} }{\gamma}
    \end{align}
    where $\rklamFunk{ \cL }{\gamma}$ is given by \eqref{5-thm7-1}.
    Then $\rklamFunk{ \cA }{\gamma}$ satisfies the inequalities
    \begin{align}		\label{6-101}
        0 \leq 
        \rklamFunk{ \cA }{\gamma}
        \leq I
    \end{align}
    and admits the strong convergent series decomposition
    \begin{align}		\label{6-102}
        \rklamFunk{ \cA }{\gamma}
        = \sum_{j = 0}^{\infty}{ 
            \rklamFunk{ \fEl{ \xi }{j} }{ \gamma } 
            \rklamFunk{ \fEl{ \Adj{\xi} }{j} }{ \gamma }
        }
    \end{align}
    where
    \begin{align}		\label{6-103}
        \rklamFunk{ \fEl{ \xi }{0} }{ \gamma } 
        :=  \rklamFunk{ \eta }{ \gamma } ,
        \qquad
        \qquad
        \rklamFunk{ \fEl{ \xi }{j} }{ \gamma } 
        := \eklam{
            \overrightarrow{
                \prod_{k = 0}^{j - 1}
            }
            {
                \rklamFunk{\gM}{ W^k \gamma}
            }
        }
        \rklamFunk{ \eta }{ W^j \gamma } ,
        j \in \N_0,
    \end{align}
    and $\rklamFunk{\gM}{\gamma}$, $\rklamFunk{\eta}{\gamma}$
    and $W$ are given by \eqref{s6-2A-1B}, \eqref{s6-2A-1D} and \eqref{3.14}, respectively.
\end{thm}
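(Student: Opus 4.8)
The plan is to recognize Theorem~\ref{6-thm13} as essentially a restatement of Lemma~\ref{lem23} (Proposition~2.3 in \cite{DFK}) in the setting of the semi-infinite matrices $\cL(\gamma)$ and $\cM(\gamma)$ rather than their finite truncations $\cL_n(\gamma)$ and $\cM_n(\gamma)$. First I would record the matrix-identity observation from Remark~\ref{re5.15}, namely that $\cA(\gamma)=I-\cL(\gamma)\cL^*(\gamma)=\cR(\gamma)\cR^*(\gamma)$, where $\cR(\gamma)$ is the block from the unitary matrix representation \eqref{3.9}. Since $\cR(\gamma)$ is a block of a unitary operator, it is a contraction, so $\cA(\gamma)=\cR(\gamma)\cR^*(\gamma)$ is nonnegative with $\cA(\gamma)\le I$; this gives \eqref{6-101} immediately.

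For the series decomposition \eqref{6-102}, the key steps are as follows. Starting from \eqref{s6-2A-1A}, i.e. $\cL(\gamma)=\cM(\gamma)\cL(W\gamma)$, I compute
\[
    \cA(\gamma)
    =I-\cM(\gamma)\cL(W\gamma)\cL^*(W\gamma)\cM^*(\gamma)
    =\bigl(I-\cM(\gamma)\cM^*(\gamma)\bigr)+\cM(\gamma)\cA(W\gamma)\cM^*(\gamma).
\]
Using \eqref{s6-2A-1C}, $I-\cM(\gamma)\cM^*(\gamma)=\eta(\gamma)\eta^*(\gamma)=\xi_0(\gamma)\xi_0^*(\gamma)$, this reads
\[
    \cA(\gamma)=\xi_0(\gamma)\xi_0^*(\gamma)+\cM(\gamma)\cA(W\gamma)\cM^*(\gamma).
\]
Iterating this identity $m$ times and unwinding the products one obtains
\[
    \cA(\gamma)
    =\sum_{j=0}^{m-1}\xi_j(\gamma)\xi_j^*(\gamma)
    +\Bigl[\overrightarrow{\prod_{k=0}^{m-1}}\cM(W^k\gamma)\Bigr]\cA(W^m\gamma)\Bigl[\overrightarrow{\prod_{k=0}^{m-1}}\cM(W^k\gamma)\Bigr]^*,
\]
which is the analogue of \eqref{prop213_48}/\eqref{5.10}-type unwinding already carried out for the finite blocks; here $\xi_j(\gamma)$ is exactly \eqref{6-103}. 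It then remains to show that the remainder term tends to $0$ strongly as $m\to\infty$. For this I would combine two facts: the partial products $\overrightarrow{\prod_{k=0}^{m-1}}\cM(W^k\gamma)$ are contractions (each $\cM(W^k\gamma)$ is contractive by \eqref{s6-2A-1C}), so the remainder is bounded above by $\cA(W^m\gamma)$ in the operator order, and $\lim_{m\to\infty}\cL(W^m\gamma)=I$ strongly (this is the semi-infinite version of Lemma~\ref{lem12neu}(c), obtained from the explicit form \eqref{5-thm7-1} of $\cL$ together with $\Pi_n\to1$ and $L_j(W^m\gamma)\to0$), hence $\cA(W^m\gamma)=I-\cL(W^m\gamma)\cL^*(W^m\gamma)\to 0$ strongly. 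Since $0\le\cA(W^m\gamma)\le I$, the sandwiched remainder goes to $0$ strongly, and the partial sums $\sum_{j=0}^{m-1}\xi_j(\gamma)\xi_j^*(\gamma)$ form a monotone nondecreasing sequence of nonnegative operators bounded by $I$, so they converge strongly to $\cA(\gamma)$. This establishes \eqref{6-102}.

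The main obstacle I anticipate is purely a matter of rigor with the semi-infinite (rather than finite-dimensional) matrices: one must justify that all products of the operators $\cM(W^k\gamma)$ and $\cL(W^k\gamma)$ are well-defined bounded operators on $l_2$ with the expected algebraic identities, that strong limits may be interchanged in the unwinding step, and that the Hankel-type matrix \eqref{5-thm7-1} really does define a bounded operator on $l_2$ with $\cL(W^m\gamma)\to I$ strongly. All of these follow from $\gamma\in\Gamma l_2$ (so that $\prod_j D_{\gamma_j}$ converges and the series defining the $L_n$ converge absolutely, as noted after \eqref{3.15}), but the bookkeeping needs care; most of it is available from Lemma~\ref{lem23}, Lemma~\ref{lem12neu}, Corollary~\ref{C3.13-1207} and Remark~\ref{re5.15}, so the proof can be presented as a short reduction to those results rather than reproving everything from scratch.
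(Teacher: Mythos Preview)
Your proposal is correct and follows essentially the same route as the paper: the paper simply states that Theorem~\ref{6-thm13} ``follows from Lemma~\ref{lem23}'', and your argument is precisely the infinite-dimensional analogue of the proof of Lemma~\ref{lem23}---iterate the identity $\cA(\gamma)=\eta(\gamma)\eta^*(\gamma)+\gM(\gamma)\cA(W\gamma)\gM^*(\gamma)$ obtained from \eqref{s6-2A-1A} and \eqref{s6-2A-1C}, then use contractivity of the $\gM(W^k\gamma)$ together with $\cA(W^m\gamma)\to 0$ strongly to kill the remainder. Your use of Remark~\ref{re5.15} to get \eqref{6-101} and your explicit discussion of the strong-convergence bookkeeping are welcome additions that the paper leaves implicit.
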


The last main result of this section is the following statement, which is an
immediate consequence of Theorem \ref{s6-t1} and Theorem \ref{6-thm13}.

\begin{thm}[\zitaa{DFK}{\cthm{4.16}}]\label{6-thm14}
    Let $\mu \in \rklamFunk{ \cM^{1}_{+} }{\dT}$ and let $\gamma \in \Gamma$
    be the sequence of Schur parameters associated with $\mu$. Then $\mu$ is a
    Helson-Szeg\H o measure if and only if $\gamma \in \Gamma \fEl{\ell}{2}$
    and there exists some positive constant $\varepsilon \in \rklam{0, \, 1}$ such
    that the inequality
    \begin{align}		\label{6-104}
        \sum_{j = 0}^{\infty}{ 
            \rklamFunk{ \fEl{ \xi }{j} }{ \gamma } 
            \rklamFunk{ \fEl{ \Adj{\xi} }{j} }{ \gamma }
        }
        \leq \rklam{ 1 - \varepsilon} I
    \end{align}
    is satisfied, where the vectors
    $\rklamFunk{ \fEl{ \xi }{j} }{ \gamma }, \, j \in \N_0$, are given by
    \eqref{6-103}.
\end{thm}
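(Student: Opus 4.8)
The plan is to deduce Theorem~\ref{6-thm14} from the two results that are cited in its statement, namely Theorem~\ref{s6-t1} and Theorem~\ref{6-thm13}, by a short operator-theoretic argument. By Theorem~\ref{s6-t1}, for $\mu\in\cM_+^1(\T)$ with associated Schur parameter sequence $\gamma\in\Gamma$, the measure $\mu$ is Helson-Szeg\H o if and only if $\gamma\in\Gamma l_2$ and the operator $\cL^*(\gamma)$ acting in $l_2$ (defined by the matrix \eqref{5-thm7-1}) is boundedly invertible. So the whole task reduces, under the standing hypothesis $\gamma\in\Gamma l_2$, to showing that the bounded invertibility of $\cL^*(\gamma)$ is equivalent to the existence of some $\varepsilon\in(0,1)$ with
\[
    \sum_{j=0}^\infty\xi_j(\gamma)\xi_j^*(\gamma)\le(1-\varepsilon)I.
\]
By Theorem~\ref{6-thm13}, the left-hand side equals $\cA(\gamma)=I-\cL(\gamma)\cL^*(\gamma)$ and satisfies $0\le\cA(\gamma)\le I$. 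Hence I would first rewrite the asserted inequality \eqref{6-104} as $I-\cL(\gamma)\cL^*(\gamma)\le(1-\varepsilon)I$, i.e.\ as $\cL(\gamma)\cL^*(\gamma)\ge\varepsilon I$.

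The core step is then the standard equivalence: for a bounded linear operator $A$ on a Hilbert space, $AA^*\ge\varepsilon I$ for some $\varepsilon>0$ holds if and only if $A^*$ is bounded below, i.e.\ $\|A^*h\|\ge\sqrt{\varepsilon}\,\|h\|$ for all $h$, which in turn (since $A^*$ is bounded and therefore has closed range once it is bounded below, and $\ker A^* =(\Ran A)^\perp$) is equivalent to $A^*$ being injective with closed range equal to the whole space. I would apply this with $A=\cL(\gamma)$, so $A^*=\cL^*(\gamma)$. From the lower-triangular shape \eqref{5-thm7-1} with strictly positive diagonal entries $\Pi_n>0$ (note $\Pi_n>0$ because $\gamma\in\Gamma l_2$ forces $\prod_j D_{\gamma_j}>0$, cf.\ Lemma~\ref{lem12neu}(a) and \eqref{3.17}) one sees $\ker\cL(\gamma)=\{0\}$, hence $\overline{\Ran\cL^*(\gamma)}=l_2$. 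Therefore $\cL^*(\gamma)$ is boundedly invertible precisely when it is bounded below, and this is exactly $\cL(\gamma)\cL^*(\gamma)\ge\varepsilon I$ for some $\varepsilon>0$; replacing $\varepsilon$ by $\min\{\varepsilon,\tfrac12\}$ if necessary we may take $\varepsilon\in(0,1)$, which is the form demanded in \eqref{6-104}. This reasoning is essentially the one already carried out in the proof of Corollary~\ref{s6-c1}, so I would either invoke that corollary directly or repeat the two-line argument.

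Putting the pieces together: assume first $\mu$ is Helson-Szeg\H o. Then $\gamma\in\Gamma l_2$ and $\cL^*(\gamma)$ is boundedly invertible by Theorem~\ref{s6-t1}; hence $\cL(\gamma)\cL^*(\gamma)\ge\varepsilon I$ for some $\varepsilon\in(0,1)$, and by the series representation \eqref{6-102} of Theorem~\ref{6-thm13} this reads $\sum_{j=0}^\infty\xi_j(\gamma)\xi_j^*(\gamma)=\cA(\gamma)=I-\cL(\gamma)\cL^*(\gamma)\le(1-\varepsilon)I$, giving \eqref{6-104}. Conversely, if $\gamma\in\Gamma l_2$ and \eqref{6-104} holds, then by \eqref{6-102} we get $I-\cL(\gamma)\cL^*(\gamma)\le(1-\varepsilon)I$, i.e.\ $\cL(\gamma)\cL^*(\gamma)\ge\varepsilon I$; since $\ker\cL(\gamma)=\{0\}$ this forces $\cL^*(\gamma)$ to be bounded below with dense range, hence boundedly invertible, and Theorem~\ref{s6-t1} yields that $\mu$ is Helson-Szeg\H o.

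The only point requiring a little care — and the one I would flag as the "hard part", though it is genuinely routine here — is the passage from "$\cL^*(\gamma)$ bounded below and with dense range" to "$\cL^*(\gamma)$ boundedly invertible": one needs that a bounded operator which is bounded below automatically has closed range, so that dense range becomes full range. This is standard (the image of a complete space under a bounded-below map is complete, hence closed), and it is exactly the step spelled out in the proof of Corollary~\ref{s6-c1}; I would simply cite that proof. No estimates on the Schur parameters themselves are needed, so the argument is short and essentially formal once Theorems~\ref{s6-t1} and~\ref{6-thm13} are in hand.
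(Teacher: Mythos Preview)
Your proposal is correct and follows exactly the route the paper intends: the paper states this theorem as ``an immediate consequence of Theorem~\ref{s6-t1} and Theorem~\ref{6-thm13}'' without giving further details, and you have supplied precisely those details---rewriting \eqref{6-104} via \eqref{6-102} as $\cL(\gamma)\cL^*(\gamma)\ge\varepsilon I$, and then invoking the argument of Corollary~\ref{s6-c1} (that $\ker\cL(\gamma)=\{0\}$ plus bounded-below implies bounded invertibility) to connect this to Theorem~\ref{s6-t1}.
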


We note that the inequality \eqref{6-104} can be considered as a rewriting
of condition \eqref{s6-t2-2} in an additive form.


\subsection*{Acknowledgment}
The first author would like to express special thanks to Professor Tatjana Eisner for her generous support.
He also thanks the Max Planck Institute for Human Cognitive and Brain Sciences, in particular 
Professor Christian Doeller, Dr Christina Schroeder, and Dr Sebastian Ziegaus.

The first author was also partially supported by the Volkswagen Foundation grant within the frameworks of the international project ``From Modeling and Analysis to Approximation''.

\end{document}